\setlist[itemize]{leftmargin=*}
\newcommand{\biobj}[2]{{\protect\substack{ #1 \\ #2 }}}
\definecolor{nordred}{HTML}{bf616a}
\definecolor{nordblue}{HTML}{81a1c1}
\definecolor{norddarkblue}{HTML}{5e81ac}
\definecolor{nordgreen}{HTML}{a3be8c}
\definecolor{nordnight}{HTML}{4c566a}
\definecolor{mygrey}{HTML}{9daaa3}
\def\expandafter\UrlBreaks\expandafter{\UrlBreaks\do\a%
    \do\b\do\c\do\d\do\e\do\f\do\g\do\h\do\i\do\j\do\k\do\l\do\m\do\n%
    \do\o\do\p\do\q\do\r\do\s\do\t\do\u\do\v\do\w\do\x\do\y\do\z\do\&}
\long\def\ignore#1{}
\newcommand{\nicelinktarget}[1]{\Hy@raisedlink{\Hy@raisedlink{\hypertarget{#1}{}}}}
\newcommand\defining[2]{\nicelinktarget{#1}{\color{black}{#2}}}
\newcommand\multicategory{\hyperlink{linkmulticategory}{multicategory}}
\newcommand\multicategories{\hyperlink{linkmulticategory}{multicategories}}
\newcommand\Multicategories{\hyperlink{linkmulticategory}{Multicategories}}
\newcommand\multifunctors{\hyperlink{linkmultifunctor}{multifunctors}}
\newcommand\Mult{\hyperlink{linkMult}{\mathbf{Mult}}}
\newcommand\Set{\hyperlink{linkSet}{\mathbf{Set}}}
\newcommand\zentre{\mathcal{Z}}
\newcommand\malleableMulticategory{\hyperlink{linkMalleableMulticategory}{malleable multicategory}}
\newcommand\malleableMulticategories{\hyperlink{linkMalleableMulticategory}{malleable multicategories}}
\newcommand\MalleableMulticategories{\hyperlink{linkMalleableMulticategory}{Malleable multicategories}}
\newcommand\coherentMulticategory{\hyperlink{linkcohmulticategory}{malleable multicategory}}
\newcommand\coherentMulticategories{\hyperlink{linkcohmulticategory}{malleable multicategories}}
\newcommand\mMult{\hyperlink{linkmMult}{\mathbf{mMult}}}
\newcommand{\symmetricMulticategory}{\hyperlink{linkSymmetricMulticategory}{symmetric multicategory}}
\newcommand{\symmetricMulticategories}{\hyperlink{linkSymmetricMulticategory}{symmetric multicategories}}
\newcommand{\cartesianMonoidalCategories}{\hyperlink{linkCartesianMonoidal}{cartesian monoidal categories}}
\newcommand{\cartesianMonoidalCategory}{\hyperlink{linkCartesianMonoidal}{cartesian monoidal category}}
\newcommand{\CartesianMonoidalCategories}{\hyperlink{linkCartesianMonoidal}{Cartesian monoidal categories}}
\newcommand{\partialMarkovCategory}{\hyperlink{linkDiscretePartialMarkovCategory}{partial Markov category}}
\newcommand{\discretePartialMarkovCategory}{\hyperlink{linkDiscretePartialMarkovCategory}{discrete partial Markov category}}
\newcommand{\discretePartialMarkovCategories}{\hyperlink{linkDiscretePartialMarkovCategory}{discrete partial Markov categories}}
\newcommand{\effectAlgebra}{\hyperlink{linkEffectAlgebra}{effect algebra}}
\newcommand{\effectAlgebras}{\hyperlink{linkEffectAlgebra}{effect algebras}}
\newcommand{\EffectAlgebras}{\hyperlink{linkEffectAlgebra}{Effect algebras}}
\newcommand{\dinaturalEquivalence}{\hyperlink{linkdinaturality}{dinatural equivalence}}
\newcommand{\dinaturality}{\hyperlink{linkdinaturality}{dinaturality}}
\newcommand\Contour[1]{\mathsf{Contour}{#1}}
\newcommand\decomp{\operatorname{decomp}}
\newcommand\bcomp{\ensuremath{\mathbin{\bm{;}}}}
\newcommand\MonCat{\mathbf{MonCat}}
\newcommand\MonCatStr{\hyperlink{linkStrictMonoidalFunctor}{\ensuremath{\mathbf{MonCat}_{\mathsf{Str}}}}}
\newcommand\Duals{\mathsf{Duals}}
\newcommand\Polar{\mathsf{Polar}}
\newcommand\monoidalCategory{\hyperlink{linkMonoidalCategory}{monoidal category}}
\newcommand\monoidalCategories{\hyperlink{linkMonoidalCategory}{monoidal categories}}
\newcommand\MonoidalCategories{\hyperlink{linkMonoidalCategory}{Monoidal categories}}
\newcommand\strictMonoidalCategory{\hyperlink{linkStrictMonoidalCategory}{strict monoidal category}}
\newcommand\strictMonoidalCategories{\hyperlink{linkStrictMonoidalCategory}{strict monoidal categories}}
\newcommand\StrictMonoidalCategories{\hyperlink{linkStrictMonoidalCategory}{Strict monoidal categories}}
\newcommand\nonStrictMonoidalCategory{\hyperlink{linkNonStrictMonoidalCategory}{non-strict monoidal category}}
\newcommand\stringDiagram{\hyperlink{linkStringDiagram}{string diagram}}
\newcommand\stringDiagrams{\hyperlink{linkStringDiagram}{string diagrams}}
\newcommand\StringDiagrams{\hyperlink{linkStringDiagram}{String diagrams}}
\newcommand\symmetricMonoidalCategory{\hyperlink{linkSymmetricMonoidal}{symmetric monoidal category}}
\newcommand\symmetricMonoidalCategories{\hyperlink{linkSymmetricMonoidal}{symmetric monoidal categories}}
\newcommand\SymmetricMonoidalCategories{\hyperlink{linkSymmetricMonoidal}{Symmetric monoidal categories}}
\newcommand\noidalCategory{\hyperlink{linknoidal}{noidal category}}
\newcommand\premonoidalCategory{\hyperlink{linkpremonoidal}{premonoidal category}}
\newcommand\premonoidalCategories{\hyperlink{linkpremonoidal}{premonoidal categories}}
\newcommand\PremonoidalCategories{\hyperlink{linkpremonoidal}{Premonoidal categories}}
\newcommand\effectfulCategory{\hyperlink{linkeffectful}{effectful category}}
\newcommand\effectfulCategories{\hyperlink{linkeffectful}{effectful categories}}
\newcommand\EffectfulCategories{\hyperlink{linkeffectful}{Effectful categories}}
\newcommand{\colorPre}[1]{{\color{nordred}{#1}}}
\newcommand{\colorMon}[1]{{\color{norddarkblue}{#1}}}
\newcommand{\pure}{\colorMon{pure}}
\newcommand{\effectful}{\colorPre{effectful}}
\newcommand{\MON}{\colorMon{\mathrm{Mon}}}
\newcommand{\MONRUN}{\colorMon{\mathrm{Mon}}_\colorPre{\mathrm{Run}}}
\newcommand\R{\colorPre{R}}
\newcommand\EFF{\colorPre{\mathrm{Eff}}}
\newcommand{\Run}{\colorPre{\operatorname{Run}}}
\newcommand{\obj}[1]{{#1}_{\mathrm{obj}}} %
\newcommand{\Braid}{\operatorname{Braid}_\R}
\newcommand\promonoidal{\hyperlink{linkpromonoidal}{promonoidal}}
\newcommand\promonoidalCategory{\hyperlink{linkpromonoidal}{promonoidal category}}
\newcommand\promonoidalCategories{\hyperlink{linkpromonoidal}{promonoidal categories}}
\newcommand\PromonoidalCategories{\hyperlink{linkpromonoidal}{Promonoidal categories}}
\newcommand\promonoidalFunctor{\hyperlink{linkPromonoidalFunctor}{promonoidal functor}}
\newcommand\promonoidalFunctors{\hyperlink{linkPromonoidalFunctor}{promonoidal functors}}
\newcommand{\monoidalMulticategory}{\hyperlink{linkMonoidalMulticategory}{monoidal multicategory}}
\newcommand{\MonoidalMulticategories}{\hyperlink{linkMonoidalMulticategory}{Monoidal multicategories}}
\newcommand{\monoidalMulticategories}{\hyperlink{linkMonoidalMulticategory}{monoidal multicategories}}
\newcommand\produoidal{\hyperlink{linkProduoidalCategory}{produoidal}}
\newcommand\symmetricProduoidal{\hyperlink{linkSymmetricProduoidal}{symmetric produoidal}}
\newcommand\produoidalCategory{\hyperlink{linkProduoidalCategory}{produoidal category}}
\newcommand\produoidalCategories{\hyperlink{linkProduoidalCategory}{produoidal categories}}
\newcommand\ProduoidalCategories{\hyperlink{linkProduoidalCategory}{Produoidal categories}}
\newcommand\symmetricProduoidalCategory{\hyperlink{linkSymmetricProduoidal}{symmetric produoidal category}}
\newcommand\symmetricProduoidalCategories{\hyperlink{linkSymmetricProduoidal}{symmetric produoidal categories}}
\newcommand{\pDuo}{\hyperlink{linkProduoidalCategory}{\ensuremath{\mathbf{ProDuo}}}}
\newcommand{\ProDuo}{\hyperlink{linkProduoidalCategory}{\ensuremath{\mathbf{ProDuo}}}}
\newcommand{\npDuo}{\hyperlink{linkNormalProduoidalCategory}{\ensuremath{\mathbf{nProDuo}}}}
\newcommand{\symProDuo}{\hyperlink{linkSymmetricProduoidal}{\ensuremath{\mathbf{symProDuo}}}}
\newcommand{\nSymProduo}{\hyperlink{linkSymmetricProduoidal}{\ensuremath{\mathbf{nSymProDuo}}}}
\newcommand{\produoidalFunctor}{\hyperlink{linkProduoidalFunctor}{produoidal functor}}
\newcommand{\produoidalFunctors}{\hyperlink{linkProduoidalFunctor}{produoidal functors}}
\newcommand\duoidalCategory{\hyperlink{linkDuoidalCategory}{duoidal category}}
\newcommand\duoidalCategories{\hyperlink{linkDuoidalCategory}{duoidal categories}}
\newcommand\DuoidalCategories{\hyperlink{linkDuoidalCategory}{Duoidal categories}}
\newcommand\normalDuoidalCategory{\hyperlink{linkNormalDuoidal}{normal duoidal category}}
\newcommand\normalDuoidalCategories{\hyperlink{linkNormalDuoidal}{normal duoidal categories}}
\newcommand\Prom{\hyperlink{linkProm}{\mathbf{Prom}}}
\newcommand\Splice[1]{\mathsf{Splice}{#1}}
\newcommand\id{\mathrm{id}}
\newcommand\Subd{\mathbf{D}_{\leq 1}}
\newcommand{\polygraph}{\hyperlink{linkPolygraph}{polygraph}}
\newcommand{\polygraphs}{\hyperlink{linkPolygraph}{polygraphs}}
\newcommand{\PolyGraph}{\hyperlink{linkPolygraph}{\mathbf{PolyGraph}}}
\newcommand{\polygraphCouple}{\hyperlink{linkpolygraphcouple}{effectful polygraph}}
\newcommand{\polygraphCouples}{\hyperlink{linkpolygraphcouple}{effectful polygraphs}}
\newcommand{\polycategory}{\hyperlink{linkPolycategory}{polycategory}}
\newcommand{\polycategories}{\hyperlink{linkPolycategory}{polycategories}}
\newcommand{\Polycategories}{\hyperlink{linkPolycategory}{Polycategories}}
\newcommand{\malleablePolycategory}{\hyperlink{linkMalleablePolycategory}{malleable polycategory}}
\newcommand{\malleablePolycategories}{\hyperlink{linkMalleablePolycategory}{malleable polycategories}}
\newcommand{\polyfunctors}{\hyperlink{linkPolyfunctor}{polyfunctors}}
\newcommand{\ProStarAut}{\mathbf{ProStar}}
\newcommand{\ProStar}{\mathbf{ProStar}}
\newcommand{\ProstarAutonomousCategories}{\hyperlink{linkProstar}{Prostar autonomous categories}}
\newcommand{\prostarAutonomousCategories}{\hyperlink{linkProstar}{prostar autonomous categories}}
\newcommand{\prostarAutonomousCategory}{\hyperlink{linkProstar}{prostar autonomous category}}
\newcommand{\prostarFunctor}{\hyperlink{linkProstarFunctor}{prostar functor}}
\newcommand{\kleisli}{\mathsf{Kleisli}}
\newcommand{\Kleisli}{\mathsf{Kleisli}}
\newcommand{\processTheory}{\hyperlink{linkProcessTheory}{process theory}}
\newcommand{\processTheories}{\hyperlink{linkProcessTheory}{process theories}}
\newcommand{\Cat}{\mathbf{Cat}}
\newcommand{\bicategory}{\hyperlink{linkBicategory}{bicategory}}
\newcommand{\bicategories}{\hyperlink{linkBicategory}{bicategories}}
\newcommand{\binoidalCategories}{\hyperlink{linknoidalcategory}{binoidal categories}}
\newcommand{\egg}{\mathsf{egg}}
\newcommand{\white}{\mathsf{white}}
\newcommand{\shell}{\mathsf{shell}}
\newcommand{\sugar}{\mathsf{sugar}}
\newcommand{\paste}{\mathsf{paste}}
\newcommand{\thickPaste}{\mathsf{thick\ paste}}
\newcommand{\yolk}{\mathsf{yolk}}
\newcommand{\whiskedWhites}{\mathsf{whisked\ whites}}
\newcommand{\mascarpone}{\mathsf{mascarpone}}
\newcommand{\cream}{\mathsf{cream}}
\newcommand{\crack}{\operatorname{crack}}
\newcommand{\beat}{\operatorname{beat}}
\newcommand{\stir}{\operatorname{stir}}
\newcommand{\whisk}{\operatorname{whisk}}
\newcommand{\fold}{\operatorname{fold}}
\newcommand{\swap}{\operatorname{swap}}
\newcommand{\discard}{\operatorname{discard}}
\newcommand{\physicalDuoidalCategory}{\hyperlink{linkPhysicalDuoidal}{physical duoidal category}}
\newcommand{\physicalDuoidalCategories}{\hyperlink{linkPhysicalDuoidal}{physical duoidal categories}}
\newcommand{\PhysicalDuoidalCategories}{\hyperlink{linkPhysicalDuoidal}{Physical duoidal categories}}
\newcommand{\physicalMonoidalMulticategory}{\hyperlink{linkPhysicalMonoidalMulticategory}{physical monoidal multicategory}}
\newcommand{\physicalMonoidalMulticategories}{\hyperlink{linkPhysicalMonoidalMulticategory}{physical monoidal multicategories}}
\newcommand{\PhysicalMonoidalMulticategories}{\hyperlink{linkPhysicalMonoidalMulticategory}{Physical monoidal multicategories}}
\newcommand{\wShuf}{\hyperlink{linkShufflingWords}{\ensuremath{\mathsf{wShuf}}}}
\newcommand{\ShufflingWords}{\hyperlink{linkShufflingWords}{Shuffling words}}
\newcommand{\pShuf}{\hyperlink{linkPolarShuffle}{\ensuremath{\mathsf{pShuf}}}}
\newcommand{\polarShuffle}{\hyperlink{linkPolarShuffle}{polar shuffle}}
\newcommand{\polarShuffles}{\hyperlink{linkPolarShuffle}{polar shuffles}}
\newcommand{\PolarShuffles}{\hyperlink{linkPolarShuffle}{Polar shuffles}}
\newcommand{\SHF}{\ensuremath{\operatorname{\mbox{\textsc{shf}}}}}
\newcommand{\COM}{\ensuremath{\operatorname{\mbox{\textsc{lnk}}}}}
\newcommand{\LNK}{\ensuremath{\operatorname{\mbox{\textsc{lnk}}}}}
\newcommand{\SPW}{\ensuremath{\operatorname{\mbox{\textsc{spw}}}}}
\newcommand{\NOP}{\ensuremath{\operatorname{\mbox{\textsc{nop}}}}}
\newcommand{\messageTheory}{\hyperlink{linkMessageTheory}{message theory}}
\newcommand{\messageTheories}{\hyperlink{linkMessageTheory}{message theories}}
\newcommand{\MessageTheories}{\hyperlink{linkMessageTheory}{Message theories}}
\newcommand\mydots{\makebox[0.8em][c]{.\hfil.\hfil.}}
\newcommand{\Proc}{\hyperlink{linkProc}{\ensuremath{\mathsf{Proc}}}}
\newcommand{\hypergraph}{\hyperlink{linkHypergraph}{hypergraph}}
\newcommand{\hypergraphs}{\hyperlink{linkHypergraph}{hypergraphs}}
\newcommand{\StringSigma}{\hyperlink{linkSymmetricStringDiagram}{\mathsf{String}_\sigma}}
\newcommand{\SymMonCatStr}{\hyperlink{linkSymMonCatStr}{\mathbf{SymMonCat}_\mathsf{Str}}}
\newcommand\return{\mathsf{return}}
\newcommand\Ins{\mathsf{Ins}}
\newcommand{\var}[1]{\mathsf{#1}}
\newcommand{\adjointMonoid}{\hyperlink{linkAdjointMonoid}{adjoint monoid}}
\newcommand{\adjointMonoids}{\hyperlink{linkAdjointMonoid}{adjoint monoids}}
\newcommand{\profunctor}{\hyperlink{linkProfunctor}{profunctor}}
\newcommand{\profunctors}{\hyperlink{linkProfunctor}{profunctors}}
\newcommand{\Profunctors}{\hyperlink{linkProfunctor}{Profunctors}}
\newcommand\sequentialUnit{\hyperlink{linkProduoidalComponents}{sequential unit}}
\newcommand\parallelUnit{\hyperlink{linkProduoidalComponents}{parallel unit}}
\newcommand\parallelUnits{\hyperlink{linkProduoidalComponents}{parallel units}}
\newcommand\sequentialSplit{\hyperlink{linkProduoidalComponents}{sequential join}}
\newcommand\SequentialSplits{\hyperlink{linkProduoidalComponents}{Sequential joins}}
\newcommand\parallelSplit{\hyperlink{linkProduoidalComponents}{parallel join}}
\newcommand\ParallelSplits{\hyperlink{linkProduoidalComponents}{Parallel joins}}
\newcommand\splicedMonoidalArrows{\hyperlink{linkProduoidalSplice}{spliced monoidal arrows}}
\newcommand\SplicedMonoidalArrows{\hyperlink{linkProduoidalSplice}{Spliced monoidal arrows}}
\newcommand\produoidalSplice{\hyperlink{linkProduoidalSplice}{produoidal splice}}
\newcommand\MonoidalContour{\hyperlink{linkMonoidalContour}{Monoidal contour}}
\newcommand\monoidalContour{\hyperlink{linkMonoidalContour}{monoidal contour}}
\newcommand\monoidalLens{\hyperlink{linkMonoidalLens}{monoidal lens}}
\newcommand\monoidalLenses{\hyperlink{linkMonoidalLens}{monoidal lenses}}
\newcommand\MonoidalLenses{\hyperlink{linkMonoidalLens}{Monoidal lenses}}
\newcommand\bisplice[2]{\langle #1 ⨾ \square ⨾ #2 \rangle}
\newcommand\trisplice[3]{\langle #1 ⨾ \square ⨾ #2 ⨾ \square ⨾ #3 \rangle}
\newcommand\Sobocinski{Soboci\'{n}ski}
\newcommand{\mSplice}[1]{\hyperlink{linkMonoidalSplice}{\mathsf{mSplice}(#1)}}
\newcommand{\mSpliceF}{\hyperlink{linkMonoidalSplice}{\mathsf{mSplice}}}
\newcommand{\mContour}[1]{\hyperlink{linkMonoidalContour}{\mathsf{mContour}(#1)}}
\newcommand{\mContourF}{\hyperlink{linkMonoidalContour}{\mathsf{mContour}}}
\newcommand{\normalProduoidalCategories}{\hyperlink{linkNormalProduoidalCategory}{normal produoidal categories}}
\newcommand{\normalProduoidalCategory}{\hyperlink{linkNormalProduoidalCategory}{normal produoidal category}}
\newcommand{\NormalProduoidalCategories}{\hyperlink{linkNormalProduoidalCategory}{Normal produoidal categories}}
\newcommand{\normalSymmetricProduoidalCategory}{\hyperlink{linkNormalSymmetricProduoidalCategory}{normal symmetric produoidal category}}
\newcommand{\promonad}{\hyperlink{linkPromonad}{promonad}}
\newcommand{\monoidalContext}{\hyperlink{linkMonoidalContext}{monoidal context}}
\newcommand{\monoidalContexts}{\hyperlink{linkMonoidalContext}{monoidal contexts}}
\newcommand\mLens{\mathsf{mLens}}
\newcommand\smLens{\hyperlink{linkSymmetricMonoidalLens}{\mathsf{smLens}}}
\newcommand{\symmetricMonoidalLens}{\hyperlink{linkSymmetricMonoidalLens}{symmetric monoidal lens}}
\newcommand{\SymmetricMonoidalLenses}{\hyperlink{linkSymmetricMonoidalLens}{Symmetric monoidal lenses}}
\newcommand{\symmetricMonoidalLenses}{\hyperlink{linkSymmetricMonoidalLens}{symmetric monoidal lenses}}
\newcommand\pbiobj[2]{\ensuremath{\left(\biobj{#1}{#2}\right)}}
\newcommand{\NOR}{\hyperlink{linkProduoidalNormalization}{\mathsf{Nor}}}
\newcommand{\sNOR}{\hyperlink{linkSymmetricProduoidalNormalization}{\mathsf{sNor}}}
\newcommand{\withPoint}[2]{#2_{#1}}
\newcommand{\starp}{\hyperlink{linkpromonad}{\star}}
\newcommand{\unitp}[1]{{#1}^{\hyperlink{linkpromonad}{\circ}}}
\newcommand{\Session}{\hyperlink{linkSessions}{\mathsf{Session}}}
\newcommand{\Msg}{\hyperlink{linkSessions}{\mathsf{Msg}}}
\newcommand{\inProc}{\mathsf{inProc}}
\newcommand{\Send}[1]{{#1}^{∘}}
\newcommand{\Get}[1]{{#1}^{•}}
\newcommand{\Alice}{\ensuremath{\mathsf{Alice}}}
\newcommand{\Bob}{\ensuremath{\mathsf{Bob}}}
\newcommand{\Eve}{\ensuremath{\mathsf{Eve}}}
\newcommand{\Stage}{\ensuremath{\mathsf{Stage}}} %
\newcommand\blackComultiplication{
\tikzset{every picture/.style={line width=0.85pt}} %
\begin{tikzpicture}[x=0.75pt,y=0.75pt,yscale=-0.4,xscale=0.4,baseline=-15pt,rotate=90,transform shape]
\draw    (40,39.92) -- (27,40) ;
\draw    (50,50.01) .. controls (40.47,50.32) and (41.08,47.82) .. (40,39.92) ;
\draw    (40,39.92) .. controls (42.58,30.82) and (40.87,29.92) .. (50,30.01) ;
\draw  [fill={rgb, 255:red, 0; green, 0; blue, 0 }  ,fill opacity=1 ] (36,39.92) .. controls (36,37.71) and (37.79,35.92) .. (40,35.92) .. controls (42.21,35.92) and (44,37.71) .. (44,39.92) .. controls (44,42.13) and (42.21,43.92) .. (40,43.92) .. controls (37.79,43.92) and (36,42.13) .. (36,39.92) -- cycle ;
\end{tikzpicture}
}
\newcommand\iconbcm{\blackComultiplication}
\newcommand\blackComonoidUnit{
\tikzset{every picture/.style={line width=0.85pt}} %
\begin{tikzpicture}[x=0.75pt,y=0.75pt,yscale=-0.4,xscale=0.4,baseline=-15pt,rotate=90,transform shape]
  \tikzset{every picture/.style={line width=0.85pt}} %
  \draw  (40,39.92) -- (27,40);
  \draw  [fill={rgb, 255:red, 0; green, 0; blue, 0 }  ,fill opacity=1 ] (36,39.92) .. controls (36,37.71) and (37.79,35.92) .. (40,35.92) .. controls (42.21,35.92) and (44,37.71) .. (44,39.92) .. controls (44,42.13) and (42.21,43.92) .. (40,43.92) .. controls (37.79,43.92) and (36,42.13) .. (36,39.92) -- cycle ;
\end{tikzpicture}
}
\newcommand\iconbcu{\blackComonoidUnit}
\newcommand\blackUnit{
\tikzset{every picture/.style={line width=0.85pt}} %
\begin{tikzpicture}[x=0.75pt,y=0.75pt,yscale=-0.4,xscale=0.4,baseline=8pt,rotate=270,transform shape]
\draw    (40,39.92) -- (27,40) ;
\draw  [fill={rgb, 255:red, 0; green, 0; blue, 0 }  ,fill opacity=1 ] (36,39.92) .. controls (36,37.71) and (37.79,35.92) .. (40,35.92) .. controls (42.21,35.92) and (44,37.71) .. (44,39.92) .. controls (44,42.13) and (42.21,43.92) .. (40,43.92) .. controls (37.79,43.92) and (36,42.13) .. (36,39.92) -- cycle ;
\end{tikzpicture}
}
\newcommand\iconbu{\blackUnit}
\newcommand\whiteUnit{
\tikzset{every picture/.style={line width=0.85pt}} %
\begin{tikzpicture}[x=0.75pt,y=0.75pt,yscale=-0.4,xscale=0.4,baseline=8pt,rotate=270,transform shape]
\draw    (40,39.92) -- (27,40) ;
\draw  [fill={rgb, 255:red, 255; green, 255; blue, 255 }  ,fill opacity=1 ] (36,39.92) .. controls (36,37.71) and (37.79,35.92) .. (40,35.92) .. controls (42.21,35.92) and (44,37.71) .. (44,39.92) .. controls (44,42.13) and (42.21,43.92) .. (40,43.92) .. controls (37.79,43.92) and (36,42.13) .. (36,39.92) -- cycle ;
\end{tikzpicture}
}
\newcommand\iconwu{\whiteUnit}
\newcommand\whiteMultiplication{
\tikzset{every picture/.style={line width=0.85pt}} %
\begin{tikzpicture}[x=0.75pt,y=0.75pt,yscale=-0.4,xscale=0.4,baseline=8pt,rotate=270,transform shape]
\draw    (40,39.92) -- (27,40) ;
\draw    (50,50.01) .. controls (40.47,50.32) and (41.08,47.82) .. (40,39.92) ;
\draw    (40,39.92) .. controls (42.58,30.82) and (40.87,29.92) .. (50,30.01) ;
\draw  [fill={rgb, 255:red, 255; green, 255; blue, 255 }  ,fill opacity=1 ] (36,39.92) .. controls (36,37.71) and (37.79,35.92) .. (40,35.92) .. controls (42.21,35.92) and (44,37.71) .. (44,39.92) .. controls (44,42.13) and (42.21,43.92) .. (40,43.92) .. controls (37.79,43.92) and (36,42.13) .. (36,39.92) -- cycle ;
\end{tikzpicture}
}
\newcommand\iconwm{\whiteMultiplication}
\newcommand\whiteComultiplication{
\tikzset{every picture/.style={line width=0.85pt}} %
\begin{tikzpicture}[x=0.75pt,y=0.75pt,yscale=-0.4,xscale=0.4,baseline=-15pt,rotate=90,transform shape]
\draw    (40,39.92) -- (27,40) ;
\draw    (50,50.01) .. controls (40.47,50.32) and (41.08,47.82) .. (40,39.92) ;
\draw    (40,39.92) .. controls (42.58,30.82) and (40.87,29.92) .. (50,30.01) ;
\draw  [fill={rgb, 255:red, 255; green, 255; blue, 255}  ,fill opacity=1 ] (36,39.92) .. controls (36,37.71) and (37.79,35.92) .. (40,35.92) .. controls (42.21,35.92) and (44,37.71) .. (44,39.92) .. controls (44,42.13) and (42.21,43.92) .. (40,43.92) .. controls (37.79,43.92) and (36,42.13) .. (36,39.92) -- cycle ;
\end{tikzpicture}
}
\newcommand\iconwcm{\whiteComultiplication}
\newcommand\whiteComonoidUnit{
\tikzset{every picture/.style={line width=0.85pt}} %
\begin{tikzpicture}[x=0.75pt,y=0.75pt,yscale=-0.4,xscale=0.4,baseline=-15pt,rotate=90,transform shape]
  \tikzset{every picture/.style={line width=0.85pt}} %
  \draw  (40,39.92) -- (27,40);
  \draw  [fill={rgb, 255:red, 255; green, 255; blue, 255}  ,fill opacity=1 ] (36,39.92) .. controls (36,37.71) and (37.79,35.92) .. (40,35.92) .. controls (42.21,35.92) and (44,37.71) .. (44,39.92) .. controls (44,42.13) and (42.21,43.92) .. (40,43.92) .. controls (37.79,43.92) and (36,42.13) .. (36,39.92) -- cycle ;
\end{tikzpicture}
}
\newcommand\iconwcu{\whiteComonoidUnit}
\newcommand\iconAgent{\includegraphics[scale=0.24]{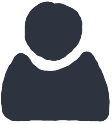}}
\newcommand\iconPredictor{\includegraphics[scale=0.20]{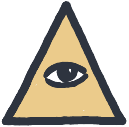}}
\newcommand\iconBoxes{\includegraphics[scale=0.24]{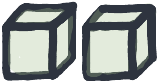}} 
\theoremstyle{plain}
\newtheorem{theorem}{Theorem}[chapter]
\newtheorem{proposition}[theorem]{Proposition}
\newtheorem{lemma}[theorem]{Lemma}
\newtheorem{conjecture}[theorem]{Conjecture}
\newtheorem{corollary}[theorem]{Corollary}
\theoremstyle{definition}
\newtheorem{definition}[theorem]{Definition}
\theoremstyle{remark}
\newtheorem{remark}[theorem]{Remark}
\newtheorem{example}[theorem]{Example}
\theoremstyle:=definition,remark,plain
\numberwithin{theorem}{section}
\newcolumntype{H}{>{\setbox0=\hbox\bgroup}c<{\egroup}@{}}
\def\l@subsection{\@tocline{1}{0pt}{2pc}{}{}}
\title{Monoidal Context Theory}
\author[Román]{Mario Román\\
\quad \\
{\small \textsc{Tallinn University of Technology, PhD Thesis,}\\
Defended on the 16th November 2023, Tallinn \\
Supervisor: {Pawe{\l} Soboci{\'n}ski}\\
Opponents: Guy McCusker and Paul-André Melliès\\
}}
\begin{document} 

\begin{abstract}
  We universally characterize the produoidal category of monoidal lenses over a monoidal category.
  In the same way that each category induces a cofree promonoidal category of spliced arrows, each monoidal category induces a cofree produoidal category of monoidal spliced arrows; monoidal lenses are the free normalization of the cofree produoidal category of monoidal spliced arrows.

  We apply the characterization of symmetric monoidal lenses to the analysis of multi-party message-passing protocols. We introduce a minimalistic axiomatization of message passing -- message theories -- and we construct combinatorially the free message theory over a set. Symmetric monoidal lenses are the derivations of the free message theory over a symmetric monoidal category.

  \ \quad \\

  \begin{center}
    \textsc{Monoidiliste Kontekstide Teooria}
  \end{center}

  \noindent\textsc{Kokkuvõte.}  Karakteriseerime monoidiliste läätsede produoidilise kategooria
  universaalomaduse abil. Nii nagu iga kategooria indutseerib
  pleissitud noolte kovaba promonoidilise kategooria, indutseerib
  monoidiline kategooria monoidiliste pleissnoolte kovaba produoidilise
  kategooria; monoidilised läätsed on monoidiliste pleissnoolte kovaba
  produoidilise kategooria vaba normalisatsioon.
  
  Kasutame sümmeetriliste monoidiliste läätsede karakterisatsiooni mitme
  osapoole sõnumiedastusprotokollide analüüsimiseks. Toome sisse
  sõnumiedastuse minimalistliku aksiomatisatsiooni – sõnumiteooriaid – ja
  konstrueerime vaba sõnumiteooria etteantud hulgal.
  Sümmeetrilised monoidilised läätsed on sümmeetrilise monoidilise
  kategooria vaba sõnumiteooria tuletised.
\end{abstract}

\maketitle
\newpage

\subsection*{Acknowledgements}
I would like to thank my PhD advisor, Pawel Sobocinski. Pawel has an exceptional ability to separate the scientifically promising ideas from the noise; he gave me the support, encouragement and freedom to pursue the research on this thesis. Pawel always said he wanted to replicate in Tallinn the atmosphere of Bob Walters' group in Sydney and I am particularly thankful for the result. I am also very grateful to Nicoletta Sabadini, for her advice and for sharing her encyclopedic knowledge of both automata and the history of Como.

Most ideas were cultivated at group meetings, and I want to thank Ed, Chad, Clémence, Nathan, Diana, Fosco, Elina, Amar, Cole, Philipp, Ekaterina, Niccolò, Michele, Andrea, and the rest of the \emph{Tarkvarateaduse Instituut} for all the math and time we shared. I am very grateful to Niels, for his contagious enthusiasm and much useful feedback on this thesis. Special thanks go to Matt for his attention to detail and mathematical elegance, great discussions and ideas, and equally great book recommendations.

I learned and enjoyed a lot on short but productive visits to Pisa, Como, Oxford and Paris, and I want to thank Filippo, Alessandro, Vladimir, Louis and Davidad for many insightful discussions during this thesis. I had the privilege of having Giovanni, James, and Dylan as coauthors and I learned a lot from each one of them. 

I thank the constant support of my parents, my brother Víctor, and my friends; I especially thank David and Esperanza for finding the best cafés in Granada. I thank Anna, Paolo, Enrico and Andrea for the time at the lake. Finally, I thank Elena: for all the fun we had writing each joint paper, and for all the happiness, math and drawings of these four years.

\setcounter{tocdepth}{3}
\tableofcontents

\clearpage{}%

\section*{Preface}
Understanding and correctly designing intelligent and explainable systems could be both, if we get it right, one of the most beneficial human advancements; and, if we get it wrong, an existential risk for humanity \cite{ord20:precipice}. Humanity's need for languages and formalisms for trustworthy complex systems is now an urge. 

Mathematics may possibly be the only right tool for this; but mathematics has not always been concerned with complex and interconnected systems. John von Neumann, talking about the intelligent and complex system that is the human brain, famously noted that

\begin{quote}
    the outward forms of our mathematics are not absolutely relevant from the point of view of evaluating what the mathematical or logical language truly used by the central nervous system is. However, the above remarks about reliability and logical and arithmetical depth prove that whatever the system is, it cannot fail to differ considerably from what we consciously and explicitly consider as mathematics.
    
     --  John Von Neumann, \emph{The Computer and The Brain} \cite{vonneumann20:computerandbrain}.
\end{quote}

Meanwhile, when we try to describe big interconnected networks with linear algebra, geometry and calculus, even with all of our achievements, we seem to miss the point: things get extremely complicated, computationally intractable, humanly unimaginable; and we declare our defeat, we resort to vague analogies, and we ask an impenetrable pile of linear algebra to be our oracle.

This does not need to be our strategy: mathematics and computer science do not advance with bigger computations; they advance with new conceptual understanding.
The past century saw the rise of conceptual mathematics and theoretical computer science -- the kind of mathematics that took seriously the most elementary notions and cultivated them to tame complex abstractions and systems \cite{lawvere09:conceptual,lawvereinterview,grothendieck85:recoltes}. Slowly but surely, %
the development of the conceptual theory of categories has brought us to a point where we can forget about comforting but vague analogies and start talking about complex systems formally and scientifically.

This thesis is part of the ongoing effort to find better languages and reasoning tools for science, epistemology, causality and probability: both intuitive graphical syntaxes for humans to reason with, and formal languages for computers, linked by a trusted and transparent mathematical formalism.

\newpage

\section*{Introduction}

\subsection*{Processes and Diagrams}

Processes come intuitively to us; descriptions of processes arose independently all across science and engineering, in the form of diagrams, flowcharts or prose. We reason with them and we depict them all the time, but that does not mean that we always know how to interpret them: many diagrams in computer science and elsewhere do not have clear formal semantics, so we relegate them to serving merely as sources of intuition and inspiration. 

\begin{quote}
  The notation has been found very useful in practice as it greatly simplifies the appearance of complicated tensor or spinor equations, the various interrelations expressed being discernable at a glance. Unfortunately the notation seems to be of value mainly for private calculations because it cannot be printed in the normal way. 
  -- Penrose and Rindler, \emph{Spinors and Spacetime} \cite{penrose:kissingerquote}
\end{quote}

Diagrams deserve better: we can lift diagrams from mere intuitions to mathematical structures; we can defend the legitimate and exceptional conceptual mathematics we now have to talk about processes and diagrams. This thesis follows the framework of \symmetricMonoidalCategories{}. Processes that pass resources around and that compose sequentially and in parallel form \symmetricMonoidalCategories{}; diagrams that depict these processes are no less than a sound and complete formal syntax for \symmetricMonoidalCategories{} (e.g. \Cref{fig:one-time-pad-correctnessIntro}).

We will develop formal syntaxes for the compositional description of process, in particular for -- but not restricted to -- probabilistic, effectful and non-classical processes. We make use of category theory as a foundational tool: category theory allows us to characterize a syntactic construction as the one generating a universal semantics object and, at the same time, it provides a robust classification framework for mathematical structures.

\begin{figure}[ht]
  \centering
  \includegraphics[scale=0.35]{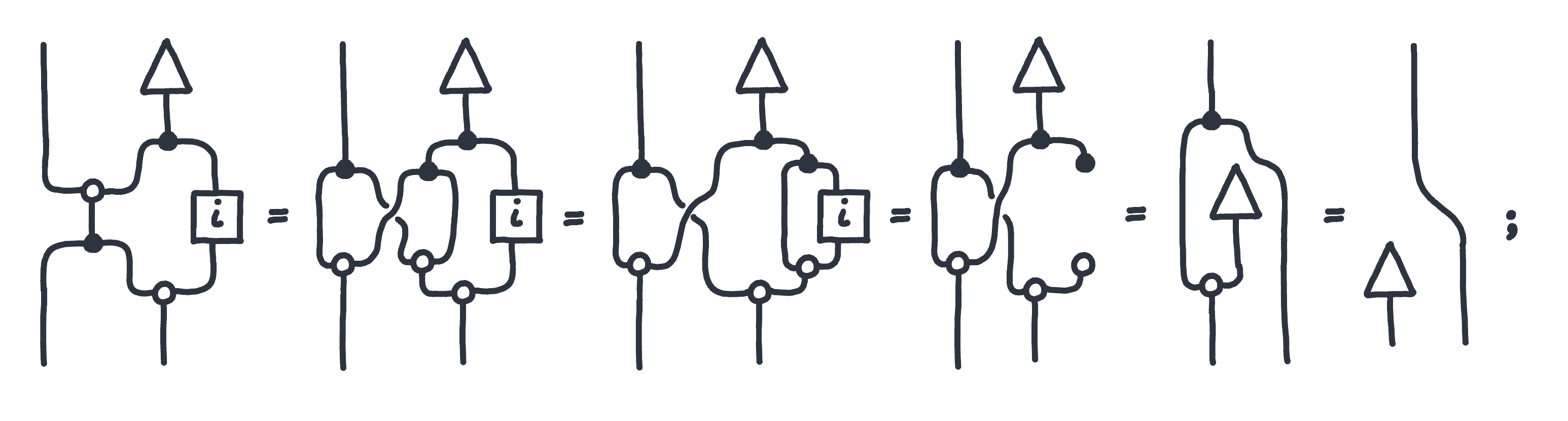}
  \caption{String-diagrammatic correctness proof for the One-time pad protocol (\Cref{prop:correctness}, {{\cite{broadbent22:crypto}}}).}
  \label{fig:one-time-pad-correctnessIntro}
\end{figure}

\subsection*{Algebra and Duoidal Algebra}
The main technical idea of this thesis is natural: in the same way that the analysis of classical algebraic theories required the development of \multicategories{} -- and more precisely, of cartesian multicategories and Lawvere theories -- the analysis of process theories, which are themselves two-dimensional algebraic theories, requires the development of \monoidalMulticategories{} and \duoidalCategories{}.

\Multicategories{}, or colored operads, are mathematical structures that describe algebraic theories. In 1963, Lawvere introduced a categorical approach to universal algebra \cite{lawvere63:functorial}: a theory can be captured by the cartesian \multicategory{} containing all of its derived operations, and this notion is invariant to the specific primitive operations we choose to present the theory. This idea opens the field of functorial semantics: theories are categories, models are functors, and homomorphisms are natural transformations. More importantly, Lawvere's thesis gives a robust account of classical algebra that can be modified to suit our needs: the same framework can be employed for deductive systems \cite{lambek:deductive}, higher-order algebra \cite{lambek1986a}, relational algebra \cite{pavlovic17}, or partial algebra \cite{di2021functorial}. 

How does it apply to \processTheories{}? \MonoidalCategories{} and \multicategories{} are not structured enough for the task of describing 2-dimensional structures themselves: we need \duoidalCategories{} and \produoidalCategories{} \cite{street12:linking}. Intermediate algebraic expressions with variables are not complete expressions; they are only \emph{contexts} into which we can plug values, and context is of central importance in computer science: we model not only processes but also the environment in which they act. While the algebra of 1-dimensional context is commonplace in applications like parsing \cite{mellies22:parsing},
the same concept was missing for 2-dimensional syntaxes, which are still less frequent in computer science \cite{uustalu18:sequent,earnshaw22}. 

\begin{figure}[ht]
  \centering
  \includegraphics[scale=0.35]{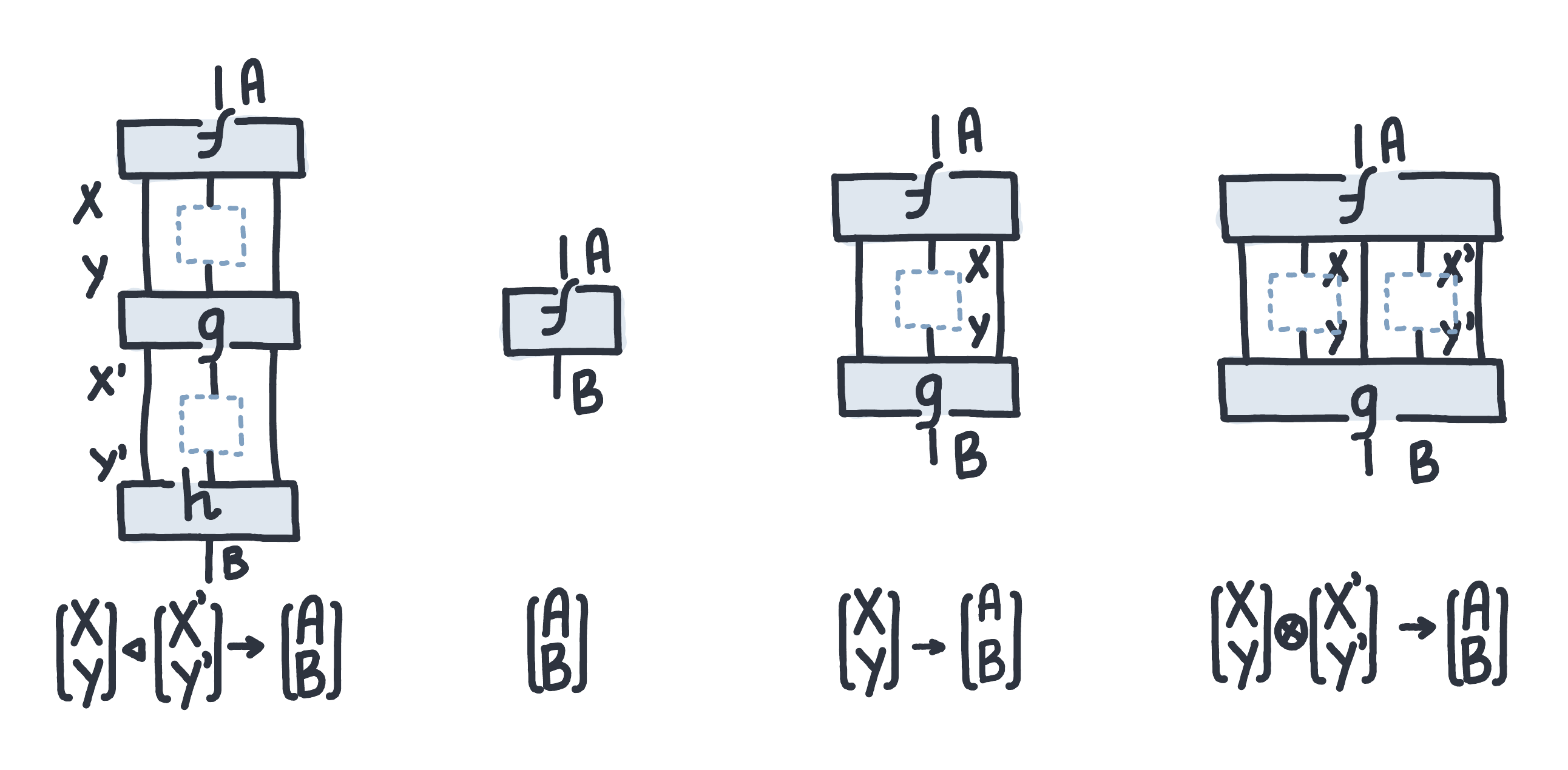}
  \caption{A depiction of monoidal lenses, or incomplete processes.}
  \label{fig:monoidalLensesIntro}
\end{figure}

\DuoidalCategories{} are well-known and there is a reasonable body of literature primarily concerned with applications in pure algebra and algebraic topology \cite{aguiar10:monoidal,street12:linking}; but the usage of \duoidalCategories{} to study processes is less frequent: two notable examples are the treatment of commutativity in the work of Garner and López Franco \cite{garner16}, and the study of ``compositional dependencies'' in the recent work of Spivak and Shapiro \cite{shapiro22:duoidal}.
In this text, \duoidalCategories{} and \monoidalMulticategories{} allow us to postulate axioms for modularity and message passing; these axioms apply to any \symmetricMonoidalCategory{}, or any \processTheory{}.

\subsection*{Fundamental Structures for Message Passing}
This main idea has an immediate consequence that we explore in the second part of this thesis: we can now develop an algebra for incomplete processes and their communication. While \emph{concurrent} software has been intensively studied since the early 60s, the theoretical research landscape remains quite fragmented: we do not have a satisfactory understanding of the underlying mathematical principles of concurrency, and the proliferation of models has not helped us understand how they relate. Indeed, Abramsky~\cite{abramsky06:concurrency} argued in 2006 that we simply do not know what the fundamental structures of concurrency are.

A way to identify such principles and arrive at more canonical models is to look for logical or universal properties. An example of the former is the discovery of and work on Curry-Howard style connections between calculi for concurrency and fragments of linear logic, which led to the development of session types~\cite{honda93,dezani09}.
We take the latter route: departing from \monoidalCategories{} and their theory of context, we universally characterize a minimalistic axiomatization of message passing in process theories.

Concurrent message passing assumes two principles: interleaving and polarization. Polarization is a categorical technique to construct dualities; and in message passing, it constructs the duality between sending and receiving \cite{cockett07:polarized,nester21,mellies21:asynchronous}. Interleaving is well-known in concurrency, and it models the ability of multiple processes to advance in parallel by mixing their global effects: imagine multiple processes determined by a sequence of statements; their concurrent execution may shuffle these statements in any possible order -- the only requirement is to preserve the relative order of statements within any single process. We will not only propose a minimalistic axiomatization of message passing from these two principles, but we will also characterize the universal structures for message passing on a \processTheory{}.

Briefly, we assume polarized types, $X^{•}$ and $X^{∘}$, that correspond to \emph{sending} and \emph{receiving}; and ordered lists of types describe sessions. Our axioms ask that \emph{(i)} a sending port can be linked to a receiving port; \emph{(ii)} echoing allows us to receive and then send; \emph{(iii)} sequences of actions can be interleaved by a shuffling $τ$; and \emph{(iv)} there exists a no-operation that does nothing.

\begin{figure}[ht]
  \hspace{-3em}
  \begin{minipage}{0.3\textwidth}
  \begin{mathpar}
      \inferrule*[Right=(com)]
      {Γ, X^{•}, X^{∘}, Δ}
      {Γ, Δ}
  \end{mathpar}
  \end{minipage}
  \begin{minipage}{0.2\textwidth}
  \begin{mathpar}
      \inferrule*[Right=(spw)]
      {\ }
      {X^{∘}, X^{•}}
  \end{mathpar}
  \end{minipage}
  \begin{minipage}{0.3\textwidth}
    \begin{mathpar}
        \inferrule*[Right=(shf${}_{\tau}$)]
        {Γ \\ Δ}
        {τ(Γ,Δ)}
    \end{mathpar}
    \end{minipage}
  \begin{minipage}{0.15\textwidth}
    \begin{mathpar}
        \inferrule*[Right=(nop)]
        {\ }
        {()}
    \end{mathpar}
    \end{minipage}
  \caption{Type-theoretic presentation of a message theory.}
  \label{fig:type-messageIntro}
\end{figure}

This is a naive logic of message passing, but its strength is that it can be characterized mathematically using \duoidalCategories{} and, more concretely, \physicalMonoidalMulticategories{}, which we introduce. This paves the way to an adjunction that characterizes the free \messageTheory{} on top of any process theory. The idea is simple but powerful: in order to construct \messageTheories{}, we need to add global effects for \emph{sending} and \emph{receiving} to our process theories \cite{orchard16:effects}; \Cref{th:sessions-vs-processes} notices that the diagrams for resulting effectful process theories can be wired precisely in the ways that the minimalistic logic of message passing prescribes.

\begin{figure}[ht]
  \centering
  \includegraphics[scale=0.30]{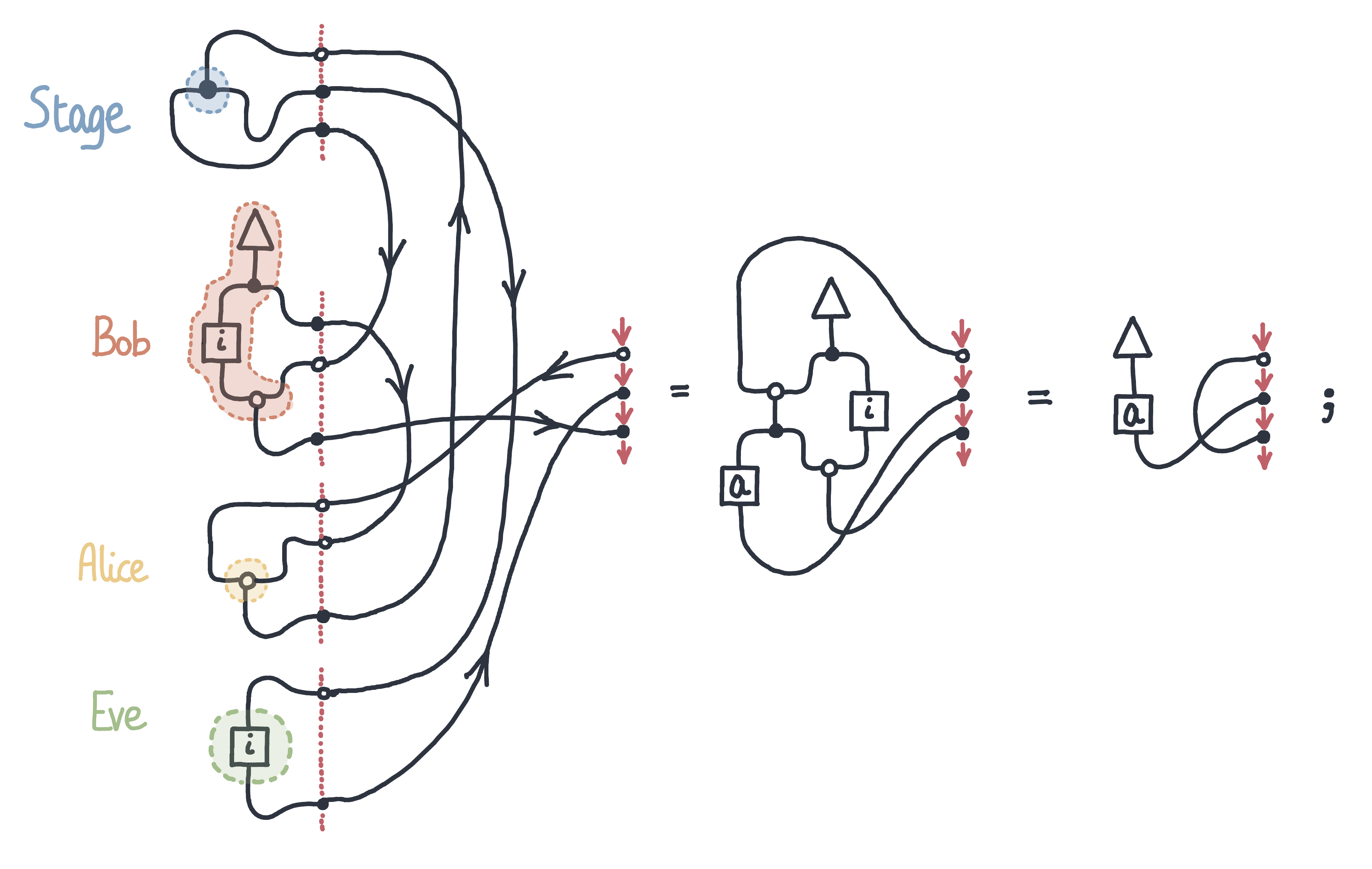}
  \caption{One-time pad protocol, split in four actors, mixed with a shuffle.}
  \label{fig:opt-sessionIntro}
\end{figure}

This means that the only addition to our process are two global effects (sending and receiving), that we depict using special red wires in the string diagrams. Each party in a session will have one of these red wires, and the logic of message passing allows us to combine them together. For instance, if the one-time pad protocol consists of a party (say, \Alice{}) sending a message to another party (say, \Bob{}), with an attacker (say, \Eve{}), sharing a \Stage{} that only allows broadcasting of messages; then these are four parties that connect together (\Cref{fig:opt-sessionIntro}).

\subsection*{Global Effects}
It remains then to explain the idea of global effects. Most imperative programming languages assume that there exist a \emph{global state} that the program affects. Full parallelism is not possible when two programs need to change this global state in a specific order: they could run into \emph{race conditions} \cite{huffman1954synthesis}.

However, mathematical theories of processes often assume no global state; processes do not interact with each other except when it is explicit that they do. This property is called \emph{purity} in some functional programming languages \cite{haskellreport} and that makes it easier to reason with them. The problem is that even pure functional programming languages need some techniques to change global state, and mathematical structures like \emph{monads} \cite{moggi91} or \emph{arrows} \cite{hughes00} achieve precisely this -- they take a pure theory and endow it with global effects.

Effects, monads and arrows create \premonoidalCategories{} \cite{power02,heunen06:arrows}. These are not \monoidalCategories{}, but Alan Jeffrey \cite{jeffrey1997:premonoidal} still introduced a string diagrammatic calculus for them: it is similar to the string diagrammatic calculus of monoidal categories, but it adds a red wire to control effects. This thesis proves that the extra red wire ensures a sound and complete graphical calculus for \premonoidalCategories{}.

\subsection*{Monoidal Context Theory}
All these ideas align to produce a theory of contexts, or incomplete processes, in \monoidalCategories{}. Each monoidal category can generate a \premonoidalCategory{} with the global effects of sending and receiving. The \stringDiagrams{} of this new \premonoidalCategory{} can be combined using the logic of \messageTheories{}, and in fact, they form the free \messageTheory{} on top of the original process theory: we can use them to reason and decompose multi-party processes in arbitrary process theories.

\newpage

\section*{Overview}

\subsection*{Chapter 1: Process Theories}
\Cref{chapter:monoidalProcess} is an introduction to \monoidalCategories{} and their string diagrammatic syntax. \Cref{sec:monoidalCategories} defines \strictMonoidalCategories{} in terms of process theories and introduces their string diagrams. \Cref{sec:symmetricMonoidalCategories} defines their \emph{symmetric} counterpart and their type theory in terms of do-notation, while \Cref{sec:non-strict-monoidals,sec:strings-bicategories} extend \stringDiagrams{} to non-strict monoidal categories and bicategories, variants that we will employ later.

\Cref{sec:premonoidal-categories} is an introduction to \premonoidalCategories{} and \effectfulCategories{}. \Cref{sec:runtime} gives their string diagrammatic calculus and proves its soundness and completeness. Finally, \Cref{sec:linearity} studies linearity, copying and discarding in terms of \monoidalCategories{}. This concludes a basic treatment of processes in terms of \monoidalCategories{}.

\subsection*{Chapter 2: Context Theory}
\Cref{chapter:compositional-algebra} introduces \profunctors{}, in \Cref{sec-profunctors}, and \multicategories{}, in \Cref{sec-multicategories}, as the mathematical tools to analyze decomposition. \Profunctors{} provide a canonical equivalence relation, \dinaturality{}, that we use whenever we study decomposition; in fact, it brings us to consider \malleableMulticategories{} in \Cref{sec-malleable-multicategories}. \Cref{sec:splice-contour-adjunction} presents the splice-contour adjunction between a category and its \malleableMulticategory{} of incomplete terms, or contexts.

\subsection*{Chapter 3: Monoidal Context Theory}
\Cref{chapter:monoidal-context-theory} brings context theory to the monoidal setting. \Cref{section-duoidal-categories} and \Cref{sec-normal-duoidal-categories} introduce \duoidalCategories{} and \normalDuoidalCategories{}. The duoidal counterpart of \malleableMulticategories{} are \produoidalCategories{} and we introduce their splice-contour adjunction in \Cref{sec:produoidalDecomposition}. The idempotent normalization monad of \produoidalCategories{} is constructed in \Cref{sec:normalization}, and it is used in \Cref{sec:monoidal-lenses} to normalize monoidal spliced arrows and obtain a universal characterization of \monoidalLenses{}.

\subsection*{Chapter 4: Monoidal Message Passing}
\Cref{chapter:monoidal-message-passing} starts defining \messageTheories{} in \Cref{sec:message-theories}. \Cref{sec:physical-monoidal-multicategories-shufflings} studies its categorical semantics in terms of \physicalMonoidalMulticategories{}. \Cref{sec:polarization} introduces polarization and opens the way for \Cref{sec:polar-shuffles} to define \polarShuffles{} and prove that they form a free polarized \monoidalMulticategory{}.
\Cref{sec:processes-sessions} constructs an adjunction between process theories and \messageTheories{}.

\begin{figure}[ht]
  \centering
  \includegraphics[scale=0.25]{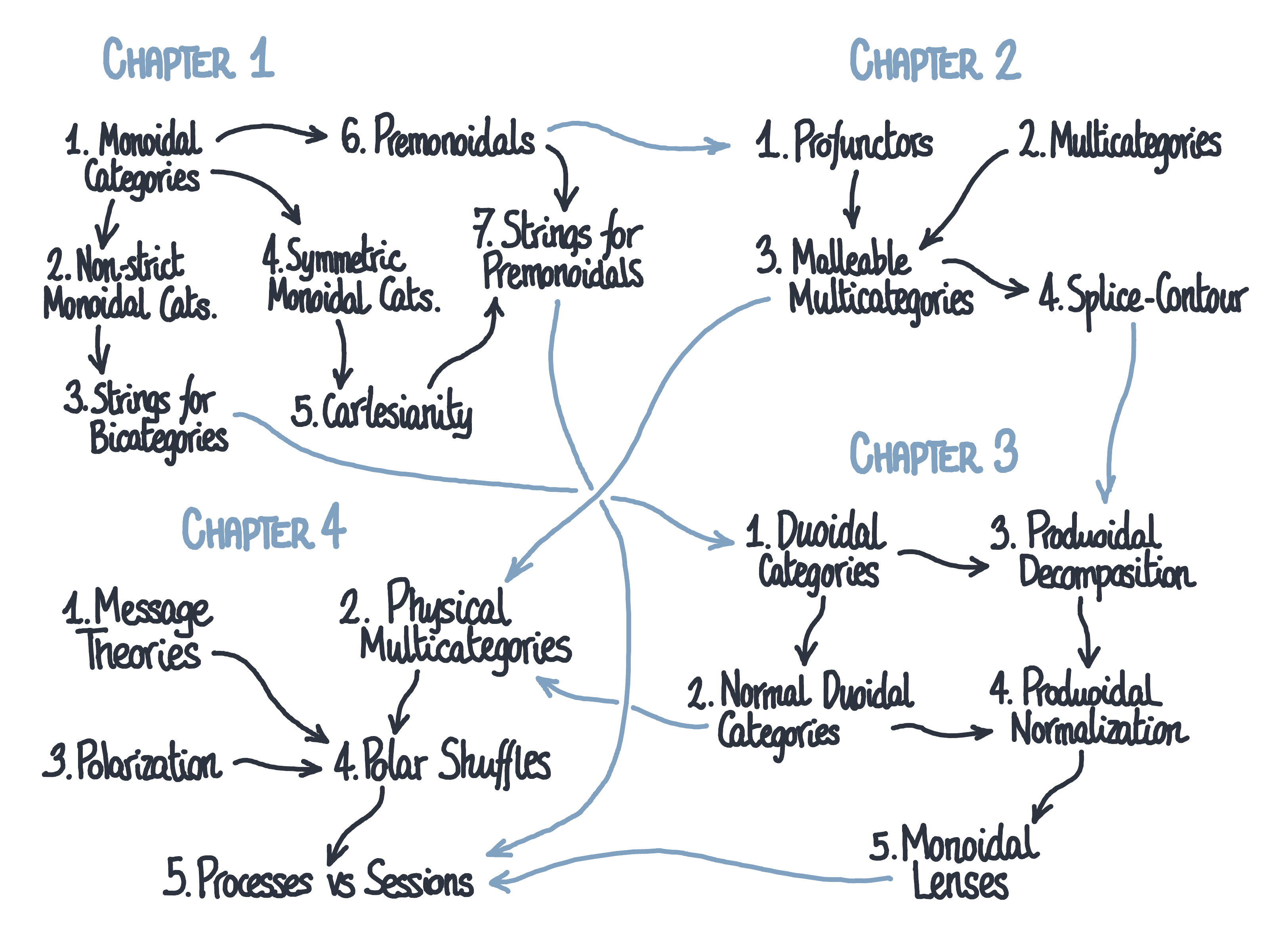}
  \caption{Chapter dependencies.}
  \label{fig:chapter-dependencies}
\end{figure}

\newpage
\section*{Contributions}

The main results of this thesis are \Cref{th:monoidalContextsAreANormalization} and  \Cref{th:sessions-vs-processes}. They universally characterize, in two different ways, the produoidal structure of incomplete diagrams: the former is used for a theory of monoidal context, the latter is used for message passing.

The definition of \emph{\messageTheory{}} (\Cref{def:messagetheory,def:messagefunctor,prop:soonerlater}) is novel. There does not seem to be literature specifically on \physicalMonoidalMulticategories{} (\Cref{def:physical-monoidal-multicategory}) nor on the observation that shuffles form the free one (\Cref{prop:shuffling-free-physical-monoidal-multicategory}) -- even when, admittedly, these are all variations on the idea of \physicalDuoidalCategories{} and an old result by Grabowski \cite{grabowski81:partial}. We give a different presentation of polarization in \monoidalCategories{} (\Cref{prop:polarization}), we discuss the problems of polarization in \monoidalCategories{} (\Cref{prop:shuffling-limit}) and we propose a solution describing polarization in \physicalMonoidalMulticategories{} (\Cref{def:physicalMonoidalMulticategoryDuals}). 
The definitions of \polarShuffle{} (\Cref{def:polarshuffle,def:polarlist}) and their \physicalMonoidalMulticategory{} (\Cref{th:polar:physicalmonoidal}) are new contributions, as it is its proposed characterization as a free polarized \physicalMonoidalMulticategory{} (\Cref{th:polar-shuffles-are-the-free-polarized-physical-monoidal-multicategory}). Our main contribution is the final adjunction between sessions and processes (\Cref{th:sessions-vs-processes}).

\DuoidalCategories{} are well-known, but we write down some observations about coherence in \Cref{prop:duoidalCoherencefails1} and we contribute the definition of the physical tensor (\Cref{def:physicaltensor}). Our main contribution is not only the monoidal splice-contour adjunction (\Cref{prop:produoidalSpliceContour}); the adjunctions between \produoidalCategories{} and \normalProduoidalCategories{}, and between \symmetricProduoidalCategories{} and \emph{physical} \produoidalCategories{}, with the construction of an idempotent monad \Cref{th:sym:freeNormalProduoidal,th:normalizationIdempotent}, are contributions to pure category theory.
\Cref{th:monoidalContextsAreANormalization} consitutes the first universal characterization of the whole \produoidalCategory{} of lenses.

Even when do-notation is well-known, a categorical treatment like the one in \Cref{th:donotation} seemed to be missing from the literature; it is based in an exposition of string diagrams that is unusual in that it takes adjunctions as the main construction (\Cref{th:pseudo-monoidal-categories}).
The string diagrams for \premonoidalCategories{} and \effectfulCategories{} are a new formalization (\Cref{theorem:runtime-as-a-resource}) that is detailed in other papers by this author \cite{roman:promonads-string-diagrams}.
We propose a new way of seeing coend calculus (\Cref{sec:pointedcoendcalculus}) that is used briefly in this thesis but that is more extensively explained in other papers by this author \cite{openDiagrams}.
The only contribution that we claim while translating the splice-contour adjunction to \promonoidalCategories{} is realizing their characterization as \malleableMulticategories{} (\Cref{prop:equivalencePromonoidalMalleable}), which is admittedly a new spin on the usual characterization as closed multicategories.

\subsection*{Literature}
The following is the list of publications authored or coauthored during the preparation of this thesis. 
As is customary in mathematics, we list authors in alphabetical order.
\begin{enumerate}
  \item Bryce Clarke, Derek Elkins, Jeremy Gibbons, Fosco Loregi{\`{a}}n, Bartosz
  Milewski, Emily Pillmore, and Mario Rom{\'{a}}n. \emph{Profunctor optics, a categorical update.} Accepted at {\em Compositionality}, preprint abs/2001.07488, 2020, \cite{ClarkeRoman20:ProfunctorOptics}.
  \item Mario Román. Open diagrams via coend calculus. {\em Applied Category Theory 2020. Electronic Proceedings in Theoretical Computer Science}, 333:65–78, Feb 2021, \cite{openDiagrams}.
  \item Guillaume Boisseau, Chad Nester, and Mario Román. Cornering optics. In {\em Applied Category Theory 2022}, Preprint abs/2205.00842, 2022, \cite{boisseaunester:corneringoptics}.
  \item Mario Rom{\'{a}}n. Promonads and string diagrams for effectful categories. In Jade Master and Martha Lewis, editors, {\em Proceedings Fifth
    International Conference on Applied Category Theory, {ACT} 2022, Glasgow,
    United Kingdom, 18-22 July 2022}, volume 380 of {\em {EPTCS}}, pages
    344--361, 2022, \cite{roman:promonads-string-diagrams}.
  \item Elena~Di Lavore, Alessandro Gianola, Mario Rom{\'{a}}n, Nicoletta Sabadini, and
  Pawel Sobocinski. A canonical algebra of open transition systems. In Gwen Sala{\"{u}}n and Anton Wijs, editors, {\em Formal Aspects of Component Software - 17th International Conference, {FACS} 2021, Virtual Event, October 28-29, 2021, Proceedings}, volume 13077 of {\em Lecture Notes in Computer Science}, pages 63--81. Springer, 2021, \cite{diLavore21:feedback}.
  \item Elena~Di Lavore, Alessandro Gianola, Mario Rom{\'{a}}n, Nicoletta Sabadini, and
  Pawel Sobocinski. Span(graph): a canonical feedback algebra of open transition systems. {\em Softw. Syst. Model.}, 22(2):495--520, 2023 \cite{diLavore:spangraph}.
  \item James Hefford and Mario Rom{\'{a}}n. Optics for premonoidal categories. {\em Applied Category Theory 2023}, abs/2305.02906, 2023 \cite{hefford23:optics-premonoidal}.
  \item Elena Di Lavore, Giovanni de Felice, and Mario Román. Monoidal streams for dataﬂow programming. In Proceedings of the \emph{37th Annual ACM/IEEE Symposium on Logic in Computer Science, LICS ’22}, New York, NY, USA, 2022. Association for Computing Machinery. Kleene Award to the best student paper.
  \item Dylan Braithwaite and Mario Román. Collages of string diagrams. {\em Applied Category Theory 2023, preprint arXiv:2305.02675}, 2023 \cite{braithwaite23:collages}.
  \item Elena~Di Lavore and Mario Rom{\'{a}}n. Evidential decision theory via partial Markov categories. In {\em {Logic In Computer Science (LICS'23)}}, pages 1--14, 2023 \cite{dilavore:evidentialdecision}.
  \item Matt Earnshaw, James Hefford, and Mario Román. The Produoidal Algebra of Process Decomposition, 2023. \emph{In Peer-Review}, \cite{produoidal23}. 
\end{enumerate}

\emph{The Produoidal Algebra of Process Decomposition} is the main unpublished work (currently in peer-review) that guides the writing of the main chapter of this thesis. It develops the universal characterization of monoidal lenses and forms the basis of \Cref{chapter:monoidal-context-theory} and \Cref{chapter:monoidal-message-passing}.
\emph{Promonads and String Diagrams for Effectful Categories}, adapted, was used as the basis of \Cref{sec:premonoidal-categories} and \Cref{sec:runtime}.

\clearpage{}%

\chapter{Monoidal Process Theory}
\label{chapter:monoidalProcess}
\section*{Monoidal Process Theory}
This chapter gives an overview of \monoidalCategories{}, their variants and their syntaxes. \MonoidalCategories{} are our framework of choice for \emph{process theories}: we claim that the minimalistic axioms of \monoidalCategories{} capture what a process theory is and we assume them for the rest of the thesis.

\Cref{sec:monoidalCategories} recalls \monoidalCategories{} and their \stringDiagrams{}. \Cref{sec:non-strict-monoidals} shows that the same axioms and syntax apply to \emph{non-strict} \monoidalCategories{} and \Cref{sec:strings-bicategories} extends them to \bicategories{}, which we will briefly use later.
\Cref{sec:symmetricMonoidalCategories} presents our definitive notion of \processTheory{}: \symmetricMonoidalCategories{}. \SymmetricMonoidalCategories{} have two syntaxes that are not commonly presented together: a string diagrammatic syntax in terms of \hypergraphs{} and a term theoretic syntax -- Hughes' \emph{do-notation} \cite{hughes00}. We argue that these two syntaxes further justify \symmetricMonoidalCategories{} as a natural setting for processes.

There is a final concept that has been traditionally left out of \monoidalCategories{}: computational effects. We argue in \Cref{sec:runtime,sec:premonoidal-categories} that, far from being a problem that requires an extension of \monoidalCategories{}, as usually assumed, computational effects can still use the same diagrammatic syntax of \stringDiagrams{}. This will be crucial for the next chapters in \emph{message passing}: messages will constitute a computational effect, but our results in this chapter allow us to model them without having to leave the syntax of \monoidalCategories{}.

\clearpage{}%

\section{Monoidal Categories}
\label{sec:monoidalCategories}
\subsection{Strict Monoidal Categories}

\MonoidalCategories{} are an algebra of processes, with minimal axioms. The definition of \monoidalCategory{} -- and this thesis -- follow a particular tradition of conceptual mathematics: \emph{category theory}. Category theory aims to extract mathematical structures in an abstract and general form.
As one such structure, \monoidalCategories{} are permissive: process theories like quantum maps and Markov kernels form \monoidalCategories{} \cite{abramsky09:categoricalquantum,heunenvicary19:categoriesquantum,fritz:markov2020,cho:jacobs:disintegration2019}; and even relations among sets or the homomorphisms of modules over a ring form \monoidalCategories{} \cite{bonchi18,aluffi21:algebra}.
We start by reinterpreting MacLane’s axioms for a \monoidalCategory{} \cite{macLane71:workingMathematician} in terms of processes.

\begin{definition}
  \defining{linkStrictMonoidalCategory}{}
  \defining{linkMonoidalCategory}{}
  A \textbf{strict monoidal category} $ℂ$ consists of a monoid of \emph{objects}, or resources, $(ℂ_{obj}, ⊗, I)$, and a collection of \emph{morphisms}, or processes, $ℂ(X; Y)$, indexed by an input $X ∈ ℂ_{obj}$ and an output $Y ∈ ℂ_{obj}$. A \emph{strict monoidal category} is endowed with operations for the sequential and parallel composition of processes, respectively
  \begin{align*}
    (⨾) &፡ ℂ(X;Y) × ℂ(Y; Z) → ℂ(X;Z), \\
    (⊗) &፡ ℂ(X;Y) × ℂ(X';Y') → ℂ(X ⊗ X'; Y ⊗ Y'), 
  \end{align*}
  and a family of \emph{identity} morphisms, $\id_X ∈ ℂ(X;X)$. \StrictMonoidalCategories{} must satisfy the following axioms.
  \begin{enumerate}
    \item Sequencing is unital, $f ⨾ \id_Y = f$ and $\id_X ⨾ f = f$.
    \item Sequencing is associative, $f ⨾ (g ⨾ h) = (f ⨾ g) ⨾ h$.
    \item Tensoring is unital, $f ⊗ \id_I = f$ and $\id_I ⊗ f = f$.
    \item Tensoring is associative, $f ⊗ (g ⊗ h) = (f ⊗ g) ⊗ h$.
    \item Tensoring and identities interchange, $\id_A ⊗ \id_B = \id_{A ⊗ B}$.
    \item Tensoring and sequencing interchange, $$(f ⨾ g) ⊗ (f' ⨾ g') = (f ⊗ f') ⨾ (g ⊗ g').$$
  \end{enumerate}
\end{definition}

\begin{remark}[Process theories]
  \label{remark:process-theories}
  Objects are also known as \emph{types} or \emph{resources} \cite{coeckeFS16}. If $X$ and $Y$ are both resources, it is reasonable to assume their joint occurrence is also a resource, $X ⊗ Y$; this joining operation, called \emph{tensor} $(⊗)$, must be unital with the empty resource $I$.
  Morphisms represent \emph{transformations} or \emph{processes}. If we have a process transforming $X$ into $Y$ and a process transforming $Y$ into $Z$, we can \emph{sequence} them $(⨾)$ and create a process that transforms $X$ into $Z$. The process that does nothing, the identity $(\id)$, is neutral for sequential composition. Similarly, transforming $X$ into $Y$ and transforming $X'$ into $Y'$ gives a way of transforming the joint object $X ⊗ X'$ into $Y ⊗ Y'$. Whenever we accept these basic constructions and axioms, we end up with strict monoidal categories.
\end{remark}

Once we have accepted these basic axioms, the next sections develop a syntax for monoidal categories: \emph{string diagrams}. \StringDiagrams{} are an intuitive syntax for process that is sound and complete for the previous axioms.

\subsection{Some Words on Syntax}

What makes a mathematical syntax practical? Different syntaxes highlight different aspects of a proof, and we consider better those that make the more bureaucratic steps invisible. Syntaxes are an explicit construction of the free mathematical object with some algebraic structure; what makes them efficient is \emph{how} we construct them.

For instance, how to prove that, in a group, the inverse of a multiplication is the reversed multiplication of the inverses? Usually, we simply observe that $$(x ⋅ y) ⋅ (y^{-1} ⋅ x^{-1}) = \cancel{x} ⋅ \cancel{y ⋅ y}^{-1} ⋅ \cancel{x}^{-1} = e;$$ 
that is, a simple computation checks that each letter is cancelled by its inverse. But we could be more bureaucratic and argue that the correct proof is, actually,
\begin{align*}
  (x ⋅ y) ⋅ (y^{-1} ⋅ x^{-1})\quad 
  \overset{(i)}{=}\quad & x ⋅ (y ⋅ (y^{-1} ⋅ x^{-1})) \\
  \overset{(ii)}{=}\quad &  x ⋅ ((y ⋅ y^{-1}) ⋅ x^{-1}) \\
  \overset{(iii)}{=}\quad &  x ⋅ (e ⋅ x^{-1}) \\
  \overset{(iv)}{=}\quad &  x ⋅ x^{-1} \\
  \overset{(v)}{=}\quad &  e.
\end{align*}
This proof uses associativity (\emph{i, ii}), the definition of inverse (\emph{iii, v}), and unitality (\emph{iv}). 
What makes these two proofs different? We can argue that, implicitly, they are using different syntaxes, constructed in different ways \cite{shulman:catlog}.

The bureaucratic syntax implicitly assumes the tautological construction of a free group. The free group on a set is generated by the elements of the set, the binary multiplication $(⋅)$, the unit $(e)$, and the inverse unary operator $({}^{-1})$; then, it is quotiented by associativity, unitality, and the inverse axioms. Tautological constructions only allow bureaucratic proofs -- but we can do better.

How does one construct free objects non-tautologically? The usual strategy is to first show that some combinatorial structure possesses the desired algebraic structure (say, it forms a group with some selected elements). This combinatorial structure will be as simple as possible, will relegate most steps to computation, and will use minimal quotienting. The result that makes this recipe work is freeness: the fact that it defines an adjunction (say, there exists a unique map to any group with some elements). 

More concretely, in our example, we know of a better classical construction of the free group: reduced words. \emph{Reduced words} are lists containing some generators and their inverses. The only condition is that they cannot contain ocurrences of a generator followed by its inverse: they get automatically cancelled out.

\begin{definition}
  Given a set $A$, the \emph{reduced words} over it, $\mathsf{Word}(A)$, are lists of polarized elements of $A$ -- that is, $a$ or $a^{-1}$ for each $a ∈ A$ -- not containing the substrings $a a^{-1}$ or $a^{-1} a$ for any element $a ∈ A$. 
\end{definition}

\begin{definition}
  The multiplication of two reduced words is inductively defined: if the first word is empty, then the multiplication is defined to be the second, $e ⋅ w₂ = w₂$; however, if the first word consists of a letter and a word, $aw₁$ or $a^{-1}w₁$, then we consider two cases: we first compute $w₁ ⋅ w₂$ by induction; if this word starts by the inverse of the first letter, $a^{-1} ⋅ w'$ or $a ⋅ w'$, then they both reduce and the multiplication is $w' ⋅ w₂$, otherwise, we just append the first letter, $a(w₁ ⋅ w₂)$.
\end{definition}
\begin{remark}
  It is non-trivial to prove that this multiplication is associative: the effort we put in here is the ease we get in return every time we use the syntax. We spare the reader this proof and we focus only on showcasing the syntax.
\end{remark}

\begin{proposition}
  The inverse of a multiplication is the reversed multiplication of the inverses.
\end{proposition}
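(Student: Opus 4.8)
The plan is to work entirely inside the combinatorial model of reduced words and to prove the identity $(w_1 \cdot w_2)^{-1} = w_2^{-1}\cdot w_1^{-1}$ by induction on the length of $w_1$, exploiting that the multiplication is defined by recursion precisely on its first argument. First I would pin down the inverse operation: writing a polarized letter as $\ell$ (so $\ell = a$ or $\ell = a^{-1}$ for $a \in A$) and $\ell^{-1}$ for its flip, the inverse of $w = \ell_1 \cdots \ell_n$ is the reversed list of flipped letters, $w^{-1} = \ell_n^{-1}\cdots\ell_1^{-1}$; reversing-and-flipping never creates a forbidden substring, so $w^{-1}$ is again reduced, and $(w^{-1})^{-1} = w$. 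I would then record three short inductions on the recursive definition of $(\cdot)$: right unitality $w \cdot e = w$ (the definition supplies only the left unit law directly); the elementary cancellation $\langle\ell\rangle\cdot\langle\ell^{-1}\rangle = e$ on one-letter words; and the fact that a product of reduced words is reduced. The one book-keeping observation that drives the proof is this: for a letter $\ell$ and a reduced word $u$, the reverse-and-flip of the list $\ell u$ is the list $u^{-1}\ell^{-1}$, and this list coincides with the product $u^{-1}\cdot\langle\ell^{-1}\rangle$ exactly when $u$ does not begin with $\ell^{-1}$, that is, exactly when $\ell u$ is already reduced (dually on the right). Throughout I would use associativity of $(\cdot)$ freely, as granted by the preceding remark.

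With these preliminaries the induction is short. If $w_1 = e$, both sides equal $w_2^{-1}$, using $e^{-1} = e$ and right unitality. If $w_1 = \ell v$ with $v$ not beginning with $\ell^{-1}$, the definition computes $w_1\cdot w_2$ by first forming $u := v \cdot w_2$ and then, according to whether $u$ begins with $\ell^{-1}$, either prepending $\ell$ or deleting the leading $\ell^{-1}$ of $u$. In the first sub-case $w_1 \cdot w_2 = \ell u$, so its inverse is $u^{-1}\cdot\langle\ell^{-1}\rangle = (v\cdot w_2)^{-1}\cdot\langle\ell^{-1}\rangle$; the induction hypothesis rewrites $(v\cdot w_2)^{-1}$ as $w_2^{-1}\cdot v^{-1}$, and since $\ell v$ is reduced we may identify $v^{-1}\cdot\langle\ell^{-1}\rangle$ with $w_1^{-1}$, giving $w_2^{-1}\cdot w_1^{-1}$. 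In the second sub-case $u = \ell^{-1}u'$ with $u'$ not beginning with $\ell$, and $w_1\cdot w_2 = u'$; here the induction hypothesis reads $(u')^{-1}\cdot\langle\ell\rangle = (\ell^{-1}u')^{-1} = (v\cdot w_2)^{-1} = w_2^{-1}\cdot v^{-1}$, so multiplying on the right by $\langle\ell^{-1}\rangle$, collapsing $\langle\ell\rangle\cdot\langle\ell^{-1}\rangle = e$ and using right unitality yields $(u')^{-1} = w_2^{-1}\cdot v^{-1}\cdot\langle\ell^{-1}\rangle = w_2^{-1}\cdot w_1^{-1}$, as required.

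I expect the main obstacle to be exactly this cancellation book-keeping in the inductive step: one must keep track of whether the recursively computed product begins with the head letter's inverse, and correspondingly of whether the list concatenations being inverted are already reduced; this is the kind of bureaucracy the reduced-word model is meant to make routine rather than to abolish. A more conceptual alternative would be to observe that the polarity-flipping reversal sends every cancellable pair to a cancellable pair, hence commutes with the reduction of words and so defines an anti-automorphism of the word monoid; but making this precise requires the confluence of the underlying rewriting system, which is essentially the associativity fact the text deliberately leaves unproved, so within this section the hands-on induction above is the honest route.
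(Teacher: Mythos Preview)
Your induction on the length of $w_1$ is essentially correct, and the lemmas you single out (right unitality, one-letter cancellation, and the identification of the reverse-flip of $\ell u$ with $u^{-1}\cdot\langle\ell^{-1}\rangle$ when $\ell u$ is reduced) are the right ones to isolate. But your approach is quite far from the paper's, and in a way that misses the expository point of the section.

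The paper does not induct over words at all. It observes that in the free group on generators $x,y$, the equality $(xy)\cdot(y^{-1}x^{-1}) = e$ holds \emph{by direct computation of the multiplication on the four-letter reduced word} --- the cancellations are immediate from the definition. That is one line. Then it invokes freeness: any pair of elements in any group is the image of $x,y$ under a unique group homomorphism, and group homomorphisms preserve products and inverses, so the identity transports. Your proof, by contrast, establishes the identity for all pairs of reduced words by a structural induction that chases the recursive definition of multiplication through both cancellation branches. This is correct, but it is precisely the kind of bureaucracy the section is arguing one should avoid by combining a combinatorial free object with its universal property. Note also that your argument as written only proves the identity inside the free group; to reach an arbitrary group you would still need to append a freeness step --- at which point you may as well have used freeness from the start and checked only the generators.
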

\begin{proof}
  Reduced words form a group, in fact, the free group over some generators. In the group of reduced words, $(xy) ⋅ (y^{-1}x^{-1}) = e$ holds by definition. Because of freeness, there is a unique group homomorphism mapping this equality to any two elements of any other group.
\end{proof}

The core of this argument has been to construct, combinatorially, a left adjoint $\mathsf{Words} ፡ \Set → \mathbf{Group}$ to the forgetful functor $\mathsf{Forget} ፡ \mathbf{Group} → \Set$. 
This thesis will use adjoints as a more compositional way to discuss syntax. Let us start with the first of these syntaxes: string diagrams for monoidal categories.

\subsection{String Diagrams of Strict Monoidal Categories}
\label{sec:string-diagrams-of-strict-monoidal-categories}

\MonoidalCategories{} have a sound and complete syntax in terms of \stringDiagrams{} \cite{joyal91:geometryOfTensorCalculus}, which is the one we will use during this text.
We may prefer the classical axioms of \monoidalCategories{} when proving that some category is indeed monoidal, but proving equalities in a \monoidalCategory{} is easier using deformations of \stringDiagrams{} -- we will not need to remember the formulas.
Accepting \stringDiagrams{} and deformations as a criterion for equality is equivalent to accepting the axioms of \strictMonoidalCategories{}: whenever we accept one, we accept the other.

\begin{figure}[ht]
  \centering
  \includegraphics[scale=0.45]{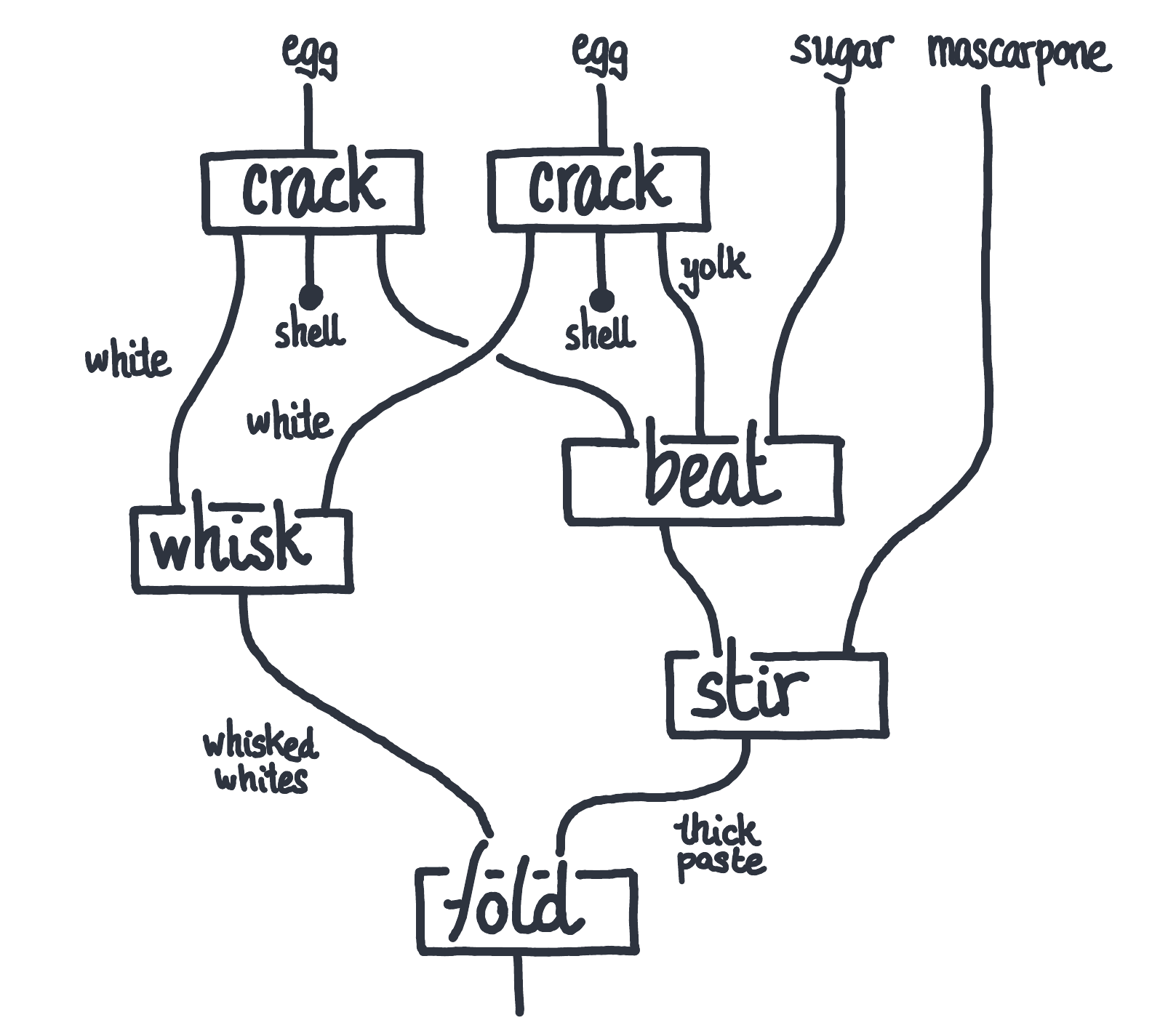}
  \caption{Process of preparing a Crema di Mascarpone, adapted from Sobocinski.}
  \label{fig:mascarpone}
\end{figure}

A first example of this syntax describing a process is in \Cref{fig:mascarpone} {{\cite{sobocinski13:graphicalLinearAlgebra}}}.
\StringDiagrams{} construct an adjunction between a category of \polygraphs{} and a category of \strictMonoidalCategories{}.

\begin{definition}
  \defining{linkPolygraph}{}
  A \emph{\polygraph{}}  $𝓖$ (analogue of a \emph{multigraph} \cite{shulman:catlog}) is given by a set of objects, $𝓖_{obj}$, and a set of arrows $𝓖(A_{0},\dots,A_{n};B_{0},\dots,B_{m})$ for any two sequences of objects $A_{0},\dots,A_{n}$ and $B_{0},\dots,B_{m}$.
  A morphism of \polygraphs{} $f ፡ 𝓖 \to 𝓗$ is a function between their object sets, $f_{\mathrm{o}} ፡ 𝓖_{obj} → 𝓗_{obj}$,
  and a family of functions between their corresponding morphism sets for any two sequences of objects
  \[f ፡ %
    𝓖(A_{0},\dots,A_{n};B_{0},\dots,B_{m}) →
    𝓗(f_{\mathrm{o}}A_{0},\dots,f_{\mathrm{o}}A_{n};f_{\mathrm{o}}B_{0},\dots,f_{\mathrm{o}}B_{m}).\]
  Polygraphs with polygraph homomorphisms form a category, $\PolyGraph$.
\end{definition}

\begin{definition}
  \defining{linkStrictMonoidalFunctor}
  A strict \emph{monoidal functor}, $F ፡ ℂ → 𝔻$, is a monoid homorphism between their object sets, $F_{obj} ፡ ℂ_{obj} → 𝔻_{obj}$, and an assignment taking any morphism $f ∈ ℂ(X; Y)$ to a morphism $F(f) ∈ 𝔻(FX; FY)$. A functor must preserve sequential composition, $F(f ⨾ g) = F(f) ⨾ F(g)$; parallel composition, $F(f ⊗ g) = F(f) ⊗ F(g)$; and identities, $F(\id) = \id$. \StrictMonoidalCategories{} with strict monoidal functors form a category, $\MonCatStr$.
\end{definition}

\begin{definition}
  \defining{linkStringDiagram}{}
  A \emph{string diagram} over a \emph{polygraph} $𝓖$ (or \emph{progressive plane graph} in the work of Joyal and Street \cite[Definition 1.1]{joyal91:geometryOfTensorCalculus}) is a graph $Γ$ embedded in the squared interval such that
  \begin{enumerate}
    \item the boundary of the graph touches only the top and the bottom of the square, $δ Γ \subseteq \{0,1\} × [0,1]$;
    \item and the second projection is injective on each component of the graph without its vertices, $Γ - Γ₀$; this makes it acyclic and progressive.
  \end{enumerate}
  We call to the components of $Γ - Γ₀$ \emph{wires}, $W$; we call the vertices of the graph \emph{nodes}, $Γ₀$.
  Wires must be labelled by the objects of the \polygraph{}, $o ፡ W → 𝓖_{obj}$, nodes must be labelled by the generators of the \polygraph{}, $m ፡ Γ₀ → 𝓖$; and each node must be connected to wires exactly typed by the objects of its generator -- a string diagram must be well-typed.
\end{definition}

\begin{lemma}
  String diagrams over a \polygraph{} $𝓖$ form a \monoidalCategory{}, which we call $\mathsf{String}(𝓖)$. This determines a functor, 
  $$\mathsf{String} ፡ \PolyGraph → \MonCatStr.$$
\end{lemma}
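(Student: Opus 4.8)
The plan is to verify directly that $\mathsf{String}(\mathcal{G})$, with the operations suggested by the geometry of diagrams, satisfies the six axioms of a strict monoidal category, and then to check functoriality of the assignment $\mathcal{G} \mapsto \mathsf{String}(\mathcal{G})$. First I would fix the categorical data. The objects of $\mathsf{String}(\mathcal{G})$ are finite lists of objects of $\mathcal{G}$ (equivalently, the free monoid $\mathcal{G}_{obj}^{*}$ under concatenation, with the empty list as $I$). A morphism $X \to Y$ is an isotopy class of string diagrams over $\mathcal{G}$ whose bottom boundary is the list $X$ and whose top boundary is the list $Y$; here "isotopy class" means up to the deformations allowed by Joyal--Street, i.e.\ planar isotopies fixing the boundary that keep the second projection injective on wires. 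The identity $\id_X$ is the diagram consisting of $|X|$ parallel vertical wires labelled by $X$. Sequential composition $f \bcomp g$ is vertical stacking (glue the top boundary of $f$ to the bottom boundary of $g$, then rescale the square); parallel composition $f \otimes g$ is horizontal juxtaposition (place the diagrams side by side in adjacent subsquares). One must check these operations are well-defined on isotopy classes and well-typed, which is immediate from the definitions since boundaries match by construction.

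Next I would dispatch the axioms. Unitality of $\bcomp$ (axiom 1): stacking the all-wires diagram on top or bottom of $f$ just lengthens the dangling wires, which is an isotopy. Associativity of $\bcomp$ (axiom 2): stacking three diagrams in either grouping produces the same embedded graph up to the rescaling homeomorphism of the interval, hence the same isotopy class. Unitality of $\otimes$ (axiom 3): juxtaposing the empty diagram (over the empty square, no wires) adds nothing, again an isotopy after rescaling. Associativity of $\otimes$ (axiom 4): three-fold horizontal juxtaposition is independent of bracketing up to rescaling the horizontal coordinate. Identity interchange (axiom 5): the juxtaposition of $|A|$ parallel wires with $|B|$ parallel wires is literally the diagram of $|A \otimes B| = |A| + |B|$ parallel wires. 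The interchange law (axiom 6) is the only one with genuine content: $(f \bcomp g) \otimes (f' \bcomp g')$ and $(f \otimes f') \bcomp (g \otimes g')$ are two ways of assembling the same four diagrams into a $2 \times 2$ grid of subsquares; the underlying embedded graphs differ only by where the intermediate horizontal cut sits, and sliding that cut past the (vertically separated) nodes is precisely an allowed isotopy — this uses the progressivity condition (injectivity of the second projection on wires) to guarantee nodes of $f$ and $g$ occupy disjoint height bands so they can be reordered. So $\mathsf{String}(\mathcal{G})$ is a strict monoidal category.

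Finally, functoriality: given a polygraph morphism $h \colon \mathcal{G} \to \mathcal{H}$, define $\mathsf{String}(h)$ on objects by applying $h_{\mathrm{o}}$ pointwise to lists (a monoid homomorphism $\mathcal{G}_{obj}^{*} \to \mathcal{H}_{obj}^{*}$), and on a diagram by relabelling every wire via $h_{\mathrm{o}}$ and every node via $h$; this is well-typed because $h$ sends a generator to one with matching boundary, and it respects isotopy because relabelling does not touch the embedding. Relabelling visibly commutes with vertical stacking, horizontal juxtaposition, and sends the all-wires diagram to the all-wires diagram, so $\mathsf{String}(h)$ is a strict monoidal functor. Preservation of identities and composition of polygraph morphisms, $\mathsf{String}(\id) = \id$ and $\mathsf{String}(h' \circ h) = \mathsf{String}(h') \circ \mathsf{String}(h)$, is immediate since relabelling is functorial.

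The main obstacle is the interchange law (axiom 6) together with the precise bookkeeping that "isotopy class of progressive plane graph" is genuinely an equivalence relation compatible with both composition operations; this is exactly the content of the Joyal--Street correspondence, and a careful write-up either reproves their deformation theorem or cites it. Everything else is routine verification that geometric gluing satisfies algebraic identities up to the rescaling homeomorphisms of $[0,1]$.
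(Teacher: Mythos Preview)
Your proposal is correct and takes essentially the same approach as the paper: the paper's proof is only a sketch that names the objects (lists), morphisms (string diagrams with prescribed boundaries), and the three operations (horizontal juxtaposition for $\otimes$, vertical juxtaposition for $\bcomp$, parallel wires for identities), whereas you carry out the axiom-by-axiom verification and the functoriality check that the paper leaves implicit. One minor convention mismatch: the paper places inputs at the upper boundary and outputs at the lower boundary, while you have them the other way round, but this is immaterial.
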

\begin{proof}[Proof sketch]
  The objects of the category are lists of objects of the \polygraph{}, which we write as $[X₀ , \dots , Xₙ]$, for $Xᵢ ∈ 𝓖_{obj}$. These form a (free) monoid with concatenation and the empty list. 
  
  Morphisms $[X₀ , \dots , Xₙ] → [Y₀ , \dots , Yₘ]$ are \stringDiagrams{} over the \polygraph{} $𝓖$ such that \emph{(i)} the ordered list of wires that touches the upper boundary is typed by $[X₀ , \dots , Xₙ]$, and \emph{(ii)} the ordered list of wires that touches the lower boundary is typed by $[Y₀ , \dots , Yₘ]$.
  \begin{figure}[ht]
    \centering
    \includegraphics[scale=0.45]{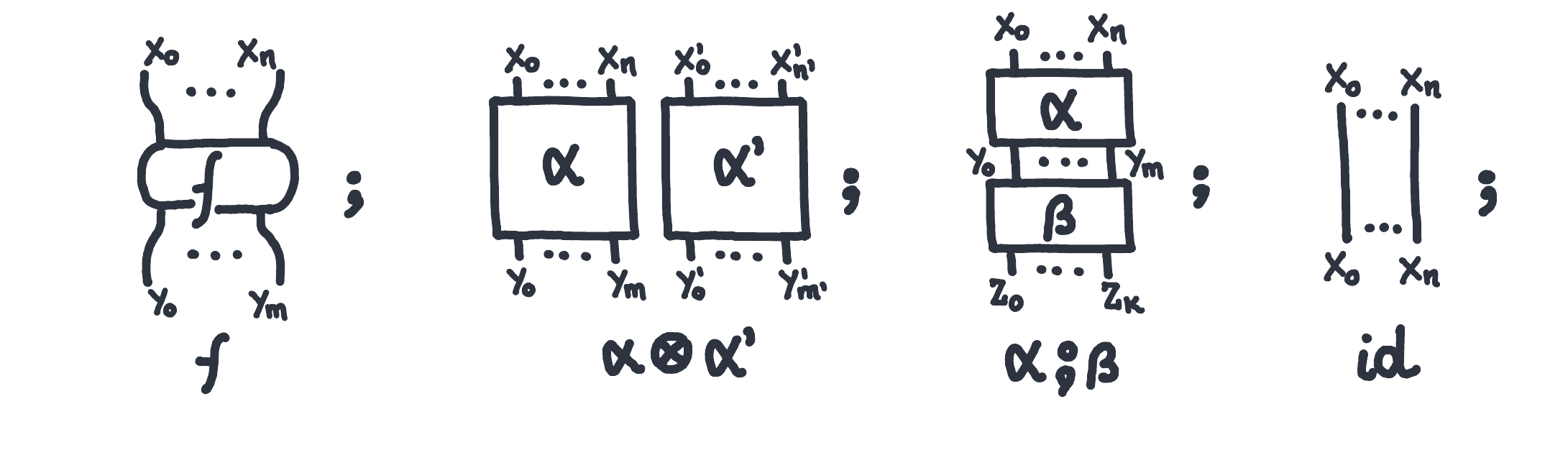}
    \caption{Strict monoidal category of string diagrams.}
    \label{fig:string-diagrams-mon-cat}
  \end{figure}

  \Cref{fig:string-diagrams-mon-cat} describes the operations of the category.
  The parallel composition of two diagrams $α ፡ [X₀ , \dots , Xₙ] → [Y₀ , \dots , Yₘ]$ and $α' ፡ [X'_0 , \dots , X'_{n'}] → [Y'_0 , \dots , Y'_{m'}]$ is their horizontal juxtaposition. 
  The sequential composition of two diagrams $α ፡ [X₀ , \dots , Xₙ] → [Y₀ , \dots , Yₘ]$ and $β ፡ [Y₀ , \dots , Yₘ] → [Z₀ , \dots , Zₖ]$ is the diagram obtained by vertical juxtaposition linking the outputs of the first to the inputs of the second.
  The identity on the object $[X₀ , \dots , Xₙ]$ is given by a diagram containing $n$ identity wires labelled by these objects. 
\end{proof}

\begin{lemma}
  Forgetting about the sequential and parallel composition defines a functor from \monoidalCategories{} to \polygraphs{}, $$\mathsf{Forget} ፡ \MonCatStr → \PolyGraph.$$
\end{lemma}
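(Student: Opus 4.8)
The plan is to build the functor in three stages: its action on a single monoidal category, its action on strict monoidal functors, and then the verification of functoriality. There is no genuine obstacle; the only thing demanding a little care is the systematic use of strictness.

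First, given a \strictMonoidalCategory{} $ℂ$, I would define the \polygraph{} $\mathsf{Forget}(ℂ)$ to have as objects the underlying set of the monoid $ℂ_{obj}$ — discarding $(⊗, I)$ and remembering only the set — and, for any two finite sequences of objects $A_0, \dots, A_n$ and $B_0, \dots, B_m$, to have arrow set
$$\mathsf{Forget}(ℂ)(A_0, \dots, A_n; B_0, \dots, B_m) \bydef ℂ(A_0 ⊗ \dots ⊗ A_n;\ B_0 ⊗ \dots ⊗ B_m),$$
where the iterated tensor of the empty sequence is taken to be the unit $I$. Strictness is exactly what makes this unambiguous: associativity and unitality of $⊗$ hold on the nose, so $A_0 ⊗ \dots ⊗ A_n$ is a well-defined object with no choice of bracketing and no coherence data to carry.

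Second, given a strict \emph{monoidal functor} $F \colon ℂ → 𝔻$, I would define $\mathsf{Forget}(F) \colon \mathsf{Forget}(ℂ) → \mathsf{Forget}(𝔻)$ to act as $F_{obj}$ on objects and as $F$ on each arrow set. The one point to check is well-typedness: an element $f ∈ ℂ(A_0 ⊗ \dots ⊗ A_n;\ B_0 ⊗ \dots ⊗ B_m)$ is sent to $F(f) ∈ 𝔻(F(A_0 ⊗ \dots ⊗ A_n);\ F(B_0 ⊗ \dots ⊗ B_m))$, and since $F_{obj}$ is a monoid homomorphism this target hom-set coincides with $𝔻(F_{obj}A_0 ⊗ \dots ⊗ F_{obj}A_n;\ F_{obj}B_0 ⊗ \dots ⊗ F_{obj}B_m)$, which is precisely what is required of a \polygraph{} morphism.

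Finally, functoriality is immediate: $\mathsf{Forget}$ sends $\id_ℂ$ to the identity polygraph morphism, since the identity functor is the identity on both objects and morphisms, and it sends $G \circ F$ to $\mathsf{Forget}(G) \circ \mathsf{Forget}(F)$, since composition of strict monoidal functors is computed componentwise on objects and morphisms — exactly as $\mathsf{Forget}$ reads them off. The hardest part, such as it is, is simply keeping track of the collapsing of sequences of objects to single objects by the tensor; this is the structure that the functor $\mathsf{String}$ freely reintroduces, and one anticipates that $\mathsf{Forget}$ will be exhibited as its right adjoint.
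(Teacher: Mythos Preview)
Your proposal is correct and follows exactly the paper's approach: define $\mathsf{Forget}(ℂ)$ on arrow sets by $ℂ(A_0 ⊗ \dots ⊗ A_n; B_0 ⊗ \dots ⊗ B_m)$ and observe that strict monoidal functors induce polygraph morphisms. You have simply spelled out in more detail what the paper leaves as ``it can be checked, by its definition''.
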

\begin{proof}
  Any \monoidalCategory{} $ℂ$ can be seen as a \polygraph{} $\mathsf{Forget}(ℂ)$ where the edges are determined by the morphisms, 
  $$\mathsf{Forget}(ℂ)(A_{0},\dots, A_{n};B_{0}, \dots, B_{m}) = ℂ(A₀ ⊗ … ⊗ Aₙ, B₀ ⊗ … ⊗ Bₘ),$$ 
  and we forget about composition and tensoring. It can be checked, by its definition, that any strict monoidal functor induces a homomorphism on the underlying \polygraphs{}.
\end{proof}

\begin{theorem}[Joyal and Street, {{\cite[Theorem 2.3]{joyal91:geometryOfTensorCalculus}}}]
  There exists an adjunction between \polygraphs{} and \strictMonoidalCategories{}, $\mathsf{String} ⊣ \mathsf{Forget}$.
  Given a \polygraph{} $𝓖$, the free \strictMonoidalCategory{} $\mathsf{String}(𝓖)$ is the \strictMonoidalCategory{} that has as morphisms the \stringDiagrams{} over the generators of the \polygraph{}; the underlying \polygraph{} determines the right adjoint.
\end{theorem}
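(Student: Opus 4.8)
The plan is to establish the adjunction by exhibiting its unit and verifying the corresponding universal property; since both $\mathsf{String}$ and $\mathsf{Forget}$ are already known to be functors, it suffices to produce, for each \polygraph{} $𝓖$, a \polygraph{} morphism $η_{𝓖} \colon 𝓖 → \mathsf{Forget}(\mathsf{String}(𝓖))$ that is initial among maps from $𝓖$ into the underlying \polygraph{} of a \strictMonoidalCategory{}. The unit sends an object $A$ to the singleton list $[A]$, and a generator $g ∈ 𝓖(A_0,\dots,A_n;B_0,\dots,B_m)$ to the \stringDiagram{} with a single node labelled by $g$, whose input and output wires are labelled by $A_0,\dots,A_n$ and $B_0,\dots,B_m$ respectively; this is a \polygraph{} morphism essentially by construction, since it only has to respect the typing of generators.

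For the universal property, fix a \strictMonoidalCategory{} $ℂ$ and a \polygraph{} morphism $f \colon 𝓖 → \mathsf{Forget}(ℂ)$; I must extend it to a \emph{unique} strict monoidal functor $\hat{f} \colon \mathsf{String}(𝓖) → ℂ$ with $\mathsf{Forget}(\hat{f}) \circ η_{𝓖} = f$. On objects there is no choice: strictness forces $\hat{f}[X_0,\dots,X_n] = f_{\mathrm{o}}X_0 ⊗ \dots ⊗ f_{\mathrm{o}}X_n$, with the empty list sent to $I$, which is the unique monoid homomorphism extending $f_{\mathrm{o}}$. On morphisms, the idea is to slice a \stringDiagram{} $α$ along a generic height function into finitely many horizontal layers, each containing exactly one node or none; each layer is then a parallel composite of identity wires with a single generator, and I set $\hat{f}(α)$ to be the sequential composite in $ℂ$ of the interpretations of the layers, interpreting a layer using $⊗$, $\id$, and the value of $f$ on its node. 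With this description $\hat{f}$ manifestly preserves $⨾$, $⊗$ and $\id$ whenever these are realized by layered diagrams, so it will be strict monoidal once it is well-defined.

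The main obstacle is exactly this well-definedness: distinct generic height functions yield distinct layerings of the same \stringDiagram{}, and they must all give the same morphism of $ℂ$. This is the combinatorial, and ultimately topological, core of the Joyal--Street theorem: the space of generic height functions on a progressive plane graph is connected, and crossing a codimension-one wall between two generic choices alters the layering by a single elementary move --- transposing the vertical order of two non-interacting nodes, which is absorbed by the interchange axiom $(f ⨾ g) ⊗ (f' ⨾ g') = (f ⊗ f') ⨾ (g ⊗ g')$; inserting or deleting an empty layer, absorbed by unitality of $⨾$ together with $\id_A ⊗ \id_B = \id_{A ⊗ B}$; or reassociating a parallel composite, absorbed by associativity and unitality of $⊗$. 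Each such move leaves the interpretation in $ℂ$ unchanged, so $\hat{f}$ is well-defined, and its compatibility with the vertical and horizontal juxtaposition of diagrams is immediate from the same layered description, so $\hat{f}$ is a strict monoidal functor.

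Finally, uniqueness. Any strict monoidal functor $G$ with $\mathsf{Forget}(G) \circ η_{𝓖} = f$ agrees with $\hat{f}$ on objects by the monoid-homomorphism argument above; and on morphisms because, by the layering, every \stringDiagram{} is a composite under $⨾$ and $⊗$ of identity wires and single-node diagrams of the form $η_{𝓖}(g)$, on each of which $G$ is forced to agree with $\hat{f}$, while $G$ preserves $⨾$, $⊗$ and $\id$. Assembling the $η_{𝓖}$ into a natural transformation and checking the triangle identities is then routine, and yields the adjunction $\mathsf{String} ⊣ \mathsf{Forget}$.
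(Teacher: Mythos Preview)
The paper does not supply its own proof of this theorem; it simply states the result and attributes it to Joyal and Street \cite[Theorem 2.3]{joyal91:geometryOfTensorCalculus}. Your proposal is a faithful sketch of the Joyal--Street argument: construct the unit by sending generators to single-node diagrams, define the extension by slicing into generic horizontal layers, and reduce well-definedness to the connectedness of the space of generic height functions together with the observation that each elementary wall-crossing is absorbed by one of the \strictMonoidalCategory{} axioms. This is correct and is precisely the approach the cited reference takes, so there is nothing to contrast.
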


\subsection{Example: Crema di Mascarpone}
  \label{sec:cremadimascarpone}
  This first example shows how to construct morphisms in a \monoidalCategory{}.
  The theory for preparing crema di mascarpone contains the following resources,
  $$\{ \mathsf{egg}, \mathsf{white}, \mathsf{yolk}, \mathsf{shell}, \mathsf{whisked\ white}, \mathsf{sugar}, \mathsf{mascarpone}, \mathsf{paste}, \mathsf{thick\ paste}, \mathsf{crema} \}.$$
  These resources are the objects of the \polygraph{} also containing the following seven generators, as in \Cref{fig:mascarpone-signature}.
  \begin{enumerate}
    \item $\crack ፡ \egg → \white ⊗ \shell ⊗ \yolk$,
    \item $\beat ፡ \yolk ⊗ \yolk ⊗ \sugar → \paste$,
    \item $\stir ፡ \paste ⊗ \mascarpone → \thickPaste$,
    \item $\whisk ፡ \white ⊗ \white → \whiskedWhites$,
    \item $\fold ፡ \whiskedWhites ⊗ \thickPaste → \cream$,
    \item $\swap ፡ \yolk ⊗ \white → \white ⊗ \yolk$,
    \item $\discard ፡ \shell → {\normalfont\textsc{i}}$.
  \end{enumerate}
  All these resources form a \polygraph{} $𝓒$. Thanks to the adjunction between \polygraphs{} and \monoidalCategories{}, deciding how to interpret the resources and the generators of the \polygraph{} in any \monoidalCategory{}, $𝓒 → \mathsf{Forget}(𝔻)$, is the same as creating a strict monoidal functor that interprets string diagrams in that category, $\mathsf{String}(𝓒) → 𝔻$. In particular, it interprets \Cref{fig:mascarpone} as a morphism in the \monoidalCategory{} $𝔻$.
  \begin{figure}[ht]
    \centering
    \includegraphics[scale=0.35]{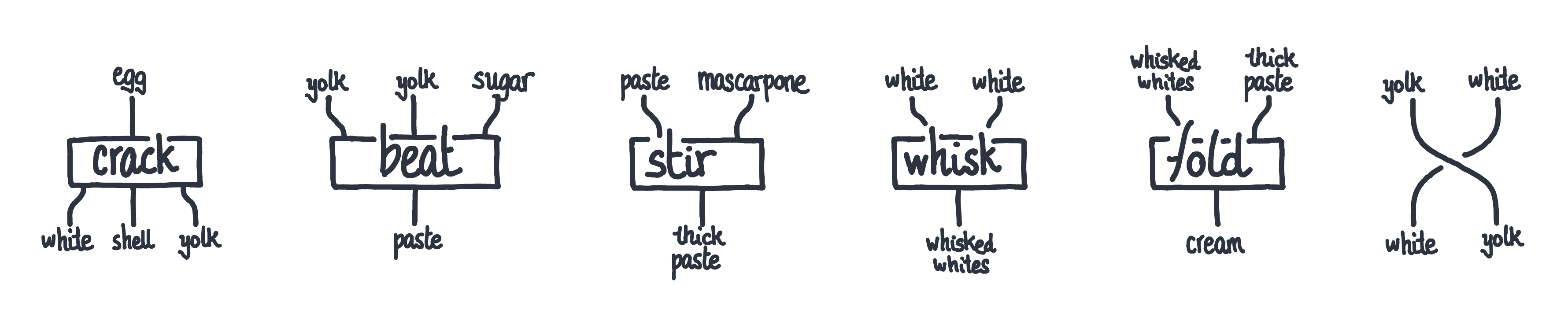}
    \caption{Polygraph for the theory of mascarpone.}
    \label{fig:mascarpone-signature}
  \end{figure}

  Usually, discarding and swapping are better understood as global structure with which all of the objects of the category are endowed. Even better than asking for the last generator in \Cref{fig:mascarpone-signature}, we will ask for the ability to copy or to swap resources freely. This is done via \cartesianMonoidalCategories{} or \symmetricMonoidalCategories{}, respectively.

\subsection{Bibliography}
  \MonoidalCategories{}, together with their coherence theorem, were first introduced by MacLane \cite{maclane63:natural,maclane78}, explicit equivalence theorems are given later by Joyal and Street \cite{joyal:braided}. \StringDiagrams{} as progressive graphs were introduced by Joyal \cite{joyal91:geometryOfTensorCalculus}. 
  Our presentation follows the \emph{resource theories} of Coecke, Fritz and Spekkens \cite{coeckeFS16}.
  
  Petri nets and process calculi are alternative mathematical approaches to \processTheory{}; at the same time, they are arguably particular cases of \monoidalCategories{} \cite{sobocinski10:representationspetrinets}.
  \MonoidalCategories{} as circuits and processes are explicitly pioneered by Sabadini, Walters, Carboni and Street \cite{sabadini95:bicategoriesOfProcesses,carboni87:cartesianbicategories}.
  The string diagrammatic recipe for ``crema di mascarpone'' is an adaptation of a blog post by Sobocinski \cite{sobocinski13:graphicalLinearAlgebra}.
  We also follow the ideas of Shulman on categorical logic \cite{shulman:catlog}.

  \newpage
  \clearpage{}%
\clearpage{}%
\section{Non-Strict Monoidal Categories}
\label{sec:non-strict-monoidals}

We have just argued for the axioms of \strictMonoidalCategories{} as our process theories and \stringDiagrams{} as our syntax. The bad news is that there exists a technicality preventing many interesting examples from ever forming a strict \monoidalCategory{}; the good news is that neither the axioms of \monoidalCategories{} nor our \stringDiagrams{} need to change at all to accommodate this technicality: this curious phenomenon is possible thanks to MacLane's \emph{strictification and coherence} results \cite{macLane71:workingMathematician}. This section introduces non-strict, or general \monoidalCategories{}, and it immediately details how MacLane's results ratify the axioms of \strictMonoidalCategories{}.

\subsection{Non-Strictness}
The axioms of \strictMonoidalCategories{} are enough to study a broader class of mathematical structures: non-strict monoidal categories.  

What is a non-strict monoidal category? In a \monoidalCategory{}, the tensor $(⊗)$ is not required to be associative or unital. Because of how we usually construct our definitions, it is not always the case that $X ⊗ (Y ⊗ Z) = (X ⊗ Y) ⊗ Z$. Consider the theory of sets and functions, with the cartesian product $(×)$ as the tensor. If we define
$$X × Y = \{ (x,y) \,\mid\, x ∈ X, y ∈ Y \},$$
then, simply, $X × (Y × Z) \neq (X × Y) × Z$. However, it is still the case that there exist two functions $X × (Y × Z) → (X × Y) × Z$ and $(X × Y) × Z → X × (Y × Z)$ and that these functions are mutual inverses for sequential composition. Whenever this happens in a category, we say these two objects are \emph{isomorphic} $(≅)$ and the morphisms are \emph{isomorphisms}. In a \monoidalCategory{}, the tensor is not associative and unital but it is still associative and unital up to an isomorphism. This may sound like a minor technicality, but it makes many examples fail to form a \monoidalCategory{}.

\begin{definition}[]
   \defining{linkmonoidalcategory}{}
   \defining{linkNonStrictMonoidalCategory}{}
   A \textbf{monoidal category}, $(ℂ, ⊗, I, α, λ, ρ)$, is a category $ℂ$ equipped with a pair of functors 
   $(⊗) ፡ ℂ × ℂ → ℂ,\mbox{ and } I ∈ ℂ$, called tensor and unit respectively. and three families of isomorphisms called the \emph{coherence maps}:
   \begin{enumerate}
     \item the associator $α_{X,Y,Z} ፡ (X ⊗ Y) ⊗ Z ≅ X ⊗ (Y ⊗ Z)$, 
     \item the left unitor $λ_{A} ፡ I ⊗ A ≅ A$ and 
     \item the right unitor $ρ_{A} ፡ A ⊗ I ≅ A$.
   \end{enumerate}
   These three families of maps must be natural, meaning they commute with other well-typed morphisms of the monoidal category. Moreover, these must be such that every formally well-typed equation between coherence maps holds.
\end{definition}

\begin{proposition}
  A \strictMonoidalCategory{} is precisely a \monoidalCategory{} where $α$, $λ$, and $ρ$ are identities.
\end{proposition}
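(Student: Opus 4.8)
The plan is to unwind both definitions and observe that each clause of one corresponds either to a piece of data or to an axiom of the other; the single point worth isolating is that requiring $α$, $λ$ and $ρ$ to ``be identities'' already forces the corresponding equalities of objects, so that the identity morphisms $\id$ typecheck in place of the coherence maps.

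For the forward implication, I would start from a \strictMonoidalCategory{} $ℂ$. Its objects form a genuine monoid $(ℂ_{obj}, ⊗, I)$, so $(X ⊗ Y) ⊗ Z = X ⊗ (Y ⊗ Z)$, $I ⊗ A = A$ and $A ⊗ I = A$ hold on the nose. First I would promote the tensor to a functor $(⊗) ፡ ℂ × ℂ → ℂ$: the action on objects is the monoid multiplication, the action on morphisms is the given operation on hom-sets, preservation of identities is axiom (5), and preservation of composition is the interchange axiom (6). Then set $α_{X,Y,Z}$, $λ_A$ and $ρ_A$ to be the identity morphisms on the (literally equal) objects involved; these are isomorphisms, every well-typed equation between them holds because both sides are identities, and their naturality squares reduce exactly to $(f ⊗ g) ⊗ h = f ⊗ (g ⊗ h)$ (axiom 4) and to $f ⊗ \id_I = f$, $\id_I ⊗ f = f$ (axiom 3). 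Sequential unitality and associativity (axioms 1--2) are just the category axioms for $ℂ$. This exhibits $ℂ$ as a \monoidalCategory{} whose coherence maps are identities.

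For the converse, let $(ℂ, ⊗, I, α, λ, ρ)$ be a \monoidalCategory{} in which $α$, $λ$, $ρ$ are identities. For $α_{X,Y,Z} ፡ (X ⊗ Y) ⊗ Z → X ⊗ (Y ⊗ Z)$ to be an identity morphism at all, its source and target must coincide, so $⊗$ is strictly associative on objects; likewise $λ$ and $ρ$ being identities force $I ⊗ A = A$ and $A ⊗ I = A$, whence $(ℂ_{obj}, ⊗, I)$ is a monoid. The hom-sets, composition and identities of $ℂ$ provide the remaining strict-monoidal data, with axioms (1)--(2) inherited from $ℂ$ being a category. Functoriality of $(⊗) ፡ ℂ × ℂ → ℂ$ gives preservation of identities (axiom 5) and preservation of composition, i.e. the interchange law (axiom 6). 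Finally, naturality of $α$ gives $(f ⊗ g) ⊗ h = f ⊗ (g ⊗ h)$ (axiom 4), and naturality of $ρ$ and $λ$ give $f ⊗ \id_I = f$ and $\id_I ⊗ f = f$ (axiom 3), exhausting the list.

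I expect the proof to be routine, with the only point needing a word of care — rather than a computation — being the typechecking remark above: ``$α$ is an identity'' must be read as ``$(X ⊗ Y) ⊗ Z = X ⊗ (Y ⊗ Z)$ and $α_{X,Y,Z} = \id$'', and similarly for $λ$ and $ρ$. Once this is granted, the two axiom lists match up term by term, with functoriality of the tensor and naturality of the coherence maps acting as the bridge between the two presentations.
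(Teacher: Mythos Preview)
Your proposal is correct and follows essentially the same approach as the paper's proof: matching functoriality of $(⊗)$ with the interchange axioms (5, 6) and naturality of the coherence maps, when they are identities, with associativity and unitality of the tensor on morphisms (3, 4). Your version is more detailed and makes explicit the typechecking point that ``$α$ is an identity'' forces the object-level equalities, which the paper leaves implicit.
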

\begin{proof}
  The naturality of the coherence maps, whenever these are identities, is the same as the associativity and unitality of the tensor (3 and 4). Functoriality of the tensor is the same as the interchange axioms (5 and 6); while the functoriality of the unit is trivially true.
\end{proof}

\subsection{Coherence}

The conditions of the definition of a \nonStrictMonoidalCategory{} may seem too strong: we are asking that a wide family of equations (all the formally well-typed ones) hold. Fortunately, the \emph{coherence theorem} shows that simply checking two families of equations is enough.

\begin{theorem}[MacLane, {{\cite{macLane71:workingMathematician}}}]
  The three different families of coherence maps $(α,λ,ρ)$ satisfy all formally well-typed equations between them whenever they satisfy the \emph{triangle} and \emph{pentagon} equations: 
  \begin{enumerate}
    \item $α_{X,I,Y} ⨾ (\id_{X} ⊗ λ_{Y}) = ρ_{X} ⊗ \id_{Y}$, and 
    \item $(α_{X,Y,Z} ⊗ \id) ⨾ α_{X,Y ⊗ Z, W} ⨾ (\id_X ⊗ α_{Y,Z,W}) = α_{X ⊗ Y,Z,W} ⨾ α_{X,Y,Z ⊗ W}.$
  \end{enumerate}
\end{theorem}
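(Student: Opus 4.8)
The plan is to reduce the statement to a single assertion about the \emph{free} monoidal category. Given a set of generating objects, let $\mathbf{F}$ be the monoidal category freely generated by them together with the data $\alpha, \lambda, \rho$, subject only to the category and bifunctoriality axioms, the naturality squares, and the \emph{triangle} and \emph{pentagon} equations. Its objects are formal $\otimes$-expressions --- finite binary trees with leaves labelled by generators or by $I$ --- and every one of its morphisms is a composite of components of $\alpha^{\pm 1}, \lambda^{\pm 1}, \rho^{\pm 1}$ under $\otimes$ and $\circ$. A ``formally well-typed equation between coherence maps'' in an arbitrary monoidal category $ℂ$ is, by construction, the image under the unique strict monoidal functor $\mathbf{F} \to ℂ$ of an equation between two parallel morphisms of $\mathbf{F}$. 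So it suffices to prove that $\mathbf{F}$ is \emph{thin}: there is at most one morphism between any two objects. Then every such equation already holds in $\mathbf{F}$, and applying the functor transports it to $ℂ$.

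Next I would fix normal forms. To each object $A$ of $\mathbf{F}$ assign its \emph{normal form} $\overline{A}$: erase every leaf labelled $I$ and reassociate the remaining word of generator-leaves to the fully left-parenthesized expression $((\cdots (X_1 \otimes X_2) \otimes \cdots) \otimes X_n)$, setting $\overline{A} = I$ if no generator-leaves remain. Simultaneously, by induction on the structure of $A$, build a distinguished morphism $n_A \colon A \to \overline{A}$ as a composite of (inverses of) components of $\alpha, \lambda, \rho$ whiskered by $\otimes$; since each coherence map is invertible, $n_A$ is an isomorphism.

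The crux is the following lemma, which I would prove by induction. For every morphism $g$ of $\mathbf{F}$ that is a \emph{single} coherence generator --- a component of $\alpha, \lambda, \rho$ or an inverse, possibly whiskered on either side by $\id \otimes (-)$ or $(-) \otimes \id$ --- one has $\overline{A} = \overline{B}$ for its domain $A$ and codomain $B$, together with $n_B \circ g = n_A$; and, closing this under $\circ$ and $\otimes$, the identity $n_B \circ f = n_A$ then holds for \emph{every} morphism $f \colon A \to B$ of $\mathbf{F}$. Here the \emph{pentagon} equation is exactly what reconciles the two ways of driving a reassociation of a triple bracketing towards the left-parenthesized form, so it disposes of the associator cases; the \emph{triangle} equation, with the standard unit consequences (e.g.\ $\lambda_I = \rho_I$) and naturality, disposes of the unitor cases; and bifunctoriality of $\otimes$ together with naturality handle the whiskerings uniformly. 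I expect this induction to be the main obstacle: one must state the lemma strongly enough that triangle and pentagon genuinely close every case, and keep the bookkeeping of whiskered coherence maps under control. Everything else is formal.

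Finally I would conclude. Given parallel morphisms $f, f' \colon A \to B$ in $\mathbf{F}$, both are isomorphisms and $\overline{A} = \overline{B}$, so the lemma gives $n_B \circ f = n_A = n_B \circ f'$; cancelling the isomorphism $n_B$ yields $f = f'$. Hence $\mathbf{F}$ is thin, every formally well-typed equation holds in it, and transporting along $\mathbf{F} \to ℂ$ proves the theorem. An equivalent packaging promotes the thinness of $\mathbf{F}$ to a monoidal equivalence between any monoidal category and a strict one, and then deduces coherence from faithfulness of that embedding --- MacLane's strictification argument --- which rests on the same use of triangle and pentagon.
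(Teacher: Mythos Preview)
The paper does not supply its own proof of this theorem: it is stated as MacLane's coherence theorem with a citation to \cite{macLane71:workingMathematician} and no proof environment follows. Your proposal is therefore not competing against anything in the paper; it is a faithful sketch of MacLane's original argument (reduce to the free monoidal category, define left-associated normal forms with units erased, build a canonical coherence map $n_A$ to the normal form, and show by induction using pentagon and triangle that every generator commutes with the $n_A$'s, whence the free category is thin). As a sketch this is correct and standard; the only point worth flagging is that the ``standard unit consequences'' such as $\lambda_I = \rho_I$ and $\lambda_{X\otimes Y}$ in terms of $\alpha$ and $\lambda$ do require a short separate derivation from triangle and pentagon (Kelly's lemmas), which you rightly acknowledge as part of the inductive bookkeeping.
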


Finally, the theorem that allows \strictMonoidalCategories{} to talk about non-strict monoidal categories is the \emph{strictification theorem} that says that any \monoidalCategory{} is monoidally equivalent to a strict one. This does determine an adjunction with extra structure, making it a 2-adjunction \cite{campbell19:strictification}.

\begin{theorem}[Joyal and Street, {{\cite{joyal91:geometryOfTensorCalculus}}}]
  \label{th:equivalentStrictOne}
  Any \monoidalCategory{} is equivalent via a strong monoidal functor to a strict one. There exists a 2-adjunction between the category of monoidal categories and strong monoidal functors and the category of strict monoidal categories with strict monoidal functors. Moreover, the unit of this 2-adjunction is an equivalence.
\end{theorem}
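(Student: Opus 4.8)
The plan is to carry out MacLane's strictification concretely through a ``category of lists'' and then recognise its universal property as the 2-adjunction in the statement. For a \monoidalCategory{} $\mathbb{C}$, let $\mathsf{st}(\mathbb{C})$ be the category whose objects are finite lists $(X_1, \dots, X_n)$ of objects of $\mathbb{C}$, with the empty list playing the role of $I$, and whose morphisms $(X_1, \dots, X_n) \to (Y_1, \dots, Y_m)$ are the $\mathbb{C}$-morphisms $X_1 \otimes \cdots \otimes X_n \to Y_1 \otimes \cdots \otimes Y_m$ between the left-bracketed tensors. Composition and identities are inherited from $\mathbb{C}$; the tensor is list concatenation on objects, and on morphisms $f, g$ it sends $(f,g)$ to $f \otimes g$ conjugated by the canonical coherence isomorphisms re-bracketing the tensor of two concatenated lists. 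Since concatenation is strictly associative and unital, the axioms of a \strictMonoidalCategory{} for $\mathsf{st}(\mathbb{C})$ reduce to identities among composites of associators and unitors, which MacLane's coherence theorem above validates; the same theorem makes the morphism-level tensor well defined and functorial. The identical recipe turns strong monoidal functors into strict ones, so $\mathsf{st}$ is a 2-functor $\MonCat \to \MonCatStr$.

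Next I would produce the equivalence $\mathbb{C} \simeq \mathsf{st}(\mathbb{C})$. There is a strong monoidal functor $E \colon \mathsf{st}(\mathbb{C}) \to \mathbb{C}$ sending a list to its left-bracketed tensor and acting as the identity on morphisms, its structure constraints being coherence maps; and a strong monoidal functor $N \colon \mathbb{C} \to \mathsf{st}(\mathbb{C})$ sending $X$ to the singleton $(X)$ and $f$ to $f$, with constraint $N X \otimes N Y = (X, Y) \to (X \otimes Y) = N(X \otimes Y)$ equal to $\mathrm{id}_{X \otimes Y}$. One checks $E \circ N = \mathrm{id}_{\mathbb{C}}$ on the nose and $N \circ E \cong \mathrm{id}_{\mathsf{st}(\mathbb{C})}$ by a monoidal natural isomorphism assembled from coherence maps, so $N$ is a monoidal equivalence. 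This is the first sentence of the theorem and will also be the unit of the adjunction.

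For the 2-adjunction I would show $\mathsf{st} \dashv U$, where $U \colon \MonCatStr \hookrightarrow \MonCat$ is the inclusion, with unit $N$. Given a strict $\mathbb{D}$ and a strong monoidal $F \colon \mathbb{C} \to \mathbb{D}$, define $\widehat{F} \colon \mathsf{st}(\mathbb{C}) \to \mathbb{D}$ on objects by $(X_1, \dots, X_n) \mapsto F X_1 \otimes \cdots \otimes F X_n$ and on a morphism $f$ by $F f$ conjugated by the iterated laxity isomorphisms of $F$; since $\mathbb{D}$ is strict, $\widehat{F}$ is strict monoidal and $\widehat{F} \circ N \cong F$, and any strict monoidal $G$ with $G \circ N \cong F$ is canonically isomorphic to $\widehat{F}$. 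Packaging this as an equivalence of hom-categories $\MonCatStr(\mathsf{st}\,\mathbb{C}, \mathbb{D}) \simeq \MonCat(\mathbb{C}, U\mathbb{D})$, 2-natural in $\mathbb{C}$ and $\mathbb{D}$, gives the 2-adjunction, whose unit is the $N$ already shown to be an equivalence.

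The main obstacle is coherence bookkeeping: that the morphism tensor of $\mathsf{st}(\mathbb{C})$ interchanges with composition, that $N \circ E$ is monoidally isomorphic to the identity, and that $\widehat{(-)}$ is 2-natural each unwind to sizeable pastings of associators and unitors that coherence collapses — routine in spirit but where all the care lies, in particular in stating the 2-adjunction at the right strictness (the unit here is only an equivalence, not an isomorphism, so one obtains a genuinely ``strict'' 2-adjunction only after choosing strict representatives). An alternative, incurring the same coherence cost elsewhere, embeds $\mathbb{C}$ into the strict monoidal category $[\mathbb{C},\mathbb{C}]$ of endofunctors under composition via $X \mapsto (X \otimes -)$ and takes the closure of the image under isomorphism; the delicate step is then identifying that closure with $\mathbb{C}$.
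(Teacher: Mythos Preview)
The paper does not actually give a proof of this theorem: it is stated with a citation to Joyal and Street, and the bibliography further points to Becerra and Campbell for modern accounts of the 2-adjunction. There is therefore nothing in the paper to compare your argument against.

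That said, your approach is the standard one and is essentially what those references do: build $\mathsf{st}(\mathbb{C})$ as the category of lists with morphisms borrowed from $\mathbb{C}$ via left-bracketed tensors, invoke MacLane coherence to make the tensor on morphisms well defined and strictly associative, exhibit the singleton functor $N$ as a strong monoidal equivalence, and read off the 2-adjunction $\mathsf{st} \dashv U$ from the universal property of $N$. Your caveats are the right ones: the coherence pastings are where the real work lies, and one must be careful about whether the hom-equivalence $\MonCatStr(\mathsf{st}\,\mathbb{C},\mathbb{D}) \simeq \MonCat(\mathbb{C},U\mathbb{D})$ is an isomorphism of categories or merely an equivalence --- the paper uses the result only to feed into a pseudoadjunction (\Cref{th:pseudo-monoidal-categories}), so either reading suffices for its purposes.
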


\subsection{String Diagrams of Monoidal Categories}
While it is true that we can construct the free \strictMonoidalCategory{} on a \polygraph{}, it is not true yet that we know how to construct the free \monoidalCategory{} (the non-strict one) over a \polygraph{}; in fact, this seems to be impossible.
This could be misread as saying that string diagrams are not equally sound and complete for \monoidalCategories{}. Nothing is further from the truth: even if there are now multiple ways of interpreting a \stringDiagram{} in a \monoidalCategory{}, these are essentially equal -- they define isomorphic functors.

\begin{theorem}
  \label{th:pseudo-monoidal-categories}
  There is a pseudoadjunction between the locally discrete 2-category of polygraphs and the 2-category of monoidal categories, strong monoidal functors and monoidal natural transformations.
\end{theorem}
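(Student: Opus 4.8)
The plan is to reduce the statement to the strict case together with coherence, using the free \emph{strict} monoidal category $\mathsf{String}(\mathcal{G})$ as a presentation of the free monoidal category on $\mathcal{G}$ ``up to coherent isomorphism''. The ingredients are all available above: the honest $1$-adjunction $\mathsf{String} \dashv \mathsf{Forget}$ between $\PolyGraph$ and $\MonCatStr$ of Joyal and Street, the strictification theorem (\Cref{th:equivalentStrictOne}) relating $\MonCatStr$ to the $2$-category $\MonCat$ of monoidal categories, strong monoidal functors and monoidal natural transformations, and MacLane's coherence theorem to keep the ambiguity introduced by non-strictness under control.

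First I would extend the right adjoint to all of $\MonCat$. For a possibly non-strict $\mathbb{C}$, let $\mathsf{Forget}(\mathbb{C})$ be the polygraph with object set $\mathbb{C}_{obj}$ and with $\mathsf{Forget}(\mathbb{C})(A_0,\dots,A_n;B_0,\dots,B_m)$ the morphisms $(((A_0\otimes A_1)\otimes\cdots)\otimes A_n)\to(((B_0\otimes B_1)\otimes\cdots)\otimes B_m)$ of $\mathbb{C}$, for a once-and-for-all left-nested bracketing and reading the empty tensor as $I$. A strong monoidal functor transports to a polygraph morphism by conjugating with its invertible structure maps, $F(X\otimes Y)\cong FX\otimes FY$ and $FI\cong I$; coherence makes this assignment pseudofunctorial, and restricted to $\MonCatStr$ it recovers the original $\mathsf{Forget}$. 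The unit is the usual $\eta_\mathcal{G}\colon\mathcal{G}\to\mathsf{Forget}(\mathsf{String}(\mathcal{G}))$ of the strict adjunction; the counit $\varepsilon_\mathbb{C}\colon\mathsf{String}(\mathsf{Forget}(\mathbb{C}))\to\mathbb{C}$ is the ``evaluation'' strong monoidal functor sending a list to the bracketed tensor of its entries and a string diagram over $\mathsf{Forget}(\mathbb{C})$ to its evaluation in $\mathbb{C}$, with the bracketing coherences inserted.

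The substantive step is, for each polygraph morphism $f\colon\mathcal{G}\to\mathsf{Forget}(\mathbb{C})$, to build its transpose $\hat f\colon\mathsf{String}(\mathcal{G})\to\mathbb{C}$ as a strong monoidal functor, essentially uniquely. Here I would strictify: by \Cref{th:equivalentStrictOne} choose a monoidal equivalence $e\colon\mathbb{C}\to\mathbb{C}^{\mathsf{str}}$ onto a strict monoidal category; then $\mathsf{Forget}(e)\circ f$ is a polygraph morphism into the \emph{strict} $\mathsf{Forget}(\mathbb{C}^{\mathsf{str}})$, which by the adjunction of Joyal and Street transposes to a strict monoidal functor $\mathsf{String}(\mathcal{G})\to\mathbb{C}^{\mathsf{str}}$; post-composing with a chosen pseudo-inverse of $e$ yields $\hat f$, whose restriction along $\eta_\mathcal{G}$ is monoidally naturally isomorphic to $f$. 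Essential uniqueness --- any two strong monoidal functors out of $\mathsf{String}(\mathcal{G})$ restricting to the same polygraph morphism are monoidally naturally isomorphic --- follows because $\mathsf{String}(\mathcal{G})$ is strict and freely generated, so a strong monoidal functor on it is pinned down, up to monoidal natural isomorphism, by its values on generating objects and generating arrows. It then remains to check the triangle identities up to the coherence isomorphisms (choosing consistent bracketing conventions so that most of them are outright equalities) and the pseudonaturality of unit and counit, which is coherence bookkeeping; this exhibits $\mathsf{String}$ as a left pseudoadjoint of $\mathsf{Forget}$.

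I expect the essential-uniqueness / non-canonicity point to be the crux, and it is also exactly why the statement is a \emph{pseudo}adjunction rather than a $1$-adjunction: there is no canonical strong monoidal functor extending a polygraph morphism into a non-strict $\mathbb{C}$, only one well-defined up to coherent isomorphism, and the strictification theorem is precisely the device that lets us make such a choice while keeping its ambiguity controlled. Everything else --- pinning down bracketing conventions so that the pseudofunctor constraints on $\mathsf{Forget}$ and the triangle modifications really satisfy their axioms --- is routine given MacLane's coherence theorem, if tedious to spell out.
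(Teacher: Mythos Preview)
Your proposal is correct and uses the same two ingredients as the paper: the strict adjunction $\mathsf{String}\dashv\mathsf{Forget}$ between $\PolyGraph$ and $\MonCatStr$, and the strictification $2$-adjunction of \Cref{th:equivalentStrictOne}. The difference is packaging. You unroll the construction by hand --- extending $\mathsf{Forget}$ to $\MonCat$, building transposes via a chosen strictification and pseudo-inverse, and checking the triangle identities and pseudonaturality directly. The paper instead invokes a general lemma (\Cref{prop:reducing}, proved via the string calculus for bicategories) that reduces one adjunction along another whenever the second has invertible unit, and observes that here the unit is only an equivalence, so the output is a pseudoadjunction rather than a strict one. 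Your argument is what that lemma amounts to when instantiated and expanded; the paper's version is shorter and avoids the bracketing and coherence bookkeeping you flag as tedious, at the cost of relying on an auxiliary abstract result.
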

\begin{proof}[Proof sketch]
  This pseudoadjunction arises as a combination of two different 2-adjunctions. The first one is the adjunction between \polygraphs{} and \strictMonoidalCategories{} we have studied before. The second one is described in \Cref{th:equivalentStrictOne}, and its unit defines an equivalence: every \monoidalCategory{} is equivalent to a strict one.

  \begin{figure}[ht]
    \centering
    \includegraphics[scale=0.35]{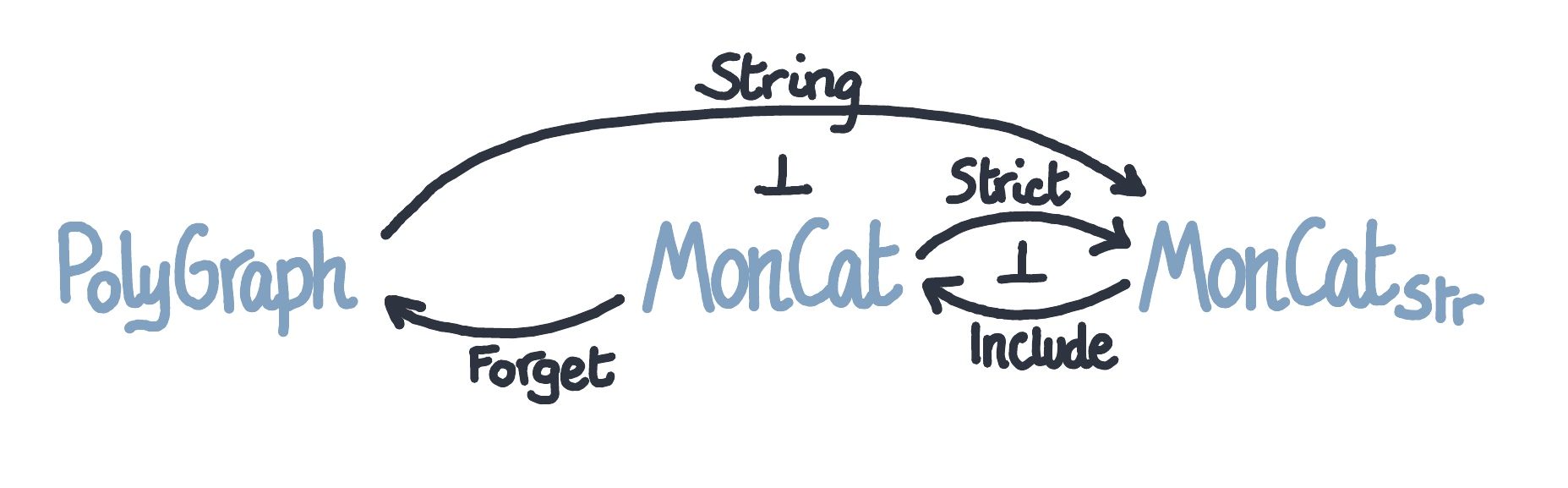}
    \caption{Pseudoadjunctions between polygraphs and monoidal categories.}
    \label{fig:pseudoadjunctions-monoidal}
  \end{figure}

  It is well known that each time that we have two adjunctions in this disposition we can reduce one along the other, provided that the unit of the former is invertible (see \Cref{prop:reducing}). In this case, the unit is not exactly invertible but merely an equivalence: as a consequence, we obtain not another 2-adjunction but merely a pseudoadjunction. This concludes the proof.
\end{proof}

\subsection{Bibliography}
The coherence results go back to MacLane, Joyal and Street \cite{maclane63:natural,joyal91:geometryOfTensorCalculus}; Hermida arrived at the same result via \multicategories{} \cite{hermida01:coherent,hadzihasanovic18:weak}; and there is a more modern account by Becerra detailing the 2-adjunction between strict and non-strict \monoidalCategories{} \cite{becerra23:strictification}, which Campbell studies for \bicategories{} \cite{campbell19:strictification}. 

\newpage
\clearpage{}%
\clearpage{}%

\newpage
\section{String Diagrams of Bicategories}
\label{sec:strings-bicategories}

Bicategories are the second extension of \monoidalCategories{} that we will employ during the text. If \monoidalCategories{} were well-suited to reason about process theories, \bicategories{}, one level up, are well-suited to reason about categories themselves; however, their string diagrammatic syntaxes are very close. 

String diagrams of \monoidalCategories{} can be easily extended to \bicategories{} if we allow ourselves to color the regions. Coloring the regions simply constrains which objects can be tensored: this algebraic structure is a \bicategory{}, also known as a \emph{weak 2-category}. In a \bicategory{}, two objects must coincide along a boundary to be tensored.

\begin{definition}
  \defining{linktwocategory}{}
  A strict \emph{2-category} $𝔹$ consists of a collection of \emph{objects}, or 0-cells, $𝔹_{obj}$, and a category of \emph{morphisms} or 1-cells between any two objects, $𝔹(A;B)$. A strict \emph{2-category} is endowed with operations for the parallel composition of 1-cells,
  \begin{align*}
    (\bcomp) &፡ 𝔹(A; B) × 𝔹(B; C) → 𝔹(A; C), \\
    (I_A) &፡ 𝔹(A;A),
  \end{align*}
  that are associative and unital both on objects and morphisms, meaning that
  $(X \bcomp Y) \bcomp Z = X \bcomp (Y \bcomp Z)$, and $I_A \bcomp X = X = X \bcomp I_B$.
  Bicategories must satisfy the following axioms, making parallel composition a functor:
  \begin{enumerate}
    \item parallel composition is unital, $f \bcomp \id = f$, and $\id \bcomp f = f$;
    \item parallel composition is associative, $f \bcomp (g \bcomp h) = (f \bcomp g) \bcomp h$;
    \item compositions are unital, $\id \bcomp \id = \id$;
    \item compositions interchange, $(f ⨾ g) \bcomp (f' ⨾ g') = (f \bcomp f') ⨾ (g \bcomp g')$.
  \end{enumerate}
\end{definition}
\begin{remark}
  A single-object strict 2-category is exactly a \strictMonoidalCategory{}.
\end{remark}

\subsection{String diagrams of 2-categories}
Let us briefly comment on how the \stringDiagrams{} of \monoidalCategories{} extend to \bicategories{}. We repeat the same definitions and the same theorems, just taking care of matching the boundaries this time.

\begin{definition}
  A \emph{bigraph}, or \emph{2-graph}, $𝓑$ is given by a set of objects, $𝓑_{obj}$; a set of arrows between any two objects, $𝓑(A; B)$; and a set of 2-arrows between any two paths of arrows, $𝓑(X₀,…,Xₙ; Y₀,…,Yₘ)$.

  A bigraph homomorphism, $f ፡ 𝓐 → 𝓑$, is a function between their object sets, $f_o ፡ 𝓐_{obj} → 𝓑_{obj}$; a family of functions between their corresponding arrow sets, $f ፡ 𝓐(A;B) → 𝓑(f(A),f(B))$; and a family of functions between their corresponding 2-arrow sets,
  $$f ፡ 𝓐(X₀,…,Xₙ; Y₀,…,Yₘ) → 𝓑(fX₀,…,fXₙ; fY₀,…,fYₘ).$$
  Bigraphs with bigraph homomorphisms form a category, $\mathbf{BiGraph}$.
\end{definition}

\begin{definition}
  A string diagram over a bigraph $𝓐$ is a string diagram over the polygraph formed by arrows and 2-arrows, additionally satisfying that each region is labelled by an object of the bigraph, and in such a way that any wire is labelled by an arrow connecting the labels of the two regions.
\end{definition}

\begin{theorem}
  There is an adjunction between bicategorical graphs and 2-categories with strict 2-functors between them.
  The left adjoint is given by colored string diagrams over the bigraph.
\end{theorem}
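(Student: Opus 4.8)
The plan is to replay, one level up, the three-step argument that produced the adjunction $\mathsf{String} \dashv \mathsf{Forget}$ between polygraphs and strict monoidal categories, carrying along the extra bookkeeping imposed by the region labels; since a single-object bigraph is precisely a polygraph, the construction restricts to the previous one along that inclusion.

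\emph{The two functors.} First I would check that the colored string diagrams over a bigraph $\mathcal{A}$ assemble into a strict 2-category $\mathsf{String}(\mathcal{A})$: its 0-cells are the objects $\mathcal{A}_{obj}$; its 1-cells $A \to B$ are the finite paths of composable arrows of $\mathcal{A}$ from $A$ to $B$ (composed by concatenation); and a 2-cell from a path $X_0,\dots,X_n$ to a path $Y_0,\dots,Y_m$ is a colored string diagram whose leftmost region is labelled $A$, whose rightmost region is labelled $B$, and whose upper and lower boundary wires spell $X_0,\dots,X_n$ and $Y_0,\dots,Y_m$ respectively. Vertical composition is stacking, horizontal composition is juxtaposition along a shared region colour -- exactly the side condition that two 1-cells must agree along a boundary to be tensored -- and identity wires provide the units; the associativity, unitality and interchange laws then hold for the same geometric reasons as in the monoidal case of \Cref{fig:string-diagrams-mon-cat}, now with regions carried along, and a bigraph homomorphism transports generators hence diagrams, giving a functor $\mathsf{String}\colon \mathbf{BiGraph} \to \mathbf{2Cat}$. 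Dually I would define the right adjoint $\mathsf{Forget}\colon \mathbf{2Cat} \to \mathbf{BiGraph}$ by discarding composition: a strict 2-category $\mathbb{B}$ yields the bigraph with the same 0-cells, with arrows its 1-cells, and with 2-arrows $\mathbb{B}(X_0,\dots,X_n;Y_0,\dots,Y_m) = \mathbb{B}(X_0 \bcomp \dots \bcomp X_n;\, Y_0 \bcomp \dots \bcomp Y_m)$; a strict 2-functor visibly induces a bigraph homomorphism.

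\emph{The adjunction.} Then I would give the unit $\eta_{\mathcal{A}}\colon \mathcal{A} \to \mathsf{Forget}(\mathsf{String}(\mathcal{A}))$, which sends each generating 2-arrow to the single-node diagram labelled by it, and verify the universal property: every bigraph homomorphism $\mathcal{A} \to \mathsf{Forget}(\mathbb{B})$ extends uniquely to a strict 2-functor $\mathsf{String}(\mathcal{A}) \to \mathbb{B}$. Existence is by induction on a presentation of a diagram as iterated vertical and horizontal composites of generating nodes and identity wires; uniqueness holds because a strict 2-functor must preserve exactly those operations; and naturality of $\eta$ together with the triangle identities then follow mechanically, just as for the Joyal--Street adjunction.

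\emph{The main obstacle.} The only genuinely non-routine point is the one already present for ordinary string diagrams: making precise that every colored string diagram does admit such a decomposition into generators and identity wires, and that the resulting extension is independent of the decomposition chosen. This is the content of the deformation theorem of Joyal and Street \cite{joyal91:geometryOfTensorCalculus}; for 2-categories one needs only the additional observation that a deformation never alters the region labelling, so that well-typedness of each horizontal composite along the way is automatic. Everything else is the polygraph argument repeated verbatim.
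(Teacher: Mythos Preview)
The paper does not actually supply a proof for this theorem; it is stated bare, immediately after the definition of a string diagram over a bigraph, and the text moves on to the non-strict case. The evident intent is that the argument is the straightforward 2-categorical analogue of the earlier Joyal--Street adjunction between polygraphs and strict monoidal categories (which the paper also states rather than proves in detail, citing \cite{joyal91:geometryOfTensorCalculus}). Your sketch is exactly that analogue, with the region-labelling bookkeeping made explicit, and is the proof one would expect the author to have in mind.
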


\subsection{Bicategories}
Strict monoidal categories have a weak analogue that still shares the same syntax -- (weak, or non-strict) monoidal categories. In the same way, strict 2-categories have a weak analogue that shares the same syntax: weak 2-categories, sometimes called bicategories.

\begin{definition}
  \defining{linkBicategory}{}
  A \emph{bicategory} $(𝔹,\bcomp,I,α,λ,ρ)$ is a collection of 0-cells, $𝔹_{obj}$, together with a category $𝔹(A;B)$ between any two 0-cells, $A,B ∈ 𝔹_{obj}$, and functors 
  \begin{align*}
    (\bcomp) ፡ 𝔹(A; B) × 𝔹(B; C) → 𝔹(A; C), \mbox{ and } I_A ፡ 𝔹(A;A),
  \end{align*}
  that are associative and unital up to isomorphism, meaning that there exist natural isomorphisms describing associativity $α_{X,Y,Z} ፡ (X ⊗ Y) ⊗ Z ≅ X ⊗ (Y ⊗ Z)$, left unitality $λ_{X} ፡ I_{A} ⊗ X ≅ X$ and right unitality $ρ_{X} ፡ X ⊗ I_{B} ≅ B$.
\end{definition}

\begin{conjecture}
  There is a pseudoadjunction between the locally discrete 2-category of bicategorical graphs and the 2-category of bicategories, pseudofunctors and icons (see Campbell, Garner and Gurski's work for the higher structure of the strictification adjunction \cite{campbell19:strictification,garner09low}).
\end{conjecture}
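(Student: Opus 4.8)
The plan is to transpose the proof of \Cref{th:pseudo-monoidal-categories} one dimension upward, exhibiting the desired pseudoadjunction as the reduction of one 2-adjunction along another. The first ingredient is the strict adjunction between bicategorical graphs and strict 2-categories with strict 2-functors from the previous theorem; because $\mathbf{BiGraph}$ is taken locally discrete and every piece of data involved is strict, this upgrades without effort to a strict 2-adjunction, whose left adjoint sends a bigraph to its 2-category of coloured string diagrams. The second ingredient is the strictification of bicategories: by the coherence theorem for bicategories every bicategory is biequivalent to a strict 2-category, and, following Campbell, Garner and Gurski \cite{campbell19:strictification,garner09low}, this organises into a 2-adjunction between the 2-category of bicategories, pseudofunctors and icons and the 2-category of strict 2-categories, strict 2-functors and icons, whose unit is componentwise an equivalence.

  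Given these two ingredients, I would invoke the reduction lemma (\Cref{prop:reducing}): when one has an adjunction landing in a 2-category $\mathcal{D}$ together with a further adjunction out of $\mathcal{D}$ whose unit is invertible, the first adjunction transports along the second; and when the unit is merely an equivalence rather than an isomorphism --- as it is here for strictification --- the transport yields a pseudoadjunction rather than a strict 2-adjunction, exactly as in \Cref{th:equivalentStrictOne} and \Cref{th:pseudo-monoidal-categories}. Applying this to the strict graph/2-category adjunction and the strictification 2-adjunction gives the claimed pseudoadjunction between $\mathbf{BiGraph}$ and the 2-category of bicategories, pseudofunctors and icons; the left adjoint sends a bigraph to the bicategory of coloured string diagrams over it, now only up to the coherence isomorphisms, mirroring the monoidal situation.

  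I expect the main difficulty to be not mathematical depth but matching the two-dimensional data so that the reduction lemma applies: both 2-adjunctions must be stated over the same sub-2-category of strict 2-categories, with the same classes of 1-cells and of 2-cells. The delicate point is the use of icons, which are only defined between pseudofunctors that agree on objects --- one must check that the unit and counit of the strict graph/2-category adjunction are iconic, that the relevant hom-functors preserve icons, and that the precise form of the strictification biadjunction cited from Campbell and Gurski is compatible with this choice of 2-cells. Once that alignment is secured, the coherence data of the resulting pseudoadjunction --- pseudonaturality of the unit and counit, and the invertible modifications realising the triangle identities --- is forced by that of the two inputs, and verifying the modification axioms is routine bookkeeping.
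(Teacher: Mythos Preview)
The paper states this as a \emph{conjecture} and gives no proof; there is nothing to compare your attempt against. Your sketch is exactly the bicategorical transposition of the paper's own proof of \Cref{th:pseudo-monoidal-categories}, and it is almost certainly the argument the author had in mind when stating the conjecture --- the parenthetical reference to Campbell, Garner and Gurski is precisely the pointer to the second 2-adjunction you invoke.

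That said, you should be aware of why the author left this as a conjecture rather than a theorem. The monoidal case works cleanly because the 2-category $\MonCat$ of monoidal categories, strong monoidal functors and monoidal transformations is well-behaved and the strictification 2-adjunction lands exactly where the polygraph adjunction does. For bicategories the situation is more delicate: the Lack--Paoli/Garner--Gurski strictification is a 2-adjunction between $\mathbf{Bicat}_{\mathrm{icon}}$ and $\mathbf{2Cat}_{\mathrm{icon}}$, but the left adjoint in your first ingredient lands in strict 2-categories with \emph{strict} 2-functors, not all 2-functors with icons between them. You flag this alignment problem yourself; it is the genuine obstacle, and resolving it --- checking that the bigraph adjunction upgrades to an honest 2-adjunction with icons as 2-cells, and that \Cref{prop:reducing} applies in the pseudo setting --- is the content that would turn the conjecture into a theorem. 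Your sketch correctly isolates where the work lies but does not carry it out, which is consistent with the paper's own choice not to claim a proof.
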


\subsection{Example: Adjunctions}
We exemplify the usage of string diagrams for bicategories in an abstract definition of adjunctions. We then use string diagrams to prove a theorem about adjunctions.

\begin{definition}
  The \emph{theory of a duality} in a \bicategory{} contains two 0-cells $A$ and $B$;
  it contains two 1-cells between them, $L ፡ A → B$ and $R ፡ B → A$, and it contains two 2-cells, $ε ፡ L ⨾ R → I$ and $η ፡ I → R ⨾ L$, that satisfy $(\id ⊗ η) ⨾ (ε ⊗ \id) = \id$ and $(η ⊗ \id) ⨾ (\id ⊗ ε) = \id$.
    \begin{figure}[!ht]
	  \centering
	  \scalebox{0.3}{\includegraphics{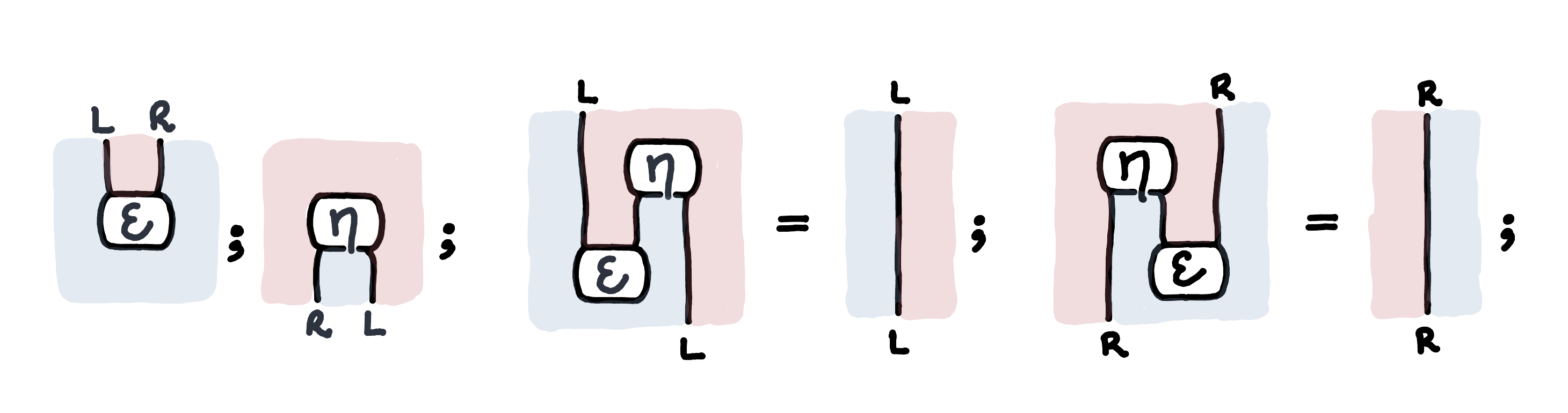}}
	  \caption{Theory of a duality.}
	  \label{fig:duality-theory}
	\end{figure}
\end{definition}

\begin{remark}
  Adjunctions are dualities in the bicategory of categories, functors, and natural transformations.  
\end{remark}

\begin{proposition}[Reducing an adjunction]
	\label{prop:reducing}
	Let $F ፡ 𝔸 \to ℂ$ and $H ⨾ U ፡ ℂ \to 𝔸$ determine an adjunction $(F,H ⨾ U,\eta,\varepsilon)$ and let $P ፡ 𝔹 \to ℂ$ determine a second adjunction $(P,H, u, c)$ such that the unit $u ፡ I \to P ⨾ H$ is a natural isomorphism (as in \Cref{fig:reducingadjoint1}). Then, $F ⨾ H$ is left adjoint to $U$.
	\begin{figure}[!ht]
	  \centering
	  \scalebox{0.3}{\includegraphics{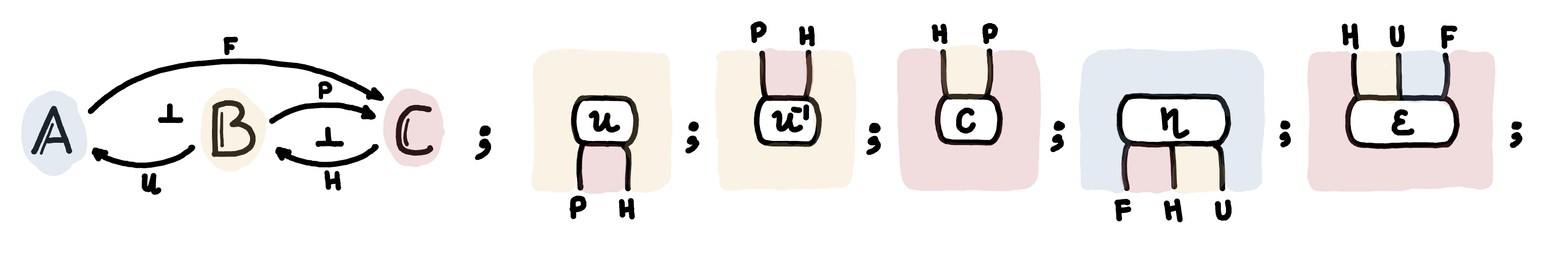}}
	  \caption{Setting for reducing an adjunction.}
	  \label{fig:reducingadjoint1}
	\end{figure}
  \end{proposition}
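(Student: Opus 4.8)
The plan is to build the unit and counit of the claimed adjunction $F \bcomp H \dashv U$ by pasting together the data of the two given adjunctions, and then to verify the two triangle identities by manipulating string diagrams in the ambient \bicategory{}, crucially exploiting that $u$ is invertible.

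First I would take $\bar\eta := \eta$: since $H \bcomp U$ denotes the composite $ℂ \xrightarrow{H} 𝔹 \xrightarrow{U} 𝔸$, the unit $\eta \colon I_{𝔸} \to F \bcomp (H \bcomp U)$ of the first adjunction is already a $2$-cell of the correct shape $I_{𝔸} \to (F \bcomp H) \bcomp U$, so nothing has to be done for the unit. For the counit $\bar\varepsilon \colon U \bcomp F \bcomp H \to I_{𝔹}$ I would paste three pieces: whisker $u \colon I_{𝔹} \to P \bcomp H$ onto the front of $U \bcomp F \bcomp H$, obtaining $U \bcomp F \bcomp H \to P \bcomp H \bcomp U \bcomp F \bcomp H$; whisker in the counit $\varepsilon \colon H \bcomp U \bcomp F \to I_{ℂ}$ of the first adjunction, which collapses the middle factor and leaves $P \bcomp H$; and finally close off with $\inv{u} \colon P \bcomp H \to I_{𝔹}$. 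In string-diagram terms (cf.\ \Cref{fig:reducingadjoint1}), $\bar\varepsilon$ is the $\varepsilon$-cup of the first duality bracketed by a $u$-cap and an $\inv{u}$-cup of the second.

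The remaining work is to check the two triangle identities for $(\bar\eta, \bar\varepsilon)$. In each snake I would expand $\bar\varepsilon$, producing a planar diagram that contains the $\eta$-cap and $\varepsilon$-cup of the first adjunction together with a $u$-cap/$\inv{u}$-cup pair; the argument is then to slide caps and cups along strands so that, on one hand, the $u$-cap cancels the $\inv{u}$-cup (using that $u$ and $\inv{u}$ are mutually inverse $2$-cells, together with — for one of the two snakes — a triangle identity of $P \dashv H$ to straighten an intervening $H \bcomp P \bcomp H$ strand, which is the second place invertibility of $u$ enters), while on the other hand the remaining $\eta$-$\varepsilon$ zigzag collapses by a triangle identity of $F \dashv H \bcomp U$. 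Naturality of $\bar\eta$ and $\bar\varepsilon$ is inherited from that of $\eta$, $\varepsilon$, $u$, and $\inv{u}$.

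I expect the only real obstacle to be the bookkeeping of whiskerings: one must position the pasted pieces so that, after $\bar\varepsilon$ is expanded inside the snake, the $u$-cap and $\inv{u}$-cup wind up adjacent and the $\eta$- and $\varepsilon$-cells align into a standard zigzag; once the diagrams are drawn correctly the cancellations are forced and only the two familiar triangle identities need to be invoked. (Equivalently, the argument can be run at the level of hom-sets — using $u$ to identify $UB \cong U(H(PB))$ and then transposing along $F \dashv H \bcomp U$ — but the diagrammatic version is the one indicated by \Cref{fig:reducingadjoint1}.)
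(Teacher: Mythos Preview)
Your proposal is correct and follows essentially the same route as the paper: build the unit and counit of $F \bcomp H \dashv U$ from the given data and then verify the two snake equations by string-diagram manipulation, invoking the dualities $(\eta,\varepsilon)$ and $(u,c)$ together with the invertibility of $u$. The only point where the paper's accounting differs from yours is the second snake, which in the paper uses the $(u,c)$ duality \emph{twice} (plus invertibility of $u$ and the $(\eta,\varepsilon)$ duality) rather than the single application you anticipate; this is precisely the ``bookkeeping of whiskerings'' you already identify as the main obstacle, and it resolves once the diagram is drawn.
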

  \begin{proof}
	  We employ the string diagrammatic calculus of bicategories for the bicategory of categories, functors and natural transformations \cite{marsden14:category}. We define the morphisms in \Cref{fig:reducingunitcounit} to be the unit and the counit of the adjunction. We then prove that they satisfy the snake equations in \Cref{fig:snake1,fig:snake2}.
	  \begin{figure}[!ht]
		  \centering
		  \scalebox{0.3}{\includegraphics{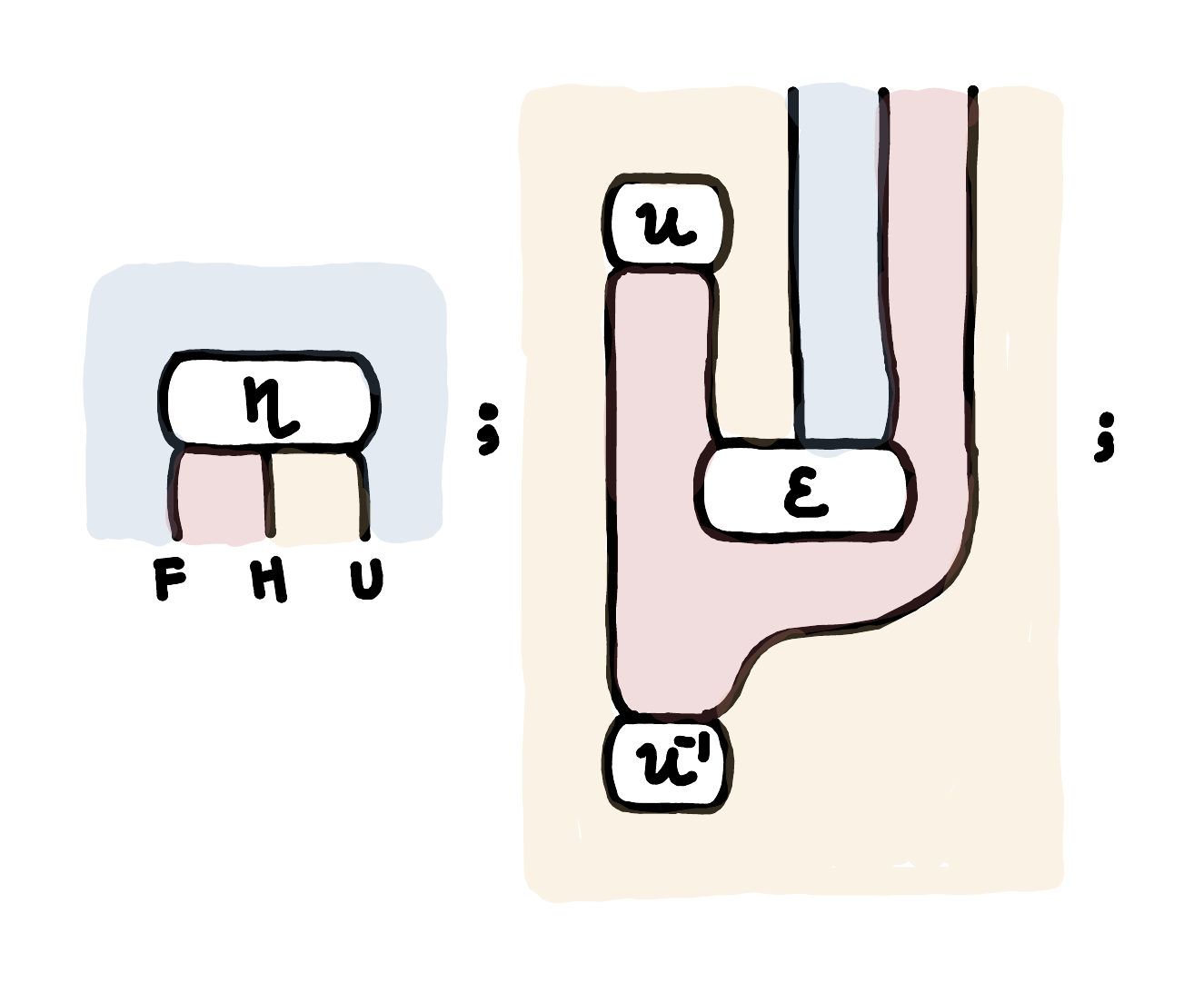}}
		  \qquad
		  \scalebox{0.3}{\includegraphics{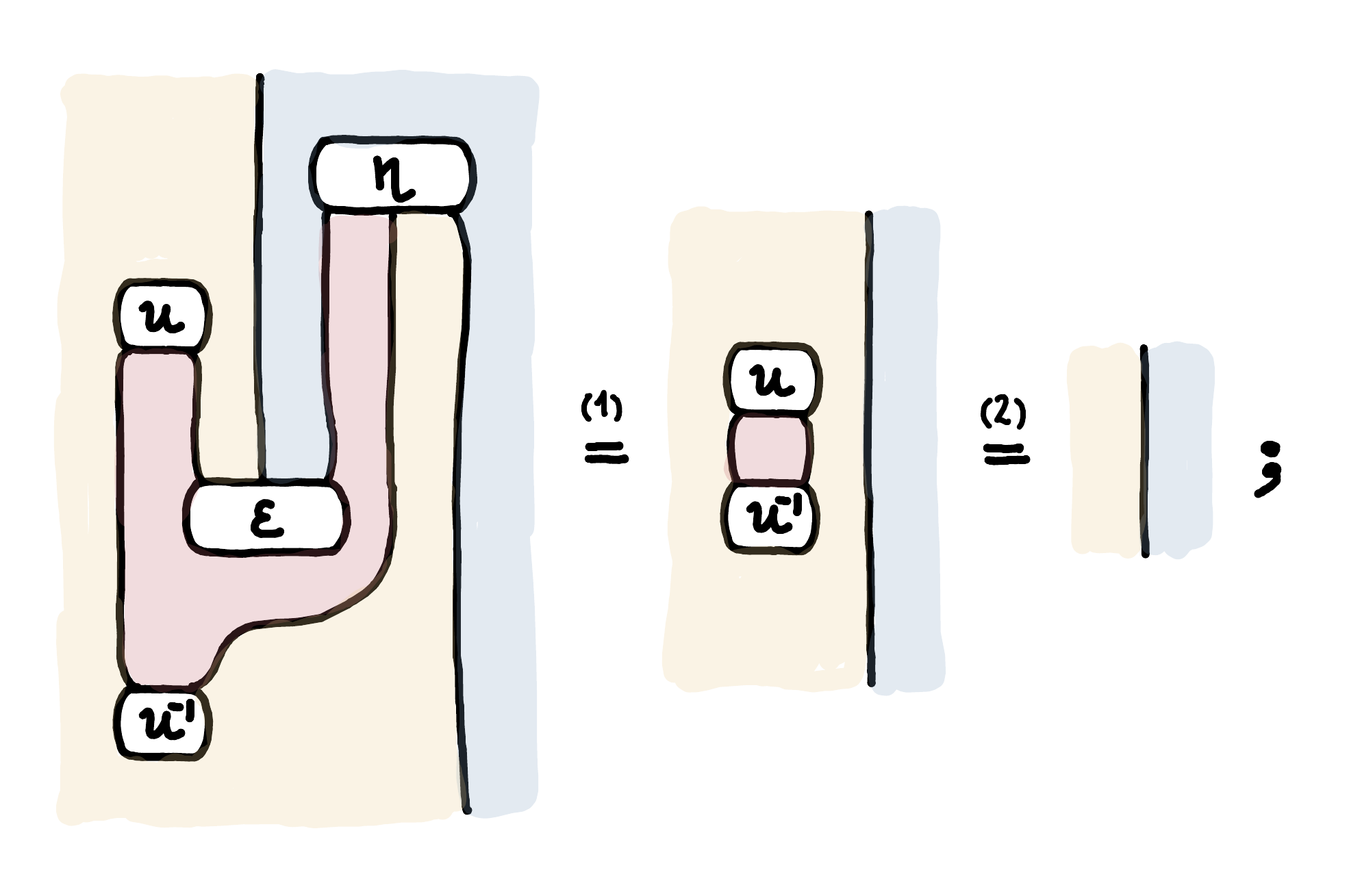}}
		  \caption{Unit and counit of the reduced adjunction 
		  (left). First snake equation (right).}
		  \label{fig:reducingunitcounit}
		  \label{fig:snake1}
	  \end{figure}

	  In the first snake equation, in \Cref{fig:snake1}, we use \emph{(i)} that there is a duality $(\eta,\varepsilon)$, and \emph{(ii)} that $u$ is invertible. In the second snake equation, in \Cref{fig:snake2}, we use \emph{(i)} that there is a duality $(u,c)$, \emph{(ii)} that $u$ is invertible, \emph{(iii)} that there is a duality $(u,c)$, again; and \emph{(iv)} that there is a duality $(\eta,\varepsilon)$. \qedhere
	  
	  \begin{figure}[!ht]
		  \centering
		  \scalebox{0.3}{\includegraphics{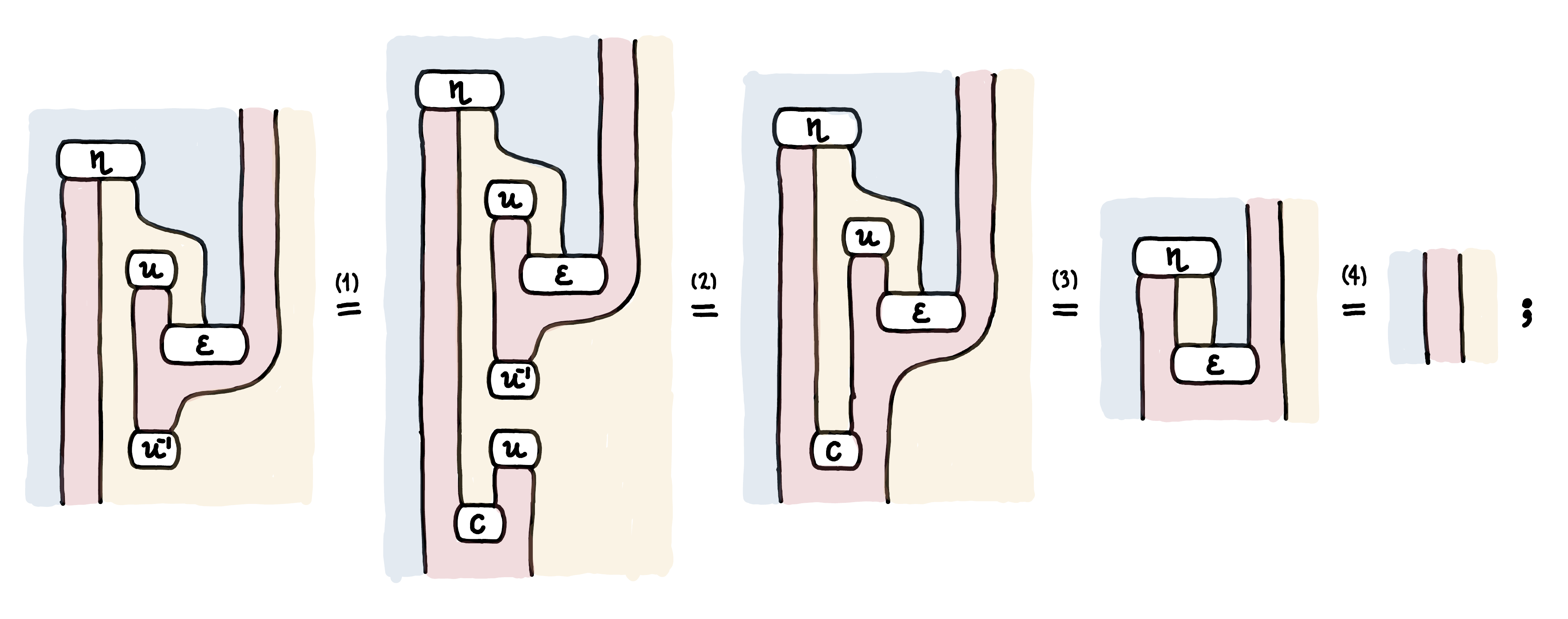}}
		  \caption{Second snake equation.}
		  \label{fig:snake2}
	  \end{figure}
  \end{proof}

\subsection{Bibliography}
String diagrams for bicategories are usually interpreted as the Poincaré dual of \emph{globular pasting diagrams}, which were used by Bénabou since the introduction of \bicategories{} \cite{benabou67}.
Marsden uses string diagrams for \bicategories{} to study basic formal category theory \cite{marsden14:category}.

\newpage
\clearpage{}%
\clearpage{}%
\section{Symmetric Monoidal Categories and Do-Notation}
\label{sec:symmetricMonoidalCategories}

Setting strictness asside, an extra axiom sharpens \monoidalCategories{} for the study of process theories: \emph{symmetry}. Symmetry states that the position that resources occupy is not important: $A ⊗ B$ is worth the same as $B ⊗ A$. Assuming symmetry simplifies process syntax drammatically -- in fact, it also enables a new syntax that mimics imperative programming -- and it is arguably an axiom that we need before we can really talk of \emph{processes}.

This section introduces \symmetricMonoidalCategories{}, their specialized string diagrams in terms of \hypergraphs{} \cite{bonchi16:rewriting}, and their do-notation syntax \cite{hughes00}.

\subsection{Commutative Monoidal Categories}
So far, the meaning of the tensor $(⊗)$ in process theories has arguably been too general: so far, we have asked $A ⊗ B$ to represent the juxtaposition of resources -- resources form a monoid. Why not a commutative monoid?
If we interpret objects as bags of resources, it seems clear that a commutative monoid would be more appropriate. However, imposing commutativity naively fails catastrophically: the individuality of each resource disappears \cite{meseguer90:petriaremonoids}.

\begin{definition}
  A \emph{commutative monoidal category} is a strict monoidal category $(ℂ,⊗,I)$ where objects form a commutative monoid, $A ⊗ A' = A' ⊗ A$ for each $A,A' ∈ ℂ_{obj}$, and the tensor of morphisms is also commutative, $f ⊗ f' = f' ⊗ f$ for any two $f ፡ A → B$ and $f' ፡ A' → B'$.
\end{definition}

\begin{proposition}
  In a commutative monoidal category, resources do not have individuality: it does not matter to which of them we apply a transformation, and not even the order in which we apply them. More formally, 
  $$(f ⨾ g) ⊗ \id = (f ⊗ g) = \id ⊗ (g ⨾ f)$$
  for each two transformations of the same resource, $f ፡ X → X$ and $g ፡ X → X$.
\end{proposition}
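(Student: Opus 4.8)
The plan is to reduce the statement to the interchange axiom for strict monoidal categories (axiom 6) together with the commutativity of the tensor of morphisms. First I would check that the three expressions are well-typed: since objects form a commutative monoid, $X ⊗ X$ is unambiguous, and $(f ⨾ g) ⊗ \id_X$, $f ⊗ g$, and $\id_X ⊗ (g ⨾ f)$ are all morphisms $X ⊗ X → X ⊗ X$, so the claimed equalities make sense.

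For the first equality I would expand the left-hand side by writing $\id_X = \id_X ⨾ \id_X$ and applying interchange, obtaining $(f ⨾ g) ⊗ \id_X = (f ⊗ \id_X) ⨾ (g ⊗ \id_X)$. Separately, inserting identities into the middle term and using interchange again gives $$f ⊗ g = (f ⨾ \id_X) ⊗ (\id_X ⨾ g) = (f ⊗ \id_X) ⨾ (\id_X ⊗ g);$$ and since the tensor of morphisms is commutative, $\id_X ⊗ g = g ⊗ \id_X$. Comparing the two computations yields $(f ⨾ g) ⊗ \id_X = f ⊗ g$.

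The second equality is symmetric. Expanding $\id_X ⊗ (g ⨾ f) = (\id_X ⨾ \id_X) ⊗ (g ⨾ f) = (\id_X ⊗ g) ⨾ (\id_X ⊗ f)$ by interchange, while $g ⊗ f = (g ⨾ \id_X) ⊗ (\id_X ⨾ f) = (g ⊗ \id_X) ⨾ (\id_X ⊗ f)$ and $g ⊗ \id_X = \id_X ⊗ g$ by commutativity, shows $\id_X ⊗ (g ⨾ f) = g ⊗ f$; one last use of commutativity of the tensor of morphisms, $g ⊗ f = f ⊗ g$, closes the chain.

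I do not anticipate a real obstacle: the content is entirely the interplay between the interchange law and the two commutativity clauses in the definition of a commutative monoidal category. The only point that needs care is inserting the identity morphisms so that interchange applies in the intended direction; drawn as string diagrams — a box for $f$ and a box for $g$ placed side by side, each slid up or down past the other — the argument is immediate, and the equational version above is just as short.
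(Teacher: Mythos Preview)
Your proof is correct. The paper actually states this proposition without proof, treating it as an immediate observation motivating the move to \emph{symmetric} rather than commutative monoidal categories; your argument supplies precisely the expected details, namely the interchange law together with the commutativity of the tensor on morphisms, which is the Eckmann--Hilton mechanism at work.
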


This may be useful in specific applications. Indeed, it is one of the crucial ideas behind the formalization of Petri nets as monoids in the work of Meseguer and Montanari \cite{meseguer90:petriaremonoids} -- commutativity represents the ``collective token philosophy'' of Petri nets \cite{baez21:categoriesnets}. However, for our purposes, we will need a more refined notion that does not destroy the individuality of our resources: this notion is given by \symmetricMonoidalCategories{}.

\subsection{Symmetric Monoidal Categories}
\defining{linkProcessTheory}{}
\emph{\SymmetricMonoidalCategories{}} do not assume that the monoid of objects is commutative; they only assume that there is a family of processes that allow us to reorder resources, making it commutative ``up to an invertible process''. In practice, this means that even when $X ⊗ Y \neq Y ⊗ X$, there exists a process $σ_{X,Y} ፡ X ⊗ Y → Y ⊗ X$ that is invertible. These are our definitive notion of \emph{process theory}.

\begin{definition}
  \defining{linkSymmetricMonoidal}{}
  A strict \emph{symmetric monoidal category} (or a \emph{permutative category}) is a strict \monoidalCategory{} $(ℂ, ⊗, I)$ endowed with a family of maps $σ_{X,Y} ፡ X ⊗ Y → Y ⊗ X$ that satisfy the following equations describing how to
  \begin{enumerate}
    \item swap nothing, $σ_{I,X} = \id_X = σ_{X,I}$;
    \item swap resources on the left, $σ_{X,Y ⊗ Z} = (σ_{X,Y} ⊗ \id_Z) ⨾ (\id_Y ⊗ σ_{X,Z})$;
    \item swap resources on the right, $σ_{X ⊗ Y, Z} =  (\id_X ⊗ σ_{Y,Z}) ⨾ (σ_{X,Z} ⊗ \id_Y)$;
    \item reverse a swap with a swap, $σ_{X,Y} ⨾ σ_{Y,X} = \id_X ⊗ \id_Y$;
    \item swap and apply transformations, $σ_{X,Y} ⨾ (f ⊗ g) = (g ⊗ f) ⨾ σ_{X',Y'}$;
    \item and swap in any order, 
    $$(σ_{X,Y} ⊗ \id) ⨾ (\id ⊗ σ_{X,Z}) ⨾ (σ_{Y,Z} ⊗ \id) = (\id ⊗ σ_{Y,Z}) ⨾ (σ_{X,Z} ⊗ \id) ⨾  (\id ⊗ σ_{X,Y}).$$
  \end{enumerate}
\end{definition}

The first three axioms are especially important for clarifying all the rest: they say that the swapping process of any two objects in a freely generated \monoidalCategory{} is determined by the swapping process of the generators. This already allow us to have a first string diagrammatic calculus for \symmetricMonoidalCategories{}: the swap on the generators is represented by wires crossing; the swap on arbitrary objects is constructed from it; the rest of the axioms are better understood in terms of string diagrams (\Cref{fig:permutative-axioms}).

  \begin{figure}[ht] 
    \centering
    \includegraphics[scale=0.35]{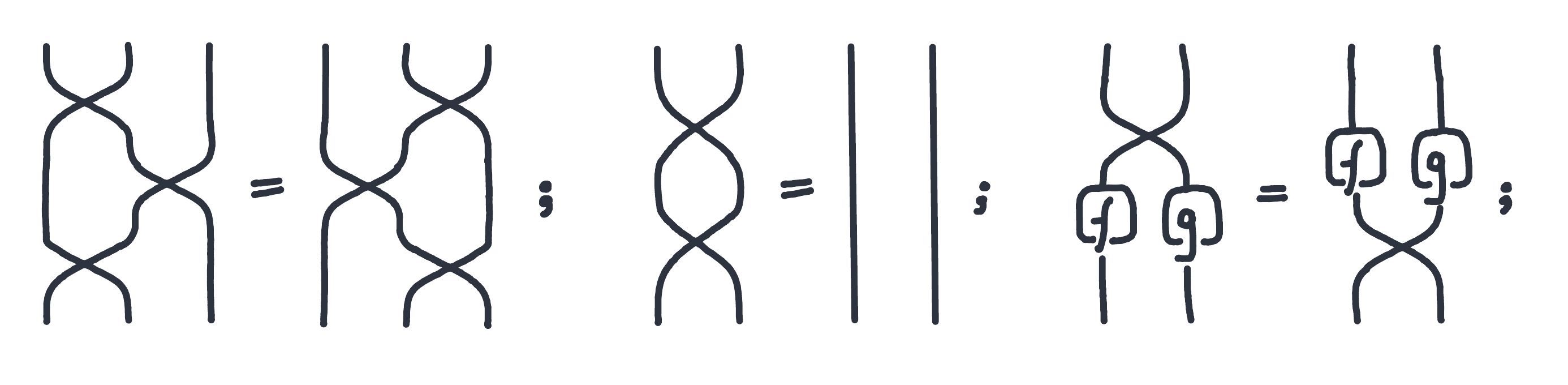} 
    \caption{Theory of strict symmetric monoidal categories.}
    \label{fig:permutative-axioms}
  \end{figure}

However, this is an inefficient syntax: it forces us to explicitly deal with the axioms of the swap. A better syntax would make them transparent, and state that the only thing that matters in a \stringDiagram{} for \symmetricMonoidalCategories{} is where the wires are ultimately connected -- that is, we only care about the underlying \hypergraph{}. A detailed presentation of the string diagrams of \symmetricMonoidalCategories{} as \hypergraphs{} is in the work of Bonchi, Sobocinski, Zanasi, and others \cite{bonchi14,bonchi16:rewriting}.

\begin{definition}
  \defining{linkHypergraph}{}
  A \emph{hypergraph} $(V,E)$ consists of a set of nodes, $V$, and a set of directed hyperedges $E$ connecting lists of vertices to lists of vertices, that is, $e ፡ [v₁,…,vₙ] → [w₁,…wₘ]$ for each $e ∈ E$. We say a \hypergraph{} is acylic if contains no loops.
\end{definition}

\begin{definition}
  A \emph{hypergraph labelled over a \polygraph{}} $𝓖$, is a \hypergraph{} $(V,E)$ such that each vertex $v ∈ V$ is assigned an object of the \polygraph{}, $l(v) ∈ 𝓖_{obj}$, and each hyperedge $e ፡ [v₁,…,vₙ] → [w₁,…wₘ]$ is assigned an edge 
  $$l(e) ፡ l(v₁),…,l(vₙ) → l(w₁),…,l(wₘ),$$ preserving the type of its vertices.
\end{definition}

\begin{definition}
  A \emph{symmetric string diagram} from $[X₁,…,Xₙ]$ to $[Y₁,…,Yₘ]$ is an acyclic \hypergraph{} $(V,E)$ labelled by a \polygraph{} $𝓖$, such that each vertex appears exactly once as an input and exactly once as an output, and endowed with two distinguished unlabelled hyperedges: 
  $$\mbox{the input }i ፡ [] → [x₁,…,xₙ], \mbox{ and the output } o ፡ [y₁,…,yₘ] → [],$$ 
  typed by $l(x₁) = X₁, … , l(xₙ) = Xₙ$ and $l(y₁) = Y₁, … , l(yₘ) = Yₘ$.
\end{definition}

\begin{proposition}
  \defining{linkSymmetricStringDiagram}{}
  Symmetric string diagrams over a \polygraph{} $𝓖$ form a \symmetricMonoidalCategory{}, $\StringSigma(𝓖)$.
\end{proposition}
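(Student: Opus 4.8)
The plan is to verify directly the axioms of a strict \symmetricMonoidalCategory{} for the stated combinatorial data, throughout identifying two symmetric \stringDiagrams{} whenever there is an isomorphism of \hypergraphs{} between them that fixes the two distinguished boundary hyperedges and their vertices pointwise. On objects, $\StringSigma(𝓖)$ has the finite lists of objects of $𝓖$, a monoid under concatenation with the empty list as unit. A morphism $[X_1,\dots,X_n]\to[Y_1,\dots,Y_m]$ is a symmetric \stringDiagram{} as in the statement, with input hyperedge $i$ and output hyperedge $o$; the identity on $[X_1,\dots,X_n]$ is the \hypergraph{} with vertices $x_1,\dots,x_n$, no internal hyperedges, $i\colon[]\to[x_1,\dots,x_n]$ and $o\colon[x_1,\dots,x_n]\to[]$, i.e.\ the diagram of $n$ bare wires. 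One could alternatively recognise these diagrams as a presentation of discrete cospans of \hypergraphs{} and quote the hypergraph-category literature, but I will carry out the elementary check.

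The substance is the definition of sequential composition and the verification that it is well defined. Given $f\colon[X_1,\dots,X_n]\to[Y_1,\dots,Y_m]$ with boundary hyperedges $i_f,o_f$ and $g\colon[Y_1,\dots,Y_m]\to[Z_1,\dots,Z_k]$ with boundary hyperedges $i_g,o_g$, I form $f⨾g$ by taking the disjoint union of the two \hypergraphs{}, identifying for each $j$ the $j$-th vertex in the input list of $o_f$ with the $j$-th vertex in the output list of $i_g$, and then deleting the hyperedges $o_f$ and $i_g$; the distinguished hyperedges of the result are $i_f$ and $o_g$. Three points need checking. First, \emph{well-typedness}: the identified vertices carry matching labels because the output list of $f$ and the input list of $g$ are both $[Y_1,\dots,Y_m]$. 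Second, \emph{the ``produced once, consumed once'' invariant}: every vertex outside the interface is untouched, while an interface vertex was consumed by $o_f$ on the $f$-side and produced by $i_g$ on the $g$-side, so after deleting both it is produced by the unique $f$-hyperedge that had it in an output list and consumed by the unique $g$-hyperedge that has it in an input list. Third, \emph{acyclicity}: consider the relation on hyperedges in which one edge precedes another when an output vertex of the former lies in an input list of the latter; the only hyperedges ever linking the $f$-part to the $g$-part were $o_f$ and $i_g$, which are deleted, so no chain in this relation can pass from the $g$-part back to the $f$-part, and no cycle is created. Unitality is then immediate, since composing with a diagram of bare wires only renames boundary vertices, and associativity of $⨾$ follows because both parenthesisations of $f⨾g⨾h$ yield the same \hypergraph{} after gluing the two interfaces — it is a direct comparison of the two constructions.

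Next I would install the monoidal structure. The tensor $f\otimes g$ is the disjoint union of the two \hypergraphs{} with the two input hyperedges merged into a single $i$ (concatenating their output lists) and the two output hyperedges merged into a single $o$; on objects it is list concatenation. The interchange law $(f⨾g)\otimes(f'⨾g')=(f\otimes f')⨾(g\otimes g')$ holds because both sides are, up to the chosen isomorphisms, the disjoint union of the four diagrams with the two evident interfaces glued, and $\id\otimes\id=\id$ is clear; strict associativity and unitality of $\otimes$ on morphisms are inherited from concatenation of lists. Hence $(\StringSigma(𝓖),\otimes,[\,])$ is a \strictMonoidalCategory{}. Finally the symmetry $σ_{[X_1,\dots,X_n],[Y_1,\dots,Y_m]}$ is the \hypergraph{} with vertices $x_1,\dots,x_n,y_1,\dots,y_m$, no internal hyperedges, $i\colon[]\to[x_1,\dots,x_n,y_1,\dots,y_m]$ and $o\colon[y_1,\dots,y_m,x_1,\dots,x_n]\to[]$; each of the six symmetry axioms relates composites of such ``crossing'' diagrams, bare wires, and at most one internal block ($f\otimes g$), and since the entire combinatorial datum of such a composite is determined by the permutation of boundary vertices it induces, every axiom reduces to the corresponding identity among permutations of finite lists, which holds on the nose.

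The step I expect to be the main obstacle is exactly the well-definedness of sequential composition: confirming that the gluing preserves acyclicity and the ``produced once, consumed once'' invariant, and that all the operations descend to isomorphism classes of boundary-rooted \hypergraphs{}, so that associativity and the symmetry axioms can be asserted as strict equalities rather than coherent isomorphisms. By contrast, once the combinatorics of composition is pinned down, upgrading the \strictMonoidalCategory{} to a \symmetricMonoidalCategory{} is routine, because the symmetry morphisms carry no internal structure and their defining equations are permutation identities.
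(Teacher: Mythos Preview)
Your proposal is correct and follows essentially the same approach as the paper: define the operations (composition by gluing along the interface and deleting the boundary hyperedges, tensor by disjoint union with merged boundaries, identities and symmetries as bare-wire diagrams) and then verify the axioms of a strict \symmetricMonoidalCategory{}. The paper's own proof is only a sketch that names these operations and asserts the axioms hold; you have filled in the details the paper omits, in particular the preservation of acyclicity and of the ``each vertex is produced once and consumed once'' invariant under composition, and the observation that the symmetry axioms reduce to permutation identities.
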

\begin{figure}[ht] 
  \centering
  \includegraphics[scale=0.35]{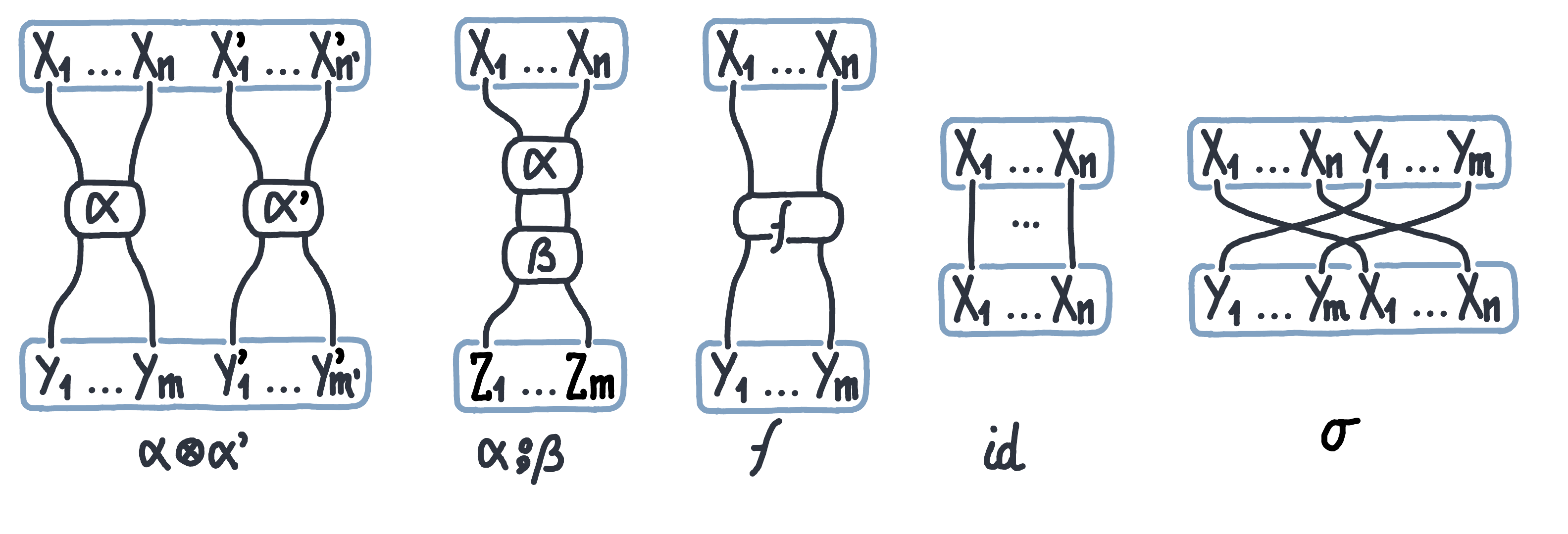} 
  \caption{Symmetric monoidal category of string diagrams.}
  \label{fig:symmetric-monoidal-string-diagrams}
\end{figure}
\begin{proof}
  A summary of the construction is in \Cref{fig:symmetric-monoidal-string-diagrams}: wires are vertices and hyperedges are nodes.
  Let us describe the category. The objects are lists of objects of the \polygraph{}.
  Tensoring of string diagrams of type $[X₁,\mydots,Xₙ] → [Y₁,\mydots,Yₘ]$ and $[X₁',\mydots,Xₙ'] → [Y₁',\mydots,Yₘ']$ is defined by the disjoint union of the hypergraphs -- merging input and output edges -- into a string diagram $[X₁,\mydots,Xₙ,X₁',\mydots,Xₙ'] → [Y₁,\mydots,Yₘ,Y₁',\mydots,Yₘ']$.
  Composition of string diagrams of type $[X₁,\mydots,Xₙ] → [Y₁,\mydots,Yₘ]$ and $[Y₁,\mydots,Yₘ] → [Z₁,\mydots,Zₚ]$ is constructed by glueing the vertices along the output edge of the first and the input edge of the second, which disappear producing a string diagram $[X₁,…,Xₙ] → [Z₁,…,Zₚ]$. Generators are included as single hyperedges labelled by them. The identity, $[X₁,…,Xₙ] → [X₁,…,Xₙ]$, consists only of the input and output edges, connected by a list of vertices.  Symmetries are defined by twisting the connections of the input and output edges. Finally, we can check that string diagrams satisfy the axioms of \symmetricMonoidalCategories{}.
\end{proof}

\begin{definition}
  \defining{linkSymMonCatStr}{}
  A \emph{strict symmetric monoidal functor} between two strict symmetric monoidal categories, $F ፡ ℂ → 𝔻$, is a strict monoidal functor that moreover preserves symmetries: $F(σ) ፡ F(X) ⊗ F(Y) → F(Y) ⊗ F(X)$ is the symmetry on $F(X)$ and $F(Y)$. Strict \symmetricMonoidalCategories{} and strict symmetric monoidal functors form a category, $\mathbf{SymMonCat}_\mathsf{Str}$.
\end{definition}

\begin{proposition}
  The construction of symmetric string diagrams extends to a functor from \polygraphs{} to \symmetricMonoidalCategories{}, 
  $$\StringSigma ፡ \PolyGraph{} → \SymMonCatStr.$$
\end{proposition}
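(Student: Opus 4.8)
The plan is to define $\StringSigma$ on a \polygraph{} homomorphism $f ፡ 𝓖 → 𝓗$ by \emph{relabeling}, and then to verify in turn that the result is a strict symmetric monoidal functor and that the assignment $f \mapsto \StringSigma(f)$ is itself functorial. Concretely, given the object component $f_{\mathrm{o}} ፡ 𝓖_{obj} → 𝓗_{obj}$ together with the edge components of $f$, I would send a symmetric string diagram $(V,E)$ over $𝓖$ — an acyclic hypergraph with vertex labeling $l ፡ V → 𝓖_{obj}$ and hyperedge labeling into $𝓖$, plus the distinguished input and output edges — to the \emph{same} underlying hypergraph $(V,E)$, now carrying the vertex labeling $f_{\mathrm{o}} \circ l$ and the hyperedge labeling obtained by postcomposing with $f$, leaving the distinguished edges unchanged. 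The first point to check is well-typedness: because $f$ preserves the types of the vertices attached to each generator, the relabeled hyperedge $f(l(e))$ connects exactly the $f_{\mathrm{o}}$-relabeled vertex types, so the relabeled diagram is again a symmetric string diagram over $𝓗$, with source and target lists the $f_{\mathrm{o}}$-images of the originals.

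Next I would check that $\StringSigma(f)$ is a strict symmetric monoidal functor, i.e. that it lands in $\SymMonCatStr$. On objects it sends $[X₁,…,Xₙ]$ to $[f_{\mathrm{o}}X₁,…,f_{\mathrm{o}}Xₙ]$, which is a monoid homomorphism for concatenation. Recall from the previous proposition that the tensor of string diagrams is disjoint union of hypergraphs (merging the input and output edges), that sequential composition is gluing of vertices along the shared boundary edge, that identities are two boundary edges joined by a list of vertices, and that symmetries are boundary edges with twisted connections; all of these operations are defined purely on the combinatorial structure of the hypergraph, transporting the labeling along. Since relabeling touches only the labels and never this combinatorial structure, it commutes on the nose with every one of these operations, so $\StringSigma(f)$ preserves $(⊗)$, $(⨾)$, identities and symmetries.

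Finally, for functoriality of $\StringSigma$ itself: relabeling along $\id_{𝓖}$ is the identity on hypergraphs-with-labels, so $\StringSigma(\id_{𝓖})$ is the identity functor; and for composable homomorphisms $g ፡ 𝓖 → 𝓗$ and $h ፡ 𝓗 → 𝓚$, relabeling along $h \circ g$ is relabeling along $h$ after relabeling along $g$, hence $\StringSigma(h \circ g) = \StringSigma(h) \circ \StringSigma(g)$. I do not expect a genuine obstacle: the only step needing a little care is the verification that relabeling commutes with the gluing and merging operations defining composition and tensor — since those delete boundary edges and identify vertices — but as these are specified combinatorially, independently of the labels, the compatibility is automatic. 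The substantive work was already done in showing that each $\StringSigma(𝓖)$ is a symmetric monoidal category; what remains is bookkeeping about labels.
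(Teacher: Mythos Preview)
Your proposal is correct and is the standard relabeling argument. The paper itself states this proposition without proof, so there is nothing to compare against; your write-up fills in exactly the routine verification one would expect, and the care you take about gluing and merging being label-independent is the right point to flag.
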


\begin{proposition}
  There exists a forgetful functor from strict \symmetricMonoidalCategories{} to \polygraphs{} that takes the objects as the vertices of the \polygraph{} and the morphisms as the edges, 
  $$\mathsf{Forget} ፡ \SymMonCatStr → \PolyGraph.$$
\end{proposition}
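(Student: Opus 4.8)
The plan is to obtain this functor as a composite, reusing the forgetful functor $\MonCatStr → \PolyGraph$ constructed earlier. First I would observe that a strict \symmetricMonoidalCategory{} is in particular a \strictMonoidalCategory{}, and a strict symmetric monoidal functor is in particular a strict monoidal functor; discarding the symmetry family $σ$ therefore defines a functor $U ፡ \SymMonCatStr → \MonCatStr$. The forgetful functor claimed in the statement is then simply the composite of $U$ with $\mathsf{Forget} ፡ \MonCatStr → \PolyGraph$, which is automatically a functor since a composite of functors is a functor.

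To see that this composite really has the stated description, I would unpack it. On objects it sends a strict \symmetricMonoidalCategory{} $ℂ$ to the \polygraph{} whose vertex set is $ℂ_{obj}$ and whose edge sets are $$\mathsf{Forget}(ℂ)(A_{0},\dots,A_{n};B_{0},\dots,B_{m}) = ℂ(A_{0} ⊗ \dots ⊗ A_{n}; B_{0} ⊗ \dots ⊗ B_{m}),$$ forgetting sequential composition, tensoring, identities and symmetries. On morphisms, a strict symmetric monoidal functor $F ፡ ℂ → 𝔻$ acts on objects by a monoid homomorphism $F_{obj} ፡ ℂ_{obj} → 𝔻_{obj}$, i.e.\ a function on vertices, and since $F$ is strict monoidal it carries $f ∈ ℂ(A_{0} ⊗ \dots ⊗ A_{n}; B_{0} ⊗ \dots ⊗ B_{m})$ to $F(f) ∈ 𝔻(F A_{0} ⊗ \dots ⊗ F A_{n}; F B_{0} ⊗ \dots ⊗ F B_{m})$, which is exactly a well-typed family of functions on the corresponding edge sets; hence $\mathsf{Forget}(F)$ is a \polygraph{} homomorphism.

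If one prefers not to invoke the composite, functoriality can be checked by hand: $\mathsf{Forget}(\id_{ℂ})$ is the identity \polygraph{} homomorphism because the identity functor fixes objects and morphisms, and $\mathsf{Forget}(G \circ F) = \mathsf{Forget}(G) \circ \mathsf{Forget}(F)$ since composition of symmetric monoidal functors is computed componentwise on objects and morphisms, matching composition of \polygraph{} homomorphisms. I do not expect any genuine obstacle here; the only point worth stressing is that the symmetry $σ$ imposes no constraint at all on the underlying \polygraph{}, which is exactly why the construction factors through $\MonCatStr$ and requires nothing beyond the verification already done in the non-symmetric case.
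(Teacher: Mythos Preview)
Your proposal is correct. The paper actually states this proposition without any proof, treating it as evident; your argument, which factors the forgetful functor through $\MonCatStr$ and invokes the earlier lemma establishing $\mathsf{Forget} ፡ \MonCatStr → \PolyGraph$, is exactly the natural justification and matches the spirit of the paper's treatment in the non-symmetric case.
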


\begin{theorem}
  String diagrams for \symmetricMonoidalCategories{} form the free strict \symmetricMonoidalCategory{} over a \polygraph{}:
  there exists an adjunction $\StringSigma{} \dashv \mathsf{Forget}$.
\end{theorem}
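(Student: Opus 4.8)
The plan is to produce, for each \polygraph{} $\mathcal{G}$, a universal arrow from $\mathcal{G}$ to $\mathsf{Forget}$, from which the adjunction follows by the standard characterization of adjoints by universal arrows. First I would define the unit $\eta_{\mathcal{G}} ፡ \mathcal{G} → \mathsf{Forget}(\StringSigma(\mathcal{G}))$: it is the identity on objects, and it sends a generator $g ፡ A_1,\dots,A_n → B_1,\dots,B_m$ to the symmetric \stringDiagram{} consisting of one $g$-labelled hyperedge wired in order to the distinguished input and output edges. That $\eta_{\mathcal{G}}$ is a \polygraph{} homomorphism is immediate from the construction of $\StringSigma(\mathcal{G})$, and naturality in $\mathcal{G}$ follows because $\StringSigma$ includes generators freely; functoriality of $\mathsf{Forget}$ is routine.

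The substance is the universal property. Given a strict \symmetricMonoidalCategory{} $\mathbb{D}$ and a homomorphism $f ፡ \mathcal{G} → \mathsf{Forget}(\mathbb{D})$, I must build a unique strict symmetric monoidal functor $\hat{f} ፡ \StringSigma(\mathcal{G}) → \mathbb{D}$ restricting to $f$ along $\eta_{\mathcal{G}}$. Strictness forces $\hat{f}[X_1,\dots,X_n] = fX_1 ⊗ \dots ⊗ fX_n$, and preservation of $⨾$, $⊗$, $\id$ and $σ$ forces the value of $\hat{f}$ on any diagram that is presented as a composite of generators, identity wires and symmetries. Every symmetric \stringDiagram{} admits such a presentation: topologically sort the hyperedges of the underlying acyclic \hypergraph{} and read the diagram off one hyperedge at a time, padding each layer with identity wires and inserting the vertex permutations needed to align successive layers. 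This assigns a candidate value $\hat{f}(\Gamma) ∈ \mathbb{D}$ to every diagram $\Gamma$.

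The one genuinely nontrivial step --- and where I expect the main difficulty --- is \textbf{well-definedness}: the value $\hat{f}(\Gamma)$ must not depend on the chosen layering. Any two layerings of the same \hypergraph{} differ by a finite sequence of local moves: sliding independent hyperedges between adjacent layers, inserting or deleting identity wires, and rerouting through permutations of vertices; each move is precisely one of the equations defining a strict \symmetricMonoidalCategory{} (interchange, unitality, the symmetric-group relations, and naturality of $σ$), hence is respected by any interpretation into $\mathbb{D}$. I expect to discharge this along one of two routes: either reduce to the already-established non-symmetric adjunction $\mathsf{String} ⊣ \mathsf{Forget}$ of Joyal and Street \cite{joyal91:geometryOfTensorCalculus} by treating a symmetric \stringDiagram{} as a progressive diagram in which crossings are nodes labelled by formal symmetries, and then check that quotienting by the six symmetry axioms yields exactly the \hypergraph{} equivalence; or argue directly via confluent hypergraph rewriting in the style of Bonchi, Sobociński and Zanasi \cite{bonchi14,bonchi16:rewriting}, which supplies a normal form and hence a canonical layering.

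Once well-definedness is in hand, the rest is routine. Functoriality of $\hat{f}$ --- preservation of $⨾$ by glueing layerings, of $⊗$ by juxtaposing them, and of $\id$ and $σ$ by inspection --- is read off the layering construction; $\hat{f}$ agrees with $f$ on generators by the definition of $\eta_{\mathcal{G}}$; and uniqueness holds because every morphism of $\StringSigma(\mathcal{G})$ is such a composite, on which the value of $\hat{f}$ was forced. Thus $\eta_{\mathcal{G}}$ is the required universal arrow, yielding the adjunction $\StringSigma ⊣ \mathsf{Forget}$ with the claimed unit.
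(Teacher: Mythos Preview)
Your proposal is correct and follows essentially the same approach as the paper: both set up the universal arrow, identify well-definedness of the induced functor as the one nontrivial step, and defer that step to the hypergraph-rewriting literature of Bonchi, Sobociński and Zanasi \cite{bonchi16:rewriting}. If anything, your outline is more detailed than the paper's own proof sketch, which simply notes that the assignment is forced by the generators and operations, flags well-definedness as the difficulty, and refers the reader to \cite{bonchi16:rewriting}.
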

\begin{proof}[Proof sketch]
  We have already shown that string diagrams form a symmetric monoidal category. It only remains to show that there exists a unique strict symmetric monoidal functor to any strict symmetric monoidal category. The assignment is determined by the fact that each string diagram is constructed from the generators and the operations of a symmetric monoidal category, the difficulty is in showing that this assignment is well-defined. We refer the reader to the work of Bonchi and others \cite{bonchi16:rewriting}.
\end{proof}

The syntax of symmetric string diagrams as \hypergraphs{} is more efficient: to check equality, only the connectivity of the wires matters, and we no longer need to track the specific blocks forming the diagram.

\subsection{Do-Notation}

There is a second practical syntax for \symmetricMonoidalCategories{} that links \stringDiagrams{} to programming: Hughes' \emph{arrow do-notation} \cite{hughes00,paterson01:arrows}. It comes from \emph{functional programming}, but it is precisely a representation of \emph{imperative programming}. The main idea is that, in a string diagram, we can label the wires by variable names, and simply declare which nodes take which inputs and outputs to reconstruct the \stringDiagram{}. In a certain sense, this is the graph encoding of a \stringDiagram{}, but it closely resembles an imperative program. 

\begin{example}[Crema di Mascarpone in Do-notation]
  Consider the same process for ``crema di mascarpone'' that we detailed in \Cref{sec:cremadimascarpone}.
  This time, we can directly assume that we are in a \symmetricMonoidalCategory{}. The translation of the string diagram of \Cref{fig:mascarpone} is the following code in \Cref{fig:code-mascarpone}.
  \begin{figure}[ht] 
    \centering
    \includegraphics[scale=0.15]{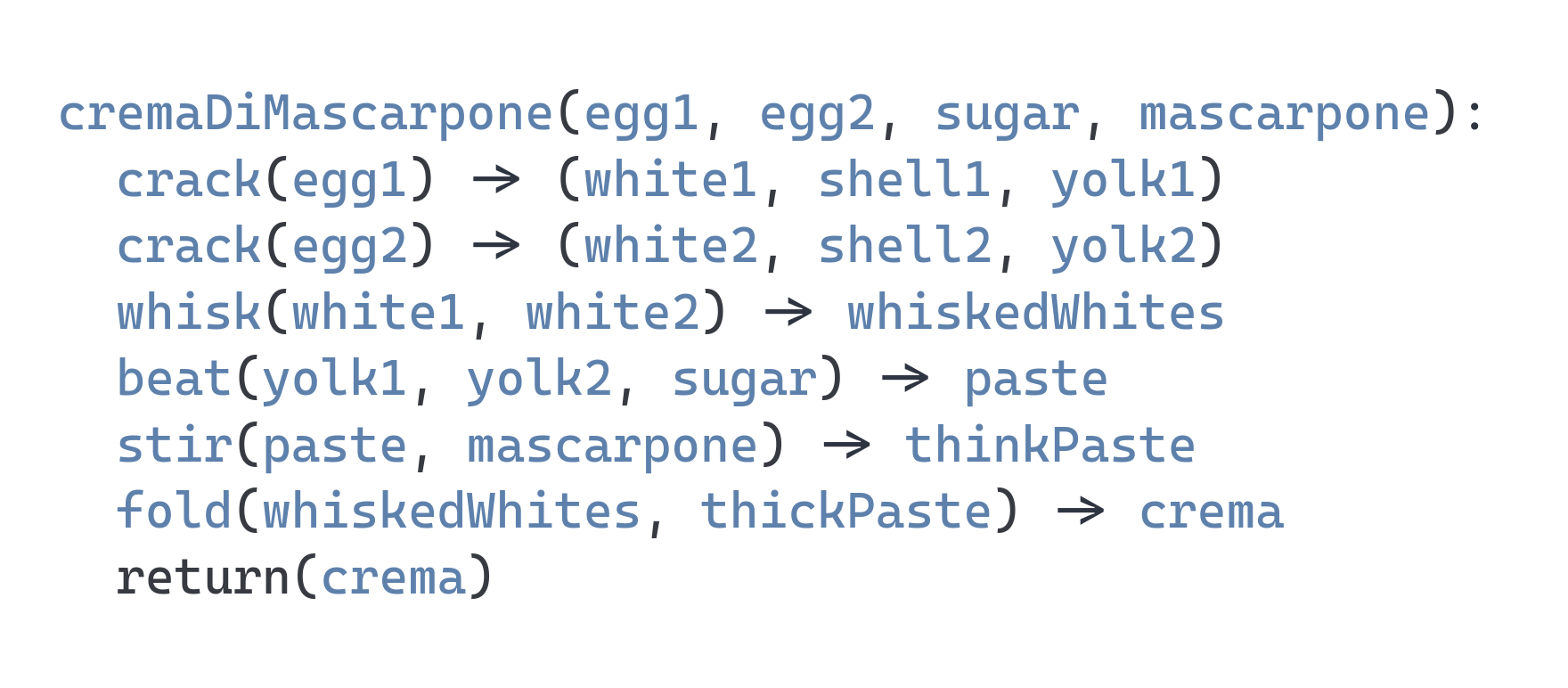} 
    \caption{Do-notation recipe for Crema di Mascarpone.}
    \label{fig:code-mascarpone}
  \end{figure}
\end{example}

Let us formalize do-notation in the style of a type-theory: we will work with variables, we consider them to be unique and we work implicitly up to $α$-equivalence (or renaming of variables). Our notion of signature is again that of a \polygraph{}: our basic types will be the objects of the polygraph, and we will have a rule for each one of the generators in the \polygraph{}.

\begin{definition}
  A \emph{derivation} in the proof theory of do-notation over a \polygraph{} is defined inductively to be either
  \begin{enumerate}
    \item a single $\mathsf{return}$ statement, given by a permutation; or
    \item the application of a generator $f ፡ A₀,…,Aₙ → B₀,…,Bₘ$, given by a choice of generator and an insertion of variables, followed by a derivation.
  \end{enumerate}

\begin{figure}[ht!]
  \begin{mathpar}
    \inferrule* [Right=(Return)]
    { }
    {\var{a_0} {:} A_0, … , \var{a_n} {:} A_n ≫_{τ} () ⊢ \var{\return(a_0,…,a_n)} : A_0 ⊗ … ⊗ A_n}
  \end{mathpar}
  \begin{mathpar}
    \inferrule*[Right=($f$)]
    {\var{b₀} {:} B₀, \dots , \var{bₘ} {:} Bₘ, Γ ⊢ \var{t} : Δ\hspace{11em}}
    {\var{a₀} {:} A₀, \dots , \var{aₙ} {:} Aₙ ≫_{τ} Γ ⊢ \var{f(a₀,…,aₙ) → b₀,…,bₘ ⨾ t} : Δ}
  \end{mathpar}
  \caption{Do-notation for symmetric monoidal categories.}
  \end{figure}
\end{definition}

Before continuing, then, it is important to understand what an \emph{insertion} is. An insertion captures how many ways we have of inserting some $n$ new terms into a list of $m$ terms. The new $n$ terms can be freely permuted, but the list of $m$ terms must preserve their relative order. This is the combinatorial structure that will track how variables are used in a derivation.

\begin{definition}
  We define the family of insertions of $n$ terms into $m$ terms, $\Ins(n,m)$, inductively. There exists a single way of inserting zero terms into any list of terms, $\mathsf{Ins}(0,m) = 1$; inserting $n+1$ terms into a list of $m$ terms amounts to choosing the position of the first among $m+1$ possible choices, and inserting the rest of the terms,
  $$\Ins(n+1,m) = (m+1) × \Ins(n,m+1).$$
  As a consequence, the number of possible insertions is $\Ins(n,m) = (m+n)!/m!$.

  We write $a₁,…,aₙ ≫_{τ} Γ$ to refer to the list resulting from the insertion of the variables $a₁,…,aₙ$ into the list $Γ$, of length $m$, according to the insertion $τ ∈ \Ins(n,m)$.
\end{definition}

\begin{remark}
  Accordingly, the only information that a return statement may carry is that of an insertion $\Ins(n,0)$, which is equivalently a permutation of the $n$ elements that are being returned. The information carried by a generator statement is not only a generator $n$-to-$m$ but also an insertion $\Ins(n,\#Γ)$ of $n$ variables on the context of the derivation.
\end{remark}

\begin{example}
  Let us provide an example of the correspondence between the different notations: a Rosetta's stone translating between string diagrams, terms of do-notation, and their corresponding derivations (\Cref{fig:rosettaexample}). 
  \begin{figure}[ht]
    \begin{minipage}{0.3\textwidth}
      \includegraphics[scale=0.35]{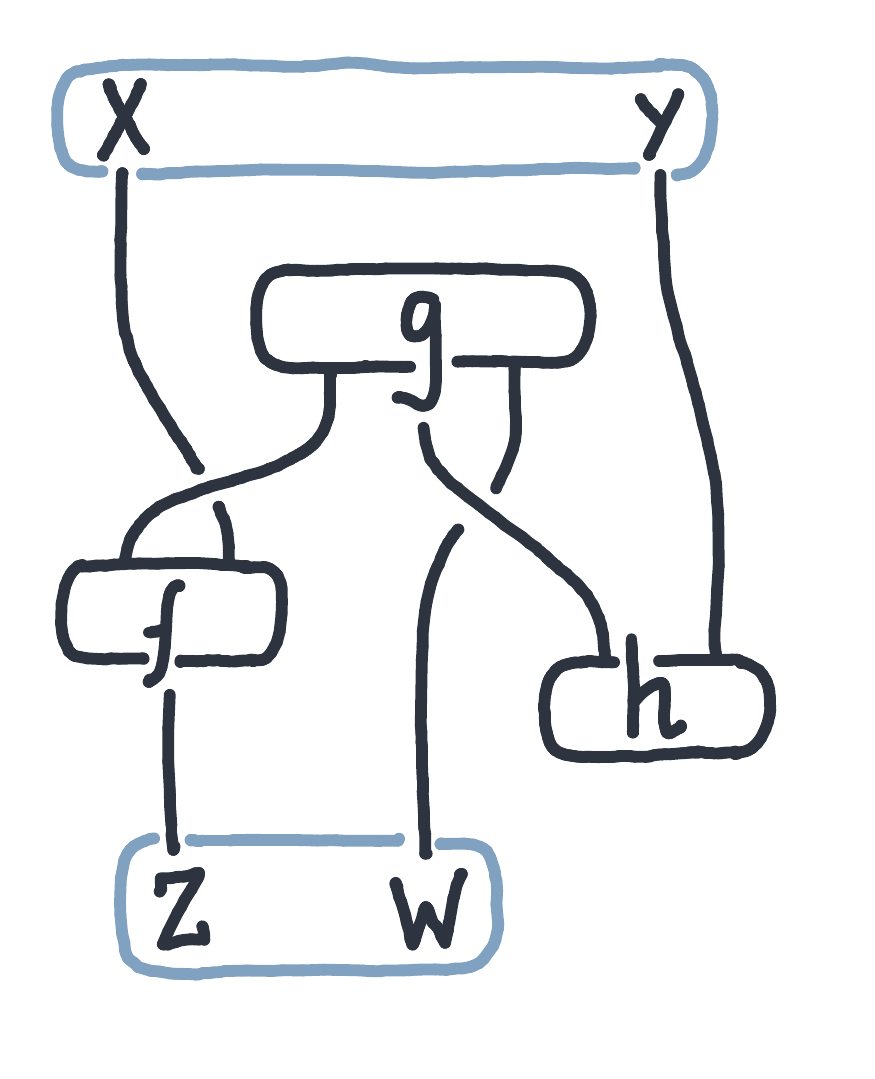}
    \end{minipage}
    \begin{minipage}{0.65\textwidth}
      \begin{mathpar}
        \inferrule*{ }{
          \inferrule*
          { \var{z}{:}Z, \var{w}{:} W ⊢ \var{\return(z,w)} : Z ⊗ W}
          {\inferrule*{ 
            { \begin{aligned} \var{z} {:} Z,\var{b} {:} B,\var{w} {:} W, \var{y} {:} Y  ⊢\ & \var{h(b,y) → () ⨾} 
              \hspace{5em} \\[-5pt] 
              & \var{\return(z,w)} : Z ⊗ W \end{aligned}} }
            { \inferrule* { { \begin{aligned} \var{a} {:} A, \var{b} {:} B, \var{w} {:} W, \var{x} {:} X, \var{y} {:} Y  ⊢\ 
              & \var{f(a,x) → z\, ⨾} \hspace{5em}\\[-5pt]
              & \var{h(b,y) → ()\, ⨾} \\[-5pt] 
              & \var{\return(z,w)} : Z ⊗ W \end{aligned}} }  
            { { \begin{aligned} \var{x} {:} X, \var{y} {:} Y  ⊢\ 
                & \var{g() → (a,b,w)\, ⨾} \\[-5pt]
                & \var{f(a,x) → z\, ⨾} \\[-5pt]
                & \var{h(b,y) → ()\, ⨾} \\[-5pt] 
                & \return(z,w) : Z ⊗ W \end{aligned}} } }
          }}
      \end{mathpar}
    \end{minipage}
    \caption{String diagram and derivation of a term.}
    \label{fig:rosettaexample}
  \end{figure}
\end{example}

\subsection{Symmetry in Do-notation}

At this point, using insertions may seem complicated: why not simply assume an exchange rule that allows us to permute variables freely? The problem we would encounter is that exchanges introduce redundancy: there would be multiple ways of writing the same term, depending on where we place the symmetries (Shulman describes the same problem for a different notation \cite{shulman:catlog}). This is not a catastrophic problem -- we could still quotient them out appropriately -- but it would make the construction much more complicated than simply dealing with the combinatorial structure of insertions upfront.

The better solution is to have exchange appear as a derived, admissible rule, rather than a primitive.

\begin{proposition}
  Exchange is admissible in Do-notation for symmetric monoidal categories. Any derivation $Γ,\var{x},\var{y},Δ ⊢ \var{t} : X$ admits a derivation $Γ,\var{y},\var{x},Δ ⊢ \var{t} : X$.
\end{proposition}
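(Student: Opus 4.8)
The plan is to prove the statement by structural induction on the derivation of $\var{t}$, the only real content being the bookkeeping of how an adjacent transposition of a context interacts with the insertions labelling each step. I would first record the key fact about insertions: $\Ins(n,m)$ parametrizes the ways of interleaving $n$ freely reorderable items with $m$ items whose relative order is fixed (consistent with $\Ins(n,m) = (m+n)!/m! = \binom{m+n}{n} \cdot n!$). Hence, given $\tau \in \Ins(n,m)$ and two adjacent positions of $a_1, \dots, a_n \gg_\tau \Gamma$ of which at least one comes from the inserted $a_i$, there is a $\tau' \in \Ins(n,m)$ with $a_1, \dots, a_n \gg_{\tau'} \Gamma$ equal to the same list with those two positions swapped. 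If instead both positions come from $\Gamma$, they are necessarily adjacent in $\Gamma$, and no such $\tau'$ exists, since every insertion preserves the relative order of $\Gamma$ — this is precisely the situation in which the induction hypothesis will be needed.

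For the base case, a derivation ending in $(\mathsf{Return})$ has context $a_0, \dots, a_n \gg_\tau ()$, a permutation of $[a_0, \dots, a_n]$, and term $\return(a_0, \dots, a_n)$ with output type $A_0 \otimes \dots \otimes A_n$; precomposing $\tau$ with the required adjacent transposition and re-applying $(\mathsf{Return})$ gives the same term and output type over the swapped context. For the inductive step, a derivation ends with an application of a generator $f \colon A_0, \dots, A_n \to B_0, \dots, B_m$ with insertion $\tau$, from a premise derivation of $\var{b_0} {:} B_0, \dots, \var{b_m} {:} B_m, \Gamma \vdash \var{t'} : \Delta$; its context is $a_0, \dots, a_n \gg_\tau \Gamma$ and its term is the application of $f$ followed by $\var{t'}$. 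I split on the pair of adjacent variables being exchanged. If both lie among the arguments $a_i$, or one is some $a_i$ and the other a variable of $\Gamma$, I replace $\tau$ by the $\tau'$ above and re-apply $(f)$ to the unchanged premise. If both lie in $\Gamma$, they are adjacent in $\Gamma$; I apply the induction hypothesis to the premise to obtain a derivation of $\var{b_0} {:} B_0, \dots, \var{b_m} {:} B_m, \Gamma' \vdash \var{t'} : \Delta$ with $\Gamma'$ the swapped segment, then pick $\tau'$ with $a_0, \dots, a_n \gg_{\tau'} \Gamma'$ equal to the desired context and re-apply $(f)$. In every case the reconstructed term equals the original $\var{t}$ verbatim and the output type is unchanged, as required.

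The point I expect to be fiddliest, and would spell out with care, is the last case: checking that after reordering $\Gamma$ by the induction hypothesis there really is an insertion reproducing exactly the transposed conclusion context. This is organized most cleanly by strengthening the statement to: for any derivation $\Xi \vdash \var{t} : X$ and any permutation $\pi$ of its context $\Xi$, there is a derivation $\pi\Xi \vdash \var{t} : X$. Then in the generator step one factors $\pi$ as the permutation $\rho$ it induces on the $\Gamma$-entries of $\Xi$ — to which the induction hypothesis applies via the block permutation $\id \oplus \rho$ on the premise context $\var{b_0} {:} B_0, \dots, \var{b_m} {:} B_m, \Gamma$ — together with the interleaving-and-reordering of $a_0, \dots, a_n$ that $\pi$ induces, which is exactly the datum of a new insertion $\tau' \in \Ins(n, \# \Gamma)$. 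Adjacent transpositions are the special case, and since they generate the symmetric group the two formulations are equivalent; beyond this factorization lemma I anticipate no obstacles.
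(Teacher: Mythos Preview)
Your proof is correct and follows essentially the same approach as the paper's: structural induction on the derivation, with the base case handled by adjusting the insertion in the $\return$ rule, and the inductive step split on whether at least one of the swapped variables lies among the inserted $a_i$ (adjust the insertion) or both lie in $\Gamma$ (apply the induction hypothesis to the premise). The paper's version is terser---it collapses your three subcases into two and does not discuss the strengthened statement for arbitrary permutations---but the argument is the same.
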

\begin{proof}
  We proceed by structural induction. The base case is a single $\return$ statement, written as $\var{a₀,…,x,y,…,aₙ} ≫_σ () ⊢ \var{\return(a₀,…,x,…,y,…aₙ)} : X$. Permuting $\var{x,y}$ in the insertion $τ$ gives us a new insertion $τ_{\var{xy}}$ deriving the same statement under a different context
  $$\var{a₀,…,y,x,…,aₙ} ≫_{τ_{\var{xy}}} () ⊢ \var{\return(a₀,…,x,…,y,…aₙ)} : X.$$

  Consider now an application of a generator, $$\var{a₀},…,\var{aₙ} ≫_{τ} Γ ⊢ \var{f(a₀,…,aₙ) →  b₀,…,bₘ ⨾ term} : X.$$ There are two possible cases here: (1) if any of $\var{x,y}$ is among the inserted variables, $\var{a₀,…,aₙ}$, then we may simply exchange them by changing the order in which they are inserted; (2) if not, then $Γ = Γ',\var{x},\var{y},Δ'$, and we apply the induction hypothesis over the derivation $Γ',\var{x},\var{y},Δ' ⊢ \var{term} : X$.
\end{proof}

Indeed, this is enough to ensure that terms do correspond to derivations: we may simply write a term and there is a unique way it could have been extracted from the context.
\begin{proposition}
  Do-notation terms in a given context have a unique derivation.
\end{proposition}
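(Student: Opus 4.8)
The plan is to show, by structural induction on the term, that the only data a derivation carries beyond the term itself --- the insertions $\tau \in \Ins(n,m)$ labelling its rule instances --- are forced by the pattern of variable names occurring in the term and the context; existence of a derivation for a well-formed term is clear from the grammar, so uniqueness is the real content. Throughout one uses that variables are unique (we work up to $\alpha$-equivalence), so that each variable name picks out exactly one position in any context.

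For the base case, where the term is a single statement $\return(a_0,\dots,a_n)$, I would observe that $(\mathsf{Return})$ is the only rule with a conclusion of this shape, so any derivation must end with it; its context must consist of exactly the variables $a_0,\dots,a_n$ reordered by some $\tau \in \Ins(n,0)$. Since these variables are distinct, $\tau$ is pinned down: it is the permutation taking the order in which $a_0,\dots,a_n$ appear in the given context to the order in which they appear inside $\return(a_0,\dots,a_n)$. Hence the derivation is unique.

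For the inductive step, where the term begins with a generator statement $f(a_0,\dots,a_n) \to b_0,\dots,b_m ⨾ t'$, I would argue that any derivation must end with an instance of the rule $(f)$ for the generator named in the term, $f \colon A_0,\dots,A_n \to B_0,\dots,B_m$. By the shape of that rule the given context arises by inserting the distinct, named variables $a_0,\dots,a_n$ into some residual context $\Gamma$ via an insertion $\tau \in \Ins(n,\#\Gamma)$; reading off the positions that $a_0,\dots,a_n$ occupy in the given context then determines $\Gamma$ (its other entries keep their relative order, since insertions preserve it) and the corresponding $\tau$ uniquely. The premise is thereby forced to be $b_0 {:} B_0,\dots,b_m {:} B_m, \Gamma \vdash t' : \Delta$, with the $B_i$ read off the generator; applying the induction hypothesis to $t'$ in this context gives a unique derivation of the premise, hence of the original term.

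The argument is essentially bookkeeping, so the work lies in getting two details right rather than in a conceptual obstacle: that insertions preserve the relative order of the non-inserted variables --- so $\Gamma$ is genuinely determined, not merely determined up to reordering --- and that well-formedness of the term guarantees each $a_i$ really occurs in the given context, so that the deletion and the resulting $\tau$ are well-defined. The underlying point is simply that a do-notation term already records, through its pattern of variable use, exactly the combinatorial information carried by $\Ins$.
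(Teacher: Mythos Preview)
Your proposal is correct and follows essentially the same approach as the paper: structural induction on the term, with the base case pinning down the permutation in $(\mathsf{Return})$ by the positions of the named variables, and the inductive case recovering $\Gamma$ as the given context with $a_0,\dots,a_n$ deleted (their relative order preserved), which fixes $\tau$ and lets the induction hypothesis finish. If anything, you are slightly more explicit than the paper about why insertions determine $\Gamma$ uniquely.
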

\begin{proof}
  By structural induction, a term is either a single $\return$ or an application of some generator $(\var{f})$.
  Any single term $\var{x₀,…,x_n} ⊢ \var{\return(a₀,…,aₙ)}$ has a unique derivation: namely, the one that inserts the $\var{a}$'s into the $\var{x}$'s permuting them in the only possible order. The insertion $τ$ must be the only one making $τ(\var{xᵢ}) = \var{aᵢ}$.

  Consider now an application of a generator, $Δ ⊢ \var{f(a₀,…,aₙ) → b₀,…,bₘ ⨾ t} : X$. We know that it must come from $Δ = (\var{a₀,…,aₙ} ≫_{τ} Γ)$, and this forces $Γ = (Δ - \{\var{a₀,…,aₙ}\})$ and the value of the insertion $τ$: it is the only one that turns $Γ$ into $Δ$. Once $Γ$ has been determined, we apply the induction hypothesis to get the derivation of $\var{b₀,…,bₘ},Γ ⊢ \var{t}:X$.
\end{proof}

\subsection{Quotienting Do-notation}

Our logic is freely constructed, but it is not yet a logic of \monoidalCategories{}: technically, it misses interchange. As it stands, it is actually a logic for symmetric \premonoidalCategories{}, which we will study later. We shall only add the following rule -- the interchange rule -- in order to convert it into a calculus of \symmetricMonoidalCategories{}.

\emph{Interchange rule.} Consider a derivation of the following term, where $\var{b₀,…,bₘ}$ and $\var{c₀,…cₚ}$ are two lists of \emph{distinct} variables, meaning that no $\var{bᵢ}$ appears in $\var{cⱼ}$ -- and conversely, no $\var{dᵢ}$ appears in $\var{aⱼ}$. Then, we can interchange the two first statements of the term.
\begin{figure}[ht]
  \begin{minipage}{0.3\textwidth}
    \begin{align*}
    Γ ⊢\ 
    & \var{f(a₀,…,aₙ) → b₀,…,bₘ}\, ⨾ \\
    & \var{g(c₀,…,cₚ) → d₀,…,dₚ}\, ⨾ \\
    & \var{term} : Δ
  \end{align*}
  \end{minipage}
  \quad $\equiv$ \quad
  \begin{minipage}{0.3\textwidth}
    \begin{align*}
      Γ ⊢\ 
      & \var{g(c₀,…,cₚ) → d₀,…,dₚ\, ⨾} \\
      & \var{f(a₀,…,aₙ) → b₀,…,bₘ\, ⨾} \\
      & \var{term} : Δ
    \end{align*}    
  \end{minipage}
  \caption{Interchange rule.}
\end{figure}

\begin{proposition}
  We show this is well defined.
  Whenever the left hand side of the interchange rule is a valid term, and variables are distinct, the right hand side is also a valid term.
\end{proposition}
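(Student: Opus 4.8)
The plan is to invert the two rule applications heading the left-hand derivation, carry off the innermost sub-derivation of $\var{term}$ together with its insertion data, and then rebuild the two applications in the opposite order, repairing the context of the innermost sub-derivation with the already-proven exchange lemma at the very end.

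First I would invert the rule $(f)$ at the head of the left-hand term. By the shape of that rule, its conclusion has a context of the form $\var{a₀}{:}A₀,\dots,\var{aₙ}{:}Aₙ \mathbin{\gg_{τ}} Γ_1$ and a sub-derivation on the context $\var{b₀}{:}B₀,\dots,\var{bₘ}{:}Bₘ, Γ_1$; so inverting it gives an insertion $τ$ with $Γ = \var{a₀},\dots,\var{aₙ}\mathbin{\gg_{τ}}Γ_1$ and a derivation of $\var{b₀},\dots,\var{bₘ},Γ_1 ⊢ \var{g(c₀,…,cₚ) → d₀,…,dₚ ⨾ term}:Δ$. Inverting $(g)$ next yields an insertion $τ'$ with $\var{b₀},\dots,\var{bₘ},Γ_1 = \var{c₀},\dots,\var{cₚ}\mathbin{\gg_{τ'}}Γ_1'$ and a derivation $\mathcal D$ of $\var{d₀},\dots,\var{dₚ},Γ_1' ⊢ \var{term}:Δ$. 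The distinctness hypothesis that no $\var{bᵢ}$ equals any $\var{cⱼ}$ — together with the automatic fact that no $\var{aᵢ}$ equals any $\var{cⱼ}$, since the $\var{a}$'s were consumed by $f$ and so are absent from the context $g$ draws from — forces the $\var{c}$'s to lie entirely inside $Γ_1$. Hence $Γ_1 = \var{c₀},\dots,\var{cₚ}\mathbin{\gg_{ρ}}Γ_2$ for the insertion $ρ$ obtained by restricting $τ'$, and $Γ_1' = \var{b₀},\dots,\var{bₘ},Γ_2$; so $\mathcal D$ actually derives $\var{d₀},\dots,\var{dₚ},\var{b₀},\dots,\var{bₘ},Γ_2 ⊢ \var{term}:Δ$, and $Γ = \var{a₀},\dots,\var{aₙ}\mathbin{\gg_{τ}}(\var{c₀},\dots,\var{cₚ}\mathbin{\gg_{ρ}}Γ_2)$.

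The key combinatorial step is that two nested insertions with disjoint inserted blocks can be re-bracketed. Since $\{\var{a}\}$ and $\{\var{c}\}$ are disjoint, both expressions $\var{a}\mathbin{\gg_{τ}}(\var{c}\mathbin{\gg_{ρ}}Γ_2)$ and $\var{c}\mathbin{\gg_{ρ^{*}}}(\var{a}\mathbin{\gg_{τ^{*}}}Γ_2)$ range bijectively, as $τ,ρ$ respectively $ρ^{*},τ^{*}$ vary, over the shuffles of the three ordered blocks $\var{a}$, $\var{c}$, $Γ_2$ that preserve each block's internal order; hence the particular list $\var{a}\mathbin{\gg_{τ}}(\var{c}\mathbin{\gg_{ρ}}Γ_2)$ equals $\var{c}\mathbin{\gg_{ρ^{*}}}(\var{a}\mathbin{\gg_{τ^{*}}}Γ_2)$ for a unique choice of $ρ^{*},τ^{*}$ (alternatively, one inducts on the length of the $\var{a}$-block, tracking where its first letter lands). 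Granting this, I reassemble: apply $(g)$ with the insertion $ρ^{*}$, reducing the goal to a derivation with context $\var{d₀},\dots,\var{dₚ},(\var{a₀},\dots,\var{aₙ}\mathbin{\gg_{τ^{*}}}Γ_2)$; then apply $(f)$ — this is legitimate precisely because the other distinctness hypothesis, that no $\var{dᵢ}$ equals any $\var{aⱼ}$, guarantees the $\var{a}$'s still form an insertion into $\var{d₀},\dots,\var{dₚ},Γ_2$ — reducing the goal to a derivation of $\var{b₀},\dots,\var{bₘ},\var{d₀},\dots,\var{dₚ},Γ_2 ⊢ \var{term}:Δ$. Finally I obtain this last derivation from $\mathcal D$ by repeatedly invoking the admissibility of exchange established above, moving the $\var{b}$-block leftward past the $\var{d}$-block. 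The result is exactly the right-hand derivation.

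The part I expect to be the main obstacle is the insertion bookkeeping: stating and proving the re-bracketing of nested insertions cleanly, and checking that all arities and positions line up at each reassembly step. Everything else is routine inversion of the two rules, the two disjointness observations (one automatic, one the hypothesis, each used exactly once), and an appeal to the exchange lemma.
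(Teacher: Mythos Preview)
Your proposal is correct and follows essentially the same approach as the paper: invert the two rule applications, use the $b$--$c$ distinctness to see that the $c$'s land entirely inside $\Gamma_1$ (yielding the factorisation $\Gamma = \var{a}\mathbin{\gg}(\var{c}\mathbin{\gg}\Delta)$), re-bracket the two nested insertions, and rebuild the derivation with $(g)$ then $(f)$ using the $d$--$a$ distinctness. You are in fact more explicit than the paper on two points: you state and justify the insertion re-bracketing lemma, and you note that the innermost premise changes from $\var{d},\var{b},\Gamma_2\vdash\var{term}$ to $\var{b},\var{d},\Gamma_2\vdash\var{term}$ and must be repaired by the exchange lemma---the paper simply writes down the second derivation tree with the swapped premise without comment.
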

\begin{proof}
  First, we reason that the term on the left-hand side must have a derivation tree of the following form.
  \begin{mathpar}
    \inferrule*{\var{d₀,…,d_q,b₀,…,bₘ},Δ ⊢ \var{term} : C \hspace{6em}}
  {\inferrule*{ \hspace{1em} { \begin{aligned} 
    \var{b₀,…,bₘ},(\var{c₀,…,cₚ} ≫_{ρ} Δ) ⊢\ & \var{g(c₀,…,cₚ) → (d₀,…,d_q) \, ⨾}  \\ & \var{term} : C 
  \end{aligned} } \hspace{0em} }
      { { \begin{aligned} \var{a₀,…,aₙ} ≫_{σ} (\var{c₀,…,cₚ} ≫_{ρ} Δ) ⊢\ & 
        \var{f(a₀,…,aₙ) → (b₀,…,bₘ)} \, ⨾\\ & 
        \var{g(c₀,…,cₚ) → (d₀,…,d_q)} \, ⨾ \\ & 
        \var{term} : C \end{aligned} } }
    }
  \end{mathpar}
  We now argue for this. In the second line of the derivation we use that, because of variables being distinct, the only possible context $\var{(c₀,…,cₚ)} ≫_{τ} Ψ$ we can use must factor as $\var{b₀,…,bₘ,(c₀,…,cₚ)} ≫_{ρ} Δ$ for some $Δ$ and $ρ$. We can then check by the form of the rules that the final context $Γ$ must be of the form $\var{a₀,…,aₙ} ≫_σ (\var{c₀,…,cₚ} ≫_ρ Δ)$ for some insertion $σ$. 
  All this means that the following derivation is also valid.
  \begin{mathpar}
    \inferrule*{\var{b₀,…,bₘ, d₀,…,d_q},  Δ ⊢ \var{term} : X \hspace{7em}}
  {\inferrule*{ { \begin{aligned} 
    \var{d₀,…,d_q} ,(\var{c₀,…,cₚ} ≫ Δ) ⊢\ & 
    \var{f(a₀,…,aₙ) → b₀,…,bₘ} \, ⨾  \\ &
    t : X \end{aligned} } }
      { { \begin{aligned} 
        \var{a₀,…,aₙ} ≫ (\var{c₀,…,cₚ} ≫ Δ) ⊢\ 
        & \var{g(c₀,…,cₚ) → (d₀,…,d_q)}\, ⨾ \\ 
        & \var{f(a₀,…,aₙ) → (b₀,…,bₘ)}\, ⨾ \\
        & \var{term} : X \end{aligned} } }
    }
  \end{mathpar}
  This derivation proves that the right side term is also valid.
\end{proof}

Finally, we can start proving that do-notation terms are a syntax for \symmetricMonoidalCategories{}: they form the free strict \symmetricMonoidalCategory{} over a signature.

\begin{lemma}
  Do-notation terms over a \polygraph{} $𝓖$, quotiented by the interchange rule, define a \monoidalCategory{}, $\mathsf{Do}(𝓖)$. This construction induces a functor $\mathsf{Do} ፡ \PolyGraph → \SymMonCatStr$.
\end{lemma}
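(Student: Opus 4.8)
The statement claims two things: (i) that do-notation terms modulo interchange form a strict symmetric monoidal category $\mathsf{Do}(𝓖)$, and (ii) that this assignment is functorial on polygraph homomorphisms. The plan is to build the category explicitly — objects, morphisms, the two compositions, the symmetry — then verify the strict symmetric monoidal axioms one by one, using the admissibility of exchange (already proven) and the interchange rule as the two tools that make the quotient well-behaved. The functoriality part will be routine once the category is constructed: a polygraph homomorphism relabels types and reindexes generators, which manifestly commutes with substitution of variables and preserves the interchange relation, so it descends to a strict symmetric monoidal functor, and composition/identity of homomorphisms are preserved on the nose.

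\textbf{Constructing the category.} Objects of $\mathsf{Do}(𝓖)$ are lists of objects of $𝓖$ (equivalently, elements of the free monoid), with tensor given by concatenation and unit the empty list — this is strictly associative and unital by construction. A morphism $[A_0,\dots,A_n] \to [B_0,\dots,B_m]$ is an $\alpha$-equivalence class of derivations $\var{a_0}{:}A_0,\dots,\var{a_n}{:}A_n \vdash \var{t} : B_0 \otimes \dots \otimes B_m$, modulo the interchange rule. Sequential composition $\var{t} \bcomp \var{s}$ is term substitution: a derivation ending in $\var{\return(b_0,\dots,b_m)}$ gets that final return replaced by a suitably $\alpha$-renamed copy of $\var{s}$ with its input variables bound to $\var{b_0},\dots,\var{b_m}$; one checks this is again a valid derivation by induction on $\var{t}$ (the unique-derivation proposition guarantees there is no ambiguity). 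The identity on $[A_0,\dots,A_n]$ is the single $\return$ statement with the trivial insertion. Tensoring of morphisms $\var{t}\otimes\var{t'}$ places the two derivations side by side on the disjoint union of contexts — here is where the subtlety lies: the naive juxtaposition ``run all of $\var{t}$, then all of $\var{t'}$'' is a valid derivation, and one defines $\var{t}\otimes\var{t'}$ as its class. The symmetry $\sigma_{\Gamma,\Delta}$ is the $\return$ statement whose insertion is the block transposition of the two lists.

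\textbf{Verifying the axioms.} Unitality and associativity of $\bcomp$ follow from the corresponding properties of term substitution, proved by structural induction on the first argument — these do not even need the quotient. Unitality of $\otimes$ with respect to the empty list holds strictly because concatenating with an empty context changes nothing. The real content is (1) associativity of $\otimes$ on morphisms, (2) the interchange law $(\var{t}\bcomp\var{s})\otimes(\var{t'}\bcomp\var{s'}) = (\var{t}\otimes\var{t'})\bcomp(\var{s}\otimes\var{s'})$, and (3) the symmetry axioms. For (1) and (2): the two sides are derivations that execute the same generator-applications in possibly different orders, but always orders in which the variables consumed by one block are disjoint from those produced by the other; so they are connected by a finite sequence of applications of the interchange rule, hence equal in $\mathsf{Do}(𝓖)$. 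For the symmetry axioms (swap nothing, swap on left/right, swap-then-swap is identity, naturality $\sigma\bcomp(f\otimes g) = (g\otimes f)\bcomp\sigma$, and the hexagon): each reduces to an identity between insertions composed with generator-applications — the insertions involved are literally permutations (return statements carry $\Ins(n,0)$, i.e. permutations of $n$ elements, as noted in the remark), and the required equalities are equalities of permutations together with one application of exchange-admissibility to move the relabelled generators past the block transposition. I expect the naturality square and the hexagon to be the most tedious, since one must carefully track which variables are inserted where, but neither is conceptually hard. \textbf{The main obstacle} is bookkeeping: making precise, in the insertion calculus, the claim that ``two derivations performing the same generator calls in interchange-compatible orders are equal modulo interchange,'' which is the crux of both the tensor-associativity and the middle-four-interchange axioms; once that lemma is stated and proved by induction on the number of statements, everything else falls into place.
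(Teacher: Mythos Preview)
Your proposal is correct and matches the paper's approach: the paper likewise defines composition by structural induction on the first term (replacing its final $\return$ by the second term after applying exchange), defines tensoring by extending each term to the joint context and then composing (noting the order is irrelevant by the interchange rule), and defines identities and symmetries as single $\return$ statements with the appropriate permutation. Your identification of the key lemma --- that derivations with the same generator calls in interchange-compatible orders are equal modulo the interchange rule --- is exactly what the paper leaves implicit when it says ``the interchange law of monoidal categories is extracted from the interchange law that we imposed in do-notation.''
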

\begin{proof}[Proof sketch]
  We will describe the operations of this strict \symmetricMonoidalCategory{}. Let us start by \textbf{composition}: consider derivations $Γ ⊢ \var{t} ፡ B₁ ⊗ … ⊗ Bₘ$ and $\var{b₁}{:}B₁,…,\var{bₘ}{:}Bₘ ⊢ \var{s} ፡ Ψ$. We will construct, by induction over $\var{t}$, a derivation $Γ ⊢ \mathsf{comp}(t,s) ፡ Ψ$, that will define their \emph{composition}. If $\var{t}$ is a $\return$ statement, then $Γ$ is a permutation of $\var{b₁}{:}B₁,…,\var{bₘ}{:}Bₘ$ and, by exchange, we obtain $Γ ⊢ s ፡ Ψ$ and we define it as the composition.
  Whenever $\var{t}$ consists of a generator followed by a derivation, $Γ ⊢ t ፡ Ψ$ must be of the form
  $$(\var{x₁},…,\var{xₙ}) ≫ Γ' ⊢ \var{f(x₁,…,xₙ) → y₁,…,yₘ ⨾ t'} : Ψ,$$
  and we define $\mathsf{comp}(t,s)$ to mean $\var{f(x₁,…,xₙ) → y₁,…,yₘ ⨾ \mathsf{comp}(t',s)}$, using the $y₁,…,yₘ,Γ' ⊢ t' ፡ B₁ ⊗ … ⊗ Bₘ$ we just obtained. Put simply, we have removed the last $\return$ statement from one of the derivations, taking care of permutations, and then concatenated both (see \Cref{fig:do-notation-category}).

  Let us now define \textbf{tensoring}: consider derivations $Γ ⊢ t ፡ Δ$ and $Γ' ⊢ t' ፡ Δ'$. We start by noticing that, if we have a derivation $Γ ⊢ \var{t} ፡ Δ$, we can construct, by induction, a derivation $Γ,\var{z} {:}Z ⊢ \var{t}_z ፡ Δ ⊗ Z$ for any $\var{z}{:}Z$: the return case consists of adding an extra variable to the permutation, the generator case is not affected by the presence of an extra variable. Repeating this reasoning and using exchange, we can obtain terms $Γ,Γ' ⊢ \var{t}_{Γ'} ፡ Δ ⊗ Γ'$ and $Δ,Γ' ⊢ \var{t'}_{Δ} ፡ Δ ⊗ Δ'$ that we can compose into $Γ,Γ' ⊢ \mathsf{comp}(\var{t}_{Γ'},\var{t'}_{Δ'}) ፡ Δ ⊗ Δ'$. The order of composition does not matter because of the \emph{interchange law}. Put simply, we write the two terms one after the other, taking care not to mix the variables (see \Cref{fig:do-notation-category}, for a graphical intuition).

  \begin{figure}[!ht]
    \centering
    \includegraphics[scale=0.35]{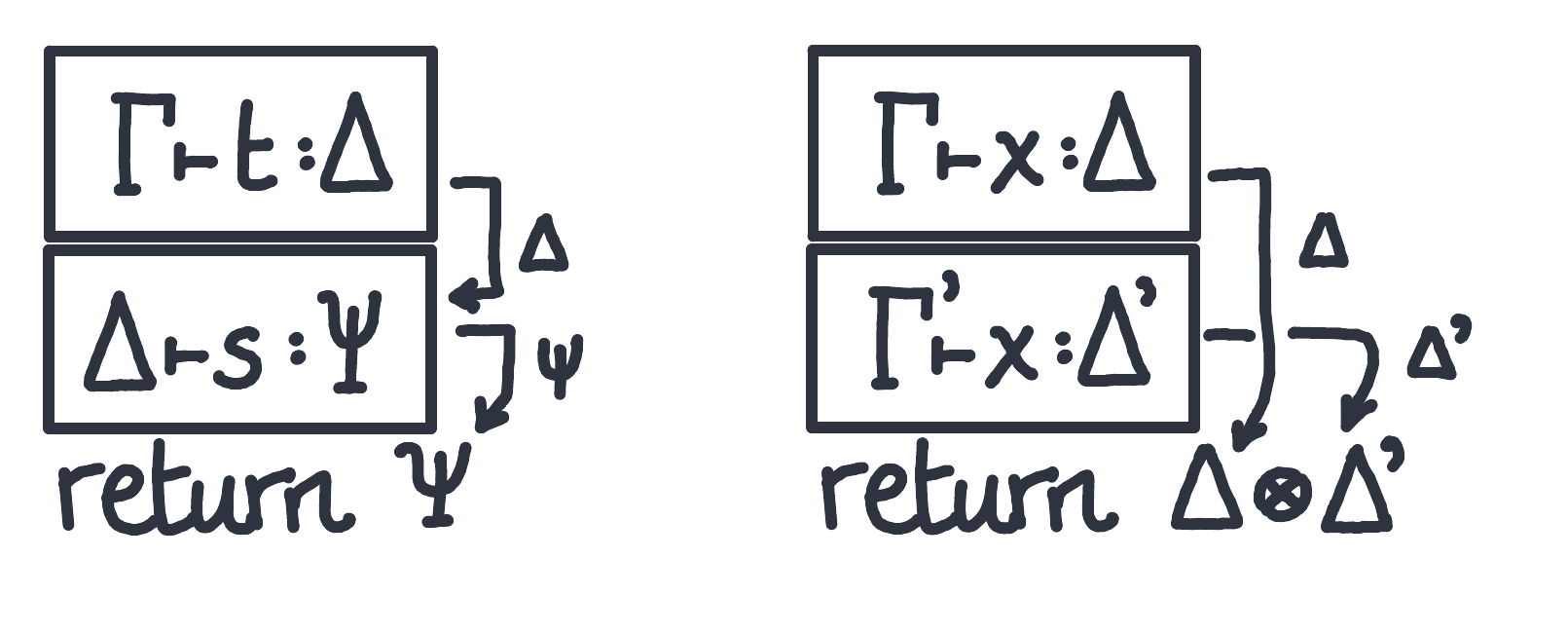}
    \caption{Composition and tensoring in do-notation.}
    \label{fig:do-notation-category}
  \end{figure}

  Finally, we must define \textbf{identities}, \textbf{generators} and \textbf{symmetries}. These are the following three terms.
  \begin{align*}
     \var{a₁,…,aₙ} &⊢ \var{\return(a₁,…,aₙ)} \\
     \var{a₁,…,aₙ} &⊢ \var{f(a₁,…,aₙ) → (b₁,…,bₘ) ⨾ \return(b₁,…,bₘ)} \\
     \var{a₁,…,aₙ,b₁,…bₘ} &⊢ \var{\return(b₁,…,bₘ,a₁,…,aₙ)}
  \end{align*}
  We can check that these operations define a category and a monoidal category. The interchange law of monoidal categories is extracted from the interchange law that we imposed in do-notation.
\end{proof}

\begin{theorem}
  \label{th:donotation}
  There is an adjunction between \polygraphs{} and the category of strict \symmetricMonoidalCategories{} given by do-notation terms, $\mathsf{Do} ⊣ \mathsf{Forget}$. Moreover, do-notation terms and string diagrams are naturally isomorphic.
\end{theorem}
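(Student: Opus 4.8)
The plan is to avoid rebuilding the adjunction from scratch and instead reduce to the adjunction $\StringSigma \dashv \mathsf{Forget}$ already established: I will produce a natural isomorphism $\Phi$ between the functors $\mathsf{Do}, \StringSigma \colon \PolyGraph \to \SymMonCatStr$ and transport the adjunction along it. The ``moreover'' clause then drops out for free, and the unit of $\mathsf{Do} \dashv \mathsf{Forget}$ is read off explicitly along the way.

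First I would define, for a fixed \polygraph{} $𝓖$, a comparison functor $\Phi_{𝓖} \colon \mathsf{Do}(𝓖) \to \StringSigma(𝓖)$ which is the identity on objects (both categories have the free monoid on $𝓖_{obj}$ as objects) and sends a do-notation derivation to its underlying \hypergraph{}: each variable occurring in the derivation --- whether in the initial context, or produced as an output of a generator statement --- becomes a vertex; each generator statement $\var{f(a₀,…,aₙ) → b₀,…,bₘ}$ becomes a hyperedge labelled by $f$ from the vertices $a₀,…,aₙ$ to the vertices $b₀,…,bₘ$; the input boundary edge records the context, and the output boundary edge records the permutation carried by the final $\return$ statement. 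Well-definedness on the interchange quotient is immediate: interchanging two statements with disjoint variables does not change the underlying \hypergraph{}, only the order in which the hyperedges were listed. Linearity of the calculus --- each variable has exactly one producer and one consumer --- makes the output a legitimate symmetric string diagram.

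Next I would exhibit the inverse. Given a symmetric string diagram, its labelled hyperedges carry the partial order of data dependency; choosing any linear extension of that order, naming the vertices afresh, and recording at each step the insertion forced by how that hyperedge's inputs sit in the current context, produces a do-notation term (unique up to renaming, with a unique derivation by the proposition on uniqueness of derivations). The delicate point --- and the step I expect to be the main obstacle --- is that this term is independent of the chosen linear extension. This is the combinatorial heart of the argument: any two linear extensions of a finite partial order are connected by a finite sequence of transpositions of adjacent incomparable elements, proved by the usual induction on the number of inversions; and a transposition of two adjacent incomparable generator statements is exactly an instance of the interchange rule, because incomparability of the two statements is precisely its ``distinct variables'' hypothesis. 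Hence any two choices name the same morphism of $\mathsf{Do}(𝓖)$, and a short check shows $\Phi_{𝓖}$ and this assignment are mutually inverse.

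It remains to verify that $\Phi_{𝓖}$ is a strict symmetric monoidal functor and that $\Phi$ is natural in $𝓖$. Strict monoidality is a structural induction matching the operations constructed earlier: tensoring in $\mathsf{Do}$ (writing two terms one after the other, keeping variables disjoint) corresponds to disjoint union of hypergraphs; composition in $\mathsf{Do}$ (concatenation after deleting the intermediate $\return$, carrying the permutations) corresponds to gluing along the shared boundary; and identities, generators, and symmetries (a permuting $\return$) go to their string-diagram counterparts. Naturality in $𝓖$ is immediate, since a \polygraph{} morphism merely relabels objects and generators, and this commutes with $\Phi$ on both sides. Finally, transporting $\StringSigma \dashv \mathsf{Forget}$ along $\Phi$ gives $\mathsf{Do} \dashv \mathsf{Forget}$; its unit sends a generator $f \colon A₀,…,Aₙ → B₀,…,Bₘ$ to the term $\var{f(a₀,…,aₙ) → (b₀,…,bₘ) ⨾ \return(b₀,…,bₘ)}$, and the triangle identities follow from those of the string-diagram adjunction. (Alternatively, one could argue directly that this unit is universal --- every \polygraph{} morphism into some $\mathsf{Forget}(𝔻)$ extends to a unique strict symmetric monoidal functor out of $\mathsf{Do}(𝓖)$, defined by induction on derivations and well defined on the quotient because $⊗$ is a bifunctor in $𝔻$ --- but the transport argument is shorter and supplies naturality at no extra cost.)
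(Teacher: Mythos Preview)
Your proof is correct but takes the reverse route from the paper. The paper first establishes the adjunction $\mathsf{Do} \dashv \mathsf{Forget}$ directly, by structural induction on do-notation terms: every term is a $\return$ (forced to go to a symmetry) or a generator followed by a term (forced by the generator's image and the inductive hypothesis), so the extension to a strict symmetric monoidal functor is unique; well-definedness holds because the only quotient imposed is interchange, which any target symmetric monoidal category already satisfies. Only then does the paper deduce $\mathsf{Do} \cong \StringSigma$ abstractly, from the uniqueness of left adjoints to $\mathsf{Forget}$. You go the other way: build the explicit isomorphism $\Phi$ first (via the hypergraph-of-a-derivation map and its inverse through linear extensions of the dependency order, with the transposition argument reducing to interchange), then transport the known string-diagram adjunction along $\Phi$. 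Your approach costs a little more combinatorics (the linear-extensions-differ-by-adjacent-swaps lemma) but pays back a concrete, computable bijection between terms and hypergraphs; the paper's approach is shorter for the adjunction itself but leaves the isomorphism abstract. You even flag the paper's route as your ``alternatively'' remark at the end.
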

\begin{proof}[Proof sketch]
  We have already proven that do-notation constructs a \symmetricMonoidalCategory{} over a \polygraph{}. We need to show that there is a unique map out of the category constructed by do-notation that commutes with any assignment of the \polygraph{} to a \monoidalCategory{}.

  The first part would be to prove the initiality of the syntax. We can do so by structural induction: any term is either a return statement or a composition of a generator with a symmetry and a term. In the first case, the symmetry determined by the return statement must be mapped to the corresponding symmetry in any \symmetricMonoidalCategory{}; in the second case, the image of the generator is determined, and the rest of the term is determined by structural induction. In conclusion, the image of any do-notation term is determined in any \symmetricMonoidalCategory{} over which we map the \polygraph{} of generators. 
  
  The core of the proof is in showing that this unique possible assignment is well-defined: the only equality imposed on do-notation is the interchange law, but this law corresponds, under the assignment, to the interchange law of \symmetricMonoidalCategories{}.

  Finally, both functors, 
  $$\mathsf{Do}, \StringSigma ፡ \PolyGraph → \SymMonCatStr$$ 
  have been found to be left adjoints to the same forgetful functor. This implies they are isomorphic functors.
\end{proof}

\subsection{Example: the XOR Variable Swap}
  Let us provide an example of the formal usage of both do-notation and \stringDiagrams{}, by reasoning about a simple process.
  In imperative programming languages, swapping the value of two variables usually requires a third temporary variable.
  However, the XOR variable swap algorithm uses the properties of the exclusive-or operation (XOR, $\oplus$) to exchange variables without needing a temporary variable. Let $\var{xor(x,y) = (x ⊕ y, y)}$. The code is in \Cref{fig:code-xor-variable-exchange}.
  \begin{figure}[!ht]
    \centering
    \includegraphics[scale=0.15]{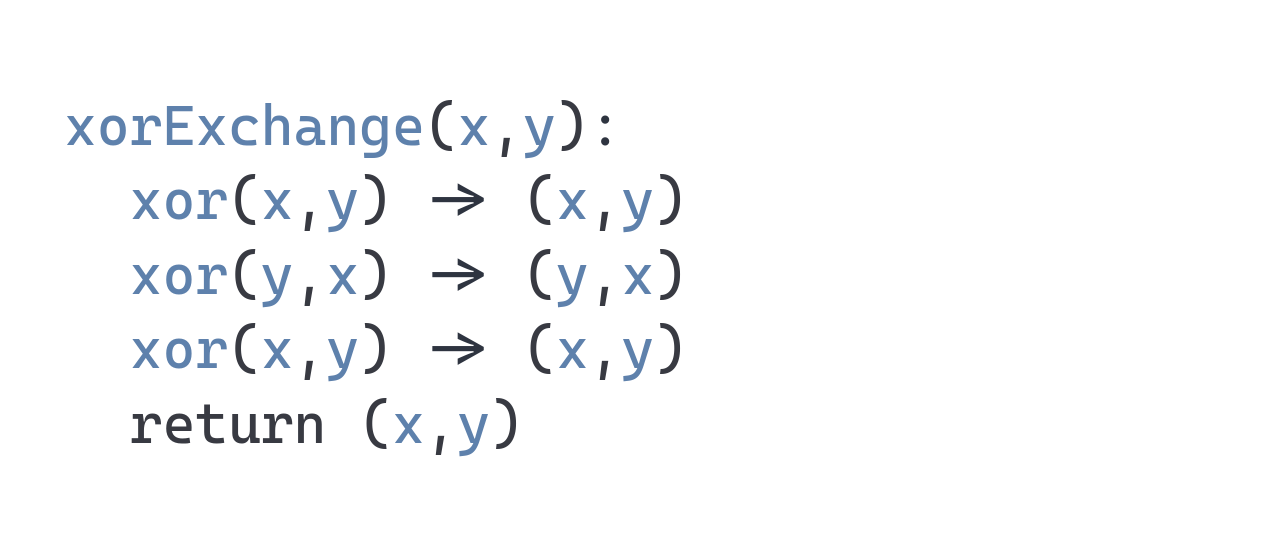}
    \caption{XOR variable exchange.}
    \label{fig:code-xor-variable-exchange}
  \end{figure}

  The property that makes this algorithm possible is the \emph{nilpotency} of the XOR operation: $x \oplus x = 0$ for any n-bit word $x ∈ 2^n$. This means that we can prove the correctness of the XOR variable exchange in the abstract setting of nilpotent bialgebras. In fact, consider a polygraph $𝓧$ with a single object and the generators depicted in \Cref{fig:signature-bialgebra}.
  \begin{figure}[ht]
    \centering
    \includegraphics[scale=0.35]{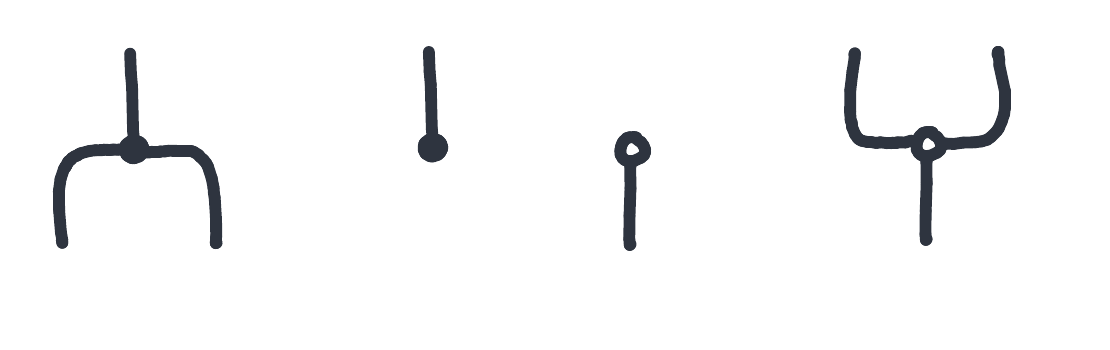}
    \caption{Signature for a bialgebra.}
    \label{fig:signature-bialgebra}
  \end{figure}

  We now want to impose a set of equations, $E \subseteq \mathsf{String}(𝓧) × \mathsf{String}(𝓧)$, on top of this signature. This can also be done via the adjunction: the equations give two maps $E → \mathsf{Forget}(\mathsf{String}(𝓧))$, or equivalently, $\mathsf{String}(E) → \mathsf{String}(𝓧)$. The coequalizer of the latter two describes the universal \monoidalCategory{} with some generators and satisfying some equations. Back to our example, the theory of a nilpotent bialgebra satisfies the following equations in \Cref{fig:nilpotent-bialgebra}.
  \begin{figure}[ht]
    \centering
    \includegraphics[scale=0.35]{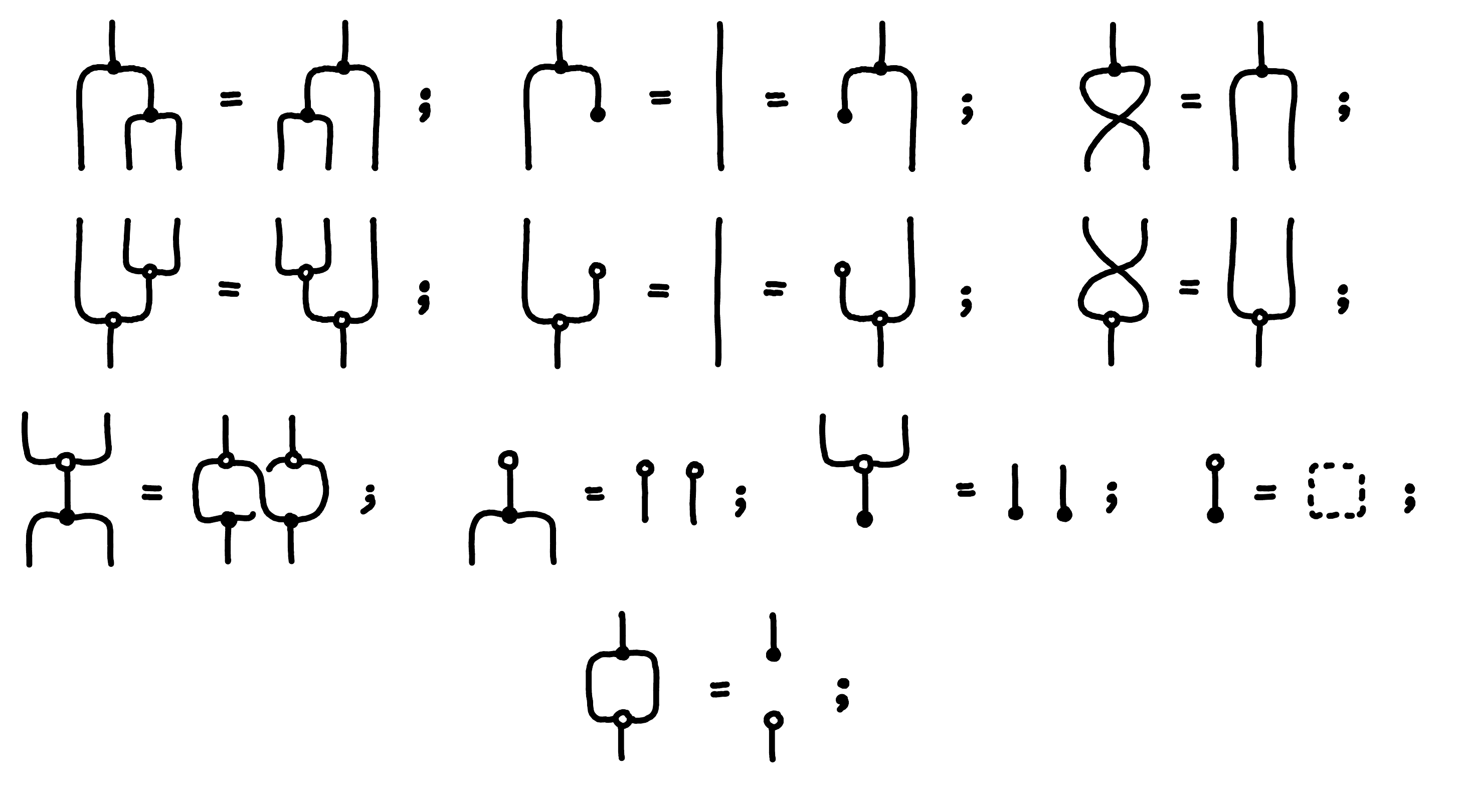}
    \caption{Theory of a nilpotent bialgebra.}
    \label{fig:nilpotent-bialgebra}
  \end{figure}

  Given any nilpotent bialgebra in any strict \symmetricMonoidalCategory{}, there exists a unique monoidal functor from the \stringDiagrams{} quotiented by these equations to that signature.

  \begin{proposition}
    Let a nilpotent bialgebra in a \symmetricMonoidalCategory{}. The XOR variable exchange algorithm is equal to the swap morphism.
  \end{proposition}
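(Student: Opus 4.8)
The plan is to translate the imperative code of \Cref{fig:code-xor-variable-exchange} into a morphism of the ambient \symmetricMonoidalCategory{} via \Cref{th:donotation} and then reduce that morphism to the symmetry using only the equations of \Cref{fig:nilpotent-bialgebra}. Write $δ$ for the comultiplication and $μ$ for the multiplication of the nilpotent bialgebra, as in the signature of \Cref{fig:signature-bialgebra}. Each assignment ``$x := x \oplus y$'' denotes the reversible gate $g = (\id_X ⊗ δ) ⨾ (μ ⊗ \id_X) ፡ X ⊗ X → X ⊗ X$, which on elements sends $(a,b)$ to $(a \oplus b, b)$; the middle assignment ``$y := x \oplus y$'' denotes its mirror image $g' = (δ ⊗ \id_X) ⨾ (\id_X ⊗ μ)$, sending $(a,b)$ to $(a, a \oplus b)$ — equivalently $g' = σ_{X,X} ⨾ g ⨾ σ_{X,X}$, by (co)commutativity. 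So the algorithm denotes $g ⨾ g' ⨾ g$, and it suffices to prove $g ⨾ g' ⨾ g = σ_{X,X}$.

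I would isolate nilpotency into a single rewrite: the equation of \Cref{fig:nilpotent-bialgebra} expressing $x \oplus x = 0$ says, diagrammatically, that $δ ⨾ μ$ equals the constant-zero map (discarding, via the counit, followed by the unit). The argument then rests on two small string-diagram lemmas. \emph{(i)} $g ⨾ g = \id_{X ⊗ X}$: in the two consecutive gates the second input is duplicated and later recombined, so after re-bracketing with coassociativity one exposes a $δ ⨾ μ$ subdiagram; nilpotency turns it into the zero map, the counit law collapses the diagram, and the unit law erases the residual unit node, leaving bare identity wires. \emph{(ii)} $g ⨾ g' = σ_{X,X} ⨾ g$: here one uses the bialgebra law to slide the comultiplication of $g'$ past the multiplication of $g$, then associativity and (co)commutativity to bring the two copies of the second input together into a $δ ⨾ μ$ that nilpotency collapses; cleaning up with the (co)unit laws leaves exactly $σ_{X,X} ⨾ g$. (At the level of elements both sides of \emph{(ii)} send $(a,b)$ to $(a \oplus b, a)$, and both sides of \emph{(i)} are the identity.)

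With \emph{(i)} and \emph{(ii)} the conclusion is immediate: $g ⨾ g' ⨾ g = (g ⨾ g') ⨾ g = (σ_{X,X} ⨾ g) ⨾ g = σ_{X,X} ⨾ (g ⨾ g) = σ_{X,X}$. The main obstacle is the diagrammatic book-keeping inside lemmas \emph{(i)} and \emph{(ii)}: one must route the copies so that a $δ ⨾ μ$ (equivalently $μ ⨾ δ$, using cocommutativity) becomes literally adjacent before invoking nilpotency, and then verify that every residual counit or unit node is absorbed by a (co)unit law. Choosing the order of the coassociativity and bialgebra rewrites so that this happens cleanly is the only delicate point, and once it is set up the reduction is purely mechanical.
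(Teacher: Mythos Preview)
Your proposal is correct. The paper's proof is a single chain of string-diagram rewrites, displayed as a figure, taking the three-gate composite directly to $σ_{X,X}$ using the nilpotent-bialgebra axioms; you instead factor the same computation through two lemmas, $g ⨾ g = \id$ and $g ⨾ g' = σ_{X,X} ⨾ g$, and then conclude in one line. Both arguments use exactly the same moves --- coassociativity and associativity to regroup copies and products, the bialgebra law to slide $δ$ past $μ$, nilpotency to collapse an exposed $δ ⨾ μ$, and the (co)unit laws to absorb the resulting zero --- so the content is identical. Your decomposition is arguably tidier: it isolates the two uses of nilpotency (one per lemma) and explains structurally why three applications of the gate suffice, whereas the paper's direct rewrite leaves that implicit in the picture.
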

  \begin{proof}
  In the theory of nilpotent bialgebras over a \symmetricMonoidalCategory{}, the following equation in \Cref{fig:xor-variable-exchange} holds. 
    \begin{figure}[ht]
      \centering
      \includegraphics[scale=0.3]{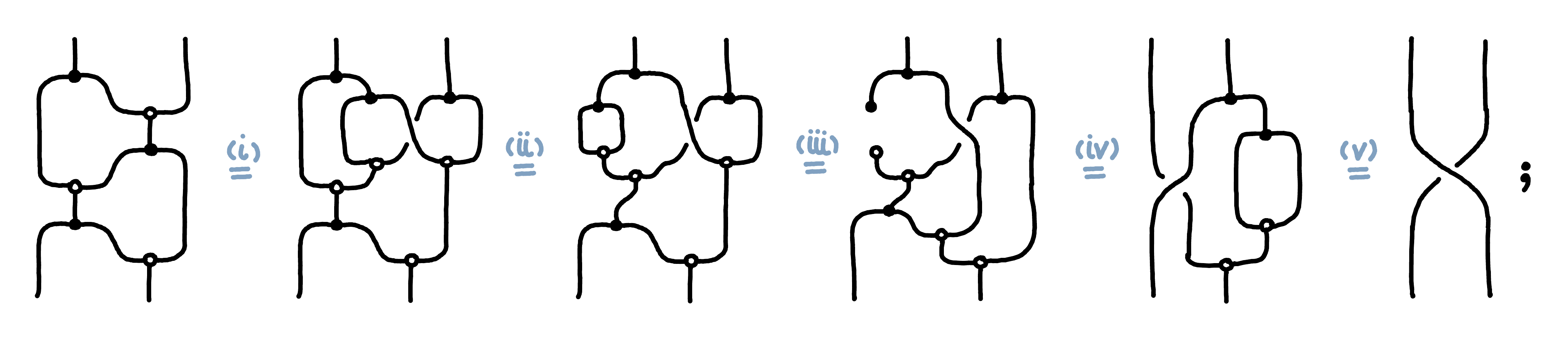}
      \caption{XOR variable exchange.}
      \label{fig:xor-variable-exchange}
    \end{figure}
    
  The left hand side represents the XOR variable exchange while the right hand side represents swapping the contents of two variables. We have shown both are equal.
  \end{proof}
  
\subsection{Bibliography}
\SymmetricMonoidalCategories{} and their hexagon coherence equations were already stated by MacLane \cite{maclane63:natural,macLane71:workingMathematician}, Bénabou defined \emph{commutations} on a monoidal category and an abstract notion of commutative monoid \cite{benabou68:structures}. String diagrams for \symmetricMonoidalCategories{} are already described by Joyal and Street \cite{joyal91:geometryOfTensorCalculus,selinger2010survey}; we follow the representation in terms of hypergraphs by Bonchi, Gadducci, Kissinger, Sobocinski and Zanasi \cite{bonchi16:rewriting}. The XOR example was known to Erbele \cite{erbele14:control} and Sobocinski. The idea of using a Rosetta stone to translate between categories and logics comes from Baez and Stay \cite{baezstay10:rosetta}.

Do-notation comes from the Haskell programming language \cite{haskellreport}, where it takes semantics in a strong promonad \cite{hughes00,heunen06:arrows} (also known as an \emph{arrow} \cite{paterson03:arrowscomputation}). We have studied here an adaptation to the monoidal setting. Our presentation of do-notation follows the style of Shulman's categorical logic \cite{shulman:catlog}.
\clearpage{}%
\clearpage{}%
\section{Cartesianity: Determinism and Totality}
\label{sec:linearity}

Our \processTheories{} are, by default, \emph{linear on resources}: every variable must be used exactly once.
This may seem like a limitation, but it is a more general case that can be particularized into the classical case when necessary:
we say that a \processTheory{} -- a \monoidalCategory{} -- is \emph{classical} or \emph{cartesian} whenever it has processes  representing copying and discarding and satisfying suitable axioms.

Non-classical theories can become so in two ways: either because they do not allow copying, or because they do not allow discarding. Theories without copying model stochasticity and non-determinism: running a computation twice is different from just running it once and assuming its result will be the same next time. Theories without discarding model partiality: even if we do not care about the result, we cannot assume anymore that any computation will succed. 

\subsection{Cartesian Monoidal Categories}
\CartesianMonoidalCategories{} give a universal property to their tensor: the tensor of two objects, $A × B$, is such that  pair of maps to $A$ and $B$ are in precise correspondence to single map to $A × B$. This universal property, in some sense, ensures that the tensor contains nothing more and nothing less than its two constituent parts; this is what characterizes classical theories.

\begin{definition}
  \emph{Cartesian monoidal categories} are \monoidalCategories{}, $(ℂ,×,1)$, such that 
  \begin{enumerate}
    \item each tensor, $A × B$, is endowed with projections, $π₁ ፡ A × B → A$ and $π₂ ፡ A × B → B$, that make it a product: for each object $X ∈ ℂ_{obj}$ and any pair of morphisms, $f ፡ X → A$ and $g ፡ X → B$, there exists a unique $\langle f,g \rangle ፡ X → A × B$ such that
    $$\langle f,g \rangle ⨾ π₁ = f \quad\mbox{ and }\quad \langle f,g \rangle ⨾ π₂ = g;$$
    \item the unit, $1$, is terminal: for each object $X ∈ ℂ_{obj}$ there exists a unique morphism $π ፡ X → 1$.
  \end{enumerate}
\end{definition}

Fox's theorem is a characterisation of classical theories, \cartesianMonoidalCategories{}, in terms of the existence of a uniform cocommutative comonoid structure (copy and delete) on all objects of a \monoidalCategory{}.

\begin{theorem}[Fox, \cite{fox76}] 
  \label{th:fox}
  A \symmetricMonoidalCategory{} $(ℂ,\otimes,I)$ is cartesian monoidal if and only if every object $X \in ℂ$ has a commutative comonoid structure $(X,ε_{X},δ_{X})$, every morphism of the category $f \colon X \to Y$ is a comonoid homomorphism, and this structure is uniform across the \monoidalCategory{}, meaning that
  $ε_{X \otimes Y} = ε_{X} \otimes ε_{Y}$, that
  $ε_{I} = \id$, that $δ_{I} = \id$ and that
  $δ_{X \otimes Y} = (δ_{X} \otimes δ_{Y}) ⨾ (\id \otimes \sigma_{X,Y} \otimes \id)$.
\end{theorem}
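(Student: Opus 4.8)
The plan is to prove the two implications separately and, for the converse, to remove all coherence bookkeeping by first reducing to the strict case via \Cref{th:equivalentStrictOne}: a monoidal equivalence $ℂ \simeq ℂ_{\mathrm{str}}$ transports a uniform commutative comonoid structure in either direction and preserves and reflects the cartesian universal properties, so it suffices to treat a \emph{strict} \symmetricMonoidalCategory{}, where the arguments become string-diagram rewrites.

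($\Rightarrow$) Assume $ℂ$ is cartesian monoidal. Set $\varepsilon_X = {!}_X \colon X \to I$, the unique map to the terminal unit, and $\delta_X = \langle \id_X, \id_X\rangle \colon X \to X \otimes X$. Every required equation then follows from a uniqueness clause: cocommutativity and coassociativity hold because both sides of each equation have identical components after postcomposition with all projections; counitality holds because $\delta_X ⨾ (\id_X \otimes \varepsilon_X)$ postcomposed with the projection onto $X$ is $\id_X$, and the projection $X \otimes I \to X$ is invertible since $I$ is terminal; each $f \colon X \to Y$ is a comonoid homomorphism because $f ⨾ \varepsilon_Y$ and $\varepsilon_X$ are both maps into a terminal object, while $f ⨾ \delta_Y$ and $\delta_X ⨾ (f \otimes f)$ agree on both projections; and the uniformity identities for $\varepsilon$ and $\delta$ are again forced by the universal properties of $I$ and of the products $X \otimes Y$. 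All of this is routine.

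($\Leftarrow$) Assume every object carries a uniform commutative comonoid $(X, \varepsilon_X, \delta_X)$ and every morphism is a comonoid homomorphism; work in the strict setting. First, $I$ is terminal: $\varepsilon_X \colon X \to I$ gives existence, and for any $f \colon X \to I$, the homomorphism law gives $f = f ⨾ \varepsilon_I = \varepsilon_X$, using $\varepsilon_I = \id_I$ from uniformity. Next, equip $X \otimes Y$ with projections $\pi_1 = \id_X \otimes \varepsilon_Y \colon X \otimes Y \to X$ and $\pi_2 = \varepsilon_X \otimes \id_Y \colon X \otimes Y \to Y$. Given $f \colon Z \to X$ and $g \colon Z \to Y$, define $\langle f, g\rangle = \delta_Z ⨾ (f \otimes g)$. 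Then $\langle f,g\rangle ⨾ \pi_1 = \delta_Z ⨾ (f \otimes (g ⨾ \varepsilon_Y)) = \delta_Z ⨾ (\id_Z \otimes \varepsilon_Z) ⨾ f = f$, using the interchange law, that $g$ is a comonoid homomorphism, and the counit law; symmetrically $\langle f,g\rangle ⨾ \pi_2 = g$. For uniqueness, suppose $h \colon Z \to X \otimes Y$ satisfies $h ⨾ \pi_1 = f$ and $h ⨾ \pi_2 = g$; since $h$ is a comonoid homomorphism, $\delta_Z ⨾ (h \otimes h) = h ⨾ \delta_{X \otimes Y}$, so $\langle f,g\rangle = \delta_Z ⨾ (h \otimes h) ⨾ (\pi_1 \otimes \pi_2) = h ⨾ \big(\delta_{X \otimes Y} ⨾ (\pi_1 \otimes \pi_2)\big)$, and it remains to show $\delta_{X \otimes Y} ⨾ (\pi_1 \otimes \pi_2) = \id_{X \otimes Y}$. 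This is where the uniformity formula $\delta_{X \otimes Y} = (\delta_X \otimes \delta_Y) ⨾ (\id_X \otimes \sigma_{X,Y} \otimes \id_Y)$ enters: substituting it, the symmetry routes the four copied wires so that $\pi_1 \otimes \pi_2$ discards exactly one copy of $X$ and one copy of $Y$, and the two counit laws $\delta_X ⨾ (\id_X \otimes \varepsilon_X) = \id_X$ and $\delta_Y ⨾ (\varepsilon_Y \otimes \id_Y) = \id_Y$ collapse the composite to the identity — a short diagram computation.

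The main obstacle is precisely this last identity $\delta_{X \otimes Y} ⨾ (\pi_1 \otimes \pi_2) = \id_{X \otimes Y}$ together with, in the non-strict formulation, keeping track of the associators and unitors hidden in the definitions of $\pi_1$, $\pi_2$ and $\langle f, g\rangle$; passing to the strict case with \Cref{th:equivalentStrictOne} and reasoning with string diagrams is what makes it painless. Everything else — the comonoid laws in the forward direction, terminality of $I$, and the $\beta$-laws for the pairing — is a one-line uniqueness argument or a single rewrite.
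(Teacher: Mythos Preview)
Your argument is correct and is the standard proof. Note, however, that the paper does not actually prove \Cref{th:fox}: it is stated as a classical result with a citation to Fox and no proof environment follows it. What the paper does prove is the refined version, \Cref{th:refinedfoxappendix}, which weakens the hypothesis from a cocommutative comonoid to a mere counital comagma and then derives coassociativity and cocommutativity from naturality and uniformity before invoking the classical statement. It is worth observing that your proof of the converse implication never actually uses coassociativity or cocommutativity of $\delta$ either---only the two counit laws, the homomorphism property of every morphism, and the uniformity formula for $\delta_{X\otimes Y}$---so your argument already essentially establishes the refined statement as well, though you do not remark on this.
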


\begin{figure}[h]
  \includegraphics[scale=0.4]{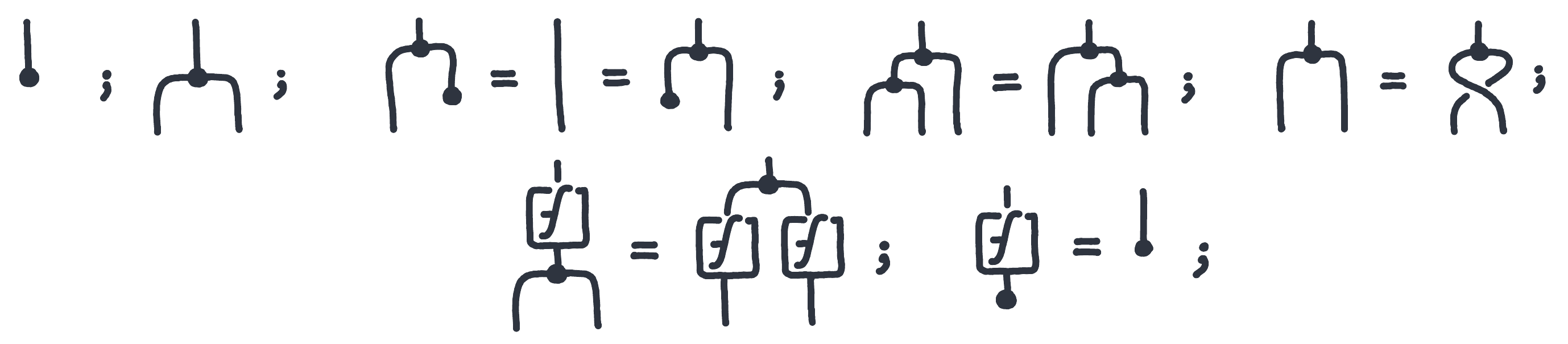}
  \caption{Theory of cartesian categories.}
  \label{fig:theorycartesian}
\end{figure}

Fox's characterization has a direct translation to string diagrams: the first conditions impose a natural commutative comonoid structure on each generator (\Cref{fig:theorycartesian}); the last conditions state that the structure on all the objects follows from that of the generators.

We can add a slight improvement. Most sources ask the comonoid structure in Fox's theorem (\Cref{th:fox}) to be cocommutative~\cite{fox76,fong2019supplying}.
However, cocommutativity and coassociativity of the comonoid structure are implied by the fact that the structure is uniform and natural. We present an original refined version of Fox's theorem.

\begin{theorem}[Refined Fox's theorem]
  \label{th:refinedfoxappendix}
  A \symmetricMonoidalCategory{}, ($ℂ$, $⊗$, $I$), is cartesian monoidal if and only if every object $X ∈ ℂ$ has a counital comagma structure $(X,ε_{X},δ_{X})$, or $(X,\iconbcu_{X},\iconbcm_{X})$, every morphism of the category $f ፡ X → Y$ is a comagma homomorphism, and this structure is uniform
  across the monoidal category: meaning that
  $ε_{X ⊗ Y} = ε_{X} ⊗ ε_{Y}$,
  $ε_{I} = \id$, $δ_{I} = \id$ and
  $δ_{X ⊗ Y} = (δ_{X} ⊗ δ_{Y}) ; (\id ⊗ \sigma_{X,Y} ⊗ \id)$.
\end{theorem}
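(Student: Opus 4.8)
The plan is to prove both implications directly, working in a strict \symmetricMonoidalCategory{} without loss of generality by the strictification theorem (\Cref{th:equivalentStrictOne}), so that unitors disappear and the uniformity equations can be read off verbatim.

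For the \emph{only if} direction I would simply note that this is the easy half of Fox's theorem (\Cref{th:fox}): if $(ℂ, ×, 1)$ is cartesian monoidal, take $ε_X ፡ X → 1$ to be the unique map into the terminal object and $δ_X = \langle \id_X, \id_X \rangle ፡ X → X × X$ the diagonal. Counitality is $δ_X ⨾ π_i = \id_X$; naturality of $ε$ is terminality; naturality of $δ$ is the standard universal-property computation; and uniformity ($ε_{X ⊗ Y} = ε_X ⊗ ε_Y$, $ε_I = \id$, $δ_I = \id$, $δ_{X ⊗ Y} = (δ_X ⊗ δ_Y) ⨾ (\id ⊗ σ_{X,Y} ⊗ \id)$) follows directly from the product universal property together with $1 × 1 = 1$.

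For the \emph{if} direction I would exhibit the cartesian structure explicitly. Given the uniform natural counital comagma $(ε, δ)$ with every morphism a comagma homomorphism, define $π_1 = \id_A ⊗ ε_B ፡ A ⊗ B → A$ and $π_2 = ε_A ⊗ \id_B ፡ A ⊗ B → B$, and for $f ፡ X → A$, $g ፡ X → B$ set $\langle f, g\rangle = δ_X ⨾ (f ⊗ g)$. First I would check the projection equations: $\langle f,g\rangle ⨾ π_1 = δ_X ⨾ (f ⊗ (g ⨾ ε_B)) = δ_X ⨾ (f ⊗ ε_X) = f$, sliding $ε$ through $g$ because $g$ is a comagma homomorphism ($g ⨾ ε_B = ε_X$) and then using the counit law; symmetrically $\langle f,g\rangle ⨾ π_2 = g$. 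The crux for uniqueness is the identity $δ_{A ⊗ B} ⨾ (π_1 ⊗ π_2) = \id_{A ⊗ B}$: expanding $δ_{A⊗B} = (δ_A ⊗ δ_B) ⨾ (\id_A ⊗ σ_{A,B} ⊗ \id_B)$ by uniformity, the symmetry in the middle is absorbed since $σ_{A,B} ⨾ (ε_B ⊗ ε_A) = ε_A ⊗ ε_B$ by naturality of $σ$ together with $σ_{I,I} = \id_I$, leaving $(δ_A ⨾ (\id_A ⊗ ε_A)) ⊗ (δ_B ⨾ (ε_B ⊗ \id_B)) = \id_{A⊗B}$ by counitality. Then any $h ፡ X → A ⊗ B$ with $h ⨾ π_1 = f$ and $h ⨾ π_2 = g$ satisfies $h = h ⨾ δ_{A⊗B} ⨾ (π_1 ⊗ π_2) = δ_X ⨾ (h ⊗ h) ⨾ (π_1 ⊗ π_2) = δ_X ⨾ (f ⊗ g) = \langle f, g\rangle$, using that $h$ is a comagma homomorphism. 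Terminality of $I$ is analogous but easier: $ε_X ፡ X → I$ is such a map, and any $t ፡ X → I$ satisfies $t = t ⨾ ε_I = ε_X$ since $t$ is a comagma homomorphism and $ε_I = \id_I$.

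I expect the genuine content — and the only mildly delicate step — to be making the symmetry wire in the uniformity law for $δ_{A⊗B}$ cancel correctly against the two counits, which is exactly where naturality of $σ$ and the axiom $σ_{I,I} = \id_I$ are needed; everything else is routine string-diagram bookkeeping. It is worth emphasizing in the write-up that coassociativity and cocommutativity of $δ$ are never invoked: they are instead \emph{consequences}, since once $ℂ$ is known to be cartesian the equations $δ_X ⨾ π_1 = \id_X = δ_X ⨾ π_2$ force $δ_X = \langle \id_X, \id_X\rangle$, which is automatically a cocommutative comonoid. This is precisely the refinement over \Cref{th:fox}.
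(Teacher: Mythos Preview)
Your argument is correct, but it takes a different route from the paper's. The paper does not construct the product directly; instead it shows that the counital comagma hypotheses already force coassociativity and cocommutativity of $\delta$, and then simply invokes the classical Fox theorem (\Cref{th:fox}). The key step there is to apply the naturality of $\delta$ to $\delta_X$ itself: since $\delta_X \colon X \to X \otimes X$ is a comagma homomorphism, one has $\delta_X \dcomp (\delta_X \otimes \delta_X) = \delta_X \dcomp \delta_{X\otimes X} = \delta_X \dcomp (\delta_X \otimes \delta_X) \dcomp (\id \otimes \sigma \otimes \id)$ by uniformity, and then postcomposing with $(\varepsilon_X \otimes \id \otimes \id \otimes \varepsilon_X)$ or $(\id \otimes \varepsilon_X \otimes \id \otimes \id)$ yields cocommutativity and coassociativity respectively.

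Your approach bypasses this and essentially reproves the hard direction of Fox from scratch, using only counitality, $\delta$-naturality, $\varepsilon$-naturality, and uniformity. The identity $\delta_{A\otimes B} \dcomp (\pi_1 \otimes \pi_2) = \id_{A\otimes B}$ is exactly the right lever, and your cancellation of $\sigma$ against the two counits is clean. What you gain is a self-contained argument that makes explicit that coassociativity and cocommutativity are never used; you then recover them a posteriori once $\delta_X$ is identified with the diagonal. What the paper's route gains is conceptual clarity about the refinement itself: it isolates the single new observation (naturality plus uniformity of a counital comagma already entail the full comonoid laws) and keeps the cartesian construction encapsulated in \Cref{th:fox}.
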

\begin{proof}
  We prove that such a comagma structure is necessarily coassociative and cocommutative.
  Note that any comagma homomorphism $f ፡ A → B$ must satisfy $δ_{A} ⨾ (f ⊗ f) = f ⨾ δ_{B}$.
  In particular, $δ_{X} ፡ X → X ⊗ X$ must itself be a comagma homomorphism
  (see \Cref{figure:comultiplication}), meaning that
  \begin{equation}\label{eq:comultiplicationhomomorphism}
    δ_{X} ⨾ (δ_X ⊗ δ_{X}) =
    δ_{X} ⨾ δ_{X ⊗ X} =
    δ_{X} ⨾ (δ_X ⊗ δ_{X}) ⨾ 
    (\id ⊗ \sigma_{X,Y} ⊗ \id),
  \end{equation}
  where the second equality follows by uniformity.
  \begin{figure}[h]
    \includegraphics[scale=0.4]{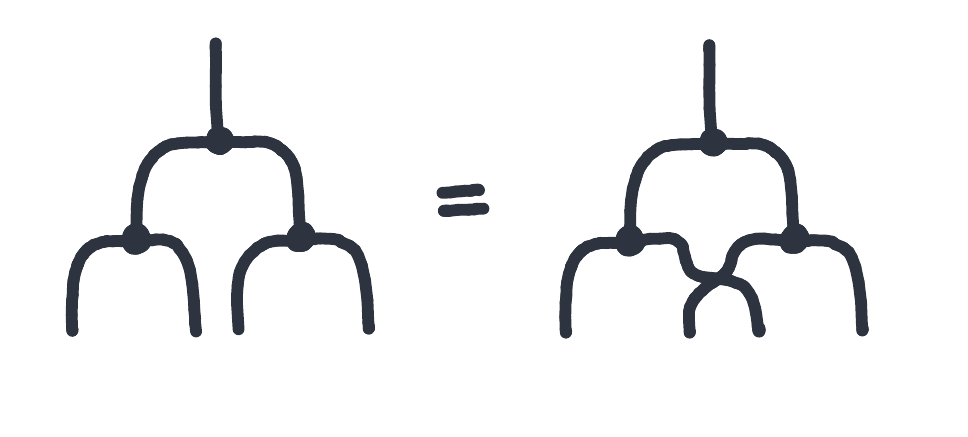}
    \caption{Comultiplication is a comagma homomorphism.}
    \label{figure:comultiplication}
  \end{figure}

  Now, we prove cocommutativity (\Cref{string:cocommutative}): composing both sides of \Cref{eq:comultiplicationhomomorphism} with $(ε_{X} ⊗ \id ⊗ \id ⊗ ε_{X})$ discards the two external outputs and gives $δ_{X} = δ_{X} ⨾ \sigma_{X}$.
  \begin{figure}[h]
    \includegraphics[scale=0.35]{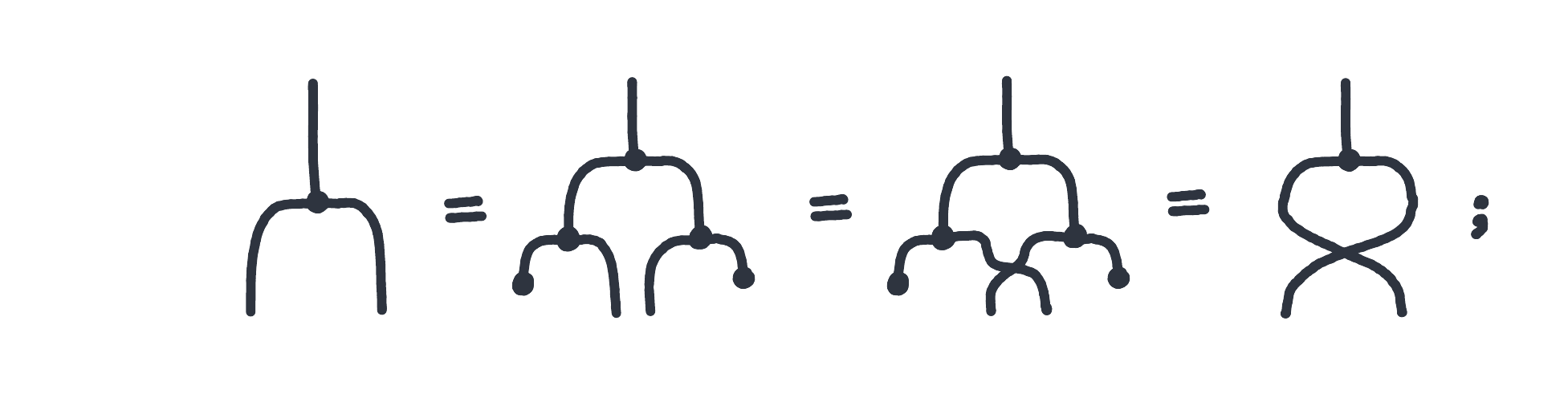}
    \caption{Cocommutativity}
    \label{string:cocommutative}
  \end{figure}
  Then, we prove coassociativity (\Cref{string:coassociativity}): composing both sides of \Cref{eq:comultiplicationhomomorphism} with $(\id ⊗ ε_{X} ⊗ \id ⊗ \id)$ discards one of the middle outputs and gives $δ_{X} ⨾ (\id ⊗ δ_{X}) = δ_{X} ⨾ (δ_{X} ⊗ \id)$.
\begin{figure}
  \includegraphics[scale=0.4]{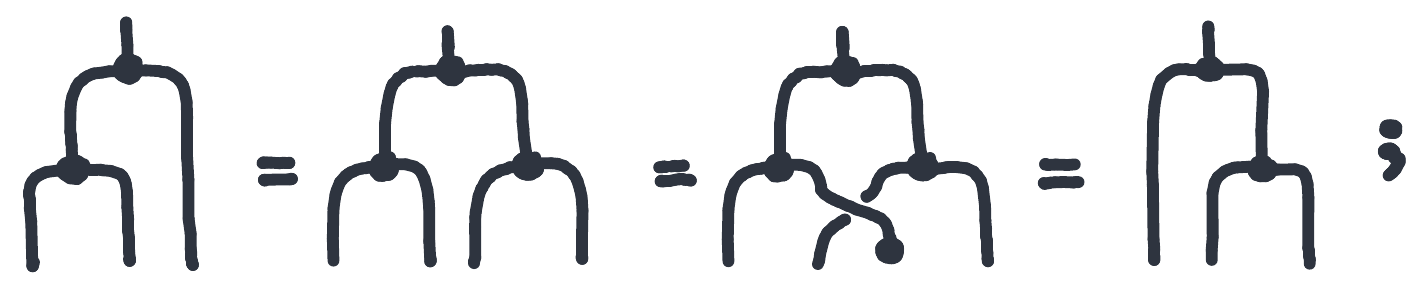}
  \caption{Coassociativity}
  \label{string:coassociativity}
\end{figure}

A coassociative and cocommutative comagma is a cocommutative comonoid. We can then apply the classical form of Fox's theorem (\Cref{th:fox}).
\end{proof} 

\subsection{Partial Markov Categories}
If process theories were all cartesian, we could use the commutative comonoid structure on every object to simplify calculations. However, most of the theories that pose a challenge to computer science (like stochastic processes, partial processes, or quantum maps) are not cartesian. The rest of this thesis will not assume cartesianity: let us give a good example and motivation for doing this.

Cartesianity in a category with copy and discard processes can be divided in two concepts: \emph{determinism} and \emph{totality}. All pure functions, for instance, are deterministic and total, but stochastic functions are not deterministic (tossing a coin twice is different from tossing it once and copying the result twice), and partially computable functions are not total (because even if we do not care about the output, they could diverge and make the whole process wait).

\begin{definition}
  In a \symmetricMonoidalCategory{} with uniform commutative comonoids, $(ℂ,⊗,I,\iconbcm,\iconbcu, σ)$, a morphism $f ፡ X → Y$ is \emph{deterministic} if it can be copied, $f ⨾ \iconbcm_{Y} = \iconbcm_{X} ⨾ (f ⊗ f)$, and it is \emph{total} (or \emph{causal}) if it can be discarded, $f ⨾ \iconbcu_Y = \iconbcu_X$. Moreover, we say it is \emph{quasitotal} (or \emph{quasicausal}), if it can be copied on the side and discarded, $f = \iconbcm_{X} ⨾ ((f ⨾ \iconbcu_Y) ⊗ f)$. See \Cref{figure:deterministictotal}.
\end{definition}
\begin{figure}[h]
  \includegraphics[scale=0.35]{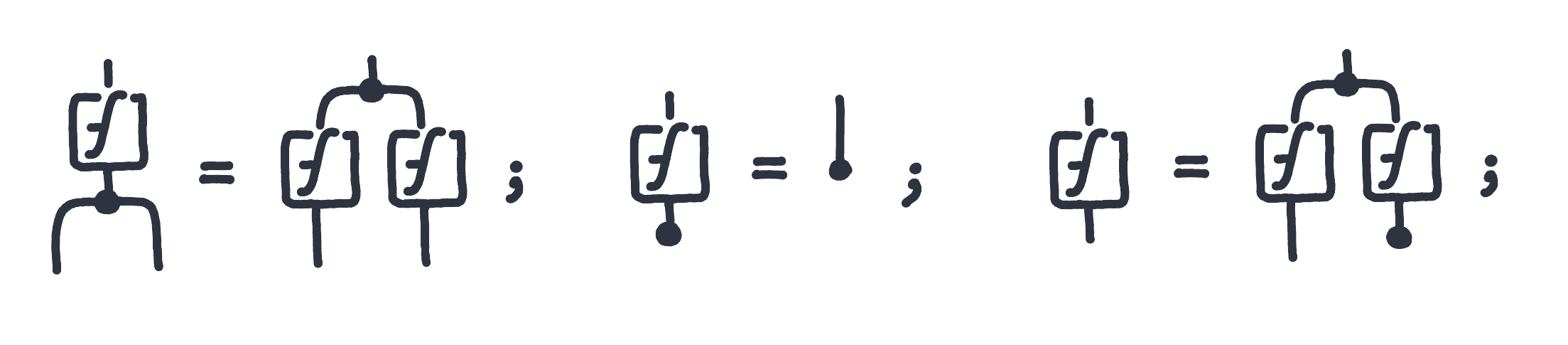}
  \caption{Deterministic, total, and quasitotal morphisms.}
  \label{figure:deterministictotal} 
\end{figure}

Let us provide a single theory where these three assumptions fail: the theory of \discretePartialMarkovCategories{} \cite{dilavore:evidentialdecision}, which we will use to study partial stochastic functions. Apart from copy $(\iconbcm) ፡ X → X ⊗ X$ and discard $(\iconbcu) ፡ X → I$, we also consider a comparator morphism $(\iconwm) ፡ X ⊗ X → X$. Copying, discarding and comparing interact as a \emph{partial Frobenius algebra} \cite{di2021functorial}.

\begin{definition}
  \defining{linkDiscretePartialMarkovCategory}{}
  A \emph{discrete partial Markov category} is is a \symmetricMonoidalCategory{} $(ℂ,⊗,I)$ such that every object has a partial Frobenius monoid structure $(\iconbcm,\iconbcu,\iconwm)$ that satisfies the axioms in \Cref{figure:comultiplication} and uniformity, meaning that 
  \begin{enumerate}
    \item $(\iconbcm)_{X ⊗ Y} = (\iconbcm_X ⊗ \iconbcm_Y) ⨾ (\id_X ⊗ σ ⊗ \id_Y)\mbox{ and }(\iconbcm)_I = \id$;
    \item $(\iconwm)_{X ⊗ Y} = (\id_X ⊗ σ ⊗ \id_Y) ⨾ (\iconwm_X ⊗ \iconwm_Y) \mbox{ and }(\iconwm)_I = \id$;
    \item $(\iconbcu)_{X ⊗ Y} = (\iconbcu_X ⊗ \iconbcu_Y) \mbox{ and }(\iconbu)_I = \id$.
  \end{enumerate}
  \begin{figure}[h]
    \includegraphics[scale=0.35]{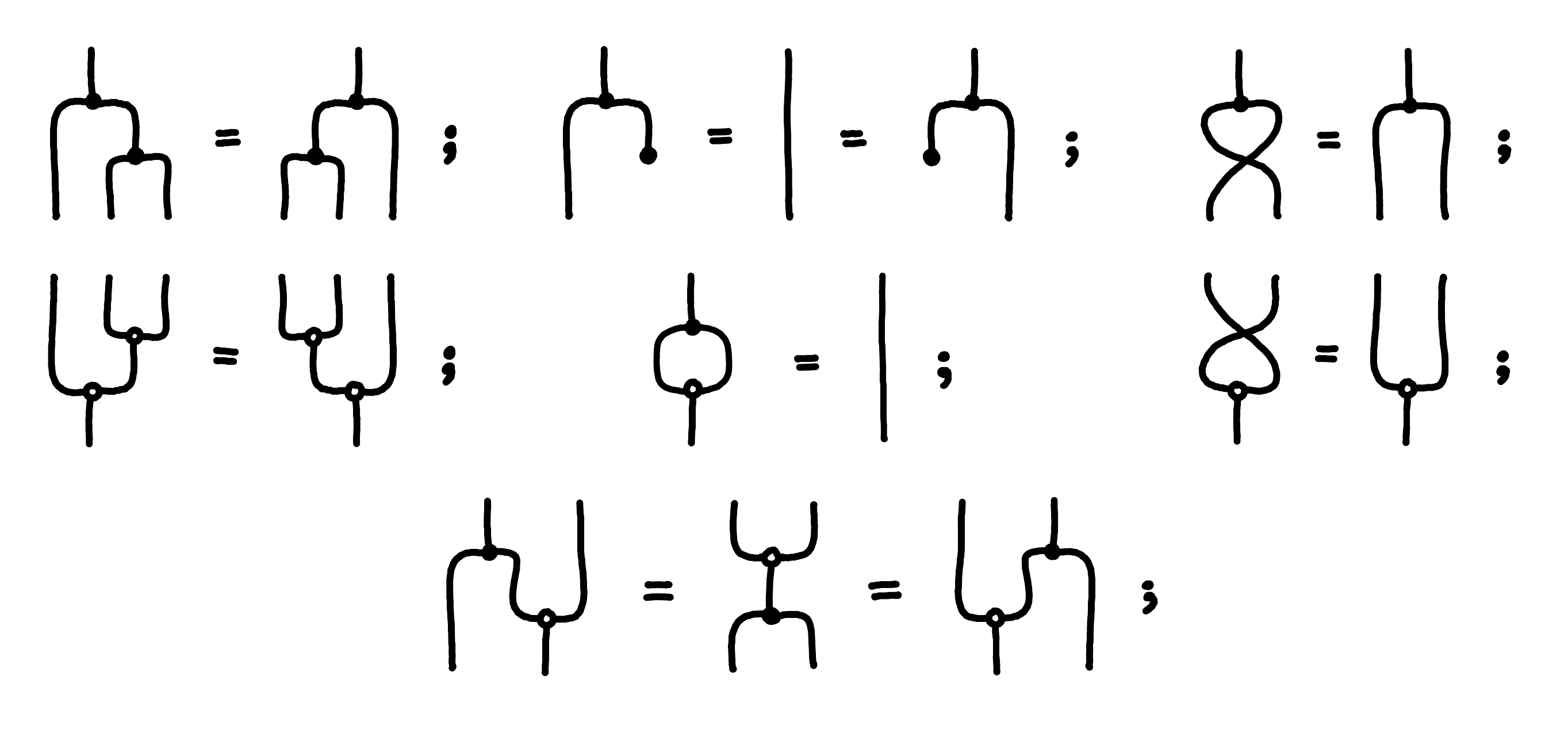}
    \caption{Theory of a partial Frobenius algebra.}
    \label{figure:comultiplication}
  \end{figure}
\end{definition}

\begin{proposition}
  A subdistribution on a set $X$ is a function $d ፡ X → [0,1]$ that is non-zero on a finite number of elements and that adds up to less or equal than one, 
  $$\sum_{d(x) > 0} d(x) \leq 1.$$
  Subdistributions form a monad, and the Kleisi category of the subdistribution monad forms a \discretePartialMarkovCategory{} \cite{dilavore:evidentialdecision}.
\end{proposition}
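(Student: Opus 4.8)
The plan is to build the structure in three layers: the subdistribution monad itself, then its symmetric monoidal Kleisli category, and finally the partial Frobenius algebras carried by every object.

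First I would pin down the \emph{subdistribution monad} $\Subd$ on $\Set$ explicitly. On objects, $\Subd(X)$ is the set of subdistributions described in the statement; the unit $\eta_X \colon X \to \Subd(X)$ sends $x$ to the Dirac subdistribution $\delta_x$; and the multiplication $\mu_X \colon \Subd(\Subd(X)) \to \Subd(X)$ sends $\Phi$ to the subdistribution $x \mapsto \sum_{d} \Phi(d)\cdot d(x)$. This is well-defined because $\sum_x \mu_X(\Phi)(x) = \sum_d \Phi(d) \sum_x d(x) \le \sum_d \Phi(d) \le 1$ and the support stays finite; the monad laws are then the same finite-sum manipulations that prove the finitely supported distribution monad is a monad, with every ``$=1$'' relaxed to ``$\le 1$''.

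Second, I would show $\Subd$ is commutative with respect to the cartesian structure $(\Set, \times, 1)$: the double strength sends a pair $(d,e)$ of subdistributions to the product subdistribution $(x,y) \mapsto d(x)\cdot e(y)$, which is again a subdistribution since $\sum_{x,y} d(x)e(y) = \big(\sum_x d(x)\big)\big(\sum_y e(y)\big) \le 1$ (a product of finite sums), and the two iterated strengths agree by Fubini for finite sums. A commutative monad on a symmetric monoidal category has a symmetric monoidal Kleisli category, so $\Kleisli(\Subd)$ is a \symmetricMonoidalCategory{} with tensor inherited from $(\times, 1)$; this is the candidate \discretePartialMarkovCategory{}.

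Third, I would exhibit the partial Frobenius data on each object $X$. Copy $\iconbcm_X \colon X \to X \otimes X$ and discard $\iconbcu_X \colon X \to I$ are the Kleisli images under $\eta$ of the diagonal $\Delta_X$ and of the terminal map; since the canonical functor $\Set \to \Kleisli(\Subd)$ is strict symmetric monoidal and $(\Set, \times, 1)$ is cartesian, these automatically form a uniform family of cocommutative comonoids, so the uniformity clauses for $\iconbcm$ and $\iconbcu$ hold. The comparator $\iconwm_X \colon X \otimes X \to X$ is the ``partial equality'': the Kleisli morphism sending $(x,y)$ to $\delta_x$ when $x = y$ and to the zero subdistribution otherwise. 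It then remains to verify, by direct computation, that $(\iconbcm_X,\iconbcu_X,\iconwm_X)$ satisfies the partial Frobenius axioms of \Cref{figure:comultiplication} (associativity and commutativity of $\iconwm$, the Frobenius law relating $\iconwm$ and $\iconbcm$, and the interaction with $\iconbcu$) and the remaining uniformity clauses $\iconwm_{X \otimes Y} = (\id \otimes \sigma \otimes \id) ⨾ (\iconwm_X \otimes \iconwm_Y)$ and $\iconwm_I = \id$; each reduces to a case split on equality of elements together with a finite-sum manipulation.

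The main obstacle is bookkeeping rather than anything conceptual: one must confirm the Frobenius law for the partial comparator and, crucially, notice that the comparator is genuinely \emph{non-unital} -- there is no morphism $I \to X$ serving as a unit for $\iconwm_X$ as soon as $X$ has more than one element, since such a unit would need constant weight $1$ on every element -- and this partiality is exactly what stops $\Kleisli(\Subd)$ from being cartesian and is what the word ``partial'' in \partialMarkovCategory{} records. Carefully tracking the zero subdistribution through these computations (so that, for instance, ``compare then copy'' and ``copy then compare'' both collapse to zero off the diagonal) is the only place where a little attention is required.
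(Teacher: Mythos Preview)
Your proposal is correct and follows essentially the same route as the paper: define the monad $(\Subd,\eta,\mu)$, describe the Kleisli category with its tensor, write down copy, discard and comparator via Dirac-on-the-diagonal formulas, and leave the partial Frobenius axioms as a direct finite-sum check. The only cosmetic difference is that you obtain the symmetric monoidal structure on $\Kleisli(\Subd)$ by invoking commutativity of the monad, whereas the paper simply writes the tensor of Kleisli morphisms as $(f \otimes f')(y,y'|x,x') = f(y|x)\cdot f'(y'|x')$; these amount to the same thing.
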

\begin{proof}
  Let us write $\Subd(X)$ for the set of subdistributions over a set. We claim that this extends to a monad in the category of sets and functions $\Subd ፡ \Set → \Set$. The multiplication $μ ፡ \Subd(\Subd(X)) → \Subd(X)$ is defined by $μ(ψ)(x) = \sum\nolimits_{ψ(d) > 0} d(x)$, and the unit $η ፡ X → \Subd(X)$ is defined by the \emph{Dirac's delta}, $η(x₀)(x) = [x = x₀]$, which is valued to $1$ whenever $x = x₀$ and is valued to $0$ otherwise.

  The Kleisli category of this monad has morphisms the \emph{partial stochastic channels} $f ፡ X → \Subd(Y)$. We write $f(y|x) ∈ [0,1]$ for the value of $f(x)$ on the input $y ∈ Y$, capturing the usual notation for conditionals in probability. Under this notation, composition on the Kleisli category becomes
  $$(f ⨾ g)(z|x) = \sum_{y ∈ Y} f(y|x) \cdot g(z|y).$$
  While tensoring is $(f ⊗ f')(y,y'|x,x') = f(y|x) \cdot f'(y'|x')$.
  The copy morphism is defined by $(\iconbcm)(x,y|z) = [x = y = z]$; the discard morphism is defined by $(\iconbcu)(|x) = 1$; and the comparator is given by $(\iconwm)(x|y,z) = [x = y = z]$. It is direct to check that these satisfy the axioms of a partial Frobenius algebra.
\end{proof}

\begin{remark}[Effect algebras]
  \defining{linkEffectAlgebra}{}
  The set of partial stochastic channels between two sets, $X → \Subd(Y)$, forms a particular algebraic structure known as an \emph{effect module}.

  An \emph{effect algebra} \cite{foulis94:effect,jacobs15:new,deWetering21:interval} is a set $E$ with a partial binary operation $(⊕) ፡ E × E → E$, a unary operation $(•)^{⊥} ፡ E → E$, and a constant $0 ∈ E$. We write $x ⊥ y$ whenever $x ⊕ y$ is well-defined and we write $1 = 0^{⊥}$. The effect algebra must satisfy
  \begin{enumerate}
    \item Commutativity, $x ⊕ y = y ⊕ x$, where $x ⊥ y$ implies $y ⊥ x$.
    \item Unitality, $x ⊕ 0 = x = 0 ⊕ x$, where $x ⊥ 0$.
    \item Associativity, $x ⊕ (y ⊕ z) = (x ⊕ y) ⊕ z$, where having both $y ⊥ z$ and $x ⊥ (y ⊕ z)$ implies both $x ⊥ y$ and $(x ⊕ y) ⊥ z$.
    \item Complementarity, $x^{⊥}$ is unique such that $x^{⊥} ⊕ x = 1$ and $x ⊥ x^{⊥}$.
  \end{enumerate}
  An \effectAlgebra{} homomorphism $f ፡ E → F$ must satisfy $f(1) = 1$, and $x ⊥ y$ must imply $f(x) ⊥ f(y)$, with $f(x ⊕ y) = f(x) ⊕ f(y)$. \EffectAlgebras{} form a \symmetricMonoidalCategory{}.

  The  unit interval, $[0,1]$, forms an \effectAlgebra{} with the binary sum (whenever it is contained on the interval), the unary complement $x^{⊥} = 1 - x$, and the zero; moreover, the unit interval $[0,1]$ with multiplication forms a monoid in the \monoidalCategory{} of \effectAlgebras{}. 
  
  Partial stochastic channels also form an effect algebra, with the same pointwise operations, but moreover, we can multiply them by a `scalar' from the unit interval $[0,1]$: they form a module in the \monoidalCategory{} of effect algebras. The structure of effect module interplays well with composition and tensoring of partial stochastic channels; so we will employ it later in \Cref{sec:casestudy}.
\end{remark}

\begin{remark}[Bayesian inversions]
  The \emph{Bayesian inversion} of a stochastic channel $g ፡ X → Y$ with respect to a distribution $f ፡ I → X$ is the stochastic channel $g^{\dagger}_f ፡ Y → X$ classically defined by
  $$g^{\dagger}_f(x|y) = \frac{g(y|x) \cdot f(x)}{\sum\nolimits_{x_• ∈ X} g(y|x_•) \cdot f(x_•)}.$$
  Bayesian inversions can be defined synthetically in any partial Markov category \cite{cho:jacobs:disintegration2019}. The Bayesian inversion of a morphism $g ፡ X → Y$ with respect to $f ፡ I → X$ is a morphism $g^{\dagger}_f ፡ Y → X$ satisfying the equation in \Cref{figure:bayesianinversion}, which translates to the above when the partial Markov category is that of subdistributions.
  \begin{figure}[h]
    \includegraphics[scale=0.35]{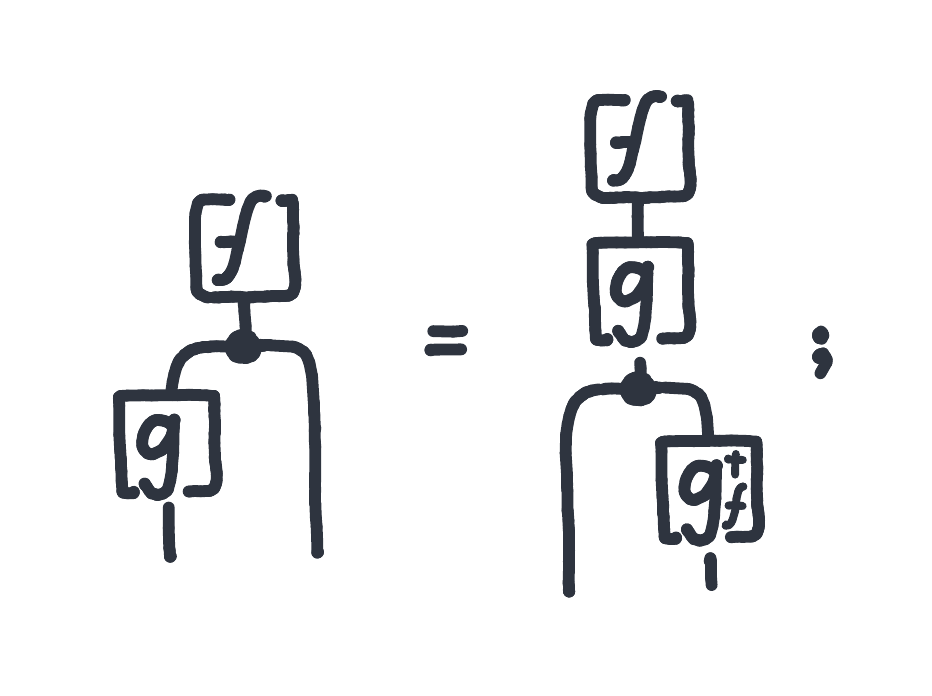}
    \caption{Bayesian inversion.}
    \label{figure:bayesianinversion}
  \end{figure}

The definition of a \emph{\partialMarkovCategory{}} includes asking a quasitotal Bayesian inversion for every morphism, with respect to any other morphism: this is the notion of \emph{quasitotal conditional} \cite{dilavore:evidentialdecision}, which we will not need in this thesis.
\end{remark}

Finally, let us justify that this \processTheory{} is enough to capture many of the features of classical probability theory. We will prove a synthetic version of Bayes theorem using the syntax of discrete partial Markov categories. The classical Bayes theorem prescribes that, after observing the output of a prior distribution through a channel, we should update our posterior distribution to be the Bayesian inversion of the channel with respect to the prior distribution, evaluated on the observation.

\begin{theorem}[Synthetic Bayes' Theorem]\label{th:bayes}
  In a discrete partial Markov category, observing a deterministic $x ፡ I → X$ from a prior distribution \(f ፡ I → A\) through a channel \(g ፡ A → X\) is the same, up to scalar, as the Bayesian inversion evaluated on the observation, $g^{\dagger}_f(x) ፡ I → A$.
\end{theorem}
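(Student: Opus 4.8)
The plan is to unfold the synthetic meaning of ``observing $x$ from the prior $f$ through the channel $g$'' as a morphism $I \to A$, to rewrite its prior--channel part using the defining equation of the Bayesian inversion, and then to use the partial Frobenius structure to peel off the normalizing scalar.

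First I would make the observation morphism explicit. To condition on the deterministic state $x \colon I \to X$ one uses the effect $x^{\ast} \colon X \to I$ that compares an incoming $X$ with $x$ and discards, namely $x^{\ast} = (x \otimes \id_X) ⨾ \iconwm_X ⨾ \iconbcu_X$. Observing $x$ from the prior $f \colon I \to A$ through $g \colon A \to X$ then means: run the prior, copy its result with $\iconbcm_A$, push one copy through $g$, and condition that $X$-output on the observation, that is
$$\mathrm{obs} \;=\; f ⨾ \iconbcm_A ⨾ (\id_A \otimes g) ⨾ (\id_A \otimes x^{\ast}).$$

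Second, the sub-morphism $f ⨾ \iconbcm_A ⨾ (\id_A \otimes g)$ is exactly the joint state on $A \otimes X$ built from the prior and the channel, so the defining equation of the Bayesian inversion $g^{\dagger}_f \colon X \to A$ with respect to $f$ (\Cref{figure:bayesianinversion}) rewrites it as the joint state built the other way round: pass to the marginal $f ⨾ g \colon I \to X$, copy it with $\iconbcm_X$, and apply $g^{\dagger}_f$ to one copy. Substituting this, and using interchange to slide $x^{\ast}$ past $g^{\dagger}_f$ (they act on disjoint tensor factors), yields
$$\mathrm{obs} \;=\; (f ⨾ g) ⨾ \iconbcm_X ⨾ (\id_X \otimes x^{\ast}) ⨾ g^{\dagger}_f.$$

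The crucial step, which I expect to be the main obstacle, is a partial-Frobenius identity: for a deterministic $x$, copying a wire and then conditioning one copy on $x$ equals conditioning the wire on $x$ and re-emitting $x$, i.e. $\iconbcm_X ⨾ (\id_X \otimes x^{\ast}) = x^{\ast} ⨾ x$. Proving this cleanly requires combining the determinism of $x$ (so that $x ⨾ \iconbcm_X = x \otimes x$) with the partial Frobenius axioms relating $\iconbcm$, $\iconbcu$ and $\iconwm$, and checking that the partial, subnormalized setting does not spoil the rewriting. Granting it, we obtain $\mathrm{obs} = (f ⨾ g) ⨾ x^{\ast} ⨾ x ⨾ g^{\dagger}_f$; writing $s$ for the scalar $(f ⨾ g) ⨾ x^{\ast} \colon I \to I$, which is central and therefore factors out, and noting $x ⨾ g^{\dagger}_f = g^{\dagger}_f(x)$ by definition of the notation, we conclude $\mathrm{obs} = s \cdot g^{\dagger}_f(x)$ --- precisely the claimed equality ``up to scalar''. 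I would finish by unwinding $s$ and $g^{\dagger}_f(x)$ in the \discretePartialMarkovCategory{} of subdistributions, where $s = \sum_{a} g(x \mid a)\, f(a)$ is the marginal likelihood of the observation and $g^{\dagger}_f(x)$ is the classical posterior, so that the synthetic statement specializes to ordinary Bayes' theorem.
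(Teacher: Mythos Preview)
Your proposal is correct and follows essentially the same route as the paper's proof: the paper also proceeds by a three-step string-diagram computation using \emph{(i)} the defining equation of the Bayesian inversion, \emph{(ii)} the partial Frobenius axioms, and \emph{(iii)} determinism of $x$, which are exactly the ingredients you identify and apply in the same order. Your ``crucial step'' identity $\iconbcm_X ⨾ (\id_X \otimes x^{\ast}) = x^{\ast} ⨾ x$ is the heart of steps \emph{(ii)}--\emph{(iii)} in the paper, and your algebraic write-up is simply a more explicit version of what the paper leaves to the string diagram.
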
    
\begin{proof}
  \begin{figure}[h]
    \includegraphics[scale=0.35]{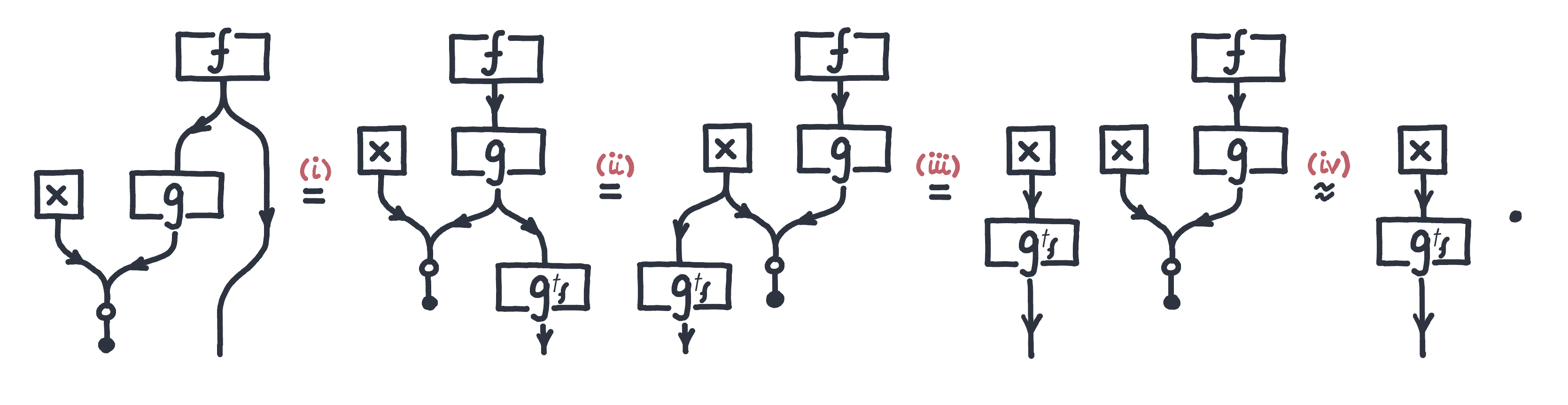}
    \caption{Bayes theorem.}
    \label{figure:bayestheorem}
\end{figure}
We employ string diagrams (\Cref{figure:bayestheorem}). Equalities follow from \emph{(i)} the definition of Bayesian inversion, \emph{(ii)} the partial Frobenius axioms, and \emph{(iii)} determinism of $y$.
\end{proof}

\subsection{Bibliography}
Fox's theorem, in its original formulation, is the construction of a right adjoint to the forgetful functor from \cartesianMonoidalCategories{} to \MonoidalCategories{}. This right adjoint is given by the category of cocommutative comonoids over a \monoidalCategory{} \cite{fox76}. The version here presented is esentially equivalent; in fact, it is called ``Fox's theorem'' in the work of Bonchi, Seeber and Sobocinski \cite{bonchi18}. 

The categorical approach to probability theory based on Markov categories is due to Fritz~\cite{fritz:markov2020} and prior work of Cho and Jacobs \cite{cho:jacobs:disintegration2019}. Multiple results of classical probability theory have been adapted to the framework of Markov categories by multiple authors~\cite{fritz2020infinite,fritz2021probability,fritz2019probability,fitz2021deFinetti}.
Markov categories have been further applied for formalising Bayes networks and other kinds of probabilistic and causal reasoning in categorical terms~\cite{fong_2013, jacobs2020logical, jacobs2021causal}. Their partial counterpart and the application to decision theory was introduced in joint work with Di Lavore \cite{dilavore:evidentialdecision}. \EffectAlgebras{} are due to Foulis and Bennett \cite{foulis94:effect}; Jacobs employed them for a probabilistic categorical logic \cite{jacobs15:new}; van de Wetering \cite{deWetering21:interval} characterized the unit interval in terms of \effectAlgebras{}.\clearpage{}%
\clearpage{}%
\section{Premonoidal Categories}
\label{sec:premonoidal-categories}

It might seem that \monoidalCategories{} are limited to pure imperative programming without effects. After all, the \emph{interchange law} seems to imply that the order in which two independent are executed does not matter. This is true, but again, category theory has a solution for us: \premonoidalCategories{}.

Category theory has two successful applications that are rarely combined: monoidal string diagrams \cite{joyal91:geometryOfTensorCalculus} and programming semantics \cite{moggi91}.
We use string diagrams to talk about quantum transformations \cite{abramsky09:categoricalquantum}, relational queries \cite{bonchi18}, and even computability \cite{pavlovic13}; at the same time, proof nets and the geometry of interaction \cite{girard89,blute96} have been widely applied in computer science \cite{abramsky02,hoshino14}.
On the other hand, we traditionally use monads and comonads, Kleisli categories and \premonoidalCategories{} to explain effectful functional programming \cite{hughes00,jacobs09,moggi91,power99:freyd,uustalu2008comonadic}. Even if we traditionally employ Freyd categories with a cartesian base \cite{power02}, we can also consider non-cartesian Freyd categories \cite{staton13}, which we call \emph{effectful categories}.

This section introduces \premonoidalCategories{} and \effectfulCategories{}. The next section will study their string diagrams in terms of \monoidalCategories{}, reducing them to a particular consideration on top of the theory of \monoidalCategories{}.

\subsection{Premonoidal Categories}
\label{sec:premonoidalcats}
\PremonoidalCategories{} are \monoidalCategories{} without the \emph{interchange law},
$(f ⊗ \id) ⨾ (\id ⊗ g) \neq (\id ⊗ g) ⨾ (f ⊗ \id)$.
This means that we cannot tensor any two arbitrary morphisms, $(f ⊗ g)$, without explicitly stating which one is to be composed first, $(f ⊗ \id) ⨾ (\id ⊗ g)$ or $(\id ⊗ g) ⨾ (f ⊗ \id)$, and the two compositions are not equivalent (\Cref{fig:noninterchange}).
  \begin{figure}[h]
    \centering
    \includegraphics[scale=0.6]{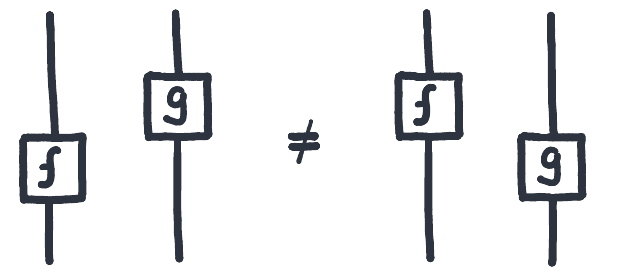}
    \caption{The interchange law does not hold in a premonoidal category.}
    \label{fig:noninterchange}
  \end{figure}

In technical terms, the tensor of a \premonoidalCategory{} $(⊗) ፡ ℂ \times ℂ → ℂ$ is not a functor, but only what is called a \emph{sesquifunctor}: independently functorial in each variable. Tensoring with any identity is itself a functor $(\bullet ⊗ \id) ፡ ℂ → ℂ$, but there is no functor $(\bullet ⊗ \bullet) ፡ ℂ \times ℂ \to ℂ$.

A practical motivation for dropping the interchange law can be found when describing transformations that affect a global state.
These effectful processes should not interchange in general, because the order in which we modify the global state is meaningful.
For instance, in the Kleisli category of the \emph{writer monad}, $(\Sigma^{\ast} \times \bullet) ፡ \Set \to \Set$ for some alphabet $\Sigma ∈ \Set$, we can consider the function $\mathsf{print} ፡ \Sigma^{\ast} \to \Sigma^{\ast} \times 1$. The order in which we ``print'' does matter (\Cref{fig:writermonad}).
  \begin{figure}[ht]
    \centering
    \includegraphics[scale=0.6]{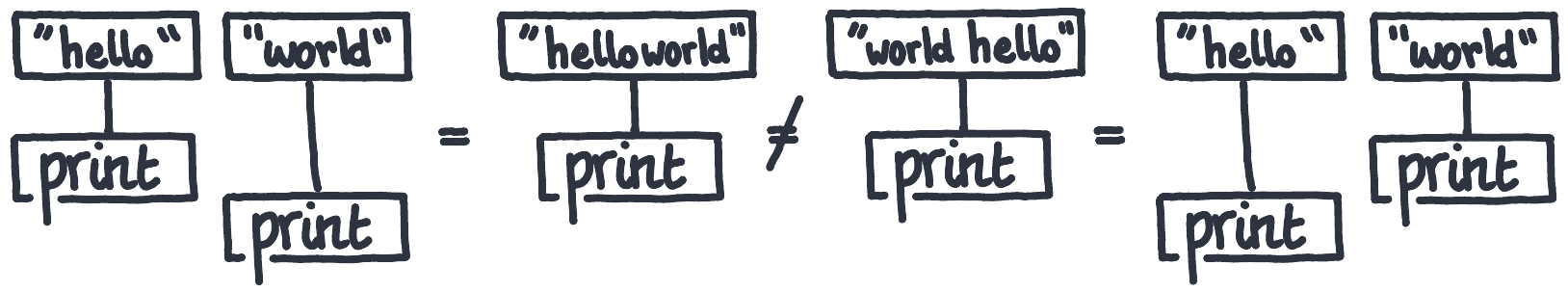}
    \caption{Writing does not interchange.}
    \label{fig:writermonad}
  \end{figure}

  Not surprisingly, the paradigmatic examples of \premonoidalCategories{} are the Kleisli categories of Set-based monads $T ፡ \Set \to \Set$  (more generally, of strong monads), which fail to be monoidal unless the monad itself is commutative \cite{guitart1980tenseurs,power97,power99:freyd,hedgesblog2019}.
  Intuitively, the morphisms are ``effectful'', and these effects do not always commute.

However, we may still want to allow some morphisms to interchange.
For instance, apart from asking the same associators and unitors of \monoidalCategories{} to exist, we ask them to be \emph{central}: which means that they interchange with any other morphism.
This notion of centrality forces us to write the definition of \premonoidalCategory{} in two different steps: first, we introduce the minimal setting in which centrality can be considered (\emph{\binoidalCategories{}} \cite{power99:freyd}) and then we use that setting to bootstrap the full definition of \premonoidalCategory{} with central coherence morphisms.

\begin{definition}[Binoidal category]
  \defining{linknoidalcategory}{}
  A \emph{binoidal category} is a category $ℂ$ endowed with an object $I \in ℂ$ and an object $A ⊗ B$ for each $A \in ℂ$ and $B \in ℂ$.
  There are functors
  $(A ⊗ \bullet) ፡ ℂ \to ℂ
    \mbox{, and }
    (\bullet ⊗ B) ፡ ℂ \to ℂ$
  that coincide on $(A ⊗ B)$. Note that $(\bullet ⊗ \bullet)$ is not being defined as a functor.
\end{definition}

Again, this means that we can tensor with identities (whiskering), functorially; but we cannot tensor two arbitrary morphisms: the interchange law stops being true in general.
The \emph{centre}, $\zentre(ℂ)$, is the wide subcategory of morphisms that do satisfy the interchange law with any other morphism.
That is, $f ፡ A \to B$ is \emph{central} if, for each $g ፡ A' \to B'$,
\begin{align*}
  & (f ⊗ \id_{A'}) ⨾ (\id_{B} ⊗ g)
   = (\id_{A} ⊗ g) ⨾ (f ⊗ \id_{B'}), \mbox{ and } \\
  & (\id_{A'} ⊗ f) ⨾ (g ⊗ \id_{B})
   = (g ⊗ \id_{A}) ⨾ (\id_{B'} ⊗ f).
\end{align*}

\begin{definition}
  \defining{linkpremonoidalcategory}
  A \emph{premonoidal category} is a \noidalCategory{} $(ℂ,⊗,I)$ together with the following coherence isomorphisms
  $\alpha_{A,B,C} ፡ A ⊗ (B ⊗ C) \to (A ⊗ B) ⊗ C$, $\rho_{A} ፡ A ⊗ I \to A$ and $\lambda_{A} ፡ I ⊗ A \to A$ which are central, natural \emph{separately at each given component}, and satisfy the pentagon and triangle equations.

  A \premonoidalCategory{} is \emph{strict} when these coherence morphisms are identities.
  A \premonoidalCategory{} is moreover \emph{symmetric} when it is endowed with a coherence isomorphism $\sigma_{A,B} ፡ A ⊗ B \to B ⊗ A$ that is central and natural at each given component and satisfies the symmetry condition and hexagon equations.
\end{definition}

\begin{remark}
  The coherence theorem of \monoidalCategories{} still holds for \premonoidalCategories{}: every premonoidal is equivalent to a strict one.
  We will construct the free strict \premonoidalCategory{} using \stringDiagrams{}.
  However, the usual \stringDiagrams{} for \monoidalCategories{} need to be restricted: in \premonoidalCategories{}, we cannot consider two morphisms in parallel unless any of the two is \emph{central}.
\end{remark}

\subsection{Effectful and Freyd Categories}
\label{sec:effectfulcats}

\PremonoidalCategories{} immediately present a problem: what are the premonoidal functors?
If we want them to compose, they should preserve the centrality of the coherence morphisms (so that the central coherence morphisms of $F ⨾ G$ are these of $F$ after applying $G$), but naively asking them to preserve all central morphisms rules out important examples~\cite{staton13}.
The solution is to explicitly choose some central morphisms that represent ``pure'' computations.
These do not need to form the whole centre: it could be that some morphisms considered  \emph{effectful} just ``happen'' to fall in the centre of the category, while we do not ask our functors to preserve them.
This is the well-studied notion of a \emph{non-cartesian Freyd category}, which we shorten to \emph{effectful monoidal category} or \emph{effectful category}.

\EffectfulCategories{} are \premonoidalCategories{} endowed with a chosen family of central morphisms.
These central morphisms are called \pure{} morphisms, contrasting with the general, non-central, morphisms that fall outside this family, which we call \effectful{}.

\begin{definition}
  \defining{linkeffectful}{}
  \defining{linkfreyd}{}
  An \emph{effectful category}
  is an identity-on-objects functor $\colorMon{𝕍} → \colorPre{ℂ}$ from a monoidal category $𝕍$ (the \pure{} morphisms, or ``values'') to a \premonoidalCategory{} $ℂ$ (the \effectful{} morphisms, or ``computations''), that strictly preserves all of the premonoidal structure and whose image is central. It is \emph{strict} when both are. A \emph{Freyd category} \cite{levy04:callbypush} is an effectful category where the \pure{} morphisms form a \cartesianMonoidalCategory{}.
\end{definition}

\EffectfulCategories{} solve the problem of defining premonoidal functors: a functor between \effectfulCategories{} needs to preserve only the \pure{} morphisms.
We are not losing expressivity: \premonoidalCategories{} are effectful with their centre, $\colorMon{\zentre}(ℂ) \to ℂ$. From now on, we study \effectfulCategories{}.

\begin{definition}[Effectful functor]
  \defining{linkeffectfulfunctor}
  Let $𝕍 → ℂ$ and $𝕎 → 𝔻$ be \effectfulCategories{}.
  An \emph{effectful functor} is a quadruple  $(F, F_{0},\varepsilon, \mu)$ consisting of a functor $F ፡ ℂ → 𝔻$ and a functor $F_{0} ፡ 𝕍 → 𝕎$ making the square commute,
  and two natural and pure isomorphisms $\varepsilon ፡ I' ≅ F(I)$ and $\mu ፡ F(A \otimes B) ≅ F(A) \otimes F(B)$ such that they make $F_{0}$ a monoidal functor. It is \emph{strict} if these are identities.
\end{definition}

When drawing \stringDiagrams{} in an \effectfulCategory{}, we shall use two different colours to declare if we are depicting either a value or a computation (\Cref{fig:prehelloworld}).

\begin{figure}[h]
  \centering
  \includegraphics[scale=0.6]{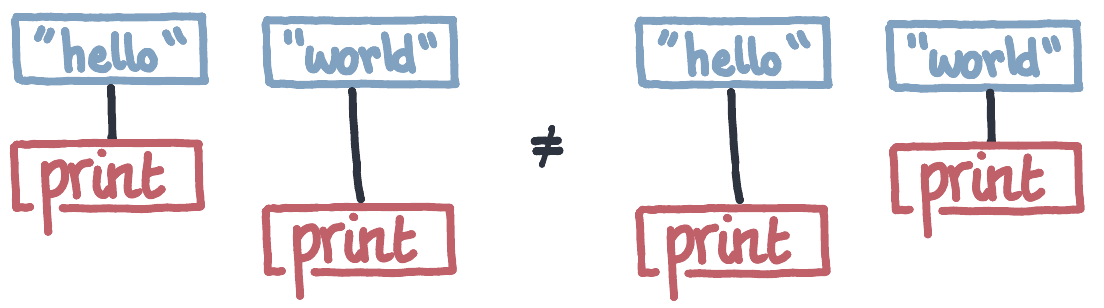}
  \caption{``Hello world'' is not ``world hello''.}
  \label{fig:prehelloworld}
\end{figure}

Here, the values \colorMon{``hello''} and \colorMon{``world''} satisfy the interchange law as in an ordinary monoidal category. However, the effectful computation \colorPre{``print''} does not need to satisfy the interchange law.
String diagrams like these can be found in the work of Alan Jeffrey \cite{jeffrey97}.
Jeffrey presents a clever mechanism to graphically depict the failure of interchange: all effectful morphisms need to have a control wire as an input and output.
This control wire needs to be passed around to all the computations in order, and it prevents them from interchanging.

  \begin{figure}[h]
    \centering
    \includegraphics[scale=0.6]{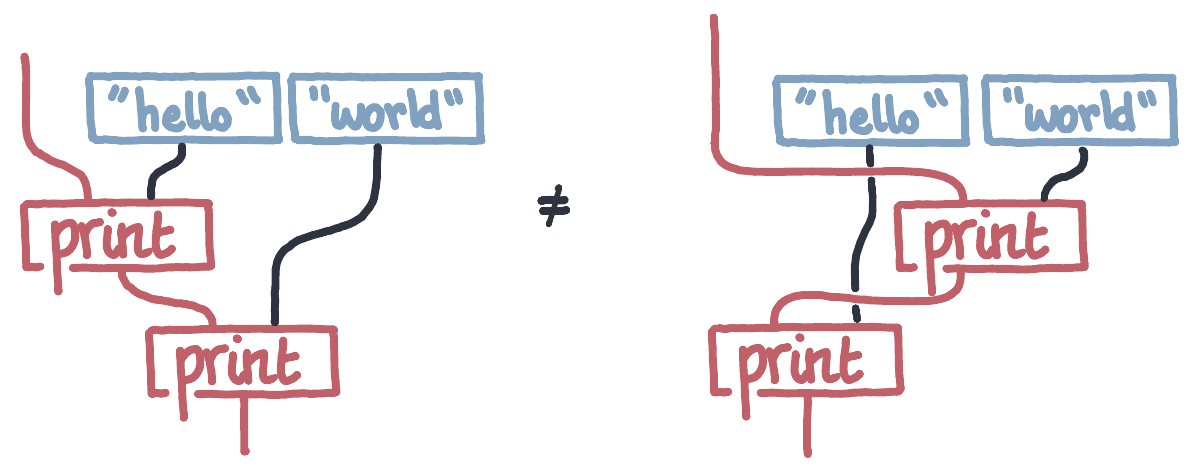}
    \caption{An extra wire prevents interchange.}
    \label{fig:helloworld}
  \end{figure}

Our interpretation of \monoidalCategories{} is as theories of resources.
We can interpret \premonoidalCategories{} as \monoidalCategories{} with an extra resource -- the \colorPre{``runtime''} -- that needs to be passed to all computations.
The next section promotes Jeffrey's observation into a theorem.

\begin{remark}
  After the next section, which reduces \premonoidalCategories{} to \monoidalCategories{}, the rest of this thesis deals mostly with \monoidalCategories{}.
  Why not study \premonoidalCategories{} instead of just \monoidalCategories{}? \PremonoidalCategories{} should be more general: they do not assume the interchange law, which is false in stateful systems.
  However, we give an argument for studying only \monoidalCategories{} in the next section: any \premonoidalCategory{} can be reinterpreted as a \monoidalCategory{} carrying an extra resource (an extra wire) representing the global state.
  We do not need to write a new theory of \premonoidalCategories{}: premonoidal categories are already \monoidalCategories{}, in which one wire is hidden. The following section makes this idea formal.
\end{remark}

\subsection{Bibliography}
This chapter follows closely the first part of ``Promonads and String Diagrams for Effectful Categories'', by this author \cite{roman:promonads-string-diagrams}.

\EffectfulCategories{} are the monoidal counterpart of a well-known notion: that of ``Freyd categories''. The name ``Freyd category'' sometimes assumes cartesianity of the pure morphisms, but it is also used for the general case; choosing to call ``effectful categories'' to the general case and reserving the name ``Freyd categories'' for the cartesian ones avoids this clash of nomenclature.
There exists also the more fine-grained notion of ``Cartesian effect category'' \cite{dumas11:cartesianEffect}, which generalizes Freyd categories and may further justify calling ``effectful category'' to the general case.

\newpage

\clearpage{}%
\clearpage{}%

\section{String Diagrams for Premonoidal Categories}
\label{sec:runtime}

\PremonoidalCategories{} give us a generalization of \monoidalCategories{} accounting for effectful computation but, at the same time, they do not need us to change our syntax yet.
String diagrams for \premonoidalCategories{} can be reduced to string diagrams for \monoidalCategories{}: they rely on the fact that the morphisms of the monoidal category freely generated over a \polygraph{} of generators are string diagrams on these generators, quotiented by topological deformations, as we saw in \Cref{sec:string-diagrams-of-strict-monoidal-categories}.

\subsection{Effectful Polygraphs}
We justify string diagrams for \premonoidalCategories{} by proving that the freely generated effectful category over a pair of \polygraphs{} (for pure and effectful generators, respectively) can be constructed as the freely generated \monoidalCategory{} over a particular \polygraph{} that includes an extra wire. In the same sense that \polygraphs{} are signatures for generating free \monoidalCategories{}, \polygraphCouples{} are signatures for generating free \effectfulCategories{}.

\begin{definition}
  \defining{linkpolygraphcouple}{}
  An \emph{\polygraphCouple{}} is a pair of \polygraphs{} $(𝓥,𝓖)$ sharing the same objects, $𝓥_{\mathrm{obj}} = 𝓖_{\mathrm{obj}}$.
  A morphism of \polygraphCouples{} $(u,f) ፡ (𝓥,𝓖) \to (𝓦,𝓗)$ is a pair of morphisms of \polygraphs{}, $u ፡ 𝓥 → 𝓦$ and $f ፡ 𝓖 → 𝓗$, such that they coincide on objects, $f_{\mathrm{obj}} = u_{\mathrm{obj}}$.
\end{definition}

\subsection{Adding Runtime}
  Recall from \Cref{sec:string-diagrams-of-strict-monoidal-categories} that there exists an adjunction between \polygraphs{} and \strictMonoidalCategories{}.
  Any monoidal category $ℂ$ can be seen as a \polygraph{}, $\mathsf{Forget}(ℂ)$, where the edges are morphisms 
  $$\mathsf{Forget}(ℂ)(A_{0},\dots, A_{n};B_{0}, \dots, B_{m}) = ℂ(A_{0} ⊗ \dots ⊗ A_{n};B_{0} ⊗ \dots ⊗ B_{m}),$$ and we forget about composition and tensoring.
  Given a \polygraph{} $𝓖$, the free strict monoidal category, which we will now write as $\MON(𝓖) = \mathsf{String}(𝓖)$, is the strict monoidal category that has as morphisms the string diagrams over the generators of the \polygraph{}.

  We will construct a similar adjunction between \polygraphCouples{} and \effectfulCategories{}. Let us start by formally adding the runtime to a free monoidal category.

\begin{definition}[Runtime monoidal category]
  \defining{linkruntimepolygraph}{}
  Let $(𝓥,𝓖)$ be an \polygraphCouple{}.
  Its \emph{runtime monoidal category}, $\MONRUN(𝓥,𝓖)$, is the monoidal category freely generated from adding an extra object -- the runtime, $\R$ -- to the input and output of every effectful generator in $𝓖$ (but not to those in $𝓥$), and letting that extra object be braided with respect to every other object of the category.

  In other words, it is the monoidal category freely generated by the following \polygraph{}, $\Run(𝓥,𝓖)$,
  (\Cref{fig:rungen}),
  assuming $A_{0},\dots,A_{n}$ and $B_{0},\dots,B_{m}$ are distinct from $\R$
  \begin{itemize}[leftmargin=2em]
    \item $\obj{\Run(𝓥,𝓖)} = \obj{𝓖} + \{ \R \} = \obj{𝓥} + \{ \R \}$,
    \item $\Run(𝓥,𝓖)(\R,A_{0},\dots,A_{n};\R, B_{0},\dots,B_{n}) = 𝓖(A_{0},\dots,A_{n}; B_{0},\dots,B_{n})$,
    \item $\Run(𝓥,𝓖)(A_{0},\dots,A_{n}; B_{0},\dots,B_{n}) = 𝓥(A_{0},\dots,A_{n}; B_{0},\dots,B_{n})$,
    \item $\Run(𝓥,𝓖)(\R,A_{0};A_{0},\R) = \Run(𝓥,𝓖)(A_{0},\R;\R,A_{0}) = \{\sigma\}$,
  \end{itemize}
  with $\Run(𝓥,𝓖)$ empty in any other case, and quotiented by the braiding axioms for $\R$ (\Cref{fig:runaxiom}).
  \begin{figure}[ht]
    \centering
    \includegraphics[scale=0.6]{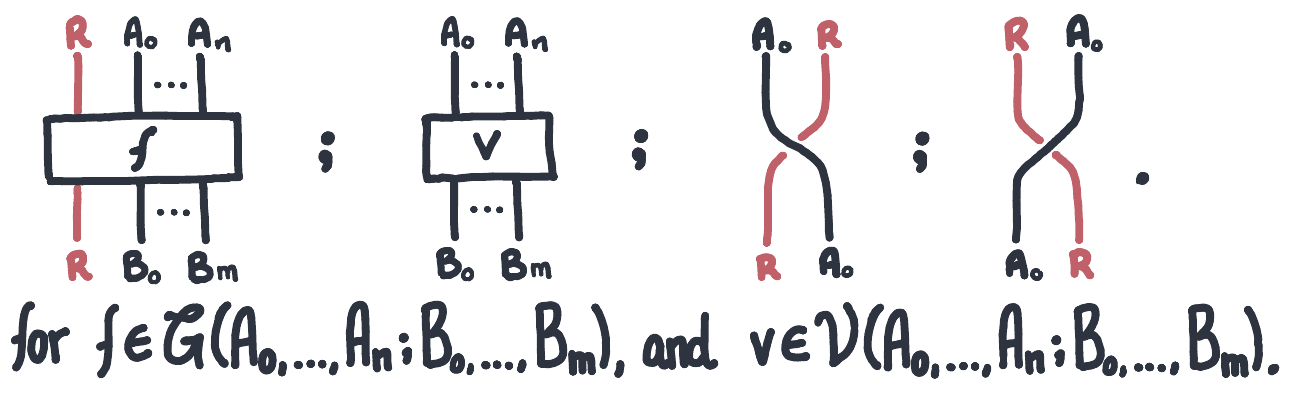}
    \caption{Generators for the runtime monoidal category.}
    \label{fig:rungen}
  \end{figure}
  \begin{figure}[ht]
    \centering
    \includegraphics[scale=0.6]{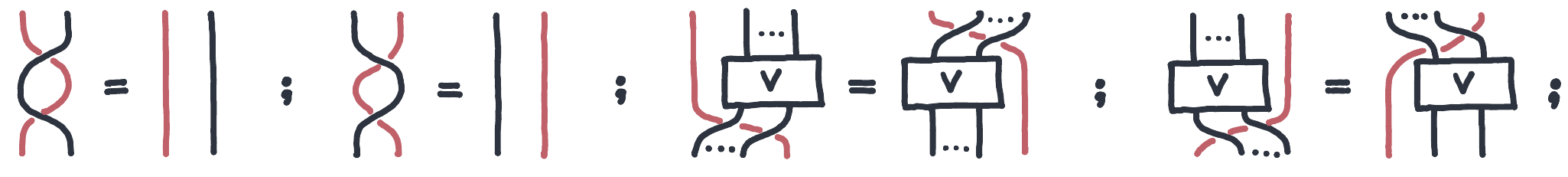}
    \caption{Axioms for the runtime monoidal category.}
    \label{fig:runaxiom}
  \end{figure}
\end{definition}

Somehow, we are asking the runtime $\R$ to be in the Drinfeld centre \cite{drinfeld10} of the monoidal category.
The extra wire that $\R$ provides is only used to prevent interchange, and so it does not really matter where it is placed in the input and the output.
We can choose to always place it on the left, for instance -- and indeed we will be able to do so -- but a better solution is to just consider objects ``up to some runtime braidings''.
This is formalized by the notion of \emph{braid clique}.

\begin{definition}[Braid clique]
  Given any list of objects $A_{0},\dots,A_{n}$ in $\obj{𝓥} = \obj{𝓖}$,
  we construct a \emph{clique}  \cite{trimble:coherence,shulman20182} in the category $\MONRUN(𝓥,𝓖)$: we consider the objects, $A_{0} ⊗ \dots ⊗ \R_{(i)} ⊗ \dots ⊗ A_{n}$, created by inserting the runtime $\R$ in all of the possible $0 \leq i \leq n+1$ positions; and we consider the family of commuting isomorphisms constructed by braiding the runtime,
  \[ \sigma_{i,j} ፡ A_{0} ⊗ \dots ⊗ \R_{(i)} ⊗ \dots ⊗ A_{n} \to  A_{0} ⊗ \dots ⊗ \R_{(j)} ⊗ \dots ⊗ A_{n}.\]
  We call this the \emph{braid clique}, $\Braid(A_{0},\dots,A_{n})$, on that list.
\end{definition}

\begin{definition}
  A \emph{braid clique morphism}, 
  $$f ፡ \Braid(A_{0},\dots,A_{n}) \to \Braid(B_{0},\dots,B_{m}),$$ 
  is a family of morphisms in the runtime monoidal category, $\MONRUN(𝓥,𝓖)$, from each of the objects of first clique to each of the objects of the second clique,
  \[f_{ik} ፡ A_{0} ⊗ \dots ⊗ \R_{(i)} ⊗ \dots ⊗ A_{n} \to
    B_{0} ⊗ \dots ⊗ \R_{(k)} ⊗ \dots ⊗ B_{m},\]
  that moreover commutes with all braiding isomorphisms, $f_{ij} ⨾ \sigma_{jk} = \sigma_{il} ⨾ f_{}$.
\end{definition}

A braid clique morphism  $f ፡ \Braid(A_{0},\dots, A_{n}) \to \Braid(B_{0},\dots, B_{m})$ is fully determined by \emph{any} of its components, by pre/post-composing it with braidings.
In particular, a braid clique morphism is always fully determined by its leftmost component $f_{00} ፡ \R ⊗ A_{0} ⊗ \dots ⊗ A_{n} \to \R ⊗ B_{0} ⊗ \dots ⊗ B_{m}$.

\begin{lemma}\label{lemma:eff-is-premonoidal}
  Let $(𝓥,𝓖)$ be an \polygraphCouple{}.
  There exists a \premonoidalCategory{}, $\EFF(𝓥,𝓖)$, that has objects the braid cliques, $\Braid(A_{0},\dots,A_{n})$, in the runtime monoidal category $\MONRUN(𝓥,𝓖)$, and as morphisms the braid clique morphisms between them. 
\end{lemma}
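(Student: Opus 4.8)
The plan is to reduce every piece of structure on $\EFF(𝓥,𝓖)$ to a computation with \emph{leftmost components} inside the runtime monoidal category $\MONRUN(𝓥,𝓖)$, using the two observations made just above: a braid clique morphism is entirely determined by its leftmost component $f_{00} ፡ \R ⊗ A_0 ⊗ \dots ⊗ A_n → \R ⊗ B_0 ⊗ \dots ⊗ B_m$, and, conversely, conjugating with the runtime braidings extends any such morphism of $\MONRUN(𝓥,𝓖)$ to a braid clique morphism. First I would record this correspondence precisely --- it identifies the underlying category of $\EFF(𝓥,𝓖)$ with the full subcategory of $\MONRUN(𝓥,𝓖)$ on the objects of the form $\R ⊗ A_0 ⊗ \dots ⊗ A_n$ --- and then use it to define composition of braid clique morphisms by $f_{00} ⨾ g_{00}$ on leftmost components and the identity on $\Braid(A_0,\dots,A_n)$ by the identity of $\R ⊗ A_0 ⊗ \dots ⊗ A_n$. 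Associativity and unitality of $\EFF(𝓥,𝓖)$ are then immediate from those of $\MONRUN(𝓥,𝓖)$.

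Next I would equip this category with its binoidal structure, so that $\EFF(𝓥,𝓖)$ becomes a \binoidalCategory{}. On objects, $\Braid(\vec A) ⊗ \Braid(\vec B)$ is the clique $\Braid(\vec A,\vec B)$ on the concatenated list, and the unit $I$ is $\Braid()$, the ``bare runtime'' clique on the empty list. Left whiskering a morphism $f ፡ \Braid(\vec A) → \Braid(\vec B)$ by $\Braid(\vec C)$ will be defined through its leftmost component as $(\sigma_{\R,\vec C} ⊗ \id) ⨾ (\id_{\vec C} ⊗ f_{00}) ⨾ (\sigma_{\vec C,\R} ⊗ \id)$ --- braid the runtime to the right past $\vec C$, act by $f_{00}$, braid it back --- whereas right whiskering by $\Braid(\vec C)$ is simply $f_{00} ⊗ \id_{\vec C}$ on leftmost components. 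I would then check three things: (i) that these formulas genuinely define braid clique morphisms, which is exactly where the extension observation is used; (ii) that the two whiskerings agree on the object $\Braid(\vec A) ⊗ \Braid(\vec B)$, which is immediate from the definition of the tensor of objects; and (iii) that each whiskering is functorial in its morphism argument --- right whiskering inherits this directly from functoriality of $⊗$ in $\MONRUN(𝓥,𝓖)$, and for left whiskering the only nontrivial point is the cancellation $\sigma_{\vec C,\R} ⨾ \sigma_{\R,\vec C} = \id$.

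Finally, for the premonoidal structure I would observe that list concatenation is strictly associative and strictly unital with the empty list, so that $(\Braid(\vec A) ⊗ \Braid(\vec B)) ⊗ \Braid(\vec C)$ and $\Braid(\vec A) ⊗ (\Braid(\vec B) ⊗ \Braid(\vec C))$ are literally the same object, and likewise $I ⊗ \Braid(\vec A) = \Braid(\vec A) = \Braid(\vec A) ⊗ I$. One therefore takes the associator and the two unitors to be identity braid clique morphisms; these are trivially central, natural separately in each component, and satisfy the pentagon and triangle equations. This makes $\EFF(𝓥,𝓖)$ a strict \premonoidalCategory{}, as claimed.

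The main obstacle will be items (i)--(iii) of the binoidal step: confirming that the whiskering formulas really land among braid clique morphisms and that they are functorial. This is where one must invoke the defining axioms of $\MONRUN(𝓥,𝓖)$ beyond its bare category structure --- the naturality of the runtime braiding, which guarantees that the value of a whiskered morphism is independent of which component one computes with, together with the coherence of the family of runtime braidings that makes each $\Braid(\vec A)$ a clique at all. Everything else, namely the category axioms and the strict premonoidal coherence, is forced by the corresponding facts in $\MONRUN(𝓥,𝓖)$ and by strictness of list concatenation.
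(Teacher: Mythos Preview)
Your proposal is correct and follows essentially the same route as the paper: identify braid clique morphisms with their leftmost components in $\MONRUN(𝓥,𝓖)$, take the tensor of objects to be list concatenation, define right whiskering by plain tensoring and left whiskering by conjugating with the runtime braidings, and take the associators and unitors to be identities. The paper's proof is terser, leaving the verification of your items (i)--(iii) to a picture, but the content is the same.
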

\begin{proof}
  First, let us give $\EFF(𝓥,𝓖)$ the structure of a category.
  The identity on $\Braid(A_{0},\dots,A_{n})$ is the identity on $\R ⊗ A$.
  The composition of a morphism $\R  ⊗ A \to \R ⊗ B$ with a morphism $\R ⊗ B \to \R ⊗ C$ is their plain composition in $\MONRUN(𝓥,𝓖)$.

  Let us now check that it is moreover a \premonoidalCategory{}.
  Tensoring of cliques is given by concatenation of lists, which coincides with the tensor in $\MONRUN(𝓥,𝓖)$. However, it is interesting to note that the tensor of morphisms cannot be defined in this way:
  a morphism $\R ⊗ A \to \R ⊗ B$ cannot be tensored with a morphism $\R ⊗ A' \to \R ⊗ B'$ to obtain a morphism $\R ⊗ A ⊗ A' \to \R ⊗ B ⊗ B'$.

  Whiskering of a morphism $f ፡ \R  ⊗ A \to \R ⊗ B$ is defined with braidings in the left case, $\R ⊗ C ⊗ A \to \R ⊗ C ⊗ B$, and by plain whiskering in the right case, $\R ⊗ A ⊗ C \to \R ⊗ B ⊗ C$, as depicted in \Cref{fig:whiskering}.
   \begin{figure}[!ht]
    \centering
    \includegraphics[scale=0.65]{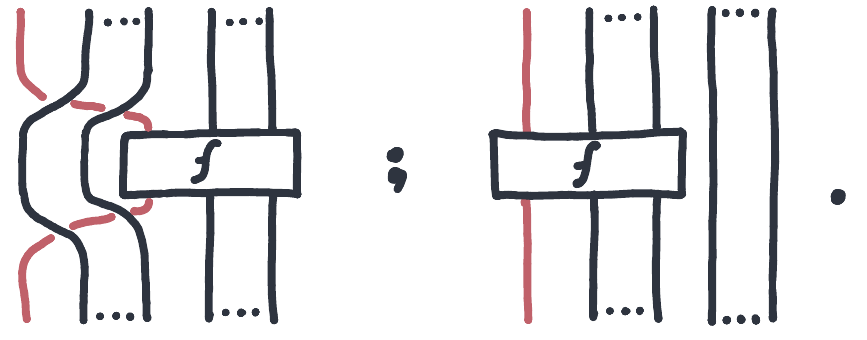}
    \caption{Whiskering in the runtime premonoidal category.}
    \label{fig:whiskering}
  \end{figure}
  Finally, the associators and unitors are identities, which are always natural and central.
\end{proof}

\begin{lemma}
  \label{lemma:identity-mon-eff}
  Let $(𝓥,𝓖)$ be an \polygraphCouple{}.
  There exists an identity-on-objects functor \(\MON(𝓥) \to \EFF(𝓥,𝓖)\) that strictly preserves the premonoidal structure and whose image is central. 
\end{lemma}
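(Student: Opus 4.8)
The plan is to construct the identity-on-objects functor $\MON(𝓥) \to \EFF(𝓥,𝓖)$ directly and then verify it has the claimed properties. First I would recall that, by the adjunction of \Cref{sec:string-diagrams-of-strict-monoidal-categories}, the strict \monoidalCategory{} $\MON(𝓥)$ is freely generated by the \polygraph{} $𝓥$; hence to define a strict monoidal functor out of it, it suffices to give a \polygraph{} morphism from $𝓥$ into $\mathsf{Forget}(\EFF(𝓥,𝓖))$ --- that is, to say where objects and generators go --- and the universal property extends it uniquely. On objects, a list $A_0,\dots,A_n$ of $𝓥_{\mathrm{obj}}$ is sent to the braid clique $\Braid(A_0,\dots,A_n)$; since $𝓥_{\mathrm{obj}} = 𝓖_{\mathrm{obj}}$, this is well-defined and manifestly identity-on-objects (under the identification of the objects of $\MON(𝓥)$ with lists and the objects of $\EFF(𝓥,𝓖)$ with braid cliques). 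On a generator $v ፡ A_0,\dots,A_n \to B_0,\dots,B_m$ of $𝓥$, note that in the \polygraph{} $\Run(𝓥,𝓖)$ the \emph{pure} generators are included without a runtime wire, so $v$ already determines a morphism $A_0 \otimes \dots \otimes A_n \to B_0 \otimes \dots \otimes B_m$ in $\MONRUN(𝓥,𝓖)$; whiskering on the left by $\R$ yields $\R \otimes A_0 \otimes \dots \otimes A_n \to \R \otimes B_0 \otimes \dots \otimes B_m$, and since a braid clique morphism is fully determined by its leftmost component, this defines a morphism $\Braid(A_0,\dots,A_n) \to \Braid(B_0,\dots,B_m)$ in $\EFF(𝓥,𝓖)$. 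This is the required \polygraph{} morphism, and freeness of $\MON(𝓥)$ gives the functor $\MON(𝓥) \to \EFF(𝓥,𝓖)$ preserving sequential composition, tensor, and identities strictly.

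Next I would check that this functor is identity-on-objects (immediate from the object assignment and the matching of object sets) and that it strictly preserves the premonoidal structure: the associators and unitors of $\EFF(𝓥,𝓖)$ are identities by \Cref{lemma:eff-is-premonoidal}, as are those of $\MON(𝓥)$, so preservation is trivial; preservation of tensor on objects is concatenation of lists in both cases; preservation of whiskering follows because left-whiskering by $\R \otimes C$ of the image of a pure morphism agrees --- via the braiding axioms for $\R$ --- with the image of the $C$-whiskered morphism, using the description of whiskering in the proof of \Cref{lemma:eff-is-premonoidal}.

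Finally I would argue that the image is central. A morphism in the image of $\MON(𝓥) \to \EFF(𝓥,𝓖)$ is (the clique morphism determined by) a morphism of $\MONRUN(𝓥,𝓖)$ of the form $\R \otimes f$ where $f$ does not touch the runtime wire $\R$; that is, $f$ is built from pure generators and braidings of $\R$ only. Since in $\MONRUN(𝓥,𝓖)$ the object $\R$ is braided with every other object and $f$ itself lives entirely in the ``$\R$-free'' part, $\R \otimes f$ slides freely past any other morphism $\R \otimes g$ --- one pushes $f$ along using naturality of the tensor in $\MONRUN(𝓥,𝓖)$ together with the braiding axioms for $\R$ that let the runtime wire commute through. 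Writing this out at the level of braid cliques gives exactly the two interchange equations defining centrality. I expect the main obstacle to be bookkeeping: making precise the identification between the leftmost-component description of clique morphisms and the abstract clique-morphism data, and carefully checking that whiskering and centrality computations respect the braiding quotient. None of these is conceptually hard, but the interplay between ``$\R$ on the left'' normal forms and the genuinely braided structure needs to be handled with care; once the normal-form lemma (a braid clique morphism is determined by its leftmost component, already stated before \Cref{lemma:eff-is-premonoidal}) is invoked, the verifications reduce to routine string-diagram manipulations in $\MONRUN(𝓥,𝓖)$.
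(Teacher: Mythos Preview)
Your approach is essentially the same as the paper's --- the functor is $v \mapsto \id_{\R} \otimes v$, and the checks for whiskering preservation and centrality are carried out via the braiding axioms for $\R$ in $\MONRUN(𝓥,𝓖)$, exactly as the paper does by string diagram.

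There is one technical wrinkle in your construction step. You invoke the universal property of $\MON(𝓥)$ as the free strict \emph{monoidal} category to obtain a functor into $\EFF(𝓥,𝓖)$ ``preserving \dots\ tensor \dots\ strictly''. But $\EFF(𝓥,𝓖)$ is only premonoidal: there is no tensor of morphisms to preserve, and the adjunction $\mathsf{String} \dashv \mathsf{Forget}$ is with $\MonCatStr$, not with premonoidal categories. So freeness does not directly hand you a structure-preserving functor into $\EFF(𝓥,𝓖)$. The paper sidesteps this entirely: rather than working on generators and extending by freeness, it observes that every morphism $v$ of $\MON(𝓥)$ already lives in the monoidal category $\MONRUN(𝓥,𝓖)$, and $v \mapsto \id_{\R} \otimes v$ is functorial simply because tensoring by an identity is a functor in any monoidal category. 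This gives the functor on the nose, after which preservation of whiskerings and centrality are checked separately --- as you also do. Your detour through freeness is not wrong in spirit, but to make it precise you would need to factor through $\MONRUN(𝓥,𝓖)$ first (where the universal property \emph{does} apply) and then whisker by $\R$; at that point you have recovered the paper's direct argument.
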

\begin{proof}
  A morphism $v \in \MON(𝓥)(A,B)$ induces a morphism $(\id_{\R} ⊗ v) \in \MONRUN(𝓥,𝓖)(\R ⊗ A,\R ⊗ B)$, which can be read as a morphism of cliques $(\id_{\R} ⊗ v) \in \EFF(𝓥,𝓖)(A,B)$.
  This is tensoring with an identity, which is indeed functorial.

  Let us now show that this functor strictly preserves the premonoidal structure.
  The fact that it preserves right whiskerings is immediate.
  The fact that it preserves left whiskerings follows from the axioms of symmetry (\Cref{fig:whiskeringeq}, left).
  Associators and unitors are identities, which are preserved by tensoring with an identity.
  \begin{figure}[ht]
    \centering
    \includegraphics[scale=0.52]{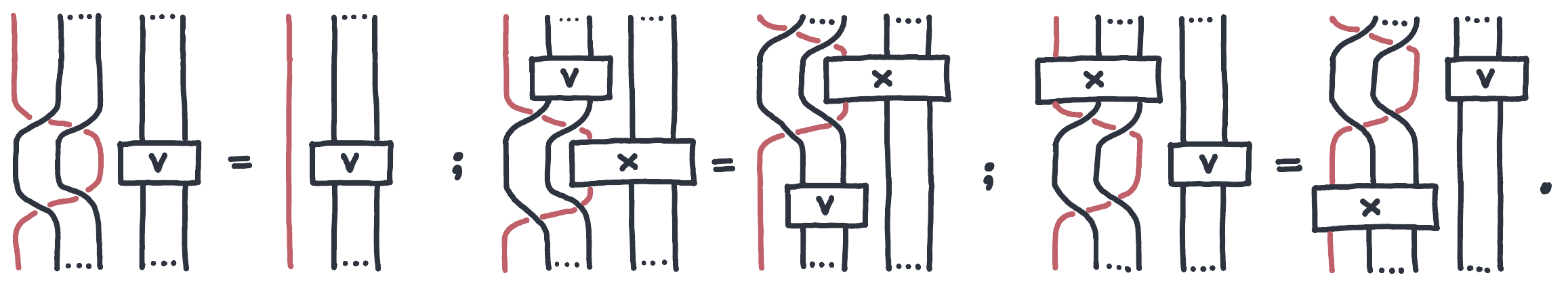}
    \caption{Preservation of whiskerings, and centrality.}
    \label{fig:centrality}    
    \label{fig:whiskeringeq}
  \end{figure}
  Finally, we can check by string diagrams that the image of this functor is central, interchanging with any given $x ፡ \R ⊗ C \to \R ⊗ D$ (\Cref{fig:centrality}, center and right).
\end{proof}

\begin{lemma}
  \label{lemma:freeness}
  Let $(𝓥,𝓖)$ be an \polygraphCouple{} and consider the \effectfulCategory{} determined by \(\MON(𝓥) \to \EFF(𝓥,𝓖)\).
  Let $𝕍 \to ℂ$ be a strict effectful category endowed with an \polygraphCouple{} morphism $F ፡ (𝓥,𝓖) \to \mathcal{U}(𝕍, ℂ)$.
  There exists a unique strict effectful functor from $(\MON(𝓥) \to \EFF(𝓥,𝓖))$ to $(𝕍 \to ℂ)$ commuting with $F$ as an \polygraphCouple{} morphism. 
\end{lemma}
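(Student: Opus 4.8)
The plan is to reduce the statement to the Joyal--Street adjunction $\mathsf{String} \dashv \mathsf{Forget}$ of \Cref{sec:string-diagrams-of-strict-monoidal-categories} together with the concrete description of $\EFF(𝓥,𝓖)$ from \Cref{lemma:eff-is-premonoidal}. Write the given \polygraphCouple{} morphism as $F = (F_{\mathrm{v}}, F_{\mathrm{e}})$, with $F_{\mathrm{v}} ፡ 𝓥 → \mathsf{Forget}(𝕍)$ and $F_{\mathrm{e}} ፡ 𝓖 → \mathsf{Forget}(ℂ)$ agreeing on objects. First, the \pure{} component is forced: any strict effectful functor out of $\MON(𝓥) → \EFF(𝓥,𝓖)$ restricts to a strict monoidal functor $\MON(𝓥) = \mathsf{String}(𝓥) → 𝕍$, and commuting with $F$ fixes its value on generators to be $F_{\mathrm{v}}$; by freeness of $\mathsf{String}(𝓥)$ there is exactly one such functor $F_{0}$. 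On objects the effectful functor is then also forced, being identity-on-objects over both effectful categories: the clique $\Braid(A_{0},\dots,A_{n})$ must go to $F_{0}(A_{0}) ⊗ \dots ⊗ F_{0}(A_{n})$ in $ℂ$.

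Next I would define the action on morphisms. A morphism $\Braid(A_{\bullet}) → \Braid(B_{\bullet})$ of $\EFF(𝓥,𝓖)$ is a clique morphism, hence determined by its leftmost component $\hat{f} ፡ \R ⊗ A_{\bullet} → \R ⊗ B_{\bullet}$ in $\MONRUN(𝓥,𝓖)$, that is, a string diagram over the polygraph $\Run(𝓥,𝓖)$ modulo the braiding axioms for $\R$. Since each \effectful{} generator of $\Run(𝓥,𝓖)$ consumes and produces exactly one copy of $\R$, whereas the \pure{} generators and the canonical braidings never create one and the diagram is acyclic, the single $\R$-input wire of $\hat{f}$ threads as one unbroken rail through finitely many effectful generators $g_{1},\dots,g_{k} ∈ 𝓖$ in a determined order; sliding that rail to the far left between consecutive effectful generators (licensed by the braiding axioms) puts $\hat{f}$ into a normal form
\[ \hat{f} \equiv p_{0} ⨾ g_{1}' ⨾ p_{1} ⨾ \dots ⨾ g_{k}' ⨾ p_{k}, \]
where each $p_{i}$ is the left $\R$-whiskering of a string diagram over $𝓥$ (so a morphism of $\MON(𝓥)$) and each $g_{i}'$ is $g_{i}$ suitably whiskered by identities. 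I then set $F_{1}$ of the morphism to be the evident composite in $ℂ$: interpret each $p_{i}$ by $F_{0}$ followed by the inclusion $𝕍 → ℂ$, interpret each $g_{i}$ by $F_{\mathrm{e}}(g_{i})$, reproduce the whiskerings inside $ℂ$, and --- this is what makes it work --- interpret the runtime object $\R$ as the monoidal unit $I$ of the strict premonoidal category $ℂ$, so that the $\R$-braidings all become identities.

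The main obstacle is the well-definedness of $F_{1}$: one must check that this recipe descends through the relations presenting $\MONRUN(𝓥,𝓖)$ --- the monoidal coherence of $\mathsf{String}(\Run(𝓥,𝓖))$, the braiding axioms for $\R$, and independence of the chosen clique component --- equivalently, that the normal form is unique up to those relations. With $\R \mapsto I$ the braiding axioms become trivial identities; the delicate point is that whenever a \pure{} layer must be commuted past an effectful generator to reach the normal form, the matching equality holds in $ℂ$ precisely because the image of $𝕍 → ℂ$ is central. Granting this, it is routine to see that $(F_{0}, F_{1})$ is a strict effectful functor: $F_{1}$ preserves composition (concatenate normal forms), preserves left and right whiskering (which translate to the premonoidal whiskerings of $ℂ$, using that the $\R$-braidings collapse), has identity associators and unitors on both sides, makes the square with $F_{0}$ commute (the image of a pure $v ∈ \MON(𝓥)$ in $\EFF(𝓥,𝓖)$ is $\id_{\R} ⊗ v$ by \Cref{lemma:identity-mon-eff}, whose normal form is the single layer $v$), and commutes with $F$ on generators by construction.

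Finally, for uniqueness, observe that by the normal form every morphism of $\EFF(𝓥,𝓖)$ is built from images of \pure{} generators and from \effectful{} generators using only sequential composition and whiskering. A strict effectful functor preserves all of these, is forced on \pure{} generators by the freeness of $\mathsf{String}(𝓥)$ used above, and is forced on \effectful{} generators by commutation with $F$; hence it agrees with $(F_{0}, F_{1})$ everywhere, which gives uniqueness and completes the plan.
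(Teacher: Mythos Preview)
Your approach is correct and closely parallels the paper's, but the organizing principle differs. The paper does not pick a normal form; instead it defines $H$ by \emph{structural induction on monoidal terms} of $\MONRUN(𝓥,𝓖)$ (identities, compositions, tensors, generators, braidings), observing at the tensor case that exactly one factor carries the single $\R$ while the other is pure, and then checks well-definedness by verifying each axiom of the presentation (interchange, associativity/unitality of $\otimes$, the category axioms, and the $\R$-braiding axioms) one by one --- interchange being the place where centrality of $𝕍 \to ℂ$ is invoked. Your route instead posits the normal form $p_0 ⨾ g_1' ⨾ \dots ⨾ g_k' ⨾ p_k$ obtained by sliding the $\R$-rail left and defines $F_1$ on that; this is equivalent, but the sentence ``Granting this, it is routine\dots'' is precisely where the paper spends its effort, and your sketch of why two normalizations agree (``the delicate point\dots'') would benefit from being made as explicit as the paper's axiom-by-axiom check. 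In particular, the well-definedness you need is not only that pure layers commute past effectful ones via centrality, but also that the monoidal interchange $(f ⨾ g) \otimes (h ⨾ k) = (f \otimes h) ⨾ (g \otimes k)$ is respected when one side carries $\R$ and the other is pure; the paper isolates exactly this computation.
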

\begin{proof}
  By freeness, there already exists a unique strict monoidal functor $H_{0} ፡ \MON(𝓥) \to 𝕍$ that sends any object $A \in \obj{𝓥}$ to $F_{obj}(A)$.
  We will show there is a unique way to extend this functor together with the hypergraph assignment $𝓖 \to ℂ$ into a functor $H ፡ \EFF(𝓥,𝓖) \to ℂ$.
  Giving such a functor amounts to give some mapping of morphisms containing the runtime $\R$ in some position in their input and output,
  \[f ፡ A_{0} ⊗ \dots ⊗ \R ⊗ \dots ⊗ A_{n} \to B_{0} ⊗ \dots ⊗ \R ⊗ \dots ⊗ B_{m}\]
  to morphisms $H(f) ፡ FA_{0} ⊗ \dots  ⊗ FA_{n} \to FB_{0} ⊗ \dots  ⊗ FB_{n}$ in $ℂ$, in a way that preserves composition, whiskerings, inclusions from $\MON(𝓥)$, and that is invariant to composition with braidings.
  In order to define this mapping, we will perform structural induction over the monoidal terms of the runtime monoidal category of the form
  $\MONRUN(𝓥,𝓖)(A_{0} ⊗ \dots ⊗ \R^{(i)} ⊗ \dots ⊗ A_{n}, \R ⊗ B_{0} ⊗ \dots  ⊗ \R^{(j)} ⊗  \dots ⊗ B_{m})$ and show that it is the only mapping with these properties (\Cref{fig:assignment}). 
  
  Monoidal terms in a strict, freely presented, monoidal category are formed by identities ($\id$), composition $(⨾)$, tensoring $(⊗)$, and some generators (in this case, in \Cref{fig:rungen}). Monoidal terms are subject to \emph{(i)} functoriality of the tensor, $\id ⊗ \id = \id$ and $(f ⨾ g) ⊗ (h ⨾ k) = (f ⊗ h) ⨾ (g ⊗ k)$; \emph{(ii)} associativity and unitality of the tensor, $f ⊗ \id_{I} = f$ and $f ⊗ (g ⊗ h) = (f ⊗ g) ⊗ h$; \emph{(iii)} the usual unitality, $f ⨾ \id = f$ and $\id ⨾ f = f$ and associativity $f ⨾ (g ⨾ h) = (f ⨾ g) ⨾ h$; \emph{(iv)} the axioms of our presentation (in this case, in \Cref{fig:runaxiom}).
  \begin{figure}[ht]
    \centering
    \includegraphics[scale=0.60]{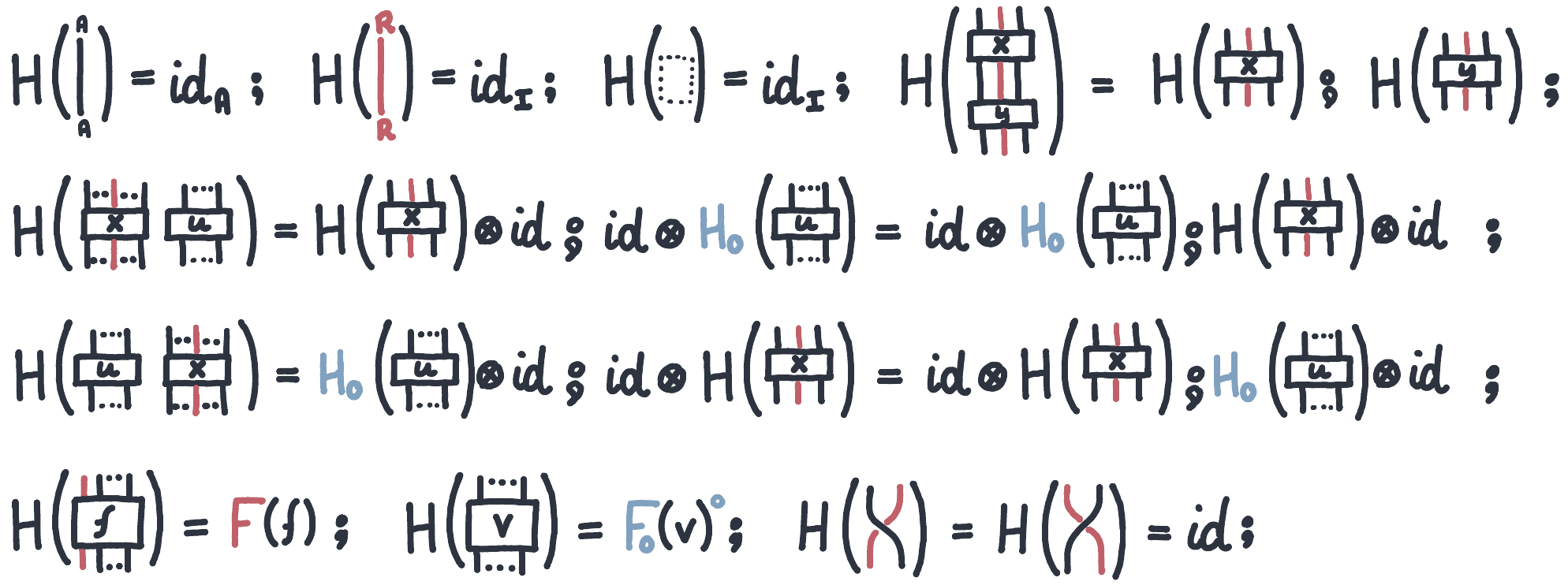}
    \caption{Assignment on morphisms, defined by structural induction on terms.}
    \label{fig:assignment}
  \end{figure}

  \begin{itemize}
    \item
      If the term is an identity, it can be \emph{(i)} an identity on an object $A \in \obj{(𝓥,𝓖)}$, in which case it must be mapped to the same identity by functoriality, $H(\id_{A}) = \id_{A}$; \emph{(ii)} an identity on the runtime, in which case it must be mapped to the identity on the unit object, $H(\id_{\R}) = \id_{I}$; or \emph{(iii)} an identity on the unit object, in which case it must be mapped to the identity on the unit, $H(\id_{I}) = \id_{I}$.

    \item
      If the term is a composition, $(f ⨾ g) ፡ A_{0} ⊗ \dots ⊗ \R ⊗ \dots ⊗ A_{n} \to C_{0} ⊗ \dots ⊗ \R ⊗ \dots ⊗ C_{k}$, it must be along a boundary of the form $B_{0} ⊗ \dots ⊗ \R ⊗ \dots ⊗ B_{m}$: this is because every generator leaves the number of runtimes, $\R$, invariant.
      Thus, each one of the components determines itself a braid clique morphism.
      We must preserve composition of braid clique morphisms, so we must map $H(f ⨾ g) = H(f) ⨾ H(g)$.

    \item
      If the term is a tensor of two terms,  $(x ⊗ u) ፡ A_{0} ⊗ \dots ⊗ \R ⊗ \dots ⊗ A_{n} \to B_{0} ⊗ \dots ⊗ \R ⊗ \dots ⊗ B_{m}$, then only one of them was a term taking $\R$ as input and output (without loss of generality, assume it to be the first one) and the other was not: again, by construction, there are no morphisms taking one $\R$ as input and producing none, or viceversa.
      We split this morphism into $x ፡ A_{0} ⊗\dots⊗ \R ⊗ \dots ⊗ A_{i-1} \to B_{0} ⊗ \dots ⊗ \R ⊗ \dots ⊗ B_{j-1}$ and $u ፡ A_{i} ⊗ \dots ⊗ A_{n} \to B_{j} ⊗ \dots ⊗ B_{m}$.

      Again by structural induction, this time over terms $u ፡ A_{i} ⊗ \dots ⊗ A_{n} \to B_{j} ⊗ \dots ⊗ B_{m}$,
      we know that the morphism must be either a generator in $𝓥(A_{i},\dots,A_{n};B_{j},\dots,B_{n})$ or a composition and tensoring of them. That is, $u$ is a morphism in the image of $\MON(𝓥)$, and it must be mapped according to the functor $H_{0} ፡ \MON(𝓥) \to 𝕍$.

      By induction hypothesis, we know how to map the morphism $x ፡ A_{0} ⊗ \dots ⊗  \R ⊗ \dots ⊗ A_{i-1} \to B_{0} ⊗ \dots ⊗ \R ⊗ \dots ⊗ B_{j-1}$.
      This means that, given any tensoring $x ⊗ u$, we must map it to $H(x ⊗ u) = (H(x) ⊗ \id) ⨾ (\id ⊗ H_{0}(u)) =  (\id ⊗ H_{0}(u)) ⨾ (H(x) ⊗ \id)$, where $H_{0}(u)$ is central.

    \item
      If the string diagram consists of a single generator, $f ፡ \R ⊗ A \to \R ⊗ B$, it can only come from a generator $f \in \Run(𝓥,𝓖)(\R,A_{0},\dots,A_{n};\R, B_{0},\dots,B_{m}) = 𝓖(A_{0},\dots,A_{n}; B_{0},\dots,B_{m})$,
      which must be mapped to $H(f) = F(f) \in ℂ(A_{0} ⊗ \dots ⊗ A_{n}, B_{0} ⊗ \dots ⊗ B_{m})$.
      If the string diagram consists of a single braiding, it must be mapped to the identity, because the want the assignment to be invariant to braidings.

  \end{itemize}

  Now, we need to prove that this assignment is well-defined with respect to the axioms of these monoidal terms. Our reasoning follows \Cref{fig:assignmentwell}.

  \begin{figure}
    \centering
    \includegraphics[scale=0.60]{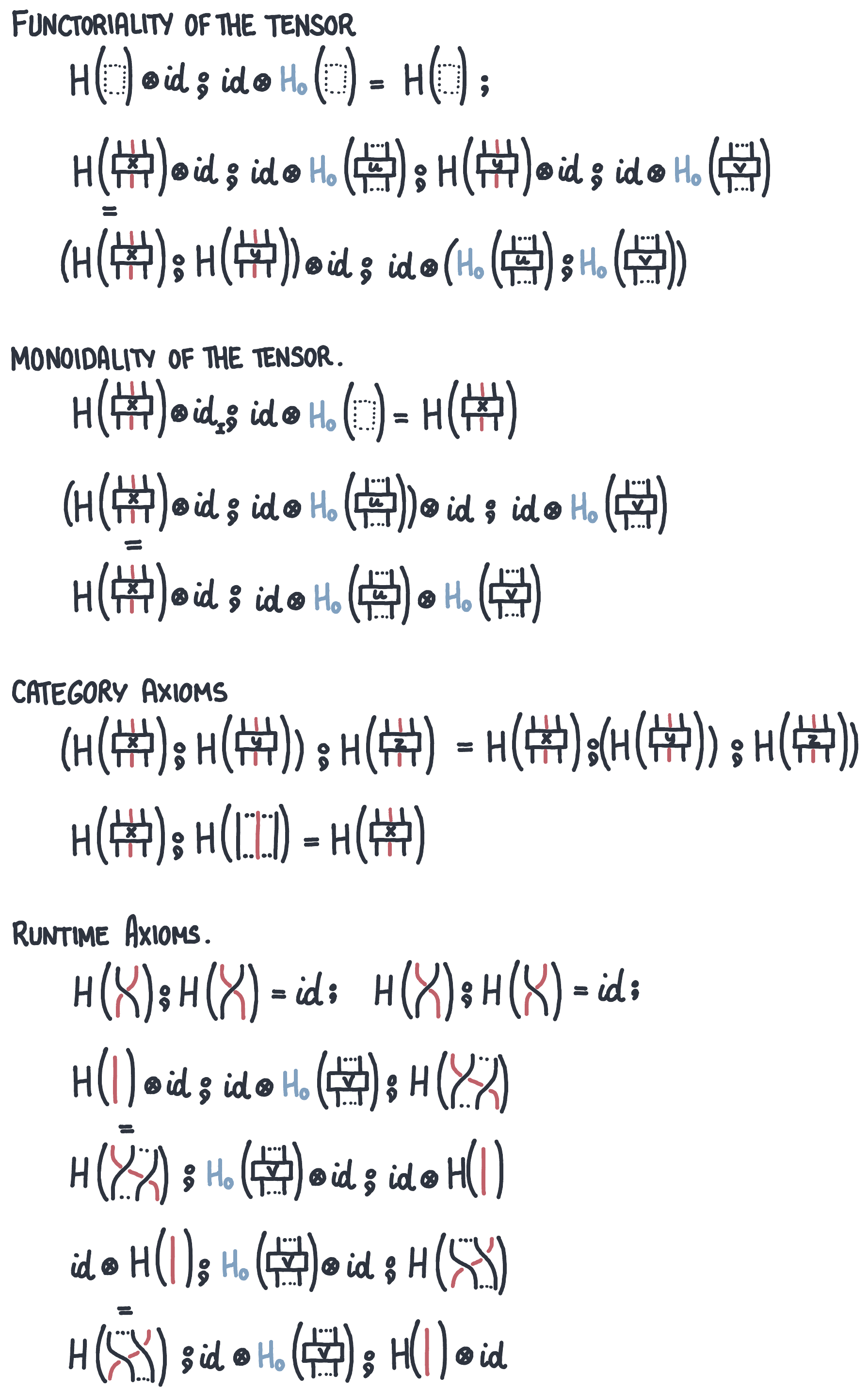}
    \caption{The assignment is well defined.}
    \label{fig:assignmentwell}
  \end{figure}

  \begin{itemize}
    \item
    The tensor is functorial. We know that $H(\id ⊗ \id) = H(\id)$, both are identities and that can be formally proven by induction on the number of wires.
    Now, for the interchange law, consider a quartet of morphisms that can be composed or tensored first and such that, without loss of generality, we assume the runtime to be on the left side. Then, we can use centrality to argue that
    \begin{align*}
      H((x ⊗ u) ⨾ (y ⊗ v)) &= (H(x) ⊗ \id) ⨾ (\id ⊗ H_{0}(u)) ⨾  (H(y) ⊗ \id) ⨾ (\id ⊗ H_{0}(v))
      \\&= ((H(x)⨾ H(y)) ⊗ \id) ⨾ (\id ⊗ (H_{0}(u) ⨾ H_{0}(v)))
      \\&= H((x ⨾ y) ⊗ (u ⨾ v)).
    \end{align*}

    \item
    The tensor is monoidal. We know that $H(x ⊗ \id_{I}) = (H(x) ⊗ \id_{I}) ⨾ (\id ⊗ \id_{I}) = H(x)$.
    Now, for associativity, consider a triple of morphisms that can be tensored in two ways and such that, without loss of generality, we assume the runtime to be on the left side.
    Then, we can use centrality to argue that
    \begin{align*}
      H((x ⊗ u) ⊗ v) 
      &=  (((H(x) ⊗ \id)  ⨾ (\id ⊗ H_{0}(u))) ⊗ \id) ⨾ \id ⊗ H_{0}(v)
      \\&= (H(x) ⊗ \id) ⨾ (\id ⊗ H_{0}(u) ⊗ H_{0}(v))
      \\&= H(x ⊗ (u ⊗ v))
    \end{align*}

    \item
    The terms form a category. And indeed, it is true by construction that $H(x ⨾ (y ⨾ z)) = H((x ⨾ y) ⨾ z)$ and also that $H(x ⨾ \id) = H(x)$ because $H$ preserves composition.

    \item
    The runtime category enforces some axioms. The composition of two braidings is mapped to the identity by the fact that $H$ preserves composition and sends both to the identity.  Both sides of the braid naturality over a morphism $v$ are mapped to $H_{0}(v)$; with the multiple braidings being mapped again to the identity.
  \end{itemize}
  Thus, $H$ is well-defined and it defines the only possible assignment and the only possible strict premonoidal functor.
\end{proof}

\begin{theorem}[Runtime as a resource]
  \label{theorem:runtime-as-a-resource}
  The free strict effectful category over an \polygraphCouple{} $(𝓥,𝓖)$ is $\MON(𝓥) \to \EFF(𝓥,𝓖)$. Its morphisms $A \to B$ are in bijection with the morphisms $\R ⊗ A \to \R ⊗ B$ of the runtime monoidal category,
  \[\EFF(𝓥,𝓖)(A,B) \cong \MONRUN(𝓥,𝓖)(\R ⊗ A, \R ⊗ B).\]
\end{theorem}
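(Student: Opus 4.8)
The plan is to assemble the three preceding lemmas together with one bookkeeping observation about hom-sets, so that essentially no new work is required. First, \Cref{lemma:eff-is-premonoidal} exhibits $\EFF(𝓥,𝓖)$ as a \premonoidalCategory{}, and \Cref{lemma:identity-mon-eff} exhibits the identity-on-objects functor $\MON(𝓥) \to \EFF(𝓥,𝓖)$ as one that strictly preserves the premonoidal structure and whose image is central; so the composite datum $\MON(𝓥) \to \EFF(𝓥,𝓖)$ is a strict \effectfulCategory{} whose \pure{} part is the free \monoidalCategory{} on $𝓥$. This is the object that the theorem claims to be free.

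Second, the universal property is exactly the content of \Cref{lemma:freeness}: for every strict \effectfulCategory{} $𝕍 \to ℂ$ and every \polygraphCouple{} morphism $F \colon (𝓥,𝓖) \to \mathcal{U}(𝕍,ℂ)$ there is a unique strict effectful functor out of $\MON(𝓥) \to \EFF(𝓥,𝓖)$ commuting with $F$. Unwinding, this says precisely that the assignment $(𝓥,𝓖) \mapsto (\MON(𝓥) \to \EFF(𝓥,𝓖))$ is left adjoint to the forgetful functor $\mathcal{U}$ from strict \effectfulCategories{} to \polygraphCouples{}, i.e. that $\MON(𝓥) \to \EFF(𝓥,𝓖)$ is the free strict \effectfulCategory{} on $(𝓥,𝓖)$.

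Finally, the hom-set bijection is read off from the definition of $\EFF(𝓥,𝓖)$: its morphisms $A \to B$ are braid clique morphisms $\Braid(A_{0},\dots,A_{n}) \to \Braid(B_{0},\dots,B_{m})$, and a braid clique morphism is, as already noted, completely determined by its leftmost component $f_{00} \colon \R \otimes A_{0} \otimes \dots \otimes A_{n} \to \R \otimes B_{0} \otimes \dots \otimes B_{m}$ in $\MONRUN(𝓥,𝓖)$; conversely, any such component extends to a braid clique morphism by pre- and post-composing with the runtime braidings $\sigma_{i,j}$, and the coherence of that family guarantees both that the extension is well-defined and that the two assignments are mutually inverse. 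This yields the bijection $\EFF(𝓥,𝓖)(A,B) \cong \MONRUN(𝓥,𝓖)(\R \otimes A, \R \otimes B)$, which is moreover natural in $A$ and $B$.

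The genuine work has already been discharged in the lemmas; the only delicate point — handled inside \Cref{lemma:freeness} — is verifying that the unique candidate functor respects the axioms of the runtime presentation (functoriality and monoidality of the tensor together with the braiding axioms for $\R$), which is exactly where centrality of the pure part is used. If I had to flag an obstacle it would be the insistence on working throughout with \emph{strict} effectful functors between \emph{strict} effectful categories, so that no coherence data needs to be transported along the correspondence; the non-strict statement is then obtained afterwards by strictification, as in \Cref{th:equivalentStrictOne}.
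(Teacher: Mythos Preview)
Your proposal is correct and follows essentially the same route as the paper: assemble \Cref{lemma:eff-is-premonoidal}, \Cref{lemma:identity-mon-eff}, and \Cref{lemma:freeness} to obtain the effectful structure and its universal property, with the hom-set bijection read off from the definition of braid clique morphisms via their leftmost component. Your extra remarks on strictness and on where centrality is used are accurate and do not diverge from the paper's approach.
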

\begin{proof}
  We must first show that $\MON(𝓥) \to \EFF(𝓥,𝓖)$ is an effectful category.
  The first step is to see that $\EFF(𝓥,𝓖)$ forms a premonoidal category (\Cref{lemma:eff-is-premonoidal}).
  We already know that $\MON(𝓥)$ is a monoidal category: a strict, freely generated one.
  There exists an identity on objects functor, \(\MON(𝓥) \to \EFF(𝓥,𝓖)\), that strictly preserves the premonoidal structure and centrality (\Cref{lemma:identity-mon-eff}).

  Let us now show that it is the free one over the \polygraphCouple{} $(𝓥,𝓖)$.
  Let $𝕍 \to ℂ$ be an effectful category, with an \polygraphCouple{} map $F ፡ (𝓥,𝓖) \to \mathcal{U}(𝕍,ℂ)$.
  We can construct a unique effectful functor from $(\MON(𝓥) \to \EFF(𝓥,𝓖))$ to $(𝕍 → ℂ)$ giving its universal property (\Cref{lemma:freeness}).
\end{proof}

\begin{corollary}[String diagrams for effectful categories]
  We can use string diagrams for effectful categories, quotiented under the same isotopy as for monoidal categories, provided that we do represent the runtime as an extra wire that needs to be the input and output of every effectful morphism.
\end{corollary}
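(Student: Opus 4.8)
The plan is to read this corollary off \Cref{theorem:runtime-as-a-resource} together with the coherence theorem of Joyal and Street for ordinary monoidal string diagrams. Since a string-diagrammatic calculus for \effectfulCategories{} is exactly a presentation of the free strict \effectfulCategory{} over an \polygraphCouple{}, it suffices to identify that free object with a calculus of string diagrams. First I would invoke \Cref{theorem:runtime-as-a-resource}: the free strict \effectfulCategory{} over $(𝓥,𝓖)$ is $\MON(𝓥) \to \EFF(𝓥,𝓖)$, and its hom-sets are in bijection with $\MONRUN(𝓥,𝓖)(\R ⊗ A, \R ⊗ B)$, the hom-sets of the \emph{runtime monoidal category}. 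So every \effectful{} morphism $A \to B$ is, by definition, a morphism $\R ⊗ A \to \R ⊗ B$ in a genuine (strict) \monoidalCategory{}.

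Next I would apply the Joyal–Street adjunction $\mathsf{String} ⊣ \mathsf{Forget}$ to the \polygraph{} $\Run(𝓥,𝓖)$ that generates $\MONRUN(𝓥,𝓖)$: morphisms of the free \monoidalCategory{} on $\Run(𝓥,𝓖)$ are precisely \stringDiagrams{} over its generators (\Cref{fig:rungen}) quotiented by topological deformation. The runtime monoidal category is this free category further quotiented by the braiding axioms for $\R$ (\Cref{fig:runaxiom}); these axioms say exactly that the distinguished $\R$-wire slides freely past every other wire and past every node, which is the same move one would allow graphically if one simply \emph{drew} the runtime as an ordinary wire threaded through every effectful box in sequence. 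Hence a morphism of $\MONRUN(𝓥,𝓖)$ is nothing but an ordinary monoidal string diagram over the \polygraphCouple{}'s generators, with one extra wire entering and leaving each \effectful{} node, taken up to the usual isotopy. The braid clique bookkeeping that precedes \Cref{theorem:runtime-as-a-resource} handles the only ambiguity — where the runtime wire sits among the boundary wires — so these diagrams depict braid clique morphisms faithfully, and the \pure{} morphisms (the image of $\MON(𝓥)$, by \Cref{lemma:identity-mon-eff}) are exactly those diagrams in which the runtime wire passes straight through without touching any node.

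Putting these together: soundness of the effectful calculus is \Cref{lemma:eff-is-premonoidal} and \Cref{lemma:identity-mon-eff} (the drawn operations really are premonoidal, and whiskering a \pure{} morphism along the runtime is well defined), while completeness is the uniqueness clause of \Cref{lemma:freeness} (two diagrams denoting the same morphism in every \effectfulCategory{} already denote the same braid clique morphism, hence are related by isotopy and the runtime-braiding moves). I expect the only delicate point to be the last one: one must be sure that the quotient imposed on free monoidal diagrams by the runtime-braiding axioms is \emph{generated} by the graphical moves one is tempted to perform — i.e. that no further identifications are forced — which is precisely what the explicit description of $\EFF(𝓥,𝓖)$ via braid cliques, together with the structural induction in \Cref{lemma:freeness}, secures. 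Given that machinery, the corollary is immediate: string diagrams for \effectfulCategories{} are just monoidal string diagrams, under the same isotopy, with a mandatory runtime wire on every \effectful{} generator.
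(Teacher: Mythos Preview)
Your proposal is correct and matches the paper's approach: the paper states this as an immediate corollary of \Cref{theorem:runtime-as-a-resource} with no separate proof, and your unpacking—reduce to the runtime monoidal category, apply Joyal--Street to $\Run(𝓥,𝓖)$, and let the braiding axioms for $\R$ absorb the extra moves—is exactly the argument the reader is meant to supply. If anything, you are more explicit than the paper about soundness and completeness via \Cref{lemma:eff-is-premonoidal}, \Cref{lemma:identity-mon-eff}, and \Cref{lemma:freeness}.
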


\subsection{Example: a Theory of Global State}
Let us provide an example of reasoning using the string diagrams for \premonoidalCategories{}. Imperative programs are characterized by the presence of a global state that can be mutated. Reading or writing to this global state constitutes an effectful computation: the order of operations that affect some global state cannot be changed. 
Let us propose a simple theory of global state and let us show that it is enough to capture the phenomenon of \emph{race conditions}.

\begin{definition}
  The \emph{theory of global state} is given by a single object $X$; two pure generators, $(\iconbcm) ፡ X → X ⊗ X$ and $(\iconbcu) ፡ X → I$, allowing copy and discard; and two effectful generators, $\mathsf{put} ፡ X → I$ and $\mathsf{get} ፡ I → X$, quotiented by the equations in \Cref{fig:state-laws}.
  \begin{figure}[ht]
    \centering
    \includegraphics[scale=0.35]{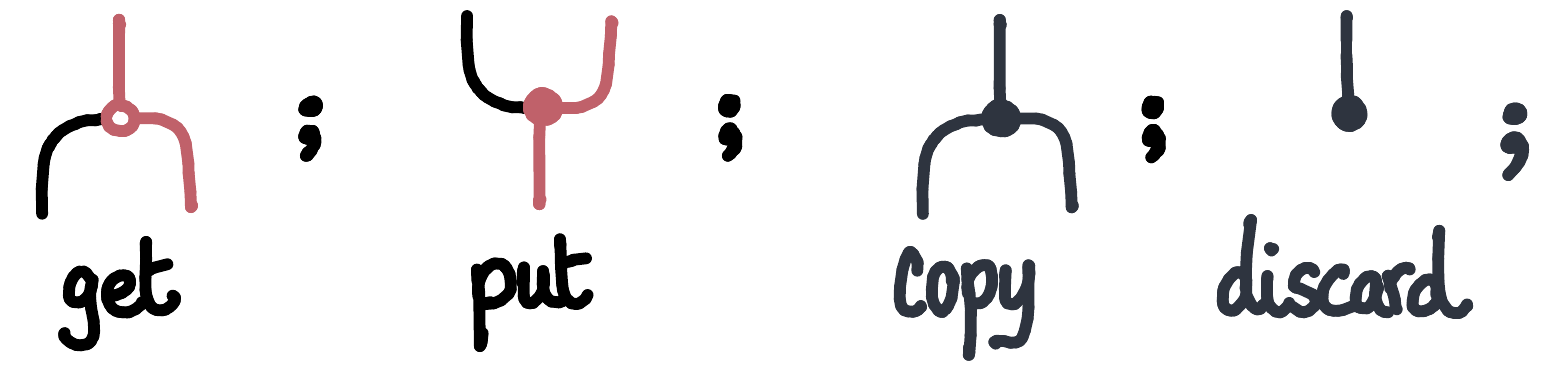}
    \caption{Generators of the theory of global state.}
    \label{fig:state-generators}
  \end{figure}
\end{definition}

These two $\mathsf{put}$ and $\mathsf{get}$ generators, without extra axioms, are enough for capturing what happens when a process can \emph{send} or \emph{receive} resources; in further sections, we will develop this theory. Right now, we are only concerned with the theory of a single \emph{global state}, accessed by a single process: we can impose some axioms that assert that the memory was not changed by anyone but this single process.
\begin{figure}[ht]
  \centering
  \includegraphics[scale=0.35]{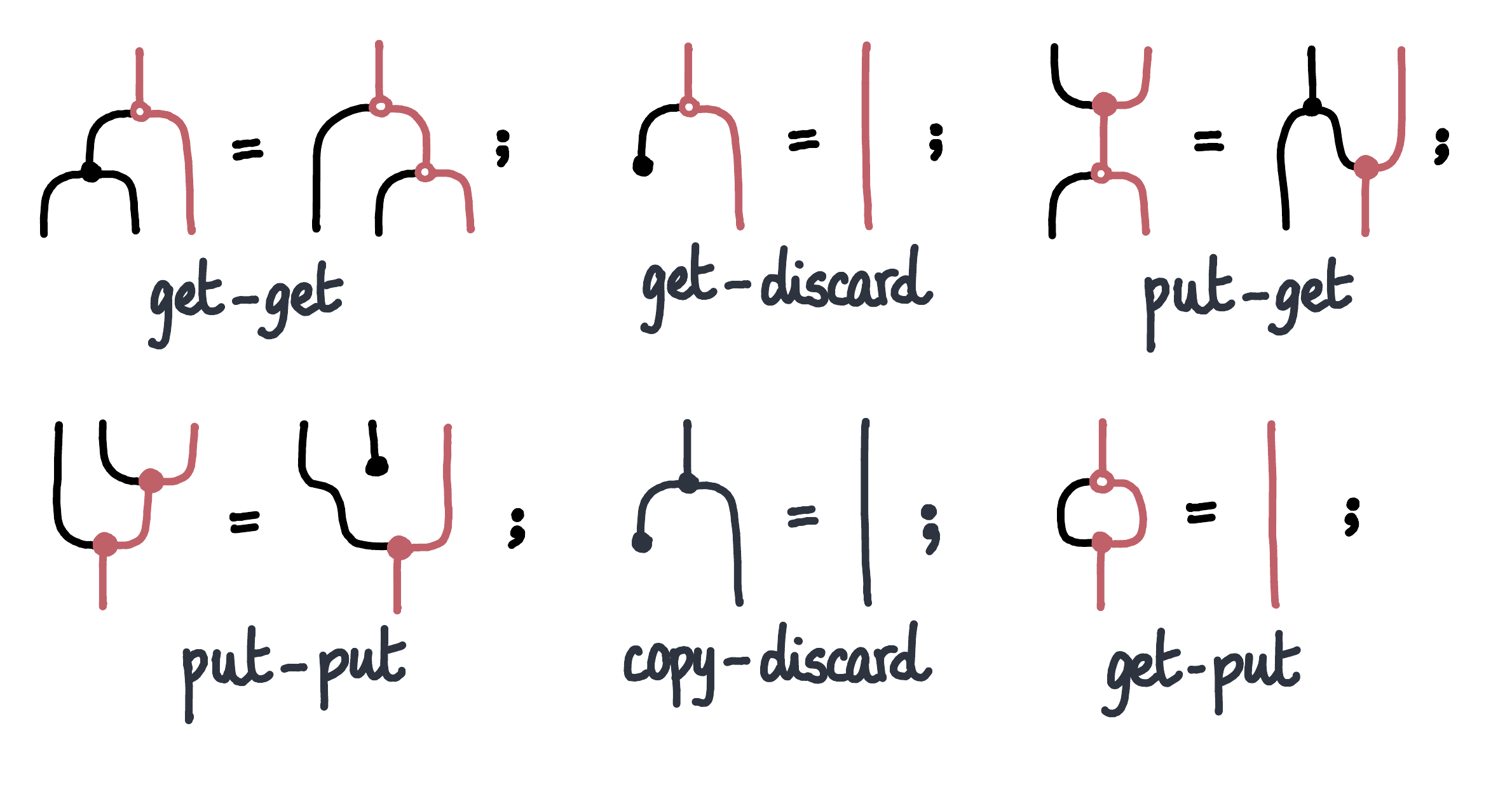}
  \caption{Axioms of the theory of global state.}
  \label{fig:state-laws}
\end{figure}

The equations in \Cref{fig:state-laws} say that: \emph{(i)} reading the global state twice gets us the same result, \emph{(ii)} reading the global state and discarding the result is the same as doing nothing, \emph{(iii)} writing something to the global state and then reading it is the same as keeping a copy of it, \emph{(iv)} writing twice to the global state keeps only the last thing that was written, \emph{(v)} copying and discarding a copy is doing nothing, and \emph{(vi)} reading something and immediately writing it to the global state is the same as doing nothing.                                          

\begin{proposition}[Race conditions]
  Concurrently mixing two processes that share a global state, $f$ and $g$, can produce four possible results: \emph{(i)} only the result of the first one is preserved, \emph{(ii)} only the result of the second one is preserved, or \emph{(iii,iv)} the composition of both is preserved, in any order, $f ⨾ g$ or $g ⨾ f$.
\end{proposition}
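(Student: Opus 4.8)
The plan is to read the two concurrent processes as read--modify--write programs on the shared state and to show that, although their atomic statements can be interleaved in many ways, the axioms of \Cref{fig:state-laws} collapse every interleaving to one of exactly four morphisms. Write $f, g \colon X \to X$ for the pure transformations performed by the two processes, so that the processes themselves are the effectful morphisms $\langle f \rangle \bydef \mathsf{get} ⨾ f ⨾ \mathsf{put}$ and $\langle g \rangle \bydef \mathsf{get} ⨾ g ⨾ \mathsf{put}$ on the unit (equivalently, endomorphisms of the runtime wire $\R$). A \emph{concurrent mixing} of $\langle f \rangle$ and $\langle g \rangle$ is any morphism obtained by shuffling the statement lists $(\mathsf{get}, f, \mathsf{put})$ and $(\mathsf{get}, g, \mathsf{put})$ into a single list preserving the internal order of each, and then composing; this is the interleaving semantics of concurrency --- the same mixing that the $(\mathrm{shf})$ rule of \messageTheories{} will later axiomatize.

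First I would use that $f$ and $g$, lying in the image of $\MON$, are central, so each slides past every statement of the other process; hence every shuffle is equivalent to one determined solely by the relative order of the four effectful statements $\mathsf{get}_f, \mathsf{put}_f, \mathsf{get}_g, \mathsf{put}_g$, subject to $\mathsf{get}_f$ before $\mathsf{put}_f$ and $\mathsf{get}_g$ before $\mathsf{put}_g$ --- one of the $\binom{4}{2} = 6$ such orderings. Then I would discharge these six by string-diagram rewriting. If $\mathsf{put}_f$ precedes $\mathsf{get}_g$ (resp.\ $\mathsf{put}_g$ precedes $\mathsf{get}_f$), the first process finishes before the second starts, so the mixing is $\langle f \rangle ⨾ \langle g \rangle$ (resp.\ $\langle g \rangle ⨾ \langle f \rangle$); axiom \emph{(iii)} rewrites the intervening $\mathsf{put} ⨾ \mathsf{get}$ as a copy, one strand of which feeds the state and is erased by axiom \emph{(iv)} against the final write while axiom \emph{(v)} removes the now-trivial copy, leaving $\langle f ⨾ g \rangle$ (resp.\ $\langle g ⨾ f \rangle$). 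In the remaining four orderings both reads precede both writes: axiom \emph{(i)} makes the two reads return the same state value, the transformations are applied to two copies of it, and axiom \emph{(iv)} keeps only the last $\mathsf{put}$ and erases the earlier write, so the mixing is $\langle f \rangle$ when $\mathsf{put}_f$ is last and $\langle g \rangle$ when $\mathsf{put}_g$ is last.

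This exhibits the four outcomes $\langle f \rangle$, $\langle g \rangle$, $\langle f ⨾ g \rangle$, $\langle g ⨾ f \rangle$ --- precisely the four asserted --- and confirms that no interleaving produces anything else; I would close by noting that the four are distinct in general, witnessed in the Kleisli category of the identity monad on $\mathbf{Set}$ with $X = \mathbb{N}$, $f = (n \mapsto n+1)$ and $g = (n \mapsto 3n)$. The main obstacle is not a single calculation but fixing the precise meaning of ``concurrent mixing'' --- interleaving of atomic statements, as read off from the premonoidal string-diagram presentation --- and then bookkeeping, across the six reductions, which writes are dead and which copies of the state are live; the centrality of the pure transformations is the conceptual step that keeps this finite. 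One caveat worth flagging: in the two ``both-reads-first'' cases, fully erasing the overwritten computation $g$ (resp.\ $f$) from the dead branch uses that pure transformations are discardable, $g ⨾ (\iconbcu) = (\iconbcu)$, which holds when the pure part is cartesian, as in a \FreydCategory{}; without that hypothesis the mixing equals $\langle f \rangle$ only up to the definedness of $g$.
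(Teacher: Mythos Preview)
Your proposal is correct and follows essentially the same route as the paper: model each process as $\mathsf{get} ⨾ f ⨾ \mathsf{put}$, interleave the atomic statements, and rewrite with the state axioms of \Cref{fig:state-laws} to collapse the six possible orderings to the four outcomes. The only presentational difference is that the paper assumes discardability of $f$ and $g$ up front (``that can moreover be discarded, meaning $f ⨾ ε = g ⨾ ε = ε$''), whereas you correctly isolate it as the hypothesis needed in the both-reads-first cases; your account is otherwise considerably more explicit than the paper's one-line reference to a figure.
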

\begin{proof}
  We work in the theory of global state adding two processes, $f ፡ X → X$ and $g ፡ X → X$, that can moreover be discarded, meaning $f ⨾ ε = g ⨾ ε = ε$. We employ the string diagrams of \premonoidalCategories{}.
  \begin{figure}[ht]
    \centering
    \includegraphics[scale=0.35]{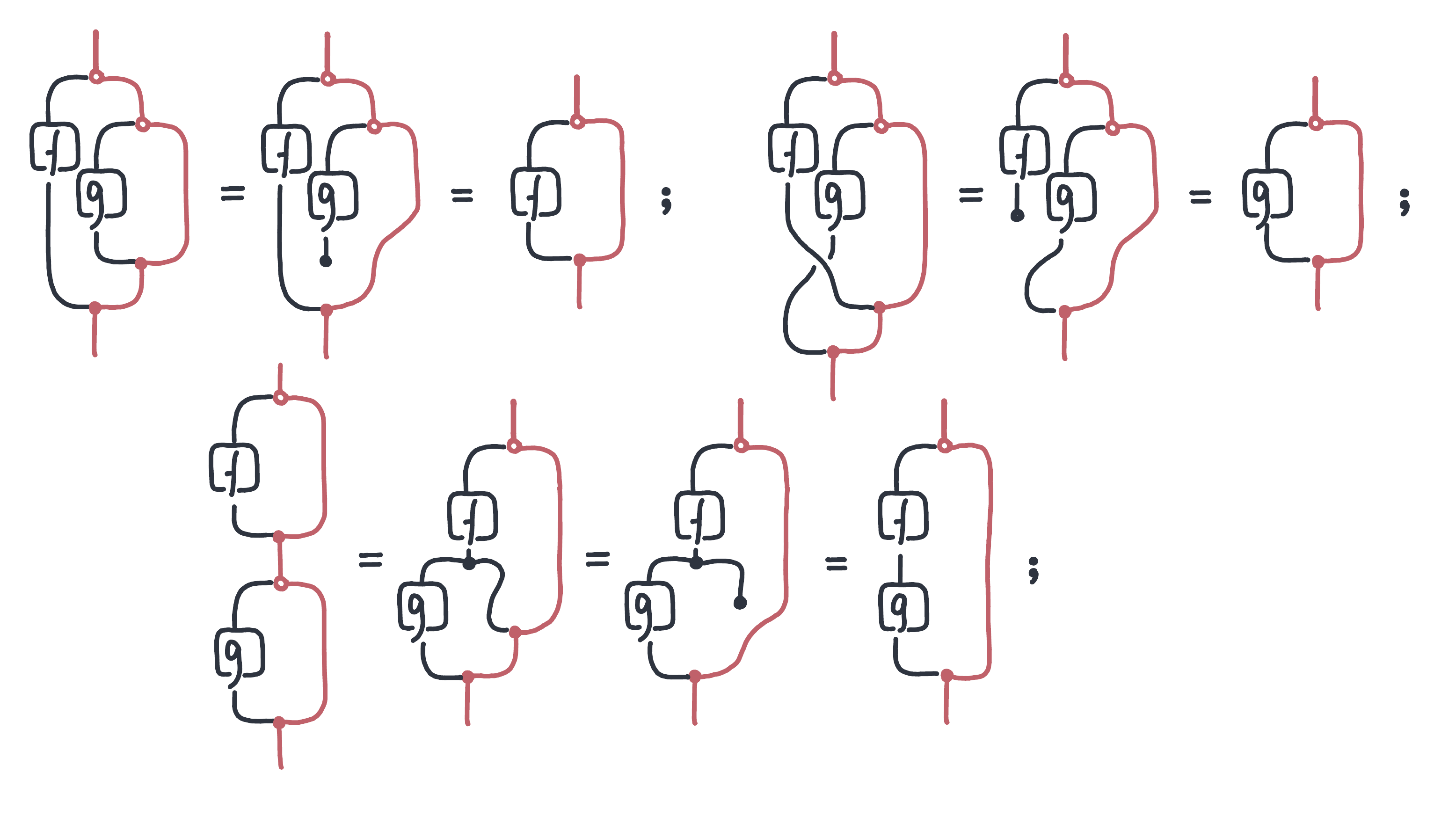}
    \caption{Race conditions in the theory of global state.}
    \label{fig:state-race}
  \end{figure}
  The first three diagrams in \Cref{fig:state-race} correspond to the first three cases, the last one is analogous to the third one.
\end{proof}

\subsection{Bibliography}
  Alan Jeffrey pioneered a string diagrammatic representation of programs using an extra wire to represent runtime \cite{jeffrey1997:premonoidal}, all the credit for this idea should go there.
  Staton and M{\o}gelberg \cite{mogelberg14} already showed how any \premonoidalCategory{} could be reinterpreted as the Kleisli category of a state promonad.

  We take these ideas a step further, showing that a particular syntax for \monoidalCategories{} can be used to talk about \premonoidalCategories{} as well. Our study may demystify \premonoidalCategories{}, or at least make them more accessible to a category theorist already interested in \monoidalCategories{}: \premonoidalCategories{} are simply \monoidalCategories{} with a hidden state object.

\newpage

\clearpage{}%

\chapter{Context Theory}
\label{chapter:compositional-algebra}
\section*{Context Theory}
Our goal in this thesis is to study how processes compose from their constituent parts while keeping for each one of these incomplete parts (each context) a meaning of its own: the meaning -- the semantics -- of the whole process is then determined by the semantics of each one of its parts, and how they compose. This is the principle of compositionality.

In the categorical framework, the structures that govern composition and decomposition are Lambek's \multicategories{}: categories where morphisms have multiple inputs that get composed into a single output. We will decompose \multicategories{} and compare them to \monoidalCategories{} using \profunctors{} and \dinaturality{}, which we claim to be the right mathematical tools to talk about abstract process composition in \Cref{sec-profunctors}.

We give a short exposition of \multicategories{} in \Cref{sec-multicategories}, and then introduce a type of \multicategory{} with the particular property that every transformation can be decomposed into smaller ones: \emph{\malleableMulticategories{}}, which we prove equivalent to \promonoidalCategories{} in \Cref{sec-malleable-multicategories}. Our main result in this section is a characterization of the \multicategory{} that governs how morphisms compose in a category -- the \malleableMulticategory{} of \emph{spliced arrows} -- as cofreely generated by the category, \Cref{sec:splice-contour-adjunction}. This result is a variant on a recent result by Melliés and Zeilberger \cite{mellies22:parsing}; and \Cref{chapter:monoidal-context-theory} will extend this theory for the first time to the setting of \monoidalCategories{}.

\clearpage{}%
\section{Profunctors and Coends}
\label{sec-profunctors}

\subsection{Profunctors}  
A \profunctor{} from a category $𝔸$ to a category $𝔹$ is a functor $P ፡ 𝔸^{op} × 𝔹 → \Set$ \cite{benabou00}.
\Profunctors{} describe families of processes indexed functorially by the objects of two different categories.
The canonical example of a profunctor is the one that returns the set of morphisms between two objects of the same category, $𝔸(•;•) ፡ 𝔸^{op} × 𝔸 \to \Set$. Profunctors, however, do not need to be restricted to a single category: this makes them useful to study the relation between processes of different categories.

Categorically, \profunctors{} can be seen as a categorification of the concept of \emph{relations}, functions $A × B → 2$.
Under this analogy, existential quantifiers correspond to \emph{coends}. This section will first introduce \profunctors{} (\Cref{def:profunctor}), then a naturality relation for them (\Cref{def:dinaturality}) and, finally, their composition using coends (\Cref{subsec:coend-calculus,sec:pointedcoendcalculus}). We connect them explicitly to process theories in \Cref{subsec:promonads}.

\begin{definition}
  \label{def:profunctor}
  \defining{linkProfunctor}{} \defining{linkprofunctor}{}
  A \emph{profunctor} $(P,≺,≻)$ between two categories, $𝔸$ and $𝔹$, is a family of sets, $P(A,B)$, indexed by objects $A ∈ 𝔸_{obj}$ and $B ∈ 𝔹_{obj}$, and endowed with jointly functorial left and right actions of the morphisms of the two categories $𝔸$ and $𝔹$, respectively. 

  Explicitly, types of these actions are 
  \begin{align*}
    & (≻) ፡ 𝔸(A';A) × P(A';B) → P(A;B) \\
    & (≺) ፡  P(A;B) × 𝔹(B;B') → P(A;B')
  \end{align*}
  These two actions must be compatible, $(f ≻ p) ≺ g = f ≻ (p ≺ g)$, they must preserve identities, $id ≻ p = p$, and $p ≺ id = p$, and they must preserve composition $(p ≺ f) ≺ g = p ≺ (f ⨾ g)$ and $f ≻ (g ≻ p) = (f ⨾ g) ≻ p$.

	More succinctly, a \profunctor{} $P ፡ 𝔸 → 𝔹$ is the same as a functor $P ፡ 𝔸^{op} × 𝔹 → \Set$. When presented as a family of sets with a pair of actions, profunctors have been sometimes called \emph{bimodules}.
\end{definition}

\begin{definition}[Parallel composition]
  Two \profunctors{}  $P ፡ 𝔸_{1}^{op} × 𝔹_{1} \to \Set$ and
  $Q ፡ 𝔸_{2}^{op} × 𝔹_{2} \to \Set$ compose \defining{linkparallel}{\emph{in parallel}}
  into a \profunctor{} $P × Q ፡ 𝔸^{op}_{1} × 𝔸^{op}_{2} × 𝔹_{1} × 𝔹_{2} \to \Set$
  defined by
  \[(P × Q)(A,A';B,B') = P(A;B) × Q(A';B').\]
\end{definition}

\begin{remark}
  We will consider \profunctors{} between product categories explicitly:
  a \defining{linkprofunctor}{\emph{profunctor}} $P ፡ 𝔸₀ × \mydots × 𝔸ₙ → 𝔹₀ × \mydots × 𝔹ₘ$ is a functor 
  $$P \colon 𝔸₀^{op} \mydots × 𝔸ₙ^{op} × 𝔹₀ × \mydots × 𝔹ₘ \to \mathbf{Set}.$$
  For our purposes, a profunctor $P(A₀, \mydots, Aₙ; B₀, \mydots, Bₘ)$ is a family of processes indexed by contravariant inputs $A₀,  \mydots, Aₙ$ and covariant outputs $B₀, \mydots, Bₘ$. The profunctor is endowed with jointly functorial left $(≻_0, \mydots, ≻_n)$ and right $(≺_0, \mydots, ≺_m)$ actions of the morphisms of $𝔸₀, \mydots, 𝔸ₙ$ and $𝔹₀, \mydots, 𝔹ₘ$, respectively \cite{benabou00,loregian2021}. We will simply use $(≺/≻)$ without any subscript whenever the input/output is unique.
\end{remark}

  Composing \profunctors{} \emph{sequentially} is subtle: the same processes could arise as the composite of different pairs of processes, so we need to impose an equivalence relation. Imagine we try to connect two different processes:
  $$p \in P(A_0,\mydots,A_n;B_0,\dots,B_m),\mbox{ and } q \in Q(C_0,\mydots,C_k;D_0,\dots,D_h);$$
and we have some morphism $f \colon B_i \to C_j$ that translates the i-th output port of $p$ to the j-th input port of $q$. Let us write $(ᵢ|ⱼ)$ for this connection operation. Note that we could connect them in two different ways: we could
\begin{enumerate}
  \item change \emph{the output of the first process} $p ≺_i f$ before connecting both, thus obtaining $(p ≺ᵢ f)\, {}_i|_j\, q$;
  \item or change \emph{the input of the second process} $f ≻_j q$ before connecting both, thus obtaining $p\, {}_i|_j\, (f ≻_j q)$.
\end{enumerate}
These are different descriptions, made up of two different components. However, they essentially describe the same process: they are \emph{dinaturally equal}.
Indeed, \profunctors{} are canonically endowed with this notion of equivalence \cite{benabou00,loregian2021}, precisely equating these two descriptions.
\Profunctors{}, and their elements, are thus composed \emph{up to dinatural equivalence}.

\subsection{Dinaturality and Composition}
Dinaturality is a canonical notion of equivalence for \profunctors{}: it arises naturally from the construction of the bicategory of profunctors, but it also has a good interpretation in terms of procesess.

\begin{definition}[Dinatural equivalence] 
  \label{def:dinaturality}
  \defining{linkdinaturality}{}
  For any functor $P ፡ ℂ^{op} × ℂ → \Set$, consider the set
  \[S_{P} = \sum_{M ∈ ℂ} P(M;M).\]
  \emph{Dinatural equivalence}, $(\sim)$, on the set $S_{P}$ is the smallest equivalence relation satisfying $(r ≻ p) \sim (p ≺ r)$ for each $p ∈ P(M;N)$ and each $r ∈ ℂ(N;M)$.
\end{definition}

Coproducts quotiented by \dinaturalEquivalence{} construct a particular form of colimit called a \emph{coend}. Under the process interpretation of \profunctors{}, taking a coend means \emph{plugging an output to an input} of the same type.

\begin{definition}[Coend]
  Let $P \colon ℂ^{op} × ℂ \to \Set$ be a functor.
  Its \emph{coend} is the coproduct of $P(M,M)$ indexed by $M ∈ ℂ$, quotiented by \dinaturalEquivalence{}.
  \[
     ∫^{M ∈ ℂ} P(M;M) \coloneqq \left(\sum_{M ∈ ℂ} P(M;M) \bigg/ \sim \right).
  \]
  That is, the coend is the colimit of the diagram containing a \emph{cospan} $P(M;M) \gets P(M;N) \to P(N;N)$
  for each $f \colon N \to M$.
\end{definition}

\begin{definition}[Sequential composition]
  Two \profunctors{} $P ፡ 𝔸^{op} × 𝔹 → \Set$ and $Q ፡ 𝔹^{op} × ℂ → \Set$ compose \defining{linksequential}{sequentially} into
  a profunctor $P ⋄ Q ፡ 𝔸^{op} × ℂ → \Set$ defined by
  $$
  (P ⋄ Q)(A;C) = ∫^{B ∈ 𝔹} P(A;B) × Q(B; C).
  $$ 
  The \defining{linkidprof}{hom-profunctor} $\hom ፡ 𝔸^{op} × 𝔸 → \Set$ that returns the set of morphisms between two objects is the unit for sequential composition. Sequential composition is associative up to isomorphism.
\end{definition}

\subsection{Coend Calculus}
\label{subsec:coend-calculus}
\emph{Coend calculus} is the name given to the algebraic manipulations of coends that prove isomorphisms or construct natural transformations between profunctors using the behaviour of \emph{coends}.
MacLane \cite{macLane71:workingMathematician} and Loregian \cite{loregian2021} give presentations of coend calculus.

\begin{proposition}[Yoneda reduction]
  \label{prop:yonedareduction}
  \defining{linkcoyoneda}{} 
  Let $ℂ$ be any category and let $F ፡ ℂ \to \Set$ be a functor; the following isomorphism holds for any given object $A ∈ ℂ_{obj}$.
  \[
    ∫^{X ∈ ℂ} ℂ(X;A) × FX ≅ FA.
  \]
  Following the analogy with classical analysis, the $\hom$ profunctor works as a Dirac's delta.
\end{proposition}

\begin{proposition}[Fubini rule]
  \defining{linkfubini}{} Coends commute between them; that is,
  there exists a natural isomorphism
  \label{prop:fubinirule}
  \[\begin{aligned}
    & ∫^{X_{1} ∈ ℂ} ∫^{X_{2} ∈ ℂ} P(X_{1},X_{2};X_{1},X_{2})
    ≅ & 
    & ∫^{X_{2} ∈ ℂ} ∫^{X_{1} ∈ ℂ}  P(X_{1},X_{2};X_{1},X_{2}).
  \end{aligned}\]
  In fact, they are both isomorphic to the coend over the product category,
  \[ ∫^{(X_{1}, X_{2}) ∈ ℂ × ℂ} P(X_{1},X_{2};X_{1},X_{2}). \]
  Following the analogy with classical analysis, coends follow the Fubini rule
  for integrals.
\end{proposition}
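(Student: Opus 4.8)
The plan is to prove the Fubini rule by reducing all three constructions to the explicit quotient description of a coend and showing they yield \emph{literally the same} quotient set. Write $Q = \sum_{(X_1,X_2) ∈ ℂ × ℂ} P(X_1,X_2;X_1,X_2)$ for the underlying coproduct. By definition, the coend over the product category, $∫^{(X_1,X_2)} P$, is $Q$ modulo the dinatural equivalence $\sim$, the smallest equivalence relation containing $((g_1,g_2) ≻ p) \sim (p ≺ (g_1,g_2))$ for every morphism $(g_1,g_2)$ of $ℂ × ℂ$. To make sense of $∫^{X_1} ∫^{X_2} P$, I would first check that for fixed $X_1$ the assignment $X_2 \mapsto P(X_1,X_2;X_1,X_2)$ is a functor $ℂ^{op} × ℂ → \Set$, so that $S_{X_1} := ∫^{X_2} P(X_1,X_2;X_1,X_2)$ is defined; then check that the residual $X_1$-actions on $P$ descend through the quotient defining $S_{X_1}$ — they do, since by joint functoriality the $X_1$-actions commute with the $X_2$-actions — making $X_1 \mapsto S_{X_1}$ a functor $ℂ^{op} × ℂ → \Set$ whose coend is the iterated coend. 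Unwinding the two successive quotients gives $∫^{X_1} ∫^{X_2} P = Q / \langle \sim_1, \sim_2 \rangle$, where $\sim_i$ is the equivalence relation generated only by the dinaturality relations coming from morphisms of the $i$-th factor of $ℂ × ℂ$, and $\langle \sim_1, \sim_2 \rangle$ is the equivalence relation they jointly generate.

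The heart of the argument is then the identity $\sim = \langle \sim_1, \sim_2 \rangle$. One inclusion is immediate, since the morphisms of the form $(g_1,\id)$ and $(\id,g_2)$ generating $\sim_1$ and $\sim_2$ are particular morphisms of $ℂ × ℂ$, so $\langle \sim_1, \sim_2 \rangle \subseteq \sim$. For the converse, I would use the factorization of an arbitrary morphism of $ℂ × ℂ$ as a composite of one morphism from each factor, namely $(g_1,g_2) = (g_1,\id) ⨾ (\id,g_2)$, together with joint functoriality of the profunctor actions: a short zig-zag — applying one $\sim_2$-step and one $\sim_1$-step, passing through the intermediate element obtained by acting with $(\id,g_2)$ and then $(g_1,\id)$ — identifies the two sides of the $(g_1,g_2)$-dinaturality relation. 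Hence every generator of $\sim$ already lies in $\langle \sim_1, \sim_2 \rangle$, and since the latter is an equivalence relation, $\sim \subseteq \langle \sim_1, \sim_2 \rangle$. The symmetric argument (swapping the two factors) shows $∫^{X_2} ∫^{X_1} P = Q / \langle \sim_1, \sim_2 \rangle$ as well. All three coends are therefore the same quotient of $Q$; the comparison maps are induced by the identity on representatives, and their naturality in $P$ is then clear.

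I expect the main obstacle to be bookkeeping rather than anything conceptual: keeping the variances and the left/right actions straight in the zig-zag establishing $\sim = \langle \sim_1, \sim_2 \rangle$, and cleanly verifying that the $X_1$-actions descend to $S_{X_1}$ so that the outer coend is genuinely well-defined. If this gets unwieldy, an alternative smoother route avoids the explicit quotients: use the universal property of the coend to identify, for every set $S$, the hom-set $\Set(∫^{X_1} ∫^{X_2} P, S)$ with the families $P(X_1,X_2;X_1,X_2) → S$ dinatural separately in $X_1$ and in $X_2$; observe, via the same factorization of morphisms in $ℂ × ℂ$, that "separately dinatural" coincides with "dinatural over $ℂ × ℂ$"; and conclude by the Yoneda lemma that $∫^{X_1} ∫^{X_2} P ≅ ∫^{(X_1,X_2)} P$, with the opposite order of iteration treated symmetrically.
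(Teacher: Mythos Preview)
Your proposal is correct. The paper, however, does not prove this proposition at all: it is stated as a standard fact of coend calculus and the surrounding section merely points the reader to MacLane~\cite{macLane71:workingMathematician} and Loregian~\cite{loregian2021} for proofs. So there is nothing to compare against on the paper's side; your explicit-quotient argument (and the alternative via the universal property you sketch at the end) are both standard and valid routes to the result.
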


\subsection{The Point of Coend Calculus}
\label{sec:pointedcoendcalculus}

In the same way that regular logic links relations, a coend calculus expression is a list of \profunctors{} linked by some objects that are bound to a coend. Usually, the isomorphisms that we construct are never made explicit, and it is difficult for the reader to compute the precise map we constructed.

Fortunately, this has a straightforward solution. We propose to \emph{point} the coends: to write an profunctorial expression, $P$, together with the \emph{generic element} it computes, $\withPoint{p}{P}$. An expression of pointed coend calculus is a coend bounding some objects and a series of \emph{pointed profunctors}. For instance, we may write
\begin{align*} 
  & {\textstyle ∫^{M,N} \withPoint{f}{P(A ; M,N)} × \withPoint{g}{Q(M ; B)} × \withPoint{h}{ℂ(N; C)}},\mbox{\ \ instead of just \ \ } \\
  & {\textstyle ∫^{M,N}} P(A ; M,N) × Q(M ; B) × ℂ(N; C).
\end{align*}
Coends quotient expressions by dinaturality, meaning that any left action on the covariant occurrence of a bounded variable can be equivalently written as a right action on its contravariant occurrence. In terms of pointed profunctors, this means that
$$∫^N \withPoint{(f ≺ h)}{P(A;N)} × \withPoint{g}{Q(N;B)} = ∫^M \withPoint{f}{P(A;M)} × \withPoint{(h ≻ g)}{Q(M;B)}.$$
\begin{proposition}
  Let $ℂ$ be a category and let $F ፡ ℂ^{op} → \mathbf{Set}$ and $G ፡ ℂ → \mathbf{Set}$ be a presheaf and a copresheaf, respectively.
  The following are natural isomorphisms of pointed profunctors,
  \begin{align*}
    & ∫^X \withPoint{f}{ℂ(X;A)} × \withPoint{h}{F(X)}\ ≅\ \withPoint{(f ≻ h)}{F(A)}; \\
    & ∫^X \withPoint{f}{ℂ(A;X)} × \withPoint{h}{G(X)}\ ≅\ \withPoint{(h ≺ f)}{G(A)}.
  \end{align*} 
  We call these isomorphisms the ``pointed'' Yoneda reductions.
\end{proposition}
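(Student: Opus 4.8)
The statement to prove is the pair of ``pointed'' Yoneda reductions: for a presheaf $F ፡ ℂ^{op} → \mathbf{Set}$ and a copresheaf $G ፡ ℂ → \mathbf{Set}$, we have natural isomorphisms of pointed profunctors
\[
  ∫^X \withPoint{f}{ℂ(X;A)} × \withPoint{h}{F(X)}\ ≅\ \withPoint{(f ≻ h)}{F(A)},
  \qquad
  ∫^X \withPoint{f}{ℂ(A;X)} × \withPoint{h}{G(X)}\ ≅\ \withPoint{(h ≺ f)}{G(A)}.
\]
These are nothing more than the classical Yoneda reduction (\Cref{prop:yonedareduction}) together with an explicit description of the isomorphism in terms of generic elements. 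So the plan is: first invoke \Cref{prop:yonedareduction} to obtain the underlying isomorphism of sets (naturally in $A$), and then verify that under this isomorphism the generic element of the left-hand coend, namely the dinaturality class of a pair $(f,h)$ with $f ∈ ℂ(X;A)$ and $h ∈ F(X)$, is sent to the generic element $f ≻ h ∈ F(A)$ of the right-hand side.

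First I would recall the proof of \Cref{prop:yonedareduction} at the level of elements. The map $∫^X ℂ(X;A) × FX → FA$ is induced by the cowedge $ℂ(X;A) × FX → FA$ sending $(f,h) \mapsto f ≻ h$ (using the contravariant action of $F$, written as the left profunctor action $≻$ here since $F$ is a presheaf); this is well-defined on the coend because, for $r ፡ X → X'$, the two elements $(r ⨾ f, h)$ and $(f, r ≻ h)$ are identified by dinaturality and both map to $(r⨾f) ≻ h = f ≻ (r ≻ h)$ by functoriality of $F$. The inverse map $FA → ∫^X ℂ(X;A) × FX$ sends $h ∈ FA$ to the class of $(\id_A, h)$. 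That these are mutually inverse: one composite is $h \mapsto (\id_A,h) \mapsto \id_A ≻ h = h$ by unitality of the action; the other composite sends the class of $(f,h)$ to $f ≻ h$ and then to the class of $(\id_A, f ≻ h)$, which equals the class of $(f, h)$ by one application of dinaturality (with $r = f$). This is exactly the content needed, and the ``pointed'' formulation simply records that the forward map is $(f,h) \mapsto f ≻ h$, i.e. that $\withPoint{(f,h)}{\left(∫^X ℂ(X;A) × FX\right)}$ corresponds to $\withPoint{(f ≻ h)}{FA}$.

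For the second isomorphism the argument is formally dual: the cowedge $ℂ(A;X) × GX → GA$ sends $(f,h) \mapsto h ≺ f$ using the covariant action of $G$ (written as the right profunctor action $≺$), well-definedness again follows from the dinaturality relation $(f ⨾ r, h) \sim (f, h ≺ r)$ matched against functoriality $h ≺ (f⨾r) = (h ≺ f) ≺ r$, and the inverse sends $h ∈ GA$ to the class of $(\id_A, h)$. Naturality of both isomorphisms in $A$ (and more generally as pointed profunctors, i.e. compatibility with any ambient actions and with further coend variables) is automatic from the Fubini rule (\Cref{prop:fubinirule}) and the fact that all maps involved are built from the profunctor actions, which commute with everything by definition.

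The ``hard part'' here is really only bookkeeping: there is no genuine obstacle, since the result is a restatement of a known lemma. The one thing that requires care is matching variances --- being consistent about whether $F$'s action is written $≻$ or $≺$, and making sure the dinaturality relation used in each direction is the one the coend actually imposes --- so that the displayed formulas $f ≻ h$ and $h ≺ f$ come out with the actions on the correct sides. Once that convention is fixed, both isomorphisms drop out of \Cref{prop:yonedareduction} with the generic element tracked through, as above.
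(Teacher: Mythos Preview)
Your proposal is correct and is exactly the standard unpacking of \Cref{prop:yonedareduction} at the level of elements; the paper itself does not give a separate proof of this proposition, treating it as an immediate reformulation of Yoneda reduction in the pointed notation. Your explicit construction of the cowedge $(f,h)\mapsto f\mathbin{≻}h$ with inverse $h\mapsto(\id_A,h)$, together with the dinaturality check, is precisely what underlies \Cref{prop:yonedareduction}, so your approach and the paper's (implicit) approach coincide.
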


\begin{remark}
Using pointed coends, any derivation does also include the computation of the isomorphism it induces. As an example, compare the following with the usual coend derivation of a cartesian lens \cite{ClarkeRoman20:ProfunctorOptics},
\end{remark}

\begin{proposition}
  In a cartesian monoidal category, the pairs of morphisms $ℂ(A; M × X)$ and $ℂ(M × Y; B)$, quotiented by dinaturality, are in bijective correspondence with the pairs of morphisms $ℂ(A; M)$ and $ℂ(M × Y; B)$.
\end{proposition}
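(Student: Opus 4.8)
The plan is to recognise this as the coend derivation of the cartesian lens and to run it through the pointed coend calculus introduced above. Spelled out, the claim is the isomorphism
\[
  \int^{M ∈ ℂ} ℂ(A; M × X) × ℂ(M × Y; B)\ ≅\ ℂ(A; X) × ℂ(A × Y; B),
\]
natural in $A, X, Y, B$; by the definitions of coend and dinatural equivalence the left-hand side is precisely the set of pairs $(p, q)$ with $p ∈ ℂ(A; M × X)$ and $q ∈ ℂ(M × Y; B)$ sharing a common residual $M$, quotiented by dinaturality, while the right-hand side is the data of a cartesian lens.

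First I would split the forward component using the universal property of the cartesian product in the codomain: a map $A → M × X$ is a pair $\langle f_1, f_2 \rangle$ with $f_1 ፡ A → M$ and $f_2 ፡ A → X$, so $ℂ(A; M × X) ≅ ℂ(A; M) × ℂ(A; X)$ naturally in every variable --- in particular in $M$ --- and hence compatibly with the dinaturality quotient. Substituting into the coend gives
\[
  \int^{M ∈ ℂ} \withPoint{f_1}{ℂ(A; M)} × \withPoint{f_2}{ℂ(A; X)} × \withPoint{g}{ℂ(M × Y; B)}.
\]
The middle factor $ℂ(A; X)$ does not mention the bound variable $M$, so it is constant for the coend and factors out, leaving $ℂ(A; X) × \int^{M ∈ ℂ} ℂ(A; M) × ℂ(M × Y; B)$.

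The remaining step is a Yoneda reduction. The assignment $M \mapsto ℂ(M × Y; B)$ is contravariant in $M$ --- a morphism $f_1 ፡ A → M$ acts on it by precomposition with $f_1 × \id_Y$ --- so it is a presheaf $G$, and the dual of \Cref{prop:yonedareduction} gives $\int^{M} ℂ(A; M) × G(M) ≅ G(A)$; in pointed form, $\withPoint{f_1}{ℂ(A; M)} × \withPoint{g}{ℂ(M × Y; B)} \mapsto \withPoint{(f_1 × \id_Y) ⨾ g}{ℂ(A × Y; B)}$. Composing the three isomorphisms gives the claimed bijection: $(\langle f_1, f_2 \rangle, g) \mapsto (f_2,\ (f_1 × \id_Y) ⨾ g)$, with inverse $(v, u) \mapsto (\langle \id_A, v \rangle, u)$ presented at residual $M = A$. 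The same thing can be seen directly, and this is the only step carrying content: every element of the left-hand coend is dinaturally equal --- by sliding $f_1$ across the coend, i.e. the elementary relation along $f_1 ፡ A → M$ --- to a representative of this normal form at residual $M = A$, and the assignment $(\langle f_1, f_2 \rangle, g) \mapsto (f_2, (f_1 × \id_Y) ⨾ g)$ respects the elementary dinaturality moves, so it descends to a well-defined two-sided inverse.

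I do not expect a genuine obstacle; this is routine coend calculus. The point needing care is the orientation of the Yoneda reduction: $ℂ(M × Y; B)$ occurs \emph{contravariantly} in the bound variable $M$ whereas $ℂ(A; M)$ occurs covariantly, so one applies the presheaf (co-Yoneda) form, the dual of \Cref{prop:yonedareduction}, rather than its copresheaf form --- integrating the wrong occurrence would produce $ℂ(A; B)$ instead of $ℂ(A × Y; B)$. The only other thing to verify is that the product-splitting isomorphism is natural in $M$, so that it passes through the dinaturality quotient, and this is immediate from the universal property of the product.
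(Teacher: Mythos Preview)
Your proof is correct and follows essentially the same route as the paper: split $ℂ(A; M × X)$ via the product's universal property (the paper phrases this as the adjunction $Δ ⊣ ×$), then apply the pointed Yoneda reduction to the remaining $ℂ(A; M)$ factor, obtaining the explicit bijection $(\langle f_1, f_2 \rangle, g) \mapsto (f_2, (f_1 × \id_Y) ⨾ g)$. You are somewhat more explicit than the paper in exhibiting the inverse and in flagging the variance check for the Yoneda step, but the argument is the same.
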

\begin{proof}
  A function is explicitly constructed by the following derivation.
  \begin{align*}
    & ∫^M \withPoint{f}{ℂ(A ; M × X)} × \withPoint{g}{ℂ(M × Y ; B)} \\ 
    & \quad ≅ \quad (\mbox{ \emph{by the adjunction} } Δ ⊣ ×) \\
    & ∫^M \withPoint{(f ⨾ π_1)}{ℂ(A ; M)} × \withPoint{(f ⨾ π_2)}{ℂ(A ; X)} × \withPoint{g}{ℂ(M × Y ; B)} \\
    & \quad ≅ \quad (\mbox{ \emph{by pointed Yoneda lemma} }) \\
    & \withPoint{(f ⨾ π_2)}{ℂ(A ; X)} × \withPoint{((f⨾ π₁) ⊗ id) ⨾ g}{ℂ(X × Y ; B)}.
  \end{align*}  
  The function mapping an equivalence class $[f,g]$ to $(f ⨾ π_2; (f⨾ π₁) ⊗ id)$ is a bijection because it has been constructed from composing bijections.

Indeed, in the first step, we have used that the adjunction $(Δ ⊣ ×)$ is given by postcomposition with projections and; in the second step, we use that the action on the last profunctor is defined as $h ≻ g = (h ⊗ id) ⨾ g$. The bijection has been explicitly constructed as sending the pair $(f;g)$ to $(f ⨾ π_2; ((f⨾ π₁) ⊗ id) ⨾ g)$.
\end{proof}

\subsection{Promonads}
\label{subsec:promonads}
Promonads are to profunctors what monads are to functors: to quip, a promonad is just a monoid in the category of endoprofunctors.
It may be then surprising to see that so little attention has been devoted to them, relative to their functorial counterparts.
The main source of examples and focus of attention has been the semantics of programming languages \cite{hughes00,paterson01:arrows,jacobs09}.
Strong monads are commonly used to give categorical semantics of effectful programs \cite{moggi91}, and the so-called \emph{arrows} (or \emph{strong promonads}) strictly generalize them: they coincide with our previous definition of \effectfulCategory{} \cite{heunen06:arrows}.

Part of the reason behind the relative unimportance given to promonads elsewhere may stem from precisely from that fact: promonads over a category can be shown in an elementary way to be equivalent to identity-on-objects functors from that category~\cite{loregian2021}. The explicit proof is, however, difficult to find in the literature, and so we include it here (\Cref{th:promonadidonobjs}).

Under this interpretation, promonads are new morphisms for an old category. We can reinterpret the old morphisms into the new ones in a functorial way. The paradigmatic example is again that of Kleisli or cokleisli categories of strong monads and comonads.
This structure is richer than it may sound, and we will explore it further during the rest of this text.

\begin{definition}\label{definition:promonad}
  \defining{linkpromonad}{}
  A \emph{promonad} $(P,\starp,\unitp{})$ over a category $ℂ$ is a \profunctor{} $P \colon ℂ^{op} × ℂ → \Set$ together with natural transformations for inclusion $(\unitp{})_{X,Y} \colon ℂ(X;Y) \to P(X;Y)$ and multiplication $(\defining{linkpromonadmultiplication}{\starp})_{X,Y} \colon P(X;Y) \times P(Y;Z) \to P(X;Z)$, and such that
  \begin{enumerate}[label=\roman*.]
    \item the right action is premultiplication, $\unitp{f} \starp p = f ≻ p$;
    \item the left action is postmultiplication, $p \starp \unitp{f} = p ≺ f$;
    \item multiplication is dinatural, $p \starp (f ≻ q) = (p ≺ f) \starp q$;
    \item and multiplication is associative, $(p_{1} \starp p_{2}) \starp p_{3} = p_{1} \starp (p_{2} \starp p_{3})$.
  \end{enumerate}
  Equivalently, promonads are promonoids in the double category of categories, where the
dinatural multiplication represents a transformation from the composition of the \profunctor{} $P$ with itself.
\end{definition}

\begin{lemma}[Kleisli category of a promonad]
  \label{lemma:kleisli}
  Every promonad $(P,\starp,\unitp{})$ induces a category with the same objects as its base category, but with hom-sets given by $P(•,•)$, composition given by $(\starp)$ and identities given by $(\unitp{\id})$.
  This is called its \emph{Kleisli category}, $\kleisli{(P)}$. Moreover, there exists an identity-on-objects functor $ℂ → \kleisli{(P)}$, defined on morphisms by the unit of the promonad. 
\end{lemma}

The converse is also true: every category $ℂ$ with an identity-on-objects functor from some base category $𝕍$ arises as the Kleisli category of a promonad.

\begin{theorem}
  \label{th:promonadidonobjs}
  Promonads over a category $ℂ$ correspond to identity-on-objects functors from the category $ℂ$.
  Given any identity-on-objects functor $i ፡ ℂ → 𝔻$ there exists a unique promonad over $ℂ$ having $𝔻$ as its Kleisli category: the promonad given by the profunctor $\hom_{𝔻}(i(\bullet),i(\bullet))$.
\end{theorem}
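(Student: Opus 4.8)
The plan is to establish a bijective correspondence between promonads over $ℂ$ and identity-on-objects functors out of $ℂ$, by constructing maps in both directions and showing they are mutually inverse. One direction is already available: Lemma \ref{lemma:kleisli} sends a promonad $(P, \starp, \unitp{})$ to the identity-on-objects functor $ℂ → \kleisli{(P)}$. So the core of the work is the reverse construction and the two round-trip checks.

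First I would fix an identity-on-objects functor $i ፡ ℂ → 𝔻$ and define the candidate profunctor $P_{i}(X;Y) = 𝔻(iX; iY)$. Since $i$ is identity-on-objects we may suppress it on objects and simply write $P_{i}(X;Y) = 𝔻(X;Y)$. The left and right actions are precomposition and postcomposition in $𝔻$ with the images $i(f)$ of morphisms of $ℂ$; functoriality of $i$ makes these genuine profunctor actions. The unit $\unitp{} ፡ ℂ(X;Y) → P_{i}(X;Y)$ is just $i$ itself on hom-sets, and the multiplication $\starp$ is composition in $𝔻$. Checking the four promonad axioms in Definition \ref{definition:promonad} is then routine: \emph{(i)} and \emph{(ii)} are the statements that $i(f) \circ p$ and $p \circ i(f)$ agree with the profunctor actions, which holds by definition of those actions; \emph{(iii)} dinaturality is associativity of composition in $𝔻$ together with functoriality of $i$; and \emph{(iv)} is associativity of composition in $𝔻$. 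This produces a promonad $P_{i}$ over $ℂ$, and by construction its Kleisli category has hom-sets $𝔻(X;Y)$ with composition inherited from $𝔻$, hence $\kleisli{(P_{i})} = 𝔻$ and the canonical functor $ℂ → \kleisli{(P_{i})}$ is exactly $i$.

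Next I would verify the other round trip: starting from a promonad $(P, \starp, \unitp{})$, form $𝔻 = \kleisli{(P)}$ and the functor $i ፡ ℂ → 𝔻$, then reconstruct $P_{i}$. By definition $P_{i}(X;Y) = 𝔻(X;Y) = P(X;Y)$ as sets; the unit of $P_{i}$ is $i$, which on hom-sets is $\unitp{}$, so the units agree; the multiplication of $P_{i}$ is composition in $\kleisli{(P)}$, which is $\starp$; and the actions of $P_{i}$ are pre/post-composition in $\kleisli{(P)}$ with images of $ℂ$-morphisms, i.e.\ $\unitp{f} \starp (-)$ and $(-) \starp \unitp{f}$, which by axioms \emph{(i)} and \emph{(ii)} of Definition \ref{definition:promonad} are precisely the original left and right actions $f ≻ (-)$ and $(-) ≺ f$. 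Hence $P_{i} = P$ as promonads. This gives the bijection on objects; for the uniqueness clause of the statement, I would note that any promonad $Q$ whose Kleisli category is $𝔻$ (compatibly with the functor from $ℂ$) must have $Q(X;Y) = 𝔻(X;Y)$ with unit, multiplication and actions all determined as above, so $Q = P_{i}$.

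The main obstacle, such as it is, is bookkeeping rather than mathematical depth: one has to be careful that ``identity-on-objects'' is used to silently identify object-indexed families on both sides, and that the promonad axioms \emph{(i)}--\emph{(ii)} are exactly what forces the Kleisli-category composition to recover the original profunctor actions (this is the one place where the correspondence could fail for a weaker notion of ``new morphisms''). I would also take care to phrase the correspondence at the level of a suitable category or groupoid of promonads versus identity-on-objects functors if the paper wants functoriality, but for the bare statement as given — a bijection together with the uniqueness assertion — the argument above suffices.
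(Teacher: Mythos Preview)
Your proof is correct and is the standard elementary argument. Note, however, that the paper does not actually include a proof of this theorem: despite the remark in \S\ref{subsec:promonads} that ``the explicit proof is, however, difficult to find in the literature, and so we include it here'', the theorem statement is followed immediately by the bibliography subsection with no \texttt{proof} environment. So there is no paper proof to compare against.

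That said, your argument is exactly what one would expect and matches the sketch implicit in the surrounding text (Lemma~\ref{lemma:kleisli} for one direction, and the profunctor $\hom_{𝔻}(i(\bullet),i(\bullet))$ named in the theorem statement for the other). The verification that axioms \emph{(i)}--\emph{(ii)} of Definition~\ref{definition:promonad} are precisely what recovers the original profunctor actions from Kleisli composition is the key observation, and you identify it correctly. Your closing remark about phrasing the correspondence at the level of a category of promonads is apt but, as you say, not needed for the statement as given.
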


\subsection{Bibliography}
Coends, the Yoneda lemma, and their calculus, were introduced in MacLane's monograph \cite{macLane71:workingMathematician}. A more modern presentation of coend calculus and its applications is in the work of Loregian \cite{loregian2021}. This author has also written on the importance of pointed profunctors for open diagrams \cite{openDiagrams} and collages \cite{braithwaite23:collages}.

\newpage

\clearpage{}%
\clearpage{}%
\section{Multicategories}
\label{sec-multicategories}

\subsection{Multicategories}

Multicategories will provide an algebra for composing multiple pieces into one.
A \multicategory{} is like a category where every morphism has a list of inputs instead of a single one. 
A multicategory, $𝕄$, contains a set of objects, $𝕄_{obj}$, as a category does; but instead of a set of morphisms, $𝕄(X;Y)$, for every pair of objects $X,Y ∈ 𝕄_{obj}$, it will have a set of \emph{multimorphisms}, 
$$𝕄(X₁,\dots,Xₙ;Y),\mbox{ for each list of objects } X₁,\dots,Xₙ, Y ∈ 𝕄_{obj}.$$
As in sequent logic, it is easier to denote lists of objects by metavariables. For instance, we will use $Γ = X₁,\dots,Xₙ$ and write $𝕄(Γ;Y)$ for the set of multimorphisms $𝕄(X₁,\dots,Xₙ;Y)$.

\begin{definition}
  \label{def:multicategory}\defining{linkmulticategory}{}
  A \emph{multicategory}, $𝕄$, is a collection of objects, $𝕄_{obj}$, together with a collection of multimorphisms, $𝕄(Γ;Y)$, for each list of objects $Γ = X₀,\dots,Xₙ \in 𝕄_{obj}$ and each object $Y ∈ 𝕄_{obj}$.

  For each object $X$, there must be an identity multimorphism, $\mathrm{id}_X ∈ 𝕄(X;X)$. For each three lists of objects $Γ,Γ₁,Γ₂$ and each two objects $Y$ and $Z$, there must exist a composition operation (we omit superscripts when clear from the context),
  \[(⨾)_{i}^{Γ₁,Γ₂} \colon 
    𝕄(Γ ; Yᵢ) × 𝕄(Γ₁,Yᵢ,Γ₂ ; Z) → 𝕄(Γ₁,Γ,Γ₂;Z).\]
  
  Composition must be unital, meaning that $\mathrm{id}_X ⨾_X f = f$ and $f ⨾ \mathrm{id}_Y = f$ every time that the equation is fomally well-typed. Composition must be also associative, meaning that  $(h ⨾_X g) ⨾_Y f = h ⨾_X (g ⨾_Y f); \mbox{ and } 
  g ⨾_Y (h ⨾_X f) = h ⨾_X (g ⨾_Y f)$ must hold whenever they are formally well-typed, see \Cref{fig:assoc-multicategory}.
  \begin{figure}[h]
    \centering
    \includegraphics[scale=0.4]{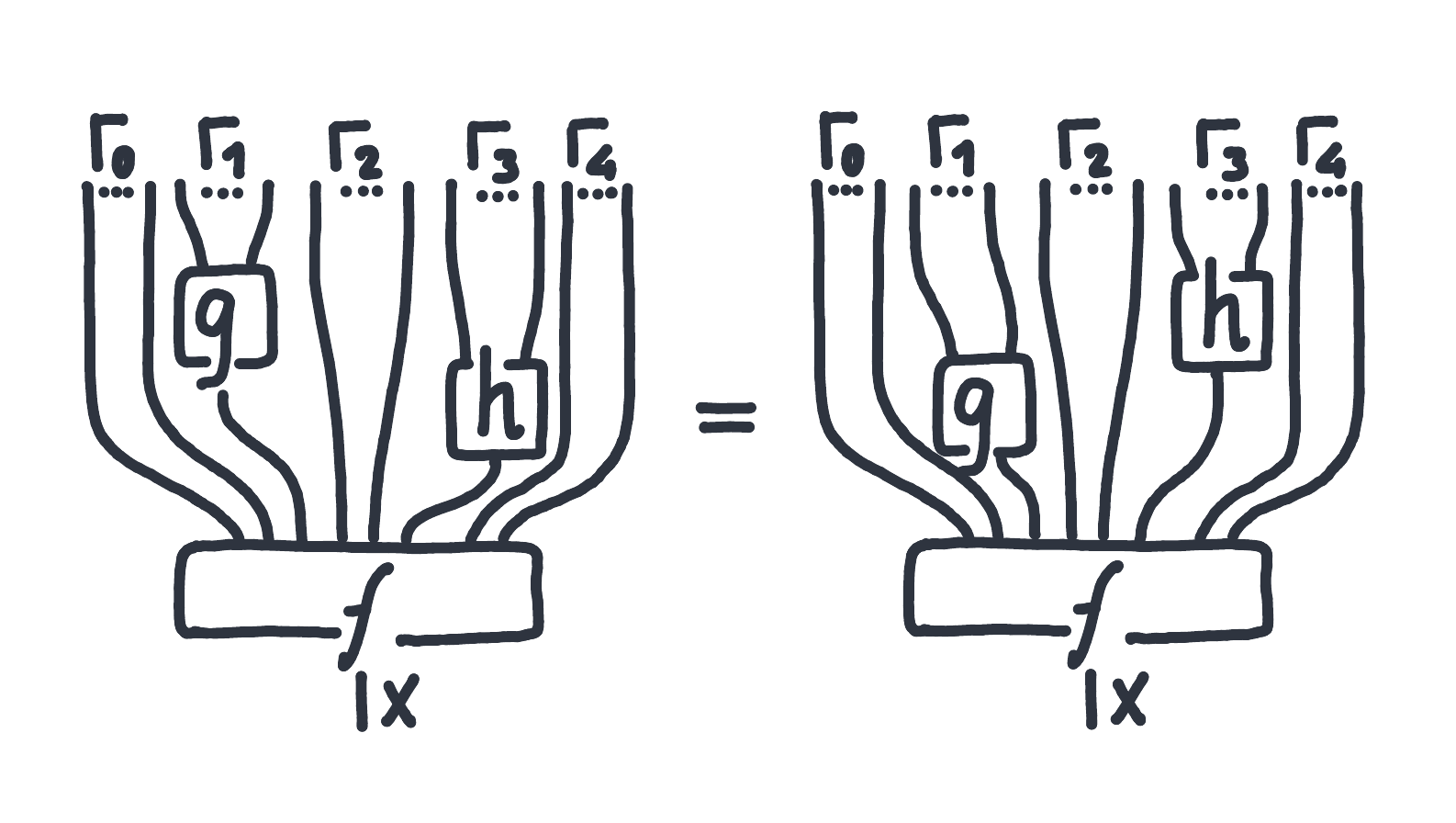}
    \caption{Associativity for a multicategory.}
    \label{fig:assoc-multicategory}
  \end{figure}
  
\end{definition}

\subsection{The Category of Multicategories}
In the same way categories are the first step towards the theory of functors and natural transformations, multicategories are the first step towards the theory of multifunctors and multinatural transformations.
In the same way the formal theory of categories is synthetised by the 2-category $\mathbf{Cat}$ of categories, functors and natural transformations; the study of multicategories is synthetised by the 2-category $\mathbf{MultiCat}$ of multicategories, multifunctors and multinatural transformations.

\begin{definition}
  \defining{linkmultifunctor}{}
  A \emph{multifunctor} between two \multicategories{}, $F ፡ 𝕄 → ℕ$, consists of an assignment on objects $F_{obj} ፡ 𝕄 → ℕ$ and an assignment on multimorphisms of any arity,
  $$Fₙ ፡ 𝕄(X₁,\dots,Xₙ; Y) → ℕ(F_{obj}X₁,\dots,F_{obj}Xₙ; F_{obj}Y),$$
  that preserves identities, $F₁(\id_X) = \id_{F_{obj}(X)}$, and composition of multimorphisms, $F_{n+m-1}(f ⨾_Y g) = Fₙ(f) ⨾_{F_{obj}(Y)} Fₘ(g)$.
\end{definition}

\begin{definition}
  A multinatural transformation $θ ፡ F → G$ between two \multifunctors{} $F, G ፡ 𝕄 → ℕ$ is given by a family of multimorphisms $θ_X ∈ ℕ(FX; GX)$ such that, for each multimorphism $f ∈ 𝕄(X₁,…,Xₙ;Y)$, the following naturality condition holds
  $$θ_{X_1} ⨾_1 … ⨾_{n-1} θ_{X_n} ⨾ G(f) = F(f) ⨾ θ_Y.$$
\end{definition}

\begin{proposition}
  \defining{linkMult}
  \Multicategories{} with \multifunctors{} between them form a category, $\Mult$.
\end{proposition}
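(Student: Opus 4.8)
The plan is to exhibit identities and composition for \multifunctors{} and then check the category axioms, all of which reduce to the corresponding facts about composition of functions. For each \multicategory{} $𝕄$, the \emph{identity multifunctor} $\id_𝕄 ፡ 𝕄 → 𝕄$ is given by the identity assignment on objects and, for every arity $n$, the identity function on the hom-sets $𝕄(X₁,\dots,Xₙ;Y)$. This is visibly a \multifunctor{}: it preserves identity multimorphisms and, since each $(\id_𝕄)ₙ$ is an identity function, it trivially commutes with the composition operations $(⨾)_i$.

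Next I would define composition: given \multifunctors{} $F ፡ 𝕄 → ℕ$ and $G ፡ ℕ → ℙ$, set $(F ⨾ G)_{obj} = F_{obj} ⨾ G_{obj}$ on objects and, for each arity $n$, $(F ⨾ G)ₙ = Fₙ ⨾ Gₙ ፡ 𝕄(X₁,\dots,Xₙ;Y) → ℙ(G_{obj}F_{obj}X₁,\dots,G_{obj}F_{obj}Xₙ;G_{obj}F_{obj}Y)$. The verification that $F ⨾ G$ is again a \multifunctor{} is the only place requiring a little care with bookkeeping: preservation of identities follows by applying first the corresponding law for $F$ and then for $G$, and preservation of multimorphism composition follows from $(F ⨾ G)_{n+m-1}(f ⨾_Y g) = Gₙ₊ₘ₋₁(Fₙ₊ₘ₋₁(f ⨾_Y g)) = Gₙ₊ₘ₋₁(Fₙ(f) ⨾_{F_{obj}Y} Fₘ(g)) = Gₙ(Fₙ(f)) ⨾_{G_{obj}F_{obj}Y} Gₘ(Fₘ(g))$, using the preservation law for $F$ and then for $G$, together with the fact that the arity of $F_n(f)$ is still $n$ so the outer application of $G$ is at the correct arity.

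Finally, the category axioms: associativity of composition of \multifunctors{} holds because the object parts compose as ordinary functions and, for each fixed arity $n$, the multimorphism parts compose as ordinary functions, and function composition is associative; unitality with respect to $\id_𝕄$ holds for the same reason. Hence $\Mult$ is a category (a large one, living in the ambient universe in which the collections $𝕄_{obj}$ and $𝕄(Γ;Y)$ are sets). The only genuinely fiddly point, as noted above, is tracking arities through the composite assignment; everything else is the standard pattern for verifying that ``structures and structure-preserving maps form a category''.
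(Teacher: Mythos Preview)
Your proof is correct; the paper states this proposition without proof, treating it as a routine verification. Your argument is exactly the standard one that would be expected here, and the only nontrivial step you identify (tracking arities through the composite $F ⨾ G$) is handled correctly.
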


\subsection{Application: Shufflings}
Let us exemplify \multicategories{} with an example that will become increasingly relevant in this thesis.
Shufflings are permutations that preserve the relative ordering of some blocks. We can always count shufflings combinatorially, but \multicategories{} provide the extra structure that allows us to track how different shufflings compose.

\begin{example}
  A \emph{shuffling} is a permutation of the elements of multiple blocks that preserves their relative ordering.
  The \multicategory{} of shufflings has objects the natural numbers and morphisms the shufflings, $σ ∈ \mathbf{Shuf}(p₀,\dots,pₙ;q)$ that reorganize $p₀,\dots,pₙ$ elements into $q = p₀ + \dots + pₙ$ without altering their internal ordering.
  
  \begin{figure}[ht]
    \centering
    \includegraphics[scale=0.22]{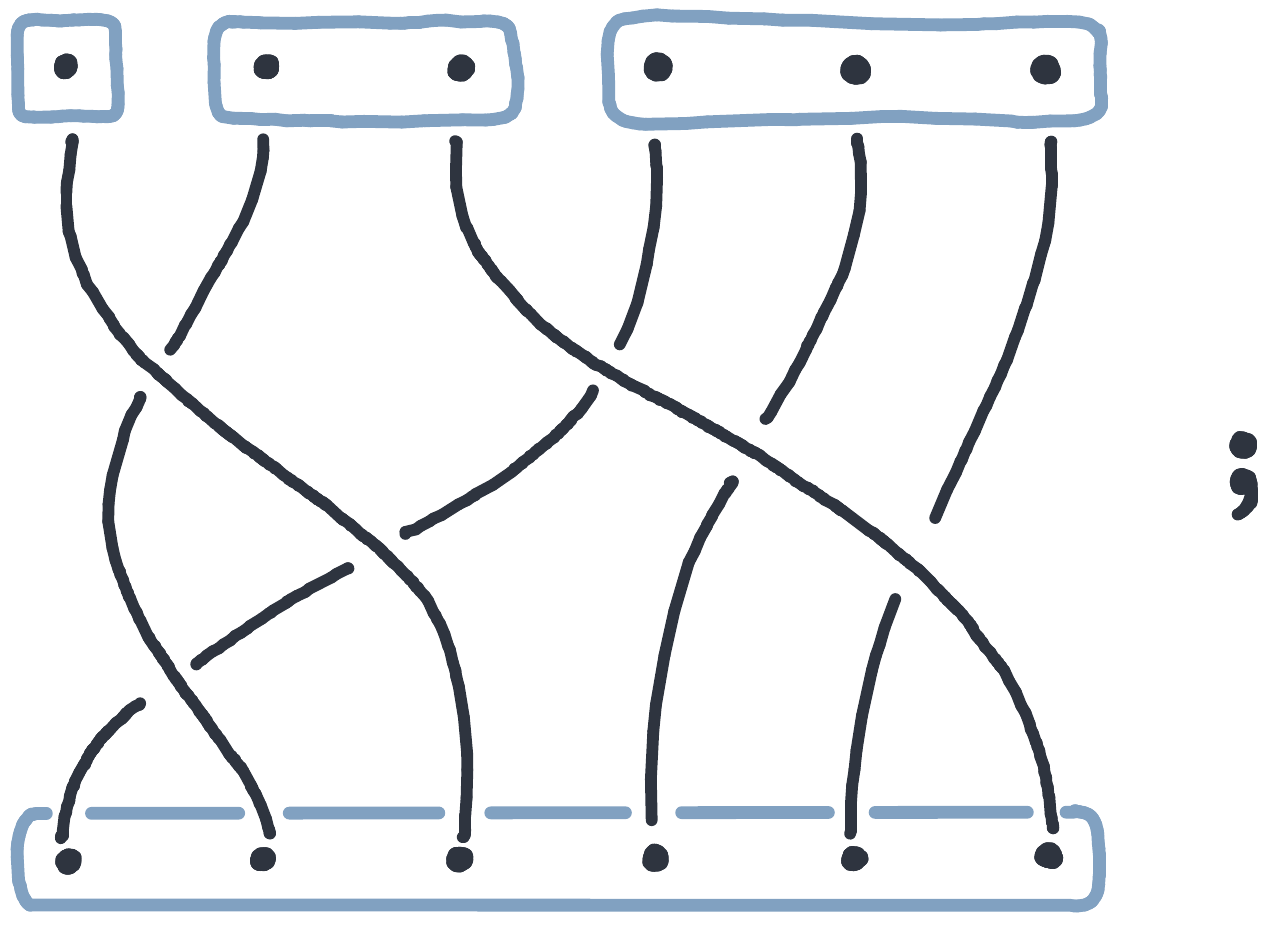}
    \caption{Example of a 1,2,3-shuffling.}
  \end{figure}

  More explicitly, the number of shufflings $\mathbf{Shuf}(p₀,\dots,pₙ; q)$ is given by a multinomial coefficient whenever $q = p₀ + \dots + pₙ$,
  $$\#\mathbf{Shuf}(p₀,\dots,pₙ; p₀ + \dots + pₙ) = \frac{(p₀ + \dots + pₙ)!}{p₀!\cdot \dots \cdot pₙ!},$$
  and it is zero in any other case.

  Shufflings exhibit a particular property that motivates our next section: \emph{malleability}. Any shuffling of $p₀$, $p₁$ and $p₂$ can be factored \emph{uniquely} in two different forms: we can first shuffle $p₀$ and $p₁$ and then shuffle the result, $p₀ + p₁$, with $p₂$; or we can first shuffle $p₁$ and $p₂$, and then shuffle $p₀$ with the result, $p₁ + p₂$. For instance, any $1,2,3$-shuffling splits uniquely into a $2,3$-shuffling followed by a $1,5$-shuffling, but also uniquely into a $1,2$-shuffling followed by a $3,3$-shuffling.

  \begin{figure}[ht]
    \centering
    \includegraphics[scale=0.22]{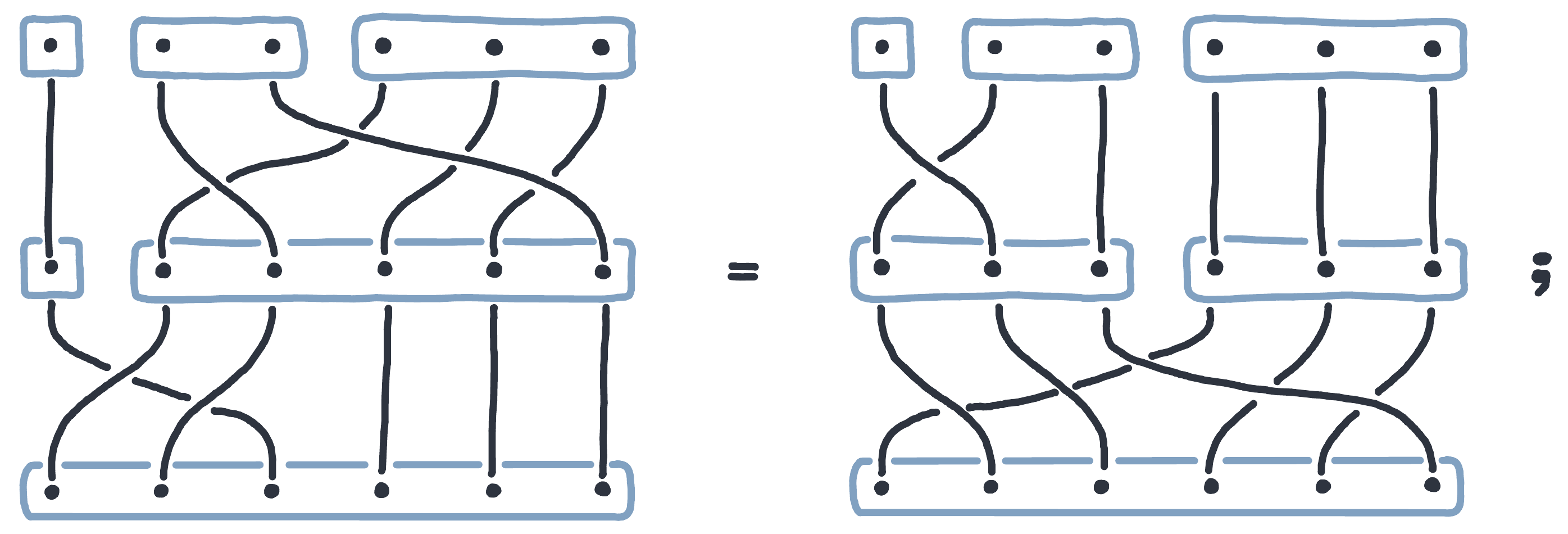}
    \caption{Two factorizations of the previous shuffling.}
  \end{figure}
  This is a global property: any shuffling can be uniquely factored into smallest shufflings, in whichever arrangement we pick. Any morphism of the \multicategory{} $\mathbf{Shuf}$ can be factored into any possible shape, uniquely. We say that the multicategories that satisfy this property are \emph{``malleable multicategories''}: shufflings form a \malleableMulticategory{}.
\end{example}

\section{Malleable Multicategories}
\label{sec-malleable-multicategories}

A \malleableMulticategory{} is a \multicategory{} where each morphism can be morphed uniquely into any possible shape. This means that there exist unique factorizations of each morphism into each one of the possible shapes. Formally, we will define \malleableMulticategories{} to have an invertible composition, up to the morphisms of some underlying category.

\begin{definition}
  \defining{linkUnderlyingCategoryOfAMulticategory}
  The unary morphisms of a \multicategory{} form a category \cite{shulman:catlog}.
  In other words, given a \multicategory{} $𝕄$, the underlying category, $𝕄^u$, has the same objects as the multicategory, $𝕄_{obj}^u = 𝕄_{obj}$, and morphisms defined from the unary multimorphisms of the \multicategory{}, $𝕄^u(X;Y) = 𝕄(X;Y)$. Composition and identities are exactly those of the \multicategory{}.
\end{definition}

\begin{remark}
  The multimorphisms of a \multicategory{} determine profunctors over the underlying category of the \multicategory.
  The underlying category acts on the multimorphisms by composition,
  \begin{align*}
    & (≻) \colon 𝕄^u(X;X') × 𝕄(Γ₁,X',Γ₂;Y) → 𝕄(Γ₁,X,Γ₂;Y), \\
    & (≺) \colon 𝕄(Γ;Y) × 𝕄^u(Y;Y') → 𝕄(Γ;Y').
  \end{align*}
  In any \multicategory{}, composition of multimorphisms is dinatural with respect to the underlying category.
  This follows from the associativity for \multicategories{},
  $$(f ≺ h) ⨾_{Xᵢ} g = (f ⨾_{X_i} h) ⨾_{X_i'} g = f ⨾_{X_i} (h ⨾_{X_i'} g) = f ⨾_{X_i} (h ≻ g).$$
  As a consequence, composition is well-defined under dinaturality. We define dinatural composition to be composition lifted to the equivalence classes of the dinaturality equivalence relation, which are written as a coend,
  $$\textstyle (⨾) \colon \left( ∫^{Y ∈ 𝕄} 𝕄(Γ;Y) × 𝕄(Γ₀,Y,Γ₁; Z)\right)  →  𝕄(Γ₀,Γ,Γ₁; Z).$$
\end{remark}

\begin{definition}
  \defining{linkcohmulticategory}{}\defining{linkcoherentmulticategory}{}\defining{linkMalleableMulticategory}{}
  A \emph{malleable multicategory} is a \multicategory{} where dinatural composition is invertible.
\end{definition}

\begin{proposition}
  \defining{linkmMult}
  \MalleableMulticategories{} with \multifunctors{} between them form a category, $\mMult$. This is a wide subcategory of the category of \multicategories{}.
\end{proposition}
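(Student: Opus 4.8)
The plan is to reduce the statement entirely to the preceding proposition, which already establishes that \Multicategories{} and \multifunctors{} form a category $\Mult$. By definition $\mMult$ has as objects the \malleableMulticategories{}, a subclass of the objects of $\Mult$, and as morphisms \emph{all} \multifunctors{} between such multicategories; so there is nothing new to build. Composition and identities will be those of $\Mult$, and the associativity and unit laws will be inherited verbatim. The only points that need a line of justification are that these operations do not leave $\mMult$.

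First I would record closure under identities: for a \malleableMulticategory{} $𝕄$, the identity \multifunctor{} $\id_𝕄 ፡ 𝕄 → 𝕄$ is a \multifunctor{} whose source and target are malleable, hence $\id_𝕄 ∈ \mMult$. Next, closure under composition: given \multifunctors{} $F ፡ 𝕄 → 𝕄'$ and $G ፡ 𝕄' → 𝕄''$ between \malleableMulticategories{}, the composite $F ⨾ G ፡ 𝕄 → 𝕄''$ is again a \multifunctor{} (composition in $\Mult$) with malleable source and target, so $F ⨾ G ∈ \mMult$. Associativity and unitality of composition then hold in $\mMult$ because they already hold in $\Mult$. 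This shows $\mMult$ is a category and that it embeds into $\Mult$ as a subcategory: every object of $\mMult$ is a multicategory, every morphism a multifunctor, and these are closed under the category operations; in fact it is the full subcategory of $\Mult$ spanned by the malleable multicategories.

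The one step worth a remark --- and the closest thing to an obstacle --- is explaining why $\mMult$ retains \emph{all} multifunctors between malleable multicategories, rather than only some ``malleability-preserving'' ones. The point is that malleability is a \emph{property}, not extra structure: by the preceding definition a \multicategory{} is malleable exactly when its dinatural composition map $∫^{Y ∈ 𝕄} 𝕄(Γ;Y) × 𝕄(Γ₀,Y,Γ₁;Z) → 𝕄(Γ₀,Γ,Γ₁;Z)$ is invertible, and such an inverse, when it exists, is uniquely determined and so carries no further choice. Hence there is no compatibility condition that a \multifunctor{} could even be asked to satisfy, and $\mMult$ is the subcategory of $\Mult$ that keeps, between the malleable multicategories it contains, every multifunctor; nothing deeper is required.
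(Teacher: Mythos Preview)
Your argument is correct, and the paper itself does not supply a proof for this proposition; it states the result and moves on to the following remark. So there is nothing in the paper to compare against, and your reduction to the already-established category $\Mult$ is exactly the expected justification.

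One point worth flagging: the paper's statement calls $\mMult$ a \emph{wide} subcategory of $\Mult$, but what you have actually (and correctly) proved is that it is a \emph{full} subcategory. A wide subcategory contains all objects of the ambient category and possibly fewer morphisms; here $\mMult$ contains only the malleable multicategories, not all of them, while retaining every multifunctor between those it does contain. Your closing paragraph gets this right. The discrepancy is almost certainly a slip in the paper's wording rather than a gap in your argument.
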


\begin{remark}
  If a multicategory is malleable, we can reconstruct it up to isomorphism from its binary and nullary maps.
  When defining a \malleableMulticategory{}, it is usually easier to provide its binary, unary and nullary maps, and deduce from those the rest of the structure. The situation is similar in monoidal categories: we do not need to provide the n-ary tensor in order to define a monoidal category, we only provide the binary and unary tensors.

  This suggests that we will really work with a \emph{biased} version of \malleableMulticategories{}, one that privileges the binary and nullary tensors over the others. Biased malleable multicategories are better known as \emph{promonoidal categories}.
\end{remark}

\subsection{Promonoidal Categories}

In the same sense that multicategories provide an algebra for the composition of multiple pieces into one,
promonoidal categories provide an algebra for the \emph{coherent composition} of multiple pieces into one. A category $ℂ$ contains sets of \emph{morphisms}, $ℂ(X ; Y)$. In the same way, a promonoidal category $𝕍$ contains sets of \emph{joints}, $𝕍(X_0 ◁ X_1; Y)$, \emph{morphisms}, $𝕍(X; Y)$, and \emph{units}, $𝕍(N; X)$, where $N$ is the virtual tensor unit.
Joints, $𝕍(X₀ ◁ X₁; Y)$, represent a way of joining objects of type $X₀$ and $X₁$ into an objects of type $Y$. Morphisms, $𝕍(X; Y)$, as in any category, are transformations of $X$ into $Y$. Units, $𝕍(N;Y)$, are the atomic pieces of type $Y$.

These compositions must now be coherent. For instance, imagine we want to join $X₀$, $X₁$ and $X₂$ into $Y$. Joining $X₀$ and $X₁$ into something $(•)$,  and then joining that something $(•)$ and $X₂$ into $Y$, \emph{should be doable in essentially the same ways} as joining $X₁$ and $X₂$ into something $(•)$, and then joining $X₀$ and that something $(•)$ into $Y$. Formally, we are saying that,
$$
∫^{U} 𝕍(X₀ ⊲ X₁ ; U) × 𝕍(U ⊲ X₂; Y)  ≅  ∫^{V} 𝕍(X₁ ⊲ X₂; V) × 𝕍(X₀ ⊲ V; Y),
$$
and, in fact, we usually just write $𝕍(X₀ ⊲ X₁ ⊲ X₂; Y)$ for the set of such decompositions, even when it is only defined up to isomorphism.

\begin{definition}
  \defining{linkpromonoidal}{}\label{def:promonoidal}
  \PromonoidalCategories{} are the 2-monoids of the monoidal bicategory of \profunctors{}, which is equivalent to the following definition. A \emph{promonoidal category} is a category $𝕍(• ; •)$ endowed with two \profunctors{}
  $$
    𝕍(• ⊲ • ; •) ፡ 𝕍 × 𝕍 → 𝕍, \mbox{ and } 𝕍(𝖭; •) ፡ 1 → 𝕍.
  $$
  Equivalently, these are functors
  $$
    𝕍(• ⊲ • ; •) ፡ 𝕍^{op} × 𝕍 × 𝕍 → \mathbf{Set}, \mbox{ and } 𝕍(𝖭; •) ፡ 𝕍^{op} → \mathbf{Set}.
  $$
  Moreover, \promonoidalCategories{} must be endowed with the following natural isomorphisms,
  \begin{align*}
    𝕍(X₀ ⊲ X₁; •) ⋄ 𝕍(• ⊲ X₂ ; Y)    &≅  𝕍(X₁ ⊲ X₂; •) ⋄ 𝕍(X₀ ⊲ • ; Y);  \\
    𝕍(𝖭; •) ⋄ 𝕍(• ⊲ X; Y)   &≅  𝕍(X; Y); \\
    𝕍(𝖭; •) ⋄ 𝕍(X ⊲ •; Y)   &≅  𝕍(X; Y);
  \end{align*}
  called $α, λ, ρ$, respectively, and asked to satisfy the pentagon and triangle coherence equations, 
  $α ⨾ α = (α ⋄ \id) ⨾ α ⨾ (\id ⋄ α)$, and $(ρ ⋄ \id) = α ⨾ (λ ⋄ \id)$.
\end{definition}

\begin{definition}[Promonoidal functor]
  \label{def:promonoidalfunctor}\defining{linkPromonoidalFunctor}{}
  Let $𝕍$ and $𝕎$ be two \promonoidalCategories{}. A \emph{promonoidal functor} $F﹕ 𝕍 → 𝕎$ is a functor between the two categories together with natural transformations
  \begin{align*}
      & F_{⊲}﹕ 𝕍(X₀ ⊲ X₁; Y) → 𝕎(FX₀ ⊲ FX₁; FY),\ \mbox{ and } 
      & F_{𝖭} ﹕ 𝕍(𝖭; X) → 𝕎(𝖭; Y),
  \end{align*}
  that satisfy $λ ⨾ F_{map} = (F_{◁} × F_{N}) ⨾ λ$, $ρ ⨾ F_{map} = (F_{◁} × F_{N}) ⨾ ρ$, and $α ⨾ (F_{◁} × F_{◁}) ⨾ i = (F_{◁} × F_{◁}) ⨾ i ⨾ α$. 
\end{definition}

\begin{proposition}
  \defining{linkProm}{}
  \PromonoidalCategories{} with \promonoidalFunctors{} between them form a category, $\Prom$.
\end{proposition}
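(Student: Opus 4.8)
The plan is the routine verification that the stated data assemble into a category, identical in form to the analogous (unproven) statements for $\Mult$ and $\mMult$ above. First I would produce identities: for a \promonoidalCategory{} $𝕍$, take the identity \promonoidalFunctor{} to be the identity functor on the underlying category together with the identity natural transformations on $𝕍(X₀ ⊲ X₁; Y)$ and on $𝕍(𝖭; X)$. With these choices the three coherence conditions of \Cref{def:promonoidalfunctor} become equations of the form $λ ⨾ \id = (\id × \id) ⨾ λ$, which hold on the nose; so every \promonoidalCategory{} carries an identity \promonoidalFunctor{}.

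Second I would define composition. For composable \promonoidalFunctors{} $F ﹕ 𝕍 → 𝕎$ and $G ﹕ 𝕎 → \mathbb{X}$, let the underlying functor of $G \circ F$ be the composite of the underlying functors, and put
\[𝕍(X₀ ⊲ X₁; Y) \xrightarrow{\,F_{⊲}\,} 𝕎(FX₀ ⊲ FX₁; FY) \xrightarrow{\,G_{⊲}\,} \mathbb{X}(GFX₀ ⊲ GFX₁; GFY)\]
for $(G \circ F)_{⊲}$, and similarly $(G \circ F)_{𝖭} = G_{𝖭} \circ F_{𝖭}$. Naturality of $F_{⊲}, F_{𝖭}, G_{⊲}, G_{𝖭}$ makes these composites natural, and the three coherence conditions for $G \circ F$ follow by pasting: each one is obtained by first invoking the corresponding condition for $F$ and then the one for $G$, sliding components past one another using the naturality of the structure isomorphisms $α, λ, ρ$ (\Cref{def:promonoidal}) of $𝕎$ and $\mathbb{X}$. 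This whiskering-and-pasting bookkeeping — tracking the directions of the natural transformations and the objects at which each component is applied — is the only part requiring care, and it is entirely routine.

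Finally, associativity and unitality of this composition are inherited componentwise: on underlying functors they are the category axioms of $\Cat$, and on the $(-)_{⊲}$ and $(-)_{𝖭}$ components they reduce to associativity and unitality for ordinary composition of functions between the relevant hom-sets. Hence \PromonoidalCategories{} and \promonoidalFunctors{} form a category $\Prom$.
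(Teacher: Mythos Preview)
Your proposal is correct and is exactly the routine verification one would expect. The paper states this proposition without proof, just as it does for the analogous propositions defining $\Mult$ and $\mMult$; your argument fills in precisely the standard details the paper omits.
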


\subsection{Promonoidal Categories are Malleable Multicategories} 
In this section, we show that the category of \promonoidalCategories{} is equivalent to that of \coherentMulticategories{}. In this sense, the study of \malleableMulticategories{} is the study of \promonoidalCategories{}.

\begin{definition}[Underlying malleable multicategory]
  \label{def:underlyingMalleableMulticategory}
  Let $𝕍$ be a \promonoidalCategory{}. There is a \coherentMulticategory{}, $𝕍^m$, that has the same objects but multimorphisms defined by the elements of the \promonoidalCategory{}. By induction, we define
  \begin{align*}
    & \textstyle{𝕍^m(X₀,X₁,Γ; Y) = ∫^V 𝕍(X₀ ⊲ X₁; V) × 𝕍^m(V,Γ;Y)}, \\
    & 𝕍^m(X;Y) = 𝕍(X;Y), \\
    & 𝕍^m(;Y) = 𝕍(𝖭; Y).
  \end{align*}
  In other words, the multimorphisms are elements of the left-biased tree reductions of the promonoidal category, seen as a 2-monoid.
  Dinatural composition is then defined to be the unique map relating two tree expressions in a 2-monoid, which exists uniquely by coherence,
  $$(\mathrm{coh}) \colon \left(\textstyle{∫^{X ∈ 𝕍}} 𝕍(Γ;X) × 𝕍^m(Γ₀,X,Γ₁; Y) \right) → 𝕍^m(Γ₀,Γ,Γ₁;X).$$
  Coherence maps are isomorphisms, and so dinatural composition is invertible, making the multicategory coherent. 
  By coherence for pseudomonoids, composition must satisfy associativity and unitality.
\end{definition}

\begin{proposition}
  \label{prop:equivalencePromonoidalMalleable}
  The category of \promonoidalCategories{} and the category of \malleableMulticategories{} are equivalent with the functor  $(•)^m \colon \Prom → \mMult$ induced by the construction of the underlying \malleableMulticategory{} of a \promonoidalCategory{}.
  See a polycategorical analogue at \Cref{prop:equivalenceProstarMalleable}.
\end{proposition}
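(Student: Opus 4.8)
The plan is to exhibit a quasi-inverse to the functor $(\bullet)^m \colon \Prom \to \mMult$ and check that the two composites are naturally isomorphic to identities. The construction of $(\bullet)^m$ is already given (\Cref{def:underlyingMalleableMulticategory}): it takes a \promonoidalCategory{} to its \malleableMulticategory{} of left-biased tree reductions. So the real work is the reverse direction: given a \malleableMulticategory{} $𝕄$, produce a \promonoidalCategory{} $𝕄^p$. The underlying category of $𝕄^p$ is the underlying category $𝕄^u$ of \Cref{def:underlyingCategoryOfAMulticategory}; the promonoidal joint profunctor is $𝕄^p(X_0 \mathbin{\triangleleft} X_1; Y) \bydef 𝕄(X_0, X_1; Y)$, which is functorial in all three arguments by pre- and post-composition with unary maps (this is exactly the profunctorial action noted in the remark after \Cref{def:underlyingCategoryOfAMulticategory}); and the promonoidal unit is $𝕄^p(𝖭; Y) \bydef 𝕄(\,; Y)$, the set of nullary maps, again functorial by post-composition.

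First I would establish the associativity isomorphism $α$. By \emph{malleability}, dinatural composition is invertible, so both
$∫^{U} 𝕄(X_0, X_1 ; U) × 𝕄(U, X_2; Y)$ and $∫^{V} 𝕄(X_1, X_2; V) × 𝕄(X_0, V; Y)$ are canonically isomorphic to a common set --- namely, the set of ternary multimorphisms $𝕄(X_0, X_1, X_2; Y)$ --- since each is the image of dinatural composition in one of the two bracketings, and invertibility of dinatural composition makes each a bijection onto $𝕄(X_0, X_1, X_2; Y)$. Composing one bijection with the inverse of the other yields the natural isomorphism $α$. The unitors $λ, ρ$ come the same way: $∫^{U} 𝕄(\,; U) × 𝕄(U, X; Y) \cong 𝕄(X; Y)$ because dinatural composition with a nullary map in the first slot is invertible, and its image is all of $𝕄(X;Y)$ by unitality ($\id_X$ in the relevant slot). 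Then the pentagon and triangle coherence equations for $(α, λ, ρ)$ follow from associativity and unitality of multicategorical composition: both sides of each coherence equation, transported back to sets of multimorphisms of fixed arity, are the identity, because multicategory composition is strictly associative and unital, so there is a \emph{unique} iso between any two bracketed reductions of the same multimorphism set (this is really the content of coherence for pseudomonoids, cf.\ the remark closing \Cref{def:underlyingMalleableMulticategory}).

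Next I would verify functoriality of $(\bullet)^p$ on \multifunctors{}: a \multifunctor{} $F \colon 𝕄 \to ℕ$ preserves identities and composition, hence restricts to an ordinary functor on underlying categories and sends binary maps to binary maps and nullary maps to nullary maps compatibly with the actions; the required equations $λ ⨾ F_{\mathrm{map}} = (F_{◁} × F_{𝖭}) ⨾ λ$ etc.\ in \Cref{def:promonoidalfunctor} hold because $F$ commutes with dinatural composition, and the isos $α, λ, ρ$ were \emph{defined} from dinatural composition. Finally I would produce the two natural isomorphisms witnessing the equivalence. For $𝕄 \in \mMult$, the counit-type comparison $(𝕄^p)^m \cong 𝕄$ is an isomorphism of \multicategories{} identity-on-objects: on $n$-ary maps it iterates the defining isomorphisms above, matching a left-biased tree reduction of $𝕄^p$ with an $n$-ary multimorphism of $𝕄$; naturality and multifunctoriality are routine. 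For $𝕍 \in \Prom$, the comparison $(𝕍^m)^p \cong 𝕍$ is the identity on underlying categories, and on joints/units it is an isomorphism because $(𝕍^m)^p(X_0 \mathbin{\triangleleft} X_1; Y) = 𝕍^m(X_0, X_1; Y) = ∫^V 𝕍(X_0 \mathbin{\triangleleft} X_1; V) × 𝕍(V; Y) \cong 𝕍(X_0 \mathbin{\triangleleft} X_1; Y)$ by \byYoneda{}, and $(𝕍^m)^p(𝖭; Y) = 𝕍^m(\,; Y) = 𝕍(𝖭; Y)$ on the nose; one then checks this iso respects $α, λ, ρ$, which again reduces to coherence. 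I expect the main obstacle to be bookkeeping in this last step: confirming that the Yoneda-reduction isomorphism for $(𝕍^m)^p \cong 𝕍$ is genuinely \emph{promonoidal}, i.e.\ intertwines the constructed associator/unitors with the original ones, rather than merely an isomorphism of the underlying profunctors --- this is where the pentagon/triangle axioms of $𝕍$ are actually used and where sign-of-bracketing errors are easy to make.
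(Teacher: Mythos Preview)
Your proposal is correct but takes a different route from the paper. The paper proves that $(\bullet)^m$ is an equivalence by showing it is \emph{fully faithful and essentially surjective}, rather than by exhibiting an explicit quasi-inverse and two natural isomorphisms. The essential-surjectivity step constructs the same object you call $𝕄^p$ (the paper writes $𝕄^{\flat}$) and checks $𝕄 \cong (𝕄^{\flat})^m$ via malleability, exactly as you do. The difference is in the other half: instead of building the comparison $(𝕍^m)^p \cong 𝕍$ and verifying it is a \emph{promonoidal} isomorphism (the bookkeeping you rightly flag as the delicate point), the paper argues fullness and faithfulness directly --- given a multifunctor $G \colon 𝕍^m \to 𝕎^m$, one reads off a promonoidal functor $G^{\star}$ from its action on nullary, unary, and binary multimorphisms, and then checks $(G^{\star})^m = G$ by induction on arity using malleability to decompose. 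This buys the paper exactly the saving you anticipate: once $(\bullet)^m$ is fully faithful, the naturality of both comparison isomorphisms and the promonoidality of the unit comparison come for free, so the pentagon/triangle check on $(𝕍^m)^p \cong 𝕍$ never has to be done by hand. Your approach, in exchange, makes the inverse functor completely explicit.
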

\begin{proof}
  First, let us show that a \promonoidalFunctor{}, $F ፡ 𝕍 → 𝕎$, induces a multifunctor, $F^m ፡ 𝕍^m → 𝕎^m$ between the underlying multicategories.
  On objects, we define it to be the same, $F^m_{obj} = F_{obj}$.
  On multimorphisms, we define the binary, unary and nullary cases using the promonoidal transformations: 
  $$F^u_0 = F_N,\mbox{ with }F^u_1 = F_{map}\mbox{ and }F_2^u = F_{⊲}.$$
  Then, we extend this definition to the n-ary case by induction, using $F^u_n = F_{⊲} × F_{n-1}^u$.
  We now verify that this assignment is functorial: the only remarkable case is that of checking that the inductive case preserves composition.
  \begin{align*}
    (F ⨾ G)^u_n & \overset{(i)}{=} (F ⨾ G)_{⊲} × (F ⨾ G)^u_{n-1} 
    \overset{(ii)}{=} (F_{⊲} ⨾ G_{⊲}) × (F^u_{n-1} ⨾ G^u_{n-1}) \\
                & \overset{(iii)}{=} (F_{⊲} × F^u_{n-1}) × (G_{⊲} × G^u_{n-1}) 
                \overset{(iv)}{=} F^u_n ⨾ G^u_n.
  \end{align*}
  This equation holds because \emph{(i)} of the inductive definition, \emph{(ii)} the composition of promonoidal functors and the inductive hypothesis, \emph{(iii)} the interchange law for functions, \emph{(iv)} and the inductive definition.

  We will now show that this is a fully faithful functor.
  It is hopefully clear that it is faithful because $F^u = G^u$ directly implies $F_{obj} = G_{obj}$, $F_{⊲} = G_{⊲}$, $F_N = G_N$ and $F_{map} = G_{map}$. Let us show that it is also full. Let $G \colon 𝕍_u → 𝕎_u$ be a multifunctor between \malleableMulticategories{}. 
  We will construct a \promonoidalFunctor{} $G^\star$ such that $G^{\star u} = G$. 
  We start by defining that $G^\star_N = G_0$, that $G^\star_{map} = G_1$ and that $G^\star_{⊲} = G_2$. Now, we need to prove that $(G^{★ u})ₙ = Gₙ$. This is definitionally true for binary, unary and nullary multimorphisms; then, by induction,
  $$Gₙ \overset{(i)}{=}
  \decomp{} ⨾ (G₂ × Gₙ) \overset{(ii)}{=}
  \decomp{} ⨾ (G₂^{\star u} × Gₙ^{\star u}) \overset{(iii)}{=}
  G^{\star u}_n.$$
  Here, we use \emph{(i)} malleability, \emph{(ii)} the induction hypothesis, and \emph{(iii)} the definition of the underlying multifunctor. We have shown that we have a fully faithful functor.

  Finally, we will show that $U ፡ \Prom → \mMult$ is essentially surjective. Given any malleable multicategory $𝕄$, we can define $M^{\flat}$ to be the \promonoidalCategory{} with the same objects and only binary, unary and nullary morphisms. We now note that there is an isomorphism, $𝕄 \cong 𝕄^{\flat u}$, that is the identity on objects. It is defined to use the invertible dinatural composition that exists by malleability,
  \begin{align*}
     𝕄_{\flat}^{u}(X₁,\dots,Xₙ;Y) & = \textstyle{∫^U} 𝕄(X₀,X₁;U) × 𝕄(U,X₂,\dots,Xₙ;Y) \\
   & \cong 𝕄(X₁, \dots, Xₙ; Y).
  \end{align*}
  This makes the functor fully faithful and essentially surjective, defining an equivalence of categories.
\end{proof}

\subsection{Bibliography}
What makes monoidal tensors universal? Products and coproducts have a universal property, but that is the exception and not the rule. Hermida's work \cite{hermida:representable} explains that tensors are universal because they represent a relevant multimap structure, a \emph{multicategory}. For instance, the \monoidalCategory{} of vector spaces with their tensor product represents functions linear in each variable: a linear function $A ⊗ B → C$ is the same as a multilinear function $A,B → C$.
Indeed, Lambek \cite{lambek:deductive} first introduced \multicategories{} as the underlying structure that unified Gentzen's sequents and multilinear maps.

\newpage
\clearpage{}%
\clearpage{}%
\section{The Splice-Contour Adjunction}
\label{sec:splice-contour-adjunction}

\subsection{Contour of a multicategory}
This last section characterizes the cofree \malleableMulticategory{} of \emph{spliced arrows}, which governs how incomplete morphisms can be nested.
Any \multicategory{} freely generates another category, its \emph{contour} \cite{mellies22:parsing}. This can be interpreted as the category that tracks the processes of decomposition that the \multicategory{} describes. The construction is particularly pleasant from the geometric point of view: it takes its name from the fact that it can be constructed by following the contour of the shape of the decomposition (\Cref{fig:contour-multimorphism}).

\begin{figure}[ht]
  \centering
  \includegraphics[scale=0.45]{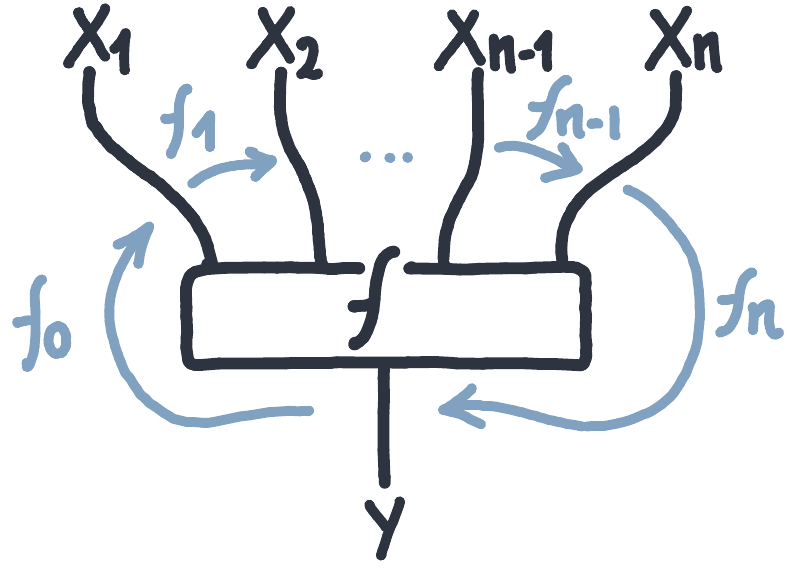}
  \caption{Contour of a multimorphism.}
  \label{fig:contour-multimorphism}
\end{figure}

\begin{definition}
  \label{defn:contour}
  \defining{linkContour}{}
  Let $𝕄$ be a \multicategory{}.
  Its contour, $\Contour(𝕄)$, is the category presented by 
  two polarized objects, $X^{∘}$ and $X^{∙}$, for each object $X ∈ 𝕄_{obj}$; 
  \begin{enumerate}
    \item for each multimorphism $f \in 𝕄(X₁,\dots,Xₙ; Y),$ the following generators,  
    \begin{align*} 
      f₀ ፡ Y^{∘} → X₁^{∘};
      f₁ ፡ X₁^{∙} → X₂^{∘};
      \quad … ; \quad
      f_{n-1} ፡ X_{n-1}^{∙} → Xₙ^{∘};
      f_{n} ፡ Xₙ^{∙} → Y^{∙};
    \end{align*}
    with only an $f₀ \colon Y^{∘} → Y^{∙}$ for the case $n = 0$;
    \item requiring contour to preserve identities, $(\id_{X})₀ = \id_{X^{∘}}$ and $(\id_X)₁ = \id_{X^{∙}}$;
    \item and requiring contour to preserve compositions, meaning that for each $f ∈ 𝕄(X₁,\dots,Xₙ; Yᵢ)$ and each $g ∈ 𝕄(Y₁,\dots,Yₘ; Z)$, the contour of their composition is defined by the following five cases
    $$(f ⨾_{X_i} g)_j = \left\{ 
      \begin{array}{ll}
        gⱼ & \mbox{ when } j < i, \\
        gᵢ ⨾ f₀ & \mbox{ when } j = i, \\
        f_{j-i} & \mbox{ when } i < j < i+n, \\
        f_n ⨾ g_{i+1} & \mbox{ when } j = i+n, \\
        g_{j-n+1} & \mbox{ when } i+n < j < n + m, \\
      \end{array}
    \right.$$
    with the special case $(f ⨾_{X_i} g)_i = gᵢ ⨾ f₀ ⨾ g_{i+1}$ whenever $n = 0$.
  \end{enumerate}
\end{definition}

A recent article by Melliès and Zeilberger \cite{mellies22:parsing} develops the notion of a context-free grammar over a category as a multicategorical functor to the multicategory of \emph{spliced arrows}. The multicategory of spliced arrows is a universal construction over a category that produces a multicategory of ``contexts'' over the category.

\subsection{Spliced Arrows}

The multicatgeory of spliced arrows is formed by arrows containing \emph{blanks} or \emph{holes} that could be filled to constuct a morphism. This \multicategory{} gives an algebraic theory of context for the category.

\begin{definition}[Spliced arrows]
  Let $ℂ$ be a category. The \multicategory{} of \emph{spliced arrows} has objects pairs of objects in $ℂ$, and the multimorphisms are given by sequences of arrows in $ℂ$ separated by $n$ gaps
  $$\Splice{ℂ} \left( 
    \biobj{X₁}{Y₁} , \dots , \biobj{Xₙ}{Yₙ} ;
    \biobj{X}{Y}
  \right) = ℂ(X;X₁) × \prod_{k=1}^{n-1} ℂ(Y_k, X_{k+1}) × ℂ(Yₙ;Y);$$
  which we write as $f₀ ⨾ □ ⨾ \dots ⨾ □ ⨾ fₙ$. That is, the sequence goes from $X$ to $Y$, with holes typed by $\{Xᵢ → Yᵢ\}_{0<i \leq n}$.
  Composition is defined by substitution
  \begin{align*}
    & (f₀ ⨾ □ ⨾ \dots ⨾ □ ⨾ fₙ) ≻_{i} (g₀ ⨾ □ ⨾ \dots ⨾ □ ⨾ gₘ) = \\
    & \qquad (g₀ ⨾ □ ⨾ \dots ⨾ gᵢ ⨾ f₀ ⨾ □ ⨾ \dots ⨾ □ ⨾ fₙ ⨾ g_{i+1} ⨾ \dots ⨾ □ ⨾ gₘ),
  \end{align*}
  and the identity morphism in $\left(\biobj{X}{Y} \right)$ is $id_X ⨾ □ ⨾ id_Y$.
\end{definition}

\begin{proposition}
  The \multicategory{} of spliced arrows is a \coherentMulticategory{}.
\end{proposition}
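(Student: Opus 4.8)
The plan is to show that the multicategory $\Splice{ℂ}$ of spliced arrows is malleable, i.e.\ that its dinatural composition operation is invertible. Recall from the earlier discussion that it suffices to exhibit, for each way of decomposing a multimorphism into a binary composite, an inverse to the composition map, and that dinatural composition is already well-defined on coend-classes. So I would begin by writing down explicitly the dinatural composition map
\[
(⨾) \colon \left( ∫^{\binobj{M}{N}} \Splice{ℂ}\!\left(Γ; \biobj{M}{N}\right) × \Splice{ℂ}\!\left(Γ_0, \biobj{M}{N}, Γ_1; \biobj{X}{Y}\right)\right) → \Splice{ℂ}\!\left(Γ_0, Γ, Γ_1; \biobj{X}{Y}\right),
\]
where I use the shorthand $\binobj{M}{N}$ for the pair-object; then I would define a candidate inverse by ``cutting'' a spliced arrow at the two boundary points that the substitution introduced.

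Concretely: an element of $\Splice{ℂ}(Γ_0,Γ,Γ_1;\biobj{X}{Y})$ with $Γ_0$ of length $a$, $Γ$ of length $b$, $Γ_1$ of length $c$ is a string $h_0 ⨾ □ ⨾ \dots ⨾ □ ⨾ h_{a+b+c}$ of $a+b+c+1$ morphisms of $ℂ$ interleaved with $a+b+c$ holes. The decomposition operation reads off the outer piece by taking $h_0,\dots,h_{a}$, then a fresh hole, then $h_{a+b},\dots,h_{a+b+c}$ (this is the element of $\Splice{ℂ}(Γ_0,\binobj{M}{N},Γ_1;\binobj{X}{Y})$ with $M$ the target of $h_a$'s codomain-side and $N$ the source of $h_{a+b}$'s domain-side), and the inner piece by taking $h_{a+1},\dots,h_{a+b-1}$ bracketed by identities $\id ⨾ □ ⨾ h_{a+1} ⨾ \dots ⨾ □ ⨾ h_{a+b-1} ⨾ □ ⨾ \id$; here the new boundary objects $M = Y_a$ (the codomain of the $a$-th hole's output) and $N = X_{a+b}$ arise exactly at the cut points. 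One then checks that composing inner into outer at the chosen slot recovers the original string, and conversely that decomposing a composite and recomposing is the identity \emph{modulo dinaturality} — the bookkeeping morphism $h_a$ (resp.\ $h_{a+b}$) can be pushed either onto the outer piece's last/first component or onto the inner piece's identity cap, and these two choices are precisely identified in the coend. This verifies invertibility of dinatural composition, hence malleability.

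The steps in order: (1) fix notation for spliced arrows and pair-objects and spell out the composition-by-substitution formula restated for a single cut; (2) write the dinatural composition map at a binary-composite shape, using the coend over the cut object; (3) define the candidate inverse by the cutting-at-boundary-points recipe above; (4) check the two round-trip equations, being careful in the second one to observe that the residual morphism at the cut is exactly a dinaturality move, so the two sides agree in the coend; (5) conclude that dinatural composition is invertible, so $\Splice{ℂ}$ is a \coherentMulticategory{} by the definition given earlier. Alternatively, and perhaps more cleanly, I could invoke \Cref{prop:equivalencePromonoidalMalleable}: exhibit a \promonoidalCategory{} whose underlying \malleableMulticategory{} is $\Splice{ℂ}$, namely the one with joints $\Splice{ℂ}(\binobj{X_0}{Y_0} ⊲ \binobj{X_1}{Y_1}; \binobj{X}{Y}) = ℂ(X;X_0) × ℂ(Y_0;X_1) × ℂ(Y_1;Y)$ and units $ℂ(X;Y)$, and check the promonoidal coherence isomorphisms directly; malleability is then automatic.

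The main obstacle I expect is purely organizational rather than conceptual: getting the index arithmetic for ``which hole is hole number $i$, and where exactly do the new boundary objects sit'' precisely right, and — in the one genuinely non-trivial check — making the dinaturality identification at the cut points fully explicit rather than hand-waved, since that is the whole content of ``invertible up to the underlying category''. If I take the promonoidal route instead, the analogous obstacle is verifying the pentagon/triangle coherence equations for the three-fold joint, which again amounts to a short coend computation using \byYoneda reductions but must be done carefully. Either way the geometric picture (cutting the contour) makes the correct formulas essentially forced, so I would lead with that intuition and then discharge the bookkeeping.
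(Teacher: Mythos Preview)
Your proposal is correct and matches the paper's proof essentially step for step: the paper also exhibits an explicit decomposition operator $\decomp_i^n$ that cuts the spliced arrow at the two boundary points, places identities on the inner piece, and then verifies both round-trips, the nontrivial one using exactly the dinaturality move you describe (sliding the boundary morphisms $h_a$, $h_{a+b}$ between the inner and outer pieces). Your alternative promonoidal route is also valid and is in fact how the paper packages the result a few paragraphs later, but the direct argument you outline first is the one the paper gives for this proposition.
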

\begin{proof}
  We will show that dinatural composition is invertible by exhibiting an inverse to the composition operation, up to dinaturality. Consider a spliced arrow $h₀ ⨾ □ \cdots □ ⨾ h_{n+m-1}$ with $(n+m+1)$ holes; we can decompose $n$ of its holes at position $i$ with the following operation.
  \begin{align*}
    & \decomp_i^n(h₀ ⨾ □ \cdots □ ⨾ h_{n+m-1})  \\
    & \qquad = (h₀ ⨾ □ \cdots  hᵢ ⨾ □ ⨾ h_{i+n} \cdots  □ ⨾ h_{n+m-1})\ |\ (\id ⨾ □ ⨾ h_{i+1} \cdots h_{i+n-1} ⨾ □ ⨾ \id) \\
    & \qquad = (h₀ ⨾ □ \cdots  \id ⨾ □ ⨾ \id \cdots  □ ⨾ h_{n+m-1})\ |\ (hᵢ ⨾ □ ⨾ h_{i+1} \cdots h_{i+n-1} ⨾ □ ⨾ h_{i+n}).
  \end{align*}
  This operation is an inverse to dinatural composition. It follows by construction that $(≻)_i ⨾ \decomp_i = \id$, and we now check that $\decomp_i ⨾ (≻)_i$ is an identity up to dinaturality. Let $f₀ ⨾ □  \cdots □ ⨾ fₙ$ and $g₀ ⨾ □  \cdots □ ⨾ gₘ$ be two spliced arrows,
  \begin{align*}
    & \decomp_i^n((f₀ ⨾ □  \dots □ ⨾ fₙ) ≻_i (g₀ ⨾ □  \dots □ ⨾ gₘ)) \\
    & \ =\ \decomp_i^n(g₀ ⨾ □ \dots ⨾ gᵢ ⨾ f₀ ⨾ □ ⨾ \dots □ ⨾ fₙ ⨾ g_{i+1} ⨾ □ \cdots □ ⨾ gₘ)) \\
    & \ =\ (g₀ ⨾ \dots ⨾ gᵢ ⨾ f₀ ⨾ □ ⨾ fₙ ⨾ g_{i+1} ⨾ \dots ⨾ gₘ)\ |\ (\id ⨾ □ ⨾ f₁ ⨾ \dots ⨾ f_{n-1} ⨾ □ ⨾ \id) \\
    & \ =\ (g₀ ⨾ \dots ⨾ gᵢ ⨾ □ ⨾ g_{i+1} ⨾ \dots ⨾ gₘ)\ |\ (f₀ ⨾ □ ⨾ f₁ ⨾ \dots ⨾ f_{n-1} ⨾ □ ⨾ fₙ).
  \end{align*}
  We have shown that dinatural composition is invertible.
\end{proof}

\subsection{Splice-Contour Adjunction}
A first explicit account of splice-contour adjunction is due to Melliès and Zeilberger \cite{mellies22:parsing}. In later  joint work with Earnshaw and Hefford \cite{produoidal23}, we showed that this adjunction produced not only \multicategories{} but \coherentMulticategories{}.

\begin{theorem}
  \label{th:splice-right-contour}
  Splice is right adjoint to contour.
\end{theorem}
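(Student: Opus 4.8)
The plan is to exhibit the adjunction $\mathsf{Contour} \dashv \mathsf{Splice}$ by constructing the unit and counit and checking the triangle identities, or equivalently by exhibiting a natural bijection
\[
  \mathbf{Cat}\bigl(\mathsf{Contour}(𝕄), ℂ\bigr) \;\cong\; \mathbf{mMult}\bigl(𝕄, \mathsf{Splice}(ℂ)\bigr)
\]
for every \malleableMulticategory{} $𝕄$ and every category $ℂ$. I would take the second route, since $\mathsf{Contour}$ is defined by a presentation (generators and relations) and functors out of a presented category are exactly structure-preserving assignments on generators. First I would unfold what a functor $G : \mathsf{Contour}(𝕄) \to ℂ$ amounts to: a choice of two objects $G(X^{∘}), G(X^{∙}) \in ℂ$ for each $X \in 𝕄_{obj}$, and for each multimorphism $f \in 𝕄(X_1,\dots,X_n;Y)$ a sequence of $ℂ$-morphisms $G(f_0),\dots,G(f_n)$ with the boundary types dictated by \Cref{defn:contour}, subject to the relations that $G$ preserve multicategorical identities and multicategorical composition.

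Next I would unfold what a multifunctor $H : 𝕄 \to \mathsf{Splice}(ℂ)$ amounts to: an assignment on objects $X \mapsto \pbiobj{A_X}{B_X}$, i.e.\ a pair of $ℂ$-objects, and for each $f \in 𝕄(X_1,\dots,X_n;Y)$ a spliced arrow $H(f) = (h_0 ⨾ \square ⨾ \dots ⨾ \square ⨾ h_n) \in \mathsf{Splice}(ℂ)\pbiobj{A_{X_1}}{B_{X_1}},\dots,\pbiobj{A_{X_n}}{B_{X_n}};\pbiobj{A_Y}{B_Y}$, i.e.\ exactly a sequence $h_0 : A_Y \to A_{X_1}$, $h_k : B_{X_k} \to A_{X_{k+1}}$, $h_n : B_{X_n} \to B_Y$, preserving multicategorical identities and composition. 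The key observation is that these are visibly the same data: set $G(X^{∘}) = A_X$, $G(X^{∙}) = B_X$, and match $G(f_j)$ with $h_j$. The boundary-type bookkeeping in the two definitions is designed to agree — the contour generators $f_0 : Y^{∘} \to X_1^{∘}$, $f_j : X_j^{∙} \to X_{j+1}^{∘}$, $f_n : X_n^{∙} \to Y^{∙}$ are precisely the component types of a spliced arrow. I would then verify that the identity-preservation and composition-preservation conditions on $G$ coincide with those on $H$: the five-case formula for $(f ⨾_{X_i} g)_j$ in \Cref{defn:contour} is literally the substitution formula defining composition in $\mathsf{Splice}(ℂ)$, read off component by component, so a $G$ respecting contour-composition is exactly an $H$ respecting splice-composition. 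This gives the bijection on objects of the hom-sets; naturality in $ℂ$ (post-composition with a functor $ℂ \to 𝔻$) and in $𝕄$ (pre-composition with a multifunctor $𝕄' \to 𝕄$) is then a routine check that both sides transform by the same relabelling.

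\textbf{Where the work is.} Two points deserve care. First, one must confirm that $\mathsf{Splice}(ℂ)$ is genuinely a \malleableMulticategory{} so that the statement typechecks against the category $\mathbf{mMult}$ — but this is exactly the \Cref{prop:equivalencePromonoidalMalleable}-adjacent proposition proved just above, that the multicategory of spliced arrows is a \coherentMulticategory{}, so I may cite it. Second, and this is the genuine subtlety: the hom-set on the multicategory side is $\mathbf{mMult}(𝕄, \mathsf{Splice}(ℂ))$, and multifunctors into a \malleableMulticategory{} interact with the \emph{dinatural} composition; strictly, a multifunctor only needs to preserve composition on the nose, but because composition in $\mathsf{Splice}(ℂ)$ is only associative up to dinaturality, I expect the main obstacle to be checking that the assignment $G \mapsto H$ is well-defined and its inverse too — i.e.\ that the relations imposed in the presentation of $\mathsf{Contour}(𝕄)$ are neither too few (so that $H$ defined from $G$ really is a multifunctor, preserving composition up to the required equality) nor too many (so that every multifunctor $H$ arises from a \emph{bona fide} functor $G$, with the contour relations all being consequences). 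Concretely I would grind through the case $n=0$ (nullary multimorphisms, where contour contributes a single generator $f_0 : Y^{∘} \to Y^{∙}$ and the composition formula has its special sub-case $(f ⨾_{X_i} g)_i = g_i ⨾ f_0 ⨾ g_{i+1}$) separately, as that is where the index arithmetic is most error-prone, and then do the general substitution case. Once well-definedness in both directions is established, the triangle identities / naturality are immediate from the explicit description, completing the proof that $\mathsf{Splice}$ is right adjoint to $\mathsf{Contour}$.
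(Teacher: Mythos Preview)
Your proposal is correct and takes essentially the same approach as the paper: the paper constructs the unit $T : 𝕄 \to \mathsf{Splice}(\mathsf{Contour}(𝕄))$ sending $X \mapsto \pbiobj{X^{∘}}{X^{∙}}$ and $f \mapsto (f_0 ⨾ \square ⨾ \dots ⨾ \square ⨾ f_n)$, then shows its universality by exactly the generator-and-relation matching you describe, verifying that the five-case contour composition formula coincides with splice substitution. Your worry about dinaturality is unnecessary, though --- multifunctors preserve multicategorical composition strictly, and malleability is merely an additional property of $\mathsf{Splice}(ℂ)$ that does not alter what it means to be a multifunctor into it.
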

\begin{proof}
  Let $𝕄$ be a \multicategory{}. We will show that any multifunctor to a spliced arrow multicategory, $F ፡ 𝕄 → \Splice{(ℂ)}$, factors through a canonical multifunctor $T ፡ 𝕄 → \Splice(\Contour(𝕄))$ that is followed by a unique functor $F^\sharp ፡ \Contour(𝕄) → ℂ$.
  First, we construct  $T ፡ 𝕄 → \Splice(\Contour(𝕄))$, the multifunctor that sends any object $X$ to the pair of polarized objects $\left( \biobj{X^{∘}}{X^{•}} \right)$; and that sends any multimap $f ∈ 𝕄(X₀,\dots,Xₙ;Y)$ to the spliced arrow 
  $$f₀ ⨾ □ ⨾ \dots ⨾ □ ⨾ fₙ ∈ \Splice(ℂ)\left(\biobj{X_0^{∘}}{X_0^{∙}}, \dots, \biobj{X_n^{∘}}{X_n^{∙}} ; \biobj{Y^{∘}_{\phantom{0}}}{Y^{∙}_{\phantom{0}}}\right).$$
  We check now that $T$ is indeed a multifunctor: by construction, it sends $T(f ⨾_{X_i} g) = f ≻_i g$ and it sends $T(\id_X) = (\id_{X^{∘}} ⨾ □ ⨾ \id_{X^{∙}})$.
  
  We will now show that there exists a unique functor $F^\sharp ፡ \Contour{(𝕄)} → ℂ$ factoring the multifunctor $F$, such that $F = T ⨾ \Splice{(F^\sharp)}$. The contour is a category presented by some generators and equations: to define a functor from it, it suffices to define it on the generators and show that it preserves the equations of the presentation. We do so next.
  Consider the objects, for each $X ∈ 𝕄_{obj}$, assume $F(X) = \left(\biobj{A}{B}\right)$. We must have that 
    $$\Splice{(F^\sharp)}(T(X)) = \Splice{(F^\sharp)}\left(\biobj{X^{∘}}{X^{∙}}\right) = \left(\biobj{F^\sharp X^{∘}}{F^\sharp X^{∙}}\right) = \left(\biobj{A}{B}\right),$$
    which forces $F^\sharp(X^{∘}) = A$ and $F^\sharp(X^{∙}) = B$. Consider now the morphisms, for each $f \in 𝕄(X₀,\dots,Xₙ;Y)$. We must have that
    \begin{align*}
      \Splice{(F^\sharp)}(T(f))  
       = \Splice{(F^\sharp)}(f₀ ⨾ \dots ⨾ fₙ) 
       = F^\sharp f₀ ⨾ □ ⨾ \dots ⨾ □ ⨾ F^\sharp fₙ,
    \end{align*}
    which forces $F^\sharp(f_i) = F(f)_i$. This uniquely determines the value of $F^\sharp$ in all of the morphisms of the contour.
  We must finally check that that $F^\sharp$ satisfies the equations. We first notice that, by functoriality of $F$, we have
  $$F^\sharp((f ⨾_{X_i} g)_j) = F(f ⨾_{X_i} g)_j = (F(f) ⨾_{FX_i} F(g))_j,$$
  and, using this and the previous $F^\sharp(f_i) = F(f)_i$, we simply check the five cases of contour composition,
  \begin{align*}
    F^\sharp((f ⨾_{X_i} g)_j) = 
    \left\{ \begin{array}{l}
      F(g)_j \\
      F(g)_i ⨾ F(f)_0 \\
      F(f)_{j-i} \\
      F(f)_n ⨾ F(g)_{i+1} \\
      F(g_{j-n+1})
    \end{array}
    \right\} = 
    \left\{ \begin{array}{l}
      F^\sharp(gⱼ) \\
      F^\sharp(gᵢ) ⨾ F^\sharp(f₀) \\
      F^\sharp(f)_{j-i} \\
      F^\sharp(f)ₙ ⨾ F^\sharp(g)_{i+1} \\
      F^\sharp(g)_{j-n+1}
    \end{array}
    \right\}.
  \end{align*}
  This proves the existence of $F^\sharp$, but it also proves that it is the unique possible functor such that $F = T ⨾ \Splice(F^\sharp)$.
\end{proof}

\subsection{Promonoidal Splice-Contour}

We have commented on how any \coherentMulticategory{} induces a \promonoidalCategory{}.
The \coherentMulticategory{} of spliced arrows induces a \promonoidalCategory{} of spliced arrows.
This \promonoidalCategory{} is precisely the one that arises from the adjunction between any category and its opposite category in the monoidal bicategory of profunctors.

\begin{remark}
  The \promonoidal{} splice could be seen as a particular case of the more general multicategorical splice. 
  However, we will see that it is usually better behaved: technically, it is the 2-monoid arising from the 2-duality of a category with its opposite category, $ℂ ⊣ ℂ^{op}$, in the monoidal bicategory of profunctors. 
  We will not use this particular fact too much, but it will inspire its generalization. In the next chapter, we repeat a \emph{monoidal} version of the splice-contour adjunction that, by default, uses only the malleable version.
\end{remark}

\begin{proposition}[Promonoidal spliced arrows] \label{prop:promSplicedArrows}
  \defining{linkSplice}{}
  Let $ℂ$ be a category. The \promonoidalCategory{} of \emph{spliced arrows}, $\Splice{ℂ}$, has as objects pairs of objects of $ℂ$. It uses the following profunctors to define morphisms, splits and units.
  \begin{enumerate}
    \item $\Splice(ℂ) \left(\biobj{X}{Y} ; \biobj{A}{B} \right) = ℂ(A;X) × ℂ(Y,B);$ 
    \item $\Splice(ℂ)( \biobj{X}{Y} ◁ \biobj{X'}{Y'} ; \biobj{A}{B}) = ℂ(A;X) × ℂ(Y;X') × ℂ(Y';B);$
    \item $\Splice(ℂ)(N ; \biobj{A}{B}) = ℂ(A;B)$.
  \end{enumerate}
\end{proposition}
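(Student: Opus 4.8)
The goal is to prove Proposition~\ref{prop:promSplicedArrows}: that the displayed profunctors equip $\Splice{ℂ}$ with the structure of a \promonoidalCategory{}. The plan is to present $\Splice{ℂ}$ as the ``pair of pants'' pseudomonoid attached to the dual pair $ℂ \dashv ℂ^{op}$ in the compact closed monoidal bicategory of \profunctors{}; concretely this amounts to exhibiting the structure isomorphisms $α$, $λ$, $ρ$ by (pointed) coend calculus and then checking the pentagon and triangle. First I would record that $\Splice{ℂ}$ is a category at all: a morphism $\pbiobj{X}{Y} → \pbiobj{A}{B}$ is a pair $(A → X,\ Y → B)$ composed coordinatewise in $ℂ$, so the underlying category is exactly $ℂ^{op} × ℂ$, and each of the three displayed assignments is then visibly functorial --- contravariant in every ``hole'' $\pbiobj{X_i}{Y_i}$ by precomposition in $ℂ$ and covariant in $\pbiobj{A}{B}$ by postcomposition --- hence a genuine \profunctor{} $\Splice{ℂ} × \Splice{ℂ} \to \Splice{ℂ}$, respectively $1 \to \Splice{ℂ}$. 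This step is routine.

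For the associator I would expand $\Splice{ℂ}(\pbiobj{X_0}{Y_0} ◁ \pbiobj{X_1}{Y_1}; •) ⋄ \Splice{ℂ}(• ◁ \pbiobj{X_2}{Y_2}; \pbiobj{A}{B})$ as a coend over a pair $\pbiobj{U}{V} ∈ \Splice{ℂ}$, obtaining $∫^{U,V} ℂ(A;U) × ℂ(U;X_0) × ℂ(Y_0;X_1) × ℂ(Y_1;V) × ℂ(V;X_2) × ℂ(Y_2;B)$, and apply two Yoneda reductions, $∫^U ℂ(A;U) × ℂ(U;X_0) ≅ ℂ(A;X_0)$ and $∫^V ℂ(Y_1;V) × ℂ(V;X_2) ≅ ℂ(Y_1;X_2)$, landing on the iterated split $ℂ(A;X_0) × ℂ(Y_0;X_1) × ℂ(Y_1;X_2) × ℂ(Y_2;B)$. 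The right-associated composite reduces to the very same set by the mirror-image calculation, and $α$ is the roundtrip of the two isomorphisms; all of this is natural in the four object arguments because Yoneda reduction is. The unitors are the same game with $\Splice{ℂ}(N;\pbiobj{U}{V}) = ℂ(U;V)$ acting as a Dirac delta that telescopes a chain of homs: for instance $∫^{U,V} ℂ(U;V) × ℂ(A;U) × ℂ(V;X) × ℂ(Y;B)$ collapses --- reduce over $V$, then over $U$ --- to $ℂ(A;X) × ℂ(Y;B)$, which gives $λ$, and symmetrically $ρ$. Carrying \emph{pointed} coends through these derivations makes the induced maps explicit, which is helpful for the final step.

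The step that needs the most care is the coherence axioms. I would argue that, because $α$, $λ$, $ρ$ are assembled entirely out of associativity and unitality of composition in $ℂ$ together with the (co)unit of the hom-profunctor, both the pentagon $α ⨾ α = (α ⋄ \id) ⨾ α ⨾ (\id ⋄ α)$ and the triangle $(ρ ⋄ \id) = α ⨾ (λ ⋄ \id)$, after all Yoneda reductions are run, become the identity on the appropriate iterated split $ℂ(A;X_0) × ℂ(Y_0;X_1) × \cdots × ℂ(Y_n;B)$, hence hold. The cleanest packaging --- and the real reason there is nothing deep here --- is the abstract one: a dual pair in a monoidal bicategory makes its ``pair of pants'' into a pseudomonoid, and the monoidal bicategory of \profunctors{} is compact closed with $ℂ^{op}$ dual to $ℂ$, so $\Splice{ℂ} = ℂ^{op} ⊗ ℂ$ is automatically a \promonoidalCategory{}; the coend computations above are just the explicit shadow of that fact. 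I expect the only genuine difficulty to be bookkeeping --- keeping the variances and the dinatural identifications straight --- rather than any real mathematical obstacle.
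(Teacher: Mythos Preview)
Your proposal is correct and matches the paper's own justification. The paper does not write out an explicit proof for this proposition; it simply remarks that the promonoidal splice is a particular case of the multicategorical splice already shown to be malleable, and that ``technically, it is the 2-monoid arising from the 2-duality of a category with its opposite category, $ℂ ⊣ ℂ^{op}$, in the monoidal bicategory of profunctors'' --- exactly the pair-of-pants argument you invoke, and your explicit Yoneda-reduction computations are the concrete shadow of it that the paper leaves implicit.
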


In other words, morphisms are \emph{pairs of arrows} $f \colon A \to X$ and $g \colon Y \to B$. 
Splits are \emph{triples of arrows} $f \colon A \to X$, $g \colon Y \to X'$ and $h \colon Y' \to B$. Units are simply \emph{arrows} $f \colon A \to B$. We use the following notation for
\begin{enumerate} 
  \item morphisms, $(f ⨾ □ ⨾ g) ፡ \pbiobj{X}{Y} → \pbiobj{A}{B}$;
  \item joins, $(f ⨾ □ ⨾ g ⨾ □ ⨾ h) ፡ \pbiobj{X}{Y} ⊲ \pbiobj{X'}{Y'} → \pbiobj{A}{B}$; 
  \item and units, $f ፡ 𝖭 → \pbiobj{A}{B}$.
\end{enumerate}

\begin{definition}
  The \emph{contour} of a \promonoidalCategory{} $ℙ$ is the category $\Contour{(ℙ)}$ presented by two polarized objects, $X^∘$ and $X^•$, for each object $X ∈ ℙ_{obj}$; and generated by the arrows that arise from \emph{contouring} the decompositions of the \promonoidalCategory{}.

  \begin{figure}[ht]
    \centering
    \includegraphics[scale=0.35]{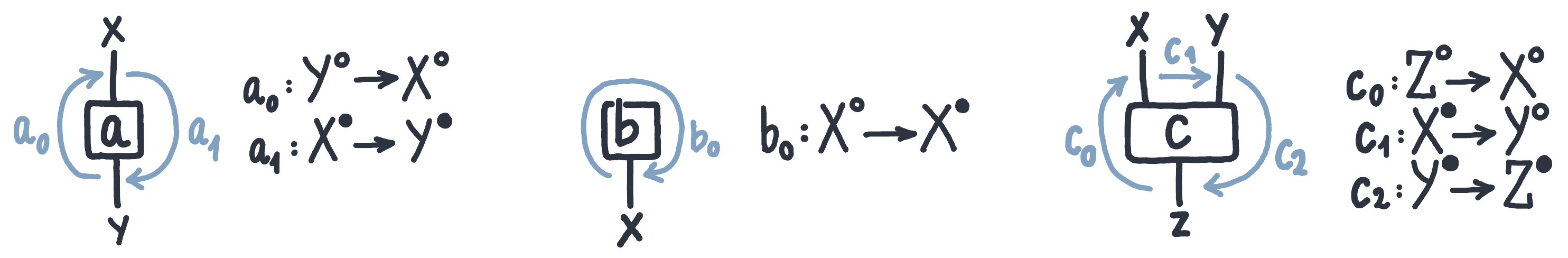}
    \caption{Contour of a promonoidal category.}
    \label{fig:contour-promonoidal}    
  \end{figure}
  
  Specifically, the contour is the category presented by the following generators, as depicted in \Cref{fig:contour-promonoidal}:
  \begin{enumerate}
    \item $a₀ ፡ Y^{∘} → X^{∘}$ and $a₁ ፡ X^{•} → Y^{•}$, for each morphism $a ∈ ℙ(X;Y)$;
    \item $b₀ ፡ X^{∘} → X^{•}$, for each unit $b ∈ ℙ(N;X)$;
    \item a triple of generators, $c₀ ፡ Z^{∘} → X^{∘}$, $c₁ ፡ X^{•} → Y^{∘}$ and $c₂ ፡ Y^{•} → Z^{•}$, for each split $c ∈ 
    ℙ(X ⊲ Y; Z)$.
  \end{enumerate}
  We impose several equations over these generators, all depicted in \Cref{fig:contour-promonoidal-equations}. The equations come from the decomposition of categories and they form the theory of contour.

  \begin{figure}[ht]
    \centering
    \includegraphics[scale=0.35]{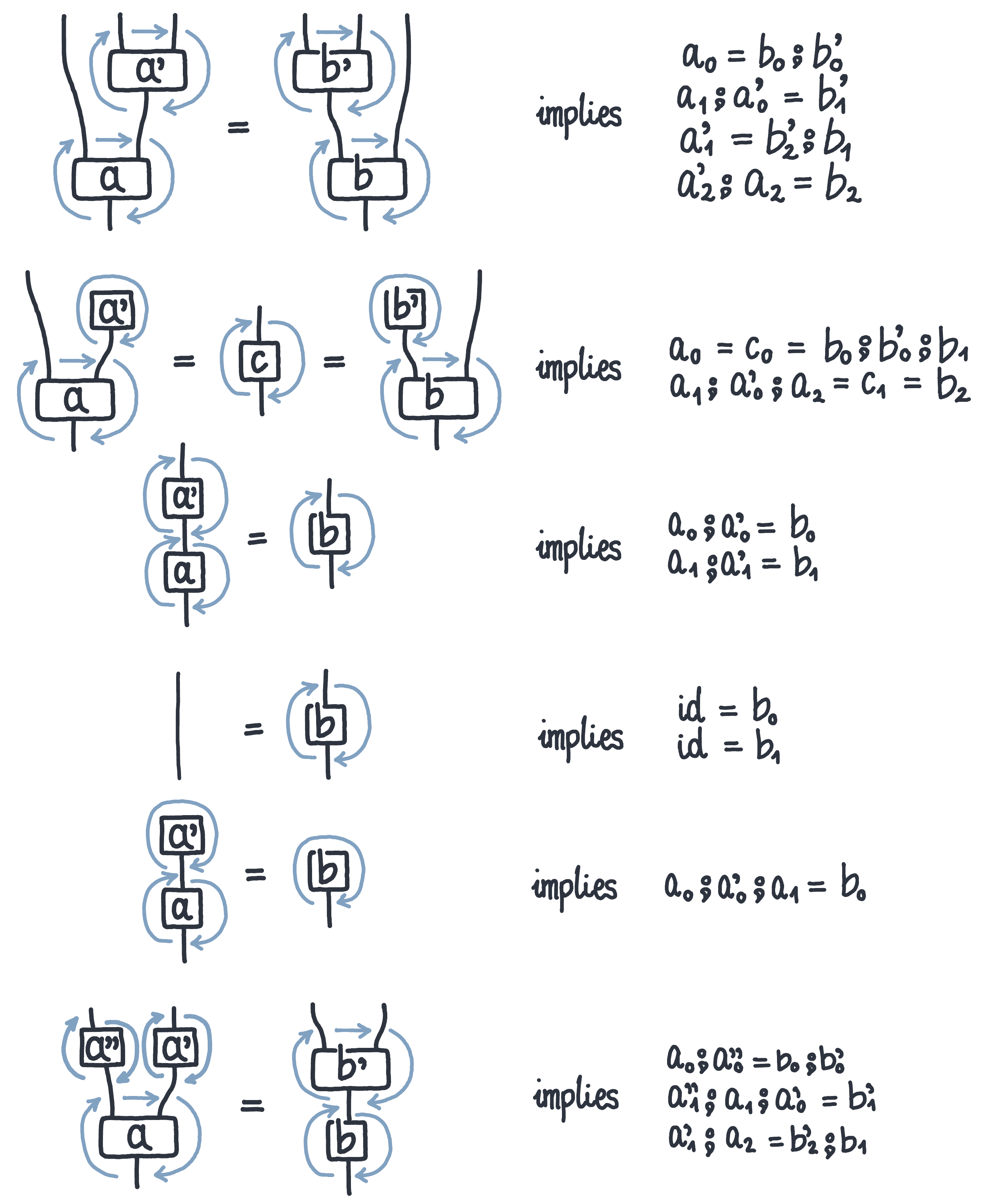}
    \caption{Theory of contour.}
    \label{fig:contour-promonoidal-equations}
  \end{figure}
  
\end{definition}

\newpage
\clearpage{}%

\chapter{Monoidal Context Theory}
\label{chapter:monoidal-context-theory}

\section*{Monoidal Context Theory}
This section develops a theory of context, or incomplete parts, for arbitrary \monoidalCategories{}.
In the same way that the theory of context for categories required \malleableMulticategories{} or \promonoidalCategories{}; the theory of context for \monoidalCategories{} will require \duoidalCategories{} and \produoidalCategories{}.

\DuoidalCategories{} combine a sequential tensor $(⊲)$ with a parallel tensor $(⊗)$; and it is well known that they can be used for process description \cite{garner16,shapiro22:duoidal}. However, lifting context theory to \monoidalCategories{} will come with a few technical surprises that we develop; the most important one is a normalization monad in the category of \produoidalCategories{}: this becomes a crucial step in creating a theory of \monoidalContext{} that allows incomplete morphisms to take any shape. The morphisms of the \produoidalCategory{} of contexts have been called \emph{lenses} and \emph{combs} in the literature, and we characterize them by a universal property.

We revisit the literature on \duoidalCategories{} and normalization in \Cref{section-duoidal-categories,sec-normal-duoidal-categories}. \ProduoidalCategories{} and their splice-contour adjunction are introduced in \Cref{sec:produoidalDecomposition}. We take a technical aside in \Cref{sec:normalization} to construct the normalization monad, and we immediately use it in \Cref{sec:monoidal-lenses}.

\clearpage{}%
\section{Duoidal categories}
\label{section-duoidal-categories}

\subsection{Duoidal Categories}
\DuoidalCategories{} result from the interaction of two \monoidalCategories{}.
By the Eckmann-Hilton argument, each time we have two monoids $(\ast,\circ)$ such that one is a monoid homomorphism over the other, $(a ∘ b) \ast (c ∘ d) = (a \ast c) ∘ (b \ast d)$, we know that both monoids coincide in a single commutative monoid.
  
However, an extra dimension helps us side-step the Eckmann-Hilton argument. If, instead of equalities or isomorphisms, we use directed morphisms, both monoids (which now may become 2-monoids) do not necessarily coincide, and the resulting structure is that of a \duoidalCategory{}.

\begin{definition}[Duoidal category]
  \defining{linkDuoidal}{}
  \defining{linkduoidal}{}
  \defining{linkDuoidalCategory}{}
    A \emph{duoidal category} \cite{aguiar10:monoidal} is a category $ℂ$ with two monoidal structures, $(ℂ,⊗, I,α,λ,ρ)$ and $(ℂ,◁, N, β, κ, ν)$ such that the latter distribute over the former.  In other words, it is endowed with a duoidal tensor, $(◁) \colon ℂ × ℂ → ℂ$, together with natural distributors
    \begin{align*}
    & ψ_2﹕ (X ◁ Z) ⊗ (Y ◁ W) → (X ⊗ Y) ◁ (Z ⊗ W), \\
    & ψ_0﹕ I → I ◁ I, \\
    & φ_2﹕ N ⊗ N → N, \quad\mbox{and}\\
    & φ_0 ﹕ I → N,
    \end{align*}
    satisfying the following coherence equations (\Cref{sec:coherencediagramsduoidal}, \Cref{cd:duoidal-coherence-assoc,cd:duoidal-coherence-unit,cd:duoidal-coherence-unit2,cd:duoidal-coherence-bimonoids,cd:duoidal-coherence-nandi}). 
    A duoidal category is \emph{strict} when both of its monoidal structures are.
\end{definition}

\begin{remark}
    In other words, the duoidal tensor and unit are lax monoidal functors for the first monoidal structure, which means that the laxators must satisfy the following equations.
    \begin{enumerate}
    \item $(ψ_2 ⊗ id) ⨾ ψ_2 ⨾ (α ◁ α) = α ⨾ (id ⊗ ψ_2) ⨾ ψ_2$, for the associator;
    \item $(ψ_0 ⊗ id) ⨾ ψ_2 ⨾ (λ ◁ λ) = λ$, for the left unitor; and
    \item $(id ⊗ ψ_0) ⨾ ψ_2 ⨾ (ρ ◁ ρ) = ρ$, for the right unitor;
    \item $α ⨾ (id ⊗ φ_2) ⨾ φ_2 = (φ_2 ⊗ id) ⨾ φ_2$, for the associator;
    \item $(φ_0 ⊗ id) ⨾ φ_2 = λ$, for the left unitor; and
    \item $(id ⊗ φ_0) ⨾ φ_2 = ρ$, for the right unitor.
    \end{enumerate}
\end{remark}

\subsection{Communication via Duoidals}
\label{sec:communication-duoidals}
The operations of a posetal duoidal structure can be interpreted as speaking about the communication of processes \cite{shapiro22:duoidal}. Let $(⊗,i)$ and $(⊲,n)$ form a duoidal structure on a poset. We read the elements of this poset as being processes and we interpret
\begin{enumerate}
  \item $x ⊗ y$ as ``$x$ and $y$ happen together, in parallel and independently'';
  \item $x ⊲ y$ as ``$y$ happens after $x$, and may depend on it'';
  \item $i$ as a process that ``interrupts communication'';
  \item $n$ as a process that ``does nothing'';
  \item $x → y$ as ``channels of $x$ are included in channels of $y$''.
\end{enumerate}
Under this interpretation, the rules of a \duoidalCategory{} say that
\begin{enumerate}
  \item $(x ⊲ y) ⊗ (z ⊲ w) → (x ⊗ z) ⊲ (y ⊗ w)$, adds intermediate communication;
  \item $i → i ⊲ i$, allows to interrupt an interrupted process;
  \item $n ⊗ n → n$, simplifies parallelism that does nothing; and
  \item $i → n$, allows new communications.
\end{enumerate}
We can already picture this interpretation in terms of \emph{communication diagrams}. Processes are boxes, and wires represent the information flow: a path from a process to another one means that the first can communicate to the second (\Cref{fig:duoidal-from-adjoint-monoid}).
\begin{figure}[!h]
  \centering
  \includegraphics[scale=0.3]{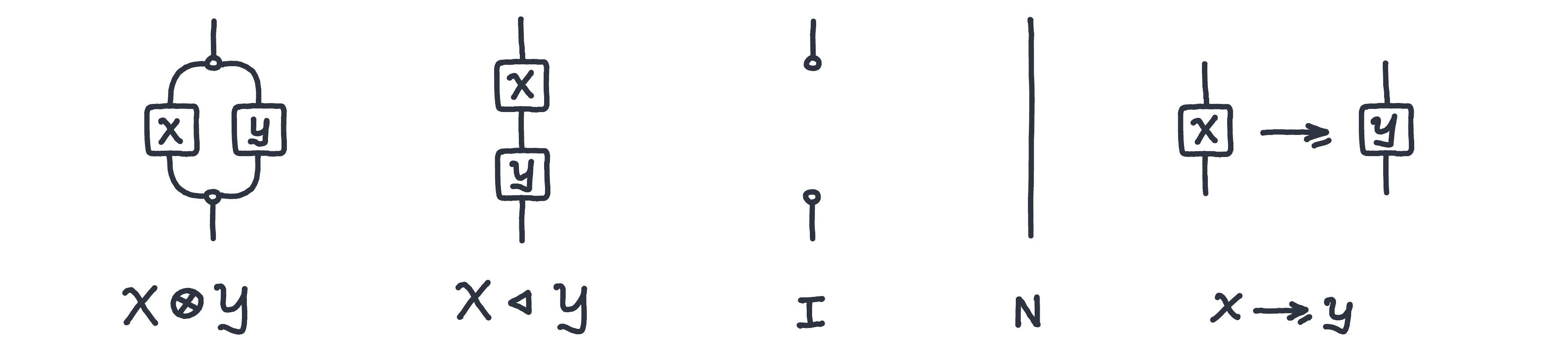}
  \caption{Communication diagrams.}
  \label{fig:duoidal-from-adjoint-monoid}
\end{figure}

The axioms of a \duoidalCategory{} impose the following transformations of communication diagrams (\Cref{fig:communication-diagram-axioms}). 

We can take these diagrams seriously: they are string diagrams of a monoidal bicategory where processes are endocells. The only structure that we ask of the single object generator is that of an \emph{adjoint monoid}.

\begin{figure}[!ht]
  \centering
  \includegraphics[scale=0.3]{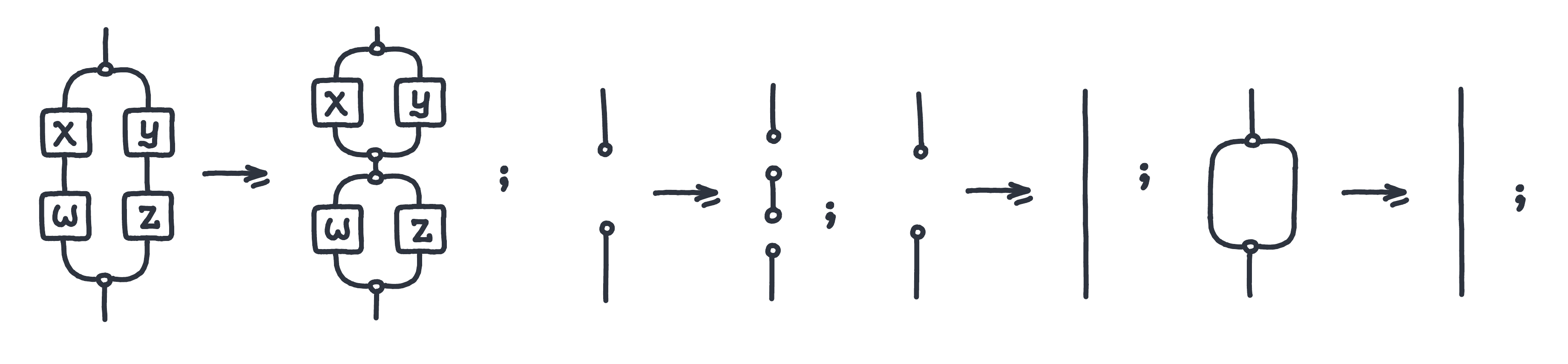}
  \caption{Communication diagram axioms.}
  \label{fig:communication-diagram-axioms}
\end{figure}

\subsection{Duoidals via adjoint monoids}

Let $(ℂ,⊗,I)$ be a monoidal category; let $(A,\iconwm{},\iconwu{})$ be a monoid and let $(B,\iconbcm{},\iconbcu{})$ be a comonoid. The set of morphisms $ℂ(A;B)$ forms a monoid with the operation of \emph{convolution}, $f \ast g = \iconbcm ⨾ (f ⊗ g) ⨾ \iconwm$ and the biunit, $e = \iconbcu{} ⨾ \iconwu{}$, as in \Cref{fig:convolution-monoid}.
\begin{figure}[!ht]
  \centering
  \includegraphics[scale=0.3]{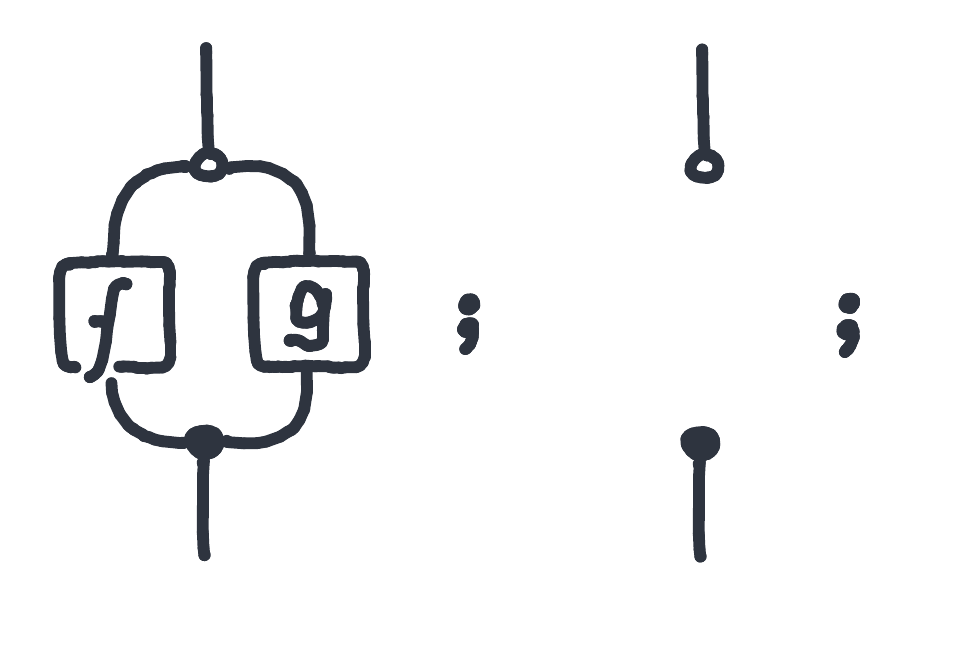}
  \caption{Convolution monoid.}
  \label{fig:convolution-monoid}
\end{figure}

Convolution and composition interact duoidally whenever the monoid and the comonoid are adjoint to each other. This is the notion of an \emph{adjoint monoid}.

\begin{definition}
  \defining{linkAdjointMonoid}{}
  An \emph{adjoint monoid}, in a monoidal bicategory $𝔹$, is an object endowed with both 2-monoid and 2-comonoid structure $(A,\iconwm,\iconwu,\iconwcu,\iconwcm)$, such that the multiplication is adjoint to the comultiplication $(\iconwm ⊣ \iconwcm)$ and the unit is adjoint to the counit $(\iconwu ⊣ \iconwcu)$.
\begin{figure}[!ht]
  \centering
  \includegraphics[scale=0.3]{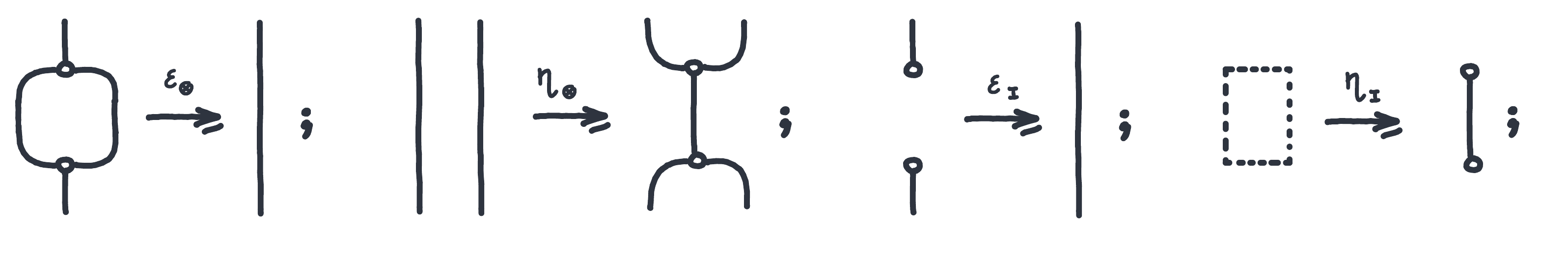}
  \caption{Adjoint monoid.}
  \label{fig:adjoint-monoid}
\end{figure}

This means that there exist 2-cells, $ε_{⊗} ፡ \iconwcm ⨾ \iconwm → \id$ and $η_{⊗} ፡ \id → \iconwcm ⨾ \iconwm$, witnessing the adjunction $(\iconwm ⊣ \iconwcm)$; and that there exist 2-cells, $ε_{I} ፡ \iconwcu ⨾ \iconwu → \id$ and $η_{I} ፡ \id → \iconwcu ⨾ \iconwu$ witnessing the adjunction $(\iconwu ⊣ \iconwcu)$, as in \Cref{fig:adjoint-monoid}.
\end{definition}

\begin{theorem}[Garner, López Franco {{\cite{garner16}}}]
  The endocells of an adjoint 2-monoid form a duoidal category with convolution and composition.
\end{theorem}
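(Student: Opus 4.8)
The plan is to fix a monoidal bicategory $𝔹$ and an adjoint monoid $A = (A,\iconwm,\iconwu,\iconwcu,\iconwcm)$ in it, and to exhibit the hom-category $𝔹(A;A)$ of endocells on $A$ together with two monoidal structures on it: one given by \emph{composition} of endocells (the ordinary vertical-then-horizontal structure already present, $f ◁ g = f ⨾ g$ with unit $\id_A$), and one given by \emph{convolution}, $f ⊗ g = \iconwcm ⨾ (f ◁ g) ⨾ \iconwm$ with unit the biunit $e = \iconwcu ⨾ \iconwu$. Both are genuinely monoidal on $𝔹(A;A)$: composition is so because $𝔹$ is a bicategory, while convolution is monoidal because $(A,\iconwm,\iconwu)$ is a $2$-monoid and $(A,\iconwcu,\iconwcm)$ is a $2$-comonoid, so the associator/unitor $2$-cells of convolution are built by pasting the (co)associativity and (co)unitality $2$-cells of the two structures, and the pentagon and triangle for convolution reduce to those for the $2$-monoid and $2$-comonoid; this part is a routine $2$-dimensional analogue of the classical fact that the convolution product on $\mathrm{Hom}(C,A)$ for a comonoid $C$ and monoid $A$ is monoidal.

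The substance is then to construct the four duoidal distributors and verify the six coherence equations. The key point — and the place where adjointness is used — is the distributor $ψ_2 ﹕ (X ⊗ Z) ◁ (Y ⊗ W) → (X ◁ Y) ⊗ (Z ◁ W)$, i.e. (unfolding both tensors) a $2$-cell
\[
  (\iconwcm ⨾ (X ◁ Z) ⨾ \iconwm) ⨾ (\iconwcm ⨾ (Y ◁ W) ⨾ \iconwm)
  \;⟶\;
  \iconwcm ⨾ \big((X ⨾ Y) ◁ (Z ⨾ W)\big) ⨾ \iconwm.
\]
After cancelling the inner $\iconwm ⨾ \iconwcm$ against each other — which is exactly where the counit $ε_{⊗} ﹕ \iconwcm ⨾ \iconwm → \id$ of the adjunction $(\iconwm ⊣ \iconwcm)$ is inserted — and using the interchange law of the bicategory to rearrange $(X ◁ Z) ⨾ (Y ◁ W) = (X ⨾ Y) ◁ (Z ⨾ W)$, one lands on the target. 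Similarly, $ψ_0 ﹕ I_{⊗} → I_{⊗} ◁ I_{⊗}$, unfolds to a $2$-cell $e → e ⨾ e$, obtained from the $2$-comonoid comultiplication on $\iconwcu$ together with $η_{I} ﹕ \id → \iconwcu ⨾ \iconwu$ to re-expand the middle; $φ_2 ﹕ N ⊗ N → N$, is $(\iconwcm ⨾ \id ⨾ \iconwm) → \id$, again the counit $ε_{⊗}$; and $φ_0 ﹕ I_{⊗} → N$ is $e → \id_A$, which is $ε_I$ composed with the fact that $e = \iconwcu ⨾ \iconwu$.

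The expected main obstacle is the verification of the six coherence diagrams (the ones in \Cref{sec:coherencediagramsduoidal}): each becomes a pasting-diagram identity in $𝔹$ that has to be reduced, using the triangle identities of the two adjunctions $(\iconwm ⊣ \iconwcm)$ and $(\iconwu ⊣ \iconwcu)$ together with the $2$-monoid/$2$-comonoid axioms and the bicategorical interchange law, to a tautology. The associativity-type equations for $ψ_2$ and $φ_2$ are the delicate ones: they require showing that the two ways of cancelling three copies of $\iconwm ⨾ \iconwcm$ via $ε_{⊗}$ agree, which follows from naturality of $ε_{⊗}$ and the snake equations — this is essentially a coherence statement for adjoint (pseudo)monoids, and I would either cite \cite{garner16} directly for the bookkeeping or prove it by drawing the string diagrams of the monoidal bicategory (\Cref{fig:adjoint-monoid}, \Cref{fig:communication-diagram-axioms}) and deforming them. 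The unit equations are comparatively mechanical, each unwinding to one triangle identity plus one (co)unitality axiom. Once all six hold, the data $(𝔹(A;A), ⊗, e, ◁, \id_A, ψ_2, ψ_0, φ_2, φ_0)$ is by definition a duoidal category, completing the proof.
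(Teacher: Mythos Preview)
Your overall plan matches the paper's, and your constructions of $φ_2$ and $φ_0$ are correct, but your main interchanger $ψ_2$ is oriented the wrong way and the argument does not go through. In the paper's (standard) convention the distributor is
\[
  ψ_2 ﹕ (X ◁ Z) ⊗ (Y ◁ W) \longrightarrow (X ⊗ Y) ◁ (Z ⊗ W),
\]
from $⊗$-outside to $◁$-outside. Unfolding (with $◁$ composition and $⊗$ convolution $\iconwcm ⨾ (- \boxtimes -) ⨾ \iconwm$, where $\boxtimes$ is the ambient tensor of $𝔹$), the source is $\iconwcm ⨾ (X \boxtimes Y) ⨾ (Z \boxtimes W) ⨾ \iconwm$ and the target is $\iconwcm ⨾ (X \boxtimes Y) ⨾ \iconwm ⨾ \iconwcm ⨾ (Z \boxtimes W) ⨾ \iconwm$: one must \emph{insert} the composite $\iconwm ⨾ \iconwcm$ in the middle, which is exactly the unit $η_{⊗} ፡ \id_{A \boxtimes A} → \iconwm ⨾ \iconwcm$ of the adjunction $\iconwm ⊣ \iconwcm$. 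You instead try to \emph{cancel} an inner $\iconwm ⨾ \iconwcm$ and cite the counit $ε_{⊗} ፡ \iconwcm ⨾ \iconwm → \id$; but that counit applies to the other composite, and the $2$-cell $\iconwm ⨾ \iconwcm → \id$ you would actually need for your reversed direction does not exist from the given adjunction. The same issue affects $ψ_0$: it is built from the unit $η_I ፡ \id_I → \iconwu ⨾ \iconwcu$, not from any ``$2$-comonoid comultiplication on $\iconwcu$''.

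A secondary but pervasive problem is that you overload $◁$ to mean both the duoidal sequential tensor (composition of endocells) and the ambient tensor $\boxtimes$ of the monoidal bicategory: your formula $f ⊗ g = \iconwcm ⨾ (f ◁ g) ⨾ \iconwm$ only typechecks if $◁$ there means $\boxtimes$. Keeping these separate immediately flags the direction issue above. The correct pairing, as the paper records, is that the four duoidal structure maps come from the four adjunction $2$-cells: $ψ_2$ and $ψ_0$ from the \emph{units} $η_{⊗}, η_I$, and $φ_2$ and $φ_0$ from the \emph{counits} $ε_{⊗}, ε_I$ (cf.\ \Cref{fig:communication-diagram-axioms}).
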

\begin{proof}
  Let $(A,\iconwm,\iconwu,\iconwcu,\iconwcm)$ be an \adjointMonoid{}, and let $X, Y ∈ 𝔹(A;A)$ be endocells. We define the sequential tensor as the composition, $X ⊲ Y = X \bcomp Y$, with the unit being the identity, $N = \mathbf{I}$. We define the parallel tensor as the convolution, $X ⊗ Y = \iconwcm ⨾ (X \boxtimes Y) ⨾ \iconwm$, with the unit being the pair of monoid units, $I = \iconwcu ⨾ \iconwu$. The duoidal interchangers are constructed out of the 2-cells of the adjoint monoid, taking \Cref{fig:communication-diagram-axioms} seriously as the string diagrams of a monoidal bicategory.
  \begin{enumerate}
    \item The first interchanger, $(X ⊲ W) ⊗ (Y ⊲ Z) → (X ⊗ Y) ⊲ (W ⊗ Z)$, is constructed from the tensor adjunction unit, $η_{⊗} ፡ \id → \iconwm ⨾ \iconwcm$;
    \item the second interchanger, $I → I ⊲ I$, is constructed from the unit of the unit adjunction, $η_{I} ፡ \id → \iconwu ⨾ \iconwcu$;
    \item the third interchanger, $I → N$, is constructed from the counit of the unit adjunction, $ε_{I} ፡ \iconwcu ⨾ \iconwu → \id$; and
    \item the fourth interchanger, $N ⊗ N → N$, is constructed from the tensor adjunction counit, $ε_{⊗} ፡ \iconwcm ⨾ \iconwm → \id$.
  \end{enumerate}
  Finally, we need to check that all of the structure diagrams commute. This is usually left to the reader \cite{garner16}; we can visualize the equations as surface diagrams.
\end{proof}

\begin{remark}[Day convolution]
  A particular case is the convolution of two parallel profunctors $P, Q ፡ ℂ → 𝔻$ between monoidal categories. A monoidal category determines an adjoint monoid in the monoidal bicategory of profunctors. Convolution is the operation that constructs a profunctor $P \ast Q ፡ ℂ → 𝔻$ defined by
  $$(P \ast Q)(A,B) = {\textstyle ∫}^{X,X',Y,Y'} ℂ(A, X ⊗ Y) × P(X,X') × Q(Y,Y') × 𝔻(X'⊗Y',B).$$
  Whenever we particularize to presheaves over a monoidal category $ℂ$, we recover \emph{Day convolution} of presheaves.
  $$(F \ast G)(A) = {\textstyle ∫}^{X,Y} ℂ(A, X ⊗ Y) × F(X) × G(Y).$$
  In particular, the endoprofunctors over any \monoidalCategory{} form a \duoidalCategory{}, this is the duoidal category that we study in \Cref{sec:produoidalDecomposition}.
\end{remark}

\subsection{Be Careful with Duoidal Coherence}
\MonoidalCategories{} possess a coherence theorem that determines that any two parallel morphisms constructed out of structure isomorphisms commute. In contrast, \duoidalCategories{} do not satisfy that same statement. This causes some confusion around coherence for \duoidalCategories{}. I bring an example of how this confusion may arise, hoping that it will help the interested reader and that it may further justify the importance of expository category theory.

We could be tempted to provide an alternative definition of \duoidalCategories{} that avoids asking for a bunch of commutative diagrams by simply asking that any formal such diagram commutes. In fact, this may possible for the \physicalDuoidalCategories{} studied by Spivak and Shapiro \cite{shapiro22:duoidal}, who comment that
\begin{quote}
  \emph{alternatively, duoidal categories can be defined by the two monoidal structures along with the generating structure maps [...] (4) natural in $a, b, c, d$ which satisfy equations guaranteeing that any two structure maps built from those in (4) between the same two expressions in $y_{⊗} , y_{⊲} , ⊗, ⊲$ are equal.}
\end{quote}

One of the first, most complete and comprehensive accounts of \duoidalCategories{} is the monograph by Aguiar and Mahajan \cite{aguiar10:monoidal}. It includes a passing comment that could suggest that this version can be proven correct. It says that
\begin{quote}
  \emph{``[...] if two morphisms $A → B$ are constructed out of the structure maps in $ℂ$ (including the structure constraints of the monoidal categories $(ℂ, \diamond, I)$ and $(ℂ, \star, J)$), then they coincide.''}
\end{quote}

However, intepreted literally and strictly, this turns out to not be true. Two parallel morphisms constructed out of the structure maps of a \duoidalCategory{} do not need to coincide.

\begin{proposition}\label{prop:duoidalCoherencefails1}
  There exist two different maps of type $I ⊲ I → I$ constructed out of the structure maps of a \duoidalCategory{}.
\end{proposition}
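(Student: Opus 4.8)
The strategy is to build two parallel morphisms $I \triangleleft I \to I$ using only the structure maps available in a generic duoidal category, and then to exhibit a concrete duoidal category in which they differ. The two candidate maps are the obvious ones: both first apply $\varphi_0 \triangleleft \mathrm{id} \colon I \triangleleft I \to N \triangleleft I$ (resp. $\mathrm{id} \triangleleft \varphi_0$), but more symmetrically, one sends $I \triangleleft I$ through $\varphi_0 \triangleleft \varphi_0 \colon I \triangleleft I \to N \triangleleft N$ and then uses the unitor $\kappa$ or $\nu$ of the $\triangleleft$-structure together with $N \triangleleft N$; the cleaner pair is to compose $I \triangleleft I \xrightarrow{\varphi_0 \triangleleft \mathrm{id}} N \triangleleft I \xrightarrow{\nu} I$ versus $I \triangleleft I \xrightarrow{\mathrm{id} \triangleleft \varphi_0} I \triangleleft N \xrightarrow{\kappa} I$ — wait, these have the wrong target, since $\nu_I \colon I \triangleleft N \to I$ and $\kappa_I \colon N \triangleleft I \to I$. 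So the honest pair is: $g_1 = (\varphi_0 \triangleleft \mathrm{id}) \,\bcomp\, \kappa_I$ and $g_2 = (\mathrm{id} \triangleleft \varphi_0)\,\bcomp\, \nu_I$, both of type $I \triangleleft I \to I$. First I would write these two maps down carefully and confirm they are indeed both well-typed endpoints-matching morphisms $I \triangleleft I \to I$ built purely from $\varphi_0$ and the $\triangleleft$-unitors.

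Next I would produce the counterexample. The natural candidate is a category where the two monoidal units $I$ and $N$ are genuinely different objects and $\varphi_0 \colon I \to N$ is not invertible. A good source is a poset duoidal category (as in the communication-diagrams discussion, \Cref{sec:communication-duoidals}): take a poset with two distinct elements $n < i$ and tensors arranged so that $i$ is the $\otimes$-unit and $n$ is the $\triangleleft$-unit, with $i \triangleleft i$ some element, and check the duoidal axioms hold. In the posetal setting all diagrams commute automatically, so this particular poset won't separate the two maps — which means I actually need a \emph{non-posetal} example. The right move is to go to a category like $\mathbf{Set}$ with $\otimes = \times$ (unit $I = 1$) and $\triangleleft$ a genuinely different monoidal product with a different unit $N$, e.g. $\triangleleft$ given by a coproduct-like or a free-monoid-like construction, so that $I \triangleleft I$ has enough elements that $g_1$ and $g_2$ pick out different ones. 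Concretely I expect to use something like the duoidal structure on pointed sets or on $\mathbf{Set}$ with $\triangleleft = $ the "join" where $N$ is a two-element set, chosen so that the asymmetry between acting on the left versus the right copy is visible.

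The key steps in order: (1) define $g_1, g_2 \colon I \triangleleft I \to I$ explicitly from the axiom data; (2) choose a concrete strict-ish duoidal category $(\mathbb{D}, \otimes, I, \triangleleft, N)$ where $\varphi_0$ is far from invertible and $\triangleleft$ is non-cartesian; (3) compute $g_1$ and $g_2$ as actual functions (or poset inequalities, upgraded) on $I \triangleleft I$ and observe $g_1 \neq g_2$; (4) conclude. The main obstacle I anticipate is step (2)–(3): finding a duoidal category that is simultaneously easy to verify (all six laxator coherence equations of the Remark after \Cref{def:promonoidalfunctor}… rather, the duoidal coherence diagrams) \emph{and} non-degenerate enough that left/right insertion of $\varphi_0$ genuinely differ. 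Poset examples are easy to verify but too coherent; richer examples verify painfully. I would lean on a known small example from Aguiar–Mahajan or Street — e.g. the duoidal category of $\mathbb{N}$-graded objects, or species with Cauchy and Hadamard products — where the unit objects differ and the relevant composites are visibly distinct, citing the verification of the duoidal axioms rather than redoing it.

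\textbf{Remark on the likely actual proof.} It is also possible the authors intend something even more lightweight: the two maps $I \xrightarrow{\psi_0} I \triangleleft I$ and then back, or a $2$-cell count in a free duoidal category, showing the free duoidal category on no generators already has two such arrows. If so, the argument is purely syntactic — enumerate the string/pasting diagrams — and the "counterexample" is the free duoidal category itself. I would present the semantic counterexample as the primary proof since it is the most convincing, and mention the syntactic free-category perspective as an alternative.
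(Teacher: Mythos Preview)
Your overall strategy matches the paper's exactly: identify the two composites $(\varphi_0 \triangleleft \mathrm{id}) ⨾ \lambda_{\triangleleft}$ and $(\mathrm{id} \triangleleft \varphi_0) ⨾ \rho_{\triangleleft}$ (your $g_1$, $g_2$ up to notation), then separate them in a concrete duoidal category. You also correctly rule out posetal examples.

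The gap is that you never actually land on and verify a counterexample. You cycle through several candidates---pointed sets, joins in $\mathbf{Set}$, graded objects, species---without committing to one or computing the two maps in it. The paper's choice is the duoidal category of endoprofunctors over a monoidal category $\mathbb{C}$ (one of Street's basic examples, arising from the adjoint-monoid construction of \Cref{sec:communication-duoidals}). There the $\otimes$-unit profunctor is $I(X;Y) = \mathbb{C}(X;I) \times \mathbb{C}(I;Y)$, so
\[
(I \triangleleft I)(X;Y) \;\cong\; \mathbb{C}(X;I) \times \mathbb{C}(I;I) \times \mathbb{C}(I;Y),
\]
and the two maps $I \triangleleft I \to I$ send a triple $(f,a,g)$ to $(f ⨾ a,\, g)$ and $(f,\, a ⨾ g)$ respectively. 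These differ as soon as $\mathbb{C}(I;I)$ contains a non-identity scalar---for instance, take $\mathbb{C} = \mathbf{Vect}_k$. This is exactly the sort of ``easy to verify and non-degenerate'' example you were searching for, and it avoids having to recheck the duoidal axioms from scratch since the structure comes from the general adjoint-monoid theorem.

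Your suggestion of $\mathbb{N}$-graded objects is also sound: the paper uses precisely that example, but for the \emph{dual} statement $N \to N \otimes N$ (\Cref{prop:duoidalCoherencefails2}). Your closing remark about a syntactic argument in the free duoidal category is reasonable in spirit, but note that actually carrying it out would require a normal-form or coherence result for free duoidal categories that you don't have---indeed, the whole point of the proposition is that naive coherence fails---so the semantic route is not just ``more convincing'' but the only one readily available.
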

\begin{proof}
  We can consider two maps of type $I ⊲ I → I$, depending on which of the two parallel units we decide to convert to a sequential unit using the laxators. Explicitly, we are saying that $(I ⊲ φ_0) ⨾ ρ_{⊲}$ and $(φ_0 ⊲ I) ⨾ λ_{⊲}$ do not coincide; and the more intuitive string diagrams for bicategories for \adjointMonoids{} confirm this (\Cref{fig:countercoherence}). We will construct an explicit example of this phenomenon.
  \begin{figure}[!ht]
    \centering
    \includegraphics[scale=0.45]{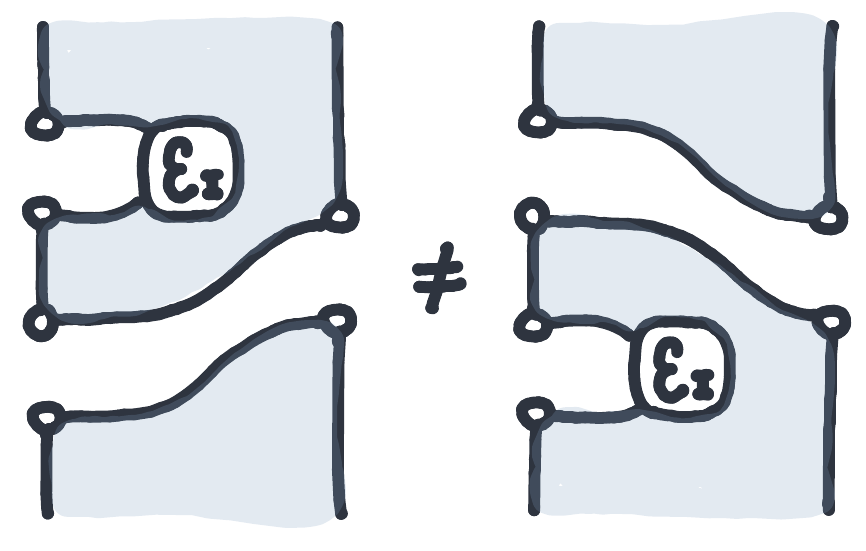}
    \caption{Bicategorical string diagrams for the two coherence maps.}
    \label{fig:countercoherence}
  \end{figure}

  Consider the \duoidalCategory{} of endoprofunctors over a \monoidalCategory{}. This is one of the first examples of \duoidalCategory{} described by Street \cite{street12:linking}; it is also described by Garner and López Franco \cite{garner16}, even when the axioms are not explicitly checked in print.
  In this category of endoprofunctors over $ℂ$,  parallel tensor is the profunctor 
  $I(X;Y) = ℂ(X;I) × ℂ(I;Y)$, and sequencing two of them gives
  \[(I ⊲ I)(X;Y) = \hom(X;I) × \hom(I;I) × \hom(I;Y).\]
  In this case, the two maps send the triple $(f,a,g)$ to $(f ⨾ a, g)$ and $(f, a ⨾ g)$, respectively. However, these two pairs do not need to be equal if $a ∈ \hom(I;I)$ is a non-identity morphism.
\end{proof}

\begin{example}[Graded spaces]
  We look for a more classical source of examples in the theory of \emph{graded spaces}.
  Let $(𝕍,⊗,I)$ be a \monoidalCategory{} with coproducts that are preserved by the tensor; let $(G,+,0)$ be a commutative monoid. We say that the functor category $[G,𝕍]$ is the category of \emph{$G$-graded $𝕍$-spaces}. This category has a rich structure; we highlight two of its tensor products: the \emph{pointwise} or \emph{Hadamard} tensor product
  \[(V ⊗ W)ₙ = Vₙ ⊗ Wₙ,\mbox{ for each } n ∈ G,\mbox{ with unit }𝕀ₙ = I;\]
  and the \emph{convolution} or \emph{Cauchy} tensor product
  \[(V • W)ₙ = \sum\nolimits_{k+m = n} Vₖ ⊗ Wₘ, \mbox{ with unit } \mathbf{1}ₙ = \mathbf{0}\mbox{ except for }\mathbf{1}₀ = I.\]
  These two tensors interact in a \duoidalCategory{} with a laxator as follows; see for instance the work of López Franco and Vasilakopoulou \cite{vasilakopoulou:20duoidal}.
  \[\sum_{k+m = n} Vₖ ⊗ Wₖ ⊗ Uₘ ⊗ Zₘ → \left( \sum_{k₁ + m₁ = n₁} V_{k₁} ⊗ U_{m₁} \right) ⊗ \left( \sum_{k₂ + m₂ = n₂} W_{k₂} ⊗ Z_{m₂} \right).\]
\end{example}

\begin{proposition}\label{prop:duoidalCoherencefails2}
  Dually, there exist two different maps of type $J → J ⊗ J$ constructed out of the structure maps of a \duoidalCategory{}.
\end{proposition}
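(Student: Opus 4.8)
\textbf{Proof proposal for \Cref{prop:duoidalCoherencefails2}.}

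The plan is to dualize the argument of \Cref{prop:duoidalCoherencefails1} in the most direct way possible. There, we exhibited two distinct maps $I \triangleleft I \to I$ by noting that the two ways of using the laxators $\varphi_0$ and $\varphi_2$ (together with the appropriate unitors of the $\triangleleft$-structure) to collapse a sequential pair of parallel units need not agree. The statement we now want is the exact formal dual under the symmetry that swaps $(\otimes, I, \psi)$ with $(\triangleleft, N, \varphi)$ and reverses all arrows; in the notation of the excerpt, writing $J$ for the sequential unit $N$, the two candidate maps $J \to J \otimes J$ are $\psi_0 \mathbin{;} (\varphi_0 \otimes J)$ followed by the left unitor $\lambda_\otimes^{-1}$ read backwards --- more precisely, the composites built from $\psi_2$ (or rather the unit laxator $\psi_0 : I \to I \triangleleft I$ is the wrong direction; one uses instead the comparison $\varphi_0 : I \to N$ and the $\otimes$-unitors) according to which of the two tensor factors of $J \otimes J$ receives the ``conversion'' $I \to J$. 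I would first write these two morphisms out explicitly: one is $J \cong J \otimes I \xrightarrow{\;J \otimes \varphi_0\;} J \otimes J$ reading a unit introduction through the right $\otimes$-unitor $\rho_\otimes$, and the other is $J \cong I \otimes J \xrightarrow{\;\varphi_0 \otimes J\;} J \otimes J$ reading it through the left $\otimes$-unitor $\lambda_\otimes$. Both are legitimately built from the duoidal structure maps, so a literal reading of the coherence claim quoted from Aguiar--Mahajan would force them equal.

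Next I would exhibit the counterexample, again reusing the running example: the duoidal category of endoprofunctors over a monoidal category $\mathbb{C}$. Dually to the computation in \Cref{prop:duoidalCoherencefails1}, the sequential unit is the hom-profunctor $N(X;Y) = \mathbb{C}(X;Y)$, and its parallel square is
\[
  (N \otimes N)(X;Y) \;=\; \int^{U,V} \mathbb{C}(X; U \otimes V) \times \mathbb{C}(U; \cdot) \times \mathbb{C}(\cdot ; \cdot) \times \cdots
\]
which after Yoneda reduction simplifies to a set of the form $\mathbb{C}(X; A \otimes B) \times (\text{stuff})$; concretely an element of $(N \otimes N)(X;Y)$ is represented by a pair $(f,g)$ with $f : X \to A$, $g : B \to Y$ over a coend in $A,B$ --- while $N(X;Y) = \mathbb{C}(X;Y)$. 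The two laxator-induced maps $N \to N \otimes N$ send a morphism $h : X \to Y$ to, respectively, the class of $(\rho_\otimes^{-1}; h\otimes\mathrm{id})$ and of $(\lambda_\otimes^{-1}; \mathrm{id}\otimes h)$; after identifying $I \otimes B$ and $A \otimes I$ with $B$ and $A$ via the $\otimes$-unitors, these differ precisely when the monoidal unitors of $\mathbb{C}$ are not identities, or more robustly, they can be distinguished by evaluating on a non-trivial endomorphism of $I$ exactly as in the previous proof. I would then check that these really are the images of $h$ under the two structural composites --- this is the only computational content, and it is routine coend-calculus bookkeeping.

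The main obstacle, as in the previous proposition, is purely bookkeeping: making sure the two morphisms I write down are genuinely the two distinct structure-map composites that the (erroneous) coherence statement claims are equal, rather than two composites that happen to be equal for a trivial reason. I expect to handle this exactly as \Cref{prop:duoidalCoherencefails1} does --- by drawing the bicategorical string diagrams for adjoint monoids (the dual picture to \Cref{fig:countercoherence}, now with the two sequential-unit ``caps'' sitting on either side of a parallel-unit cup) and reading off that they are manifestly different surface diagrams. Alternatively, and perhaps more cleanly, I would simply remark that \Cref{prop:duoidalCoherencefails1} and \Cref{prop:duoidalCoherencefails2} are interchanged by the evident self-duality of the notion of duoidal category (swap the two monoidal structures and pass to $\mathbb{C}^{\mathrm{op}}$), so that the present statement follows formally from the previous one with no further work; the endoprofunctor computation above is then offered only as an explicit witness. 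I would state the proof in this dual-of-the-previous form, keeping it to a few lines.
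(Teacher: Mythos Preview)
Your core argument --- that the statement follows from \Cref{prop:duoidalCoherencefails1} by the self-duality of the notion of duoidal category (pass to the opposite and swap the roles of the two monoidal structures) --- is exactly what the paper does, in a single sentence. Your identification of the two structure maps as $\rho_\otimes^{-1} ; (N \otimes \varphi_0)$ and $\lambda_\otimes^{-1} ; (\varphi_0 \otimes N)$ is also correct.

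The divergence is in the explicit witness. The paper does \emph{not} redo the endoprofunctor computation; instead it offers a second, independent example (credited to Aguiar): the duoidal category of $G$-graded spaces, where the two maps $N \to N \otimes N$ send the degree-$g$ summand into the summands indexed by $(g,0)$ and $(0,g)$ respectively, which are manifestly different inclusions into a coproduct. This is much cleaner than your endoprofunctor attempt, which is left incomplete (the coend for $(N \otimes N)(X;Y)$ is never written out, and the parenthetical ``differ precisely when the monoidal unitors of $\mathbb{C}$ are not identities'' is not the right diagnosis --- the obstruction, as in the previous proposition, should again be a nontrivial endomorphism, not a non-strictness of unitors). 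Since you yourself flag the explicit computation as secondary to the duality argument, this does not break your proof, but if you want an explicit witness you would do better either to carry the endoprofunctor example through properly or to borrow the graded-spaces example.
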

\begin{proof}
  This follows from the previous \Cref{prop:duoidalCoherencefails1}, by considering the opposite \duoidalCategory{}. However, let us comment a second example \cite{aguiarEmail}. Consider the \duoidalCategory{} of graded spaces over a monoid $G$. The two maps, $𝕀 → 𝕀 ⊗ 1 → 𝕀 ⊗ 𝕀$ and $𝕀 → 1 ⊗ 𝕀 → 𝕀 ⊗ 𝕀$, correspond to inclusions of the vector space graded by $g ∈ G$ into the summand indexed by $(g,0)$ or $(0,g)$, respectively; these are different in general.
\end{proof}

In fact, the stronger statement of coherence does not seem to be used explicitly in any of these two texts, and the definition of \duoidalCategories{} as completely coherent strucutres is not usually found in the literature. Most authors, like Aguiar and Mahajan \cite{aguiar10:monoidal}, and Garner and López Franco \cite{garner16}, revert to the definition of \duoidalCategory{} as a 2-monoid in the monoidal bicategory of monoidal categories.

Aguiar and Mahajan \cite{aguiar10:monoidal} do actually point out that the expected coherence theorem should follow from the coherence theorem for lax monoidal functors. The confusion can arise if we do not realize that this coherence theorem does not actually prove that any two parallel maps coincide: in particular, coherence for lax monoidal functors does not prove that the two maps $F(I) ⊗ F(I) → F(I)$ coincide. In this case, however, the problem is better known -- it is mentioned by Malkiewich and Ponto \cite{malkiewich21:coherence}, who cite a short mention in the original proof by Lewis \cite{lewis06:coherence} and Kelly and Laplaza \cite{kelly80:coherence}.  

\subsection{Bibliography}
I thank Marcelo Aguiar and Swapneel Mahajan \cite{aguiarEmail} for their generosity, helping me confirm the problem and specially for providing the second counterexample; I thank Brandon Shapiro and David Spivak for helping me follow this same idea on their work. I thank Matt Earnshaw for sharing his knowledge of the literature on \duoidalCategories{}.

\newpage
\section{Normal Duoidal Categories}
\label{sec-normal-duoidal-categories}
\DuoidalCategories{} seem to contain too much structure: of course, we want to split things in two different ways, sequentially $(◁)$ and in parallel $(⊗)$; but that does not necessarily mean that we want to keep track of two different types of units, parallel $(I)$ and sequential $(N)$. The atomic components of our decomposition algebra could be the same, without having to care if they are \emph{atomic for sequential composition} or \emph{atomic for parallel composition}; when this is the case, we talk of \emph{normal duoidal categories}.

\begin{definition}
  \defining{linkNormalDuoidal}{}
  A \emph{normal duoidal category} is a \duoidalCategory{} in which the map $φ_0 ፡ I → N$ is an isomorphism.
\end{definition}

While \duoidalCategories{} are useful to track communication between processes; symmetric normal duoidal categories track \emph{dependencies} -- whether a process' input depends on the output of another -- structuring a dependency poset. This idea is explored by Garner and López Franco \cite{garner16} and Spivak and Shapiro \cite{shapiro22:duoidal}, and it has a formal counterpart in \Cref{th:posetsAreFreePhysical}.

Most \duoidalCategories{} we have seen so far -- and particularly those arising from \adjointMonoids{} -- have two different units.
There exists a well-known abstract procedure that, starting from some \duoidalCategory{}, constructs a new \duoidalCategory{} that is normal: both units are identified. This procedure is known as \emph{normalization}, and it can only be applied to \duoidalCategories{} with certain coequalizers preserved by the tensor.

\subsection{Normalization of duoidal categories}
Garner and López Franco construct the normalization of a well-behaved \duoidalCategory{}, using a new duoidal category of \emph{bimodules} \cite{garner16}.

\begin{remark}
Let $M$ be a bimonoid in the \duoidalCategory{} $(𝕍,⊗,I,◁,N)$, with maps $e ﹕ I → M$ and $m ﹕ M ⊗ M → M$; and with maps $u ﹕ M → N$ and $d ﹕ M → M ◁ M$. Consider now the category of $M^⊗$-bimodules. This category has a monoidal structure lifted from $(𝕍,◁,N)$:
\begin{enumerate}
    \item the unit, $N$, has a bimodule structure with 
      $$M⊗N⊗M \overset{u ⊗ \id ⊗ u}\longrightarrow  N⊗N⊗N \longrightarrow N;$$
    \item the sequencing of two $M^⊗$-bimodules is a $M^⊗$-bimodule with
    $$\begin{aligned}
        M&⊗(A◁B)⊗M \\
        &→ (M◁M)⊗(A◁B)⊗(M◁M) \\
        &→ (M⊗A⊗M)◁(M⊗B⊗M) → A◁B.
    \end{aligned}$$
\end{enumerate}
Moreover, whenever $𝕍$ admits reflexive coequalizers preserved by $(⊗)$, the category of $M^{⊗}$-bimodules is monoidal with the tensor of bimodules: the coequalizer
$$A ⊗ M ⊗ B \rightrightarrows A ⊗ B \twoheadrightarrow A ⊗_M B.$$
In this case $(\mathbf{Bimod}^{⊗}_M, ⊗_M, M, ◁, N)$ is a \duoidalCategory{}.
\end{remark}

\begin{theorem}[Normalization of a duoidal, {{\cite{garner16}}}]
  \label{thm:normalizationDuoidal}
    Let $(𝕍,⊗,I,◁,N)$ be a \duoidalCategory{} with reflexive coequalizers preserved by $(⊗)$. The category of $N$-bimodules is then a \normalDuoidalCategory{},
    $$\mathcal{N}(𝕍) = (\mathbf{Bimod}^{⊗}_N, ⊗_N, N, ◁, N).$$
    We call this category the \emph{normalization} of the \duoidalCategory{} $𝕍$.
\end{theorem}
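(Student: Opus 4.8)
The plan is to deduce the statement from the construction recalled in the preceding remark, specialised to the canonical bimonoid furnished by the sequential unit $N$ itself. First I would verify that $N$ carries a bimonoid structure in $(𝕍,⊗,I,◁,N)$. Its $⊗$-monoid structure is witnessed by the duoidal unit comparisons $e = φ_0 \colon I → N$ and $m = φ_2 \colon N ⊗ N → N$, whose associativity and unitality are precisely the coherence axioms $(φ_0 ⊗ \id) ⨾ φ_2 = λ$, $(\id ⊗ φ_0) ⨾ φ_2 = ρ$ and $α ⨾ (\id ⊗ φ_2) ⨾ φ_2 = (φ_2 ⊗ \id) ⨾ φ_2$; its $◁$-comonoid structure is witnessed by the counit $u = \id_N \colon N → N$ and the comultiplication $d = κ^{-1}_N \colon N ≅ N ◁ N$ obtained from the left $◁$-unitor, coassociative and counital again by coherence. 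The compatibility of the two structures — that $e$ and $m$ are morphisms of $◁$-comonoids, equivalently that $u$ and $d$ are morphisms of $⊗$-monoids — is one of the bimonoid compatibility diagrams, which for $M = N$ collapses to instances of the duoidal laxator equations listed in the remark after the definition of \duoidalCategory{}.

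Once $N$ is known to be a bimonoid, I would invoke the remark directly. Since $𝕍$ has reflexive coequalizers preserved by $(⊗)$, the category $\mathbf{Bimod}^{⊗}_N$ inherits a monoidal structure $(⊗_N, N)$ from the defining coequalizer $A ⊗ N ⊗ B \rightrightarrows A ⊗ B \twoheadrightarrow A ⊗_N B$, with associativity and unitality following from the standard module-category argument that uses the preservation of reflexive coequalizers by $(⊗)$; and it inherits a monoidal structure $(◁, N)$ lifted from $(𝕍,◁,N)$ along the bimodule structures on $N$ and on $A ◁ B$ spelled out in the remark. The four distributors $ψ_2, ψ_0, φ_2, φ_0$ of the new structure are induced from those of $𝕍$ by restricting to bimodule morphisms and descending along the coequalizers, and the duoidal coherence equations for $\mathcal{N}(𝕍)$ are inherited from the corresponding equations in $𝕍$.

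Normality is then immediate: in $\mathcal{N}(𝕍) = (\mathbf{Bimod}^{⊗}_N, ⊗_N, N, ◁, N)$ the parallel unit and the sequential unit are the same object $N$, carrying the same canonical bimodule structure, and the comparison map $φ_0 \colon I → N$ of the new duoidal structure is, under this identification, the identity on $N$; in particular it is an isomorphism, so $\mathcal{N}(𝕍)$ is a \normalDuoidalCategory{} by definition.

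The genuine work lies in the middle step — checking that the two monoidal structures on $\mathbf{Bimod}^{⊗}_N$ really do assemble into a \duoidalCategory{}: that $⊗_N$ is associative and unital, that $◁$ restricts to $N$-bimodules with the claimed unit and structure maps, and that the induced distributors satisfy all of the duoidal coherence diagrams. This is exactly the content that Garner and López Franco leave implicit; I would organise it by establishing everything first at the level of $N^{⊗}$-bimodules inside $(𝕍,◁,N)$, and only afterwards descending along the defining coequalizers, using the hypothesis that $(⊗)$ preserves reflexive coequalizers wherever $⊗_N$ must be commuted past $◁$.
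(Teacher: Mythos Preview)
Your proposal is correct and follows exactly the route the paper implicitly points to: the paper does not give its own proof of this theorem but simply attributes it to Garner and L\'opez Franco, after setting up in the preceding remark the general bimodule construction for an arbitrary bimonoid $M$. Your specialisation of that remark to $M = N$, together with the verification that $N$ is canonically a bimonoid via $(φ_2, φ_0)$ and the $◁$-unitors, is precisely the intended argument; the observation that both units coincide in $\mathbf{Bimod}^{⊗}_N$ so that the resulting duoidal category is normal is the whole point.
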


\subsection{Physical duoidal categories}
The interaction of dependent and independent composition of \normalDuoidalCategories{} is a recurrent idea in physical models: categorical models of spacetime exhibit this structure \cite{hefford_spacetime,shapiro22:duoidal}; but it is also exhibited by parallel and sequentially composing programs \cite{sigal23:duoidally}; or more simply, by the category of partially ordered sets.

In most of these cases, the \normalDuoidalCategory{} has an extra property: the parallel tensor ($⊗$) is symmetric. This is what motivates the name \emph{physical duoidal category} for the $⊗$-symmetric \normalDuoidalCategories{}.

\begin{definition}
  \defining{linkPhysicalDuoidal}{}
  A \emph{physical duoidal category} is a \normalDuoidalCategory{} endowed with a \symmetricMonoidalCategory{} structure for its parallel tensor.
\end{definition}

  Posets are a canonical example of a \physicalDuoidalCategory{}: in fact, it is known that a subcategory of the category of posets and poset inclusions forms the free \physicalDuoidalCategory{} over a generator. In that precise sense, duoidal expressions are dependency tracking posets.

\begin{definition}
  The category of \emph{poset shapes}, $\mathbf{PosetSh}$, is the skeleton of the category of finite posets with bijective-on-objects monotone functions. Objects are isomorphism classes of finite posets, and morphisms are inclusions.
\end{definition}

\begin{proposition}
  \emph{Poset shapes} form a \physicalDuoidalCategory{}.
\end{proposition}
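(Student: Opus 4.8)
The plan is to exhibit the two monoidal structures on $\mathbf{PosetSh}$ explicitly and then verify that the second distributes over the first, producing a normal duoidal category whose parallel tensor is symmetric. First I would define the \emph{parallel} (or independent) tensor: given two finite posets $P$ and $Q$, set $P \otimes Q$ to be their disjoint union with no added relations between the summands, and take the unit $I$ to be the empty poset; this is clearly symmetric, associative and unital up to the canonical bijections, so $(\mathbf{PosetSh}, \otimes, I)$ is a symmetric monoidal category. Then I would define the \emph{sequential} (or dependent) tensor $P \triangleleft Q$ to be the ordinal sum: the underlying set is again the disjoint union, but now every element of $P$ is declared strictly below every element of $Q$, retaining the internal orders; the unit $N$ is again the empty poset. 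Since both units are the empty poset, the comparison map $\varphi_0 \colon I \to N$ is literally the identity, hence an isomorphism, so once we have a duoidal structure it is automatically normal.

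Next I would supply the four distributors and check the coherence equations. The map $\varphi_2 \colon N \otimes N \to N$ and $\psi_0 \colon I \to I \triangleleft I$ are forced isomorphisms between various copies of the empty poset. The only substantive distributor is $\psi_2 \colon (X \triangleleft Z) \otimes (Y \triangleleft W) \to (X \otimes Y) \triangleleft (Z \otimes W)$: on underlying sets both sides are the disjoint union $X + Y + Z + W$, and the left-hand side carries the order generated by $X < Z$ (within the first summand) and $Y < W$ (within the second summand, disjoint from the first), while the right-hand side carries the order generated by $X + Y$ entirely below $Z + W$. Every relation present on the left is present on the right, so the identity-on-objects function is monotone, i.e.\ an inclusion, which is exactly a morphism of $\mathbf{PosetSh}$. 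I would then check the six laxator equations from the Remark following the definition of duoidal category; each reduces to the observation that two monotone bijections between the same pair of finite posets that are both the identity on underlying sets must coincide, so these equations hold trivially, and likewise for the larger coherence diagrams referenced in \Cref{sec:coherencediagramsduoidal}.

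An alternative, slicker route would be to identify $\mathbf{PosetSh}$ with the endocells of an adjoint monoid and invoke the Garner--López Franco theorem quoted earlier, or to present $\mathbf{PosetSh}$ as the normalization (\Cref{thm:normalizationDuoidal}) of the duoidal category of finite linear-order-free combinatorial shapes; but for a self-contained proof the direct verification above is cleanest, and I would only mention the adjoint-monoid perspective as a remark. The main obstacle I anticipate is purely bookkeeping: making sure the chosen convention for the ordinal sum is consistent across the associativity and unit coherence diagrams, and confirming that morphisms in $\mathbf{PosetSh}$ (bijective-on-objects monotone maps) really are closed under the operations $\otimes$ and $\triangleleft$ applied to morphisms, not just objects — this is where one must be careful that functoriality of both tensors holds on the nose. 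Once that is pinned down, normality and $\otimes$-symmetry are immediate, giving a physical duoidal category as claimed.
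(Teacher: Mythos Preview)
Your proposal is correct and follows essentially the same approach as the paper: define $\otimes$ as disjoint union and $\triangleleft$ as ordinal sum, both with the empty poset as unit, so normality is immediate and symmetry of $\otimes$ is clear. The paper's proof is actually briefer than yours—it states the two tensors and the shared unit and stops—so your explicit description of $\psi_2$ as the identity-on-elements monotone bijection and your remark that all coherence equations collapse because the competing morphisms agree on underlying sets are welcome elaborations rather than a different route.
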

\begin{proof}
  The sequential tensor is constructed by sequentially joining the posets. Let $(P, ≤_{P})$ and $(Q, ≤_{Q})$ be two posets; their sequentiation, $P ⊲ Q$, is a poset that contains a copy of $P$, a copy of $Q$, and an edge $pᵢ ≤ qⱼ$ for each $pᵢ ∈ P$ and $qⱼ ∈ Q$; that is,
  $$P ⊲ Q = (P + Q, ≤_P + ≤_Q + \{ pᵢ ≤ qⱼ \mid pᵢ ∈ P, qⱼ ∈ Q\}).$$
  The parallel tensor, $P ⊗ Q$, is defined to be the disjoint union of posets,
  $P ⊗ Q = (P + Q, ≤_P + ≤_Q)$, which defines a symmetric monoidal structure. The empty poset is the unit for both sequential and parallel tensoring, making $\mathbf{PosetSh}$ a \physicalDuoidalCategory{}.
\end{proof}

The category of poset shapes is not posetal: there are, for instance, two possible inclusions of the discrete two-element poset into itself. This prompts us to label the nodes to indicate inclusions, as in \Cref{fig:posetinclusion}, but we work up to relabelling, or $α$-equivalence. 

\begin{figure}[!ht]
  \centering
  \includegraphics[scale=0.55]{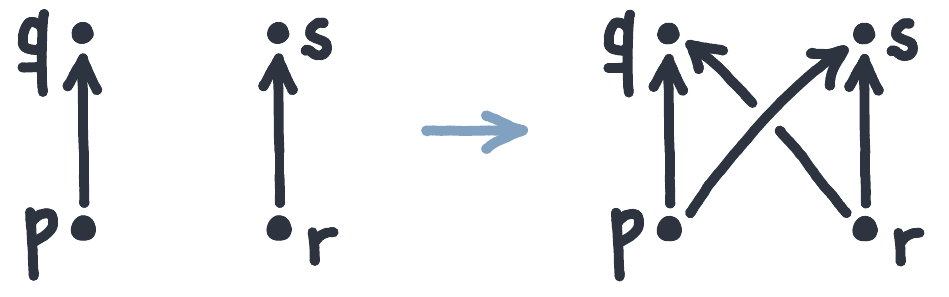}
  \caption{Poset inclusion.}
  \label{fig:posetinclusion}
\end{figure}

What makes this \physicalDuoidalCategory{} particularly relevant is that it contains the free \physicalDuoidalCategory{} over a generator. Every formal normal duoidal expression constructs a poset: we simply substitute each variable by the singleton poset and we interpret the expression in the \duoidalCategory{} of poset shapes. Every formal structure map between normal duoidal expressions corresponds to an inclusion; for instance, \Cref{fig:posetinclusion} documents the structure map $(p ⊲ q) ⊗ (r ⊲ s) → (p ⊗ r) ⊲ (q ⊗ s)$.

Formalizing this result needs a bit of care, though: while all formal physical duoidal expressions correspond to posets, not every poset shape corresponds to a physical duoidal expression. The poset shapes that arise from applying duoidal operations to the singleton poset are called \emph{expressible}, and we have a characterization result for them.

\begin{definition}
  Expressible poset shapes are those inductively constructed from
  \begin{enumerate}
    \item the empty poset, $𝖭$;
    \item the singleton poset, $\{A\}$;
    \item the union of posets, $P ⊗ Q = (P + Q, ≤_P + ≤_Q)$; and
    \item the sequencing of posets, $$P ⊲ Q = (P + Q, ≤_P + ≤_Q + \{ pᵢ ≤ qⱼ \mid pᵢ ∈ P, qⱼ ∈ Q\}).$$
  \end{enumerate}
  Expressible poset shapes form a full duoidal subcategory of the \physicalDuoidalCategory{} of poset shapes, $\mathbf{ExprSh}$.
\end{definition}

\begin{proposition}[Grabowski, {{\cite{grabowski81:partial}}}]
  Not every poset shape is expressible. In fact, a poset shape is not expressible if and only if it admits an inclusion of the \emph{Z-poset shape} defined by \Cref{fig:zposet}.
\end{proposition}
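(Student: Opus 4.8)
In more classical language this is the theorem of Valdes, Tarjan and Lawler identifying the \emph{series--parallel} posets: an expressible poset shape is one built from the empty poset and the singleton by disjoint union $(\otimes)$ and ordinal sum $(\triangleleft)$, and the $\mathsf{Z}$-poset of \Cref{fig:zposet} is the classical forbidden ``$\mathsf{N}$'' configuration. I read ``$P$ admits an inclusion of the $\mathsf{Z}$-poset shape'' as ``$P$ has a four-element subset whose induced order is isomorphic to $\mathsf{Z}$'', i.e.\ $P$ is not $\mathsf{Z}$-free. Note the opening clause of the proposition then follows from the ``only if'' clause, since $\mathsf{Z}$ trivially admits an inclusion of $\mathsf{Z}$ (equivalently, the easy direction below shows directly that $\mathsf{Z}$ is not expressible).

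The easy direction is that an expressible poset shape is $\mathsf{Z}$-free, and the plan is structural induction on a duoidal expression presenting $P$. The base cases, the empty poset and the singleton, have fewer than four elements. For the inductive step one uses that $\mathsf{Z}$ is \emph{prime}: its comparability graph is the connected path $c-a-d-b$, so $\mathsf{Z}$ has no nontrivial $\otimes$-decomposition; and, since $a \| b$ and $c \| d$ force any ordinal cut to keep $\{a,b\}$ on one side and $\{c,d\}$ on the other while $a \not\le d$ kills the sole remaining candidate, $\mathsf{Z}$ has no nontrivial $\triangleleft$-decomposition either. Hence if $P = Q \otimes R$, a four-element subset inducing $\mathsf{Z}$ cannot meet both $Q$ and $R$ (there are no relations between $Q$ and $R$, so the partition it would inherit is a nontrivial $\otimes$-decomposition of $\mathsf{Z}$), and similarly if $P = Q \triangleleft R$ (every element of $Q$ lies below every element of $R$, so the inherited partition is a nontrivial $\triangleleft$-decomposition); thus the subset lies inside one factor, contradicting the induction hypothesis.

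The substantive direction is that a $\mathsf{Z}$-free finite poset $P$ is expressible, and the plan is induction on $|P|$: the cases $|P| \le 1$ are the generators, and for $|P| \ge 2$ it is enough to produce a nontrivial decomposition $P = Q \otimes R$ or $P = Q \triangleleft R$ with $Q, R$ nonempty, since induced subposets of a $\mathsf{Z}$-free poset are again $\mathsf{Z}$-free and strictly smaller. If the comparability graph of $P$ is disconnected, take $Q, R$ to be the unions of its components. Otherwise I claim $P$ is series-decomposable, and here I would recall the standard argument via the cograph characterization of $P_4$-free graphs: the comparability graph of a $\mathsf{Z}$-free poset is $P_4$-free, because every transitive orientation of an induced four-vertex path in a comparability graph is an induced copy of $\mathsf{Z}$ in the poset, and conversely; by Seinsche's theorem a connected $P_4$-free graph on at least two vertices has disconnected complement; so the incomparability graph of $P$ is disconnected, and from such a disconnection one extracts an ordinal cut $P = Q \triangleleft R$.

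The main obstacle is precisely that last step: upgrading ``the incomparability graph is disconnected'' to a genuine ordinal decomposition in which \emph{every} element of $Q$ precedes \emph{every} element of $R$. A partition into incomparability-components need not have all its cross-relations pointing the same way, so the cut must be chosen with more care -- this is the combinatorial heart of Grabowski's (and Valdes--Tarjan--Lawler's) theorem, and I would either invoke that theorem directly or push through the standard proof, which organizes the join-factors of the comparability graph into a hierarchy and peels off an extremal block, the consistency of the peeling being exactly what $\mathsf{Z}$-freeness provides. Everything else is bookkeeping: that induced subposets of expressible shapes are expressible, that $\otimes$ and $\triangleleft$ on poset shapes are literally disjoint union and ordinal sum, and that the inductions are well-founded.
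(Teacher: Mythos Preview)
The paper does not prove this proposition at all: it is stated as a cited result of Grabowski, with no argument supplied. So there is no ``paper's own proof'' to compare against, and your sketch is entirely extra. That said, your sketch is the standard series--parallel argument and is essentially correct; both directions are handled the right way, and you correctly isolate the one step that needs work.

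On that step: you are right that disconnectedness of the incomparability graph does not by itself give an ordinal cut, but you do not need to invoke the full Grabowski or Valdes--Tarjan--Lawler machinery either. Here is a short direct argument. Pick an incomparability-component $C$ and let $D = P \setminus C$; every element of $C$ is comparable to every element of $D$. If $x,x' \in C$ are incomparable and $d \in D$, then $x < d$ forces $x' < d$ (otherwise $d < x'$ and transitivity gives $x < x'$). So adjacent vertices of $C$ agree on whether a given $d$ lies above or below them, and by connectivity of $C$ all of $C$ agrees; thus $D$ splits as $D^- \triangleleft C \triangleleft D^+$ with at least one of $D^\pm$ nonempty. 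This closes the gap without appealing to anything beyond the definitions, so your induction goes through cleanly.
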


\begin{figure}[!ht]
  \centering
  \includegraphics[scale=0.55]{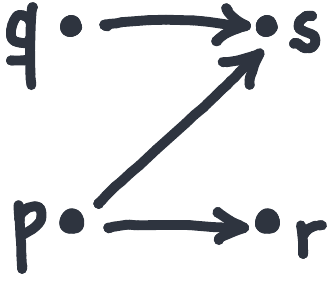}
  \caption{The Z poset shape.}
  \label{fig:zposet}
\end{figure}

\begin{theorem}[Shapiro and Spivak, {{\cite{shapiro22:duoidal}}}]
  \label{th:posetsAreFreePhysical}
  The \physicalDuoidalCategory{} $\mathbf{ExprSh}$ of expressible poset shapes is the free \physicalDuoidalCategory{} on a single object. There exists exactly one structure map between any two objects of the free \physicalDuoidalCategory{} for each inclusion of their associated expressible posets.
\end{theorem}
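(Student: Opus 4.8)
The plan is to establish the freeness statement in two halves: first that $\mathbf{ExprSh}$ admits a \physicalDuoidalCategory{} structure and receives a canonical functor from the free one on a single generator, and second that this functor is invertible, which amounts to the clause that ``there exists exactly one structure map between any two objects for each inclusion of the associated posets''. The first half is largely already in place: the preceding proposition gives the \physicalDuoidalCategory{} structure on $\mathbf{PosetSh}$, and $\mathbf{ExprSh}$ is its full duoidal subcategory generated under $\otimes$, $\triangleleft$, $\mathsf{N}$ and the singleton $\{A\}$, so it is closed under the duoidal operations by construction. Let $\mathbf{Free}$ denote the free \physicalDuoidalCategory{} on one object; by the universal property there is a unique \produoidalFunctor{}-style physical duoidal functor $E ፡ \mathbf{Free} \to \mathbf{ExprSh}$ sending the generator to $\{A\}$. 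The content of the theorem is that $E$ is an isomorphism of categories.

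First I would show $E$ is bijective on objects. Surjectivity is immediate from the definition of $\mathbf{ExprSh}$: every expressible poset shape is, by definition, built from $\mathsf{N}$ and $\{A\}$ by $\otimes$ and $\triangleleft$, and each such build is the image under $E$ of the corresponding formal expression in $\mathbf{Free}$. For the object-level the key lemma is a normal form: every formal physical-duoidal expression in one variable can be rewritten, using the coherence isomorphisms and $\otimes$-symmetry, into a canonical expression determined precisely by its underlying poset shape; two formal expressions with isomorphic associated posets are connected by a (necessarily unique) chain of structure isomorphisms. Here is where I would lean on Grabowski's proposition: the obstruction to expressibility is exactly the $Z$-poset shape, so the recursive decomposition ``$P$ is either a non-trivial $\otimes$-product, a non-trivial $\triangleleft$-sequencing, a singleton, or empty'' is available for every expressible $P$ and is unique up to reordering $\otimes$-factors and reassociating; this unique decomposition is what lets one read off a canonical $\mathbf{Free}$-expression from a poset shape and prove the normal form.

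Then I would prove $E$ is fully faithful, i.e. that for expressible poset shapes $P, Q$ there is exactly one morphism $P \to Q$ in $\mathbf{Free}$ over each monotone bijective-on-underlying-set inclusion $P \hookrightarrow Q$. Faithfulness (at most one) follows from the coherence theorem for lax monoidal functors applied to the two laxators: a structure map in a \physicalDuoidalCategory{} is a composite of associators, unitors, symmetries and the distributor $\psi_2, \psi_0, \varphi_2, \varphi_0$, and on expressible (hence $Z$-free) shapes the pathologies of \Cref{prop:duoidalCoherencefails1,prop:duoidalCoherencefails2} do not occur --- those counterexamples crucially used a $\hom(I;I)$ with a non-identity, or the ambiguity of which unit absorbs, which are precisely absent when $N \cong I$ and the shapes are $Z$-free. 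So any two parallel structure maps between expressible shapes agree. Fullness (at least one) is a second induction: given an inclusion $P \hookrightarrow Q$, decompose $Q$ canonically; the image of $P$ must be compatible with this decomposition (again using $Z$-freeness to rule out ``crossing'' inclusions that would create a $Z$), so $P$ itself decomposes compatibly and the inclusion is built from the distributors plus inclusions on strictly smaller pieces by induction.

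The main obstacle I expect is the fullness step, specifically proving that an arbitrary inclusion of expressible poset shapes respects the recursive $\otimes/\triangleleft$-decomposition: one must argue that if $Q = Q_1 \triangleleft Q_2$ (a maximal such splitting) then the image of any expressible $P$ is split by the same cut, and dually for $\otimes$. This is exactly the combinatorial heart of Grabowski's and Shapiro--Spivak's analysis, and the careful case distinction --- ruling out inclusions whose image straddles a $\triangleleft$-cut in a way that would embed a $Z$-poset --- is where the real work lies; everything else is bookkeeping with coherence. I would either cite \cite{grabowski81:partial,shapiro22:duoidal} for this decomposition compatibility or reprove it by induction on $|P| + |Q|$, handling the base cases $\mathsf{N}$ and $\{A\}$ directly and using minimality of the chosen top-level splitting of $Q$ to force the splitting of $P$.
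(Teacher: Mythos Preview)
The paper does not give its own proof of this theorem: it is stated with attribution to Shapiro and Spivak and immediately used to derive the coherence corollary for normal duoidal categories that follows it. There is therefore nothing in the paper to compare your proposal against; your outline is broadly the strategy of the cited reference, and you have correctly located the combinatorial heart of the argument in the unique series--parallel decomposition of $Z$-free posets (Grabowski).

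Two points deserve tightening. First, your faithfulness argument is circular as written. You invoke ``the coherence theorem for lax monoidal functors'' to conclude that parallel structure maps agree, and then patch the known failure of that coherence (\Cref{prop:duoidalCoherencefails1}) by observing that the counterexamples vanish when $N \cong I$. But the claim ``in the normal case, all parallel structure maps agree'' is precisely the coherence theorem you are trying to prove; the paper itself presents it as a \emph{corollary} of this theorem, not an input to it. The honest route is the one you already have in hand: the unique recursive $\otimes/\triangleleft$-decomposition of expressible shapes gives you the normal form on objects \emph{and}, by the same induction, uniqueness of morphisms --- you do not need an external coherence result.

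Second, $E$ is not literally bijective on objects: in the free physical duoidal category $A \otimes B$ and $B \otimes A$ are distinct objects related by the symmetry isomorphism, while $\mathbf{ExprSh}$ is a skeleton in which they coincide. You are proving an equivalence, not an isomorphism, which is all the statement needs; just say so.
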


\begin{remark}[Coherence for normal duoidal categories]
  Coherence for \duoidalCategories{} needs some care: not any two morphisms between distinctly-typed expressions in the free \duoidalCategory{} are equal (\Cref{prop:duoidalCoherencefails1}). However, the previous theorem implies that the same statement is true for \normalDuoidalCategories{}.
\end{remark}
\begin{corollary}
  Any two morphisms between distinctly typed expressions in the free \duoidalCategory{} over a set of objects are equal.
\end{corollary}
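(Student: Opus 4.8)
The plan is to deduce the statement from the Shapiro--Spivak characterization of the free \physicalDuoidalCategory{} on one generator (\Cref{th:posetsAreFreePhysical}), after upgrading it to a \emph{coloured} version. First I would fix the reading of the statement. It really concerns the free \emph{normal} \duoidalCategory{} --- indeed the free \physicalDuoidalCategory{} --- since in the non-normal case coherence fails: \Cref{prop:duoidalCoherencefails1} exhibits two distinct structure maps $I \triangleleft I \to I$, and normality (the isomorphism $I \cong N$) is exactly what collapses the two units responsible for this. I would also read ``distinctly typed expression'' over the set $S$ as a formal \normalDuoidalCategory{} expression in which each generating object of $S$ occurs at most once; this is the analogue of the multilinearity hypothesis under which coherence for \symmetricMonoidalCategories{} is stated.

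Next I would construct the free \physicalDuoidalCategory{} on $S$ explicitly, as the category $\mathbf{ExprSh}_{S}$ of \emph{$S$-labelled expressible poset shapes}: objects are isomorphism classes of finite posets each of whose elements carries a label in $S$, built inductively from the empty poset, the labelled singletons $\{s\}$ for $s \in S$, the parallel tensor $P \otimes Q = (P + Q, {\leq_{P}} + {\leq_{Q}})$ and the sequential tensor $P \triangleleft Q = (P + Q, {\leq_{P}} + {\leq_{Q}} + \{p \leq q \mid p \in P, q \in Q\})$, carrying the labels along; morphisms are the label-preserving, bijective-on-underlying-sets, monotone maps. I would check that this is a \physicalDuoidalCategory{} exactly as in the one-object case, and then verify its universal property by rerunning the proof of \Cref{th:posetsAreFreePhysical} with labels attached: a formal expression over $S$ evaluates to an object of $\mathbf{ExprSh}_{S}$, every formal structure map evaluates to a label-preserving poset inclusion, and --- the substantive point --- any two formal composites of structure maps with the same domain and codomain induce \emph{the same} inclusion, so a physical duoidal functor out of the free category is determined by the images of the singletons.

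The corollary then falls out. Given distinctly typed $A$ and $B$ over $S$ and two parallel structure maps $f, g \colon A \to B$, interpret them in $\mathbf{ExprSh}_{S}$. Each of the distributors $\psi_{2}, \psi_{0}, \phi_{2}, \phi_{0}$, each associator and unitor, and whiskerings and composites thereof, preserves the multiset of atoms of an expression; hence $A$ and $B$ carry exactly the same labels, each occurring once. Therefore the underlying label-preserving bijection of their posets is forced --- the unique $s$-labelled element must be sent to the unique $s$-labelled element --- and it is monotone because $f$ (equally, $g$) realises it. So $f$ and $g$ have the same underlying inclusion, and by the ``exactly one structure map per inclusion'' clause they coincide in the free category.

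The hard part will be the coloured upgrade of \Cref{th:posetsAreFreePhysical}: faithfulness of the interpretation from the \emph{syntactic} free \physicalDuoidalCategory{} on $S$ into $\mathbf{ExprSh}_{S}$, i.e. that no two distinct formal composites of laxators and coherence cells with equal endpoints survive. This is precisely the coherence content already established by Shapiro and Spivak for a single generator; the labels add only bookkeeping, but one must still check that their argument goes through unchanged. Everything else --- that $\mathbf{ExprSh}_{S}$ is a \physicalDuoidalCategory{}, that structure maps preserve atoms, that a label-preserving bijection is unique when labels are distinct --- is routine. I would also note explicitly that the ``distinctly typed'' hypothesis is genuinely needed: the two serialisation maps $A \otimes A \to A \triangleleft A$ are distinct in $\mathbf{ExprSh}_{S}$, but here the atom $A$ is repeated, so this is not a counterexample, just as $I \triangleleft I \to I$ is ruled out only once one passes to the \emph{normal} setting.
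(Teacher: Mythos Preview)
Your proposal is correct and in fact more careful than the paper. The paper states this as a bare corollary with no proof, treating it as immediate from the Shapiro--Spivak characterization (\Cref{th:posetsAreFreePhysical}); the preceding remark says only that ``the previous theorem implies'' the claim. You have correctly spotted that \Cref{th:posetsAreFreePhysical} as stated concerns a \emph{single} generator, whereas the corollary speaks of a set of objects, so a coloured upgrade is genuinely needed; you have also correctly identified that the statement must be read in the normal (physical) setting, since the counterexample of \Cref{prop:duoidalCoherencefails1} shows it is false for general duoidal categories. Your route through $\mathbf{ExprSh}_{S}$ and label-preserving inclusions is exactly the argument one must supply to make the paper's ``implies'' rigorous, and your identification of the one substantive step --- faithfulness of the interpretation, i.e.\ the coloured coherence --- is accurate. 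So: same approach, but you have filled in what the paper leaves implicit.
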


\subsection{Physical Lax Tensor of a Physical Duoidal Category}
\label{sec:physicallaxtensor}
Let us recap our interpretation of \physicalDuoidalCategories{}: they track an underlying poset of dependencies. The sequential tensor, $X ⊲ Y$, says that $X$ occurs before $Y$, but $Y ⊲ X$ says that $Y$ occurs before $X$; consequently, it is not symmetric. The parallel tensor, $X ⊗ Y$, states that both $X$ and $Y$ occur independently. This final section of our introduction to \physicalDuoidalCategories{} shows what happens when we want to consider both $X$ and $Y$ but we do not know at all how they interact: the tensor that tracks this case is a derived operation, the \emph{physical tensor}, $X \boxtimes Y$.

The physical tensor simply says that both occur at some point: it does not impose independence, but it does not impose any particular dependency either. The physical tensor $X \boxtimes Y$ says that $X$ may occur before $Y$, or $Y$ before $X$, or both in parallel and in that case it does not matter how we regard the dependency. This is a tool that we will employ later to discuss a version of monoidal context that does not track dependency: \emph{wiring diagrams} (\cite{spivak13}, \Cref{conj:wiringdiagrams}).

\begin{remark}
  The binary physical tensor, $X \boxtimes Y$, is easy to define: it is the pushout of the two structure maps $X ⊗ Y → X ⊲ Y$ and $X ⊗ Y → Y ⊲ X$. However, unlike most tensors, its n-ary version cannot be deduced from its binary and nullary versions; the physical tensor is only a lax tensor.
\end{remark}

\begin{definition}[Leinster {{\cite{leinster04}}}]
  A \emph{lax monoidal category} is a category $ℂ$ endowed with a family of lax tensor n-fold tensor functors $(\boxtimes) ፡ ℂ^{n} → ℂ$ -- written as $X₁ \boxtimes … \boxtimes Xₙ$, with the 0-ary case $E$ -- and a family of associator natural transformations that unbias the application of the lax tensor,
  $$α ፡ \boxtimes_{i = 0}^{n} \left( \boxtimes_{j = 0}^{k_i} X^{i}_{j} \right) → X_{1}^{1} \boxtimes … \boxtimes X^{1}_{k_1} \boxtimes … \boxtimes X^{n}_1 \boxtimes … \boxtimes X^n_{k_n},$$
  such that all formally well-typed equations hold.
\end{definition}

\begin{definition}
  \label{def:physicaltensor}
  Let $(ℂ,⊗,I,⊲,N)$ be a \physicalDuoidalCategory{}. 
  The \emph{physical tensor}, $(\boxtimes)$, is an additional lax monoidal tensor, defined as the glueing of the sequential tensor $(⊲)$ along the parallel tensor $(⊗)$; that is, it is the pushout on the following family of structure maps, indexed by permutations
  $$l_{σ} ፡ X₁ ⊗ … ⊗ Xₙ → X_{σ1} ⊲ … ⊲ X_{σn},\mbox{ for } σ ∈ P(n).$$
\end{definition}

\begin{remark}
This only forms a lax tensor for a good reason. Consider the simpler case of three elements, $X \boxtimes Y \boxtimes Z$. This expression allows all of the possible six permutations to occur: \emph{(i)} $X ⊲ Y ⊲ Z$; \emph{(ii)} $X ⊲ Z ⊲ Y$; \emph{(iii)} $Y ⊲ X ⊲ Z$; \emph{(iv)} $Y ⊲ Z ⊲ X$; \emph{(v)} $Z ⊲ X ⊲ Y$; and \emph{(vi)} $Z ⊲ Y ⊲ X$. However, when we consider $(X \boxtimes Y) \boxtimes Z$, we are only allowing a certain subset of these cases to occur. Namely, only those where $Z$ does not happen between $X$ and $Y$: \emph{(i)} $X ⊲ Y ⊲ Z$; \emph{(ii)} $Y ⊲ X ⊲ Z$; \emph{(iii)} $Z ⊲ X ⊲ Y$; and \emph{(iv)} $Z ⊲ Y ⊲ X$.
This is why the physical tensor is only lax. We have two inclusions: 
$(X \boxtimes Y) \boxtimes Z → X \boxtimes Y \boxtimes Z$ and  $X \boxtimes (Y \boxtimes Z) → X \boxtimes Y \boxtimes Z$,
but these are not isomorphisms.
\end{remark}

\begin{proposition}
  The physical tensor defines a lax monoidal structure. Given any \physicalDuoidalCategory{} $(ℂ,⊗,⊲,N)$, the physical tensor defines a lax monoidal category $(ℂ,\boxtimes,N)$. 
\end{proposition}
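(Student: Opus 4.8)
The plan is to verify directly that the physical tensor, as defined by the pushout in \Cref{def:physicaltensor}, assembles into a lax monoidal structure in the sense of Leinster's definition. First I would record the data: for each $n$ the $n$-fold functor $(\boxtimes) \colon ℂ^n → ℂ$ is the colimit of the diagram of structure maps $l_σ \colon X₁ ⊗ … ⊗ Xₙ → X_{σ1} ⊲ … ⊲ X_{σn}$ indexed by $σ ∈ P(n)$; the $0$-ary case is $N$ and the unary case is the identity functor. These functors exist because a \physicalDuoidalCategory{} is built on a \monoidalCategory{} $(ℂ,⊗,I)$ and a \monoidalCategory{} $(ℂ,⊲,N)$ with $I ≅ N$, and one assumes the relevant pushouts exist (this should be stated as a hypothesis, mirroring the coequalizer hypotheses appearing in \Cref{thm:normalizationDuoidal}).

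Next I would construct the associator natural transformations $α \colon \boxtimes_{i=0}^{n}\left(\boxtimes_{j=0}^{k_i} X^i_j\right) → X^1_1 \boxtimes … \boxtimes X^n_{k_n}$. The key observation is that both sides are colimits of diagrams of $⊲$-expressions obtained from the single $⊗$-expression $X^1_1 ⊗ … ⊗ X^n_{k_n}$ via families of sequencing maps, and the right-hand side's colimit is taken over the \emph{full} set of permutations of all $k_1 + … + k_n$ variables, while the left-hand side's colimit is over the subset of permutations that respect the bracketing (those where, within each block, the $X^i_j$ appear in some internal order, and no variable from another block interleaves with a block once that block's nesting has been fixed). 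The inclusion of index sets, together with the compatibility of the $⊲$-associators, induces the comparison map $α$ out of the universal property of the left-hand colimit. I would spell this out for the ternary case $(X \boxtimes Y) \boxtimes Z → X \boxtimes Y \boxtimes Z$ as in the preceding remark, and then note the general case is the same bookkeeping.

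The remaining step is coherence: every formally well-typed equation between composites of associators must hold. Here I would invoke \Cref{th:posetsAreFreePhysical}: the free \physicalDuoidalCategory{} on one object is $\mathbf{ExprSh}$, and between any two expressible poset shapes there is at most one structure map. Since each $\boxtimes$-expression is itself built by \physicalDuoidalCategory{} operations (pushouts of $⊲$'s over $⊗$'s, all of which are expressible in the free object once we also close under the relevant colimits), any two parallel composites of $α$'s between the same source and target agree by this uniqueness — equivalently, one checks that the physical tensor of a tuple of singleton posets is the poset whose linear extensions are exactly the admissible orders, and naturality plus the poset-shape coherence forces the rest. Strictly, one should check that the pushout defining $\boxtimes$ in $\mathbf{PosetSh}$ stays within a setting where the "at most one map" property still applies; that is the point requiring the most care.

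The main obstacle I expect is precisely this last point: the physical tensor is genuinely only lax, so one cannot reduce coherence to the strict/biased coherence theorem for (pseudo)monoidal categories. Making the argument rigorous requires either (a) carefully identifying the index categories of the defining colimits and showing the comparison maps are induced by fully faithful inclusions of those index categories — so that the associativity pentagon-type diagrams commute by a colimit-uniqueness argument — or (b) reducing to the concrete model of poset shapes, where $X_1 \boxtimes … \boxtimes X_n$ is the poset shape whose order relations are exactly those present in \emph{every} linear order of the $X_i$ (i.e. the empty relation on distinct singletons, recovering the parallel tensor in that degenerate case, but genuinely interesting for nested $\boxtimes$), and invoking that poset shapes have at most one map between fixed endpoints. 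Approach (b) is cleaner and I would take it, treating approach (a) as the conceptual explanation of why it works.
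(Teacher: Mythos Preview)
The paper's proof is a single sentence: the laxator is induced by the universal property of the defining pushout, and the coherence equations hold by the uniqueness clause of that same universal property. This is precisely your approach (a).

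Your stated preference for approach (b) has a gap. Invoking \Cref{th:posetsAreFreePhysical} establishes at most that the coherence equations hold for the physical tensor computed \emph{in the free physical duoidal category}; but $\boxtimes$ is defined as a pushout in the ambient category $ℂ$, and the unique duoidal functor from the free object to $ℂ$ has no reason to preserve those pushouts, so coherence does not transport along it. The role that poset-shape coherence can legitimately play is more modest: once you precompose a coherence equation with all the relevant pushout cocone legs --- that is, once you have already carried out approach (a) --- you are left with equations between composites of duoidal structure maps linking $⊗$- and $⊲$-expressions, and \emph{those} equations are discharged by \Cref{th:posetsAreFreePhysical}. So (b) is a tool for finishing the residual obligation produced by (a), not an alternative to it; the load-bearing step is the colimit-uniqueness argument, just as the paper (tersely) says.
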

\begin{proof}
  The definition of the laxator follows from the universal property of the pushout; the coherence equations hold by uniqueness of the maps constructed out of this universal property.
\end{proof}

\subsection{Bibliography}

The original monograph on \duoidalCategories{} is due to Aguiar and Mahajan \cite{aguiar10:monoidal} -- \duoidalCategories{} were originally known as \emph{``2-monoidal categories''}; Street first described multiple examples that we recall \cite{street12:linking}, and Garner and López Franco mention for the first time the connection to \adjointMonoids{} \cite{garner16}. The reason \duoidalCategories{} do not have a correspondence in lower dimensional algebra is the Eckmann-Hilton argument \cite[Theorem 1.12]{eckman61:structure}.

\PhysicalDuoidalCategories{} follow the definition and nomenclature of Spivak and Shapiro \cite{shapiro22:duoidal}; their work makes the case for interpreting them as expressing dependencies between processes and argues initiality of the category of expressible posets. It seems originally due to Grabowski \cite{grabowski81:partial} that expressible posets are precisely those not containing a Z, and Gischer recognized the lax interchange of \normalDuoidalCategories{} as subsumption of posets \cite{gischer88:equationalpomsets}. Even if the physical lax tensor does not seem to appear in the related literature, its definition and its consideration as a lax tensor follow the work of Leinster \cite{leinster04} and the abstraction of commutativity by Garner and López Franco \cite{garner16}. I thank Matt Earnshaw for multiple pointers to the literature.
\clearpage{}%
\clearpage{}%
\section{Produoidal Decomposition of Monoidal Categories}
\label{sec:produoidalDecomposition}

\subsection{Produoidal categories}

\defining{linkProduoidalComponents}
\ProduoidalCategories{}, first defined by Booker and Street \cite{bookerstreet13}, provide an algebraic structure for the interaction of sequential and parallel decomposition. A \produoidalCategory{} $𝕍$ not only contains \emph{morphisms}, $𝕍(X; Y)$, as in a category, but also \emph{sequential joints}, $𝕍(X_0 ◁ X_1; Y)$,  and \emph{sequential units}, $𝕍(N; X)$, provided by a \promonoidal{} structure; and \emph{parallel joints}, $𝕍(X_0 ⊗ X_1; Y)$ and \emph{parallel units}, $𝕍(I; X)$, provided by another \promonoidal{} structure.

These splits must be coherent. For instance, imagine we want to join  $X_0$, $X_1$ and $X_2$ (sequentially) into $Y$. Joining $X₀$ and $X₁$ into something $(•)$, and then joining that something with $X_2$ to produce $Y$ \emph{should be doable in essentially the same ways} as joining $X₁$ and $X₂$  into something $(•)$, and then joining that something with $X_0$ to produce $Y$. Formally, we are saying that
$$𝕍(X_0 ⊲ X₁ ; •) ⋄ 𝕍(• ⊲ X_2; Y) ≅ 𝕍(X₁ ⊲ X₂ ; •) ⋄ 𝕍(X₀ ⊲ • ; Y),
$$
and, in fact, we just write $𝕍(X_0◁ X_1 ◁ X_2; Y)$ for the set of such transformations. This is precisely what we ask for in a \promonoidal{} structure.

\begin{definition}[Produoidal category]
  \defining{linkproduoidal}{}\defining{linkProduoidalCategory}{}
  \label{def:produoidal}
  A \emph{produoidal category} is a category $𝕍$ endowed with two \promonoidal{} structures,
  $$\begin{gathered}
    𝕍(• ⊗ • ; •) ፡ 𝕍 × 𝕍 → 𝕍, \mbox{ and } 𝕍(I; •) ፡ 1 → 𝕍, \\
    𝕍(• ◁ • ; •) ፡ 𝕍 × 𝕍 → 𝕍, \mbox{ and } 𝕍(N; •) ፡ 1 → 𝕍,
  \end{gathered}$$
  such that one laxly distributes over the other.
  This is to say that it is endowed with the following natural \emph{lax interchangers}:
  \begin{enumerate}
    \item $ψ_2 ፡ 𝕍((X◁Y)⊗(Z◁W);•) → 𝕍((X⊗Z)◁(Y⊗W);•)$,
    \item $ψ_0 ፡ 𝕍(I;•) → 𝕍(I◁I;•)$,
    \item $φ_2 ፡ 𝕍(N⊗N;•) → 𝕍(N;•)$, and
    \item $φ_0 ፡ 𝕍(I;•) → 𝕍(N;•)$.
  \end{enumerate}
  Interchangers, together with unitors and associators, must satisfy coherence conditions (see \Cref{sec:coherencediagramsduoidal}). We denote by $\pDuo$ the category of \produoidalCategories{} and \produoidalFunctors{}.
\end{definition}

\begin{proposition}
  Let $𝕍$ be a \produoidalCategory{}, then its category of copresheaves, $[𝕍,\Set]$, is a \duoidalCategory{}.
\end{proposition}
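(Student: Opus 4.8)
The plan is to extend the construction of Day convolution to both \promonoidal{} structures at once and then verify the distributivity and coherence conditions by transporting them along Day convolution. First I would recall from the remark following the Garner--López Franco theorem that each \promonoidal{} structure on $𝕍$ induces a monoidal structure on the copresheaf category $[𝕍,\Set]$ by Day convolution: the parallel tensor sends $F, G$ to $(F ⊗ G)(A) = ∫^{X,Y} 𝕍(X ⊗ Y; A) × F(X) × G(Y)$ with unit $A ↦ 𝕍(I; A)$, and similarly the sequential tensor sends $F, G$ to $(F ◁ G)(A) = ∫^{X,Y} 𝕍(X ◁ Y; A) × F(X) × G(Y)$ with unit $A ↦ 𝕍(N; A)$. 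The associators, unitors and their coherence all follow from the promonoidal coherence of each structure separately, exactly as in \Cref{def:promonoidal} — this is standard, so I would cite it rather than reprove it.

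The substance is the distributor. From the lax interchanger $ψ_2 ፡ 𝕍((X ◁ Y) ⊗ (Z ◁ W); •) → 𝕍((X ⊗ Z) ◁ (Y ⊗ W); •)$ of \Cref{def:produoidal}, I would build a natural transformation $(F ◁ G) ⊗ (H ◁ K) → (F ⊗ H) ◁ (G ⊗ K)$ of copresheaves by unwinding both sides with coend calculus (Fubini, \Cref{prop:fubinirule}) into a single coend over four variables $X, Y, Z, W$ weighted by $F(X) × G(Y) × H(Z) × K(W)$, with the profunctorial coefficients $𝕍((X◁Y)⊗(Z◁W);A)$ on the left and $𝕍((X⊗Z)◁(Y⊗W);A)$ on the right; the map is then just $ψ_2$ applied coefficient-wise, and its naturality in $F, G, H, K$ and in $A$ is inherited from that of $ψ_2$. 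The unit distributors $I → I ◁ I$, $N ⊗ N → N$, and $I → N$ arise identically from $ψ_0$, $φ_2$, and $φ_0$ respectively — the last two because convolving with the representable-at-$N$ copresheaf reproduces the $N$-profunctor by \byYoneda{}.

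The main obstacle I expect is the coherence bookkeeping: I must check that these induced distributors satisfy the six equations listed in the remark after \Cref{def:produoidal} (the three $ψ$-equations for associator and unitors, and the three $φ$-equations), i.e.\ that $[𝕍,\Set]$ is genuinely a \duoidalCategory{} and not merely a category with two monoidal structures and some maps between them. The clean way to do this is to observe that each of these equations, after applying coend calculus to reduce both sides to coends with the \emph{same} weighting by $F$, $G$, $H$, $\dots$, becomes exactly the corresponding coherence equation satisfied by the produoidal interchangers $ψ_2, ψ_0, φ_2, φ_0$ on the profunctorial coefficients (the equations referenced as \Cref{sec:coherencediagramsduoidal}); since coend calculus manipulations are natural, equality of the coefficient maps forces equality of the induced copresheaf maps. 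So the proof reduces to a dictionary: Day convolution turns the promonoidal/produoidal data of $𝕍$ into the monoidal/duoidal data of $[𝕍,\Set]$, and turns produoidal coherence into duoidal coherence. I would present the parallel-and-sequential tensors and the four distributors explicitly, note the two promonoidal halves by appeal to \Cref{def:promonoidal}, and then dispatch the duoidal axioms by this reduction, leaving the routine coend rewrites to the reader.
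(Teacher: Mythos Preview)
The paper states this proposition without proof, treating it as a well-known consequence of Day convolution (it is folklore that a promonoidal structure on $𝕍$ yields a monoidal closed structure on $[𝕍,\Set]$, and the produoidal case is the obvious two-fold version). Your proposal is correct and is precisely the standard argument one would give: extend Day convolution to both promonoidal structures, and transport the four produoidal interchangers $ψ_2, ψ_0, φ_2, φ_0$ coefficient-wise through the coend formulae, with coherence following because the coend rewrites are natural and reduce each duoidal axiom to the corresponding produoidal one. There is nothing to compare against in the paper itself.
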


\begin{remark}[Nesting profunctorial structures]
  Notation for nesting functorial structures, say $(◁)$ and $(⊗)$, is straightforward: we use expressions like $(X_1 ⊗ Y_1) ◁ (X_2 ⊗ Y_2)$ without a second thought. Nesting the profunctorial (or \emph{virtual}) structures $(⊲)$ and $(⊗)$ is more subtle: defining $𝕍(X ⊗ Y; •)$ and $𝕍(X ◁ Y; •)$ for each pair of objects $X$ and $Y$ does not itself define what something like $𝕍((X_1 ⊗ Y_1) ◁ (X_2 ⊗ Y_2); •)$ means. Recall that, in the profunctorial case, $X_1 ◁ Y_1$ and $X_1 ⊗ Y_1$ are not objects themselves: they are just names for the \profunctors{} $𝕍(X_1 ◁ Y_1; •)$ and $𝕍(X_1 ⊗ Y_1; •)$, which are not \emph{representable}.

  Instead, when we write $𝕍((X_1 ⊗ Y_1) ◁ (X_2 ⊗ Y_2); •)$, we formally mean the composition of \profunctors{} $𝕍(X_1 ⊗ Y_1; •_1) ⋄ 𝕍( X_2 ⊗ Y_2; •_2) ⋄ 𝕍(•_1 ◁ •_2 ; •)$. By convention, nesting profunctorial structures means \profunctor{} composition in this text.
\end{remark}

\begin{remark}
  Should we reverse the direction of the interchangers? Depending on the author, \promonoidalCategories{} and \produoidalCategories{} are reversed. It seems that both conventions have their advantages. The one we follow here \cite{day05:centres} makes intuitive sense: it follows the multicategorical and operadic point of view -- multiple ingredients produce a result. The opposite one \cite{produoidal23} gets the interchangers to be those of a \duoidalCategory{} and it becomes clear that there is a correspondence between \produoidalCategories{} and closed \duoidalCategories{} on preshaves.
\end{remark}

\subsection{Monoidal Contour of a Produoidal Category}

Any \produoidalCategory{} freely generates a monoidal category, its \emph{monoidal contour}. Contours form a monoidal category of paths around the decomposition trees of the \produoidalCategory{}.
Contours follow a pleasant geometric pattern where we follow the shape of the decomposition, both in the parallel and sequential dimensions, to construct both sequential and parallel compositions for a monoidal category.

\begin{definition}[Monoidal contour]
  \label{def:monoidalContour}
  \defining{linkMonoidalContour}{}
    The \emph{contour} of a \produoidalCategory{} $𝔹$ is the \monoidalCategory{} $\mContour{𝔹}$ presented by two polarized objects, $X^{∘}$ and $X^{•}$, for each object $X ∈ 𝔹_{\text{obj}}$; and generated by arrows that arise from \emph{contouring} both sequential and parallel decompositions of the \promonoidalCategory{}.
    \begin{figure}[ht]
      \centering
      \includegraphics[scale=0.35]{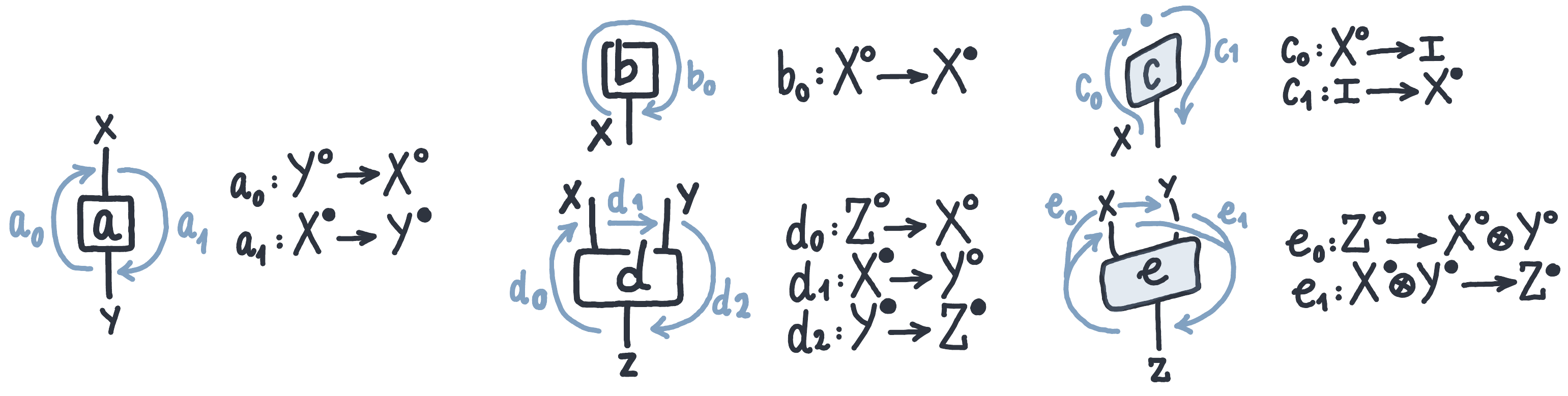}
      \caption{Generators of the monoidal category of contours.}
      \label{fig:monoidal-contour}
    \end{figure}
  \end{definition}

  Specifically, \monoidalContour{} is the \monoidalCategory{} presented by the following generators in \Cref{fig:monoidal-contour}:
    \begin{enumerate}
      \item $a_0 ፡ Y^{∘} → X^{∘}$ and $a_1 ፡ X^{•} → Y^{•}$, for each morphism $a ፡ X → Y$;
      \item $b_0 ፡ X^{∘} → X^{•}$, for each \sequentialUnit{}, $b ፡ 𝖭 → X$;
      \item $c_0 ፡ X^{∘} → I$ and $c_1 ፡ I → X^{•}$, for each \parallelUnit{}, $c ፡ I → X$;
      \item a triple of generators $d_0 ፡ Z^{∘} → X^{∘}$, $d_1 ፡ X^{•} → Y^{∘}$ and $d_2 ፡ Y^{•} → Z^{•}$, for each \sequentialSplit{} $d ፡ X ⊲ Y → Z$; and
      \item a pair of generators $e_0 ፡ Z^{∘} → X^{∘} ⊗ Y^{∘}$ and $e_1 ፡ X^{•} ⊗ Y^{•} → Z^{•}$ for each \parallelSplit{}, $e ፡ X ⊗  Y → Z$.
    \end{enumerate}

    We impose all the equations of the theory of contour. Additionally, we also impose all of the equations depicted in \Cref{fig:monoidal-contour-equation1}. Together, these form the theory of \monoidalContour{}, which adds to the theory of sequential contour a new monoidal dimension.
    \begin{figure}[ht]
        \centering
        \includegraphics[scale=0.26]{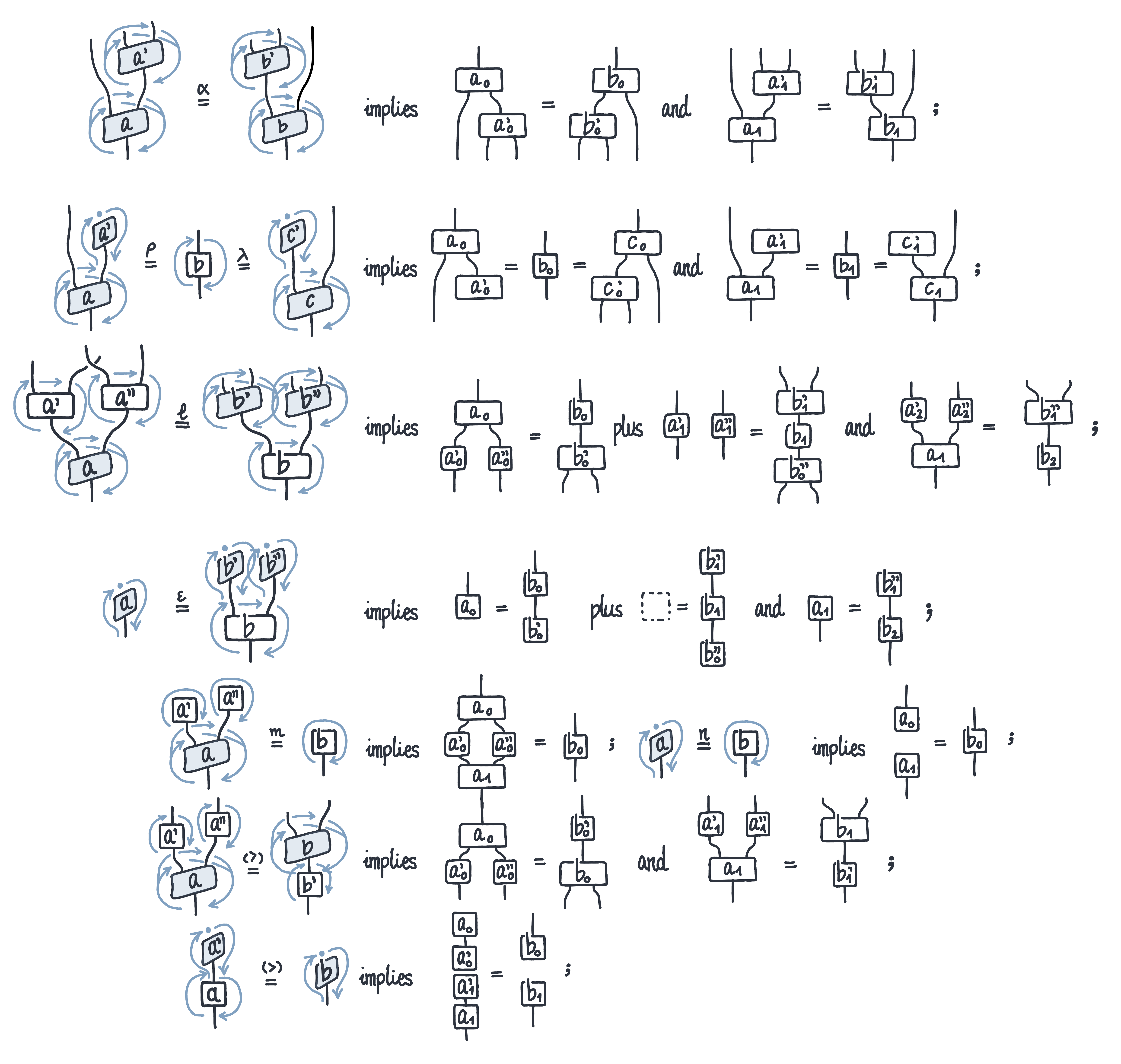}
        \caption{Extra equations for the theory of monoidal contour.}
        \label{fig:monoidal-contour-equation1}
        \label{fig:monoidalContourAssociativity}        
    \end{figure}

\begin{proposition} \label{prop:monoidalContourFunctor}
  \MonoidalContour{} extends to a functor 
  $$\mContourF : \pDuo → \MonCat{}.$$
\end{proposition}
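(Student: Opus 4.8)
The plan is to exploit that, by \Cref{def:monoidalContour}, $\mContour{\mathbb{B}}$ is a \emph{presented} \monoidalCategory{}: it has generating objects, generating morphisms, and a set of relations (the theory of contour together with the extra equations of \Cref{fig:monoidal-contour-equation1}). To specify a monoidal functor out of a presented monoidal category it suffices to give its action on the generators and to check that the relations are respected; this is the universal property of the presentation, a consequence of the adjunction $\mathsf{String} \dashv \mathsf{Forget}$. So I would carry out three steps: define $\mContourF(F)$ on generators for a given \produoidalFunctor{} $F$, verify that the relations of $\mContour{\mathbb{B}}$ are sent to valid relations of $\mContour{\mathbb{C}}$, and then check functoriality of $\mContourF$ itself.

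First, fix a \produoidalFunctor{} $F \colon \mathbb{B} \to \mathbb{C}$. I define $\mContourF(F) \colon \mContour{\mathbb{B}} \to \mContour{\mathbb{C}}$ on generating objects by $X^{\circ} \mapsto (FX)^{\circ}$ and $X^{\bullet} \mapsto (FX)^{\bullet}$, and on each generator of $\mContour{\mathbb{B}}$ by applying the corresponding component of $F$: the pair $a_0, a_1$ attached to a morphism $a$ goes to the pair attached to $F(a)$; the generator $b_0$ attached to a \sequentialUnit{} $b$ goes to the one attached to $F_{N}(b)$; the pair $c_0, c_1$ attached to a \parallelUnit{} $c$ goes to the pair attached to the image of $c$ under the parallel-unit component of $F$; the triple $d_0, d_1, d_2$ attached to a \sequentialSplit{} $d$ goes to the triple attached to $F_{\triangleleft}(d)$; and the pair $e_0, e_1$ attached to a \parallelSplit{} $e$ goes to the pair attached to the image of $e$ under the parallel-split component of $F$. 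Each assignment is type-correct because $F$ preserves sources and targets of the relevant produoidal data.

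Second --- and this is the main work --- I check that the assignment sends every relation in the presentation of $\mContour{\mathbb{B}}$ to a relation valid in $\mContour{\mathbb{C}}$. The relations inherited from the theory of contour encode that contouring preserves identities and composites of morphisms and of sequential splits/units; these survive because $F$ is a functor on underlying categories and because its split- and unit-components are natural and satisfy the produoidal-functor coherence equations ($\lambda \cdot F_{\mathrm{map}} = (F_{\triangleleft} \times F_{N}) \cdot \lambda$, and the analogues for $\rho$ and $\alpha$). The additional equations of \Cref{fig:monoidal-contour-equation1}, governing $e_0, e_1$ and the interchangers $\psi_2, \psi_0, \varphi_2, \varphi_0$, survive because a \produoidalFunctor{} commutes with the associators, unitors, and all four interchangers by definition. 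Concretely, for each equation in \Cref{fig:contour-promonoidal-equations} and \Cref{fig:monoidal-contour-equation1} one replaces every generator by its image under $\mContourF(F)$ and recognizes the result as the same equation instantiated at the $F$-images --- legitimate precisely because the corresponding structural identity in $\mathbb{C}$ is what the produoidal-functor axioms assert. I expect this verification, though lengthy, to be entirely routine; it is the bulk of the argument, and the only reason it is not trivial is that the presentation has generators indexed by \emph{all} morphisms, units, and splits of $\mathbb{B}$, so one genuinely uses every produoidal-functor axiom and not merely preservation of composition.

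Third, functoriality of $\mContourF$ is immediate on generators and hence everywhere: $\mContourF(\mathrm{id}_{\mathbb{B}})$ and $\mathrm{id}_{\mContour{\mathbb{B}}}$ agree on each generator, and given composable \produoidalFunctors{} $F$ and $G$, the functors $\mContourF(G \cdot F)$ and $\mContourF(G) \cdot \mContourF(F)$ agree on each generator because composition of \produoidalFunctors{} is computed componentwise; since both sides are monoidal functors out of a presented monoidal category agreeing on generators, they are equal. This establishes that \MonoidalContour{} extends to a functor $\mContourF \colon \pDuo \to \MonCat{}$ (landing, in fact, among strict monoidal categories and strict monoidal functors).
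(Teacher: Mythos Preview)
Your proposal is correct and follows essentially the same approach as the paper: define $\mContourF(F)$ on the generators of the presentation using the components of the \produoidalFunctor{} $F$, argue that the relations of the theory of monoidal contour are preserved because the produoidal-functor axioms guarantee the images satisfy the same equations in $\mContour{\mathbb{C}}$, and verify functoriality by checking it on generators. The paper's proof is terser in the middle step (it simply invokes ``because $F$ is a \produoidalFunctor{}, the images of the generators do satisfy all of the contour equations''), whereas you spell out which coherence axioms are doing the work, but the argument is the same.
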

\begin{proof}
  \Cref{def:monoidalContour} defines how the functor acts on objects. We define the action on \produoidalFunctors{}, the morphisms of the category of \produoidalCategories{}. Given a \produoidalFunctor{}, $F: 𝕍 → 𝕎$, let us define the strict monoidal functor $\mContour{F} : \mContour{𝕍} → \mContour{𝕎}$ by the following morphism of generators:
  \begin{enumerate}
    \item objects $X^{∘}$ and $X^{•}$ are mapped to $F(X)^{∘}$ and $F(X)^{•}$;
    \item for each $a ፡ X → Y$, the morphisms $a_0 : X^{∘} → X^{∘}, a_1 : X^{•} → Y^{•}$ are mapped to $F(a)_0$ and $F(a)_1$;
    \item for each $b ፡ I → X$, both $b_0 : X^{∘} → I$ and $b_1 : I → X^{•}$ are mapped to $F_I(b)_0$ and $F_I(b)_1$;
    \item for each $c ፡ 𝖭 → X$, the morphism $c_0 : X^{∘} → X^{•}$ is mapped to $F_{𝖭}(c)_0$;    
    \item for each $d ፡ X ⊲ Y → Z$, the morphisms $d_0 : Z^{∘} → X^{∘}$, $d_1 : X^{•} → Y^{∘}$ and $d_2 : Y^{•} → Z^{•}$ are mapped to $F_{◁}(d)_0$, $F_{◁}(d)_1$ and $F_{◁}(d)_2$;
    \item for each $e ፡ X ⊲ Y → Z$, the morphisms $e_0 : Z^{∘} → X^{∘} ⊗ Y^{∘}$, and $e₁ : X^{•} ⊗ Y^{•} → Z^{•}$ are mapped to $F_{⊲}(e)_0$, and $F_{◁}(e)₁$.
  \end{enumerate}
  To show that this defines a morphism of presentations, we need to prove that the assignment of generators preserves the equations of the theory of contour, in \Cref{defn:contour}. Because $F: 𝕍 → 𝕎$ is a \produoidalFunctor{}, the images of the generators do satisfy all of the contour equations of the target category. As a consequence, this assignment extends to a strict monoidal functor. 
  
  Finally, when $\id{_𝕍} : 𝕍 → 𝕍$ is an identity, the resulting functor is an identity because it is the identity on generators. Let $G : 𝕌 → 𝕍$ be another \produoidalFunctor{}, then $\mContour{G⨾F} = \mContour{G}⨾ \mContour{F}$ follows from the composition of \produoidalFunctors{}.
\end{proof}

\subsection{Produoidal Splice of a Monoidal Category}
\label{sec:produoidalSplice}

We want to go the other way around: given a \monoidalCategory{}, what is the \produoidalCategory{} that tracks the decomposition of arrows in that \monoidalCategory{}?
This subsection finds a right adjoint to the \monoidalContour{} construction: the \produoidalCategory{} of \emph{spliced monoidal arrows}.

\begin{figure}[ht]
  \centering
  \includegraphics[scale=0.35]{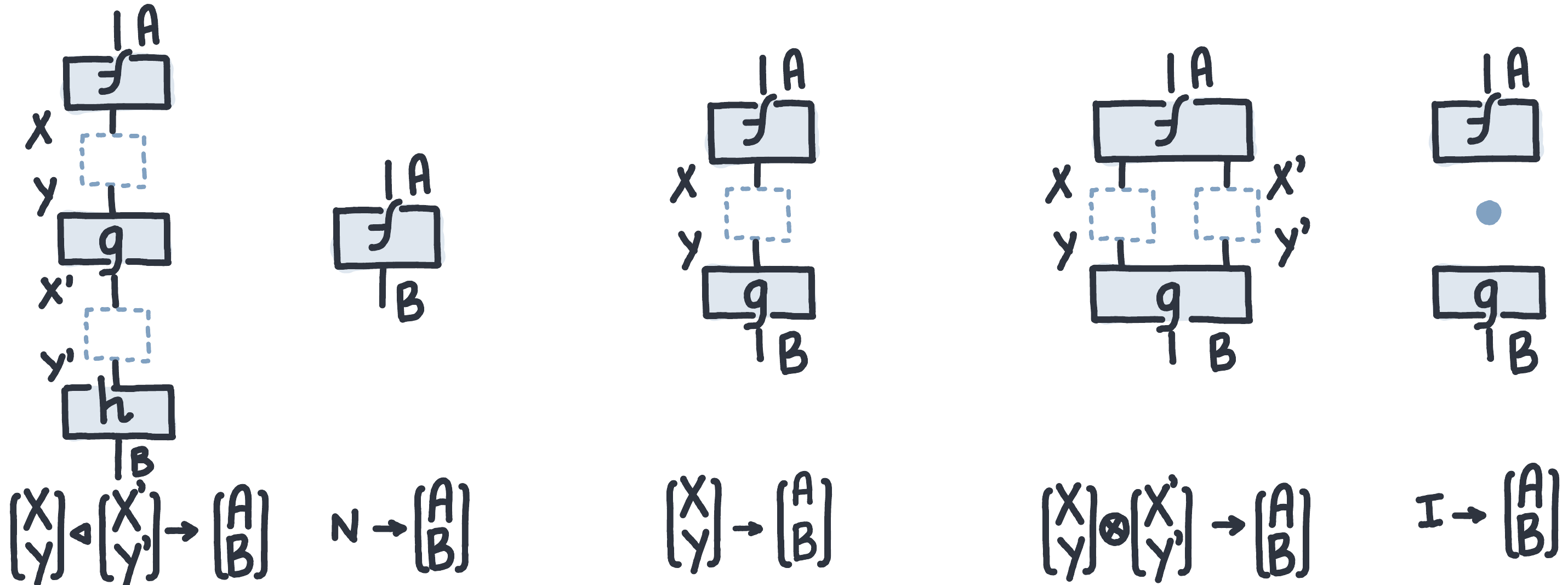}
  \caption{Spliced monoidal arrows.}
  \label{fig:monoidal-splice}
\end{figure}

\begin{definition}
  \defining{linkMonoidalSplice}{}\label{def:monoidalSplice}\label{eqs:splicedmonoidalarrows}
  Let $(ℂ,⊗,I)$ be a \monoidalCategory{}.
  Its \produoidalCategory{} of \emph{spliced monoidal arrows}, $\mSplice{ℂ}$, has objects formed by pairs, $\mSplice{ℂ}_{obj} = (ℂ^{op} × ℂ)_{obj}$, and is defined by the following profunctors, depicted in \Cref{fig:monoidal-splice}.
  \begin{enumerate}
    \item $\mSplice{ℂ} \left(\biobj{X}{Y} ;\biobj{A}{B} \right) = ℂ(A;X) × ℂ(Y,B)$,
    \item $\mSplice{ℂ}(\biobj{X}{Y} ◁ \biobj{X'}{Y'}; \biobj{A}{B}) = ℂ(A;X) × ℂ(Y;X') × ℂ(Y';B);$
    \item $\mSplice{ℂ}( \biobj{X}{Y} ⊗ \biobj{X'}{Y'}; \biobj{A}{B}) = ℂ(A;X ⊗ X') × ℂ(Y ⊗ Y';B);$
    \item $\mSplice{ℂ}(𝖭; \biobj{A}{B};) = ℂ(A;B);$
    \item $\mSplice{ℂ}(I; \biobj{A}{B}) = ℂ(A;I) × ℂ(I;B)$.
  \end{enumerate}
\end{definition}

  \begin{proposition}
    \label{ax:prop:spliceIsProduoidal}
    \SplicedMonoidalArrows{} indeed form a \produoidalCategory{}.
  \end{proposition}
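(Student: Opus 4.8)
The plan is to verify directly that the five profunctors of \Cref{def:monoidalSplice} assemble into a \produoidalCategory{}, i.e., that they carry two \promonoidal{} structures — one for $(◁, 𝖭)$ and one for $(⊗, I)$ — together with the four lax interchangers $ψ_2, ψ_0, φ_2, φ_0$ satisfying the coherence conditions. Rather than checking every coherence diagram by hand, I would organise the proof around two observations that reduce the work considerably. First, the sequential structure $(◁, 𝖭)$ is exactly the \promonoidalCategory{} $\Splice{ℂ}$ of \Cref{prop:promSplicedArrows} applied to the underlying category of $ℂ$; we already know that is \promonoidal{}, so nothing new is needed there. Second, the parallel structure $(⊗, I)$ is the \promonoidal{} structure on $ℂ^{op} × ℂ$ induced by Day convolution with the monoidal structure $(⊗, I)$ of $ℂ$: unfolding coends, $\mSplice{ℂ}(\pbiobj{X}{Y} ⊗ \pbiobj{X'}{Y'}; \pbiobj{A}{B}) = ℂ(A; X ⊗ X') × ℂ(Y ⊗ Y'; B)$ is precisely the Day-convolution joint of the two representables in $ℂ^{op} × ℂ$, so its \promonoidal{} axioms (pentagon, triangle, unit isomorphisms) follow from the monoidal coherence of $ℂ$ via the standard coend-calculus manipulations of \Cref{subsec:coend-calculus} (Yoneda reduction and Fubini).

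With both \promonoidal{} structures in hand, the remaining content is the lax distributivity. Here the key move is to write down the four interchangers explicitly on generic elements using pointed coend calculus (\Cref{sec:pointedcoendcalculus}). The interchanger $ψ_2$ must send an element of $\mSplice{ℂ}((\pbiobj{X}{Y}◁\pbiobj{X'}{Y'})⊗(\pbiobj{Z}{W}◁\pbiobj{Z'}{W'}); \pbiobj{A}{B})$ — which unfolds to a pair of maps $A → X ⊗ Z$ and $W ⊗ W' → B$ together with a middle pair $Y → X'$, $W → Z'$, suitably quotiented — to an element of $\mSplice{ℂ}((\pbiobj{X}{Y}⊗\pbiobj{Z}{W})◁(\pbiobj{X'}{Y'}⊗\pbiobj{Z'}{W'}); \pbiobj{A}{B})$, i.e. maps $A → X ⊗ Z$, $Y ⊗ W → X' ⊗ Z'$, $Y' ⊗ W' → B$; the natural choice is to tensor the two middle morphisms $Y → X'$ and $W → Z'$ into $Y ⊗ W → X' ⊗ Z'$, leaving the outer legs untouched. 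The unit interchangers $ψ_0 ፡ I → I ◁ I$, $φ_2 ፡ N ⊗ N → N$, $φ_0 ፡ I → N$ are built from the coherence isomorphisms $λ, ρ$ of $ℂ$ (for instance $φ_0$ sends a pair $A → I$, $I → B$ to the composite $A → I → B$, and $φ_2$ uses the left unitor of $ℂ$ to merge $ℂ(A; I) × ℂ(I ⊗ I; B)$-type data). I would then check that each of these is dinaturally well-defined — this is where a pointed-coend presentation pays off, since dinaturality over the bound objects is visibly respected by tensoring and composing in $ℂ$.

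The main obstacle I expect is not the existence of the interchangers but the verification of the \produoidal{} coherence conditions referenced in \Cref{sec:coherencediagramsduoidal} — the analogues of the six laxator equations listed in the remark after \Cref{def:monoidalSplice} for \duoidalCategories{}, now phrased for \profunctors{} and therefore holding only up to the associator/unitor isomorphisms of the two \promonoidal{} structures. Each such equation becomes an identity between two composites of \profunctor{} morphisms, which after Yoneda reduction collapses to an identity between two morphisms of $ℂ$ built from $α, λ, ρ$; by MacLane's coherence theorem for $ℂ$ these automatically agree. So the strategy is: reduce every coherence square, via pointed coend calculus, to a formally-well-typed equation between coherence maps of the ambient monoidal category $ℂ$, and then invoke \Cref{th:equivalentStrictOne}/MacLane coherence. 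A clean way to package this — and the route I would actually take in the writeup — is to first prove the special case where $ℂ$ is \emph{strict} (so $α, λ, ρ$ are identities and every interchanger is a literal tensor/compose in $ℂ$, making all coherence equations hold on the nose by associativity and interchange in $ℂ$), and then transport the result along the strictification equivalence of \Cref{th:equivalentStrictOne}, noting that $\mSpliceF$ sends monoidally-equivalent categories to equivalent \produoidalCategories{}. This isolates the only genuinely delicate bookkeeping — matching up the coherence cells — into a single appeal to a theorem we are allowed to assume.
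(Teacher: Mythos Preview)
Your proposal is correct and follows the same route the paper gestures at: the paper's own proof is only a one-line sketch (``construct all of the necessary natural isomorphisms using coend calculus'') that defers the actual verification to \cite{produoidal23}, so your plan is in fact considerably more detailed than what appears here. Your two organising observations---that the $(◁,𝖭)$ structure is literally the promonoidal $\Splice{ℂ}$ of \Cref{prop:promSplicedArrows}, and that the $(⊗,I)$ structure is representable (as the paper itself notes just after this proposition)---are exactly the right way to cut the work down, and the reduction of the coherence conditions to MacLane coherence in $ℂ$ (optionally via strictification) is sound. One small slip: in your unfolding of the source of $ψ_2$ you wrote ``$W ⊗ W' → B$'' where the Yoneda reduction actually gives $ℂ(Y' ⊗ W'; B)$; the rest of that computation, including the description of $ψ_2$ as tensoring the two middle morphisms, is correct.
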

  \begin{proof}[Proof sketch]
    The complete proof constructs all of the necessary natural isomorphisms using coend calculus. We refer to joint work of this author with Earnshaw and Hefford, where the equations are proven in full detail \cite{produoidal23}.
  \end{proof}

  \begin{remark}
    The produoidal algebra of spliced arrows is a natural construction: abstractly, we know that there exists a duoidal structure on the endomodules of any monoidal category \cite{day,street12:linking} -- monoidal spliced arrows form its explicitly constructed produoidal counterpart.
    What may be more surprising is that spliced arrows have themselves a universal property as part of an adjunction.
  \end{remark}
  
  \begin{theorem}\label{prop:produoidalSpliceContour}\label{prop:spliceIsProduoidal}\label{prop:monoidalSpliceFunctor}
    \SplicedMonoidalArrows{} form a \produoidalCategory{} with their \hyperlink{linkProduoidalComponents}{sequential and parallel splits, units}, and suitable coherence morphisms and laxators. \SplicedMonoidalArrows{} extend to a functor $\mSpliceF{} : \MonCat → \pDuo$.
    The \monoidalContour{} and the \produoidalSplice{} are left and right adjoints to each other, respectively.
  \end{theorem}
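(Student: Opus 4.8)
The plan is to mirror, in the produoidal setting, the multicategorical splice-contour adjunction proven in \Cref{th:splice-right-contour}, upgrading it one dimension. Concretely, I would show that for every \produoidalFunctor{} $F \colon 𝕍 → \mSplice{ℂ}$ there is a unique strict monoidal functor $F^{\sharp} \colon \mContour{𝕍} → ℂ$ such that $F$ factors as $F = T \bcomp \mSpliceF(F^{\sharp})$, where $T \colon 𝕍 → \mSplice{(\mContour{𝕍})}$ is the universal unit \produoidalFunctor{}. This establishes $\mContourF \dashv \mSpliceF$, i.e. \monoidalContour{} is left adjoint to \produoidalSplice{}. The functoriality halves ($\mSpliceF$ is a functor, \Cref{prop:monoidalSpliceFunctor}, and $\mContourF$ is a functor, \Cref{prop:monoidalContourFunctor}) are assumed from earlier; likewise \Cref{ax:prop:spliceIsProduoidal} that $\mSplice{ℂ}$ is genuinely produoidal.

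First I would construct the unit. Define $T \colon 𝕍 → \mSplice{(\mContour{𝕍})}$ on objects by $X \mapsto \pbiobj{X^{∘}}{X^{•}}$, and on the produoidal structure by sending a morphism $a \colon X → Y$ to the pair $(a_0 ⨾ \square ⨾ a_1)$, a \sequentialSplit{} $d \colon X ⊲ Y → Z$ to the triple $(d_0 ⨾ \square ⨾ d_1 ⨾ \square ⨾ d_2)$, a \parallelSplit{} $e \colon X ⊗ Y → Z$ to $(e_0 ; \square ; e_1)$ with $e_0 \colon Z^{∘} → X^{∘} ⊗ Y^{∘}$ and $e_1 \colon X^{•} ⊗ Y^{•} → Z^{•}$, a \sequentialUnit{} $b \colon 𝖭 → X$ to $b_0 \colon X^{∘} → X^{•}$, and a \parallelUnit{} $c \colon I → X$ to the pair $(c_0 ⨾ \square ⨾ c_1)$ with $c_0 \colon X^{∘} → I$ and $c_1 \colon I → X^{•}$. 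One must check $T$ preserves all produoidal operations and the laxators: this is precisely what the equations of the theory of \monoidalContour{} (\Cref{fig:monoidal-contour-equation1}) were designed to guarantee, so it reduces to matching each contour equation against the corresponding produoidal coherence law.

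Next I would establish the universal property. Given $F \colon 𝕍 → \mSplice{ℂ}$, write $F(X) = \pbiobj{A_X}{B_X}$; the factorisation forces $F^{\sharp}(X^{∘}) = A_X$ and $F^{\sharp}(X^{•}) = B_X$ on objects, and on each generator of the presentation of $\mContour{𝕍}$ it forces $F^{\sharp}$ to take the component prescribed by the corresponding component of $F$ applied to the relevant morphism, unit, or split (e.g. $F^{\sharp}(d_1) = F_{◁}(d)_1$, $F^{\sharp}(e_0) = F_{⊗}(e)_0$, and so on). Since $\mContour{𝕍}$ is presented by generators and relations, it remains to check that this forced assignment respects every relation of the theory of \monoidalContour{}; this follows because $F$, being a \produoidalFunctor{}, sends produoidal coherence data to produoidal coherence data in $\mSplice{ℂ}$, and each contour relation is the image under the splice construction of one such coherence equation — exactly the bookkeeping already carried out in the multicategorical case of \Cref{th:splice-right-contour}. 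Naturality of the bijection $\MonCat(\mContour{𝕍}, ℂ) \cong \pDuo(𝕍, \mSplice{ℂ})$ in both arguments is then routine from the explicit description of $T$, $F^{\sharp}$, and functoriality.

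The main obstacle I anticipate is not the adjunction bookkeeping itself but verifying that the interaction of the \emph{two} tensors $(◁)$ and $(⊗)$ — the laxators $ψ_2, ψ_0, φ_2, φ_0$ — is correctly tracked by the monoidal contour: in the purely promonoidal case of \Cref{th:splice-right-contour} there is only one tensor, whereas here one must confirm that contouring the lax distributivity square $ψ_2$ produces exactly the extra monoidal-dimension equations of \Cref{fig:monoidal-contour-equation1}, and that $T$ genuinely preserves these laxators rather than merely the associators and unitors of each \promonoidal{} structure separately. I would therefore treat the laxator cases most carefully, and, since the full coend-calculus verification that $\mSplice{ℂ}$ is produoidal and the detailed diagram chases are already done in the joint work with Earnshaw and Hefford \cite{produoidal23}, I would cite that reference for those computations and present here only the construction of $T$, the forced definition of $F^{\sharp}$, and the uniqueness argument.
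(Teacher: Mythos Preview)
Your proposal is correct and follows essentially the same approach as the paper: both arguments reduce the adjunction to matching the generators-and-relations presentation of $\mContour{𝕍}$ against the componentwise data of a \produoidalFunctor{} into $\mSplice{ℂ}$, with the laxator conditions $ψ_2, ψ_0, φ_2, φ_0$ handled by the extra monoidal-dimension equations of \Cref{fig:monoidal-contour-equation1}. The only cosmetic difference is that you frame it via the universal unit $T$ and a forced $F^{\sharp}$, whereas the paper presents it as a direct hom-set bijection by listing, side by side, the data-plus-conditions for a strict monoidal functor $\mContour{𝔹} → 𝕄$ and for a \produoidalFunctor{} $𝔹 → \mSplice{𝕄}$, and observing these are definitionally identical; the paper likewise defers the coend-calculus verification of the produoidal axioms to \cite{produoidal23}.
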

  \begin{proof}
    Monoidal contour $\mContour{𝔹}$ is presented by generators and equations: to specify a strict monoidal functor $\mContour{𝔹} → 𝕄$, it is enough to specify images of the generators and then prove that they satisfy the equations. 
    
    Let $(𝕄,⊗_M,I_M)$ be a \monoidalCategory{}. Then a strict monoidal functor $\mContour{𝔹} → 𝕄$ amounts to the following data satisfying some extra conditions.
    \begin{enumerate}
    \item For each object $X \in \obj{𝔹}$, a pair of objects $X^{∘}, X^{•} ∈ \obj{𝕄}$;
    \item for each element $f ፡ 𝖭 → X$, a morphism $f_0 ፡ X^{∘} → X^{•}$;
    \item for each unit $f ፡ I → X$, a choice of $f_0 ፡ X^{∘} → I$ and $f_1 ፡ I → X^{•}$;
    \item for each morphism $f ፡ X → Y$, a choice of $f_0 ፡ Y^{∘} → X^{∘}$ and $f_1 ፡ X^{•} → Y^{•}$;
    \item for each sequential split $f ፡ X ⊲ Y → Z$, a choice of morphisms $f_0 ፡ Z^{∘} → X^{∘}$, plus $f_1 ፡ X^{•} → Y^{∘} \mbox{ and } f_2 ፡ Y^{•} → Z^{•}$;
    \item for each parallel split $f ፡ X ⊗ Y → Z$, a choice of morphisms $f_0 ፡ Z^{∘} → X^{∘} ⊗ Y^{∘}\mbox{ and }f_1 ፡ X^{•} ⊗ Y^{•} → Z^{•}$.
    \end{enumerate}
  
    In order to construct a well-defined strict monoidal functor, the previous assignments must satisfy the following conditions for each one of the two \promonoidalCategories{}:
    \begin{enumerate}
      \item $α(a ⨾_1 b) = (c ⨾_2 d)$ in the \promonoidalCategory{} implies $a_0 ⨾ (b_0 ⊗ \id) = c_0 ⨾ (\id ⊗ d_0)$ and $(b_1 ⊗ \id) ⨾ a_1 = (\id ⊗ d_1) ⨾ c_1$ in the \monoidalCategory{};
      \item $λ(a ⨾_1 b) = c = ρ(d ⨾_2 e)$ in the \promonoidalCategory{} implies $a_0 ⨾ (b_0 ⊗ \id) = c_0 = d_0 ⨾ (\id ⊗ e_0)$ and  $(b_1 ⊗ \id) ⨾ a_1 = c_1 = (\id ⊗ e_1) ⨾ d_1$ in the \monoidalCategory{};
    \end{enumerate}
    Moreover, they must also satisfy the following conditions for the \produoidalCategory{}.
    \begin{enumerate}
      \item $ψ_2(a ｜ b ｜ c) = (d ｜ e ｜ f)$ in the \promonoidalCategory{} implies $a_0 ⨾ (b_0 ⊗ c_0)  = d_0 ⨾ e_0, b_1 ⊗ c_1 = e_1 ⨾ d_1 ⨾ f_0$ and $(b_2 ⊗ c_2) ⨾ a_1 = f_1 ⨾ d_2$ in the \monoidalCategory{};
      \item $ψ_0(a) = (b ｜ c ｜ d)$ in the \promonoidalCategory{} implies $a_0 = b_0 ⨾ c_0$, $\id = c_1 ⨾ b_1 ⨾ d_0$, and $a_1 = d_1 ⨾ b_2$ in the \monoidalCategory{};
      \item $φ_2(a ｜ b ｜ c) = d$ in the \promonoidalCategory{} implies $a_0 ⨾ (b_0 ⊗ c_0) ⨾ a_1 = d_0$ in the \monoidalCategory{};
      \item $φ_0(a) = b$ in the \promonoidalCategory{} implies $a_0 ⨾ a_1 = b_0$ in the \monoidalCategory{}.
    \end{enumerate}
    
    On the other hand, a \produoidalFunctor{} $𝔹 \to \mSplice{𝕄}$, also amounts to the following data. We will state it in multiple points and finally confirm that each one of these points has a correspondence on the first part of the proof, finishing the definition.
    \begin{enumerate}
    \item For each object $X \in \obj{𝔹}$, an object $(X^{∘}, X^{•}) \in \obj{\mSplice{𝕄}}$;
    \item for each element $f ፡ 𝖭 → X$, a morphism $f_0 ፡  𝖭 → \pbiobj{X^{∘}}{X^{•}};$
    \item for each unit, $f ፡ I → X$, a unit $⟨ f_0 \parallel f₁ ⟩ ፡ I → \pbiobj{X^{∘}}{X^{•}}$;
    \item for each morphism $f ፡ X → Y$, a splice $\bisplice{f_0}{f_1} ፡ \pbiobj{X^{∘}}{X^{•}} → \pbiobj{Y^{∘}}{Y^{•}}$;
    \item for each seq. join $f ፡ X ⊲ Y → Z$, a spliced arrow 
    $$\trisplice{f_0}{f_1}{f_2} ፡ \pbiobj{X^{∘}}{X^{•}} ⊲ \pbiobj{Y^{∘}}{Y^{•}} → \pbiobj{Z^{∘}}{Z^{•}};$$
    \item for each par. join $f ፡ X ⊗ Y → Z$, a spliced arrow 
    $$⟨ f_0 ⨾ □ ⊗ □ ⨾ f_1 ⟩ ፡ \pbiobj{X^{∘}}{X^{•}} ⊗ \pbiobj{Y^{∘}}{Y^{•}}→ \pbiobj{Z^{∘}}{Z^{•}}.$$
  \end{enumerate}
  
  The following conditions must hold for each one of the \promonoidalCategories{}. These correspond definitionally to the conditions for the two promonoidal structures we imposed before.
  \begin{enumerate}
    \item $α(a ｜ b) = (c ｜d)$ in the \produoidalCategory{} implies $α(Fa ｜Fb) = (Fc ｜ Fd)$ for the spliced arrows;
    \item $λ(a ｜ b) = c = ρ(d ｜ e)$ in the \produoidalCategory{} implies $λ(Fa ｜ Fb) = Fc = ρ(Fd ｜ Fe)$ for the spliced arrows.
  \end{enumerate}

  Finally, all the following conditions must also hold for the \produoidalCategory{}. These are definitionally equal to the conditions for the \produoidalCategory{} we imposed before.
  \begin{enumerate}
    \item $ψ_2(a ｜ b ｜ c) = (d ｜ e ｜ f)$ in the \produoidalCategory{} implies $$ψ_2(Fa ｜ Fb ｜ Fc) = (Fd ｜Fe ｜Ff);$$
    \item $ψ_0(a) = (b ｜ c ｜ d)$ in the \produoidalCategory{} implies $$ψ_0(Fa) = (Fa ｜ Fc ｜ Fd);$$
    \item $φ_2(a ｜ b ｜ c) = d$ in the \produoidalCategory{} implies $$φ_2(Fa ｜Fb ｜ Fc) = Fd;$$
    \item $φ_0(a) = b$ in the \produoidalCategory{} implies $φ_0(Fa) = Fb$.
  \end{enumerate}
  
  Each of these points is exactly equal by definition to the relative point in the first part of the proof: this establishes the desired adjunction. In other words, the definition of monoidal splice is precisely the one that makes this proof hold definitionally -- even when we gave multiple different characterizations of it.
  \end{proof}

  \subsection{A Representable Parallel Structure}

  A \produoidalCategory{} has two tensors, and neither is, in principle, representable. However, the cofree \produoidalCategory{} over a category we have just constructed happens also to have a representable tensor, $(⊗)$: spliced monoidal arrows form a \monoidalCategory{}.
  
  \begin{remark} %
    This means $\mSplice{ℂ}$ has the structure of a \emph{virtual duoidal category} \cite{nlab:duoidal} or \emph{monoidal multicategory}, defined by Aguiar, Haim and López Franco \cite{aguiar18} as a pseudomonoid in the cartesian monoidal 2-category of multicategories.
  \end{remark}

  \begin{proposition}
    \ParallelSplits{} and \parallelUnits{} of spliced monoidal arrows are representable \profunctors{}. Explicitly,
  \begin{align*}
    & \mSplice{ℂ}\left( \biobj{X}{Y} ⊗ \biobj{X'}{Y'}; \biobj{A}{B}\right) ≅ 
    \mSplice{ℂ}\left( \biobj{X ⊗ X'}{Y ⊗ Y'} ; \biobj{A}{B}\right),
    \mbox{ and } \\
    & \mSplice{ℂ}\left( I ; \biobj{A}{B}\right) ≅ 
    \mSplice{ℂ}\left( \biobj{I}{I} ; \biobj{A}{B} \right).
  \end{align*}
  \end{proposition}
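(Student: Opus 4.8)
The plan is to observe that this proposition unwinds directly from \Cref{def:monoidalSplice}, so that the only genuine content is the verification of naturality. First I would recall that the ``morphism'' profunctor of $\mSplice{ℂ}$ is $\mSplice{ℂ}\left(\biobj{P}{Q};\biobj{A}{B}\right) = ℂ(A;P) × ℂ(Q;B)$, so the underlying category of $\mSplice{ℂ}$ is (isomorphic to) $ℂ^{op} × ℂ$ with this profunctor as its hom; and that, because $(⊗)$ is a functor on $ℂ$, the assignment $\left(\biobj{X}{Y},\biobj{X'}{Y'}\right) \mapsto \biobj{X ⊗ X'}{Y ⊗ Y'}$ is a functor $\mSplice{ℂ} × \mSplice{ℂ} → \mSplice{ℂ}$, which I denote $(\boxtimes)$, and that $\biobj{I}{I}$ is an object of $\mSplice{ℂ}$.

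For the parallel join I would then compare the two sides termwise. By \Cref{def:monoidalSplice},
\[\mSplice{ℂ}\left(\biobj{X}{Y} ⊗ \biobj{X'}{Y'};\biobj{A}{B}\right) = ℂ(A;X ⊗ X') × ℂ(Y ⊗ Y';B),\]
and this is \emph{literally} the set obtained by evaluating the morphism profunctor at the object $\biobj{X}{Y} \boxtimes \biobj{X'}{Y'} = \biobj{X ⊗ X'}{Y ⊗ Y'}$, namely $\mSplice{ℂ}\left(\biobj{X ⊗ X'}{Y ⊗ Y'};\biobj{A}{B}\right)$. So the candidate natural isomorphism is the identity function, and what remains is to check that it is natural in each of $\biobj{X}{Y}$, $\biobj{X'}{Y'}$ and $\biobj{A}{B}$. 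Naturality in $\biobj{A}{B}$ is immediate, since both sides reindex $ℂ(A;-)$ by precomposition and $ℂ(-;B)$ by postcomposition in the same way. Naturality in $\biobj{X}{Y}$ and $\biobj{X'}{Y'}$ is exactly where functoriality of the tensor $(⊗)$ on $ℂ$ enters: on the left-hand side a pair of morphisms acts on the factors $ℂ(A; X ⊗ X')$ and $ℂ(Y ⊗ Y'; B)$ by composition with the corresponding tensor products of those morphisms, and this is precisely the hom-profunctor action on the right-hand side of the $(\boxtimes)$-tensored morphism. Hence the identity exhibits the required natural isomorphism.

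For the parallel unit the argument is the same but shorter: by \Cref{def:monoidalSplice}, $\mSplice{ℂ}\left(I;\biobj{A}{B}\right) = ℂ(A;I) × ℂ(I;B)$, which is literally $\mSplice{ℂ}\left(\biobj{I}{I};\biobj{A}{B}\right)$, and the identity is natural in $\biobj{A}{B}$ for the same trivial reason as before.

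I do not expect a real obstacle here; the statement holds almost by the construction of $\mSplice{ℂ}$. The only subtlety --- and the one step that is not purely formal --- is the bookkeeping of variances and functorial actions, so that one genuinely obtains a \emph{natural} isomorphism rather than merely a bijection at each choice of objects; this reduces cleanly to functoriality of the tensor on $ℂ$. As an upshot I would record that the parallel promonoidal structure of $\mSplice{ℂ}$ is representable, with representing tensor $\biobj{X}{Y} \boxtimes \biobj{X'}{Y'} = \biobj{X ⊗ X'}{Y ⊗ Y'}$ and representing unit $\biobj{I}{I}$, so that $\mSplice{ℂ}$ carries an honest monoidal structure in the parallel direction --- which is exactly the monoidal-multicategory, or virtual-duoidal, structure noted in the preceding remark.
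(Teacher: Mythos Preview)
Your proposal is correct and matches the paper's approach: the paper itself simply remarks that ``these sets are equal by definition'' and gives no further argument. You are more thorough in spelling out why the identity bijection is natural (reducing it to functoriality of $\otimes$ on $ℂ$), but the core observation is the same.
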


  In fact, these sets are equal by definition. However, we argue that there is a reason to work in the full generality of \produoidalCategories{}: produoidal categories can always be \emph{normalized}. 
  
  \begin{remark}
    Normalization is a procedure to mix both tensors of a duoidal category, $(⊗)$ and $(\triangleleft)$, but not every \duoidalCategory{} has a normalization \cite{garner16}. 
    It is folklore that one loses nothing by regarding non-representable produoidal structures as representable \emph{duoidal structures on presheaves}, dismissing that they are moreover \emph{closed} \cite{day}; thus, one would expect only some produoidal categories to be normalizable -- after all, only some duoidal categories are.
    Against folklore, we prove that every \produoidalCategory{}, representable or not, has a \emph{universal normalization}, a normal \produoidalCategory{} which may be again representable or not. 
  \end{remark}

  \subsection{Bibliography}
  Motivated by language theory and the representation theorem of Chomsky and Schützenberger, Melliès and Zeilberger \cite{mellies22:parsing} were the first to present the multicategorical \emph{splice-contour} adjunction. We are indebted to their exposition, which we extend to the promonoidal and produoidal cases.  Our contribution is to show how \monoidalContexts{} arise from an extended \produoidal{} splice-contour adjunction; unifying these two threads.
  
  Street already noted that the endoprofunctors of a \monoidalCategory{} had a duoidal structure \cite{street12:linking}; Pastro and Street described a promonoidal structure on lenses \cite{pastro07} and Garner and López-Franco contributed a partial normalization procedure for duoidal categories \cite{garner16}.
  We build on top of this literature, putting it together, spelling out existence proofs, popularizing its produoidal counterpart and providing multiple new results and constructions that were previously missing (e.g. \Cref{prop:produoidalSpliceContour,prop:MonoidalContextProtensor,th:freeNormalProduoidal}).

  This section takes its main ideas from joint work of this author with Matt Earnshaw and James Hefford \cite{produoidal23}. Earnshaw and \Sobocinski{}~\cite{earnshaw22} have described a syntactic congruence on formal languages of string diagrams using \monoidalContexts{}.

  \newpage
  \section{Interlude: Produoidal Normalization}
  \label{sec:normalization}

  \subsection{Normal Produoidal Categories}
  \ProduoidalCategories{} seem to contain too much structure: of course, we want to split things in two different ways, sequentially $(◁)$ and in parallel $(⊗)$; but that does not necessarily mean that we want to keep track of two different types of units, parallel $(I)$ and sequential $(N)$. The atomic components of our decomposition algebra should be the same, without having to care if they are \emph{atomic for sequential composition} or \emph{atomic for parallel composition}. 
  
  \begin{remark}
    The monoidal spliced arrows we just introduced are a perfect example: if we simply want a hole in a string diagram, the type it may take depends on the wires we want to leave to each side (see \Cref{fig:centrality}).
    We would prefer to construct a new category -- a Kleisli category on top of this one -- where the bureaucracy of the units was already handled for us.
    \begin{figure}[ht]
      \centering
      \includegraphics[scale=0.35]{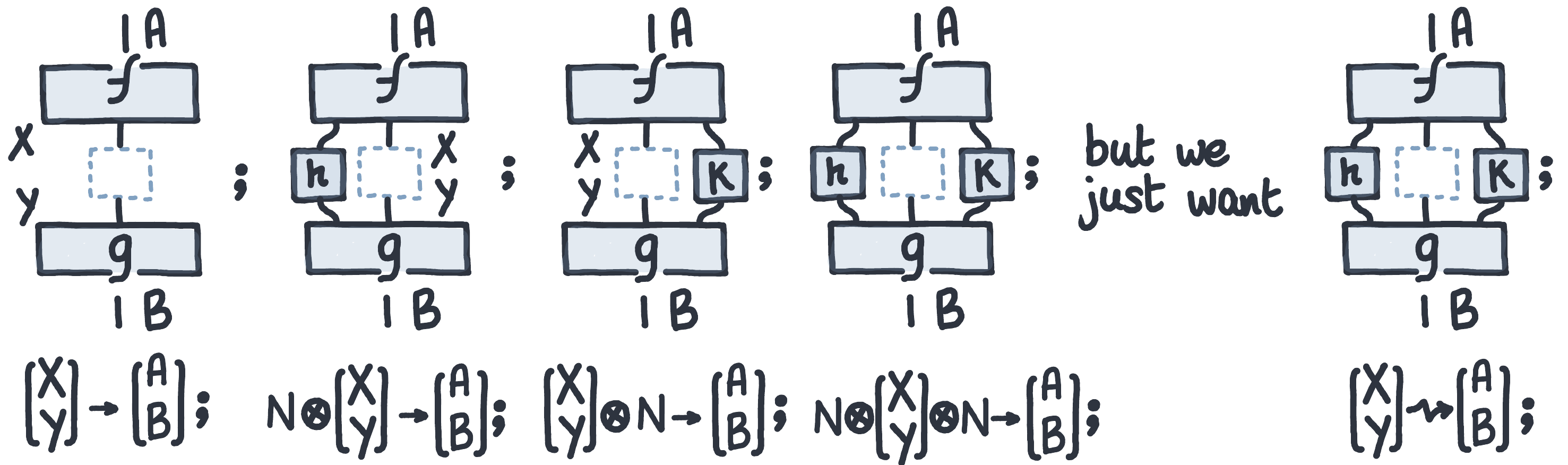}
      \caption{Multiple units complicate types.}
      \label{fig:motivate-normalized-splice}    
    \end{figure}
  \end{remark}

  \subsection{The Normalization Monad}
  Fortunately, there exists an abstract procedure that, starting from any \produoidalCategory{}, constructs a new produoidal category where both units are identified. This procedure is known as \emph{normalization}, and the resulting produoidal categories are called \emph{normal}.
  
  \begin{definition}[Normal produoidal category]
    \defining{linkNormalProduoidalCategory}{}
    A \emph{normal produoidal category} is a \produoidalCategory{} where the unit interchanger $n \colon 𝕍(I;•) → 𝕍(𝖭;•)$ is an isomorphism.

    \NormalProduoidalCategories{} form a category with produoidal functors between them, $\npDuo$. As a consequence, it is endowed with a fully faithful forgetful functor $𝓤 \colon \npDuo → \pDuo$.
  \end{definition}
  
  \begin{theorem}
    \label{th:normalizationProduoidal}\label{eqs:normalization}\defining{linkProduoidalNormalization}{}
      Let $𝕍_{⊗,I,◁,N}$ be a \produoidal{} category. The profunctor
      $$\NOR(𝕍)(•; •) = 𝕍(𝖭 ⊗ • ⊗ 𝖭; •)$$
      forms a \promonad{}. Moreover, the Kleisli category of this \promonad{} is a \normalProduoidalCategory{} with the following \profunctors{}. 
      \begin{enumerate}
        \item $\NOR(𝕍)(X ⊗_𝖭 Y; Z) = 𝕍(𝖭 ⊗ X ⊗ 𝖭 ⊗ Y ⊗ 𝖭; Z)$;
        \item $\NOR(𝕍)(X ⊲_𝖭 Y; Z) = 𝕍((𝖭 ⊗ X ⊗ 𝖭) ⊲ (𝖭 ⊗ Y ⊗ 𝖭); Z)$; and
        \item $\NOR(𝕍)(I_𝖭;X) = \NOR(𝕍)(𝖭_𝖭;X) = 𝕍(𝖭;X)$.
      \end{enumerate}
  \end{theorem}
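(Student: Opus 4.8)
\textbf{Proof plan for \Cref{th:normalizationProduoidal}.}

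The plan is to identify the promonad $\NOR(𝕍)$ as precisely the promonad that arises from the structure of the sequential unit $𝖭$ acting on both sides via the parallel tensor, then transport the produoidal structure along the Kleisli inclusion. First I would verify that $\NOR(𝕍)(•;•) = 𝕍(𝖭 ⊗ • ⊗ 𝖭 ; •)$ is a \promonad{} in the sense of \Cref{definition:promonad}. The unit $\unitp{(•)} ፡ 𝕍(X;Y) → 𝕍(𝖭 ⊗ X ⊗ 𝖭; Y)$ is obtained by precomposing with the structure map that inserts the parallel units $I → 𝖭$ on each side (using $φ_0$) followed by the unitors $λ, ρ$ of the parallel promonoidal structure. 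The multiplication $\starp$ uses the interchanger $ψ_2$ to fuse two copies of $𝖭 ⊗ • ⊗ 𝖭$: given $p ∈ 𝕍(𝖭 ⊗ X ⊗ 𝖭; Y)$ and $q ∈ 𝕍(𝖭 ⊗ Y ⊗ 𝖭; Z)$, one forms $𝕍(𝖭 ⊗ (𝖭 ⊗ X ⊗ 𝖭) ⊗ 𝖭; Z)$ by acting with $p$ on the middle, then collapses the nested $𝖭$'s using the comultiplication-like map $𝖭 ⊗ 𝖭 → 𝖭$ coming from $φ_2$ together with associators. Dinaturality and associativity of $\starp$ then reduce to the coherence axioms of the \produoidalCategory{} (the associativity pentagon for $⊗$, the compatibility of $ψ_2$ with associators, and the $φ_2$-associativity square); since all these are assumed, this step is a bookkeeping verification in coend calculus, best organized by the pointed-coend notation of \Cref{sec:pointedcoendcalculus}.

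Next I would equip the Kleisli category $\kleisli(\NOR(𝕍))$ with two \promonoidal{} structures by the stated formulas and check they are well-defined. The key observation is that the sequential promonoidal structure on $𝕍$ is \emph{automatically} a structure on the normalization: I define $\NOR(𝕍)(X ⊲_𝖭 Y; Z) = 𝕍((𝖭 ⊗ X ⊗ 𝖭) ⊲ (𝖭 ⊗ Y ⊗ 𝖭); Z)$, which equals $\NOR(𝕍)(X; •_1) ⋄ \NOR(𝕍)(Y; •_2) ⋄ 𝕍(•_1 ⊲ •_2; Z)$ up to Yoneda reduction, so the sequential associator and unitors are inherited from those of $𝕍$. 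For the parallel structure, $\NOR(𝕍)(X ⊗_𝖭 Y; Z) = 𝕍(𝖭 ⊗ X ⊗ 𝖭 ⊗ Y ⊗ 𝖭; Z)$, and I must use $ψ_2$ and $φ_2$ again to supply the parallel associator; here the crucial check is that the new parallel unit $I_𝖭$ and the sequential unit $𝖭_𝖭$ are \emph{the same profunctor} $𝕍(𝖭; •)$, which is exactly the normality condition $n ፡ \NOR(𝕍)(I_𝖭; •) ≅ \NOR(𝕍)(𝖭_𝖭; •)$ being an identity. Then I would verify that one structure laxly distributes over the other by exhibiting the four interchangers $ψ_2, ψ_0, φ_2, φ_0$ for $\NOR(𝕍)$ built from those of $𝕍$, checking $φ_0$ is invertible (it is, being $n$), and confirming the coherence diagrams of \Cref{sec:coherencediagramsduoidal}.

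The main obstacle I anticipate is the well-definedness and coherence of the \emph{parallel} tensor of $\NOR(𝕍)$: because $\NOR(𝕍)(X ⊗_𝖭 Y; Z)$ interposes an extra central $𝖭$ between $X$ and $Y$ that is not present in $X ⊗ Y$, the associator must relate $𝕍(𝖭 ⊗ (𝖭 ⊗ X ⊗ 𝖭 ⊗ Y ⊗ 𝖭) ⊗ 𝖭 ⊗ Z ⊗ 𝖭; W)$ to $𝕍(𝖭 ⊗ X ⊗ 𝖭 ⊗ (𝖭 ⊗ Y ⊗ 𝖭 ⊗ Z ⊗ 𝖭) ⊗ 𝖭; W)$, and matching these requires repeatedly merging adjacent $𝖭 ⊗ 𝖭$ blocks via $φ_2$ while slicing single $𝖭$'s via $ψ_2$ through the other factors; the two sides of the pentagon then differ a priori by the ways one chooses to perform these merges, and one must invoke the coherence of $φ_2$ (associativity and unitality of $𝖭$ as a bimonoid) precisely at the points flagged in \Cref{prop:duoidalCoherencefails1} as \emph{not} being automatic. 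I expect this can be pushed through cleanly by first proving an auxiliary lemma: that in any \produoidalCategory{}, the profunctor $𝕍(𝖭 ⊗ • ⊗ 𝖭; •)$ together with the maps induced by $φ_2, φ_0$ forms an idempotent-modulo-structure situation, so that iterated insertions of $𝖭$ collapse coherently. Finally I would remark that the coherence theorem for \normalProduoidalCategories{} (the corollary following \Cref{th:posetsAreFreePhysical}) guarantees that any two parallel structure maps between distinctly-typed expressions agree once normality holds, which retroactively trivializes the pentagon/triangle verifications for $\NOR(𝕍)$ itself.
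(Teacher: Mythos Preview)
Your plan matches the paper's approach almost exactly: the promonad unit comes from $φ_0$ and the $⊗$-unitors, the multiplication from $φ_2$ collapsing adjacent $𝖭$'s, and the Kleisli produoidal structure is inherited from $𝕍$ through Yoneda reductions. The one genuine gap is your final step.

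Invoking the coherence corollary after \Cref{th:posetsAreFreePhysical} to ``retroactively trivialize'' the pentagon and triangle equations for $\NOR(𝕍)$ is circular. That corollary (which is stated for normal \emph{duoidal} categories, not produoidal ones) tells you that formal equations between structure maps hold in any structure that \emph{already is} a normal duoidal category; it cannot be used to verify that a candidate collection of data satisfies the coherence axioms in the first place. You would be assuming what you want to prove. The worry you raise about \Cref{prop:duoidalCoherencefails1} is also misplaced here: that failure concerns maps of type $I ⊲ I → I$ built from $φ_0$, and no such composite appears in the equations you actually need.

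The paper's resolution is more direct and avoids any circularity. The associators and unitors for $⊗_𝖭$ are constructed as explicit chains of Yoneda reductions together with the promonad multiplication; for instance, both sides of the $⊗_𝖭$-associator are shown to be isomorphic to the common expression $𝕍(𝖭 ⊗ X ⊗ 𝖭 ⊗ Y ⊗ 𝖭 ⊗ Z ⊗ 𝖭; A)$ by the same Yoneda/Kleisli manipulation. The pentagon and triangle for $\NOR(𝕍)$ then reduce to associativity and unitality of the promonad $\starp$, which you have already established in the first part from the monoid axioms for $𝖭$ (the diagrams in \Cref{cd:duoidal-coherence-bimonoids}). Three of the four interchangers are identities or unitors by definition, and the remaining $ψ_2$ for $\NOR(𝕍)$ is built directly from that of $𝕍$; its coherence equations unwind to those already assumed in $𝕍$. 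So your auxiliary ``idempotent-modulo-structure'' lemma is not needed either: the reduction goes straight through the promonad.
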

  \begin{proof}
    Let us prove that $\NOR(𝕍)$ is a \promonad{}.
    We now define the multiplication and unit for the \promonad{}, $\NOR(𝕍)$. They are constructed out of laxators of the produoidal category $𝕍$ and Yoneda isomorphisms; thus, they must be associative and unital by coherence. The unit is defined by \emph{(i)} unitality of $𝕍$, \emph{(ii)} the laxator of $𝕍$, and \emph{(iii)} by definition of $\NOR(𝕍)$.
    $$𝕍(X; Y) ≅ 𝕍(I ⊗ X ⊗ I; Y) → 𝕍(𝖭 ⊗ X ⊗ 𝖭; Y) ≅ \NOR(𝕍)(X; Y).$$
    Multiplication is constructed as follows, using \emph{(i)} the definition of $\NOR(𝕍)$, \emph{(ii)} Yoneda reduction, \emph{(iii)} the laxators of $𝕍$, and \emph{(iv)} the definition of $\NOR(𝕍)$.
    \begin{align*}
      {\textstyle ∫}^{Y ∈ 𝕍} &\NOR(𝕍)(X; Y) × \NOR(𝕍)(Y; Z) =
      {\textstyle ∫}^{Y ∈ 𝕍} 𝕍(𝖭 ⊗ X ⊗ 𝖭; Y) × 𝕍(𝖭 ⊗ Y ⊗ 𝖭; Z) \\
      &≅ 𝕍(𝖭 ⊗ 𝖭 ⊗ X ⊗ 𝖭 ⊗ 𝖭; Y) → 𝕍(𝖭 ⊗ Y ⊗ 𝖭; Z) = \NOR(𝕍)(X; Z).
    \end{align*}
    Associativity and unitality follow from those of the \produoidalCategory{}.

    The second part of this proof will show that $\NOR(𝕍)$ is indeed a \produoidalCategory{}: we will construct its associators, unitors and laxators from those of $𝕍$ and Yoneda isomorphisms. The right unitor is constructed as follows; the left unitor is constructed in a similar way: \emph{(i)} by definition of $\NOR(𝕍)$, \emph{(ii)} by associativity, \emph{(iii)} by definition, \emph{(iv)} by Yoneda reduction, \emph{(v)} by definition, and \emph{(vi)} by unitality.
    \begin{align*}
    & {\textstyle ∫}^{M ∈ \NOR(𝕍)} \NOR(𝕍)(𝖭; M) × \NOR(𝕍)(X ⊗_{𝖭} M; Y) &\quad
    = & \\
    & {\textstyle ∫}^{M ∈ \NOR(𝕍)} \NOR(𝕍)(𝖭; M) × 𝕍(𝖭 ⊗ X ⊗ 𝖭 ⊗ M ⊗ 𝖭; Y) &\quad
    ≅ & \\
    & {\textstyle ∫}^{M ∈ \NOR(𝕍), P ∈ 𝕍} \NOR(𝕍)(𝖭; M) × 𝕍(𝖭 ⊗ M ⊗ 𝖭; P) ×  𝕍(𝖭 ⊗ X ⊗ P; Y) &\quad
    = & \\
    & {\textstyle ∫}^{M ∈ \NOR(𝕍), P ∈ 𝕍} \NOR(𝕍)(𝖭; M) × \NOR(𝕍)(M; P) × 𝕍(𝖭 ⊗ X ⊗ P; Y) &\quad
    ≅ & \\
    & {\textstyle ∫}^{P ∈ 𝕍} \NOR(𝕍)(𝖭; P) × 𝕍(𝖭 ⊗ X ⊗ P; Y)  &\quad
    = & \\
    & {\textstyle ∫}^{P ∈ 𝕍} 𝕍(𝖭; P) × 𝕍(𝖭 ⊗ X ⊗ P; Y)  &\quad
    ≅ & \\
    & 𝕍(𝖭 ⊗ X ⊗ 𝖭; Y).
    \end{align*}
    Let us now construct the associator in two steps: we will show that both sides of the following equation
    \begin{align*}
      & {\textstyle ∫}^{P ∈ \NOR(𝕍)} \NOR(𝕍)(Y ⊗_{𝖭} Z; P) × \NOR(𝕍)(X ⊗_{𝖭} P; A) \  ≅ \\
      & {\textstyle ∫}^{M ∈ \NOR(𝕍)} \NOR(𝕍)(X ⊗_{𝖭} Y; M) × \NOR(𝕍)(M ⊗_{𝖭} Z; A)
    \end{align*}
    are isomorphic to $𝕍(𝖭 ⊗ X ⊗ 𝖭 ⊗ Y ⊗ 𝖭 ⊗ Z ⊗ 𝖭; A)$. The first side is isomorphic by \emph{(i)} definition of $\NOR(𝕍)$, \emph{(ii)} associativity of $𝕍$, \emph{(iii)} definition of $\NOR(𝕍)$, \emph{(iv)} Yoneda reduction, \emph{(v)} definition of $\NOR(𝕍)$, and \emph{(vi)} associativity. The second side is analogous.
    \begin{align*}
      & {\textstyle ∫}^{M ∈ \NOR(𝕍)} \NOR(𝕍)(Y ⊗_{𝖭} Z; M) × \NOR(𝕍)(X ⊗_{𝖭} M; A) &\quad
      = & \\
      & {\textstyle ∫}^{M ∈ \NOR(𝕍)} \NOR(𝕍)(Y ⊗_{𝖭} Z; M) × 𝕍(𝖭 ⊗ X ⊗ 𝖭 ⊗ M ⊗ 𝖭; A)  &\quad
      ≅ & \\
      & {\textstyle ∫}^{M ∈ \NOR(𝕍), P ∈ 𝕍} \NOR(𝕍)(Y ⊗_{𝖭} Z; M) × 𝕍(𝖭 ⊗ M ⊗ 𝖭; P) × 𝕍(𝖭 ⊗ X ⊗ P; A) &\quad
      = & \\
      & {\textstyle ∫}^{M ∈ \NOR(𝕍), P ∈ 𝕍} \NOR(𝕍)(Y ⊗_{𝖭} Z; M) × \NOR(𝕍)(M; P) × 𝕍(𝖭 ⊗ X ⊗ P; A) &\quad
      ≅ & \\
      & {\textstyle ∫}^{P ∈ 𝕍} \NOR(𝕍)(Y ⊗_{𝖭} Z; P) × 𝕍(𝖭 ⊗ X ⊗ P; A) ×  &\quad
      = & \\
      & {\textstyle ∫}^{P ∈ 𝕍} 𝕍(𝖭 ⊗ Y ⊗ 𝖭 ⊗ Z ⊗ 𝖭; P) × 𝕍(𝖭 ⊗ X ⊗ P; A) &\quad
      ≅ & \\
      & 𝕍(𝖭 ⊗ X ⊗ 𝖭 ⊗ Y ⊗ 𝖭 ⊗ Z ⊗ 𝖭; A).
      \end{align*}
    Finally, let us construct the four interchangers that define the \produoidalCategory{}. Three of them are immediate: they are either identities or unitors: $\NOR(𝕍)(I_{𝖭},A) ≅ \NOR(𝕍)(I_{𝖭} ⊲ I_{𝖭}; A)$ is the first, $\NOR(𝕍)(𝖭 ⊗ 𝖭; A) ≅ \NOR(𝕍)(𝖭; A)$ is the second, and $\NOR(𝕍)(I_{𝖭};A) ≅ \NOR(𝕍)(𝖭_{𝖭};A)$ is the last one. We note here that this \produoidalCategory{} is natural because of these isomorphisms. Thus, we only need to construct the first interchanger morphism of a \produoidalCategory{},
    $$
    \NOR(𝕍)((X₁ ⊲ Y₁) ⊗ (X₂ ⊲ Y₂); A) \longrightarrow 
    \NOR(𝕍)((X₁ ⊗ X₂) ◁ (Y₁ ⊗ Y₂); A),
    $$
    which is defined by the following reasoning. Here, we abbreviate $𝖭 ⊗ X ⊗ 𝖭$ by $X^{⊗𝖭}$, and we apply \emph{(i)} the definition of the tensors of $\NOR(𝕍)$, \emph{(ii)} the interchanger of $𝕍$, \emph{(iii)} unitality in $𝕍$, and \emph{(iv)} the definition of tensors of $\NOR(𝕍)$.
    \begin{align*}
    & \NOR(𝕍)((X₁ ⊲_{𝖭} Y₁) ⊗_{𝖭} (X₂ ⊲_{𝖭} Y₂); A) && = \\
    & 𝕍(𝖭 ⊗ (X_1^{⊗𝖭} ⊲ Y_1^{⊗𝖭}) ⊗ 𝖭 ⊗ (X_2^{⊗𝖭} ⊲ Y_2^{⊗𝖭}) ⊗ 𝖭; A) & & \to  \\
    & 𝕍((𝖭 ⊗ X_1^{⊗𝖭} ⊗ 𝖭 ⊗ Y_1^{⊗𝖭} ⊗ 𝖭) ⊲ (𝖭 ⊗ X_2^{⊗𝖭} ⊗ 𝖭 ⊗ Y_2^{⊗𝖭} ⊗ 𝖭); A) & & \to  \\
    & 𝕍((𝖭 ⊗ X_1 ⊗ 𝖭 ⊗ Y_1 ⊗ 𝖭)^{⊗𝖭} ⊲ (𝖭 ⊗ X_2 ⊗ 𝖭 ⊗ Y_2 ⊗ 𝖭)^{⊗𝖭}; A) & & \to  \\
    & \NOR(𝕍)((X_1 ⊗_{𝖭} Y_1) ⊲_{𝖭} (X_2 ⊗_{𝖭} Y_2); A).
    \end{align*}
    The structure equations of the laxators follow from those of the base category $𝕍$. This finishes the construction of a \produoidalCategory{} on the Kleisli category of the \promonad{}.
  \end{proof}
  
  \begin{lemma}
    Normalization extends to an idempotent monad.
  \end{lemma}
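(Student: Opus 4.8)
The plan is to exhibit $\NOR$ as the left adjoint of the fully faithful forgetful functor $𝓤 \colon \npDuo → \pDuo$; a monad whose right adjoint is fully faithful — equivalently, one presenting a reflective subcategory — is automatically idempotent, so this suffices. Concretely, after \Cref{th:normalizationProduoidal} three things remain: that $\NOR$ is functorial on $\pDuo$, that the identity-on-objects functor $η_{𝕍} \colon 𝕍 → 𝓤\NOR(𝕍)$ induced by the unit of the \promonad{} is a \produoidalFunctor{} natural in $𝕍$, and that $η_{𝕍}$ is universal among \produoidalFunctors{} from $𝕍$ into \normalProduoidalCategories{}.

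First I would define the action on morphisms. A \produoidalFunctor{} $F \colon 𝕍 → 𝕎$ carries, besides its underlying functor, laxators $F_{⊗}$ and the unit component $F_{𝖭}$; pasting $F_{⊗}$ twice and $F_{𝖭}$ three times sends $𝕍(𝖭 ⊗ X ⊗ 𝖭; Y)$ to $𝕎(𝖭 ⊗ FX ⊗ 𝖭; FY)$. One checks that this respects the unit and multiplication of the \promonad{} $\NOR(𝕍)$ — so it is a functor $\NOR(F) \colon \NOR(𝕍) → \NOR(𝕎)$ on the Kleisli categories — and that it commutes with the associators, unitors and the four interchangers of the normalized structure, since every one of those was assembled in \Cref{th:normalizationProduoidal} from the corresponding structure of the base together with Yoneda reductions, all of which $F$ preserves. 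The same bookkeeping shows that $η$ is produoidal and natural: its sequential-unit component is (the inverse of) the isomorphism $\NOR(𝕍)(𝖭_{𝖭}; X) ≅ \NOR(𝕍)(I_{𝖭}; X)$, while its tensor components come from $λ$, $ρ$ and $φ_0$.

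For the universal property I would use that normalization of an already-normal category collapses: if the interchanger $n \colon 𝕎(I; •) → 𝕎(𝖭; •)$ of $𝕎$ is invertible, then
\[ \NOR(𝕎)(X; Y) = 𝕎(𝖭 ⊗ X ⊗ 𝖭; Y) ≅ 𝕎(I ⊗ X ⊗ I; Y) ≅ 𝕎(X; Y), \]
and the analogous isomorphisms on the two tensors and on the units are compatible with all laxators, giving an isomorphism of \produoidalCategories{} $\NOR(𝕎) ≅ 𝕎$ inverse to $η_{𝕎}$; hence $\NOR 𝓤 ≅ \mathrm{Id}_{\npDuo}$. Given any \produoidalFunctor{} $G \colon 𝕍 → 𝓤𝕎$ with $𝕎$ normal, $\hat{G} := \NOR(G) ⨾ (\NOR 𝓤 𝕎 ≅ 𝕎)$ satisfies $η_{𝕍} ⨾ 𝓤\hat{G} = G$ by naturality of $η$ and the triangle identity, and it is the unique such factorization; this establishes $\NOR ⊣ 𝓤$. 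Applying $\NOR 𝓤 ≅ \mathrm{Id}$ to $𝕎 = \NOR(𝕍)$ makes $η_{\NOR(𝕍)}$ invertible — equivalently, the counit of $\NOR ⊣ 𝓤$ is invertible — so that $μ_{𝕍} \colon \NOR(\NOR(𝕍)) → \NOR(𝕍)$, the unique multiplication making $(\NOR, η, μ)$ a monad, is an isomorphism. The monad is therefore idempotent.

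The main obstacle I expect is purely organisational: confirming that $\NOR(F)$ and the collapsing isomorphism for normal categories are genuine \produoidalFunctors{}, i.e.\ that they commute with all four interchangers and with both associators and unitors of the normalized produoidal structure. Because each of those pieces was built in \Cref{th:normalizationProduoidal} by Yoneda manipulations and insertions of the sequential unit $𝖭$, the verification is long but mechanical; the one genuinely fiddly point is tracking the several $𝖭$-insertions and checking compatibility with the \promonad{} multiplication.
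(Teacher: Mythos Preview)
Your proposal is correct, but it inverts the paper's order of argument. The paper proves this lemma by constructing the monad directly: it defines the unit $\eta_{𝕍}$ as the identity-on-objects Kleisli inclusion and equips it with produoidal components built from the laxator $I \to N$; it then constructs the multiplication $\mu_{𝕍} \colon \NOR(\NOR(𝕍)) \to \NOR(𝕍)$ explicitly as an isomorphism, using that $\NOR(𝕍)$ is already normal so that $\NOR(𝕍)(𝖭 ⊗_{𝖭} X ⊗_{𝖭} 𝖭; Y) \cong \NOR(𝕍)(X;Y)$; finally it checks the unit and associativity laws by hand. Only \emph{after} this lemma does the paper characterise the algebras and deduce the adjunction $\NOR \dashv 𝓤$. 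You instead go straight for the adjunction, showing that $\eta_{𝓤𝕎}$ is invertible for normal $𝕎$ and invoking the general fact that a reflective subcategory yields an idempotent monad; the explicit $\mu$ and the monad laws then come for free.

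Your route is more conceptual and avoids verifying the monad laws by hand; the paper's route is more concrete and makes the multiplication visible. One point where you are lighter than the paper is the uniqueness half of the universal property: you assert that $\hat G$ is the unique factorisation, but since $\eta_{𝕍}$ is not surjective on hom-sets this does need the produoidal-functor axioms on $H$ (or equivalently the second triangle identity, which amounts to $\NOR(\eta_{𝕍}) = \eta_{\NOR(𝕍)}$). The paper sidesteps this by proving idempotency first and then reading off the adjunction from the algebra characterisation; in your order you should spell out why a produoidal functor out of $\NOR(𝕍)$ is determined by its restriction along $\eta_{𝕍}$.
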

  \begin{proof}
    The first part of the proof will show that $\NOR(𝕍)$ is the free \normalProduoidalCategory{} over $𝕍$ by constructing the a monad structure on top of the functor $\NOR ፡ \npDuo → \npDuo$. Let us construct the unit and the multiplication of the monad.
    The unit $\eta_{𝕍} : 𝕍 → \NOR(𝕍)$ is defined as the identity-on-objects functor associated to the \promonad{}; it and acts on morphisms by the unit of the promonad. Let us show that this is a \produoidalFunctor{} by constructing the following components; all of them use the map $I → 𝖭$ from the base \produoidalCategory{},
    \begin{enumerate}
      \item $η_{⊗} ፡ 𝕍(X ⊗ Y; A) → 𝕍(I ⊗ X ⊗ I ⊗ Y ⊗ I; A) → 𝕍(𝖭 ⊗ X ⊗ 𝖭 ⊗ Y ⊗ 𝖭; A)$;
      \item $η_{I} : 𝕍(I;A) → 𝕍(𝖭;A)$;
      \item $η_{⊲} : 𝕍(X ⊲ Y; A) → 𝕍((I ⊗ X ⊗ I) ⊲ (I ⊗ Y ⊗ I); A) → 𝕍((𝖭 ⊗ X ⊗ 𝖭) ⊲ (𝖭 ⊗ Y ⊗ 𝖭); A)$;
      \item $η_{𝖭} : 𝕍(𝖭; A) → 𝕍(𝖭;A)$ is simply an identity.
    \end{enumerate}
    these preserve laxators and coherence maps since they are constructed only from laxators and coherence maps.

    Let us construct now the multiplication of the monad, $μ_{𝕍} : \NOR(\NOR(𝕍)) → \NOR(𝕍)$, and show that it is an isomorphism, making it an idempotent monad. The underlying functor is identity on objects, and it acts on morphisms by 
    the normality of the already normalized \produoidalCategory{},
    $$\NOR(\NOR(𝕍))(X;Y) = \NOR(𝕍)(𝖭 ⊗_{𝖭} X ⊗_{𝖭} 𝖭; Y) ≅ \NOR(𝕍)(X;Y).$$
    The following components make this functor a \produoidalFunctor{}, they are constructed again from the normality of the already normalized \produoidalCategory{}:
    \begin{enumerate}
      \item $\mu_{⊗} : \NOR(\NOR(𝕍))(X ⊗_{𝖭𝖭} Y; A) = \NOR(𝕍)(𝖭 ⊗_𝖭 X ⊗_𝖭 𝖭 ⊗_𝖭 Y ⊗_𝖭 𝖭; A) ≅ \NOR(𝕍)(X ⊗_N Y;A)$;
      \item $\mu_{⊲} : \NOR(\NOR(𝕍))(X ⊲_{𝖭𝖭} Y; A) = \NOR(𝕍)((𝖭 ⊗_𝖭 X ⊗_𝖭 𝖭) ⊲_𝖭 (𝖭 ⊗_𝖭 Y ⊗_𝖭 𝖭); A) ≅ \NOR(𝕍)(X ⊲_𝖭 Y; A)$;
      \item $\mu_{I} : \NOR(\NOR(𝕍))(𝖭;A) = \NOR(𝕍)(𝖭;A)$;
      \item $\mu_{𝖭} : \NOR(\NOR(𝕍))(𝖭;A) = \NOR(𝕍)(𝖭;A)$,
    \end{enumerate}
    Finally we verify the monad laws. $η_{𝓝𝕍} ⨾ \mu_{𝕍}$ is an identity-on-objects; on morphisms, it applies left and right unitors followed by their inverses; as a consequence, its underlying functor is the identity. 
    The components of the natural transformations are also identities, since the interchanger $I → 𝖭$ is an identity for the normalized \produoidalCategory{} $\NOR(𝕍)$; they are otherwise composed of unitors followed by their inverses. The last step is to check the associativity of the monad, $\mu_{\NOR(𝕍)} ⨾ \mu_𝕍$ and $\NOR(\mu_{𝕍}) ⨾ \mu_𝕍$; this is simply the identity on objects, so we simply apply left and right unitors twice on morphisms and their components.
  \end{proof}
  
  \begin{lemma}
    \label{lemma:ax:exactlyOneAlgebra}
    A \produoidalCategory{} $𝕍$ has exactly one algebra structure for the normalization monad when it is normal, and none otherwise.
  \end{lemma}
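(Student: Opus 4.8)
The plan is to reduce the statement to the standard behaviour of idempotent monads. First I would record the general fact that, for an idempotent monad $(T,η,μ)$ on a category $\mathcal{C}$, an object $X$ carries a $T$-algebra structure if and only if the unit component $η_X$ is invertible, and in that case the algebra structure is \emph{unique}, necessarily $(X,η_X^{-1})$. This has a one-line proof: from an algebra $a \colon TX → X$ the unit law gives $η_X ⨾ a = \id_X$, while naturality of $η$ at $a$ together with idempotency (so $η_{TX} = Tη_X$, both invertible) gives $a ⨾ η_X = η_{TX} ⨾ Ta = T(η_X ⨾ a) = \id_{TX}$; hence $η_X$ is two-sided inverse to $a$, so $a = η_X^{-1}$, and conversely $(X,η_X^{-1})$ is at once an algebra. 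By the preceding lemma, $\NOR$ is an idempotent monad on $\pDuo$, so this applies directly: $𝕍$ admits a (then automatically unique) normalization-algebra structure exactly when $η_{𝕍} \colon 𝕍 → \NOR(𝕍)$ is an isomorphism in $\pDuo$.

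The remaining --- and only substantive --- task is to identify when $η_{𝕍}$ is a $\pDuo$-isomorphism, and I would show this happens precisely when $𝕍$ is normal. One implication is immediate: if $η_{𝕍}$ is invertible, then $𝕍 ≅ \NOR(𝕍)$, and $\NOR(𝕍)$ is normal by \Cref{th:normalizationProduoidal}; since being normal is a property invariant under isomorphism of \produoidalCategories{}, $𝕍$ is normal. For the converse I would unwind the explicit description of $η_{𝕍}$ from the construction of the normalization monad in the preceding lemma: its underlying functor is identity-on-objects and acts on hom-sets through the composite $𝕍(X;Y) ≅ 𝕍(I ⊗ X ⊗ I;Y) → 𝕍(𝖭 ⊗ X ⊗ 𝖭;Y) = \NOR(𝕍)(X;Y)$, where the middle arrow applies the unit interchanger $φ_0 \colon I → 𝖭$ on each flank; and its four structure transformations $η_{⊗}, η_{I}, η_{⊲}, η_{𝖭}$ are likewise assembled purely from $φ_0$ and the unitors and associators of $𝕍$, with $η_{𝖭}$ being literally the identity and $η_{I}$ being $φ_0$ itself. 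When $𝕍$ is normal, $φ_0$ is invertible, so substituting $φ_0^{-1}$ and cancelling the parallel unitors yields explicit inverses for the hom-set maps and for each of the four structure transformations; therefore $η_{𝕍}$ is an isomorphism in $\pDuo$.

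Combining the two implications gives the lemma: $𝕍$ carries a normalization-algebra structure iff $η_{𝕍}$ is a $\pDuo$-isomorphism iff $𝕍$ is normal, and the structure, when it exists, is unique. I expect the only point requiring care to be the converse above --- confirming that \emph{every} component of $η_{𝕍}$ genuinely reduces to a combination of $φ_0$ with the coherence data of $𝕍$, so that invertibility of $φ_0$ forces invertibility of $η_{𝕍}$ in $\pDuo$ rather than merely on underlying categories. But this verification is already implicit in the construction of the unit $η$ in the preceding lemma, so it is a matter of reading it off rather than fresh computation.
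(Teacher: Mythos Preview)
Your proof is correct and reaches the same conclusion as the paper's, but you route it through the standard general fact about idempotent monads (an object carries an algebra iff its unit component is invertible, and then the algebra is forced to be $η_X^{-1}$), whereas the paper's proof is more hands-on: it takes an arbitrary algebra $(F,F_{⊗},F_I,F_{⊲},F_{𝖭})$, chases the unit and multiplication triangles explicitly at the component $φ_0 \colon 𝕍(I;\bullet) \to 𝕍(𝖭;\bullet)$ to extract that $η_I = φ_0$ and $F_I$ must be its inverse, and then observes that the remaining components $η_{⊗},η_{⊲},η$ are built from $φ_0$ together with unitors and associators, forcing $F_{⊗},F_{⊲},F$ to be their inverses. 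In effect the paper re-derives, in this specific instance, the abstract lemma you invoke.

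Your presentation is cleaner and makes the dependence on idempotency transparent; the paper's version has the minor advantage of isolating exactly which piece of the unit $η_{𝕍}$ is $φ_0$ (namely $η_I$), so the direction ``$η_{𝕍}$ invertible $\Rightarrow$ normal'' becomes a one-step observation rather than an appeal to invariance of normality under produoidal isomorphism. Both arguments are short and the difference is purely organizational.
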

  \begin{proof}
    Let $(F, F_{⊗}, F_I, F_{𝖭}, F_𝖭) ፡ \NOR(𝕍) → 𝕍$ be an algebra. This means that the following commutative diagrams with the unit and multiplication of the normalization monad must commute.
    \begin{center}
      \begin{tikzcd}
        𝕍 \arrow[r, "η"] \arrow[rd, "\mathrm{id}"', no head] & \NOR(𝕍) \arrow[d, "F"] &  & \NOR(\NOR(𝕍)) \arrow[r, "μ"] \arrow[d, "\NOR(F)"'] & \NOR(𝕍) \arrow[d, "F"] \\
        & 𝕍 && \NOR(𝕍) \arrow[r, "F"'] & 𝕍                 
        \end{tikzcd}  
    \end{center}
    Now, consider how the interchanger $ψ_0 ፡ 𝕍(I; •) → 𝕍(𝖭; •)$ is transported by these maps.
    \begin{center}
      \begin{tikzcd}
          & 𝕍(𝖭; •) \arrow[rd, "F_I"] \arrow[d, "\mathrm{id}"] & \\
          𝕍(I; •) \arrow[ru, "η_I"] \arrow[d, "ψ_0"'] \arrow[rr, "\mathrm{id}"', bend right] & 
          𝕍(𝖭; •) \arrow[rd, "F_N"'] & 
          𝕍(I; •) \arrow[d, "ψ_0"] \\
          𝕍(𝖭; •) \arrow[ru, "\mathrm{id}"'] \arrow[rr, "\mathrm{id}"', bend right] && 
          𝕍(𝖭; •)
      \end{tikzcd}
    \end{center}
    We conclude that $η_I = ψ_0$, but also that $F_𝖭 = \mathrm{id}$. As a consequence, $ψ_0$ is invertible and $F_I$ must be its inverse. We have shown that any \produoidalCategory{} that is an algebra for the normalization monad must be normal.
  
    We will now show that this already determines all of the functor $F$.  We know that $η_{⊗}, η_{◁}, η$ are isomorphisms because they are constructed from the unitors, associators, and the laxator $ψ_0$, which is an isomorphism in this case. This determines that $F_{⊗}, F_{⊲}, F$ must be their inverses. By construction, these satisfy all structure equations.
  \end{proof}

  \begin{theorem}[Free normal produoidal]
    \label{th:freeNormalProduoidal}\label{th:normalizationIdempotent}
    Normalization determines an adjunction between \produoidalCategories{} and \normalProduoidalCategories{},
    $$\NOR ፡ \ProDuo \rightleftharpoons \npDuo \colon \mathsf{Forget}.$$
    That is, $\NOR(𝕍)$ is the free \normalProduoidalCategory{} over $𝕍$.
  \end{theorem}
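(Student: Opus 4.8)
The plan is to show that normalization $\NOR$ is left adjoint to the forgetful functor $\mathsf{Forget} \colon \npDuo \to \pDuo$ by assembling the pieces already proven. By the previous lemmas we know that $\NOR$ is an idempotent monad on $\pDuo$ (the multiplication $\mu$ is an isomorphism), and that a \produoidalCategory{} carries an algebra structure for this monad if and only if it is normal, in which case the algebra structure is unique. The standard categorical fact is then that for an idempotent monad $T$ on a category $\mathcal{C}$, the category of $T$-algebras is (equivalent to) a reflective full subcategory of $\mathcal{C}$, and the monad exhibits that reflection; combining this with \Cref{lemma:ax:exactlyOneAlgebra}, which identifies the $\NOR$-algebras with the normal produoidal categories, yields the adjunction $\NOR \dashv \mathsf{Forget}$.

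Concretely, I would proceed as follows. First, recall from \Cref{th:normalizationProduoidal} and the two lemmas above that $\NOR$ is a functor $\pDuo \to \pDuo$ equipped with unit $\eta_{𝕍} \colon 𝕍 \to \NOR(𝕍)$ and multiplication $\mu_{𝕍} \colon \NOR(\NOR(𝕍)) \to \NOR(𝕍)$ making it an idempotent monad. Second, observe that $\NOR(𝕍)$ is always normal: the unit interchanger of $\NOR(𝕍)$ is the isomorphism $\NOR(𝕍)(I_{𝖭}; \bullet) \cong \NOR(𝕍)(𝖭_{𝖭}; \bullet)$ exhibited in the proof of \Cref{th:normalizationProduoidal}, so $\NOR$ factors through $𝓤 \colon \npDuo \to \pDuo$. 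Third, for the universal property: given a \normalProduoidalCategory{} $𝕎$ and a \produoidalFunctor{} $F \colon 𝕍 \to 𝓤(𝕎)$, I must produce a unique \produoidalFunctor{} $F^{\sharp} \colon \NOR(𝕍) \to 𝕎$ with $\eta_{𝕍} \mathbin{;} 𝓤(F^{\sharp}) = F$. Apply $\NOR$ to $F$ to get $\NOR(F) \colon \NOR(𝕍) \to \NOR(𝓤(𝕎))$, then postcompose with the unique algebra structure map $\alpha_{𝕎} \colon \NOR(𝓤(𝕎)) \to 𝓤(𝕎)$ guaranteed by \Cref{lemma:ax:exactlyOneAlgebra} (which exists precisely because $𝕎$ is normal); set $F^{\sharp} = \NOR(F) \mathbin{;} \alpha_{𝕎}$. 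Naturality of $\eta$ gives $\eta_{𝕍} \mathbin{;} \NOR(F) = F \mathbin{;} \eta_{𝓤(𝕎)}$, and the algebra unit law $\eta_{𝓤(𝕎)} \mathbin{;} \alpha_{𝕎} = \mathrm{id}$ then yields $\eta_{𝕍} \mathbin{;} F^{\sharp} = F$, which is the triangle identity.

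For uniqueness: suppose $G \colon \NOR(𝕍) \to 𝕎$ is another \produoidalFunctor{} with $\eta_{𝕍} \mathbin{;} G = F$. Because $\mu_{𝕍}$ is an isomorphism, $\eta_{\NOR(𝕍)}$ is its inverse, so $\eta_{\NOR(𝕍)}$ is an isomorphism; hence any \produoidalFunctor{} out of $\NOR(𝕍)$ is determined by its precomposition with $\eta_{\NOR(𝕍)}$. Applying $\NOR$ to the equation $\eta_{𝕍} \mathbin{;} G = F$ and using naturality of $\eta$ and $\mu$ pins down $G = F^{\sharp}$; alternatively, invoke the general uniqueness of reflector-comparison maps for an idempotent monad. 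Finally, I would remark that, since $\mu$ is invertible, the adjunction is idempotent and the unit $\eta_{𝕍}$ is an isomorphism exactly when $𝕍$ is already normal, so $\npDuo$ sits inside $\pDuo$ as a reflective subcategory with reflector $\NOR$.

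The main obstacle I anticipate is not the formal adjunction argument — which is essentially the folklore statement ``an idempotent monad is the same as a reflective subcategory'' — but rather making sure the bookkeeping of \produoidalFunctor{} structure (the four components $F_{⊗}, F_{I}, F_{⊲}, F_{𝖭}$ and their compatibility with associators, unitors, and the four laxators) is respected by $\NOR(F)$ and by the algebra structure map $\alpha_{𝕎}$. However, this has effectively been discharged already: the proof of the idempotent-monad lemma checks that $\NOR$ preserves all this structure and that the components of $\mu$ and $\eta$ are built solely from laxators and coherence isomorphisms, and \Cref{lemma:ax:exactlyOneAlgebra} checks that the unique algebra map is likewise structure-preserving. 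So the remaining work is genuinely routine: cite the two lemmas, set $F^{\sharp} = \NOR(F) \mathbin{;} \alpha_{𝕎}$, and verify the two triangle identities by the algebra laws and the invertibility of $\mu$.
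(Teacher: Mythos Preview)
Your proposal is correct and follows essentially the same approach as the paper: both use that $\NOR$ is an idempotent monad together with \Cref{lemma:ax:exactlyOneAlgebra} to identify the Eilenberg--Moore category with $\npDuo$, so that the canonical monad--algebra adjunction is the desired one. The paper compresses your explicit universal-arrow construction into a single sentence (idempotent monad $\Rightarrow$ the forgetful functor from algebras is fully faithful $\Rightarrow$ algebra morphisms are just produoidal functors), but the content is the same.
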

  \begin{proof}
    We know that the algebras for the normalization monad are exactly the \normalProduoidalCategories{} (\Cref{lemma:ax:exactlyOneAlgebra}). We also know that the normalization monad is idempotent (\Cref{th:normalizationIdempotent}). 
    This implies that the forgetful functor from its category of algebras is fully faithful, and thus, the algebra morphisms are exactly the produoidal functors. As a consequence, the canonical adjunction to the category of algebras of the monad is exactly an adjunction to the category of \normalProduoidalCategories{}. 
  \end{proof}

  \begin{remark}
    Garner and López Franco \cite{garner16} introduced a partial normalization procedure for duoidal categories. 
    We contribute a general normalization procedure for \produoidalCategories{} and we characterize it universally.
    Produoidal normalization behaves slightly better than duoidal normalization: it always succeeds, and we prove that it forms an idempotent monad (\Cref{th:normalizationIdempotent}).
    The technical reason for this improvement is that the original duoidal normalization required the existence of certain coequalizers in $𝕍$; produoidal normalization uses coequalizers in $\Set$.
  \end{remark}
  
  In the previous \Cref{sec:produoidalSplice}, we constructed the \produoidalCategory{} of \splicedMonoidalArrows{}, which distinguishes between morphisms and morphisms with a hole in the monoidal unit.
  This is because the latter hole splits the morphism in two parts.   
  Normalization equates both; it sews these two parts.
  In \Cref{sec:monoidalContexts}, we explicitly construct monoidal contexts, the normalization of \splicedMonoidalArrows{}. Before that, let us also introduce the symmetric version of normalization.
  
\subsection{Symmetric Normalization}

Normalization is a generic procedure that applies to any \produoidalCategory{}, it does not matter if the parallel join $(⊗)$ is symmetric or not. However, when $⊗$ happens to be symmetric, we can also apply a more specialized normalization procedure: \emph{symmetric normalization}.

\begin{definition}[Symmetric produoidal category]
  \defining{linkSymmetricProduoidal}{}
  \label{def:symmetricProduoidal}
  A \emph{symmetric produoidal category} is a \produoidalCategory{} $𝕍_{◁,N,⊗,I}$ endowed with a natural isomorphism
  $σ ፡ 𝕍(X ⊗ Y;Z) ≅ 𝕍(Y ⊗ X; Z)$ satisfying the symmetry and hexagon equations.
\end{definition}

\begin{theorem}
  \label{th:symNormalizationProduoidal}\defining{linkSymmetricProduoidalNormalization}{}
    Let $𝕍$ be a \symmetricProduoidal{} category. The profunctor
    $$\sNOR(𝕍)(• ; •) =𝕍(N ⊗ •; •)$$
    forms a \promonad{}. Moreover, the Kleisli category of this \promonad{} is a normal \symmetricProduoidal{} category with the following \profunctors{}.
    \begin{enumerate}
      \item $\sNOR(𝕍)(X ⊗_N Y; Z) = 𝕍(N ⊗ X ⊗ Y; Z)$; 
      \item $\sNOR(𝕍)(X ⊲_𝖭 Y; Z) = 𝕍((N ⊗ X) ◁ (N ⊗ Y); Z)$; and
      \item $\sNOR(𝕍)(I_𝖭; X) =\sNOR(𝕍)(𝖭_𝖭; X) = 𝕍(𝖭; X)$.
    \end{enumerate}
\end{theorem}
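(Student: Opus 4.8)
The plan is to adapt, \emph{mutatis mutandis}, the proof of \Cref{th:normalizationProduoidal}: the only new ingredient is that the symmetry of $(⊗)$ lets us work with a single sequential unit on the left, $N ⊗ •$, in place of the two-sided $𝖭 ⊗ • ⊗ 𝖭$ used in the non-symmetric case. Wherever the non-symmetric argument would move a unit from one side of an expression to the other, here we simply transport it with $σ$.

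First I would check that $\sNOR(𝕍)$ is a \promonad{}. The unit is the composite $𝕍(X;Y) ≅ 𝕍(I ⊗ X; Y) → 𝕍(N ⊗ X; Y) = \sNOR(𝕍)(X;Y)$, built from unitality of $(⊗)$ and the laxator $φ_0 : 𝕍(I;•) → 𝕍(N;•)$. The multiplication is obtained by Yoneda reduction followed by $φ_2 : 𝕍(N ⊗ N;•) → 𝕍(N;•)$:
\[
∫^{Y} \sNOR(𝕍)(X;Y) × \sNOR(𝕍)(Y;Z)
= ∫^{Y} 𝕍(N ⊗ X; Y) × 𝕍(N ⊗ Y; Z)
≅ 𝕍(N ⊗ N ⊗ X; Z)
→ 𝕍(N ⊗ X; Z).
\]
Since both maps are assembled solely from laxators, unitors and Yoneda isomorphisms of $𝕍$, associativity and unitality of the \promonad{} follow by coherence for the produoidal structure of $𝕍$, exactly as in \Cref{th:normalizationProduoidal}.

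Second I would put the \produoidal{} structure on the Kleisli category. The tensors are the stated \profunctors{}; the key observation is that each iterated application of $(⊗_𝖭)$ or $(◁_𝖭)$ reduces, by Yoneda and the symmetry $σ$ of $𝕍$, to a $𝕍$-expression carrying one copy of $N$ per application, independently of the bracketing — for instance, both associated forms of the ternary parallel joint reduce to $𝕍(N ⊗ N ⊗ X ⊗ Y ⊗ Z; A)$, which supplies the associator. The unitors come from $φ_0$, $φ_2$ and unitality, and the four lax interchangers are transported from those of $𝕍$ along these isomorphisms. All coherence conditions — associativity, unitality, and the duoidal coherence diagrams of \Cref{sec:coherencediagramsduoidal} — then reduce to the corresponding equations in $𝕍$, because every structure map of $\sNOR(𝕍)$ is manufactured from structure maps of $𝕍$. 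For the symmetry, the required natural isomorphism is $\sNOR(𝕍)(X ⊗_𝖭 Y; Z) = 𝕍(N ⊗ X ⊗ Y; Z) ≅ 𝕍(N ⊗ Y ⊗ X; Z) = \sNOR(𝕍)(Y ⊗_𝖭 X; Z)$, induced by $σ_{X,Y}$ (fixing the leading $N$); the symmetry and hexagon laws follow from those of $𝕍$. Normality is then immediate: by definition $\sNOR(𝕍)(I_𝖭; X) = \sNOR(𝕍)(𝖭_𝖭; X) = 𝕍(𝖭; X)$, so the unit interchanger $n : \sNOR(𝕍)(I_𝖭;•) → \sNOR(𝕍)(𝖭_𝖭;•)$ is the identity. (As with \Cref{th:normalizationProduoidal,th:freeNormalProduoidal}, one can go further and show $\sNOR$ is an idempotent monad exhibiting the free normal \symmetricProduoidal{} category, but that is beyond the present statement.)

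The main obstacle will be bookkeeping rather than anything conceptual: keeping track of the placement and multiplicity of the units $N$ as they pass through the nested \profunctor{} compositions, and confirming that the various $σ$- and $φ$-mediated rearrangements used to define the associators and the interchangers are mutually compatible. As in the non-symmetric case, this is tamed by the fact that every arrow in sight is built from the coherence isomorphisms, laxators and symmetry of $𝕍$, so no genuinely new coherence obligation is created — one only has to be disciplined about which instance of $σ$, $φ_0$ or $φ_2$ is being applied where.
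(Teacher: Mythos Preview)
Your proposal is correct and matches the paper's intended approach: the paper states \Cref{th:symNormalizationProduoidal} without an explicit proof, placing it immediately after the detailed non-symmetric \Cref{th:normalizationProduoidal} as its evident analogue. Your adaptation — replacing the two-sided $𝖭 ⊗ • ⊗ 𝖭$ by the one-sided $N ⊗ •$ using the symmetry $σ$, and otherwise rerunning the same coherence-and-Yoneda argument — is precisely the intended \emph{mutatis mutandis}, and your treatment of the promonad structure, the associators, the interchangers, the symmetry on $⊗_𝖭$, and normality is sound.
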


\begin{theorem}\label{th:sym:freeNormalProduoidal}
  Normalization determines an adjunction between \symmetricProduoidal{} and normal \symmetricProduoidal{} categories,
  $$\sNOR ፡ \symProDuo \rightleftharpoons \nSymProduo \colon 𝓤.$$
  That is, $\sNOR(𝕍)$ is the free normal \symmetricProduoidal{} category over $𝕍$.
\end{theorem}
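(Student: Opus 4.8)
The plan is to mirror exactly the non-symmetric development that was just carried out for \Cref{th:freeNormalProduoidal,th:normalizationIdempotent}, only now keeping track of the $\otimes$-symmetry throughout. First I would verify that $\sNOR$, as defined in \Cref{th:symNormalizationProduoidal}, extends to a functor $\symProDuo \to \symProDuo$: on objects it is given by the stated \promonad{} on the Kleisli category, and on \produoidalFunctors{} it acts by post-composition; functoriality is routine since everything is built from coherence isomorphisms and the base functor. The key observation, just as in the asymmetric case, is that the symmetry $\sigma$ on $\sNOR(𝕍)$ is transported from that of $𝕍$ together with the unitor that absorbs the extra copy of $𝖭$ on the left, so $\sNOR(𝕍)$ is genuinely a \symmetricProduoidalCategory{} and not merely a \produoidalCategory{}.

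Next I would equip $\sNOR$ with the structure of an \emph{idempotent} monad on $\symProDuo$. The unit $\eta_{𝕍}\colon 𝕍 \to \sNOR(𝕍)$ is the identity-on-objects \produoidalFunctor{} induced by the promonad unit, and its coherence components $\eta_{\otimes}, \eta_{\triangleleft}, \eta_{I}, \eta_{𝖭}$ are constructed from the structure map $I \to 𝖭$ and the laxators of $𝕍$, exactly as in the proof of \Cref{th:normalizationIdempotent}; they preserve symmetry because they are built only from coherence data. The multiplication $\mu_{𝕍}\colon \sNOR(\sNOR(𝕍)) \to \sNOR(𝕍)$ is an isomorphism because the inner $\sNOR(𝕍)$ is already normal, so $\sNOR(𝕍)(𝖭 \otimes_{𝖭} X; Y) \cong \sNOR(𝕍)(X;Y)$; the monad laws then follow by the same ``unitors followed by their inverses'' argument, and invertibility of $\mu$ gives idempotency. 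One then shows, exactly paralleling \Cref{lemma:ax:exactlyOneAlgebra}, that a \symmetricProduoidalCategory{} carries an $\sNOR$-algebra structure if and only if its unit interchanger $n\colon 𝕍(I;•) \to 𝕍(𝖭;•)$ is invertible (i.e.\ it is normal), and that this algebra structure is then unique: transporting $\psi_{0}$ along the algebra laws forces $F_{𝖭} = \id$ and exhibits $\psi_{0}$ as an isomorphism with $F_{I}$ its inverse, after which every component of $F$ is pinned down as an inverse of a composite of coherence maps.

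Finally, since $\sNOR$ is an idempotent monad on $\symProDuo$ whose algebras are precisely the normal \symmetricProduoidalCategories{}, the forgetful functor $𝓤\colon \nSymProduo \to \symProDuo$ from the category of algebras is fully faithful, so algebra morphisms coincide with \produoidalFunctors{} preserving the symmetry, and the Eilenberg--Moore adjunction for the monad is exactly the claimed adjunction $\sNOR \dashv 𝓤$; thus $\sNOR(𝕍)$ is the free normal \symmetricProduoidalCategory{} over $𝕍$. The main obstacle I anticipate is purely bookkeeping rather than conceptual: one must check that every one of the coherence equations for a \symmetricProduoidalCategory{} — the hexagons relating $\sigma$ to the associators of both tensors and to the four interchangers — is preserved by the transported structure on the Kleisli category. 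As in the asymmetric proof these all reduce ``by coherence'' to the corresponding equations in $𝕍$ once the relevant Yoneda reductions and laxator applications are spelled out, but the symmetric case has strictly more diagrams to discharge, and the one genuinely new diagram — compatibility of $\sigma$ on $\sNOR(𝕍)$ with the first interchanger $\psi_{2}$, where the single $𝖭$ on the left must be shuffled past a swapped pair — is where I would spend the most care, using the symmetry and naturality of $\psi_{2}$ in $𝕍$ together with the fact that $𝖭$ is, up to the normalizing unitors, absorbed on both sides.
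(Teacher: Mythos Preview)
Your proposal is correct and is exactly the approach the paper intends: the paper states \Cref{th:sym:freeNormalProduoidal} (and the preceding \Cref{th:symNormalizationProduoidal}) without proof, leaving them as the evident symmetric analogues of \Cref{th:normalizationProduoidal}, the idempotent-monad lemma, \Cref{lemma:ax:exactlyOneAlgebra}, and \Cref{th:freeNormalProduoidal}. Your outline faithfully transports that argument, correctly identifying that the only new content is checking the symmetry is preserved by the Kleisli construction and interacts well with the interchangers, which indeed reduces to coherence in $𝕍$.
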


\subsection{Bibliography}
Garner and López-Franco contributed a partial normalization procedure for duoidal categories \cite{garner16}, all of the credit for this elegant idea goes there. 

We contribute its produoidal counterpart. The reader could think that this is an automatic process: extending an argument by Day \cite{day}, \produoidalCategories{} could be understood as closed \duoidalCategories{} in some sense. However, we show that there are some technical differences that make this case important: in the work of Garner and López-Franco, not every \duoidalCategory{} has a normalization, and this prevents us from constructing a monad. We prove that produoidal normalization is always possible and defines an idempotent monad.

\newpage
\section{Monoidal Lenses}
\label{sec:monoidal-lenses}
\label{sec:monoidalContexts}

\MonoidalLenses{} -- the name we give to \monoidalContexts{} \cite{pickering17:profunctoroptics,riley2018categories,ClarkeRoman20:ProfunctorOptics} -- formalize the notion of an incomplete morphism in a monoidal category. The category of \monoidalLenses{} will have a rich algebraic structure: we shall be able to still compose contexts sequentially and in parallel and, at the same time, we shall be able to fill a context using another monoidal context. Perhaps surprisingly, then, the category of \monoidalLenses{} is not even monoidal.

We justify this apparent contradiction in terms of profunctorial structure: the category is not monoidal, but it does have two \promonoidal{} structures that precisely represent sequential and parallel composition. These structures form a \normalProduoidalCategory{}.
In fact, we show it to be the normalization of the \produoidal{} category of \splicedMonoidalArrows{}. This section constructs explicitly this \normalProduoidalCategory{} of \monoidalLenses{}.

\subsection{The Category of Monoidal Lenses}
A \monoidalLens{} -- an element of type $\pbiobj{X}{Y} → \pbiobj{A}{B}$ -- represents a process from $A$ to $B$ with a hole admitting a process from $X$ to $Y$. In this sense, \monoidalLenses{} are similar to \splicedMonoidalArrows{}. The difference with \splicedMonoidalArrows{} is that \monoidalLenses{} allow for communication to happen to the left and to the right of this hole.

\begin{definition}[Monoidal lens]
  \defining{linkmonoidalcontext}{}\defining{linkMonoidalLens}{}\defining{linkMonoidalContext}{}\label{def:monoidalcontext}
  \label{def:monoidallens}
  Let $(ℂ,⊗,I)$ be a \monoidalCategory{}. \emph{Monoidal lenses} are the elements of the \profunctor{}
  $$\mLens\left({\biobj{X}{Y}}; \biobj{A}{B}\right) = ∫^{M_1,M_2} ℂ(A;M_1⊗X⊗M_2) × ℂ(M_1⊗Y⊗M_2;B).$$
\end{definition}

In other words, a \emph{monoidal lens} from $A$ to $B$, \emph{with a hole} from $X$ to $Y$, is an equivalence class consisting of a pair of objects $M, N ∈ \obj{ℂ}$ and a pair of morphisms $f ∈ ℂ(A; M⊗X⊗N)$  and $g ∈ ℂ(M⊗Y⊗N;B)$, quotiented by dinaturality of $M$ and $N$.

\begin{definition}
  \defining{linkMonoidalLens}{}
  Let $(ℂ,⊗,I)$ be a \monoidalCategory{}.
  Its \normalProduoidalCategory{} of \emph{monoidal lenses}, $\mLens(ℂ)$, has objects formed by pairs, $\mLens(ℂ)_{obj} = (ℂ^{op} × ℂ)_{obj}$, and is defined by the following \profunctors{}. 
  \begin{enumerate}
    \item Morphisms are diagrams with a single typed hole. $$\mLens{ℂ} \left( \biobj{X}{Y} ; \biobj{A}{B}\right) =  ∫^{M_1,M_2} ℂ(A;M_1⊗X⊗M_2) × ℂ(M_1⊗Y⊗M_2;B),$$
    \item \SequentialSplits{} are diagrams with a pair of sequential holes. 
    $$\begin{aligned} \mLens{ℂ}(\biobj{X}{Y} ⊲ \biobj{X'}{Y'}; \biobj{A}{B};) =  ∫^{M_1,M_2}  &ℂ(A;M_1⊗X⊗M_2) × ℂ(M_1⊗Y⊗M_2;\\& M_3 ⊗X'⊗M_4) × ℂ(M_3⊗Y'⊗M_4; B);\end{aligned}$$
    \item \ParallelSplits{} are diagrams with a pair of parallel holes. 
    $$\begin{aligned}
      \mLens{(ℂ)}(\biobj{X}{Y} ⊗ \biobj{X'}{Y'}; \biobj{A}{B}) = ∫^{M₁,M₂,M₃} &ℂ(A; M₁ ⊗ X ⊗ M₂ ⊗ X' ⊗ M₃) × \\&  ℂ(M₁  ⊗ Y ⊗ M₂ ⊗ Y' ⊗ M₃; B)\end{aligned}$$
    \item Units are complete diagrams with no holes,
    $\mLens{(ℂ)}(𝖭;\biobj{X}{Y}) = ℂ(X;Y).$
  \end{enumerate}
  Reading the profunctorial notation can be unenlightening. 
  We provide the incomplete string diagrams for these profunctors in \Cref{fig:monoidalLenses} \cite{openDiagrams}.
  \begin{figure}[ht]
    \centering
    \includegraphics[scale=0.35]{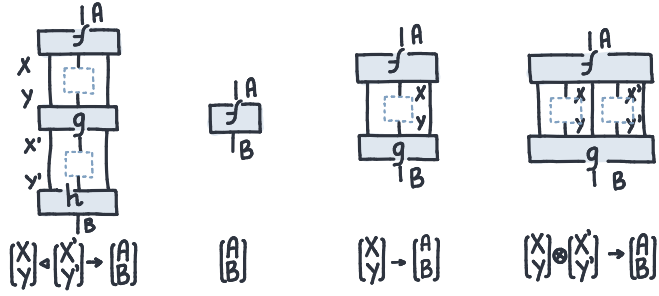}
    \caption{Monoidal lenses.}
    \label{fig:monoidalLenses}
  \end{figure}
\end{definition}

\begin{theorem}
  \label{prop:MonoidalContextProtensor}
  \label{th:monoidalContextsAreANormalization}
  The category of \monoidalLenses{} forms a normal \produoidalCategory{} with its units, sequential and parallel joins.
  \MonoidalLenses{} are the free normalization of the cofree \produoidalCategory{} over a category.
  In other words, \monoidalLenses{} are the normalization of \splicedMonoidalArrows{},
  $$\mLens{(ℂ)} ≅ \NOR(\mSplice{ℂ}).$$
\end{theorem}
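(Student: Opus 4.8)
The plan is to verify the isomorphism $\mLens(\mathbb{C}) \cong \NOR(\mSplice{\mathbb{C}})$ by unfolding the definition of $\NOR$ applied to $\mSplice{\mathbb{C}}$ and matching it, profunctor by profunctor, with the explicit coend formulas given for $\mLens(\mathbb{C})$. By \Cref{th:normalizationProduoidal}, the normalization $\NOR(\mSplice{\mathbb{C}})$ has as its carrier the Kleisli category of the promonad $\mSplice{\mathbb{C}}(\mathsf{N} \otimes \bullet \otimes \mathsf{N}; \bullet)$, with morphisms, sequential joins, parallel joins and units given by padding each argument of the corresponding $\mSplice{\mathbb{C}}$-profunctor with copies of $\mathsf{N}$. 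So the first step is purely mechanical: write out $\NOR(\mSplice{\mathbb{C}})\left(\biobj{X}{Y};\biobj{A}{B}\right) = \mSplice{\mathbb{C}}\!\left(\mathsf{N} \otimes \biobj{X}{Y} \otimes \mathsf{N};\biobj{A}{B}\right)$, expand the nested parallel joins of $\mSplice{\mathbb{C}}$ as iterated profunctor composition (following the convention in the ``Nesting profunctorial structures'' remark), and recognize that $\mathsf{N}$ is the sequential unit, so $\mSplice{\mathbb{C}}(\mathsf{N};\biobj{M}{M'}) = \mathbb{C}(M;M')$ collapses each padding slot to a single object.

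The second step is the coend bookkeeping. Expanding $\mSplice{\mathbb{C}}\!\left(\mathsf{N} \otimes \biobj{X}{Y} \otimes \mathsf{N};\biobj{A}{B}\right)$ as a composite of four profunctors (two copies of the $\mathsf{N}$-unit, one copy of $\mSplice{\mathbb{C}}\left(\biobj{X}{Y};-\right)$, and two of the representable parallel-join profunctor of $\mSplice{\mathbb{C}}$) yields a coend over intermediate pairs of objects. Using the representability of the parallel structure of $\mSplice{\mathbb{C}}$ (the proposition just before \Cref{th:monoidalContextsAreANormalization}) together with pointed Yoneda reductions (\Cref{sec:pointedcoendcalculus}), all but two of the coend variables are eliminated, leaving exactly $\int^{M_1,M_2} \mathbb{C}(A; M_1 \otimes X \otimes M_2) \times \mathbb{C}(M_1 \otimes Y \otimes M_2; B)$, which is the definition of $\mLens(\mathbb{C})$. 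The same calculation is then repeated for the sequential join (padding the two arguments of $\mSplice{\mathbb{C}}(-\triangleleft-;-)$ each by $\mathsf{N}$), for the parallel join, and — trivially, since normalization identifies the two units — for the unit profunctor $\mLens(\mathbb{C})(\mathsf{N};-) = \mathbb{C}(X;Y) = \mSplice{\mathbb{C}}(\mathsf{N};-)$.

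The third step is to check that these component isomorphisms are compatible with the produoidal structure: the associators, unitors, symmetry and the four laxators of $\NOR(\mSplice{\mathbb{C}})$ (constructed in the proof of \Cref{th:normalizationProduoidal} from the laxators of $\mSplice{\mathbb{C}}$ and Yoneda maps) must transport, under the isomorphisms above, to the corresponding structure maps of $\mLens(\mathbb{C})$. Since both sides are built out of coherence maps of $\mSplice{\mathbb{C}}$ and Yoneda reductions, this follows by the coherence of $\mSplice{\mathbb{C}}$ as a produoidal category (\Cref{ax:prop:spliceIsProduoidal}) together with naturality; I would state it as ``each structure map is uniquely determined by coherence, and both sides realize the same one.'' Finally, one invokes \Cref{th:freeNormalProduoidal}: $\NOR$ is the free-normalization left adjoint and $\mLens(\mathbb{C})$, being normal (the two units literally agree), satisfies the universal property, so the comparison functor is an equivalence — indeed an isomorphism here, since it is the identity on objects. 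The main obstacle I expect is not any single hard idea but the careful coend algebra in step two: tracking which padding $\mathsf{N}$'s collapse, ensuring the iterated profunctor composition is associated correctly, and confirming that the surviving two coend variables in each case are exactly the $M_i$'s appearing in the stated $\mLens$ formulas rather than some reindexing of them. This is where I would be most careful, and where referring to the detailed computations in \cite{produoidal23} is warranted.
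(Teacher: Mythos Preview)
Your proposal is correct and follows essentially the same route as the paper: expand $\NOR(\mSplice{\mathbb{C}})$ using the definition of normalization, apply Yoneda reduction to collapse the $\mathsf{N}$-padding coend variables (the paper does exactly this for the hom-profunctor, reducing four variables $U,V,U',V'$ to two via $\mathbb{C}(U;V)\times\mathbb{C}(U';V')$), declare the other profunctors similar, and invoke \Cref{th:freeNormalProduoidal} for the universal property. Your step three on compatibility of the produoidal structure maps is more explicit than what the paper writes down, but it is the correct extra care and is handled the way you suggest, by coherence.
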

\begin{proof}
The core of this result is in \Cref{th:freeNormalProduoidal}, which says that the normalization procedure yields the free normalization over a \produoidalCategory{}. It is only left to check that this \produoidalCategory{} of \monoidalLenses{} that we have explicitly constructed in this section is precisely the normalization of the \hyperlink{linkProduoidalSplice}{produoidal category of spliced arrows}. We do so for morphisms, the rest of the proof is similar; the proof shows that $\mSplice{ℂ} \left(  𝖭 ⊗ \biobj{X}{Y} ⊗ 𝖭 ; {\biobj{A}{B}} \right)$, the normalization of spliced monoidal arrows, is isomorphic to monoidal lenses, $\mLens(ℂ)\left( {\biobj{A}{B}};{\biobj{X}{Y}} \right)$. We employ the Yoneda lemma on both $V$ and $V'$.
  \begin{align*}
    & ∫^{U, V, U', V' ∈ ℂ} 
      ℂ(A; U ⊗ X ⊗ U') ×
      ℂ(V ⊗ Y ⊗ V'; B) × 
      ℂ\left(U ; V \right) ×
      ℂ\left(U' ; V' \right)
    & ≅ \\
    & ∫^{U,U' ∈ ℂ} 
      ℂ\left( A ; U ⊗ X ⊗ U'\right) × 
      ℂ(U ⊗ Y ⊗ U' ; B)
  \end{align*}
  The rest of the profunctors follow a similar reasoning.
\end{proof}

\subsection{Symmetric Monoidal Lenses}
\defining{linksymmetricmonoidalcontext}{}
\defining{linkSymmetricMonoidalLens}{}
\label{sec:monoidallenses}

A \symmetricMonoidalLens{} of type $\smLens{(ℂ)}(\biobj{X}{Y}; \biobj{A}{B})$ represents a process in a \symmetricMonoidalCategory{} with a hole admitting a process from $X$ to $Y$. 

\SymmetricMonoidalLenses{} are \monoidalLenses{}, but we stop caring where the hole is.
Again, the category of \symmetricMonoidalLenses{} has a rich algebraic structure; and again, most of this structure exists only virtually in terms of profunctors. In this case, though, the monoidal tensor \emph{does} indeed exist: contrary to \monoidalLenses{}, \symmetricMonoidalLenses{} form also a \monoidalCategory{}.
This is perhaps why applications of \monoidalLenses{} have grown popular in recent years~\cite{riley2018categories}, with applications in decision theory \cite{ghani:compositionalgametheory2018}, supervised learning \cite{cruttwell22:learning,fong19:lenses} and most notably in functional data accessing \cite{kmett12:lenslibrary,pickering17:profunctoroptics,boisseau2018you,ClarkeRoman20:ProfunctorOptics}.
The \promonoidal{} structure of optics was ignored, even when, after now identifying for the first time its relation to the monoidal structure of optics, we argue that it could be potentially useful in these applications: e.g. in multi-stage decision problems, or in multi-stage data accessors.

This section explicitly constructs the normal \symmetricProduoidalCategory{} of \emph{symmetric monoidal lenses}.
We describe it for the first time by a universal property: it is the free symmetric normalization of the cofree \produoidalCategory{}.

\begin{definition}
  \label{def:symmetricmonoidallens}
  \defining{linkSymmetricMonoidalLens}{}
  Let $(ℂ,⊗,I)$ be a symmetric \monoidalCategory{}.
  \emph{Symmetric monoidal lenses} are the elements of the profunctor
  $$\smLens{ℂ}\left(\biobj{X}{Y}; \biobj{A}{B} \right) = ∫^M ℂ(A ; M ⊗ X) × ℂ(M ⊗ Y ; B).$$
  In other words, a \emph{symmetric monoidal lens} from $A$ to $B$, \emph{with a hole} from $X$ to $Y$, is an equivalence class consisting of a pair of objects $M ∈ ℂ_{obj}$ and a pair of morphisms $f ∈ ℂ(A; M ⊗ X)$  and $g ∈ ℂ(M ⊗ Y;B)$, quotiented by \dinaturality{} of $M$.
\end{definition}
\begin{definition}
  \defining{linkSymmetricMonoidalLens}{}
  Let $(ℂ,⊗,I)$ be a \symmetricMonoidalCategory{}.
  Its \normalSymmetricProduoidalCategory{} of \emph{symmetric monoidal lenses}, $\smLens{(ℂ)}$, has objects formed by pairs, $\smLens{(ℂ)}_{obj} = (ℂ^{op} × ℂ)_{obj}$, and is defined by the following \profunctors{}. 
  \begin{enumerate}
    \item Morphisms are diagrams with a single typed hole. 
    $$\smLens{(ℂ)} \left(\biobj{X}{Y} ; \biobj{A}{B} \right) =  ∫^{M} ℂ(A;M ⊗X) × ℂ(M⊗Y;B),$$
    \item \SequentialSplits{} are diagrams with a pair of sequential holes. 
    $$\begin{aligned} 
      \smLens(ℂ)( \biobj{X}{Y} ⊲ \biobj{X'}{Y'} ; \biobj{A}{B}) =  ∫^{M_1,M_2}  &ℂ(A;M_1⊗X) × ℂ(M_1⊗Y;\\& M_2 ⊗X') × ℂ(M_2 ⊗ Y'; B);
    \end{aligned}$$
    \item \ParallelSplits{} are diagrams with a hole encompassing parallel wires. 
    $$\begin{aligned}
      \smLens(ℂ)(\biobj{X}{Y} ⊗ \biobj{X'}{Y'}; \biobj{A}{B}) = ∫^{M} &ℂ(A; M ⊗ X ⊗ X') ×  ℂ(M ⊗ Y ⊗ Y'; B).
    \end{aligned}$$
    \item Units are complete diagrams with no holes,
    $\smLens(ℂ)(𝖭; \biobj{A}{B}) = ℂ(A;B).$
  \end{enumerate}
  Reading the profunctorial notation can be unenlightening.
  We provide the incomplete string diagrams for these profunctors in \Cref{fig:monoidalLenses} \cite{openDiagrams}.
  The only substantial difference with \monoidalLenses{} is that we do not need to keep track of where the hole is placed.
  \begin{figure}[ht]
    \centering
    \includegraphics[scale=0.35]{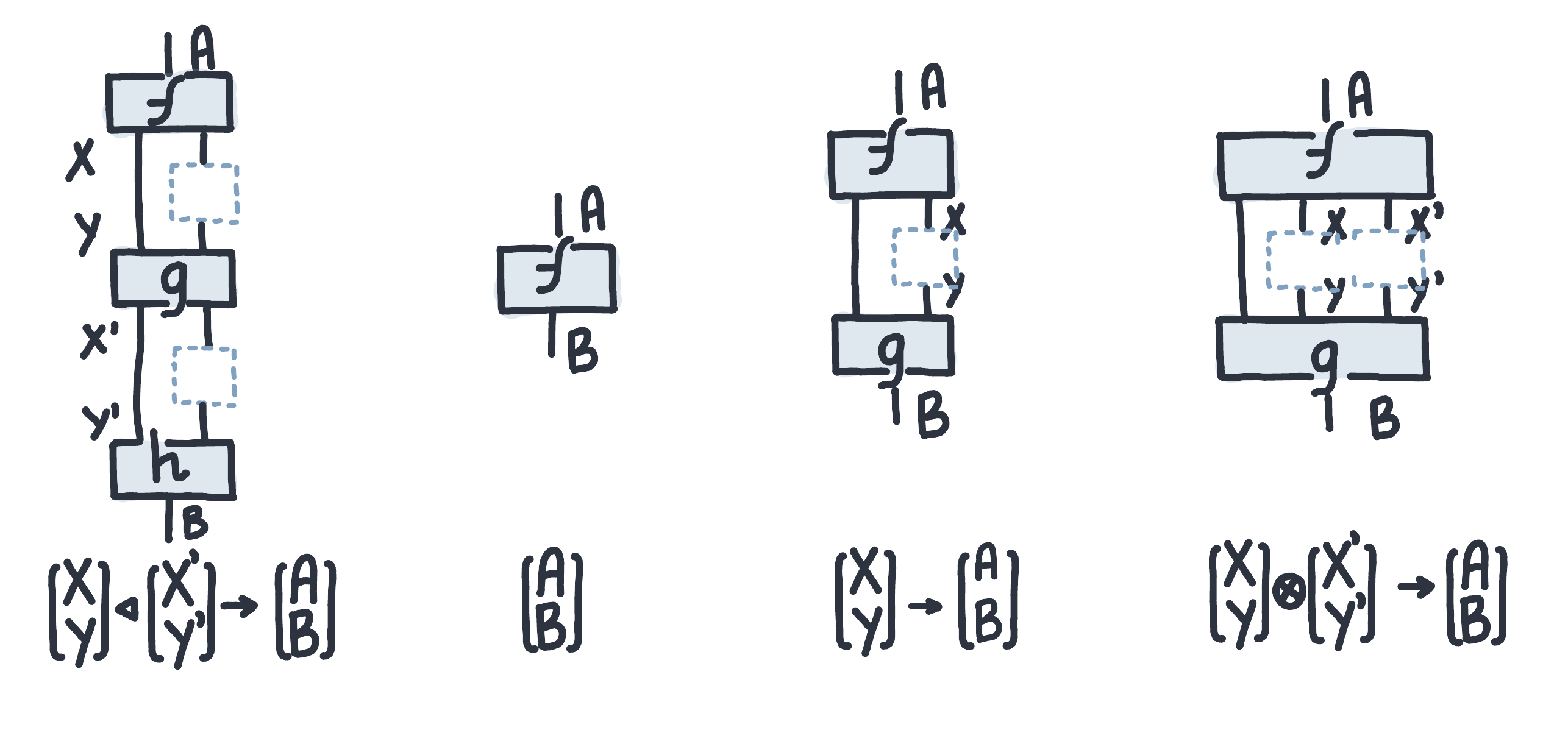}
    \caption{Symmetric monoidal lenses.}
    \label{fig:symmetricMonoidalLenses}
  \end{figure}

  The term ``\monoidalLenses{}'' has usually been reserved for the morphisms of this category; in the literature, the sequential splits  -- the lenses with multiple holes -- get a name from their distinctive shape: these are \emph{combs} or \emph{quantum combs} \cite{coeckeFS16}.
\end{definition}

\subsection{Towards Message Theories}
Lenses, or \emph{combs}, can be interpreted as incomplete morphisms, but also as morphisms that send and receive resources. The next chapter will exploit this intuition.

\begin{remark}[Session notation for combs]
  We will write $A^{∘} = \pbiobj{A}{I}$ and $B^{•} = \pbiobj{I}{B}$ for the objects of the symmetric \produoidalCategory{} of lenses that have a monoidal unit as one of its objects. Thanks to $A^{∘} ⊗ B^{•} = \pbiobj{A}{B}$, these are enough to express all objects.
\end{remark}

\begin{proposition}
  \label{prop:sessionNotation}
  Let $(ℂ,⊗,I)$ be a symmetric \monoidalCategory{}. There exist monoidal functors 
  $$(\Send{}) ፡ ℂ → \smLens(ℂ), \quad\mbox{ and }\quad (\Get{}) \colon ℂ^{op} → \smLens(ℂ).$$
  Moreover, they satisfy the following properties definitionally:
  $ℂ( \Get{A} ⊲ \Get{B} ; •) ≅ ℂ(\Get{A} ⊗ \Get{B}; •)$; 
  $\Send{(A ⊗ B)} = \Send{A} ⊗ \Send{B}$;
  $ℂ(\Send{A} ⊲ \Send{B}; •) ≅ ℂ(\Send{A} ⊗ \Send{B}; •)$; 
  $\Get{(A ⊗ B)} = \Get{A} ⊗ \Get{B}$; and 
  $ℂ(\Send{A} ⊲ \Get{B}; •) ≅ ℂ(\Send{A} ⊗ \Get{B}; •)$.
\end{proposition}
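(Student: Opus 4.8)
The plan is to unfold every claimed isomorphism and equality using the explicit coend formula for \symmetricMonoidalLenses{} from \Cref{def:symmetricmonoidallens}, together with the session-notation abbreviations $A^{∘} = \pbiobj{A}{I}$ and $B^{•} = \pbiobj{I}{B}$ and the equality $A^{∘} ⊗ B^{•} = \pbiobj{A}{B}$. First I would define the two functors. For $(\Send{})$, send an object $A$ to $A^{∘} = \pbiobj{A}{I}$ and a morphism $f ፡ A → B$ to the lens $\langle f ⨾ □ ⨾ \id_I \rangle$ (i.e. the class of the pair $(\id_I ⊗ f ⨾ \rho^{-1}, \id)$ with trivial residual $M = I$); functoriality is immediate from the definition of lens composition, and strict monoidality on objects follows since $\pbiobj{A}{I} ⊗ \pbiobj{B}{I} = \pbiobj{A ⊗ B}{I ⊗ I} = \pbiobj{A ⊗ B}{I}$ (using strictness of the base). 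Dually $(\Get{})$ sends $A ∈ ℂ^{op}$ to $A^{•} = \pbiobj{I}{A}$ and a morphism in $ℂ^{op}$ — that is, $g ፡ B → A$ in $ℂ$ — to the lens with hole absorbing $g$ on the second component; contravariant functoriality and $\Get{(A ⊗ B)} = \Get{A} ⊗ \Get{B}$ follow identically. This establishes the two functors and the two strict-monoidality equations $\Send{(A⊗B)} = \Send{A} ⊗ \Send{B}$ and $\Get{(A⊗B)} = \Get{A} ⊗ \Get{B}$.

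Next I would verify the three profunctor isomorphisms comparing the sequential join $(⊲)$ with the parallel join $(⊗)$ of \smLens{}. Each is a routine coend computation. For $ℂ(\Send{A} ⊗ \Send{B}; \pbiobj{P}{Q})$, substituting $\Send{A} = \pbiobj{A}{I}$, $\Send{B} = \pbiobj{B}{I}$ into the parallel-join formula of \Cref{def:symmetricmonoidallens} gives $∫^{M} ℂ(P; M ⊗ A ⊗ B) × ℂ(M ⊗ I ⊗ I; Q) ≅ ∫^{M} ℂ(P; M ⊗ A ⊗ B) × ℂ(M; Q)$, using strictness to erase the units. For the sequential join $ℂ(\Send{A} ⊲ \Send{B}; \pbiobj{P}{Q})$ one gets $∫^{M_1,M_2} ℂ(P; M_1 ⊗ A) × ℂ(M_1 ⊗ I; M_2 ⊗ B) × ℂ(M_2 ⊗ I; Q) ≅ ∫^{M_1,M_2} ℂ(P; M_1 ⊗ A) × ℂ(M_1; M_2 ⊗ B) × ℂ(M_2; Q)$; composing the first two factors along $M_1$ (this is exactly the coend composition that glues $P → M_1 ⊗ A$ with $M_1 → M_2 ⊗ B$ into $P → M_2 ⊗ B ⊗ A$, here using symmetry $σ$ of $⊗$ to reorder $A$ past $B$) reduces it to $∫^{M_2} ℂ(P; M_2 ⊗ B ⊗ A) × ℂ(M_2; Q)$, which matches the parallel-join expression up to the symmetry swapping $A$ and $B$. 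The cases $ℂ(\Get{A} ⊲ \Get{B}; •) ≅ ℂ(\Get{A} ⊗ \Get{B}; •)$ and the mixed case $ℂ(\Send{A} ⊲ \Get{B}; •) ≅ ℂ(\Send{A} ⊗ \Get{B}; •)$ are handled the same way: plug in the objects with a unit in the appropriate slot, use strictness to kill the spurious units, and use the coend Yoneda reduction (\Cref{prop:yonedareduction}) plus symmetry to absorb the now-trivial residual factors and identify the two expressions.

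The step I expect to be the main obstacle is not any single computation but the bookkeeping needed to show these isomorphisms are the \emph{canonical} (structural) ones — i.e. that they agree with the associators, unitors, and interchangers of the normal \symmetricProduoidalCategory{} \smLens{(ℂ)}, so that the statement ``$ℂ(\Send{A} ⊲ \Get{B}; •) ≅ ℂ(\Send{A} ⊗ \Get{B}; •)$'' is a statement about the produoidal structure and not just a bijection of sets. The cleanest way to discharge this is to invoke \Cref{th:symNormalizationProduoidal}: since \smLens{(ℂ)} is the symmetric normalization $\sNOR(\mSplice{ℂ})$ and normalization identifies $I_𝖭$ with $𝖭_𝖭$, the isomorphism $𝕍(N ⊗ X ⊗ Y; Z) ≅ 𝕍((N ⊗ X) ◁ (N ⊗ Y); Z)$ of the normalization is precisely the interchanger witnessing these coincidences; one then only needs to check that $\Send{A}$ and $\Get{B}$ land in the sub-objects $\pbiobj{A}{I}$, $\pbiobj{I}{B}$ on which the residual legs of the join become trivial, which is immediate from the unit laws of the base \symmetricMonoidalCategory{}. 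With this observation the remaining verifications — naturality of each isomorphism in the codomain object $\pbiobj{P}{Q}$, and compatibility with composition in \smLens{(ℂ)} — are mechanical coend manipulations of the kind already carried out in \Cref{th:monoidalContextsAreANormalization}, so I would state them and refer to that proof pattern rather than repeat the calculations.
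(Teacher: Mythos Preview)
Your proposal is correct and follows the same approach as the paper: define the functors by $\Send{f} = (f ⨾ □ ⨾ \id_I)$ and $\Get{g} = (\id_I ⨾ □ ⨾ g)$, observe the object-level equalities $\Send{(A⊗B)} = \Send{A}⊗\Send{B}$ and $\Get{(A⊗B)} = \Get{A}⊗\Get{B}$ from the definition of the tensor on $\smLens(ℂ)$, and unfold the coend formulae for the remaining isomorphisms. The paper's own proof is in fact much terser than yours---it does not write out the coend reductions or discuss canonicity, simply asserting the equalities hold ``by definition'' and deferring the monoidal-functor claim to Riley \cite[Proposition~2.0.14]{riley2018categories}---so your extra care with Yoneda reduction and the normalization interchanger is more detail than the paper itself supplies.
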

\begin{proof}  
  We define $\Send{f} = (f ⨾ □ ⨾ \id_I)$ and $\Get{g} = (\id_I ⨾ □ ⨾ g)$, and then check that compositions and tensoring of morphisms are compatible with composition and tensoring of \monoidalLenses{}, this is straightforward.
  Moreover, we can see that, by definition, 
  \begin{align*}
    & \Send{(A ⊗ B)} = \left(\biobj{A ⊗ B}{I}\right) = \left(\biobj{A}{I}\right) ⊗ \left(\biobj{B}{I}\right) = \Send{A} ⊗ \Send{B}, \quad\mbox{ and } \\ 
    &\Get{(A ⊗ B)} = \left(\biobj{I}{A ⊗ B}\right) = \left(\biobj{I}{A}\right) ⊗ \left(\biobj{I}{B}\right) = \Get{A} ⊗ \Get{B}.
  \end{align*}

  This proof appears with a different language in the work of Riley \cite[Proposition 2.0.14]{riley2018categories}. In fact, there, the combined identity-on-objects functor $({∘} × {•}) ፡ ℂ × ℂ^{op} → \smLens(ℂ)$ is shown to be monoidal.
\end{proof}

\begin{example}\label{ex:one-time-pad}
  Let us give a first example of how to employ combs for the description of concurrent protocols.
  Broadbent and Karvonen \cite{broadbent22:crypto} propose a formalization of the \emph{one-time pad} encryption protocol in a \symmetricMonoidalCategory{} endowed with a Hopf algebra with an integral.
  \begin{figure}[ht]
    \centering
    \includegraphics[scale=0.35]{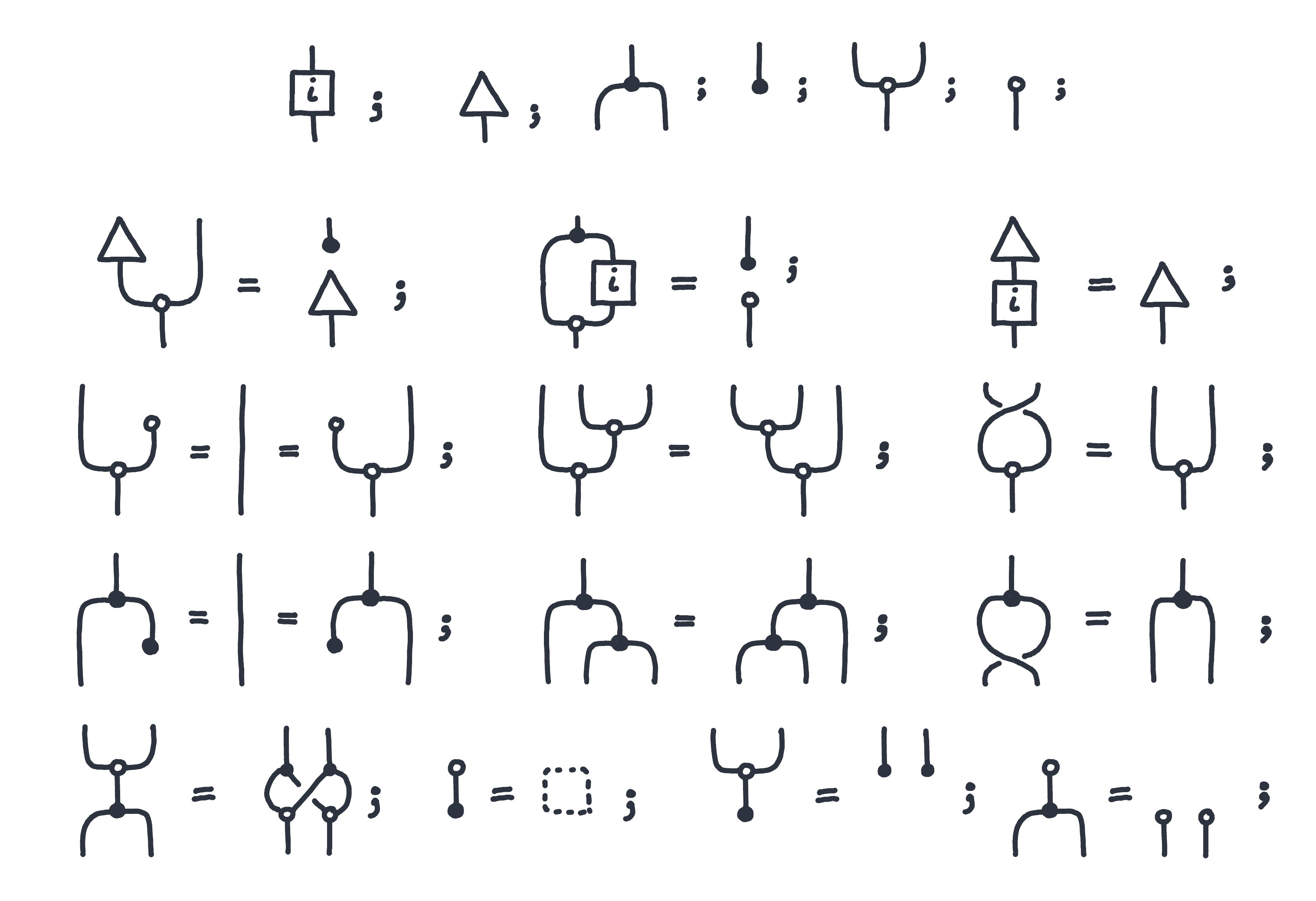}
    \caption{Theory of a Hopf algebra with an integral.}
    \label{fig:hopf-algebra-theory}    
  \end{figure}

  \begin{definition}
    A Hopf algebra with an integral, $(X,\iconbcm,\iconbcu,\iconwm,\iconwu,i,d)$, is a commutative bialgebra endowed with an \emph{antipode} map $i ፡ X → X$, representing inversion; and endowed with an \emph{integral map}, $d ፡ I → X$, representing a non-determined value, or pure noise. These must satisfy the equations in \Cref{fig:hopf-algebra-theory}.
  \end{definition}

  The one-time pad is a mathematically secure encryption technique. It works as follows: \emph{(i)} the two parties communicating -- say, \Alice{} and \Bob{} -- start by preparing some random bits and sharing them; \emph{(ii)} when the message is ready, \Alice{} applies bitwise XOR with the random bits to encrypt the message, and then broadcasts the encrypted message -- a potential attacker, \Eve{}, will receive this encrypted message; \emph{(iii)} finally, \Bob{} receives the encrypted message and applies again bitwise XOR with the random bits to decrypt the message (\Cref{fig:one-time-pad}).
  \begin{figure}[ht]
    \centering
    \includegraphics[scale=0.35]{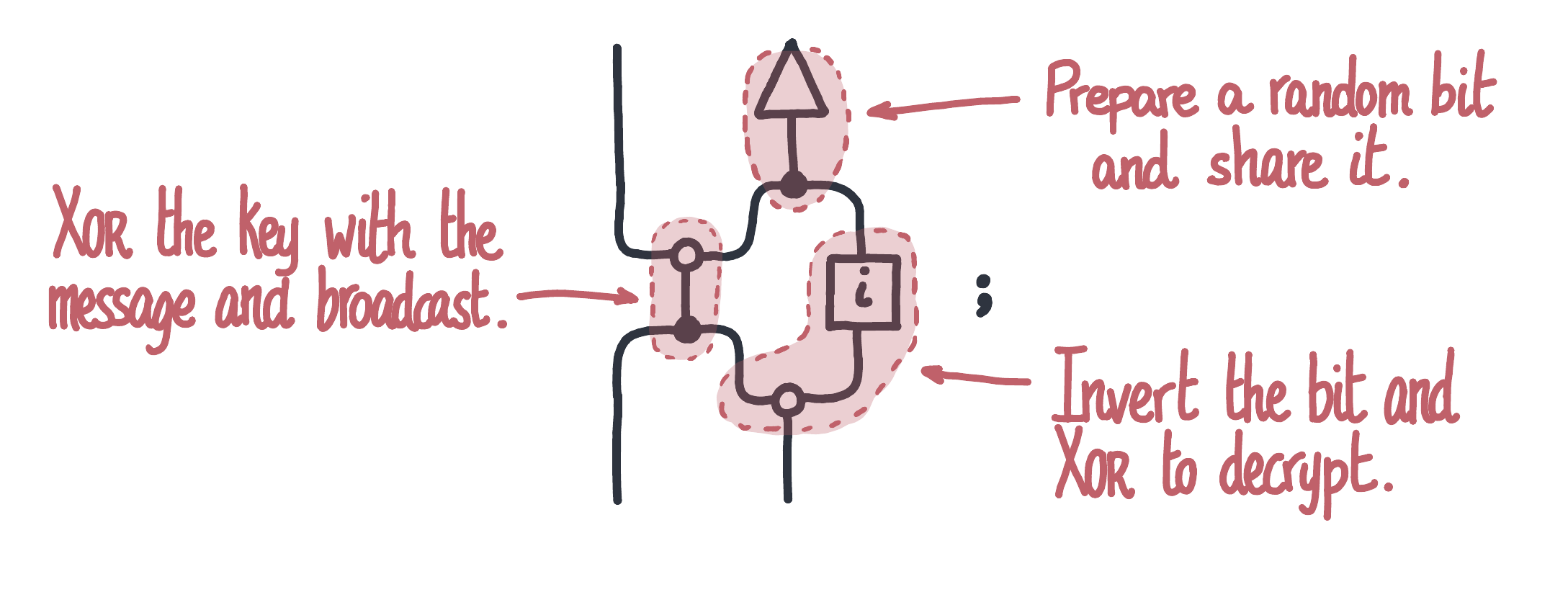}
    \caption{Description of the one-time pad.}
    \label{fig:one-time-pad}    
  \end{figure}

  \begin{proposition}
    \label{prop:correctness}
    The one-time pad is \emph{secure}, meaning that it is equal to the process that sends a message from \Alice{} to \Bob{} and outputs random noise through the attacker's channel.
  \end{proposition}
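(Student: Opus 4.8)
The statement to prove is \Cref{prop:correctness}: the one-time pad protocol, expressed as a composite of combs for \Alice{}, \Bob{}, and \Eve{}, equals the ``ideal'' process that transmits a message from \Alice{} to \Bob{} while emitting pure noise on \Eve{}'s channel. The plan is to work entirely inside the \symmetricMonoidalCategory{} $ℂ$ equipped with a Hopf algebra with an integral, unfolding the comb description of \Cref{fig:one-time-pad} into an ordinary string diagram and then reasoning by string-diagrammatic rewriting using the axioms in \Cref{fig:hopf-algebra-theory}. Since $\smLens(ℂ)$ is built from the underlying monoidal category and, by \Cref{prop:sessionNotation}, sending and receiving are monoidal functors $ℂ → \smLens(ℂ)$ and $ℂ^{op} → \smLens(ℂ)$, composing the three parties' combs yields a single comb whose underlying morphism is an honest morphism of $ℂ$; it suffices to prove the desired equality there.

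First I would write down the composite explicitly: \Alice{} prepares the shared randomness via the integral $d ፡ I → X$, keeps one copy (using the comultiplication $\iconbcm$), applies XOR (the multiplication $\iconwm$) of her message with her copy, and broadcasts; \Bob{} receives the broadcast and XORs again with his copy of the shared randomness; \Eve{} sits on the broadcast wire. The key algebraic steps are then: \emph{(i)} using coassociativity and cocommutativity of $\iconbcm$ to route the two copies of the random value to \Alice{} and \Bob{}; \emph{(ii)} using associativity and commutativity of $\iconwm$ together with the bialgebra law to push \Bob{}'s XOR next to \Alice{}'s XOR, so that the message meets $x \oplus r \oplus r$ for the shared random $r$; \emph{(iii)} using the antipode/inverse law — that $x \oplus x$ composed appropriately collapses via $i$ and the Hopf axiom — to cancel the two occurrences of the shared randomness on \Bob{}'s output, leaving the message passing straight through to \Bob{}; and \emph{(iv)} using the integral axiom (the defining property of $d$, that copying noise and feeding one copy through any group-like structure still yields noise, i.e. $d$ is invariant under translation) to show that the residual value delivered on \Eve{}'s broadcast channel is again $d$, i.e. fresh noise independent of the message. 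This is essentially the diagrammatic proof reproduced as \Cref{fig:one-time-pad-correctnessIntro}, attributed to Broadbent and Karvonen \cite{broadbent22:crypto}.

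The main obstacle I anticipate is bookkeeping the comb/lens structure correctly: the three parties are combs with holes, and their composite in $\smLens(ℂ)$ uses the sequential and parallel joins of the \normalSymmetricProduoidalCategory{}. I would need to invoke \Cref{prop:sessionNotation} carefully to see that, once the \Stage{} broadcasting wires and the shared-randomness wires are plugged in, the whole thing reduces to a closed morphism in $ℂ$ — i.e. I must check that no residual dinaturality coend obstructs reading off a plain $ℂ$-morphism. After that reduction the argument is a routine, if slightly intricate, sequence of rewrites with the Hopf-algebra-with-integral axioms, and the only subtlety is the use of the integral's translation-invariance in step \emph{(iv)}, which is exactly the axiom that encodes ``XOR with uniform randomness yields uniform randomness.'' I would present the proof as a labelled chain of string-diagram equalities, citing \emph{(i)} the comonoid laws, \emph{(ii)} the bialgebra law, \emph{(iii)} the Hopf/antipode axiom, and \emph{(iv)} the integral axiom, mirroring \Cref{fig:one-time-pad-correctnessIntro}.
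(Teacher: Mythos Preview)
Your core algebraic plan is correct and matches the paper: the proof is a chain of string-diagram rewrites in the underlying \symmetricMonoidalCategory{} using the Hopf-algebra-with-integral axioms of \Cref{fig:hopf-algebra-theory}, arriving at an identity on the message wire tensored with the integral $d$ on \Eve{}'s wire. The paper simply cites Broadbent and Karvonen and defers to \Cref{fig:one-time-pad-correctness} for the rewrite sequence, so your steps \emph{(i)}--\emph{(iv)} are exactly the intended content.

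Where you add unnecessary work is the comb/lens bookkeeping. At this point in the text the one-time pad is presented as a single morphism of $ℂ$ (\Cref{fig:one-time-pad}), not yet as a composite of combs; the decomposition into \Alice{}, \Bob{}, \Eve{}, and \Stage{} as elements of $\smLens(ℂ)$ only appears \emph{after} the proof, as motivation for the next chapter. So there is no reduction from $\smLens(ℂ)$ to $ℂ$ to perform, no dinaturality coend to worry about, and no need to invoke \Cref{prop:sessionNotation}. The proposition is literally an equation between two morphisms of $ℂ$, and the proof lives entirely there. Drop the first and last paragraphs of your plan and you have the paper's argument.
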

  \begin{proof}
    We repeat the proof from Broadbent and Karvonen \cite{broadbent22:crypto}.
    We employ string diagrams of \symmetricMonoidalCategories{}, in \Cref{fig:one-time-pad-correctness}, to show that the morphism is equal to an identity tensored by the integral of the Hopf algebra. 
    \begin{figure}[ht]
      \centering
      \includegraphics[scale=0.35]{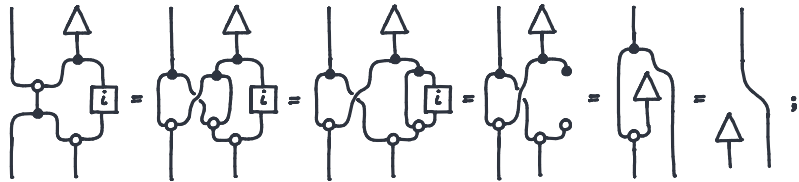}
      \caption{Correctness of the one-time pad.}
      \label{fig:one-time-pad-correctness}
    \end{figure}
  \end{proof}
\end{example}

The interesting part comes when we want to split the morphism into its different constituents: there should be a \emph{stage} where the three actors play; \Alice{} does not control the fact that the encrypted message will be broadcast; \Eve{}, the attacker, can only attack at the end; \Bob{} will need to keep a bit in memory. These considerations are part of the problem statement the one-time pad is solving. It is easy to come up with a morphism that connects an input to an output: the problem the one-time pad is solving is to do so on a stage that has been preset.

The components of the one time pad are not simply morphisms of a \monoidalCategory{} (\Cref{fig:one-time-pad-components}). They must be understood as \monoidalLenses{}. After this section, we can declare that a possible typing for these components is the following:
\begin{enumerate}
  \item $\Alice ፡ 𝖭 → \pbiobj{X ⊗ X}{X}$;
  \item $\Bob ፡ X^{•} ⊲ X^{∘} → \pbiobj{I}{X}$;
  \item $\Eve ፡ 𝖭 → \pbiobj{X}{X}$;
  \item $\mathsf{Stage} ፡ X^{∘} ⊲ X^{•} ⊲ X^{•} ⊲ X^{∘} ⊲ X^{•} ⊲ X^{•} ⊲ X^{∘} ⊲ X^{∘} → \pbiobj{X}{X ⊗ X}$;
\end{enumerate}

\begin{figure}[ht]
  \centering
  \includegraphics[scale=0.35]{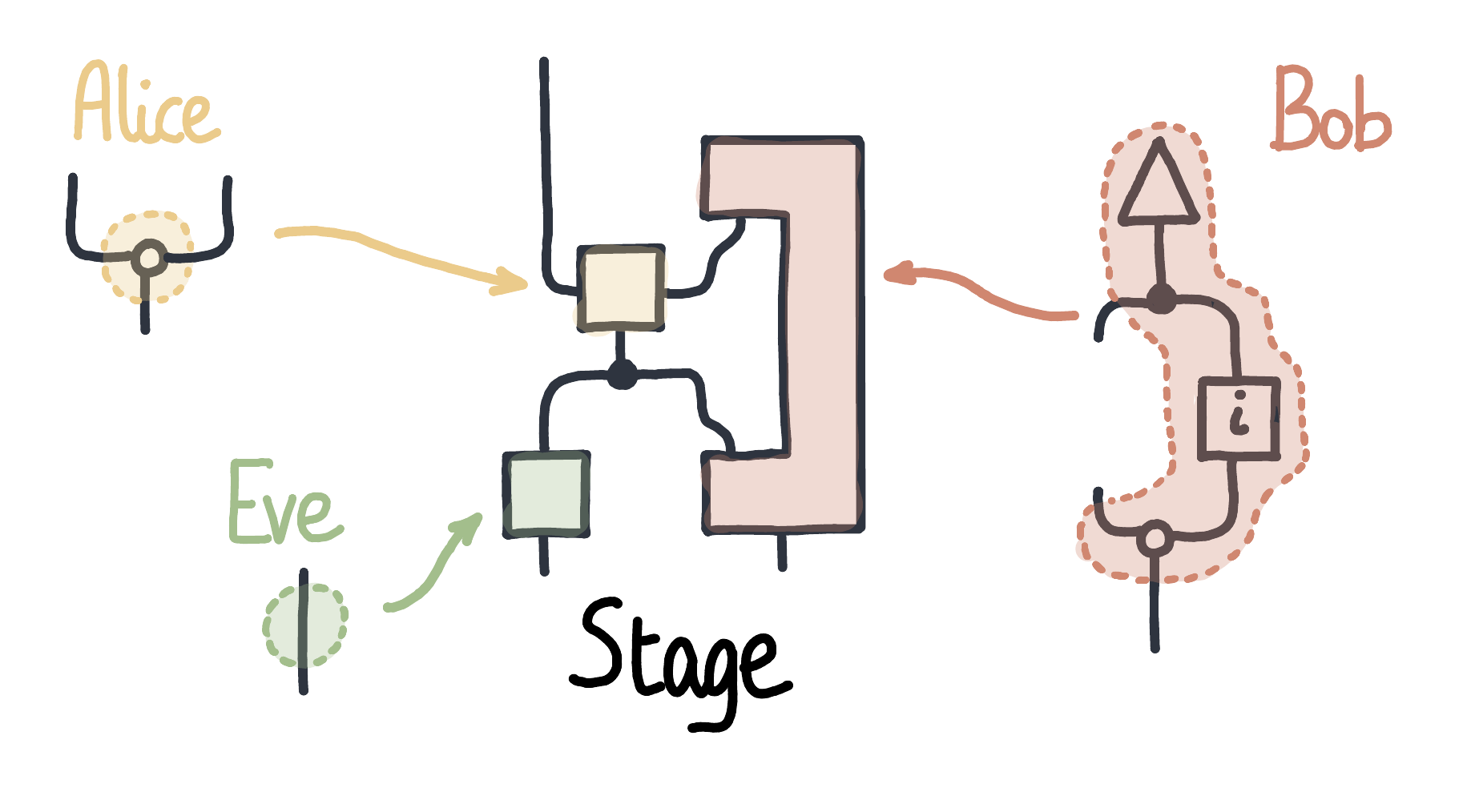}
  \caption{Components of the one-time pad.}
  \label{fig:one-time-pad-components}
\end{figure}

Still, at this stage it is not easy to talk about message passing with this syntax. It is true that the new produoidal types can track all the exchanges that happen along a boundary; but these types are tedious -- see, for instance, the long type of $\mathsf{Stage}$ -- and it is not clear how to compose them. What we are missing is a combinatorial description of the different ways we can combine elements of this produoidal algebra.

This is what the next chapter will solve: we will propose a combinatorial algebra of message passing and show that lenses, the normalized cofree produoidal algebra over a \symmetricMonoidalCategory{} have a second universal property -- they also constitute the free \messageTheory{}.

\subsection{Bibliography}
Lenses~\cite{foster07:bidirectional} are a notion of bidirectional transformation that can be cast in arbitrary monoidal categories.
The first mention of monoidal lenses separate from their classical database counterparts \cite{johnson2012lenses} is due to Pastro and Street \cite{pastro07}, who identify them as an example of a \promonoidalCategory{}. However, it was with a different monoidal structure \cite{riley2018categories} that they became popular in recent years, spawning applications not only in bidirectional transformations \cite{foster07:bidirectional} but also in functional programming \cite{pickering17:profunctoroptics,ClarkeRoman20:ProfunctorOptics}, open games \cite{ghani:compositionalgametheory2018}, polynomial functors \cite{niuspivak:polynomial} and quantum combs \cite{hefford_combs}.
Relating this monoidal category of lenses with the previous \promonoidalCategory{} of lenses was an open problem; and the \promonoidal{} structure was mostly ignored in applications.
We solve this problem, proving that lenses are a universal normal \symmetricProduoidal{} category (the \symmetricMonoidalLenses{}), which endows them with a novel algebra and a novel universal property.
This also extends work on the relation between \emph{incomplete diagrams}, \emph{comb-shaped diagrams}, and \emph{lenses} \cite{roman2020,openDiagrams}.

Lenses themselves have been applied to protocol specification~\cite{videlacapucci22}.
Spivak \cite{spivak13} also discusses the \multicategory{} of \emph{wiring diagrams}, later used for incomplete diagrams \cite{patterson21:wiringdiagrams} and related to lenses \cite{schultz20:dynamical}; we conjecture that this \multicategory{} of wiring diagrams is precisely the \produoidalCategory{} of lenses, once we stop tracking dependencies explicitly.

\begin{conjecture}
  \label{conj:wiringdiagrams}
  Each physical \produoidalCategory{} induces a \multicategory{} given by its physical lax tensor (\Cref{sec:physicallaxtensor}).
  The \multicategory{} of wiring diagrams \cite{spivak13} of \symmetricMonoidalCategories{} is the \multicategory{} induced by the \produoidalCategory{} of \monoidalLenses{}.
\end{conjecture}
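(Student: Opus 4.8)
The plan is to prove the two halves of the conjecture in turn: first, construct for an arbitrary \emph{physical} \produoidalCategory{} (by which I mean a normal \symmetricProduoidal{} category, the produoidal analogue of a \physicalDuoidalCategory{}) the \multicategory{} determined by its physical lax tensor; and second, identify this \multicategory{}, in the case of \symmetricMonoidalLenses{} $\smLens(ℂ)$, with Spivak's \multicategory{} of wiring diagrams. Note that, although the conjecture says ``\monoidalLenses{}'', the physical lax tensor requires a symmetric parallel structure, so the relevant object is $\smLens(ℂ)$, which is indeed normal and \symmetricProduoidal{}.

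For the first half, I would begin by lifting the physical lax tensor of \Cref{sec:physicallaxtensor} from the representable (duoidal) setting to the profunctorial one. Given a physical \produoidalCategory{} $𝕍$, I define a family of profunctors $𝕍(X_1 \boxtimes \dots \boxtimes X_n; Y)$ as the \emph{pointwise} colimit in $\Set$ of the sequential joints $𝕍(X_{σ1} ⊲ \dots ⊲ X_{σn}; Y)$, ranging over permutations $σ ∈ P(n)$ and glued along the parallel joint $𝕍(X_1 ⊗ \dots ⊗ X_n; Y)$ through the structure maps $l_σ$. Just as produoidal normalization only needs colimits in $\Set$ (\Cref{th:normalizationProduoidal}), this colimit always exists, avoiding the coequalizer hypotheses that constrain the duoidal physical tensor. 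The unary profunctor recovers the hom-set, giving identities; the unbiasing associators of the lax monoidal structure give the substitution $𝕄(Γ;Y_i) × 𝕄(Γ_1, Y_i, Γ_2; Z) → 𝕄(Γ_1, Γ, Γ_2; Z)$; and associativity and unitality follow from the coherence of the lax monoidal structure (every formally well-typed equation between laxators holds). This is the profunctorial analogue of the facts that a lax monoidal category has an underlying \multicategory{} in the sense of Leinster \cite{leinster04}, and that a biased \promonoidalCategory{} yields a \malleableMulticategory{} (\Cref{prop:equivalencePromonoidalMalleable}); here the invertibility of composition is dropped precisely because $\boxtimes$ is only lax. I would package this into a functor from physical \produoidalCategories{} to $\Mult$.

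For the second half, I would first unwind the induced \multicategory{} of $\smLens(ℂ)$ explicitly: a physical multimorphism $𝕄(\pbiobj{X_1}{Y_1}, \dots, \pbiobj{X_n}{Y_n}; \pbiobj{A}{B})$ is an equivalence class of combs whose $n$ holes are threaded in \emph{some} linear order $σ$ through residual objects, with two orders identified when they both factor through the fully parallel block $\pbiobj{X_1}{Y_1} ⊗ \dots ⊗ \pbiobj{X_n}{Y_n}$; by \dinaturality{} this is exactly a morphism $A → M_0 ⊗ X_{σ1} ⊗ \dots$ compatible with the gluing. In parallel I would fix the precise flavour of Spivak's wiring-diagram operad \cite{spivak13} relevant here: directed, acyclic wiring diagrams over the objects of $ℂ$, whose inner boxes carry interfaces $\pbiobj{X_i}{Y_i}$ and whose routings are those making the open diagram fillable in any \symmetricMonoidalCategory{}. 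I would then exhibit a bijection, natural in the outer box and compatible with operadic substitution, between such wiring diagrams and these physical multimorphisms: an acyclic wiring induces a dependency poset on its boxes, every linear extension of which realises the wiring as a comb, and the gluing over permutations is exactly the identification of linear extensions of the same poset.

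The main obstacle I expect is precisely this last combinatorial matching, and in particular reconciling it with \Cref{th:posetsAreFreePhysical} and Grabowski's expressibility result \cite{grabowski81:partial}. An element of the \emph{flat} $n$-ary physical tensor is represented by a single total order $σ$ modulo gluing along the fully parallel object, whereas an acyclic wiring diagram may carry an arbitrary dependency poset — potentially including non-expressible, Z-shaped shapes (\Cref{fig:zposet}). The delicate point to verify is exactly which dependency posets are realised by the flat colimit over all $n!$ permutations at a single multicategorical level, and whether the remaining shapes appear only through iterated \emph{substitution}, matching the laxness of the associator (as in the observation that $(X \boxtimes Y) \boxtimes Z$ forbids $Z$ between $X$ and $Y$). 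Establishing that operadic composition of wiring diagrams corresponds step-by-step to the lax associator of $\boxtimes$, rather than merely that the hom-sets biject, is where I expect the genuine work — and the genuine risk to the conjecture — to lie; once this substitution compatibility is settled, the identity axioms and functoriality are routine and the isomorphism of \multicategories{} follows.
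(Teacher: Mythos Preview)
The statement you are attempting to prove is explicitly a \emph{conjecture} in the paper, not a theorem: the paper offers no proof and presents it as open. There is therefore no ``paper's own proof'' to compare against; the surrounding text merely motivates the conjecture by observing that Spivak's wiring diagrams have been related to lenses elsewhere, and that promonoidal categories can be seen as multicategories with an extra coherence property.

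Your proposal is a reasonable research plan rather than a proof, and you are candid about this. The first half --- lifting the physical lax tensor to the profunctorial setting by taking pointwise colimits in $\Set$ over the permutations glued along the parallel joint --- is plausible and in the spirit of the paper's normalization argument, though you would need to check that the lax associator really is well-defined at this level of generality (the paper only constructs the physical tensor in the representable duoidal case, \Cref{def:physicaltensor}). The second half is where you correctly locate the difficulty: your own observation about non-expressible poset shapes (\Cref{fig:zposet}) is exactly the obstruction. A flat $n$-ary physical tensor glues only over the \emph{fully} parallel object, so it identifies linear extensions only of \emph{expressible} posets built from $\otimes$ and $\triangleleft$; an arbitrary acyclic wiring diagram may realise a Z-shaped dependency poset, and it is not clear that such a wiring is captured by any single hom-set of your multicategory or by iterated substitution. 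Until this is resolved --- either by restricting Spivak's operad to expressible shapes, or by showing that the colimit is larger than it first appears --- the bijection you sketch remains conjectural, which is consistent with the paper's own stance.
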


The \promonoidalCategories{} we use can be seen as \multicategories{} with an extra coherence property. In this sense, we contribute the missing algebraic structure of the universal multicategory of \emph{wiring diagrams relative to a monoidal category}.

\newpage

\clearpage{}%

\chapter{Monoidal Message Passing}
\label{chapter:monoidal-message-passing}

\section*{Monoidal Message Passing}
This chapter develops message passing in \monoidalCategories{} following the theory of context we just constructed. We have already defined what incomplete morphisms in \monoidalCategories{} are: we will now study the structure of all their possible compositions. This includes not only the obvious operations of composition but any possible \emph{wiring} of a diagram that could combine them while respecting the acyclicity of string diagrams.

Studying seriously the combinatorial structure of \stringDiagram{} composition arrives at the same conclusion as axiomatizing a naive theory of message passing: \messageTheories{}. Indeed, we prove that the polarized shufflings that describe \stringDiagram{} composition have as algebras precisely the \messageTheories{}.

\Cref{sec:message-theories} introduces our minimalistic theory of message passing, with axioms that should hopefully be  acceptable to any reader. \Cref{sec:physical-monoidal-multicategories-shufflings} starts developing the categorical semantics for \messageTheories{}, based on \physicalMonoidalMulticategories{} (a variant of \duoidalCategories{}); it then shows that shufflings from the free \physicalMonoidalMulticategory{}. \Cref{sec:polarization} provides the second ingredient for this categorical semantics: polarization. We then combine both ingredients in \Cref{sec:polar-shuffles}: \polarShuffles{} form the combinatorial structure that combines incomplete string diagrams; \messageTheories{} are precisely the algebras of \physicalMonoidalMulticategory{} of \polarShuffles{}. This chapter ends with an adjunction between \messageTheories{} and \symmetricMonoidalCategories{}, which ensures that we can construct a free \messageTheory{} on top of any \symmetricMonoidalCategory{}.

\clearpage{}%
\section{Message Theories}
\label{sec:message-theories}

\subsection{Message Theories}
Message passing requires the interplay of at least two mathematical structures: the ability to \emph{interleave} events in time and the ability to connect a \emph{sender and a receiver}.
Let us propose a minimally axiomatized algebra of interleaving and sending/receiving: interleaving will correspond to a normal duoidal algebra, and sending/receiving will correspond to polarization.

\begin{definition}
  \defining{linkMessageTheory}{}
  \label{def:messagetheory}
  A \emph{message theory} $𝕄$ consists of a set of types, $𝕄_{obj}$ with extra structure: a \emph{send/receive session type} is a polarized list of types; for each session type, we have a collection of \emph{sessions} with that type,
  $$𝕄( X₁^{‽₁} , … , Xₙ^{‽ₙ} ), \mbox{ for each } X₁, …, Xₙ ∈ 𝕄_{obj}, \mbox{ and each polarization } ‽ᵢ ∈ \{{∘},{•}\}.$$
  A \messageTheory{} must contain operations for \emph{(i)} binary shuffling, \emph{(ii)} and nullary shuffling, \emph{(iii)} linking a sent message to immediately receive it, and \emph{(iv)} spawning a channel that receives a message and sends it immediately.
  \begin{enumerate}
    \item $\SHF_{σ} ፡ 𝕄(Γ) × 𝕄(Δ) → 𝕄({σ}(Γ,Δ))$, shuffling two processes;
    \item $\NOP ፡ 𝕄()$, a no-operation, doing nothing;
    \item $\LNK_x^{Γ;Δ} ፡ 𝕄(Γ, X^{•}, X^{∘}, Δ) → 𝕄(Γ, Δ)$, linking send to receive;
    \item $\SPW_x^{Γ;Δ} ፡ 𝕄(Γ, Δ) → 𝕄(Γ, X^{∘}, X^{•}, Δ)$, a receive to send channel.
  \end{enumerate}
  \MessageTheories{} may be better understood in the notation of a logic, as in \Cref{fig:type-message}. Types form a free polarized monoid; each term describes a possible communication protocol.
  \begin{figure}[ht]
    \begin{minipage}{0.20\textwidth}
      \begin{mathpar}
          \inferrule*[Right=(shf${}_{\sigma}$)]
          {Γ \qquad Δ}
          {[Γ,Δ]_{σ}}
      \end{mathpar}
      \end{minipage}
    \begin{minipage}{0.24\textwidth}
    \begin{mathpar}
        \inferrule*[Right=(lnk)]
        {Γ, X^{•}, X^{∘}, Δ}
        {Γ, Δ}
    \end{mathpar}
    \end{minipage}
    \begin{minipage}{0.30\textwidth}
    \begin{mathpar}
        \inferrule*[Right=(spw)]
        {Γ, Δ}
        {Γ, X^{∘}, X^{•}, Δ}
    \end{mathpar}
    \end{minipage}
    \begin{minipage}{0.15\textwidth}
      \begin{mathpar}
          \inferrule*[Right=(nop)]
          {\ }
          {\varepsilon}
      \end{mathpar}
      \end{minipage}
    \caption{Type-theoretic presentation of a message theory.}
    \label{fig:type-message}
  \end{figure} 

  A message theory must satisfy the following axioms: \emph{(i)} shuffles compose as in their symmetric \malleableMulticategory{}, where we write $(σ ⨾₁ τ)$ to be the same by associativity as $(τ' ⨾₂ σ')$, we write $(\ast)$ for the trivial shuffle, and we write $\tilde{σ}$ for the symmetric counterpart of $σ$; \emph{(ii)} linking is natural with respect to the shuffles; \emph{(iii)} spawning is natural with respect to the shuffles; \emph{(iv)} linking is dual to spawning; and \emph{(v)} independent linkings and spawnings commute.
  \begin{enumerate}
    \item[(1a)] $\SHF_{τ}(\SHF_{σ}(m₁m₂),m₃) = \SHF_{σ'}(m₁,\SHF_{τ'}(m₂,m₃))$;
    \item[(1b)] $\SHF_{\ast}(m, \NOP) = m$;
    \item[(1c)] $\SHF_{σ}(m₁,m₂) = \SHF_{\tilde{σ}}(m₂,m₁)$;
    \item[(2a)] $\SHF_{σ,τ}(\LNK_x^{Γ₁,Γ₂}(m₁), m₂) = \LNK_x^{Γ₁,Δ₁;Γ₂,Δ₂}(\SHF_{σ,x,τ}(m₁,m₂))$;
    \item[(2b)] $\SHF_{σ,τ}(m₁, \LNK_x^{Δ₁,Δ₂}(m₂)) = \LNK_x^{Γ₁,Δ₁;Γ₂,Δ₂}(\SHF_{σ,x,τ}(m₁,m₂))$;
    \item[(3a)] $\SHF_{σ,τ}(\SPW_x^{Γ₁,Γ₂}(m₁), m₂) = \SPW_x^{Γ₁,Δ₁;Γ₂,Δ₂}(\SHF_{σ,τ}(m₁,m₂))$;
    \item[(3b)] $\SHF_{σ,τ}(m₁, \SPW_x^{Δ₁,Δ₂}(m₂)) = \SPW_x^{Γ₁,Δ₁;Γ₂,Δ₂}(\SHF_{σ,τ}(m₁,m₂))$;
    \item[(4a)] $\LNK_{x}^{Γ,X^{∘};Δ}(\SPW_x^{Γ;X^{∘},Δ}(m)) = m$;
    \item[(4b)] $\LNK_{x}^{Γ;X^{•},Δ}(\SPW_x^{Γ,X^{•};Δ}(m)) = m$;
    \item[(5a)] $\LNK_{x}^{Γ₁;Γ₂YΓ₃}(\SPW_y^{Γ₁XΓ₂;Γ₃}(m)) = \SPW_y^{Γ₁Γ₂;Γ₃}(\LNK_x^{Γ₁;Γ₂Γ₃}(m))$;
    \item[(5b)] $\LNK_{y}^{Γ₁XΓ₂;Γ₃}(\SPW_x^{Γ₁;Γ₂YΓ₃}(m)) = \SPW_x^{Γ₁;Γ₂Γ₃}(\LNK_y^{Γ₁XΓ₂;Γ₃}(m))$;
    \item[(5c)] $\SPW_{x}^{Γ₁;Γ₂YΓ₃}(\SPW_y^{Γ₁Γ₂;Γ₃}(m)) = \SPW_y^{Γ₁XΓ₂;Γ₃}(\SPW_x^{Γ₁;Γ₂Γ₃}(m))$;
    \item[(5d)] $\LNK_{x}^{Γ₁;Γ₂Γ₃}(\LNK_y^{Γ₁XΓ₂;Γ₃}(m)) = \LNK_y^{Γ₁Γ₂;Γ₃}(\LNK_x^{Γ₁;Γ₂YΓ₃}(m))$.
  \end{enumerate}
  These axioms are again better understood in logic notation, as equations between derivations, see \Cref{fig:message-theory-axioms}.
  \begin{figure}[!ht]
    \centering
    \includegraphics[scale=0.3]{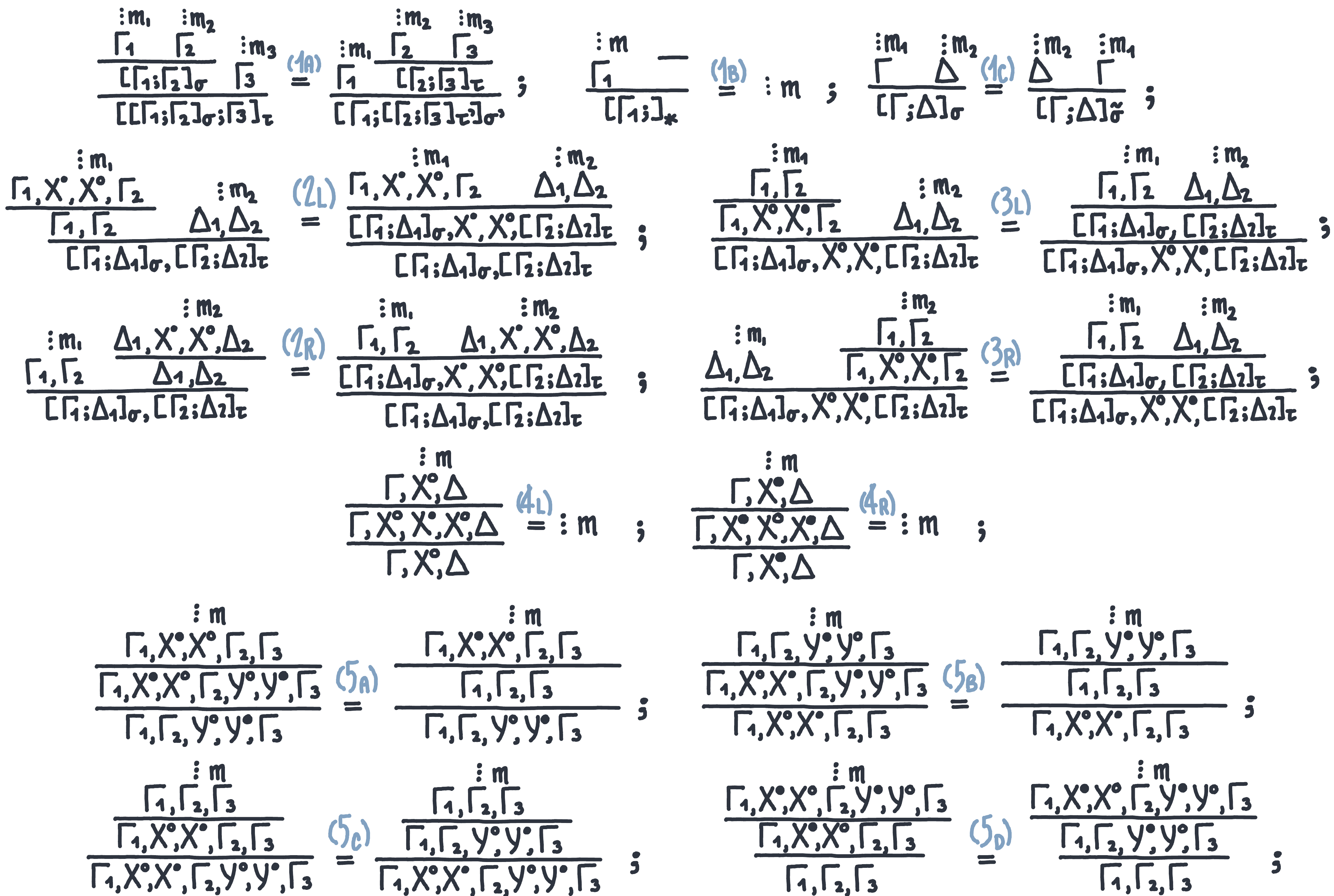}
    \caption{Axioms of a message theory.}
    \label{fig:message-theory-axioms}
  \end{figure}
\end{definition}

\begin{definition}
  \defining{linkMessageFunctor}{}
  \label{def:messagefunctor}
  A \emph{message functor} $F ፡ 𝕄 → ℕ$ between two \messageTheories{}, $𝕄$ and $ℕ$, is a function on objects $F_{obj} ፡ 𝕄_{obj} → ℕ_{obj}$ that extends to a family of functions on session sets,
  $$F ፡ 𝕄(X_1^{‽},…,X_n^{‽}) → ℕ(FX_1^{‽},…,FX_n^{‽}).$$
  This function must \emph{(i)} preserve shuffling, $F(\SHF_{σ}(f,g)) = \SHF_{σ}(Ff,Fg)$; \emph{(ii)} preserve spawning, $F(\SPW_x) =\SPW_{Fx}$; \emph{(iii)} connecting, $F(\COM_x(f)) = \COM_x(Ff)$; and \emph{(iv)} the no-operation, $F(\NOP) = \NOP$. \MessageTheories{} form a category $\mathbf{Msg}$ with message functors between them.
\end{definition}

\subsection{Properties of a Message Theory}
Our goal is to prove a coherence theorem for \messageTheories{}. Building up to this result, let us reason with \messageTheories{} to understand the basic properties that can be derived from the axioms.

\begin{proposition}
  \label{prop:shuffles-as-shuffles}
  \MessageTheories{} have a derived operation for each shuffling; moreover, these operations compose as in the \multicategory{} of shufflings.
  $$\mathsf{shuf}_{σ}^{Γ₁,…,Γₙ} ፡ 𝕄(Γ₁) × \dots × 𝕄(Γₙ) → 𝕄([Γ₁,…,Γₙ]_{σ})$$
\end{proposition}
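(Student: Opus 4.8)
The statement asks for a derived $n$-ary shuffling operation $\mathsf{shuf}_\sigma$ in any \messageTheory{}, together with the fact that these operations satisfy the composition laws of the symmetric \malleableMulticategory{} $\mathbf{Shuf}$. My plan is to proceed by induction on $n$, using the binary shuffling $\SHF_\sigma$ (axiom (1)) as the building block, and then to appeal to the universal property of $\mathbf{Shuf}$ as a \malleableMulticategory{} to verify the composition laws with minimal bookkeeping.

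\textbf{Construction of the operations.} First I would set $\mathsf{shuf}_\ast^{\Gamma} = \id$ (or more precisely, $\mathsf{shuf}$ on a single argument is the identity on $𝕄(\Gamma)$), and $\mathsf{shuf}^{()} = \NOP$ for the empty family. For the inductive step, given an $n$-ary shuffle $\sigma \in \mathbf{Shuf}(|\Gamma_1|,\dots,|\Gamma_n|; q)$, I would use malleability of $\mathbf{Shuf}$ (the unique-factorization property highlighted in the excerpt's shuffling example): $\sigma$ factors uniquely as $\tau \mathbin{⨾_1} \sigma'$ where $\tau \in \mathbf{Shuf}(|\Gamma_1|,\dots,|\Gamma_{n-1}|; r)$ shuffles the first $n-1$ blocks and $\sigma' \in \mathbf{Shuf}(r, |\Gamma_n|; q)$ is a binary shuffle of the result with $\Gamma_n$. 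Then define
\[
  \mathsf{shuf}_\sigma^{\Gamma_1,\dots,\Gamma_n}(m_1,\dots,m_n)
  = \SHF_{\sigma'}\bigl(\mathsf{shuf}_\tau^{\Gamma_1,\dots,\Gamma_{n-1}}(m_1,\dots,m_{n-1}),\, m_n\bigr).
\]
Because the binary factorization is \emph{unique}, this is a well-defined operation, and in the base case $n = 2$ it recovers $\SHF_\sigma$ directly.

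\textbf{Verifying the multicategorical laws.} I would then show that $\sigma \mapsto \mathsf{shuf}_\sigma$ is compatible with the composition $(\mathbin{⨾_i})$ of $\mathbf{Shuf}$, i.e. that substituting one shuffle into an argument slot of another corresponds to composing the derived operations. The key algebraic input is axiom (1a), the associativity $\SHF_\tau(\SHF_\sigma(m_1 m_2), m_3) = \SHF_{\sigma'}(m_1, \SHF_{\tau'}(m_2, m_3))$, together with (1b) unitality and (1c) the symmetry relating $\SHF_\sigma$ and $\SHF_{\tilde\sigma}$. I would argue by induction on the total arity: any $\mathbin{⨾_i}$-composite of shuffles can, by malleability, be rebracketed into the right-biased normal form that the definition of $\mathsf{shuf}$ uses, and each rebracketing step is exactly one application of (1a) (or (1c) when the slot being substituted is not the last one, to move it into position). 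Unitality of composition, $\mathsf{shuf}_\sigma \mathbin{⨾} \id = \mathsf{shuf}_\sigma$ and $\id \mathbin{⨾} \mathsf{shuf}_\sigma = \mathsf{shuf}_\sigma$, follows from (1b). The symmetric-group action on the arguments is handled by (1c).

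\textbf{Expected main obstacle.} The genuine difficulty is purely combinatorial bookkeeping: translating the unique factorizations in $\mathbf{Shuf}$ into the precise sequence of (1a)/(1c) rewrites on nested $\SHF$ expressions, and checking that the induction on arity closes without a coherence ambiguity — i.e. that \emph{every} way of associating a composite of shuffles yields the same element of $𝕄$. This is where I would lean most heavily on the fact that $\mathbf{Shuf}$ is \malleable{}: uniqueness of factorization means there is at most one candidate value, so the associativity and symmetry axioms of the \messageTheory{} are exactly enough to show the nested expressions all agree. I would present the $n=3$ case explicitly as the illustrative instance and then indicate the general induction, rather than writing out the full rewrite sequence for arbitrary $n$.
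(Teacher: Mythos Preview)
Your proposal is correct and follows essentially the same approach as the paper: both rely on the fact that $\mathbf{Shuf}$ is a \malleableMulticategory{}, so that the binary and nullary operations (with axioms 1a--1c) determine all $n$-ary shufflings uniquely. The paper's proof is far more terse---it simply invokes malleability in one sentence---whereas you have spelled out the inductive construction and the verification of the composition laws, which is a faithful unpacking of the same idea.
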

\begin{proof}
  We have defined an operation for binary and nullary shufflings, and we have defined them to compose exactly as shufflings do. Because shufflings form a \malleableMulticategory{}, each n-ary shuffling can be recovered uniquely from the binary and nullary shufflings.
\end{proof}

\begin{proposition}
  \label{prop:soonerlater}
  A session can always send later and receive sooner, but it cannot send sooner nor receive later. Formally, there exist derived operations 
  \begin{align*}
    \mathsf{wait}_X^{Γ,Δ,Ψ} ፡ 𝕄(Γ,X^{•},Δ,Ψ) → 𝕄(Γ, Δ, X^{•},Ψ), \\
    \mathsf{rush}_X^{Γ,Δ,Ψ} ፡ 𝕄(Γ, Δ, X^{∘},Ψ) → 𝕄(Γ,X^{∘},Δ,Ψ).
  \end{align*}
\end{proposition}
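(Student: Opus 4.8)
The plan is to build each derived operation by combining a single spawn, a shuffle, and a single link, following the intuition that ``a message can be delayed by first spawning a fresh receive/send pair, shuffling it past the intervening block, and then linking the original port into the fresh one.'' Concretely, for $\mathsf{wait}_X^{Γ,Δ,Ψ}$ I would start from a session $m \in 𝕄(Γ,X^{•},Δ,Ψ)$, apply $\SPW_x$ to insert a new pair $X^{∘},X^{•}$ immediately after the original $X^{•}$ (so the context becomes $Γ, X^{•}, X^{∘}, X^{•}, Δ, Ψ$ after suitably choosing where to spawn; I will place the spawn so that the $X^{∘}$ sits right next to the $X^{•}$ we want to move), then use $\LNK_x$ on the adjacent $X^{•}, X^{∘}$ pair to cancel the original output port, which leaves the surviving $X^{•}$ one slot to the right. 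Iterating this argument along the length of $Δ$ (or, more cleanly, using the $n$-ary shuffle from \Cref{prop:shuffles-as-shuffles} to move the fresh pair across the whole block $Δ$ in one step before linking) produces a session in $𝕄(Γ, Δ, X^{•}, Ψ)$. Dually, $\mathsf{rush}_X^{Γ,Δ,Ψ}$ is obtained by spawning an $X^{∘},X^{•}$ pair to the left of $Δ$, shuffling the fresh $X^{∘}$ leftward past $Γ$'s boundary into the desired position, and then linking the fresh $X^{•}$ against the original $X^{∘}$ on the right.

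The key steps, in order, are: (1) fix the precise placement of the $\SPW_x$ application so that one of the two newly created ports is adjacent to the port we intend to relocate; (2) use a shuffle $\SHF_{σ}$ (or the derived $n$-ary shuffle) to commute the other newly created port past the block $Δ$ to its final position --- here naturality of spawning with respect to shuffles, axioms (3a)--(3b), guarantees this is well-typed and coherent; (3) apply $\LNK_x$ to the adjacent complementary pair, using axioms (4a)--(4b) to see that the composite spawn-then-link on an \emph{adjacent} pair is the identity on the rest of the session, so that the net effect is exactly the claimed relocation; and (4) check that $\mathsf{wait}$ and $\mathsf{rush}$ so defined do not depend on auxiliary choices, by invoking the commutation axioms (5a)--(5d) for independent links and spawns together with the shuffle-composition laws (1a)--(1c).

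The main obstacle I expect is bookkeeping of the contexts: every application of $\SPW$, $\LNK$, and $\SHF$ carries superscripts recording how the ambient list is split, and getting these to line up --- especially threading the shuffle $σ$ that transports the fresh port past $Δ$ while keeping $Γ$ and $Ψ$ fixed --- is where the argument can go wrong. A secondary subtlety is justifying that it is genuinely \emph{impossible} to send sooner or receive later; this negative half of the statement is not proved by constructing an operation but presumably follows from the coherence theorem for \messageTheories{} (the one the surrounding text is building toward), i.e.\ from the explicit description of free \messageTheories{} as \polarShuffles{}, where one inspects that no polar shuffle realizes the forbidden reordering. I would therefore prove the positive half now by the spawn-shuffle-link construction above, and defer the negative half to the coherence theorem, or at least remark that it follows once that theorem is available.
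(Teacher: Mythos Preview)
Your construction has a genuine gap. You apply $\SPW_x$ directly to the session $m$, obtaining $Γ, X^{•}, X^{∘}, X^{•}, Δ, Ψ$, and then apply $\LNK_x$ to the adjacent $X^{•}, X^{∘}$. But axiom (4a) says precisely that this composite is the identity: the surviving $X^{•}$ sits exactly where the original one did, not ``one slot to the right''. Iterating therefore gets you nowhere. Your ``cleaner'' alternative, moving the freshly spawned port past $Δ$ via a shuffle, does not typecheck either: $\SHF_{σ}$ (and the derived $n$-ary shuffles of \Cref{prop:shuffles-as-shuffles}) combine \emph{two or more separate} sessions; there is no operation in a message theory that reorders ports inside a single session, and axioms (3a)--(3b) only let you commute a spawn past an external shuffle, not internally rearrange the spawned pair.

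The paper's construction supplies the missing idea: spawn the channel as a \emph{separate} session, $\SPW_X(\NOP) \in 𝕄(X^{∘}, X^{•})$, and then use a binary shuffle $\SHF_{σ}(m, \SPW_X(\NOP))$ to interleave it with $m$. Because a shuffle may place the two elements of the second argument at any positions relative to the first (subject only to preserving their mutual order $X^{∘}$ before $X^{•}$), you can land the fresh $X^{∘}$ immediately after the original $X^{•}$ and the fresh $X^{•}$ after $Δ$, yielding $Γ, X^{•}, X^{∘}, Δ, X^{•}, Ψ$. A single $\LNK$ then removes the adjacent pair and leaves $Γ, Δ, X^{•}, Ψ$. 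The dual construction gives $\mathsf{rush}$. For the impossibility half, the paper does not wait for the coherence theorem but gives a short direct argument: any derivation that removes the original $X^{•}$ must link it against some $X^{∘}$; the only source of such an $X^{∘}$ is a spawned channel, whose partner $X^{•}$ necessarily lies \emph{after} it in any shuffle, so the net effect can only delay a send, never advance it.
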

\begin{proof}
  We can construct the derivation trees of both operations. They both spawn a new channel, shuffle its ends to the origin and target position, and they connect the channel.
  \begin{figure}[ht]
    \begin{minipage}{0.45\textwidth}
      \begin{mathpar}
          \inferrule*[Right=(shf)]
          {Γ, X^{•}, Δ, Ψ \\ \inferrule*[Right=(spw)]{\phantom{X}}{X^{∘}, X^{•}}}
          {\inferrule*[Right=(com)]
          {Γ, X^{•}, X^{∘}, Δ, X^{•}, Ψ}
          {Γ, Δ, X^{•}, Ψ}
          }
      \end{mathpar}
      \end{minipage}
      \begin{minipage}{0.45\textwidth}
        \begin{mathpar}
            \inferrule*[Right=(shf)]
            {Γ, Δ, X^{∘}, Ψ \\ \inferrule*[Right=(spw)]{\phantom{X}}{X^{∘}, X^{•}}}
            {\inferrule*[Right=(com)]
            {Γ, X^{∘}, Δ, X^{∘}, X^{•}, Ψ}
            {Γ,X^{∘}, Δ,  Ψ}
            }
        \end{mathpar}
    \end{minipage}
    \caption{Derivation of $\mathsf{wait}$ and $\mathsf{rush}$.}
    \label{fig:waitrush}
  \end{figure}

  The explicit construction is in \Cref{fig:waitrush}. Note how, thanks to polarization, it is not possible to use the same technique to \emph{send sooner} nor \emph{receive later}. In fact, we can reason by contradiction that sending sooner, $𝕄(Γ,Δ,X^{•},Ψ) → 𝕄(Γ, X^{•}, Δ, Ψ)$, is impossible: the only possible operations we can apply in an arbitrary message theory to an arbitrary session are \emph{shufflings with a spawned channel} and \emph{connections}; we require at least a connection to eliminate the $X^{•}$, but because the $X^{∘}$ must come from shuffling a spawned channel, the corresponding $X^{•}$ must be placed strictly after it.
\end{proof}

\begin{proposition}
  \label{prop:swaps-self-inverses}
  In particular, we can always swap the order of objects with the same polarity,
  \begin{align*}
    \mathsf{swap}^{Γ;X;Y;Δ}_{∘} ፡ 𝕄(Γ,X^{∘},Y^{∘},Δ) → 𝕄(Γ,Y^{∘},X^{∘},Δ), \\
    \mathsf{swap}^{Γ;X;Y;Δ}_{•} ፡ 𝕄(Γ,X^{•},Y^{•},Δ) → 𝕄(Γ,Y^{•},X^{•},Δ).
  \end{align*}
  These are self-inverses, forming not only braidings but symmetries in the underlying \monoidalCategory{} of positively or negatively polarized objects.
\end{proposition}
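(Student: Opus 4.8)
The plan is to construct the two swap operations explicitly as composites of the derived \textsf{wait} and \textsf{rush} operations from \Cref{prop:soonerlater}, and then to verify that they are self-inverse using the naturality axioms (2a)--(3b) together with the duality axioms (4a)--(4b) and the commutation axioms (5a)--(5d). I expect the construction itself to be routine, and the self-inverse verification to be where the real content lies.

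First I would build $\mathsf{swap}_{•}$. Given a session in $𝕄(Γ,X^{•},Y^{•},Δ)$, I apply $\mathsf{wait}$ to move the $X^{•}$ port one position to the right, past $Y^{•}$, obtaining $𝕄(Γ,Y^{•},X^{•},Δ)$; dually, $\mathsf{swap}_{∘}$ is built by applying $\mathsf{rush}$ to bring $Y^{∘}$ one position to the left, past $X^{∘}$. Concretely, unfolding the derivations in \Cref{fig:waitrush}: for $\mathsf{swap}_{•}$ I spawn a fresh channel of type $X$, shuffle its $X^{∘}$ end to sit immediately after $Γ$ (just before $Y^{•}$) and its $X^{•}$ end to sit immediately after $Y^{•}$, and then link the original $X^{•}$ to the spawned $X^{∘}$; the $X^{•}$ that survives is now on the far side of $Y^{•}$. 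Symmetrically for $\mathsf{swap}_{∘}$.

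The main step is showing $\mathsf{swap}_{•} ⨾ \mathsf{swap}_{•} = \id$ (and likewise for $\mathsf{swap}_{∘}$). The strategy is: unfold the left-hand side into a single derivation tree built from two spawns, a bunch of shuffles, and two links; then use the commutation axiom (5d) to bring the two $\LNK$ operations adjacent, and (5c)/(5a)/(5b) to rearrange the two $\SPW$ operations and interleave shuffles appropriately; this isolates a subpattern of the form $\LNK_x(\SPW_x(-))$, which collapses to the identity by (4a) or (4b). After the first such cancellation, what remains is a single spawn-shuffle-link triple whose net effect, again by (4a)/(4b) after pushing the shuffle through with the $\SHF$ naturality axioms (2a)--(2b), is once more the identity. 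The bookkeeping of context indices $Γ,Δ,\dots$ on the superscripts of $\LNK$ and $\SPW$ is the fiddly part, but it is forced: at each stage there is only one well-typed choice. Once self-inverse is established, the braiding axioms (naturality of $\mathsf{swap}$ with respect to all other message-theory operations, and the hexagon/symmetry identities) follow in the same style from (2a)--(5d), exhibiting the wide subcategory on purely positively (resp. negatively) polarized objects as a strict symmetric monoidal category.

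The hard part will be managing the combinatorial explosion of the shuffle rearrangements cleanly: a naive unfolding of $\mathsf{swap}_{•} ⨾ \mathsf{swap}_{•}$ involves four spawned channel-ends and several nested shuffles, and one must apply axioms (1a)--(1c) to normalize the shuffle component before the $\LNK$/$\SPW$ cancellations become visible. I would mitigate this by first proving a small lemma --- essentially that $\mathsf{wait}$ followed by the opposite $\mathsf{rush}$ on the same port cancels --- and then deriving both self-inverse statements as instances of that lemma, so the heavy axiom-chasing happens once.
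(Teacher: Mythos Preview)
Your construction of $\mathsf{swap}_{\bullet}$ and $\mathsf{swap}_{\circ}$ via $\mathsf{wait}$ and $\mathsf{rush}$ is exactly what the paper does, and your overall strategy for the self-inverse check---unfold both applications, rearrange, and cancel via the snake equations (4a)/(4b)---is the right shape.

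The difference lies in \emph{which} axioms you reach for during the rearrangement. You propose to lean primarily on the commutation axioms (5a)--(5d) to bring the two $\LNK$'s adjacent and to reorder the two $\SPW$'s. But in the composite $\SPW_X,\SHF,\LNK_X,\SPW_Y,\SHF,\LNK_Y$ the two linkings are separated by a shuffle and a spawn, so you cannot apply (5d) directly; you would first need the naturality axioms (2a)/(2b) to push $\LNK_X$ outward through the second shuffle anyway. The paper's proof exploits exactly this: it uses \emph{only} naturality of linking (2a/2b), shuffle composition (1a--1c), naturality of spawning (3a/3b), and the duality (4a/4b), in six steps, never invoking the group-5 commutation axioms at all. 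Once naturality has floated both $\LNK$'s to the outside and both $\SPW$'s have been absorbed into a single composite shuffle, the duality axiom cancels everything in one shot.

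So your route is not wrong, but it carries more machinery than needed; the commutation axioms (5a)--(5d) turn out to be redundant for this argument. Your proposed auxiliary lemma (``$\mathsf{wait}$ followed by the opposite $\mathsf{rush}$ on the same port cancels'') is a reasonable organizing device, though note that $\mathsf{wait}$ acts on $X^{\bullet}$ and $\mathsf{rush}$ on $X^{\circ}$, so the phrase ``same port'' needs care---what you really want is that two $\mathsf{wait}$'s in opposite directions cancel, which is precisely the self-inverse statement itself.
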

\begin{proof}
  Swaps are constructed from rushing and waiting (\Cref{prop:soonerlater}); explicitly,
  $$\mathsf{swap}^{Γ;X;Y;Δ}_{∘} = \mathsf{rush}_Y^{Γ,X^{∘},Ψ},\quad\mbox{ and }\quad\mathsf{swap}^{Γ;X;Y;Δ}_{•} = \mathsf{wait}_Y^{Γ,Y^{•},Ψ}.$$ 
  \begin{figure}[!ht]
    \centering
    \includegraphics[scale=0.4]{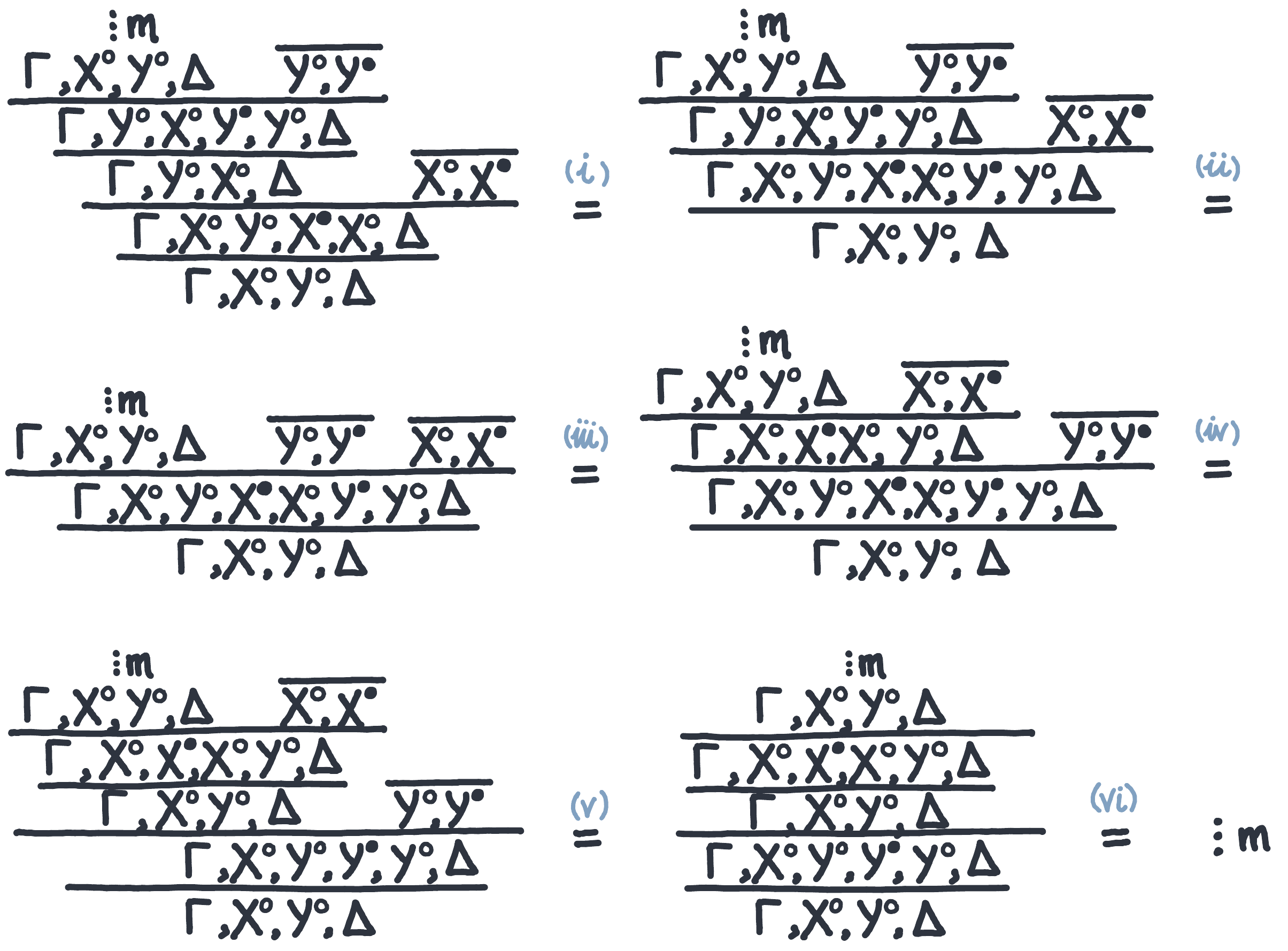}
    \caption{Swaps are self-inverses.}
    \label{fig:session-swaps}
  \end{figure}
  
  Let us prove that they are self-inverses; we do so with the negatively-polarized one (\Cref{fig:session-swaps}), the other case is analogous.
  We reason using naturality of linking, \emph{(i,iv)}, that shuffles compose as shuffles \emph{(ii,iii)} (\Cref{prop:shuffles-as-shuffles}), the interaction between shuffling and spawning \emph{(v)},  and the duality between spawning and linking \emph{(vi)}.
\end{proof}

\begin{proposition}
  \label{prop:linking-wherever}
  We can link sooner or later without changing the result. Formally, the equations in \Cref{fig:linking-wherever} hold.
  \begin{figure}[!ht]
    \centering
    \includegraphics[scale=0.4]{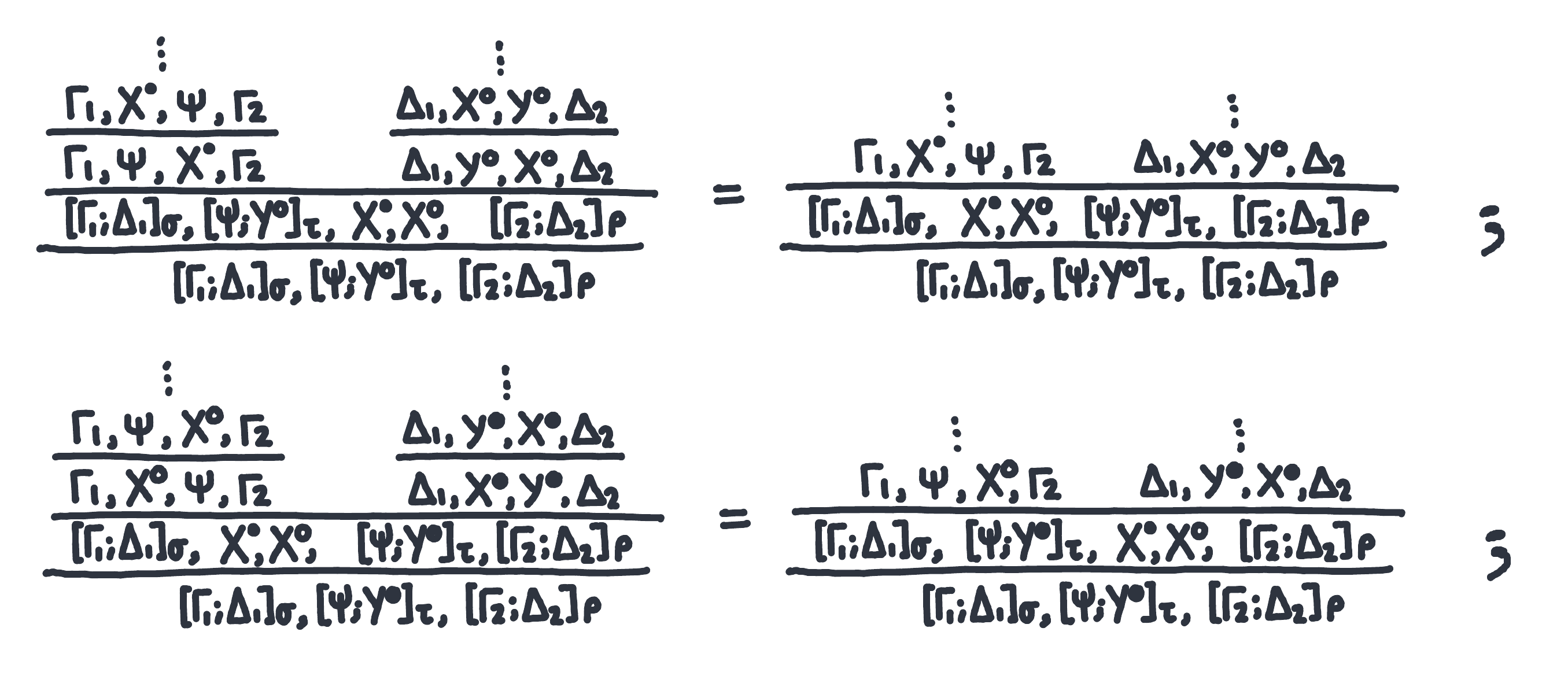}
    \caption{Linking sooner or later does not change the restult.}
    \label{fig:linking-wherever}
  \end{figure}
\end{proposition}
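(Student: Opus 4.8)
The statement to prove is \Cref{prop:linking-wherever}: that linking a spawned channel can be performed ``sooner or later'' without affecting the result, i.e. the equations in \Cref{fig:linking-wherever} hold in any \messageTheory{}. The plan is to set up the two sides of each equation as explicit derivation trees built from the primitive operations $\SHF$, $\SPW$, $\LNK$, $\NOP$, and then transform one into the other using only the axioms (1a)--(5d) of a \messageTheory{}, together with the already-established derived operations $\mathsf{wait}$, $\mathsf{rush}$ (\Cref{prop:soonerlater}) and $\mathsf{swap}$ (\Cref{prop:swaps-self-inverses}).

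First I would make precise what ``linking sooner or later'' means as a composite: in each case we have a session containing a matched pair $X^{•}, X^{∘}$ sitting at two different positions relative to some other block $\Gamma'$ (the result of an intervening shuffle or spawn), and the claim is that applying $\LNK_x$ before the repositioning equals applying it after. The natural strategy is to observe that repositioning the link site is itself built out of a $\SPW$ followed by a $\SHF$ followed by a $\LNK$ (exactly as in the construction of $\mathsf{wait}$ and $\mathsf{rush}$), so both sides of the equation are composites of $\SHF$'s, $\SPW$'s and $\LNK$'s. Then the equality is a consequence of the commutation axioms: naturality of linking with respect to shuffles (2a)--(2b), naturality of spawning with respect to shuffles (3a)--(3b), the duality (4a)--(4b) collapsing a spawn immediately followed by a link, and the independence axioms (5a)--(5d) letting us slide one $\LNK$/$\SPW$ past another. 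This is the same recipe already used in the proof of \Cref{prop:swaps-self-inverses}, just applied one level up.

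Concretely, the key steps, in order, would be: (i) write the left-hand derivation of each equation in \Cref{fig:linking-wherever} as a tree, identifying where the ``early'' $\LNK_x$ sits; (ii) insert, via (4a) or (4b) read backwards, a cancelling $\SPW_x$--$\LNK_x$ pair at the ``late'' position, so that both the early and the late link sites are simultaneously present; (iii) use the shuffle-naturality axioms (2a)--(2b) and (3a)--(3b) to commute the relevant $\LNK$ and $\SPW$ past the intervening $\SHF$; (iv) use (5a)--(5d) to commute the two independent $\LNK$/$\SPW$ operations into the order that matches the right-hand side; (v) finally use (4a)/(4b) again to cancel the now-redundant $\SPW$--$\LNK$ pair at the early position, leaving exactly the right-hand derivation. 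Since all of these are equalities of derivations, composing them gives the claimed equation. I would present this as a short string-diagrammatic calculation with the axiom labels $(i),(ii),\dots$ annotating each step, matching the style of \Cref{fig:linking-wherever}.

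The main obstacle I expect is purely bookkeeping: keeping the context-superscripts $\Gamma_i, \Delta_i, \Psi$ on $\LNK$ and $\SPW$ straight through the commutations, since the naturality axioms (2)--(3) reindex these contexts and the independence axioms (5) require the two operations to act on genuinely disjoint positions. The conceptual content is small --- it is the same ``spawn-a-cancelling-pair, slide, cancel'' move --- but verifying that each application of (2a)--(3b) and (5a)--(5d) is correctly typed (i.e. the positions really are independent, and the shuffle being commuted past really does leave the linked pair adjacent) is where care is needed. A secondary subtlety is that \Cref{fig:linking-wherever} presumably bundles several variants (linking before vs.\ after a shuffle, and before vs.\ after a spawn); each variant needs its own instance of the argument, but they are all structurally identical, so I would prove one in detail and remark that the others follow by the symmetric reasoning, exactly as the excerpt does for $\mathsf{wait}$/$\mathsf{rush}$ and for the two polarities of $\mathsf{swap}$.
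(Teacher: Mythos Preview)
Your strategy is plausible but more elaborate than what the paper does, and it misses the key shortcut the paper exploits. The paper's proof does \emph{not} insert an auxiliary cancelling $\SPW$--$\LNK$ pair and then slide two independent linkings past each other via axioms (5a)--(5d). Instead it works directly: step~(i) unfolds the definition of $\mathsf{wait}$ (which already contains the relevant $\SPW$, $\SHF$, and $\LNK$); steps~(ii) and~(iii) use that shuffles compose as shuffles (\Cref{prop:shuffles-as-shuffles}) to reassociate the shuffling; step~(iv) applies naturality of linking; and step~(v) finishes by invoking the already-proved lemma that swaps are self-inverses (\Cref{prop:swaps-self-inverses}). Thus the heavy lifting is delegated to \Cref{prop:swaps-self-inverses} rather than to the independence axioms (5).

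Your approach would likely go through, but the ``insert, slide, cancel'' detour means you would essentially be re-proving \Cref{prop:swaps-self-inverses} inline, and the bookkeeping you flag as the main obstacle is real: verifying the disjointness hypotheses of (5a)--(5d) at each step is fiddly. The paper's route avoids this by packaging that argument once in \Cref{prop:swaps-self-inverses} and then reusing it here as a black box, yielding a five-step proof rather than a longer chain of commutations.
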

\begin{proof}
  We will prove the first one (\Cref{fig:linking-wherever-proof}), the second one follows analogously. 
  We reason using \emph{(i)} the definition of $\mathsf{wait}$; \emph{(ii,iii)} that shuffles compose as shuffles (\Cref{prop:shuffles-as-shuffles}); \emph{(iv)} naturality of linking; and \emph{(v)} that swaps are self-inverses (\Cref{prop:swaps-self-inverses}).
  \begin{figure}[!ht]
    \centering
    \includegraphics[scale=0.35]{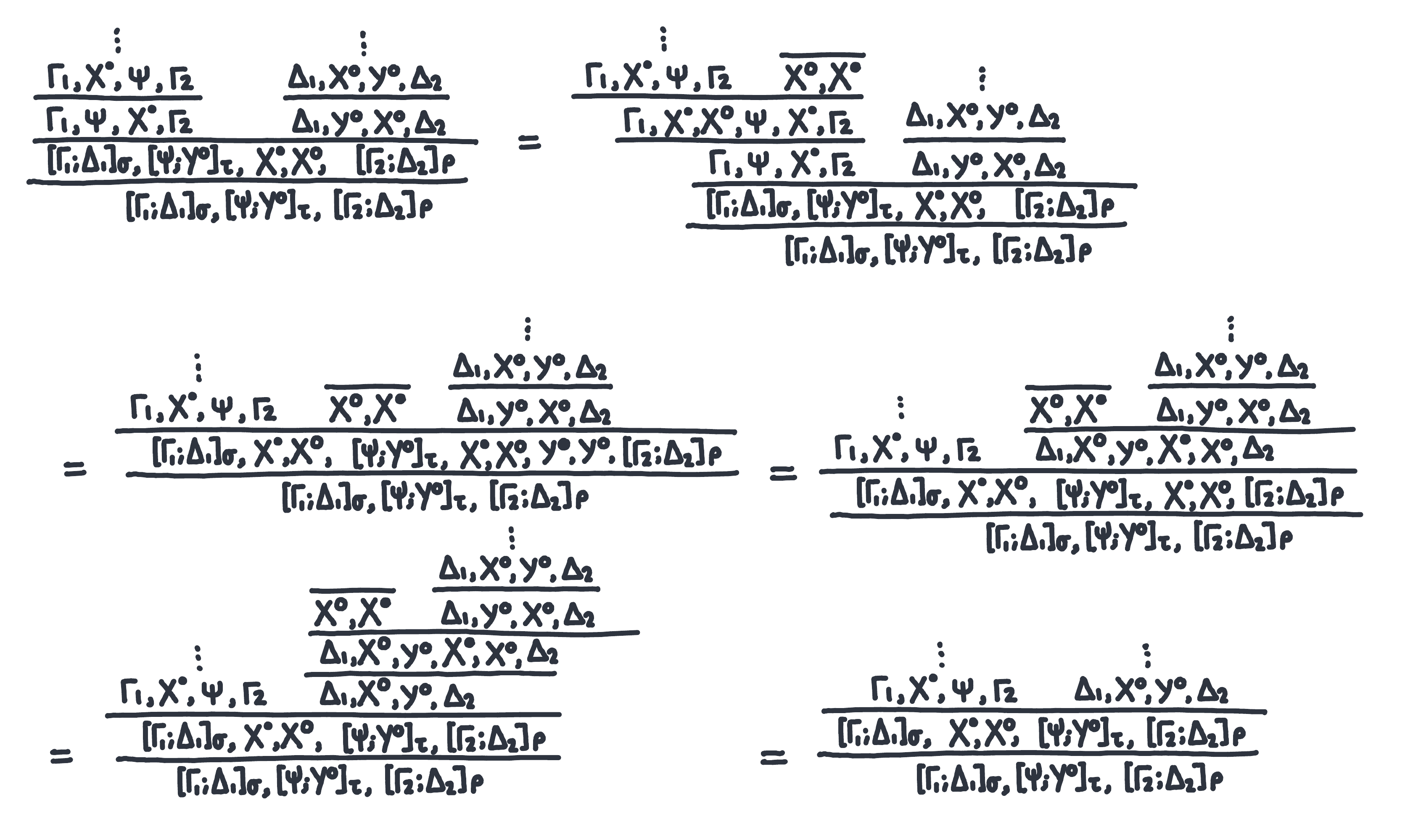}
    \caption{Proof of {{\Cref{prop:linking-wherever}}}.}
    \label{fig:linking-wherever-proof}
  \end{figure}
\end{proof}

\begin{proposition}
  Any spawning factors as the spawning of a single channel followed by a shuffling. Formally,
  $$\SPW_X^{Γ,Δ}(m) = \SHF_{Γ,X^{∘},X^{•},Δ}(m, \SPW_X^{;}(\NOP)).$$
\end{proposition}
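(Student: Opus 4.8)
The statement is a direct consequence of the naturality of spawning with respect to shuffling, axiom~(3b) (equivalently~(3a)), specialised to the case in which the second session is the empty session $\NOP$ and the spawn is into the empty context.

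The plan is as follows. First I would instantiate axiom~(3b) with $m_1 = m$, splitting its context as $Γ_1 = Γ$ and $Γ_2 = Δ$; with $m_2 = \NOP$, splitting its (empty) context as $Δ_1 = ()$ and $Δ_2 = ()$; and with the channel taken to be $X$. Under this instantiation $\SPW_X^{Δ_1,Δ_2}(m_2)$ becomes $\SPW_X^{;}(\NOP)$, and the right-hand side spawn $\SPW_X^{Γ_1,Δ_1;Γ_2,Δ_2}$ becomes $\SPW_X^{Γ;Δ} = \SPW_X^{Γ,Δ}$. Thus axiom~(3b) specialises to
$$\SHF_{σ,τ}(m, \SPW_X^{;}(\NOP)) = \SPW_X^{Γ,Δ}(\SHF_{σ,τ}(m,\NOP)).$$

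Second, I would identify the two shuffles $\SHF_{σ,τ}$ that occur. On the left-hand side $σ$ shuffles the block $Γ$ with the empty block and $τ$ shuffles $Δ$ with the empty block, while the spawned block $X^{∘}X^{•}$ is kept fixed between them; since a shuffle of any block with the empty block is the trivial one (the unit of the \malleableMulticategory{} of shufflings, \Cref{prop:shuffles-as-shuffles}), this composite shuffle is precisely the canonical shuffle producing the list $Γ, X^{∘}, X^{•}, Δ$, that is, $\SHF_{Γ,X^{∘},X^{•},Δ}$. On the right-hand side $\SHF_{σ,τ}(m,\NOP)$ is a shuffle of $m$ with the empty session along the trivial shuffle, so by axiom~(1b) it equals $m$. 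Substituting both simplifications into the displayed equation yields
$$\SHF_{Γ,X^{∘},X^{•},Δ}(m, \SPW_X^{;}(\NOP)) = \SPW_X^{Γ,Δ}(m),$$
which is the claim.

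The only genuine content is bookkeeping: checking that the shuffle appearing on the left of the instantiated axiom~(3b) is indeed $\SHF_{Γ,X^{∘},X^{•},Δ}$ and that a shuffle against the empty session is trivial. Both facts follow from uniqueness of factorisation in the \malleableMulticategory{} of shufflings (\Cref{prop:shuffles-as-shuffles}) together with axiom~(1b); no further properties of \messageTheories{} are needed. If one prefers the symmetric split of the context one may instead use axiom~(3a) together with~(1c); the argument is identical, and I expect no obstacle beyond making the notational conventions for $\SHF$ explicit.
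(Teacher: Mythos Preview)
Your proposal is correct and follows the same approach as the paper, which simply states ``This is a direct consequence of Axioms (3a,3b).'' You have unpacked this one-line proof in the obvious way, correctly noting that axiom~(1b) is also needed to collapse $\SHF_{\ast}(m,\NOP)$ to $m$; the paper's citation of only (3a,3b) is slightly terse on this point, but your expansion is exactly what is intended.
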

\begin{proof}
  This is a direct consequence of Axioms (3a,3b).
\end{proof}

Theoretically, it would be possible to reason with \messageTheories{} at the level of derivations.
However, as we have done through this text, we will try to find better categorical semantics and a better combinatorial expression of the free \messageTheory{}.
We will be most interested in the categorical semantics of \messageTheories{} and how do they interplay with process theories, in the sense of \monoidalCategories{}. We introduce specialized semantics in terms of \physicalMonoidalMulticategories{}, which are another instance of the idea of partially representing a \physicalDuoidalCategory{}. For all of this, we will need a coherence theorem.

\subsection{Coherence for Message Theories}
\SymmetricMonoidalCategories{} are not \emph{perfectly coherent}: easily, we can find that there are two formally well-typed structure maps $A ⊗ A → A ⊗ A$, the identity and the swap. However, what is indeed true is that any two \emph{distinctly typed} structure maps in the free \symmetricMonoidalCategory{} are equal. ``Distinct typing'' is a notion that only makes sense in the free \symmetricMonoidalCategory{} over some generators; it means that the generators comprising the lists that are our objects appear only once with each variance: arrows $A ⊗ B ⊗ C → B ⊗ C ⊗ A$ are distinctly typed, but arrows $A ⊗ A → A ⊗ A$ are not, because $A$ appears twice with each variance.

Shufflings satisfy a similar form of coherence: there is a unique way of shuffling two words into a third one if these words are distinctly typed.
This section proves that \messageTheories{} satisfy the same form of coherence. It is not true that any two parallel formal arrows are equal in any \messageTheory{}: for instance, there are two ways of deriving $X^{∘},X^{∘}$ from $X^{∘}$ and $X^{∘}$. It is true, however, that there is a unique arrow for any distinctly typed domains and codomain.

\begin{theorem}
  \label{theorem:coherentMessageTheories}
  \MessageTheories{} are coherent. In the free \messageTheory{} over a set of objects, there is at most a single derivation between any distinctly typed premises and conclusion.
\end{theorem}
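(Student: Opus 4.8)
The plan is to set up a normal form for derivations and show that every derivation reduces to it, with the normal form uniquely determined by the typing data. First I would observe that, by the earlier propositions (in particular \Cref{prop:shuffles-as-shuffles} and the factorization of spawnings into a single-channel spawn followed by a shuffle), any derivation in the free \messageTheory{} can be rewritten so that all spawnings are pushed to the leaves and all linkings are pushed toward the root: concretely, a normal derivation first takes the input generators, then a single $n$-ary shuffle of those generators together with some number of spawned single channels $\SPW_X(\NOP)$, and finally a sequence of linkings that each consume one spawned $X^{\bullet},X^{\circ}$ adjacent pair (using \Cref{prop:linking-wherever} to move links around freely). Axioms (2a,2b,3a,3b) provide the rewrites that move shuffles past links and spawns; axioms (5a)--(5d) let linkings and spawnings be reordered among themselves; axiom (1a,1b,1c) and malleability of shufflings collapse nested shuffles into a single one. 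So the first key step is: \emph{every derivation equals a normal one}.

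The second key step is to show the normal form is \emph{forced} by the distinctly-typed conclusion and premises. Here is where the ``distinctly typed'' hypothesis does the work. In a normal derivation, each object occurrence in the conclusion came from exactly one generator leaf (with its variance preserved), and since each generator type appears at most once with each variance, the assignment of conclusion-slots to generator-slots is a bijection that is uniquely determined. This pins down the $n$-ary shuffle $\sigma$: it is the unique shuffle realizing that bijection on the distinctly-typed words (coherence for shufflings, invoked in the paragraph before the theorem). It also pins down which pairs of occurrences get linked: a linked pair $X^{\bullet},X^{\circ}$ must come from a spawned channel (it cannot come from the premises, by the acyclicity/polarization argument already used in the proof of \Cref{prop:soonerlater}), and the multiset of linked channel-types is exactly the multiset of $X$ such that $X^{\bullet}$ and $X^{\circ}$ both fail to appear in the conclusion but the pairing is needed to match premises to conclusion; because everything in sight is distinctly typed, there is no ambiguity in \emph{which} $X^{\bullet}$ pairs with \emph{which} $X^{\circ}$. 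Hence the normal derivation is unique, and combined with step one, any two derivations between the same distinctly-typed premises and conclusion are equal.

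The main obstacle I expect is step one --- proving that the rewriting to normal form is confluent and terminating, i.e. genuinely using \emph{all} of the axioms (1)--(5) and checking that no derivation gets ``stuck''. In particular one must handle the case where a linking's two legs are separated by material that itself involves other spawns and links, and verify (via (5a)--(5d) and \Cref{prop:linking-wherever}) that one can always migrate them into adjacency without changing the derivation's value; and one must be careful that shuffles absorbing spawned $\NOP$-channels interact correctly with the associativity axiom (1a) when several channels are spawned. A clean way to organize this is to assign to each derivation a measure (e.g. lexicographic: number of links not yet adjacent to their spawn, then number of shuffle nodes, then total size) and show each rewrite strictly decreases it; the axioms guarantee the rewrites are sound, and termination plus the uniqueness argument of step two gives the theorem. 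The remaining verifications --- that the bijection-to-shuffle correspondence really is injective on distinctly-typed words, and that polarization blocks the ``bad'' pairings --- are exactly the coherence-for-shufflings fact and the argument in \Cref{prop:soonerlater} respectively, so they can be cited rather than reproved.
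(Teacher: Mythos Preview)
Your overall shape (normalize, then argue the normal form is determined by the types) matches the paper, but your specific normal form and your uniqueness argument both have a real gap.

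The paper's normal form is \emph{rush/wait $\to$ pure shuffle $\to$ linkings $\to$ spawnings}, and crucially it separates the three classes of types: (1) types appearing twice in the conclusion with opposite polarity (produced by a final spawn), (2) types appearing twice in the premises with opposite polarity (consumed by a link), and (3) types appearing once on each side with the same polarity (carried through, possibly after rush/wait). Your normal form \emph{spawn $\to$ shuffle $\to$ link} collapses rush/wait into the spawn and link counts, and that is exactly where your uniqueness argument breaks.

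Concretely, your claim that ``a linked pair $X^{\bullet},X^{\circ}$ must come from a spawned channel (it cannot come from the premises)'' is false: class (2) types are precisely premise-sourced pairs that get linked, with no spawn involved. The acyclicity argument from \Cref{prop:soonerlater} does not block this; it only blocks sending sooner / receiving later. So your characterization of ``which pairs get linked'' is wrong, and with it the determination of the shuffle $\sigma$: in your normal form the output of the shuffle is \emph{not} the conclusion (links still follow), so the ``bijection from conclusion-slots to generator-slots'' does not pin $\sigma$ down directly. Worse, because rush/wait is itself a spawn-shuffle-link pattern, your normal form can realize the same reordering with different numbers of auxiliary spawns and links, and nothing in your argument rules out that redundancy.

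The fix is what the paper does: peel off the class-(1) spawns at the very end (giving a unique $\Delta_1$), then peel off the class-(2) links (giving $\Delta_2$, with \Cref{prop:linking-wherever} absorbing the positional ambiguity), then the remaining shuffle from $(\Gamma'_1,\dots,\Gamma'_n)$ to $\Delta_2$ is forced, and finally the rush/wait from each $\Gamma_i$ to $\Gamma'_i$ is forced by \Cref{prop:swaps-self-inverses}. The key move you are missing is to treat rush/wait as a separate phase rather than letting it hide inside the spawn/link bookkeeping.
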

\begin{proof}
  Consider distinctly typed premises and conclusion. There must be three different classes of types in this derivation: \emph{(1)} those that appear twice on the conclusions with different polarity, \emph{(2)} those that appear twice on the premises with different polarity, and \emph{(3)} those that appear once in the premises and once in the conclusions with the same polarity.

  We will construct a non-unique normal form for derivations in a message theory, taking into account each one of these cases.
  \begin{enumerate}
    \item In the first case, the types must have been created by spawning a channel. Using naturality of spawning (Axioms 3a, 3b, 5c), we can move these spawning operations to be shuffled at the end of the derivation. Note that it is not true that we can move \emph{all} spawning to the end of the derivation, but if the types appear only in the conclusions, then we can always do so.
    \item In the second case, these variables must get linked to each other. We can always move the linkings to the end of the derivation (before spawning new objects) using again the naturality of the linkings (Axioms 2a, 2b, 5a).
    \item For the third case, only a shuffling, rushing and waiting can be involved (any linking or spawning that does not involve rushing or waiting has already been moved to the end). Rushing and waiting can be moved to the beginning of the derivation because of naturality of spawning and linking (Axioms 2a, 2b, 3a, 3b).
  \end{enumerate}
  All of this argues that we can always factor a derivation in a \messageTheory{} as \emph{(i)} rushing and waiting, \emph{(ii)} a shuffle, \emph{(iii)} linkings, \emph{(iv)} spawnings and shufflings of new variables; but we have not yet shown that this derivation is unique.

  Imagine that we know the premises $(Γ₁, …, Γₙ)$ and the conclusion $(Δ)$ of a derivation in a \messageTheory{}. Let us argue that there is at most a unique derivation between these two.
  \begin{enumerate}
    \item Removing the objects that appear twice in the conclusion $(Δ)$, we obtain a new conclusion $(Δ₁)$; there is, at most, a unique way of getting from $(Δ₁)$ to $(Δ)$ using spawnings and shufflings: spawning all the possible variables in any order and employing the only possible shuffle (using \Cref{prop:shuffles-as-shuffles}, axioms 1a, 1b, 1c); the order of spawning does not matter because shuffles are symmetric (Axioms 1c, 5c).
    \item Adding all of the objects that get linked and appear twice with different polarities on the premises, in any order, we obtain a new conclusion $(Δ₂)$. There is a unique way of getting from $(Δ₂)$ to $(Δ₁)$ because of the interchanging axioms of linking (Axioms 5b and 5d). The only obstruction to uniqueness here is that we could choose different conclusions $(Δ₂)$ depending on where they place the variables that will be linked; however, we have already shown that all possible choices lead to the same result (\Cref{prop:linking-wherever}).
    \item We are left with a shuffle and some rushings and waitings.  From the conclusion $(Δ₂)$ we obtain now a sequence of premises $(Γ₁',…,Γₙ')$ that are the same as the original premises $(Γ₁,…,Γₙ)$ with the only difference that the objects appear ordered as in the conclusion $(Δ₂)$. There is a unique possible shuffling from $(Γ₁',…,Γₙ')$ to $(Δ₂)$.
    \item Finally, for each premise, there will be rushings and waitings. Rushings and waitings interchange (\Cref{prop:swaps-self-inverses}), and so there is a unique way of going from the original premise $Γᵢ$ to the new premise $Γᵢ'$.
  \end{enumerate}
  A summary of the proof can be found in \Cref{fig:derivation-shape}: $Δ₁$ is determined from $Δ$; $Δ₂$ is not, but all the possible choices lead to the same result (\Cref{fig:linking-wherever}); $Γ₁',…,Γₙ'$ are determined from $Δ₂$ and $Γ₁,…,Γₙ$; we have argued that in each one of the steps of the proof there is a single possible derivation.
  \begin{figure}[!ht]
    \centering
    \includegraphics[scale=0.45]{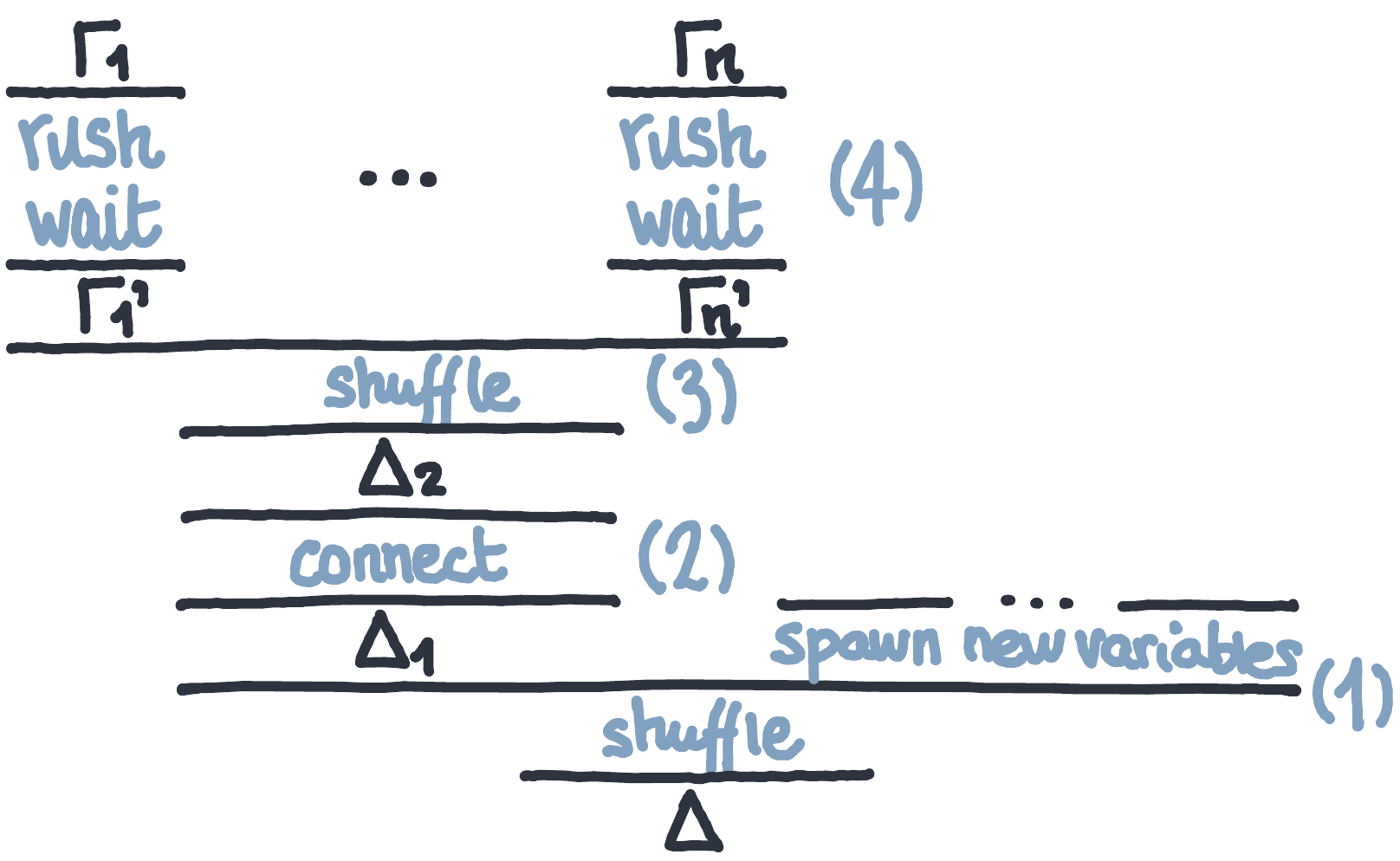}
    \caption{Schema for the proof of uniqueness.}
    \label{fig:derivation-shape}
  \end{figure}
\end{proof}

\subsection*{Bibliography}
Honda pioneered \emph{binary session types} in the 90s \cite{honda93}; and further work with Yoshida and Carbone extended them to the multi-party case \cite{honda08:sessionTypes}. Session types \cite{honda93,honda08:sessionTypes}
are the mainstay type formalism for communication protocols, and they have been extensively applied to the π-calculus \cite{sangiorgi01:picalculus}.
Our approach is not set up to capture all of the features of a fully-fledged session type theory \cite{kobayashi96:linearitypicalculus}. Explicitly, our framework of \messageTheories{} can be compared to asynchronous session types without choice.
Arguably, this makes it more general: it always provides a universal way of implementing send $(\Send{A})$ and receive $(\Get{A})$ operations in an arbitrary \processTheory{} represented by a \monoidalCategory{}.
For instance, recursion and the internal/external choice duality \cite{gay99,pierce93:subtyping} are not discussed, although they could be considered as extensions in the same way they are to monoidal categories: via trace \cite{hasegawa97} and linear distributivity \cite{cockett1997}.

\newpage
\section{Physical Monoidal Multicategories, and Shufflings}
\label{sec:physical-monoidal-multicategories-shufflings}

\PhysicalMonoidalMulticategories{} are \physicalDuoidalCategories{} \cite{spivak13} where the parallel tensor $(⊗)$ is not representable. They follow the same idea of \produoidalCategories{}, but they will be a better framework for message passing. The theory of \physicalMonoidalMulticategories{} will need of three ingredients: \emph{(i)} \symmetricMulticategories{}, which correspond to the symmetry of the parallel tensor $(⊗)$; \emph{(ii)} \monoidalMulticategories{}, which correspond to the half-representable sequential tensor $(⊲)$; and \emph{(iii)} normality, which we will need to reinterpret in this setting.

\subsection{Symmetric Multicategories} 
Symmetric multicategories \cite{baez98:higher,cruttwell09:unified,shulman:catlog} are to \multicategories{} what \symmetricMonoidalCategories{} are to \monoidalCategories{}.
Let us write $σ ∈ S(n)$ for an element of the permutation group on $n$ points. Let us write $σ₁ + σ₂ ∈ S(n + m)$ for the disjoint union of two permutations, $σ₁ ∈ S(n)$ and $σ₂ ∈ S(m)$. Let us write as $σ \wr (k₁, …, kₙ) ፡ k_{σ1} + … + k_{σn} → k_1 + … + k_n$ the thickening of a permutation $σ ∈ S(n)$ to a permutation $S(k_1 + \dots + k_n)$ that applies it \emph{considering each block} of $k_n$ elements separately.

\begin{definition}
  \defining{linkSymmetricMulticategory}{}
  A \emph{symmetric multicategory} is a \multicategory{} $𝕄$ together with the following family of functions
  $$σ^\ast ፡ 𝕄(X_{σ1}, … , X_{σn} ; Y) → 𝕄(X₁, …, Xₙ; Y),\mbox{ for each } σ ∈ S(n),$$
  that moreover satisfy the following axioms: \emph{(i)} functoriality, $(τ ⨾ σ)^\ast(f) = τ^\ast (σ^\ast (f))$ and $\id^\ast(f) = f$; \emph{(ii)} preservation of disjoint unions, $σ₁^\ast(f_1)⨾_1 … ⨾_{n-1} σ_n^\ast(fₙ) ⨾_n g = (σ₁ + … + σₙ)(f₁ ⨾_1 … ⨾_{n-1} fₙ ⨾_n g)$; and \emph{(iii)} naturality, $f₁ ⨾_1 … ⨾_{n-1} fₙ ⨾_n σ^\ast(g) = (σ \wr (k₁,…,kₙ))^\ast (f_{σ1} ⨾_1 … ⨾_{n-1} f_{σn} ⨾_n g),$ for any $fₙ$ having arity $kₙ$.
\end{definition}

In \physicalMonoidalMulticategories{}, symmetry appears with the non representable parallel tensor $(⊗)$; while the sequential tensor will still form a representable \monoidalCategory{}, this is the notion of \monoidalMulticategory{}.

\subsection{Monoidal Multicategories}
\Multicategories{} helped us describe typed algebras: the objects of the \multicategory{} were the types, and the multimorphisms were the operations we were allowed to use in that algebra. As we saw with \produoidalCategories{}, we need a second dimension if we want to study parallelism: what happens when the types themselves form a monoid? \MonoidalMulticategories{} are the monoidal version of algebra.

A \emph{monoidal category} was a 2-monoid on the 2-category of categories, functors and natural transformations. Analogously, a \emph{monoidal multicategory}, sometimes called a \emph{virtual duoidal category} \cite{nlab:duoidal} or \emph{monoidal operad}, is a 2-monoid of the 2-category $\Mult$ of multicategories, multifunctors, and multinatural transformations.

\begin{definition}
  \defining{linkMonoidalMulticategory}{}
  A \emph{monoidal multicategory} $(𝕄,⊲,N,m,i,⨾)$ is a \multicategory{} $(𝕄,⨾)$ with a tensor and a unit both on objects $(⊲) ፡ 𝕄_{obj} × 𝕄_{obj} → 𝕄_{obj}$ and $N ፡ 𝕄_{obj}$, and a tensor and unit on multimorphisms,
  \begin{align*}
    mₙ ፡ & 𝕄(X₁,\mydots, Xₙ; Y) × 𝕄(X'_1,\mydots, X'ₙ; Y) %
    → 𝕄(X_1 ⊲ X'_1, \mydots, Xₙ ⊲ X'ₙ; Y ⊲ Y'), \\
    n_n ፡ & 1 → 𝕄(N, \overset{n}\dots, N; N).
  \end{align*}
  These are associative and unital up to multinatural transformations $α_{X,Y,Z} ∈ 𝕄(X ⊲ (Y ⊲ Z); (X ⊲ Y) ⊲ Z)$, $λ_X ∈ 𝕄(I ⊲ X; X)$ and $ρ_X ∈ 𝕄(X ⊲ I; X)$, satisfying the pentagon and triangle equations.
\end{definition}

\begin{remark}
  Any \monoidalMulticategory{} has an underlying \monoidalCategory{}. 
  Multifunctors between representable multicategories are lax monoidal functors, and multinatural transformations are lax monoidal transformations. This implies that a representable monoidal multicategory is exactly a \duoidalCategory{} \cite{nlab:duoidal}.
\end{remark}

\subsection{Physical Monoidal Multicategories}

Bringing the notion of normality to the context of \monoidalMulticategories{} is not completely trivial: here, there is no map between the two units $I → N$. Instead, what we will have is a map between the hom-sets, given by the precomposition of the multimap $n₀ ∈ 𝕄(;N)$ that we have because of the monoidality of the unit.

\begin{definition}
  \label{def:physical-monoidal-multicategory}
  A \emph{physical monoidal multicategory} is a \symmetricMulticategory{} $(𝕄,⨾,\id)$ that is moreover monoidal forming a virtual duoidal category $(𝕄,⊲,N)$ and that contains an invertible composition with the unit map.
  That is, the following composition is an isomorphism,
  $$(n₀ ⨾ •)^{-1} ፡ 𝕄(X₀,\overset{n}{…},Xₙ ; Y) → 𝕄(X₀,…,N,…,Xₙ; Y).$$
\end{definition}

\begin{remark}
  We know the structure we need, but we will need to be able to compute with it.
  A better combinatorial description of \physicalMonoidalMulticategories{} will be possible once we characterize the free ones.
  The next section shows that the \physicalMonoidalMulticategory{} of shufflings is the free normal symmetric monoidal multicategory over a set of objects.
\end{remark}

\subsection{Shuffling}
Shuffling events is the principle on which we have based our definition of \messageTheories{}.
The first step towards endowing them with categorical semantics is to show that shuffling arises naturally in a categorical setting: if \physicalDuoidalCategories{} represented inclusions of posets, \physicalMonoidalMulticategories{} will represent inclusions of linear posets, which correspond to shuffling.

\begin{definition}
  \defining{linkShufflingWords}{}
  Let $A$ be an alphabet. The \monoidalMulticategory{} of \emph{shuffling words}, $\mathsf{wShuf}_A$, has as objects the set $A^\ast$ of words on the alphabet $A$. The multimorphisms $\mathsf{wShuf}_A(w₁,…,wₙ; w)$ are precisely the shufflings of the letters of the words $w₁,… , wₙ$ that result into the word $w$. The monoidal tensor is given by concatenation of words.
\end{definition}

\begin{remark}
  This \multicategory{} is not posetal: for instance, there exist two multimaps $\mathsf{wShuf}_A(a,a;aa)$ for any letter $a ∈ A$. More generally, the number of shuffles of a repeated letter is given by the binomial coefficients,
  \[\#\mathsf{wShuf}(a^{n_1},…,a^{n_k}; a^{n₁ + … + nₖ}) = \frac{(n₁ + … + nₖ)!}{n₁! … nₖ!}.\]
  However, it is true that, if each letter appears at most once on the component words, then the shuffle, if it exists, must be unique. For instance, there exists a unique shuffle $\mathsf{wShuf}(xy,wz; xwyz)$, and so we can refer to it without explicitly specifying it. In other words, there exists at most one morphism between formal distinctly-typed expressions; shuffles are coherent.
\end{remark}

\begin{remark}
  This \monoidalMulticategory{} is symmetric and normal. In other words, it is a \physicalMonoidalMulticategory{}.
\end{remark}

\begin{proposition}
  \label{prop:shuffling-free-physical-monoidal-multicategory}
  \ShufflingWords{} are the free \physicalMonoidalMulticategory{} on a set of objects.
\end{proposition}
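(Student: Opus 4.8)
The statement asserts a universal property, so the natural route is to exhibit the two halves of an adjunction (or, since we expect this to be an equivalence-free freeness statement, to directly verify the defining adjunction $\mathbf{Set} \to \mathbf{PhysMonMult}$). First I would fix the obvious candidate for the unit: given a set $A$, the inclusion $A \to \wShuf_A$ sends each element $a$ to the one-letter word $a \in A^{\ast}$. The content of the proof is then to show that for any \physicalMonoidalMulticategory{} $\mathbb{M}$ and any function $F_0 \colon A \to \mathbb{M}_{obj}$, there is a unique morphism of \physicalMonoidalMulticategories{} $F \colon \wShuf_A \to \mathbb{M}$ extending $F_0$. The assignment on objects is forced, since $\mathbb{M}$ has a monoid of objects under $(\triangleleft)$ (with unit $N$) and a word $a_1\cdots a_n \in A^{\ast}$ must go to $F_0(a_1) \triangleleft \cdots \triangleleft F_0(a_n)$, with the empty word going to $N$; here I would use that $(\triangleleft)$ is strictly associative and unital on objects in the biased presentation, or otherwise carry the associators/unitors along, which is routine.

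The heart of the argument is the assignment on multimorphisms. A multimorphism in $\wShuf_A(w_1, \dots, w_k; w)$ is a shuffle of the letters of $w_1, \dots, w_k$ into $w$. I would build its image in $\mathbb{M}$ by structural induction on the total length, using exactly the generating operations available in a \physicalMonoidalMulticategory{}: the tensor $m_k$ of multimorphisms (to interleave the blocks), the unit multimaps $n_j$ and the normality isomorphism $(n_0 \,\bcomp\, \bullet)^{-1}$ (to insert or delete the unit word, which is how a letter appearing in one block but not another is handled), the symmetry maps $\sigma^{\ast}$ (to realize the permutation part of the shuffle), and ordinary multicategorical composition (to combine smaller shuffles). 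Concretely, a shuffle of $w_1, \dots, w_k$ can always be decomposed by looking at the first letter of the output word $w$: it comes from some $w_i$; one strips it off, recurses, and reassembles using $m$, $\sigma^{\ast}$ and the normality iso to pad the other blocks with the unit. This gives a definition; one must then check it is independent of the chosen decomposition — this is where I would invoke \Cref{theorem:coherentMessageTheories}-style reasoning, i.e. the coherence remark immediately preceding the statement: \emph{shuffles between distinctly-typed expressions are unique}, so any two decomposition strategies produce multimorphisms with the same (distinctly-typed) domain and codomain in $\wShuf_A$, hence agree. Transporting this along $F$ and using the axioms of a \physicalMonoidalMulticategory{} (functoriality of $\sigma^{\ast}$, compatibility of $m$ with composition, the unit/normality laws) shows well-definedness.

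Next I would verify that $F$ so defined is genuinely a morphism of \physicalMonoidalMulticategories{}: it preserves multicategorical composition (both sides compute a shuffle-of-shuffles, which is again a shuffle, so by coherence they must land on the same multimorphism and $F$ respects it by construction), identities, the symmetric structure $\sigma^{\ast}$, the monoidal tensor $m$ and unit $n$ on multimorphisms, and the normality isomorphism — each of these is a short diagram chase once well-definedness is in hand. Uniqueness is then essentially automatic: any extension $G$ of $F_0$ must agree with $F$ on one-letter words by hypothesis, on the empty word since $G$ preserves $N$, and on every multimorphism because every shuffle is generated from identities by $m$, $n$, $\sigma^{\ast}$, the normality iso and composition, all of which $G$ must preserve; an induction on length closes it.

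The main obstacle I anticipate is precisely the well-definedness of $F$ on multimorphisms: the recursive description of a shuffle is highly non-canonical, and showing that different ways of peeling off letters and inserting unit-words give equal results in $\mathbb{M}$ requires the coherence of \physicalMonoidalMulticategories{} (uniqueness of structure maps between distinctly-typed expressions), which itself has to be stated cleanly in this half-representable, symmetric, normal setting — it is the analogue of the coherence theorem for \messageTheories{} proved in \Cref{theorem:coherentMessageTheories}, and I would either cite/adapt that argument or prove the needed fragment directly by the same normal-form strategy (move symmetries/normality isos to the boundary, observe the middle shuffle is forced). Everything else — the object assignment, preservation of the algebraic structure, and uniqueness — is bookkeeping by comparison.
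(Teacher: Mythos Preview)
Your approach is correct but differs from the paper's in two respects. First, the construction of a shuffle's image: you peel off letters sequentially by induction on length, whereas the paper builds it in one shot by taking, for each letter $a_{j,i}$ of the target word (coming from the $j$-th input word), the identity on $a_{j,i}$ viewed via the normality isomorphism as a multimorphism with that letter in the $j$-th slot and units elsewhere, and then tensoring all of these together with $(\triangleleft)$ in the order dictated by $w$. Second, and more significantly, the coherence argument: you propose adapting the normal-form strategy of \Cref{theorem:coherentMessageTheories}, but that result concerns message theories, which carry extra polarization structure you would have to strip out; the paper instead invokes \Cref{th:posetsAreFreePhysical} (Shapiro--Spivak), which gives directly that there is at most one formal structure map between distinctly-typed expressions in the free physical duoidal category, hence also between distinctly-typed formal multimorphisms in any physical monoidal multicategory, and this single fact simultaneously handles well-definedness of the assignment and preservation of all the structure. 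Your route is more self-contained but requires reproving a coherence lemma from scratch; the paper's is shorter because it leverages an established external result.
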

\begin{proof}
  We have already shown how they form a \physicalMonoidalMulticategory{}; we need to show that it is the free one. Let $𝕍$ be any \physicalDuoidalCategory{}, for each map $A → 𝕍_{obj}$ there must exist a unique physical monoidal multifunctor $\wShuf_A → 𝕍$ that factors on objects through the former map.

  First, we argue that the physical monoidal multifunctor is forced. Any shuffle can be reconstructed from the operations of a \physicalMonoidalMulticategory{}, and so any shuffle must be mapped accordingly. For instance, if we want to reconstruct the shuffle $𝕍(xy,z; xzy)$, we multiply together: first $𝕍(x,i;x)$, then $𝕍(i,z;z)$, and finally $𝕍(y,i;y)$.
  
  Let us do this in general. Assume a shuffle $\wShuf(w₁,…,wₙ; w)$, where the words are $wⱼ = a_{j,1} … a_{j,k_j}$. We position each one of the letters in their position: for instance, if $a_{j,i}$ appears in the $j$th-word, we can use the normalization isomorphism 
  $$𝕍(a_{j,i};a_{j,i}) ≅ 𝕍(I,…,a_{j,i}^{(j)},…,I; a_{j,i})$$
  to position it in the $j$th input, via the identity multimorphism. Then, we can multiply all of the letters forming the shuffle, using monoidality, and obtain the desired shuffle in any \physicalMonoidalMulticategory{},
  \[ \prod_{a_{j,i} ∈ w} 𝕍(I,…,a_{j,i}^{(j)}, …,I ; a_{j,i}) \rightarrow 𝕍(w₀,…,wₙ; w).\]
  We have used exactly one factor for each letter of the resulting word.

  Finally, we need to prove that this assignment is well-defined and that it defines a physical monoidal multifunctor. These will be both a consequence of the same fact: there exists at most a unique shuffle between words with different letters and there exists at most a unique formally distinctly typed map between any poset shapes.
  To prove this, we use the characterization of the free duoidal category as poset shapes: there is a single formally well-typed map between any two poset shapes, so there is a single formally well-typed map between any two duoidal expressions; in particular, there exists at most one multimorphism between any words with distinct letters in a physical monoidal multicategory.
  This renders the map well-defined: there is at most one way of constructing any shuffle, and we have shown that it is possible. On the other hand, this also makes the map a physical monoidal multifunctor: all the equations are formal and thus they hold automatically.
\end{proof}

Shuffling takes care of the first and most important aspect of message theories: \messageTheories{} are algebras for the shuffling words \physicalMonoidalMulticategory{} of their objects. 
However, message theories do have an extra structure: each object is a left dual, naturally with respect to these shufflings -- the second ingredient for message theories is \emph{polarization}.

\subsection{Bibliography}

This formulation of \symmetricMulticategories{} is a particular case of Shulman's definition of \multicategory{} over a \emph{faithful cartesian club} \cite[\S 2]{shulman:catlog}. The theory of \symmetricMulticategories{} is less developed than the theory of \emph{cartesian multicategories}, which are well-known to correspond to Lawvere theories.
\MonoidalMulticategories{} are also not particularly explored, but Shulman has a note relating them directly to \duoidalCategories{} and proposing the name \emph{virtual duoidal category} for them \cite{nlab:duoidal}.

\newpage
\section{Polarization}
\label{sec:polarization}
Polarization is not a property of our structures, but more of a particular way of constructing free structures. This is, we do not say that a monoid is ``polarized''; but we have constructed free polarized monoids when talking about \messageTheories{}. Saying that a monoid is polarized would seem to imply that all of its elements have a sign, which is not what happens in the free polarized monoid: generators do appear with a sign, but the words on the monoid are combinations without a particular sign.

\begin{remark}[Polarization of a monoid]
  Given any monoid $M$, we can consider its polarization, $\mathsf{Polar}(M)$, to be the monoid generated by two copies of each element of $M$, denoted $a^{•} ∈ \mathsf{Polar}(M)$ and $a^{∘} ∈ \mathsf{Polar}(M)$, and quotiented by the equations $a^{•} b^{•} = (ab)^{•}$ and $a^{∘}b^{∘} = (ab)^{∘}$.
\end{remark}

Taking seriously this idea, we will not regard polarization as part of an algebraic structure (as it happens with \monoidalCategories{}). Instead, polarization will arise as a left adjoint. To quip, \emph{polarization is left adjoint to taking left adjoints}. Let us see first how this works in the context of monoidal categories; we will later extend it to \physicalMonoidalMulticategories{}.

\subsection{Monoidal Polarization}

Every \monoidalCategory{} has a notion of \emph{duality} inside it. It does not suffice to say that some objects \emph{have duals}: a duality is not only a property. Instead, we need to specify the maps that constitute the duality. Dualities in a \monoidalCategory{} form a \monoidalCategory{} themselves. Polarization is the left adjoint to taking this category of dualities.

\begin{definition}[Category of dualities]
  Let $(ℂ,⊗,I)$ be a monoidal category.
  We define the category of left duals, $\mathsf{Duals}(ℂ)$ to have objects the dualities $(L ⊣ R, ε, η)$ of the monoidal category and morphisms 
  $$(f_L,f_R) ፡ (L ⊣ R, ε, η) → (L' ⊣ R', ε', η')$$
  to be pairs of morphisms $f_L ፡ L → L'$ and $f_R ፡ R' → R$ such that the equations in \Cref{fig:morphism-of-adjunctions} hold.
  \begin{figure}[!ht]
    \centering
    \includegraphics[scale=0.4]{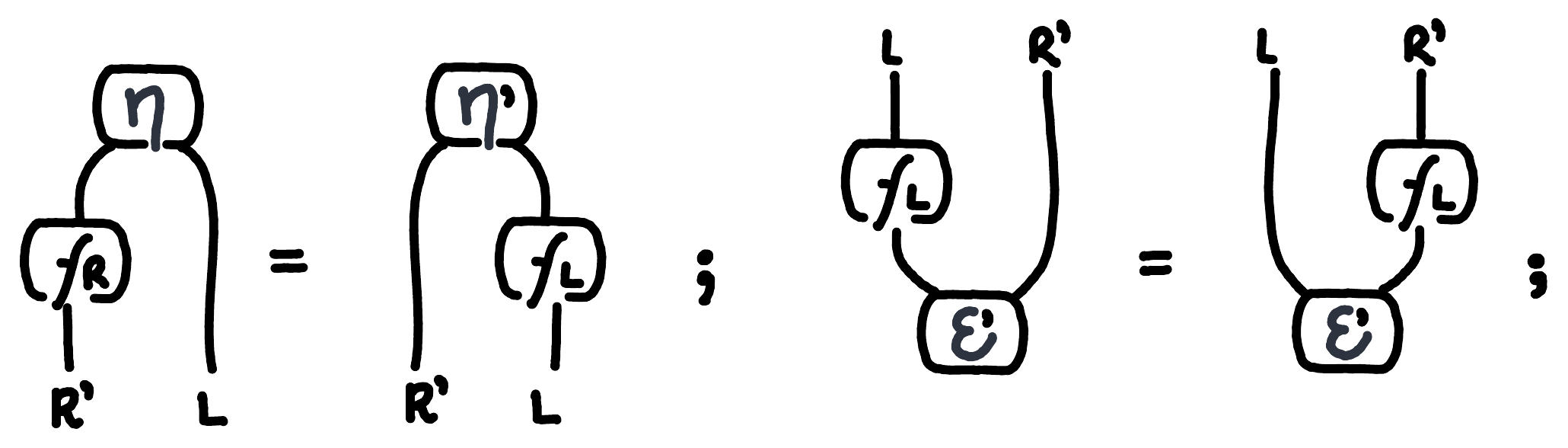}
    \caption{Morphism of adjunctions.}
    \label{fig:morphism-of-adjunctions}
  \end{figure}

  The category of left duals is a monoidal category where the tensor of two dualities, $(L ⊣ R, ε, η)$ and $(L' ⊣ R', ε', η')$, is defined to be
  $$(L ⊗ L' ⊣ R' ⊗ R, (\id ⊗ ε ⊗ \id)⨾ ε', η ⨾ (\id ⊗ η ⊗ \id)).$$
\end{definition}

Taking the left duals extends to an endofunctor $\mathsf{Duals} ፡ \mathbf{MonCat} → \mathbf{MonCat}$.
This endofunctor has a right adjoint $\mathsf{Polar} ፡ \mathbf{MonCat} → \mathbf{MonCat}$.

\begin{definition}[Polarization]
  Let $(ℂ,⊗,I)$ be a strict monoidal category.
  Its polarization, $\mathsf{Polar}(ℂ)$, is a monoidal category presented by the following data. 
  
  The polarization contains two objects, $A^{•}$ and $A^{∘}$, for each object of the original category $A ∈ ℂ_{obj}$, and quotiented by equalities for tensors $(A ⊗ B)^{•} = A^{•} ⊗ B^{•}$ and $(A ⊗ B)^{∘} = B^{∘} ⊗ A^{∘}$ and units $I^{•} = I^{∘} = I$.  
  It also contains a pair of morphisms $ε_A ፡ A^{•} ⊗ A^{∘} → I$ and $η_A ፡ I → A^{∘} ⊗ A^{•}$ constructing an adjunction $A^{•} ⊣ A^{∘}$: that is, $ε_A ⨾ η_A = \id$ and $η_A ⨾ ε_A = \id$. The duality is monoidal, satisfying both $ε_{A ⊗ B} = ε_A ⨾ ε_B$ and $η_{A ⊗ B} = η_B ⨾ η_A$, and finally $ε_I = η_I = \id_I$.

  The polarization contains two dual morphisms, $f^{•} ፡ A^{•} → B^{•}$ and $f^{∘} ፡ B^{∘} → A^{∘}$, for each morphism $f ፡ A → B$.  These are quotiented by equations explicitly asking for functoriality: $f^{•} ⨾ g^{•} = (f ⨾ g)^{•}$ and $\id^{•} = \id$, and also $f^{∘} ⨾ g^{∘} = (g ⨾ f)^{∘}$ and $\id^{∘} = \id$.  These are tensored as $(f ⊗ g)^{•} = f^{•} ⊗ g^{•}$ and $(f ⊗ g)^{∘} = g^{∘} ⊗ f^{∘}$.
\end{definition}

\begin{proposition}
  \label{prop:polarization}
  Polarization is left dual to taking left duals.
\end{proposition}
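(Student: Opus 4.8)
The plan is to establish the adjunction $\mathsf{Polar} \dashv \mathsf{Duals}$ by constructing a natural bijection between strict monoidal functors $\mathsf{Polar}(ℂ) → 𝔻$ and strict monoidal functors $ℂ → \mathsf{Duals}(𝔻)$. Since $\mathsf{Polar}(ℂ)$ is given by an explicit presentation (generators $A^{•}, A^{∘}$, structure maps $ε_A, η_A$, dualized morphisms $f^{•}, f^{∘}$, and equations), a strict monoidal functor out of it is determined by an assignment of the generators satisfying the equations; this is the standard move we have used throughout the text for presented monoidal categories (cf. \Cref{th:donotation}, \Cref{prop:produoidalSpliceContour}).

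First I would unpack what a strict monoidal functor $G ፡ \mathsf{Polar}(ℂ) → 𝔻$ amounts to: for each object $A$, a pair of objects $G(A^{•}), G(A^{∘})$ with $G(I^{•}) = G(I^{∘}) = I_𝔻$ and $G((A⊗B)^{•}) = G(A^{•}) ⊗ G(B^{•})$, $G((A⊗B)^{∘}) = G(B^{∘}) ⊗ G(A^{∘})$; morphisms $G(ε_A), G(η_A)$ exhibiting $G(A^{•}) ⊣ G(A^{∘})$ with the monoidality conditions $ε_{A⊗B} = ε_A ⨾ ε_B$ etc.; and for each $f ፡ A → B$ a pair $G(f^{•}) ፡ G(A^{•}) → G(B^{•})$, $G(f^{∘}) ፡ G(B^{∘}) → G(A^{∘})$ compatible with composition (note the contravariance of $(-)^{∘}$) and tensor. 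Reading this data: the triple $(G(A^{•}) ⊣ G(A^{∘}), G(ε_A), G(η_A))$ is exactly an object of $\mathsf{Duals}(𝔻)$, and the assignment $A ↦$ that triple is strict monoidal by the conditions just listed (the monoidal tensor on $\mathsf{Duals}(𝔻)$ was defined precisely as $(L⊗L' ⊣ R'⊗R, \dots)$). The pair $(G(f^{•}), G(f^{∘}))$ is, by the equations relating it to $ε$ and $η$ that hold in $\mathsf{Polar}(ℂ)$, a morphism of dualities $(L ⊣ R,ε,η) → (L' ⊣ R',ε',η')$ in the sense of the diagrams of \Cref{fig:morphism-of-adjunctions}. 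Functoriality and monoidality of $f ↦ (f^{•},f^{∘})$ transfer directly. This defines a map $G ↦ G^{\flat} ፡ ℂ → \mathsf{Duals}(𝔻)$.

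Conversely, given a strict monoidal functor $H ፡ ℂ → \mathsf{Duals}(𝔻)$, sending $A$ to some duality $(L_A ⊣ R_A, ε_A^H, η_A^H)$ and $f$ to a morphism of dualities, I would define $H^{\sharp} ፡ \mathsf{Polar}(ℂ) → 𝔻$ on generators by $A^{•} ↦ L_A$, $A^{∘} ↦ R_A$, $ε_A ↦ ε_A^H$, $η_A ↦ η_A^H$, $f^{•} ↦$ the $L$-component, $f^{∘} ↦$ the $R$-component, and then check that all the defining equations of the presentation of $\mathsf{Polar}(ℂ)$ are satisfied: the snake equations because $H(A)$ is a genuine duality; the monoidality equations for $ε,η$ because $H$ is strict monoidal (the tensor in $\mathsf{Duals}(𝔻)$ encodes exactly $ε_{A⊗B} = ε_A ⨾ ε_B$, $η_{A⊗B} = η_B ⨾ η_A$); functoriality and tensor-compatibility of $f^{•},f^{∘}$ because $H$ is a functor and strict monoidal; and $H((A⊗B)^{•}) = L_{A⊗B} = L_A ⊗ L_B$, $H((A⊗B)^{∘}) = R_{A⊗B} = R_B ⊗ R_A$, $H(I^{•}) = H(I^{∘}) = I_𝔻$ by strictness. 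Hence $H^{\sharp}$ extends uniquely to a strict monoidal functor. The two constructions $(-)^{\flat}$ and $(-)^{\sharp}$ are mutually inverse by construction (both sides are determined by their values on generators), and naturality in $ℂ$ and $𝔻$ is routine, since everything is defined on generators and preserved by pre/post-composition.

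The main obstacle I anticipate is bookkeeping around the contravariance of the $(-)^{∘}$ part: one must be careful that $\mathsf{Duals}$ really is a covariant endofunctor of $\mathbf{MonCat}$ (a functor $\varphi ፡ 𝔻 → 𝔼$ induces $\mathsf{Duals}(\varphi)$ sending $(L ⊣ R,ε,η)$ to $(\varphi L ⊣ \varphi R, \varphi ε, \varphi η)$, which is well-defined because $\varphi$ preserves the snake equations) and that the tensor of $\mathsf{Duals}(𝔻)$, with its order-reversal on the right-dual components, matches the tensor equations $(A⊗B)^{∘} = B^{∘} ⊗ A^{∘}$ imposed in $\mathsf{Polar}(ℂ)$ — any sign or order mismatch here would break the correspondence. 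A secondary subtlety is verifying that the morphism-of-dualities conditions in \Cref{fig:morphism-of-adjunctions} are \emph{equivalent} to (not merely implied by) the presentation equations on $f^{•}, f^{∘}$, so that the bijection is exact rather than merely surjective; this should follow because in the presence of a duality each of $f^{•}, f^{∘}$ determines the other, so a single mate-compatibility equation suffices, exactly as in the definition of $\mathsf{Duals}$.
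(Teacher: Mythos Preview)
Your proposal is correct and takes essentially the same approach as the paper: both exploit that $\mathsf{Polar}(ℂ)$ is given by a presentation, so strict monoidal functors out of it are determined by generator assignments satisfying the equations, and these assignments are then matched to the data of a functor $ℂ \to \mathsf{Duals}(𝔻)$. The only cosmetic difference is that the paper phrases the adjunction via a universal arrow (constructing the unit $\eta_{𝕄} \colon 𝕄 \to \mathsf{Duals}(\mathsf{Polar}(𝕄))$ and showing every $H \colon 𝕄 \to \mathsf{Duals}(ℕ)$ factors uniquely through it) whereas you phrase it via the hom-set bijection; the content is identical, and your flagged ``secondary subtlety'' about the mate relationship between $f^{\bullet}$ and $f^{\circ}$ is indeed the one step both versions pass over lightly.
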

\begin{proof}
  We already know that $\Duals ፡ \MonCat → \MonCat$ is a functor; we only need to construct a universal arrow $η_{𝕄} ፡ 𝕄 → \Duals(\Polar(𝕄))$. The universal arrow will be given by $η(X) = (X^{•} ⊣ X^{∘}, ε_X, η_X)$, which uses the fact that $X^{•} ⊣ X^{∘}$ determines an adjunction. This assignment extends to a functor that sends a morphism $f ፡ X → Y$ to a morphism of dualities $(f^{•}, f^{∘}) ፡ (X^{•} ⊣ X^∘{}, ε, η) → (Y^{•} ⊣ Y^{∘}, ε, η)$, where $f^{∘} ፡ Y^{∘} → X^{∘}$ is the dual of $f^{•} ፡ X^{•} → Y^{•}$. This constructs the candidate universal arrow.

  Let us show that it is indeed universal. Consider a functor $H ፡ 𝕄 → \Duals(ℕ)$. We will show that there is a unique $H^\ast ፡ \Polar(𝕄) → ℕ$ such that $H = η_{𝕄} ⨾ \Duals(H^\ast)$. Let us pick an object $X ∈ 𝕄_{obj}$ that is sent to $H(X) = (A ⊣ A', ε_A, η_A)$.  To make this equation hold, it is necessary that $H^\ast(X^\circ) = A$ and $H^\ast(X^\bullet) = A'$, but also $H^\ast(ε_X) = ε_A$ and $H^\ast(η_X) = η_A$. 
  Let us pick a morphism $f ፡ A → B$ that is sent to $H(f) = (g_L,g_R) ፡ (A ⊣ A', ε_A, η_A) → (B ⊣ B', ε_B, η_B)$; this forces $H^\ast(f^{•}) = g_L$ and $H^{\ast}(f^{∘}) = g_R$.
  Because $(A ⊣ A', ε_A, η_A)$ forms a duality and the maps $g_L$ and $g_R$ are duals, this assignment satisfies all necessary equations. 
  This determines the value of $H^\ast$ on all of the generating maps while satisfying all of the equations of $\Polar(𝕄)$.
\end{proof}

\subsection{Monoidal Polarization is Not Enough}
An important insight of some works into the categorical semantics of message passing is the importance of polarization. Given this, one could expect that the polarization of a \processTheory{} is its corresponding \messageTheory{}.
Indeed, this chapter will claim that the polarization of a \symmetricMonoidalCategory{} exhibits the necessary structure to discuss message passing; however, at the same time, it will claim that the algebraic structure that allows it to do so is not that of a polarized monoidal category: it is that of a \messageTheory{}.

\begin{remark}
  Some simpler interleavings of events can be expressed using dualities in a \monoidalCategory{}. For instance, let us consider the polarization of a \symmetricMonoidalCategory{}: wires with the same sign can be swapped, but wires with different sign cannot. We can interleave two maps $f ፡ I → X^{∘} ⊲ Y^{∘} ⊲ Z^{•}$ and $g ፡ I → U^{∘} ⊲ V^{•}$ to produce a map $h ፡ I → X^{∘} ⊲ U^{∘} ⊲ V^{•} ⊲ Y^{∘} ⊲ Z^{•}$ as in \Cref{fig:shuffling-that-can} -- note how we use the adjunction to contort the wires and allow wires to pass over wires with different polarization.
  \begin{figure}[!ht]
    \centering
    \includegraphics[scale=0.32]{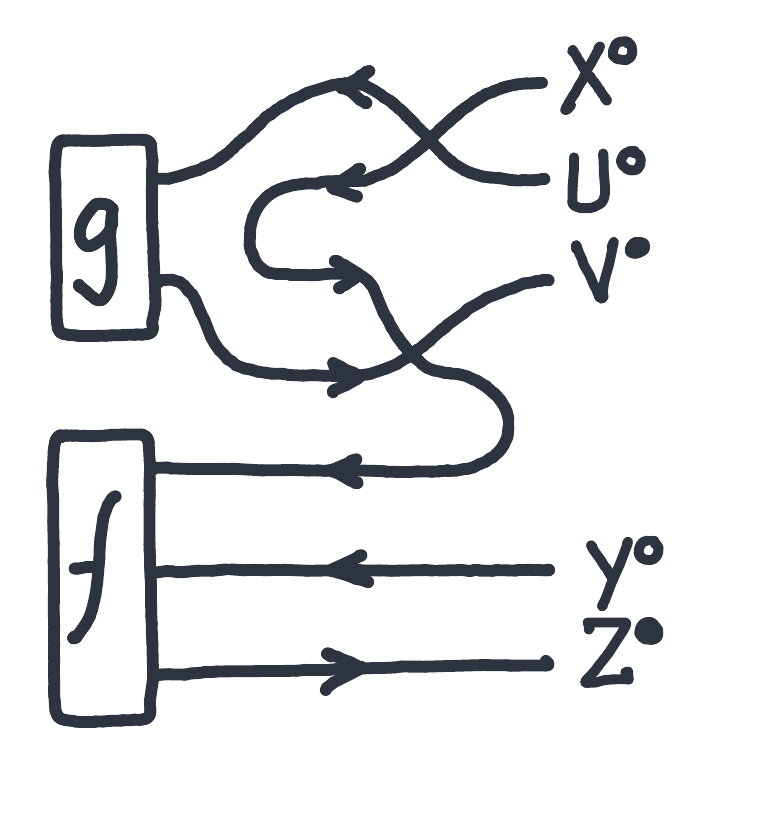}
    \caption{A shuffling, using polarization.}
    \label{fig:shuffling-that-can}
  \end{figure}

  In general, thanks to the dualities, a \emph{receiving} resource can be used later, $X^{∘} ⊲ A → A ⊲ X^{∘}$, and a \emph{sending} resource can be sent sooner, $A ⊲ X^{•} → X^{•} ⊲ A$, but not the other way around. These two laws govern which wires can pass over other wires. A natural question, then, is to ask if all possible interleavings can be expressed in the same way: polarizing objects, and using dualities.
\end{remark}

\begin{proposition}
  \label{prop:shuffling-limit}
  It is not the case that every shuffling of events can be expressed in a category with polarized objects and dualities.
\end{proposition}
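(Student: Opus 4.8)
The plan is to exhibit a concrete shuffling that no amount of contorting wires through dualities can realize. The statement following \Cref{prop:shuffling-limit} is \Cref{prop:shuffling-limit} itself: it claims that some shuffling of events cannot be expressed in a monoidal category with polarized objects and dualities. I would prove this by a counting/obstruction argument, using the fact that the interleavings realizable via polarization are exactly those governed by the two ``wire-passing'' laws noted in the preceding remark: a receiving resource $X^{\circ}$ can move later ($X^{\circ} \otimes A \to A \otimes X^{\circ}$) and a sending resource $X^{\bullet}$ can move earlier ($A \otimes X^{\bullet} \to X^{\bullet} \otimes A$), but never the reverse.

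First I would pin down which shuffles polarization \emph{can} express. Given polarized words, the only way to pass a wire over another using the adjunction units and counits is when the moving wire is receiving-polarized and moves rightward, or sending-polarized and moves leftward (this is exactly the content of the picture in \Cref{fig:shuffling-that-can}). So the realizable permutations are precisely those that can be factored into such elementary moves. Then I would look for a target shuffle that violates this: a shuffle $\wShuf$ that forces a $\bullet$-wire to move later relative to a $\circ$-wire, or symmetrically a $\circ$-wire to move earlier relative to a $\bullet$-wire, in a way that cannot be circumvented by a different factorization. The cleanest candidate is something like shuffling $f \colon I \to X^{\bullet} \otimes Y^{\circ}$ with $g \colon I \to Z^{\circ} \otimes W^{\bullet}$ into a word where $Z^{\circ}$ ends up before $X^{\bullet}$: to get $Z^{\circ}$ to the left of $X^{\bullet}$ one would need $X^{\bullet}$ to move later past $Z^{\circ}$, which is forbidden, and one cannot instead move $Z^{\circ}$ around the other side because the relative order within $g$ and the polarity of $W^{\bullet}$ blocks it. Contrast this with the ``echoing''/\SPW{} operation of a \messageTheory{}, which can realize it — this is the conceptual point the proposition is making.

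The key steps, in order: (1) characterize the monoidal-polarization-realizable shuffles as those built from the two elementary moves, appealing to the coherence of the free polarized monoidal category (\Cref{prop:polarization}) so that any realizing morphism in the free setting must be one of these formal composites; (2) define the obstructing shuffle explicitly as a permutation of a distinctly-typed polarized word, so that coherence applies and there is at most one candidate morphism of that type; (3) show by induction on the length of a putative factorization into elementary moves that the obstructing permutation is not reachable — each elementary move preserves a monovariant quantity (e.g.\ the number of pairs $(i,j)$ with $i$ a $\circ$-position, $j$ a $\bullet$-position, $i$ before $j$, that are ``out of source order''), and the target shuffle has the wrong value of this quantity; (4) conclude that no morphism of the required type exists in the free polarized monoidal category, hence no natural/uniform realization exists in general.

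The main obstacle will be step (1): making precise that ``expressible in a category with polarized objects and dualities'' means expressible in the free polarized monoidal category (via \Cref{prop:polarization}), and that consequently it suffices to analyze formal composites of units, counits, symmetries and functorial images — i.e.\ ruling out that some clever use of non-generating morphisms or of the symmetry on same-polarity wires could sneak the forbidden permutation through. Once the realizable class is correctly delimited, the obstruction itself (step 3) is a routine monovariant argument. I would expect the paper's actual proof to simply display the offending shuffle as a picture and argue informally that ``the wires cannot be contorted this way,'' with the monovariant making the informal argument rigorous.
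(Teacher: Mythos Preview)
There is a genuine gap: your proposed counterexample is actually realizable. Starting from $g \otimes f = Z^{\circ} W^{\bullet} X^{\bullet} Y^{\circ}$, swap the same-polarity pair $W^{\bullet}, X^{\bullet}$ to reach $Z^{\circ} X^{\bullet} W^{\bullet} Y^{\circ}$, then realise $W^{\bullet} Y^{\circ} \to Y^{\circ} W^{\bullet}$ via the cap--cup composite
\[
W^{\bullet}\, Y^{\circ}
\xrightarrow{\ \id \otimes \eta_W\ }
W^{\bullet}\, Y^{\circ}\, W^{\circ}\, W^{\bullet}
\xrightarrow{\ \id \otimes \sigma \otimes \id\ }
W^{\bullet}\, W^{\circ}\, Y^{\circ}\, W^{\bullet}
\xrightarrow{\ \varepsilon_W \otimes \id\ }
Y^{\circ}\, W^{\bullet},
\]
arriving at $Z^{\circ} X^{\bullet} Y^{\circ} W^{\bullet}$. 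Every shuffle of your $f,g$ that places $Z^{\circ}$ before $X^{\bullet}$ is reachable from one of the two placements. The failure is in your step~(1): the two ``wire-passing laws'' of the preceding remark are not the complete list of derivable structure maps, because an $\eta$, a same-polarity symmetry, and an $\varepsilon$ combine as above to swap any adjacent $A^{\bullet} B^{\circ}$ to $B^{\circ} A^{\bullet}$. The only genuinely forbidden adjacent swap is $A^{\circ} B^{\bullet} \to B^{\bullet} A^{\circ}$, so your monovariant is pointed the wrong way: what every structure map preserves is the predicate ``$A^{\circ}$ precedes $B^{\bullet}$'' (one checks this by projecting onto the two-label subcategory and a height-function argument). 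With the corrected invariant a working $2{+}2$ obstruction does exist --- the target $X^{\bullet} Z^{\circ} W^{\bullet} Y^{\circ}$, blocked from $f\otimes g$ by $(Y^{\circ},W^{\bullet})$ and from $g\otimes f$ by $(Z^{\circ},X^{\bullet})$ --- but it is not the one you wrote down.

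The paper's proof does not set up a monovariant. It exhibits a larger example --- $f$ with four outputs of pattern $\bullet\circ\bullet\circ$, $g$ with six of pattern $\bullet\circ\circ\bullet\bullet\circ$ --- and disposes of the two placements $f\otimes g$ and $g\otimes f$ by direct case analysis, tracing which wires can be routed where and exhibiting a stuck wire in each case, with the figure carrying much of the argument. Your monovariant programme, once the direction is fixed, is arguably cleaner and handles the paper's example too (from $f\otimes g$ the pair $(W^{\circ},A^{\bullet})$ obstructs; from $g\otimes f$ the pair $(C^{\circ},X^{\bullet})$ obstructs); but as written it proves nothing.
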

\begin{proof}
  Consider two cells as in \Cref{fig:polar-not-enough}, with types 
  \begin{align*}
    f ፡ I → X^{•} ⊲ Y^{∘} ⊲ Z^{•} ⊲ W^{∘}, \mbox{ and }
    g ፡ I → A^{•} ⊲ B^{∘} ⊲ C^{∘} ⊲ D^{•} ⊲ E^{•} ⊲ F^{∘}
  \end{align*} 
  and imagine we want to interleave them into $A^{•} ⊲ B^{∘} ⊲ X^{•} ⊲ Y{∘} ⊲ C^{∘} ⊲ D^{•} ⊲ Z^{•} ⊲ W^{∘} ⊲ E^{•} ⊲ F^{∘}$ using only the dualities. There are two possible cases: \emph{(i)} we place $f$ before $g$ and we try to position the wires of $f$ correctly; or \emph{(ii)} we place $g$ before $f$ and we try to do the same.
  
  In the first case, we will find that we can pass the wire $X^{∘}$ over $A^{•} ⊲ B^{∘}$ -- but the same cannot be done with the wire $Y^{•}$. Therefore, we are forced to pass the wire $Y^{•}$ over $C^{∘} ⊲ D^{•} ⊲ E^{•} ⊲ F^{∘}$, using the dualities; but then there will be no way of moving the wire $Z^{•}$ to its place.
  \begin{figure}[!ht]
    \centering
    \includegraphics[scale=0.32]{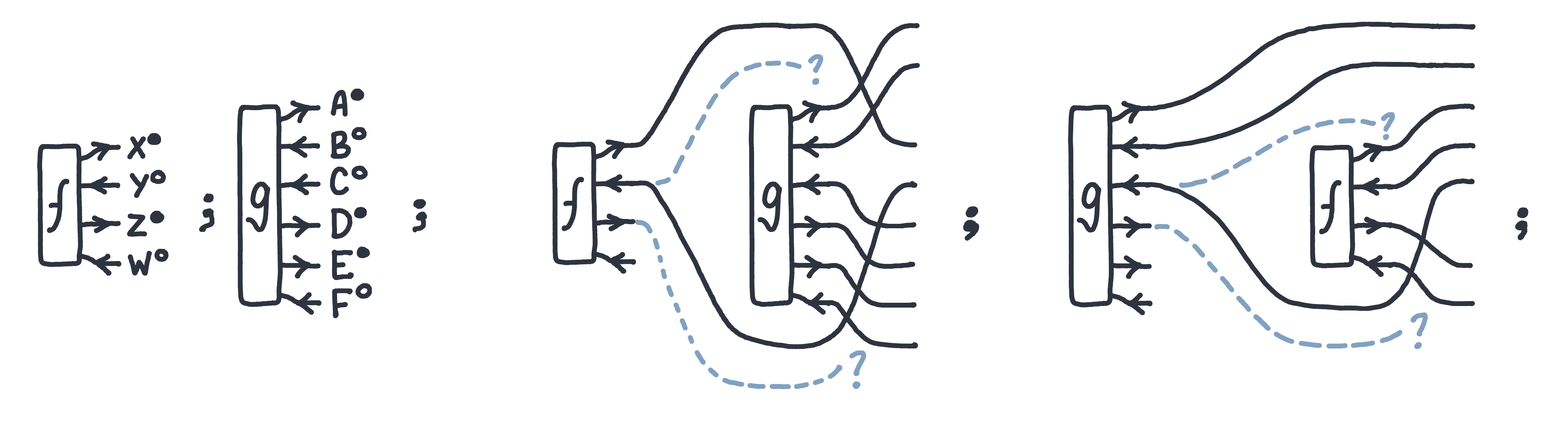}
    \caption{A shuffling that cannot be expressed.}
    \label{fig:polar-not-enough}
  \end{figure}

  In the second case, the first two wires, $A^{•} ⊲ B^{∘}$, are automatically correctly placed. The third, $C^{∘}$, must pass over $Z^{•} ⊲ W^{∘}$ because it could not do the same with $X^{•} ⊲ Y^{∘}$; so $D^{•}$ would be forced to do the same, but it cannot.
\end{proof}

Then, if this algebra is not what we are using about the polarization of a \monoidalCategory{}, what are we using exactly? It happens that the polarization of a \monoidalCategory{} posesses extra structure: its states (the morphisms without input) can always be split into morphisms that take negative objects on the left and produce positive objects on the right. This property is what makes it possible to express any interleaving of events.

With this in mind, the polarization of a \monoidalCategory{} seems interesting not only because it produces a category with duals (or, in the work of Nester \cite{nester21}, a proarrow equipment, or a \emph{cornering}), but because it constructs a full \messageTheory{}. We do not only care about the free polarization, we care about its \messageTheory{}.

\subsection{Polarization of a Physical Monoidal Multicategory}

Every \physicalMonoidalMulticategory{} has a notion of dual, inherited from that of its underlying monoidal category. The next section will construct the free polarized \physicalMonoidalMulticategory{} over a set of objects. Let us define here the right adjoint: the functor that picks the set of left adjoints of a \physicalMonoidalMulticategory{}.

\begin{definition}
  \label{def:physicalMonoidalMulticategoryDuals}
  Let $(𝕄,⊲,I)$ be a \physicalMonoidalMulticategory{}.  We define the set of left duals, $\Duals(𝕄)$, to have objects the dualities $(L ⊣ R, ε, η)$ where $ε ∈ 𝕄(L ⊲ R; I)$ and $η ∈ 𝕄(I ; R ⊲ L)$ are 1-to-1 multimorphisms satisfying the adjoint equations, $(ε ⊲ \id_L) ⨾ (\id_R ⊲ η) = \id_L$ and $(\id_R ⊲ ε) ⨾ (η ⊲ \id_R) = \id_R$.
\end{definition}

\subsection{Bibliography}
A categorical treatment of polarization appears in the work of Cockett and Seely \cite{cockett07:polarized}, which points to the connection with Abramsky-Jagadesaan games \cite{abramsky94:games}. ``Polarized category'' takes a different meaning there: it is a pair of categories endowed with a \profunctor{} between them. However, we do follow the same core idea: using walking adjunctions for sending and receiving.

Nester notices the importance of polarization for message passing \cite{nester21} via a single-object proarrow equipment; and all the credit for this idea should go there. This was later extended by Nester and Voorneveld \cite{voorneveld23} to include iteration and choice.
The reader will find a discussion of its relationship with lenses in a joint paper of this author with Nester and Boisseau \cite{boisseaunester:corneringoptics}.
The graphical calculus of proarrow equipments was described by Myers \cite{myers16}; we can reuse this calculus to explain polarization in \monoidalCategories{}.
Here, we prefer to avoid discussing polarization through proarrow equipments, noticing that this adjunction already works at the level of \monoidalCategories{}.

\newpage
\section{Polar Shuffles}
\label{sec:polar-shuffles}

\subsection{Polar Shuffles}
Polarization in a \physicalMonoidalMulticategory{} leads us to consider \polarShuffles{}: shufflings endowed with a polarization for each one of its elements. This section will show that \polarShuffles{} are the free polarized \monoidalMulticategory{}.

The objects of the category of shuffles could be seen as the finite ordinals: finite linear posets with inclusions on each other, preserving the ordering. Similarly, the objects of the category of \polarShuffles{} are polarized finite ordinals.

\begin{definition}
  \label{def:polarlist}
  A \emph{polar list} over a set of types $T$ is a list of types $X ∈ T^\ast$ endowed with a function $p ፡ X → \{ ∘, • \}$. 
  In any polar list, we consider the \emph{negative elements}, $X^{∘} = p^{-1}(∘)$, and the \emph{positive elements}, $X^{•} = p^{-1}(•)$. %
\end{definition}

\begin{definition}
  \defining{linkPolarShuffle}{}
  \label{def:polarshuffle}
  A \emph{polar shuffle} over a set of types $T$, from a multiset of polar lists $X₀,…,Xₙ$ to a single polar list $Y$, is a bijection
  \[f ፡ X₁^{•} + … + Xₙ^{•} + Y^{∘} → Y^{•} + X₁^{∘} + … + Xₙ^{∘}\]
  preserving the types and such that the directed graph containing all polar lists (as linear posets) and an edge $x → f(x)$ for each element in $X₁^{•} + … + Xₙ^{•} + Y^{∘}$, is acyclic.
\end{definition}

For instance, an untyped \polarShuffle{} (or typed over the singleton set) of shape $\pShuf({∘}•{∘}•, {∘}{∘}••, {∘}; {∘}{∘}••{∘})$, is given by the following acyclic graph in \Cref{fig:polar-shuffle-example}. In black, we depict the edges that come from the graph of a function. In blue, the edges that come from the linear finite posets.
\begin{figure}[!ht]
  \centering
  \scalebox{1}[1]{\includegraphics[scale=0.45]{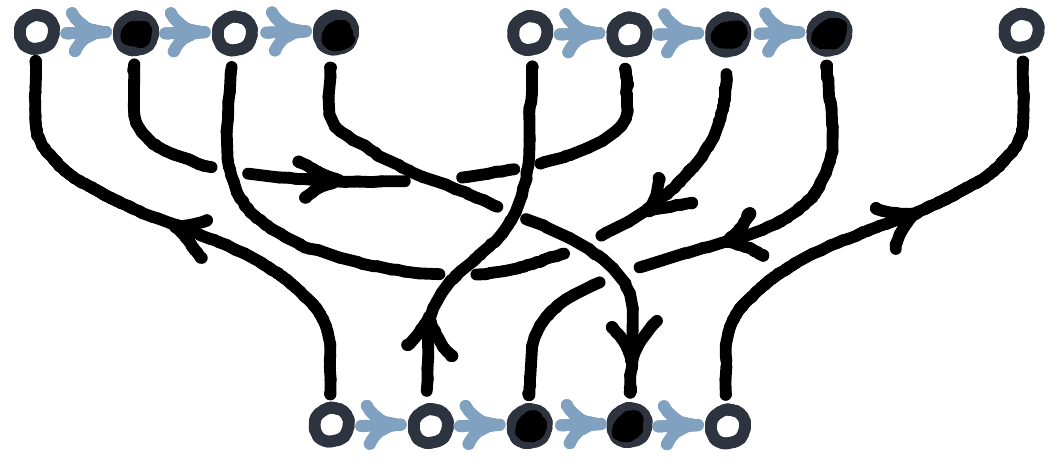}}
  \caption{A polar shuffle.}
  \label{fig:polar-shuffle-example}
\end{figure}

\subsection{Encoding of polar shuffles}

\PolarShuffles{} are ultimately graphs, and they can be encoded as such. We propose a notation suggestive of multiparty session calculi.

The \emph{session encoding} of \polarShuffles{} assigns a variable name to each polar list (say, $f, g, h, …$) and to each edge of the graph outside a polar list (say, $a,b,c,x,y,z,…$). The encoding of a \polarShuffle{} starts by declaring the list of edges incident to the output polar list, together with their polarization. Then, enclosed in braces, we write the polar lists and the edges that incide on them.

For instance, the encoding of the \polarShuffle{} in \Cref{fig:polar-shuffle-example} is in \Cref{fig:polar-shuffle-example-encoding}.
\begin{figure}[!ht]
  \centering
  \begin{minipage}{0.5\textwidth}
    \centering
    \includegraphics[scale=0.45]{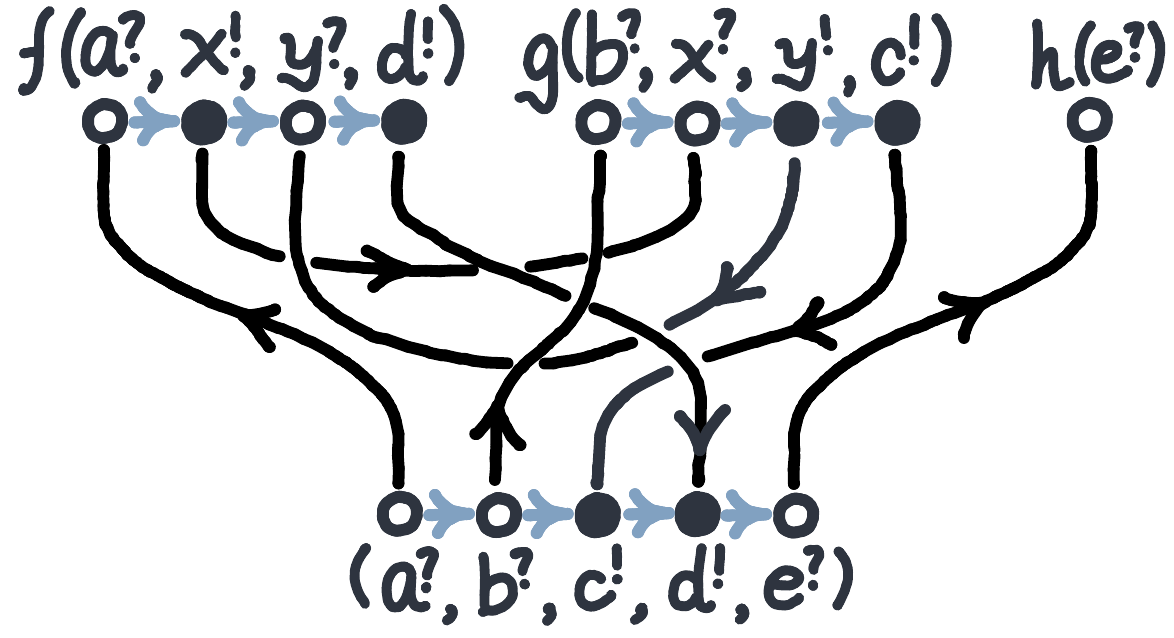}
  \end{minipage}
  \begin{minipage}{0.49\textwidth}
    \begin{verbatim}

      (a?,b?,c!,d!,e?) { 
        f(a?,x!,y?,d!),
        g(b,x?,y!,c!),
        h(e?) }
    \end{verbatim}
  \end{minipage}
  \caption{Encoding of a polar shuffle.}
  \label{fig:polar-shuffle-example-encoding}
\end{figure}

\begin{remark}
Parsing this notation requires checking whether the graph is acyclic. Checking if a graph is acyclic can be done in linear time on the sum of vertices and edges, $𝓞(v+e)$. The number of vertices in a \polarShuffle{} is the sum of the lengths of all of the polar lists involved, $e = \#X₁ + … + \#Xₙ + \#Y,$ and each vertex receives at most three edges, giving a bound $e \leq 2v$. This means that checking if a \polarShuffle{} is valid is linear in the length of its polar lists, $𝓞(\#X₁ + … + \#Xₙ + \#Y)$.
\end{remark}

The second implication of this encoding is that, if we label the vertices of a \polarShuffle{} with types, there exists at most one \polarShuffle{} with any distinctly typed sources and targets.

\begin{proposition}
  \label{prop:polar-shuffles-are-coherent}
  \PolarShuffles{} are coherent. There exists at most a single \polarShuffle{} between some distinctly typed polar lists: we say that a polar list is distinctly typed if each variable (each type) appears in the premises and the conclusion exactly twice, each time with a different variance.
\end{proposition}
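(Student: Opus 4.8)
The plan is to use the \textbf{session encoding} of polar shuffles (\Cref{fig:polar-shuffle-example-encoding}) as a canonical data structure, and show that under the distinctness hypothesis this encoding is entirely forced by the source lists $X_0,\dots,X_n$ and the target list $Y$. A \polarShuffle{} is nothing but the data of a bijection
$$f ፡ X_1^{•} + \dots + X_n^{•} + Y^{∘} \longrightarrow Y^{•} + X_1^{∘} + \dots + X_n^{∘}$$
subject to the acyclicity constraint, so ``distinctly typed'' means precisely that each type $A$ occurring anywhere occurs exactly once positively and exactly once negatively, counting all lists together. I would first observe that this forces the function $f$ to be \emph{unique as a function}: each element $x$ on the left with type $A$ and polarity $\epsilon$ must be sent to the unique element on the right with the same type $A$; the type-preservation requirement pins down $f(x)$ with no freedom. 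The only thing that could go wrong is that this forced $f$ fails to be a bijection, or fails to be acyclic.

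Bijectivity is immediate from a counting/matching argument: distinctness says every type that appears on the left with polarity $•$ (namely in some $X_i^{•}$, or in $Y^{•}$ which appears on the right) is matched by exactly one element of the same type and opposite polarity on the correct side; tallying types shows the forced assignment is a well-defined bijection between the two sets of equal cardinality. So the content of the proposition is really just: \emph{if} a polar shuffle with these source and target lists exists at all, it is this forced one, hence there is \emph{at most one}. I would phrase the proof to make exactly that logical shape explicit — we are not claiming existence, only that any two polar shuffles with the same distinctly-typed domains and codomain have the same underlying bijection $f$, and since a polar shuffle \emph{is} such a bijection (with a property attached), they coincide.

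The key steps, in order: (1) recall that a polar shuffle between fixed polar lists is determined by its underlying bijection $f$, the acyclicity condition being a \emph{property} of $f$ rather than extra data; (2) unfold ``distinctly typed'' to the statement that the multiset of (type, polarity) pairs occurring on the domain side of $f$ has each type appearing once with each polarity, and likewise for the codomain; (3) conclude that type-preservation forces $f(x)$ to be the unique codomain element sharing the type of $x$, so $f$ is unique; (4) note in passing that this is exactly the coherence phenomenon already established for shuffling words in the remark after \Cref{prop:shuffling-free-physical-monoidal-multicategory} and for expressible poset shapes in \Cref{th:posetsAreFreePhysical}, and that polar shuffles inherit it because the polarization only refines, never relaxes, the matching.

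The main obstacle — really the only subtlety — is being careful about what is being quantified. One must not accidentally claim that a distinctly-typed polar shuffle always exists (it need not: the forced $f$ might create a cycle, as \Cref{prop:shuffling-limit} essentially illustrates for the polarized case). So the delicate point in writing is to keep the statement as ``at most one,'' and to resist the temptation to also argue acyclicity. A secondary bookkeeping point is handling the polarities of $Y$ correctly: $Y^{∘}$ sits on the domain of $f$ while $Y^{•}$ sits on the codomain, so when I say ``appears twice with different variance'' across premises-and-conclusion, I must track that the conclusion's negative part contributes to the domain and its positive part to the codomain — once this convention is fixed, the type-matching argument goes through mechanically.
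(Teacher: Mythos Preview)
Your proposal is correct and follows essentially the same approach as the paper: both argue that a polar shuffle is a type-preserving bijection satisfying a property (acyclicity), that the distinct-typing hypothesis forces the bijection uniquely, and that existence is not claimed but left to whether the forced bijection happens to be acyclic. The paper's proof is two sentences long and your version is a careful elaboration of the same idea, including the explicit caution about not claiming existence.
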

\begin{proof}
  In this combinatorial structure, coherence works almost by definition.
  Note that a \polarShuffle{} is ultimately a bijection satisfying certain extra properties; but the distinct typing already forces where each element must be sent: to the only element with different variance but same type. Whether the \polarShuffle{} exists at all depends on whether it satisfies the acyclicity property.
\end{proof}

\subsection{The Multicategory of Polar Shuffles}

\PolarShuffles{} form a \multicategory{}, as their shape already suggests. The composition and the rest of the structure follow that of the category of shufflings. For instance, the composition of \polarShuffles{} is defined to be the substitution of the resulting polar list of a shuffle into the input polar list of another shuffle. See \Cref{fig:polar-shuffle-composition} for an example.

\begin{figure}[!ht]
  \centering
  \includegraphics[scale=0.40]{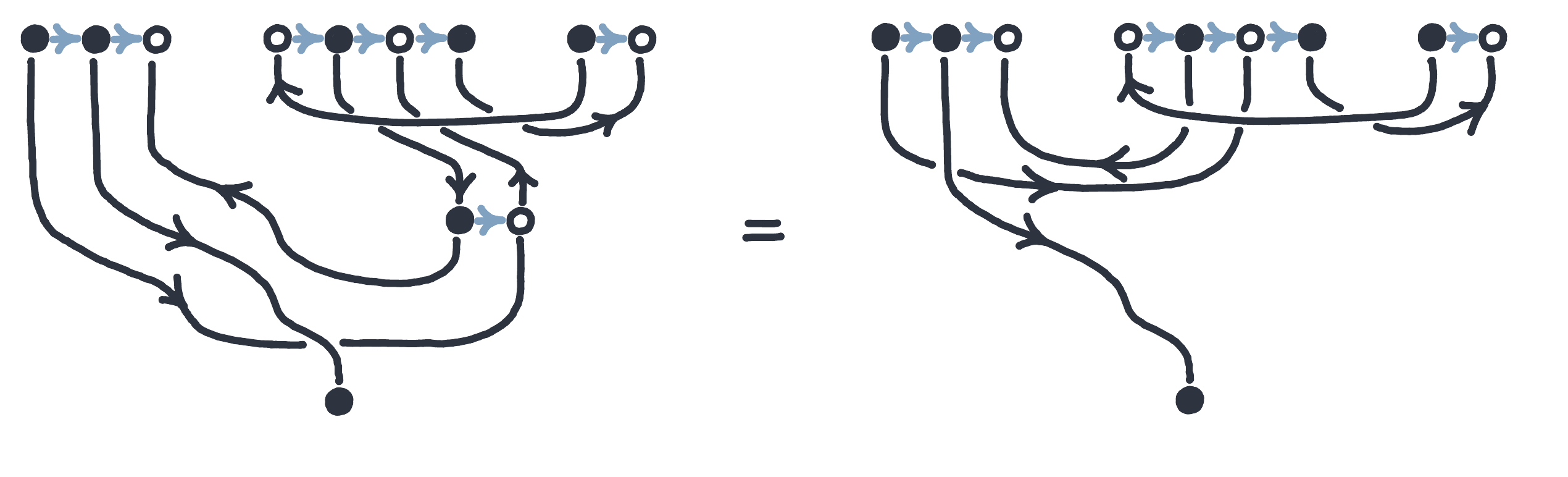}
  \caption{Composition of two polar shuffles.}
  \label{fig:polar-shuffle-composition}
\end{figure}

\begin{proposition}
  \label{prop:polarcomposition}
  The composition of two \polarShuffles{}, $s ∈ \pShuf(X₁,…,Xₙ;Y)$ and $t ∈ \pShuf(Y_1,…,Y,…,Yₘ;Z)$, along a polar list $Y$, is a \polarShuffle{} obtained by substituting the entire graph of the former into the polar list of the latter,
  $$s ⨾_Y t ∈ \pShuf(Y_1,…,X₁,…,Xₙ,…,Yₘ;Z).$$
  Substituting a \polarShuffle{} into the inputs of a \polarShuffle{} forms again a \polarShuffle{}. That is, composition is well-defined and preserves acyclicity.
\end{proposition}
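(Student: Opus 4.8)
The plan is to verify directly from the definition of \polarShuffle{} (\Cref{def:polarshuffle}) that the substitution operation produces a valid \polarShuffle{}, the only substantive content being the preservation of acyclicity. First I would set up the combinatorial data: given $s ∈ \pShuf(X₁,…,Xₙ;Y)$, which is a bijection $s ፡ X₁^{•} + … + Xₙ^{•} + Y^{∘} → Y^{•} + X₁^{∘} + … + Xₙ^{∘}$, and $t ∈ \pShuf(Y_1,…,Y,…,Yₘ;Z)$, which is a bijection $t ፡ Y_1^{•} + … + Y^{•} + … + Yₘ^{•} + Z^{∘} → Z^{•} + Y_1^{∘} + … + Y^{∘} + … + Yₘ^{∘}$, I would describe $s ⨾_Y t$ as a function on the disjoint union $Y_1^{•} + … + X₁^{•} + … + Xₙ^{•} + … + Yₘ^{•} + Z^{∘}$ that agrees with $t$ away from the $Y$-block, and on the former $Y$-wires routes through $s$: a positive $Y$-element that $t$ would have received becomes, via $s$, something incident to one of the $X_i$'s; dually a negative $Y$-element produced by $s$ gets fed into $t$. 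Concretely, one composes the two bijections along the shared $Y^{∘}/Y^{•}$ boundary, exactly as the corresponding picture in \Cref{fig:polar-shuffle-composition} indicates. I would check type-preservation (immediate, since both $s$ and $t$ preserve types) and bijectivity (a composite of bijections matched along a common summand).

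The heart of the argument is acyclicity. I would form the directed graph $G$ associated to $s ⨾_Y t$: its vertices are the elements of all the polar lists $Y_1,…,X₁,…,Xₙ,…,Yₘ,Z$ (with the internal $Y$-list \emph{deleted}, since it no longer appears in the result), and its edges are of two kinds --- the ``poset'' edges internal to each remaining polar list, and the ``function'' edges $x → (s⨾_Y t)(x)$. The key observation is that $G$ embeds into a larger graph $\widehat{G}$ obtained by \emph{retaining} the internal $Y$-list: $\widehat{G}$ contains a copy of the graph $G_s$ of $s$ and a copy of the graph $G_t$ of $t$, glued along the vertices of the $Y$-list. Both $G_s$ and $G_t$ are acyclic by hypothesis. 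I would then argue that any cycle in $\widehat{G}$ would have to either stay entirely within the $G_t$-part (impossible, $G_t$ acyclic), stay entirely within the $G_s$-part (impossible, $G_s$ acyclic), or cross the $Y$-boundary; but crossing the boundary is controlled because the $Y$-list, sitting as a linear poset, can only be traversed in the increasing direction, and every function-edge of $s$ entering the $Y^{•}$-side from an $X_i$, respectively every function-edge of $t$ leaving the $Y^{∘}$-side toward a $Y_j$ or $Z$, is compatible with that orientation --- so a putative cycle would induce a cycle in $G_s$ or in $G_t$ after contracting the $Y$-segment. Finally, contracting the $Y$-list in $\widehat{G}$ (which only shortens paths) yields $G$, so $G$ is acyclic as well.

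The main obstacle I anticipate is making the boundary-crossing bookkeeping precise: one must be careful that the ``poset'' edges of the deleted $Y$-list, when contracted, do not secretly create a new directed cycle --- i.e.\ that the acyclicity of $G_s$ and $G_t$ \emph{separately}, plus the fact that they share only the linearly-ordered $Y$-vertices with consistent orientation, really does imply acyclicity of the glued-and-contracted graph. The clean way to handle this is to exhibit an explicit topological order: take a topological order of $G_t$ (which exists by its acyclicity), and insert, at the position formerly occupied by each $Y$-vertex $y$, a topological order of the ``fiber'' of $G_s$ over $y$; one then checks that this linear order is compatible with every edge of $G$. This reduces the whole matter to an ordering argument rather than a cycle-chasing argument, which I expect to be routine once the fibers are identified. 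Associativity and unitality of this composition, needed for \polarShuffles{} to form a \multicategory{}, then follow by the same reasoning as for ordinary shufflings and are not part of the present statement, so I would only remark on them.
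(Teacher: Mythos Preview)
Your cycle-chasing argument is essentially the paper's: the paper also argues by contradiction that a putative cycle in the glued graph must cross the $Y$-border, and then uses the linearity of $Y$ to extract a directed cycle entirely on one side. The paper phrases this last step slightly differently --- the border splits the cycle into two arcs, and whichever arc runs against the orientation of $Y$ closes up with the border segment into a directed cycle in that component --- but the content is the same as your ``contract the $Y$-segment'' version. Your setup is more explicit than the paper's (distinguishing $G$ from $\widehat{G}$, spelling out the bijection), which is fine.

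Your proposed alternative via an explicit topological order is a genuinely different route that the paper does not take. It trades the cycle-chasing for a direct construction, and would indeed make the boundary bookkeeping cleaner; the only care needed is to define the ``fiber over $y$'' precisely (e.g.\ the $X_i$-vertices lying strictly between consecutive $Y$-vertices in a topological order of $G_s$). Either route suffices; the paper's is shorter, yours is more constructive.
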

\begin{proof}
  Composition happens across the resulting polar list of a \polarShuffle{}, which coincides with one of the input polar lists of another \polarShuffle{}; we say that this polar list is the \emph{border}. The proof argues that two acyclic graphs glued along a linear graph are again acyclic.

  Let us prove this by contradiction. Imagine there was a cycle in the multicategorical composition of \polarShuffles{}. It must contain edges on both components of the composition, simply because each component is itself acyclic. This means that the cycle should cross the border between both components of the \polarShuffle{} at least twice and always an even number of times -- it must take two different directions.

  \begin{figure}[!ht]
    \centering
    \includegraphics[scale=0.40]{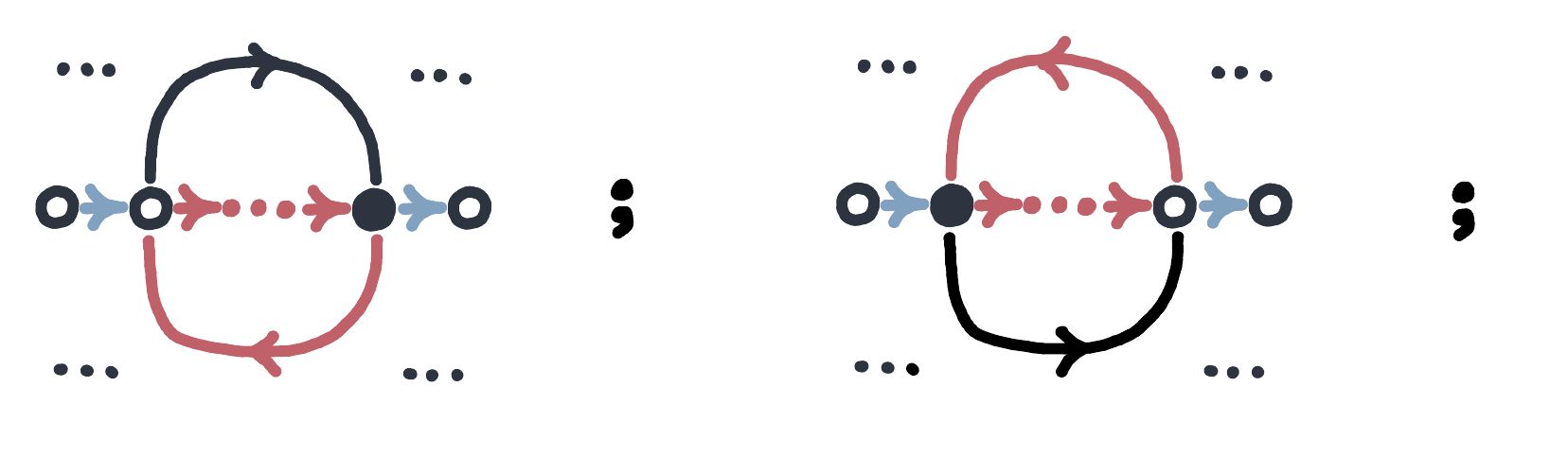}
    \caption{Composition along the borders of two polar shuffles.}
    \label{fig:polar-shuffle-border}
  \end{figure}

  Because the border is a linear poset, it must split the cycle (at least) in two parts, creating two undirected cycles to the two sides of the border. At least one of these two is forced to be directed, thus contradicting acyclicity on that side of the composition.  
\end{proof}

\begin{proposition}
  \label{prop:polartensoring}
  The tensoring of two polar shuffles of the same arity, 
  $$s ∈ \pShuf(X₀,…,Xₙ;Y)\mbox{ and }t ∈ \pShuf(X'₀,…,X'ₙ;Y'),$$
  is a polar shuffle on the pointwise concatenation of the polar lists 
  $$(s ⊲ t) ፡ \pShuf(X₀X'₀,…,XₙX'ₙ;YY'),$$
  defined by the disjoint union of the two acyclic graphs determining the polar shuffles.
  The tensoring of two polar shuffles is well-defined and it is again a polar shuffle.
  See \Cref{fig:polar-shuffle-parallel} for an example.
\end{proposition}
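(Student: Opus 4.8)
The plan is to unwind \Cref{def:polarshuffle} and check its two requirements for the disjoint union $f + g$ underlying $s ⊲ t$: that it is a type-preserving bijection of the prescribed shape, and that its associated directed graph is acyclic. The first requirement is bookkeeping and I would dispatch it quickly; the second is where the one genuine subtlety lies.

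First I would record the elementary fact that polarization commutes with concatenation of polar lists: $(X_i X_i')^{\bullet} = X_i^{\bullet} + X_i'^{\bullet}$ and $(X_i X_i')^{\circ} = X_i^{\circ} + X_i'^{\circ}$, and likewise $(YY')^{\bullet} = Y^{\bullet} + Y'^{\bullet}$ and $(YY')^{\circ} = Y^{\circ} + Y'^{\circ}$. After reindexing the coproducts, the domain $\sum_i (X_iX_i')^{\bullet} + (YY')^{\circ}$ required by \Cref{def:polarshuffle} becomes, up to a permutation of summands, the disjoint union of the domain of $f$ with the domain of $g$, and the codomain decomposes the same way. Hence $f + g$ is a bijection of the type demanded, and since $f$ and $g$ individually preserve types so does $f+g$.

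For acyclicity I would analyse the directed graph $G$ attached to $s ⊲ t$. Its vertex set is the disjoint union of the vertex set $A$ of the graph $G_s$ of $s$ and the vertex set $B$ of the graph $G_t$ of $t$. Its function edges are exactly those of $G_s$ together with those of $G_t$, since $f+g$ restricts to $f$ on $A$ and to $g$ on $B$; they never cross between $A$ and $B$. The poset edges internal to each $X_i$, each $X_i'$, to $Y$ and to $Y'$ likewise stay inside $A$ or inside $B$. The only remaining edges are the \emph{bridge} edges of the concatenated linear posets $X_iX_i'$ and $YY'$ --- the additional covers $\max X_i \to \min X_i'$ and $\max Y \to \min Y'$ --- and because concatenation places the $s$-block strictly below the $t$-block, every bridge edge points from a vertex of $A$ to a vertex of $B$. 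Thus no edge of $G$ leaves $B$: the set $B$ is closed under successors, so any directed cycle of $G$ is confined entirely to $A$ or entirely to $B$. But the subgraph induced on $A$ is precisely $G_s$ and the subgraph induced on $B$ is precisely $G_t$, both acyclic by hypothesis. Hence $G$ has no cycle, and $s ⊲ t$ is a \polarShuffle{}. Well-definedness --- in particular functoriality in the pair $(s,t)$ --- is then immediate, since disjoint union of graphs respects identities and componentwise composition of the bijections.

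The only real obstacle is the bridge edges: one must notice that the graph of $s ⊲ t$ is not literally the disjoint union of $G_s$ and $G_t$, but rather that disjoint union together with the bridge edges coming from concatenating the polar lists. The observation that rescues the argument is the same one that powers the composition proof of \Cref{prop:polarcomposition}: all of the crossing edges are oriented the same way, from the $s$-side to the $t$-side, so none of them can lie on a cycle.
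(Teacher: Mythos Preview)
Your proof is correct and follows essentially the same approach as the paper: both identify that the graph of $s \triangleleft t$ is the disjoint union of the two acyclic graphs together with the new poset edges coming from concatenation, and both observe that these extra edges all point from the $s$-side to the $t$-side, so no cycle can form. Your version is simply more explicit about the bookkeeping and the closure-under-successors argument, while the paper dispatches it in two sentences.
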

\begin{figure}[!ht]
  \centering
  \includegraphics[scale=0.40]{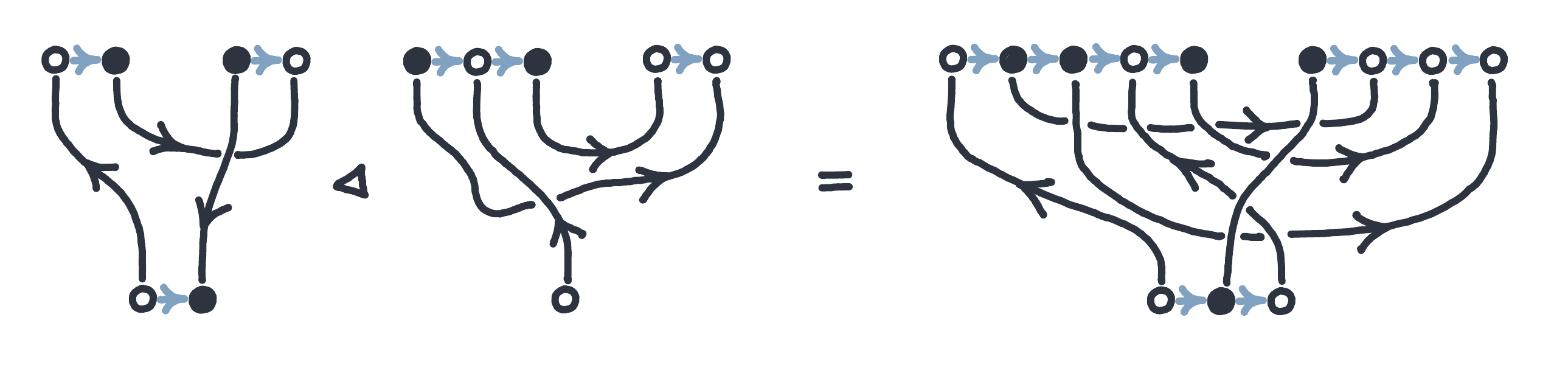}
  \caption{Parallel composition of polar shuffles.}
  \label{fig:polar-shuffle-parallel}
\end{figure}
\begin{proof}
  The graph of the tensoring is the disjoint union of two acyclic graphs, together with the edges determined by the polar lists. The directed edges coming from the polar lists always go from the first graph to the second; thus, they will not create cycles and the resulting graph with be acyclic.
\end{proof}

\begin{theorem}
  \label{th:polar:physicalmonoidal}
  Polar shuffles over a set of types are the morphisms of a \physicalMonoidalMulticategory{}, $\pShuf{}$, that has the polar lists as objects.
\end{theorem}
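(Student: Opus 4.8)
The plan is to assemble the required structure from the three ingredients that have already been established in the preceding propositions, and then verify the coherence axioms by appealing to \Cref{prop:polar-shuffles-are-coherent}. First I would record the data: the objects are polar lists over the fixed set of types; the multimorphisms are the polar shuffles $\pShuf(X_1,\dots,X_n; Y)$; composition $(\bcomp)$ — rather, the multicategorical composition $(⨾)_i$ — is substitution of polar-shuffle graphs as in \Cref{prop:polarcomposition}; identities are the evident singleton-on-each-block shuffles; the symmetric group acts by relabelling the input blocks; the sequential tensor $(⊲)$ on objects is concatenation of polar lists with unit the empty polar list $N$, and on multimorphisms it is disjoint union of graphs as in \Cref{prop:polartensoring}; and normality is witnessed by the fact that inserting an empty polar list into a tuple of inputs does not change the set of shuffles. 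The substance of the proof is then in checking that these data satisfy the axioms of a \physicalMonoidalMulticategory{} in the sense of \Cref{def:physical-monoidal-multicategory}: a symmetric multicategory that is moreover monoidal (a virtual duoidal category) and whose composition with the nullary unit multimorphism $n_0 ∈ \pShuf(; N)$ is invertible.

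The key steps, in order, would be: (1) verify the plain \multicategory{} axioms — unitality and associativity of substitution — which follows because gluing acyclic graphs along linear posets is strictly associative and the singleton shuffles are strict two-sided units; here the only genuine content, acyclicity-preservation, is already \Cref{prop:polarcomposition}. (2) Verify the \symmetricMulticategory{} axioms (functoriality of the $S(n)$-action, compatibility with disjoint unions, and naturality with respect to composition); since a polar shuffle is ultimately a bijection between type-indexed finite sets together with an acyclicity constraint, the permutation action is just relabelling of input blocks and every required equation is an equation of bijections, hence automatic. (3) Verify that $(⊲, N)$ together with the disjoint-union tensor on multimorphisms $m$ and the nullary $n$ of \Cref{prop:polartensoring} makes $\pShuf$ a \monoidalMulticategory{}: the associators, unitors and the pentagon/triangle are all witnessed by canonically typed polar shuffles, and \Cref{prop:polar-shuffles-are-coherent} says these are the unique multimorphisms of their (distinct) type, so every coherence equation holds for free. (4) Verify normality: precomposition with $n_0$ sends a shuffle on $X_0,\dots,X_n$ to one on $X_0,\dots,N,\dots,X_n$, and since $N$ is the empty polar list, its only contribution to the bijection and to the acyclic graph is vacuous, so this map is a bijection with obvious inverse (delete the empty block). (5) Confirm that $(⊲)$ is symmetric — the swap $\pShuf(XY \to YX)$-style shuffle exists and is self-inverse — so that $\pShuf$ is in fact a \physicalMonoidalMulticategory{} and not merely a monoidal multicategory; this too is immediate from coherence.

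The main obstacle — or rather, the only place where a real argument is needed as opposed to bookkeeping — is ensuring throughout that each constructed multimorphism is genuinely a polar shuffle, i.e.\ that its underlying directed graph is acyclic. For composition and tensoring this has already been discharged in \Cref{prop:polarcomposition,prop:polartensoring}; for the structural maps (associators, unitors, symmetries, the normality isomorphism) the graphs involved are either the composite/tensor of previously-verified shuffles or are visibly acyclic because the block being moved or inserted carries no internal function-edges, so no new work is required. The remaining subtlety is organizational: one must present the monoidal multicategory structure in biased (binary/nullary) form and invoke malleability/coherence to extend to $n$-ary tensors, exactly as the paper does for shuffling words in \Cref{prop:shuffling-free-physical-monoidal-multicategory}. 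I would therefore structure the proof as: ``We have already exhibited composition (\Cref{prop:polarcomposition}) and tensoring (\Cref{prop:polartensoring}); the multicategory, symmetric-multicategory, and monoidal-multicategory axioms are equations between bijections of typed finite sets and hold automatically, while all structure maps are the unique distinctly-typed polar shuffles of their type by \Cref{prop:polar-shuffles-are-coherent}; normality holds because the unit object is the empty polar list. Hence $\pShuf$ is a \physicalMonoidalMulticategory{}.''
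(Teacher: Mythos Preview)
Your overall approach is correct and matches the paper's, which also assembles the structure from \Cref{prop:polarcomposition} and \Cref{prop:polartensoring}, defines identities as the identity bijection, obtains associativity and unitality from the associativity of graph gluing, and observes normality from the emptiness of the unit polar list. Your write-up is in fact considerably more thorough than the paper's (which does not spell out the symmetric-multicategory axioms or the monoidal coherence at all), and your use of \Cref{prop:polar-shuffles-are-coherent} to dispatch the pentagon and triangle equations is a legitimate shortcut.

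There is one genuine error, in your step~(5). A \physicalMonoidalMulticategory{} (\Cref{def:physical-monoidal-multicategory}) is a \emph{symmetric multicategory} that is monoidal and normal; the ``symmetric'' refers to the $S(n)$-action on input slots (your step~(2), corresponding to symmetry of the virtual $\otimes$), not to symmetry of $(\triangleleft)$. The sequential tensor $(\triangleleft)$ on $\pShuf$ is \emph{not} symmetric, and your claim that a swap $\pShuf(XY; YX)$ ``exists and is self-inverse\dots\ immediate from coherence'' is false. Take $X = x^{\circ}$ and $Y = y^{\bullet}$: the only type-preserving bijection forces the function edges $y_{XY} \to y_{YX}$ and $x_{YX} \to x_{XY}$, and together with the poset edges $x_{XY} \to y_{XY}$ and $y_{YX} \to x_{YX}$ this yields the 4-cycle $x_{XY} \to y_{XY} \to y_{YX} \to x_{YX} \to x_{XY}$, so no polar shuffle of that type exists. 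Coherence (\Cref{prop:polar-shuffles-are-coherent}) only tells you there is \emph{at most} one such shuffle, not that there is one. Since step~(5) was never required by the definition, simply delete it; the remaining steps (1)--(4) already establish the theorem.
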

\begin{proof}
  We define the identity polar shuffle on a polar list to be the identity bijection linking each sign to itself. The identity polar shuffle is acyclic because the identity bijection preserves the linear ordering. We have already defined the composition and shown that it is acyclic in \Cref{prop:polarcomposition}. Associativity of composition follows from associativity of glueing graphs; unitality follows by construction. We have already defined the tensoring in \Cref{prop:polartensoring}, and the tensor on objects is the concatenation of polar lists. The unit for the tensoring is the empty polar list, and because it does not appear in a \polarShuffle{}, it makes the \monoidalMulticategory{} normal.
\end{proof}

\subsection{Message Theories are Algebras of Polar Shuffles}
What makes \polarShuffles{} relevant to the discussion of message-passing is that they promise us a better syntax for \messageTheories{}. Instead of thinking of the operations of a \messageTheory{} as generated by a few primitives satisfying equations, we can give them a combinatorial characterization in terms of \polarShuffles{}. This section shows that \messageTheories{} are precisely the algebras for the operations given by \polarShuffles{}.

\begin{definition}
  An algebra over a \multicategory{}, $(𝕄,⨾,\id)$, is the assignment of a set $S(X)$ to each object $X ∈ 𝕄_{ob}$, and the assignment of a function
  \[S(f) ፡ S(X₁) × … × S(Xₙ) → S(Y),\]
  for each multimorphism $f ∈ S(X₁, …, Xₙ; Y)$.  The assignment must preserve compositions, $S(f ⨾_X g) = S(f) ⨾_X S(g)$ and identities, $S(\id) = \id$. Alternatively, it is a multifunctor to the cartesian monoidal category of sets and functions.
\end{definition}

\begin{proposition}
  \label{prop:messageTheoriesAreAlgebrasOfFreePolarized}
  \MessageTheories{} are precisely the algebras of the free polarized \physicalMonoidalMulticategory{} over their respective sets of types. In other words, the derivations of a \messageTheory{} form the free polarized \physicalMonoidalMulticategory{} over its set of types.
\end{proposition}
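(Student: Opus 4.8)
The plan is to prove the statement in two directions, exhibiting a correspondence between the data of a \messageTheory{} and the data of an algebra over $\pShuf{}$ (the free polarized \physicalMonoidalMulticategory{} over a set of types), and checking it is an equivalence of categories. The key tool is the coherence theorem for \messageTheories{} (\Cref{theorem:coherentMessageTheories}) together with the coherence of \polarShuffles{} (\Cref{prop:polar-shuffles-are-coherent}): both structures have ``at most one derivation/morphism between distinctly typed premises and conclusion'', so the correspondence is essentially forced and the equational bookkeeping collapses.

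First I would spell out what an algebra $S$ over $\pShuf{}$ is: a set $S(X)$ for each polar list $X$, with $S(\sigma)\colon S(X_1)\times\cdots\times S(X_n)\to S(Y)$ for each \polarShuffle{} $\sigma\in\pShuf(X_1,\dots,X_n;Y)$, preserving composition and identities. Given such an $S$, I would define a \messageTheory{} $𝕄$ by $𝕄_{obj} = T$ and $𝕄(X_1^{‽_1},\dots,X_n^{‽_n}) = S(X_1^{‽_1},\dots,X_n^{‽_n})$ where the right side regards the polarized list as an object of $\pShuf{}$. Each of the four \messageTheory{} operations must be read off from a specific \polarShuffle{}: $\SHF_\sigma$ is $S$ applied to the obvious shuffle \polarShuffle{} (no bends, edges just from inputs into the output list following $\sigma$); $\NOP$ is $S$ of the empty \polarShuffle{}; $\LNK_x^{\Gamma;\Delta}$ is $S$ of the \polarShuffle{} $\pShuf(\Gamma,X^{•},X^{∘},\Delta;\Gamma,\Delta)$ that bends the $X^{•}$ into the $X^{∘}$; and $\SPW_x^{\Gamma;\Delta}$ is $S$ of the \polarShuffle{} $\pShuf(\Gamma,\Delta;\Gamma,X^{∘},X^{•},\Delta)$ spawning a new channel. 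One checks each of these is a valid (acyclic) \polarShuffle{}. Then axioms (1a)--(5d) of a \messageTheory{} all follow \emph{automatically}: each side of each axiom is $S$ applied to a \polarShuffle{} built by composing these generating \polarShuffles{}, the two sides have the same distinctly-typed domain and codomain, so by \Cref{prop:polar-shuffles-are-coherent} they are the same \polarShuffle{}, hence $S$ sends them to the same function. Conversely, given a \messageTheory{} $𝕄$, I would build an algebra over $\pShuf{}$ by setting $S(X) = 𝕄(X)$ on a polar list and, for a \polarShuffle{} $\sigma$, defining $S(\sigma)$ via \emph{any} decomposition of $\sigma$ into the generating \polarShuffles{} using \Cref{prop:shuffles-as-shuffles}, \Cref{prop:soonerlater} (rush/wait), $\LNK$, $\SPW$, $\SHF$; well-definedness (independence of the chosen decomposition) is precisely the content of the coherence theorem \Cref{theorem:coherentMessageTheories}, since its proof shows every derivation factors uniquely through a normal form. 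Functoriality (preservation of composition and identities of $\pShuf{}$) again reduces to coherence: both sides are $𝕄$-derivations with the same distinct typing, hence equal.

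Next I would check the two constructions are mutually inverse, which is immediate on objects and on hom-sets, and that the generating operations are recovered, since the generating \polarShuffles{} were chosen to match the \messageTheory{} primitives. Finally I would match morphisms: a \messageFunctor{} $F\colon 𝕄\to ℕ$ (\Cref{def:messagefunctor}), being a type map preserving $\SHF,\SPW,\LNK,\NOP$, induces and is induced by a map of $\pShuf{}$-algebras, because preservation on generators plus coherence gives preservation on all \polarShuffles{}. This shows $\mathbf{Msg}$ is equivalent to (indeed isomorphic to) the category of $\pShuf{}$-algebras.

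The main obstacle is not any single step but making the coherence invocation airtight: I must be precise that the ``distinctly typed'' hypothesis of \Cref{theorem:coherentMessageTheories} and \Cref{prop:polar-shuffles-are-coherent} actually applies to the composites of generators that appear when translating each \messageTheory{} axiom. Concretely, when I decompose an arbitrary \polarShuffle{} into $\SHF$, $\SPW$, $\LNK$, rush and wait, the intermediate polar lists introduce fresh channel names, and I need each intermediate hom to be distinctly typed so that coherence bites at every stage; this is exactly the normal-form analysis already carried out in the proof of \Cref{theorem:coherentMessageTheories}, so the work is to cite it correctly rather than redo it, and to verify that the generating \polarShuffles{} I picked for $\SHF/\NOP/\LNK/\SPW$ are genuinely the free-polarized-\physicalMonoidalMulticategory{} images of those operations — which follows from \Cref{prop:shuffling-free-physical-monoidal-multicategory} (shuffles are the free \physicalMonoidalMulticategory{}) together with \Cref{prop:polarization} (polarization is left adjoint to taking left duals), applied one categorical level up.
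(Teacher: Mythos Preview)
Your proposal has a circularity problem. You identify ``the free polarized \physicalMonoidalMulticategory{}'' with $\pShuf{}$ from the outset and then prove that \messageTheories{} are $\pShuf{}$-algebras. But in the paper's logical order, the fact that $\pShuf{}$ \emph{is} the free polarized \physicalMonoidalMulticategory{} is \Cref{th:polar-shuffles-are-the-free-polarized-physical-monoidal-multicategory}, whose proof invokes \Cref{cor:physicalMonoidalMulticat-are-coherent}, which in turn cites the very proposition you are trying to prove. Your final paragraph attempts an independent justification (``follows from \Cref{prop:shuffling-free-physical-monoidal-multicategory} together with \Cref{prop:polarization}, applied one categorical level up''), but \Cref{prop:polarization} establishes the polarization adjunction only for \monoidalCategories{}; the paper never proves an analogous adjunction for \physicalMonoidalMulticategories{}, so this step is not available to you.

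The paper's argument avoids $\pShuf{}$ entirely and is essentially one line: the axioms of a \messageTheory{} are, term by term, the axioms of an algebra over the (abstract, tautologically defined) free polarized \physicalMonoidalMulticategory{}. Axioms (1a--1c) say the shuffle operations compose as in the free \physicalMonoidalMulticategory{} (\Cref{prop:shuffling-free-physical-monoidal-multicategory}); axioms (2)--(5) say that $\SPW$ and $\LNK$ are the unit and counit of a duality $X^{\bullet} \dashv X^{\circ}$, natural with respect to shuffles and satisfying the snake equations. No coherence theorem, no combinatorics of \polarShuffles{}, is needed. Your elaborate machinery with normal forms and \Cref{theorem:coherentMessageTheories} is doing work the paper deliberately defers to the \emph{next} results, where it is actually required.
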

\begin{proof}
  This will follow almost by definition. The definition of \messageTheories{} includes an operation for each shuffling, and these operations compose as shufflings (\Cref{prop:shuffles-as-shuffles} and Axioms 1a, 1b, and 1c); equivalently, this is saying that \messageTheories{} are in particular algebras of the multicategory of shufflings, the free \physicalMonoidalMulticategory{} over their set of types.

  The rest of the axioms of \messageTheories{} are exactly asking that each object is a left adjoint: the spawning and linking operations are describing the unit and the counit of the adjunction; the rest of the axioms are saying that: the unit of the adjunction is natural with respect to shufflings (Axioms 2a and 2b); the counit of the is natural with respect to shufflings (Axioms 3a and 3b); unit and counit satisfy the snake equations (4a and 4b); and they are natural with respect to each other (Axioms 5a, 5b, 5c and 5d).
\end{proof}

\begin{corollary}
  \label{cor:physicalMonoidalMulticat-are-coherent}
  Polarized \physicalMonoidalMulticategories{} are coherent; there exists at most one multimorphism between any distinctly typed objects of the free \physicalMonoidalMulticategory{} over some objects.
\end{corollary}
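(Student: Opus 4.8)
The plan is to transport the coherence theorem for \messageTheories{} (\Cref{theorem:coherentMessageTheories}) along the identification set up in \Cref{prop:messageTheoriesAreAlgebrasOfFreePolarized}. First I would recall that, by that proposition, the free polarized \physicalMonoidalMulticategory{} over a set of types $T$ is precisely the \physicalMonoidalMulticategory{} whose multimorphisms are the derivations of the free \messageTheory{} over $T$: the shuffle operations supply the multicomposition together with the symmetric and normal structure, while the spawn and link operations and their naturality, snake, and commutation axioms are exactly what is needed to witness that every object is a left dual, i.e.\ that the structure is polarized. Under this dictionary, a multimorphism with inputs the polar lists $\Gamma_1,\dots,\Gamma_n$ and output $\Delta$ is the same thing as a derivation with premises $\Gamma_1,\dots,\Gamma_n$ and conclusion $\Delta$.

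Next I would check that the multicategorical notion of \emph{distinctly typed objects} matches the notion of \emph{distinctly typed premises and conclusion} appearing in \Cref{theorem:coherentMessageTheories}: each generating type of $T$ occurs exactly twice across all the polar lists involved, once with each polarity. This is a routine unwinding of definitions. With the dictionary and this matching in hand, \Cref{theorem:coherentMessageTheories} applies directly: between distinctly typed premises and conclusion there is at most one derivation, hence at most one multimorphism of the free polarized \physicalMonoidalMulticategory{}, which is the claim. As a sanity check I would also verify the conclusion via the parallel route through \Cref{prop:polar-shuffles-are-coherent} and \Cref{th:polar:physicalmonoidal} — polar shuffles form a coherent \physicalMonoidalMulticategory{}, so once they are identified with the free polarized one the statement follows again — and confirm that the two arguments agree.

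There is essentially no new mathematical content here; the corollary is a book-keeping consequence of the two cited results. The only genuine point demanding care — and hence the main, rather minor, obstacle — is making the dictionary between \messageTheory{} derivations and multimorphisms of the free polarized \physicalMonoidalMulticategory{} precise enough that the hypothesis ``distinctly typed'' transfers without gap, and in particular being explicit that the ambient structure is the \emph{free polarized} \physicalMonoidalMulticategory{} over a set of objects rather than an arbitrary polarized one, since coherence fails for non-free structures exactly as it does for non-free \symmetricMonoidalCategories{} (the identity and the swap of $A \otimes A$).
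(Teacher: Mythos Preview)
Your main argument is correct and is exactly the paper's proof: invoke \Cref{prop:messageTheoriesAreAlgebrasOfFreePolarized} to identify multimorphisms of the free polarized \physicalMonoidalMulticategory{} with derivations of the free \messageTheory{}, then apply \Cref{theorem:coherentMessageTheories}. One caution on your proposed sanity check: the identification of \polarShuffles{} with the free polarized \physicalMonoidalMulticategory{} (\Cref{th:polar-shuffles-are-the-free-polarized-physical-monoidal-multicategory}) appears \emph{after} this corollary and in fact uses it, so that alternative route would be circular in the paper's logical order.
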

\begin{proof}
  This is now a consequence of \Cref{prop:messageTheoriesAreAlgebrasOfFreePolarized} and \Cref{theorem:coherentMessageTheories}. The derivations of a \messageTheory{} form the free polarized \physicalMonoidalMulticategory{}, but we have already shown that they are coherent.
\end{proof}

\begin{theorem}
  \label{th:polar-shuffles-are-the-free-polarized-physical-monoidal-multicategory}
  \PolarShuffles{} form the free polarized \physicalMonoidalMulticategory{} over a set of types.
\end{theorem}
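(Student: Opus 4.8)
The plan is to combine the two characterizations already established in the excerpt. By \Cref{prop:shuffling-free-physical-monoidal-multicategory}, the \physicalMonoidalMulticategory{} $\wShuf_T$ of \shufflingWords{} is the free \physicalMonoidalMulticategory{} on the set of types $T$. By \Cref{prop:polarization} (monoidal polarization is left adjoint to taking left duals), polarization is the left adjoint to the forgetful functor from ``structures with chosen left duals'' back to the underlying structure; the same adjunction is set up for \physicalMonoidalMulticategories{} via \Cref{def:physicalMonoidalMulticategoryDuals}. Composing a free adjunction with the polarization adjunction yields the free \emph{polarized} \physicalMonoidalMulticategory{} on $T$, and the claim is that this composite is computed explicitly by $\pShuf_T$.

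Concretely, I would proceed as follows. First, recall from \Cref{th:polar:physicalmonoidal} that $\pShuf_T$ is indeed a \physicalMonoidalMulticategory{} with the polar lists as objects, and observe that each object $X^{‽}$ comes equipped with canonical unit and counit \polarShuffles{} $\eta_X \in \pShuf_T(\varepsilon ; X^{∘} X^{•})$ and $\varepsilon_X \in \pShuf_T(X^{•} X^{∘}; \varepsilon)$ --- these are the singleton-channel \polarShuffles{} --- and check that they satisfy the snake equations and the naturality conditions of \Cref{def:physicalMonoidalMulticategoryDuals}; this exhibits a map $T \to \Duals(\pShuf_T)$, i.e. each generator is sent to a chosen left dual. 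Second, I would verify the universal property directly: given any polarized \physicalMonoidalMulticategory{} $𝕄$ and any function $T \to 𝕄_{obj}$ picking out a left dual for each generator, there is a unique polarized physical monoidal multifunctor $\pShuf_T \to 𝕄$ extending it. Uniqueness follows from the fact that every \polarShuffle{} is generated --- as was shown in the proof of \Cref{prop:messageTheoriesAreAlgebrasOfFreePolarized} --- by shufflings together with the unit/counit operations: one decomposes an arbitrary \polarShuffle{} into singleton-letter identity multimorphisms positioned by normalization, glued by the monoidal tensor, and spawnings/linkings of channels, exactly as in \Cref{th:polar:physicalmonoidal} and \Cref{prop:polarcomposition}. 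Well-definedness is then immediate from coherence: by \Cref{prop:polar-shuffles-are-coherent} there is at most one \polarShuffle{} between distinctly-typed polar lists, and by \Cref{cor:physicalMonoidalMulticat-are-coherent} the same holds in the free polarized \physicalMonoidalMulticategory{}, so any two candidate images of a \polarShuffle{} that agree on its distinctly-typed ``skeleton'' must coincide; all the equations one would need to check are formal and hold automatically.

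An alternative, cleaner packaging --- and probably the one I would actually write --- is to route everything through \Cref{prop:messageTheoriesAreAlgebrasOfFreePolarized}. That proposition says \messageTheories{} on $T$ are precisely algebras of the free polarized \physicalMonoidalMulticategory{} on $T$. Meanwhile, \Cref{theorem:coherentMessageTheories} establishes coherence of \messageTheories{}, and the constructions \Cref{prop:shuffles-as-shuffles,prop:soonerlater,prop:swaps-self-inverses,prop:linking-wherever} show that the derived operations of a \messageTheory{} are exactly ``shuffling plus walking-adjunction'' data, which is literally the data of a \polarShuffle{}. So the free polarized \physicalMonoidalMulticategory{} and $\pShuf_T$ have the same objects (polar lists), the same multimorphisms (derivations modulo the \messageTheory{} axioms $\cong$ acyclic bijections), and the same composition and tensor; the bijection on multimorphisms is the ``session encoding'' of \Cref{fig:polar-shuffle-example-encoding} together with the normal-form argument in the proof of \Cref{theorem:coherentMessageTheories}. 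Hence $\pShuf_T$ is the free polarized \physicalMonoidalMulticategory{}.

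The main obstacle I anticipate is the surjectivity-and-well-definedness half of the correspondence: one must show that \emph{every} derivation in the free polarized \physicalMonoidalMulticategory{} arises as (the image of) some \polarShuffle{}, and that distinct \polarShuffles{} give genuinely distinct multimorphisms, i.e. the session encoding is a bijection. Surjectivity reuses the normal-form decomposition from \Cref{theorem:coherentMessageTheories} (every derivation factors as rushing/waiting, then a shuffle, then linkings, then spawnings-and-shufflings of new variables) and matches each factor to a feature of the acyclic graph; injectivity is the subtler direction, because one needs that the acyclicity side-condition on \polarShuffles{} is neither too permissive (every acyclic bijection really is realized by a derivation --- \Cref{prop:soonerlater} and the factorization guarantee this) nor too restrictive (two derivations producing the same graph are provably equal --- this is exactly \Cref{theorem:coherentMessageTheories} plus \Cref{prop:linking-wherever}). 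Once these bookkeeping facts are lined up, the freeness statement falls out with essentially no further computation.
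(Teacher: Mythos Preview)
Your second packaging is essentially the paper's proof. The paper argues exactly as you do in your final paragraph: since both $\pShuf_T$ and the free polarized \physicalMonoidalMulticategory{} are coherent (\Cref{prop:polar-shuffles-are-coherent} and \Cref{cor:physicalMonoidalMulticat-are-coherent}), the question reduces to showing that a multimorphism of a given distinctly-typed shape exists in one structure if and only if it exists in the other; one direction is immediate because $\pShuf_T$ is itself a polarized \physicalMonoidalMulticategory{}, and the other direction is exactly your factorization of an arbitrary \polarShuffle{} into internal rushing/waiting, spawnings, a pure shuffle, and linkings. The paper phrases this last step as a topological factorization of the acyclic graph (``pull down the linkings, pull up the spawnings'') rather than invoking the normal form of \Cref{theorem:coherentMessageTheories}, but the content is the same.
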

\begin{proof}
  \PolarShuffles{} are coherent (\Cref{prop:polar-shuffles-are-coherent}), and polarized \physicalMonoidalMulticategories{} are coherent as well (\Cref{cor:physicalMonoidalMulticat-are-coherent}); this simplifies the proof because, to show that they coincide, we only need to show that a \polarShuffle{} between some types exists if an only if a multimorphism in the free polarized \physicalMonoidalMulticategory{} exists.

  If a multimorphism of a certain type exists in the free polarized \physicalMonoidalMulticategory{}, then it exists in all polarized \physicalMonoidalMulticategories{} and, in particular, in \polarShuffles{} (\Cref{th:polar:physicalmonoidal}).

  Let us prove the converse implication: if a \polarShuffle{} between some types does exist, then there is a multimorphism in the free polarized \physicalMonoidalMulticategory{} between these types. For this, we will use that a \polarShuffle can be always factored (not necessarily uniquely!) in the following way: \emph{(i)} we exchange positions inside each polar list to get them to their final relative position; \emph{(ii)} we use a series of spawnings, or polar shuffles $I → X^{∘} ⊲ X^{•}$; \emph{(iii)} we use a pure, non-polarized shuffle; and \emph{(iv)} some final linkings of type $X^{•} ⊲ X^{∘} → I$. This is easy to verify topologically: we can always `pull down the linkings' and `pull up' the spawnings, and we can always factor any shuffle in two parts -- a pure shuffle and a shuffle internal to each one of the components. For instance, consider the following example in \Cref{fig:factored-shuffle}, adapted from \Cref{fig:polar-shuffle-parallel} with an extra spawning.
  \begin{figure}[!ht]
    \centering
    \includegraphics[scale=0.40]{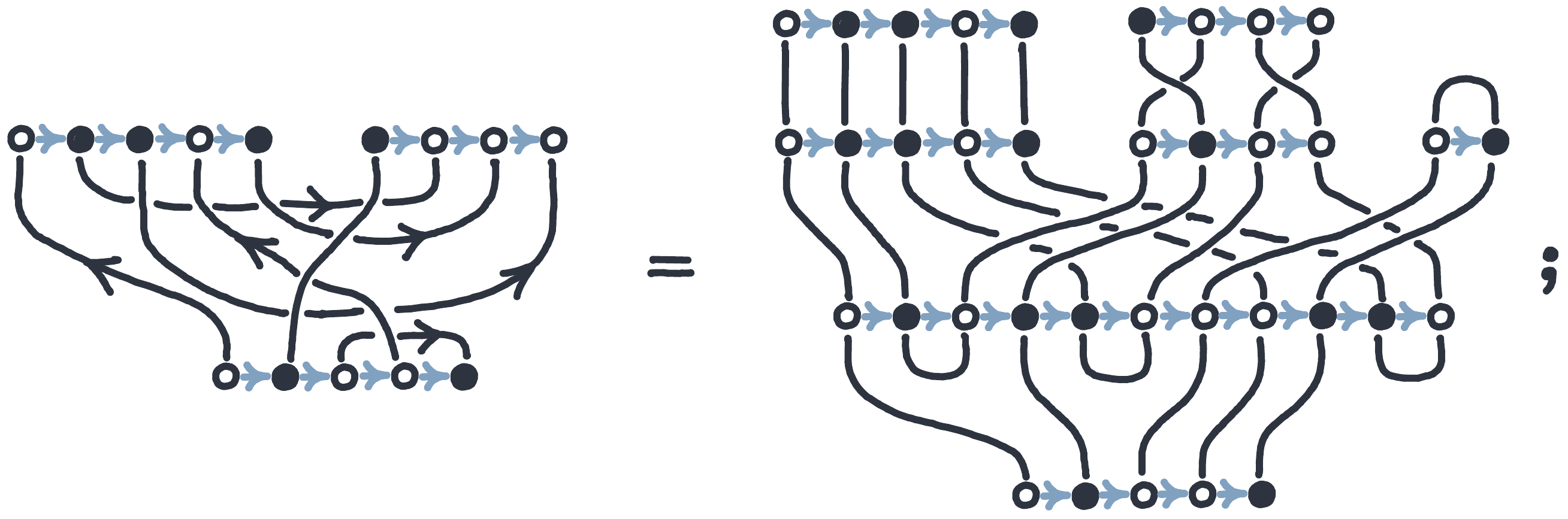}
    \caption{Factored polar shuffle.}
    \label{fig:factored-shuffle}
  \end{figure}

  Spawning, linking, shuffling, waiting and rushing are all operations on a \physicalMonoidalMulticategory{} (see \Cref{sec:message-theories}); so this proves that there is at least one multimorphism in the free \physicalMonoidalMulticategory{} with these types. We do not care about the specific choice of multimorphism because \polarShuffles{} are coherent (\Cref{prop:polar-shuffles-are-coherent}), and polarized \physicalMonoidalMulticategories{} are coherent as well (\Cref{cor:physicalMonoidalMulticat-are-coherent})
\end{proof}

\begin{corollary}
  \MessageTheories{} are the algebras of the \physicalMonoidalMulticategory{} of \polarShuffles{}.
\end{corollary}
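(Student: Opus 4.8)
The plan is to combine two results that have already been established in the preceding sections. The final corollary asserts that \MessageTheories{} coincide with the algebras of the \physicalMonoidalMulticategory{} $\pShuf{}$ of \polarShuffles{}. The first ingredient is \Cref{prop:messageTheoriesAreAlgebrasOfFreePolarized}, which identifies \messageTheories{} with the algebras of the \emph{free polarized} \physicalMonoidalMulticategory{} over their set of types. The second ingredient is \Cref{th:polar-shuffles-are-the-free-polarized-physical-monoidal-multicategory}, which identifies \polarShuffles{} with that same free polarized \physicalMonoidalMulticategory{}. Chaining these two, the algebras of $\pShuf{}$ over a set $T$ are exactly the algebras of the free polarized \physicalMonoidalMulticategory{} over $T$, which are exactly the \messageTheories{} on $T$.

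First I would fix a set of types $T$ and unwind the statement: a \messageTheory{} $𝕄$ with $𝕄_{obj} = T$ is, by \Cref{prop:messageTheoriesAreAlgebrasOfFreePolarized}, the same data as an algebra $S$ of the free polarized \physicalMonoidalMulticategory{} on $T$, assigning to each polar list $X$ over $T$ the set $𝕄(X)$ and to each generating operation (shuffle, no-op, link, spawn) the corresponding function, with the \messageTheory{} axioms corresponding precisely to functoriality of $S$. Then I would invoke \Cref{th:polar-shuffles-are-the-free-polarized-physical-monoidal-multicategory} to rewrite "free polarized \physicalMonoidalMulticategory{} on $T$" as $\pShuf{}$ over $T$; since an algebra of a \multicategory{} depends only on that \multicategory{} up to isomorphism, the two notions of algebra agree. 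Finally I would check that this identification is compatible with morphisms: a \messageFunctor{} $F ፡ 𝕄 → ℕ$ (\Cref{def:messagefunctor}) is by definition a map on objects together with a family of functions preserving shuffling, spawning, linking and the no-operation — which is exactly a morphism of algebras over $\pShuf{}$ — so the correspondence is an equivalence (indeed an isomorphism) of categories, not merely a bijection on objects.

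The only real content to verify is that the equivalence $\pShuf{} \simeq (\text{free polarized } \physicalMonoidalMulticategory{})$ of \Cref{th:polar-shuffles-are-the-free-polarized-physical-monoidal-multicategory} genuinely transports algebras and algebra morphisms; but this is formal, since an algebra over a \multicategory{} is a \multifunctor{} into $\Set$ and \multifunctors{} compose with equivalences. I expect the main (and only) subtlety to be bookkeeping: making sure that the generating operations of a \messageTheory{} named in \Cref{def:messagetheory} ($\SHF_σ$, $\NOP$, $\LNK$, $\SPW$) are matched with the images under the equivalence of the corresponding \polarShuffles{} — the binary and nullary pure shuffles, the spawn shuffle $I → X^{∘} ⊲ X^{•}$, and the link shuffle $X^{•} ⊲ X^{∘} → I$ — and that the \messageTheory{} axioms are exactly the naturality and snake equations which hold automatically in $\pShuf{}$ by coherence (\Cref{prop:polar-shuffles-are-coherent}, \Cref{cor:physicalMonoidalMulticat-are-coherent}). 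Since both \Cref{prop:messageTheoriesAreAlgebrasOfFreePolarized} and \Cref{th:polar-shuffles-are-the-free-polarized-physical-monoidal-multicategory} already do this matching, the corollary is immediate: it is the transitivity of "is the category of algebras of".
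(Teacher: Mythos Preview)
Your proposal is correct and follows the same approach as the paper: combine \Cref{prop:messageTheoriesAreAlgebrasOfFreePolarized} with \Cref{th:polar-shuffles-are-the-free-polarized-physical-monoidal-multicategory}. The paper's proof is a two-line invocation of exactly these two results, without the additional bookkeeping you describe about morphisms and matching generators; your extra checks are fine but not required.
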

\begin{proof}
  We will use that \polarShuffles{} are the free polarized \physicalMonoidalMulticategory{} (\Cref{th:polar-shuffles-are-the-free-polarized-physical-monoidal-multicategory}) and that \messageTheories{} are precisely the algebras of the free polarized \physicalMonoidalMulticategory{} over their objects (\Cref{prop:messageTheoriesAreAlgebrasOfFreePolarized}).
\end{proof}

\subsection{Bibliography}
The definition -- and the notation -- of polar shuffles takes inspiration from a different concept: Hughes' \emph{partial leaf functions} \cite{hughes12:simple}. Partial leaf functions are the $\mathbf{Int}$-construction -- the free compact closed category over a traced monoidal category -- applied to the category of finite sets and partial functions \cite{abramsky99}; here we follow a similar idea, but we work over finite sets and bijections, which are not traced. Not only the $\mathbf{Int}$-construction, but also the idea of shuffling two traces point back to game semantics \cite{mellies18:game}.

\newpage
\section{Processes versus Sessions}
\label{sec:processes-sessions}

\subsection{Processes of a message theory}
Inside of a \messageTheory{}, we call \emph{processes} the sessions that happen in two parts: \emph{(i)} first they ask for some indeterminate number of resources (possibly zero), $X₁^{∘},…Xₙ^{∘}$ and \emph{(ii)} then, they give some indeterminate number of resources (possibly zero), $Y₁^{•},…,Yₘ^{•}$. This simple definition builds a \symmetricMonoidalCategory{} inside any \messageTheory{}, and we will find a left adjoint to this construction.

\begin{proposition}
  \defining{linkProc}{}
  Let $𝕄$ be a \messageTheory{}. There exists a \symmetricMonoidalCategory{}, $\mathsf{Proc}(𝕄)$, whose objects are the lists of objects of the \messageTheory{}, $\Proc(𝕄)_{obj} = 𝕄^\ast_{obj}$, and whose morphisms are the sessions that first ask some resources and then provide some resources,
  $$\Proc(𝕄)(X₁ ⊗ … ⊗ Xₙ; Y₁ ⊗ … ⊗ Yₘ) = 𝕄(X^?_n,…X^?_1; Y^!_1,…,Y^!_m).$$
  Note how we reverse the order of inputs; this will make reasoning easier even if it is unnecessary: the monoidal category will be symmetric in any case.
\end{proposition}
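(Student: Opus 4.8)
The plan is to verify directly that the claimed data assemble into a symmetric monoidal category, using the operations and axioms of a \messageTheory{} together with the coherence theorem (\Cref{theorem:coherentMessageTheories}) to handle the bureaucracy of structural morphisms. First I would fix notation: for a list $X₁,\dots,Xₙ$ write $X^{∘}$ for the polar list $Xₙ^{∘},\dots,X₁^{∘}$ (inputs, reversed) and $Y^{•}$ for $Y₁^{•},\dots,Yₘ^{•}$, so that a morphism $X₁ ⊗ … ⊗ Xₙ → Y₁ ⊗ … ⊗ Yₘ$ is by definition an element of $𝕄(X^{∘},Y^{•})$. The identity on $X₁ ⊗ … ⊗ Xₙ$ should be the session obtained by spawning a channel for each $Xᵢ$ and shuffling the ends into place; concretely, iterating $\SPW$ and $\SHF_{σ}$ produces an element of $𝕄(X^{∘},X^{•})$, and by coherence (\Cref{theorem:coherentMessageTheories}) this element is independent of the order of spawning and the chosen shuffles, since the domain and codomain are distinctly typed.

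Next I would define composition. Given $f ∈ 𝕄(X^{∘},Y^{•})$ and $g ∈ 𝕄(Y^{∘},Z^{•})$, I would shuffle $f$ and $g$ together with $\SHF_{σ}$ to obtain an element of $𝕄(X^{∘},Y^{•},Y^{∘},Z^{•})$ — placing each $Yᵢ^{•}$ of $f$ immediately before the matching $Yᵢ^{∘}$ of $g$ — and then apply $\LNK_{Yᵢ}$ for each $i$ to contract those pairs, yielding an element of $𝕄(X^{∘},Z^{•})$. The key steps are then: (1) well-definedness and independence of the auxiliary shuffle, which follows from naturality of linking with respect to shuffles (Axioms 2a, 2b) and the interchange axioms for links (Axioms 5b, 5d), again invoking coherence; (2) unitality, $f ⨾ \id = f$ and $\id ⨾ f = f$, which uses that linking is dual to spawning (Axioms 4a, 4b) together with \Cref{prop:linking-wherever} to absorb the identity's spawns and shuffles; (3) associativity, which reduces to the fact that the two ways of bracketing three shuffles agree (Axiom 1a, via \Cref{prop:shuffles-as-shuffles}) combined with commuting independent links (Axiom 5d).

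Then I would equip $\Proc(𝕄)$ with its symmetric monoidal structure. The tensor on objects is concatenation of lists of types; on morphisms, the tensor of $f ∈ 𝕄(X^{∘},Y^{•})$ and $f' ∈ 𝕄(X'^{∘},Y'^{•})$ is their shuffle $\SHF_{τ}(f,f') ∈ 𝕄((X,X')^{∘},(Y,Y')^{•})$ for the shuffle $τ$ that interleaves the reversed-input blocks and the output blocks appropriately. Functoriality of the tensor (interchange) is Axiom 1a together with the compatibility of linking and shuffling (Axioms 2a, 2b); the associator, unitors, and symmetry are the structural sessions built from $\SPW$, $\LNK$ and $\SHF$ on distinctly typed data, so their naturality and the pentagon, triangle and hexagon equations all hold automatically by coherence (\Cref{theorem:coherentMessageTheories}), exactly as structural isomorphisms in a free symmetric monoidal category are forced. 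The symmetry $σ_{X,Y}$ is the swap built as in \Cref{prop:swaps-self-inverses}; its involutivity and the hexagon are instances of coherence.

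The main obstacle I anticipate is not any single equation but organizing the coherence argument cleanly: one must check that every structural morphism that arises (identities, associators, unitors, symmetries, and the auxiliary shuffles used in composition and tensoring) is built between distinctly typed polar lists, so that \Cref{theorem:coherentMessageTheories} applies and pins it down uniquely. The subtle point is that the intermediate object produced by shuffling $f$ and $g$ before linking, namely $𝕄(X^{∘},Y^{•},Y^{∘},Z^{•})$, is \emph{not} distinctly typed (each $Yᵢ$ appears with both variances but those variances cancel), so coherence must be invoked at the level of the final morphism $X → Z$ rather than the intermediate session; making this precise — and checking that the links can always be "pulled down" to a canonical position using \Cref{prop:linking-wherever} and Axioms 5a–5d — is where the real work lies. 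Once that reduction is in place, all the categorical axioms follow as formal consequences, and the remaining verifications (that the structural sessions genuinely satisfy the monoidal coherence diagrams) are automatic.
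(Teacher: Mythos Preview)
Your proposal is correct, and the outline you give would assemble into a complete proof. The paper, however, takes a somewhat different route: rather than working directly with the primitive operations $\SPW$, $\LNK$, $\SHF$ and the individual axioms, it invokes the characterization of \messageTheories{} as algebras of the \physicalMonoidalMulticategory{} of \polarShuffles{} (\Cref{th:polar-shuffles-are-the-free-polarized-physical-monoidal-multicategory}). Each structural operation --- composition, identity, tensor, symmetry --- is then defined as a single explicit \polarShuffle{} (depicted in \Cref{fig:polar-process-composition,fig:polar-process-identity,fig:polar-process-tensor,fig:polar-process-swap}), and all the categorical laws are verified by ``following the connections'' of the resulting acyclic graphs, using that \polarShuffles{} form a multicategory with well-behaved composition.

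The two arguments are equivalent under the hood, since \Cref{theorem:coherentMessageTheories} and the polar-shuffle characterization are two sides of the same coin. Your approach is more elementary in that it does not rely on the polar-shuffle machinery, but it pays for this by having to name specific axioms (2a, 2b, 4a--4b, 5a--5d) and auxiliary propositions at each step, and by having to worry about the ``intermediate object'' subtlety you flag in your final paragraph. The paper's approach sidesteps that subtlety entirely: once an operation is packaged as a single \polarShuffle{}, well-definedness is automatic (a polar shuffle \emph{is} a well-defined combinatorial object), and the associativity/unitality/interchange checks become purely topological verifications on graphs rather than chases through the axiom list.
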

\begin{proof}
  Let us define the composition and identities. Composition is given by the \polarShuffle{} that connects the middle outputs to inputs, see \Cref{fig:polar-process-composition}; because \messageTheories{} are algebras of \polarShuffles{}, such a \polarShuffle{} defines an operation, composition, that takes two sessions to a third one. 
  \begin{figure}[ht]
    \centering
    \includegraphics[scale=0.45]{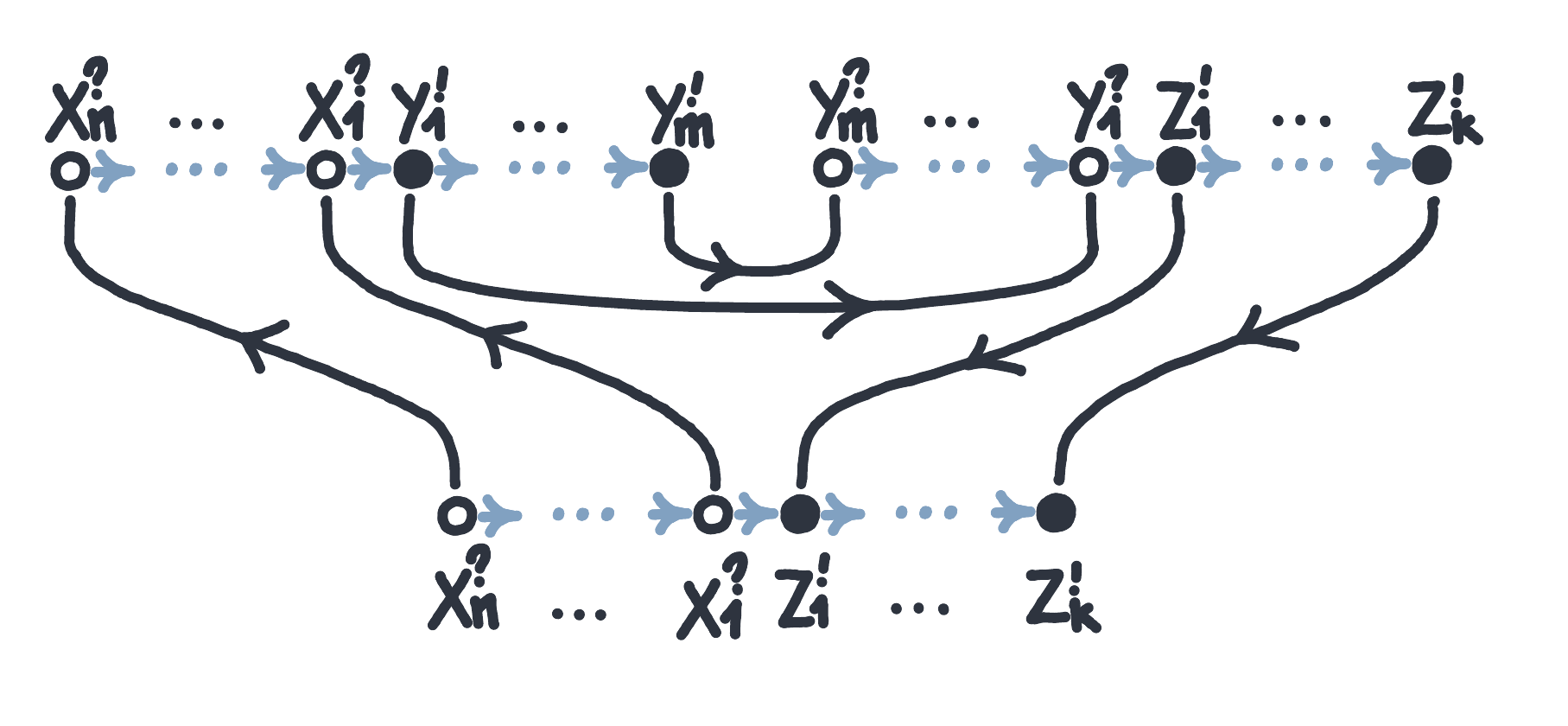}
    \caption{Composition of processes of a message theory.}
    \label{fig:polar-process-composition}
  \end{figure}

  Identities are created by spawning channels, see \Cref{fig:polar-process-identity}. Again, because \messageTheories{} are algebras of \polarShuffles{}, each \messageTheory{} must contain a constant given by the \polarShuffle{} that spawns a list of channels.
  Composition and identities are associative and unital: it can be checked from the definition that we are using the duality from spawning and connecting channels.
  \begin{figure}[ht]
    \centering
    \includegraphics[scale=0.45]{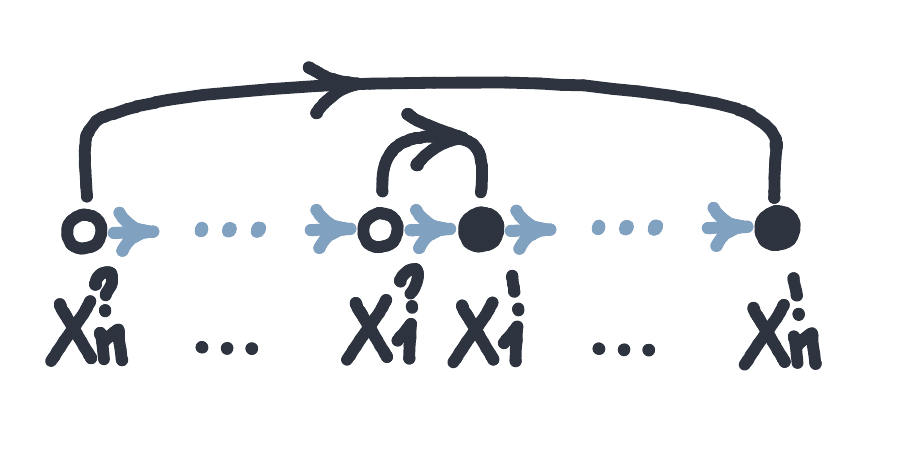}
    \caption{Identity process of a message theory.}
    \label{fig:polar-process-identity}
  \end{figure}

  Symmetries are given by the polar graph that spawns a channel for each one of the objects and then shuffles them so that the inputs and the outputs are divided in two blocks and positioned in reverse order, see \Cref{fig:polar-process-swap}.
  \begin{figure}[ht]
    \centering
    \includegraphics[scale=0.45]{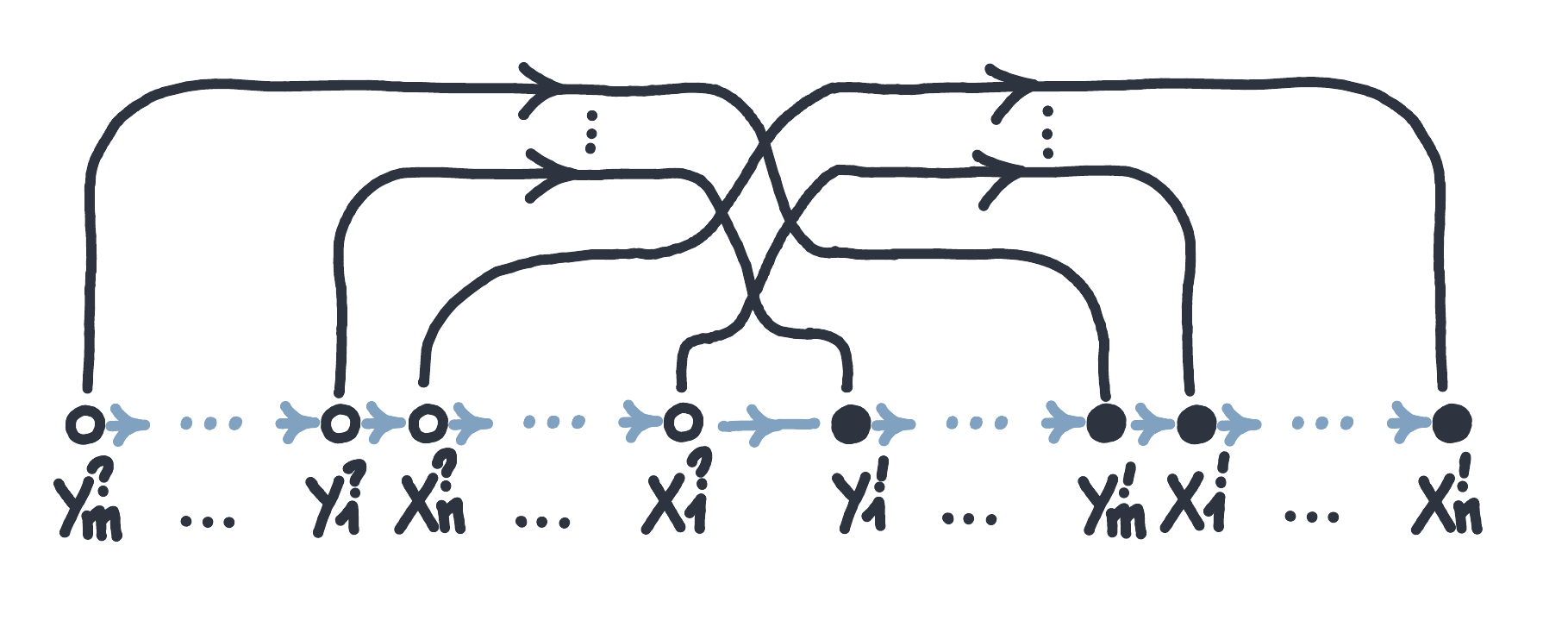}
    \caption{Symmetries of processes in a message theory.}
    \label{fig:polar-process-swap}
  \end{figure}

  Tensoring is given by the \polarShuffle{} that preserves all outputs and inputs but passes the outputs of a process pass the inputs of the other, see \Cref{fig:polar-process-tensor}. This operation is again associative and unital with the empty \polarShuffle{} that represents the monoidal unit.
  \begin{figure}[ht]
    \centering
    \includegraphics[scale=0.45]{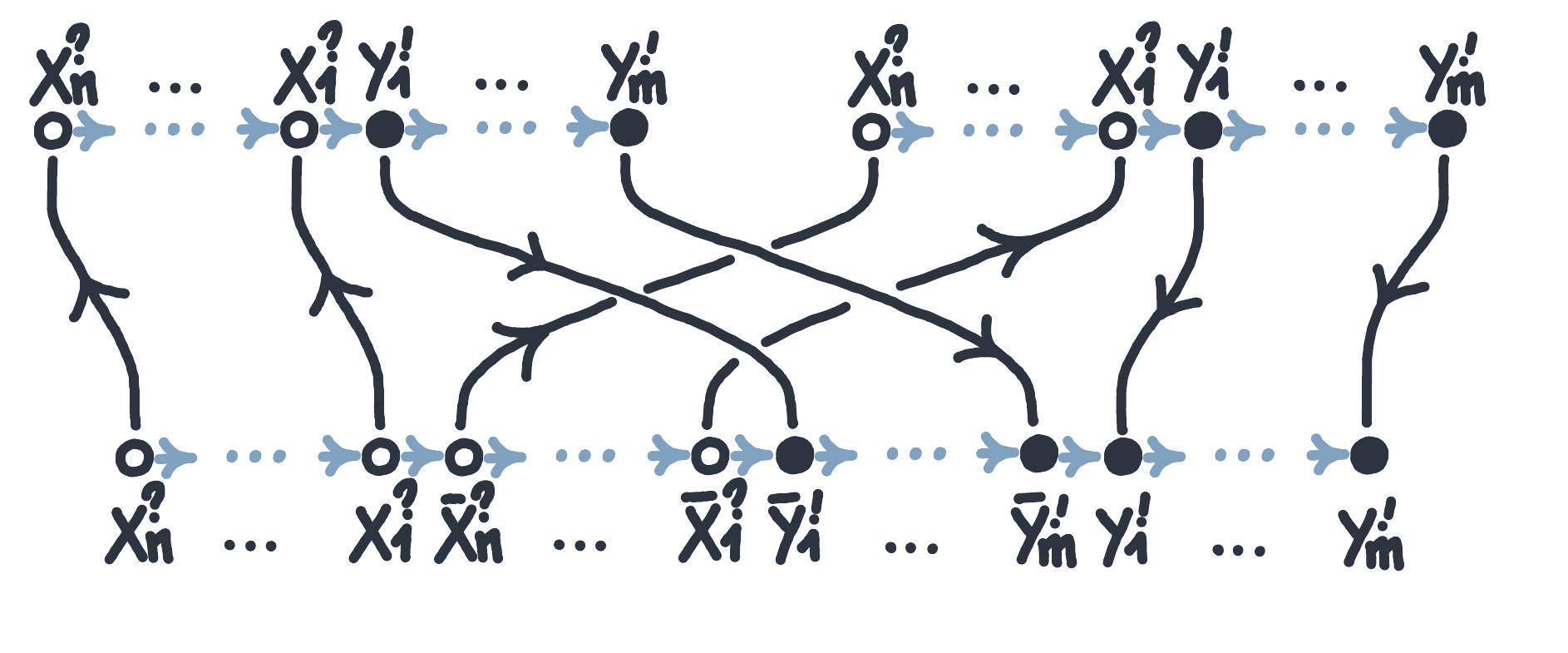}
    \caption{Tensor of processes in a message theory.}
    \label{fig:polar-process-tensor}
  \end{figure}
  
  It concludes the proof to check, by following the connections of the \polarShuffles{}, that tensoring behaves functorially with composition so that the interchange law of monoidal categories holds.
\end{proof}

\begin{proposition}
  The construction of the \symmetricMonoidalCategory{} of processes extends to a functor 
  $$\Proc ፡ \mathbf{Msg} → \SymMonCatStr.$$
\end{proposition}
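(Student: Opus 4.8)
The plan is to extend $\Proc$ to \messageFunctors{} and check that it takes values in $\SymMonCatStr$ and is itself functorial. Given a \messageFunctor{} $F ፡ 𝕄 → ℕ$, I would let $\Proc(F)$ act on objects as the componentwise lift of $F_{obj}$, namely $\Proc(F)_{obj} = F_{obj}^{\ast} ፡ 𝕄_{obj}^{\ast} → ℕ_{obj}^{\ast}$; this is a monoid homomorphism for list concatenation and fixes the empty list, so it is strictly monoidal on objects, and it respects the two-block input/output splitting used to present $\Proc(𝕄)$. On morphisms, a morphism of $\Proc(𝕄)$ is by definition a session $m ∈ 𝕄(X_n^{∘},…,X_1^{∘}; Y_1^{•},…,Y_m^{•})$, and since a \messageFunctor{} comes equipped with polarization-preserving functions on session sets, I would simply set $\Proc(F)(m) = F(m) ∈ ℕ(FX_n^{∘},…,FX_1^{∘}; FY_1^{•},…,FY_m^{•})$.

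The heart of the argument is that $F$ commutes with all the operations used to build $\Proc(𝕄)$. Composition, identities, tensoring and symmetries in $\Proc(𝕄)$ were each defined by applying a fixed \polarShuffle{} operation -- composition by the \polarShuffle{} linking middle outputs to inputs, identities by the \polarShuffle{} spawning a list of channels, tensoring by the \polarShuffle{} sliding outputs past inputs, and symmetries by spawnings followed by a block-reversing shuffle. By \Cref{prop:messageTheoriesAreAlgebrasOfFreePolarized}, the structure of a \messageTheory{} is that of an algebra over the \physicalMonoidalMulticategory{} of \polarShuffles{}, generated as an algebra by the primitives $\SHF$, $\NOP$, $\SPW$ and $\LNK$; a \messageFunctor{} preserves these four primitives by definition (\Cref{def:messagefunctor}), and \polarShuffles{} are coherent (\Cref{prop:polar-shuffles-are-coherent}), so preservation of the generators upgrades automatically to preservation of every derived \polarShuffle{} operation. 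Concretely, $\Proc(F)(f ⨾ g) = F(\SHF_{σ}(f,g)) = \SHF_{σ}(Ff,Fg) = \Proc(F)(f) ⨾ \Proc(F)(g)$ for the composition shuffle $σ$, $\Proc(F)$ sends each identity to an identity, and it preserves the tensor of morphisms and the symmetries on the nose; strict associativity, strict unitality and preservation of the monoidal unit (the empty list) are then automatic, since these hold strictly in both $\Proc(𝕄)$ and $\Proc(ℕ)$. Hence $\Proc(F)$ is a strict symmetric monoidal functor.

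It remains to check that $\Proc$ itself is a functor $\mathbf{Msg} → \SymMonCatStr$. The identity \messageFunctor{} on $𝕄$ is the identity on objects and on every session set, so $\Proc(\id_{𝕄})$ is the identity on objects and morphisms of $\Proc(𝕄)$; and for composable \messageFunctors{} $F$ and $G$, composition of \messageFunctors{} is pointwise composition of the underlying functions on objects and sessions, whence $\Proc(F ⨾ G) = \Proc(F) ⨾ \Proc(G)$.

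I expect no genuine obstacle here: $\Proc(𝕄)$ was assembled entirely from operations that \messageFunctors{} are defined to preserve, so the only point that deserves care is the reduction of an arbitrary \polarShuffle{} operation to the four generators $\SHF, \NOP, \SPW, \LNK$ -- which is exactly the freeness statement of \Cref{th:polar-shuffles-are-the-free-polarized-physical-monoidal-multicategory} -- together with the coherence of \polarShuffles{}, which guarantees that the choice of such a reduction does not matter.
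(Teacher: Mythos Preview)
Your proposal is correct and follows the same approach as the paper: define $\Proc(F)$ on objects by $F_{obj}^{\ast}$ and on morphisms by the action of $F$ on session sets, then observe that composition, identities, tensoring and symmetries were all built as \polarShuffle{} operations and hence are preserved by any \messageFunctor{}. If anything you are more thorough than the paper, which asserts preservation of \polarShuffle{} operations directly rather than reducing it to the four primitives via freeness and coherence, and which does not spell out the (trivial) functoriality of $\Proc$ on identities and composites.
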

\begin{proof}
  We will show that any message functor $F ፡ 𝕄 → ℕ$ induces a strict symmetric monoidal functor $\Proc(F) ፡ \Proc(𝕄) → \Proc(ℕ)$. Because the category of processes is freely generated on objects, it suffices to explain that the object $X ∈ 𝕄_{ob}$ is sent to the object $FX ∈ N_{ob}$. On morphisms, we already have a map
  $$F ፡ 𝕄(Xₙ^{?},…,X₁^{?},Y₁^{!},…,Yₘ^{!}) → ℕ(FXₙ^{?},…,FX₁^{?},FY₁^{!},…,FYₘ^{!})$$
  that gets reinterpreted as a map
  $$\Proc(𝕄)(X₁ ⊗ … ⊗ Xₙ;Y₁ ⊗ … ⊗ Yₘ) → \Proc(ℕ)(F(X₁ ⊗ … ⊗ Xₙ);F(Y₁ ⊗ … ⊗ Yₘ)).$$
  Composition, identities and tensoring are operations constructed as \polarShuffles{}, and so they must be preserved by a message functor; this means that $\Proc(F)$ becomes a strict monoidal functor.
\end{proof}

\subsection{Sessions of a process theory}

We will now construct message sessions over an arbitrary \processTheory{}, and we will do so in a minimalistic theory. Message passing  consists of two effects: \emph{sending} and \emph{receiving}. \PremonoidalCategories{} already are a framework for effectful computation in process theories, so we employ them here.

\begin{definition}
  \defining{linkSessions}{}
  \label{def:sessions}
  The \effectfulCategory{} of \emph{sessions} over a strict \symmetricMonoidalCategory{} $ℂ$ is the \effectfulCategory{} $ℂ → \Session(ℂ)$ generated by 
  \begin{enumerate}
    \item all of the morphisms of the original \monoidalCategory{}, $ℂ$, quotiented by the equations of the original monoidal category, as pure morphisms;
    \item and a pair of \emph{send} and \emph{receive} generators for each object $X ∈ ℂ_{obj}$ imposing no further equations. We write these generators as $(∘)_X ፡ X → I$ and $(•)_X ፡ I → X$.
  \end{enumerate}

  \begin{figure}[ht]
    \centering
    \includegraphics[scale=0.45]{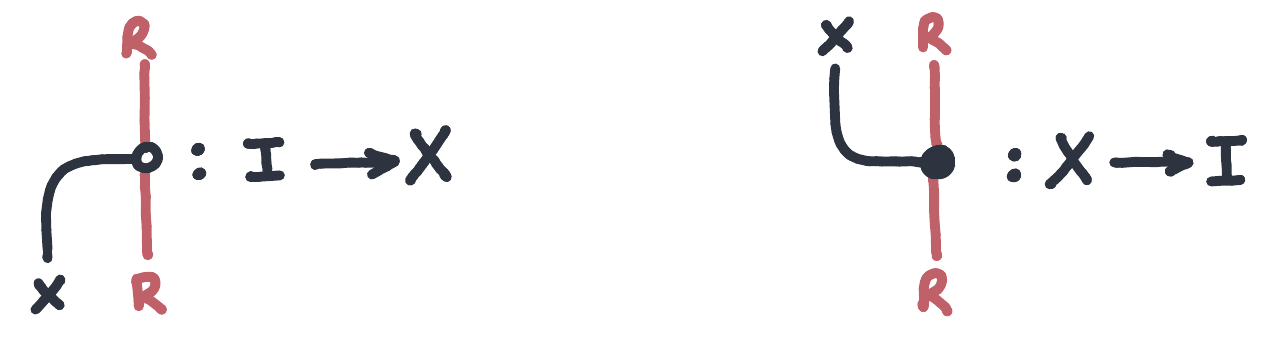}
    \caption{Session runtime generators.}
    \label{fig:session-runtime-generators}
  \end{figure}
\end{definition}

This naive theory of message passing on top of a \monoidalCategory{} may remind us of the combs that we were studying before: instead of using incomplete diagrams, we are marking the interchanges explicitly now. This intuition can be made formal: we will now prove that combs of type $X^{‽}_1 ⊲ … ⊲ X^{‽}_n → \pbiobj{A}{B}$ correspond to sessions $A → B$ where the events are exactly $X^{‽}_1, …, X^{‽}_n$, and happen in that specific order. First, note that we can define a sequence of events associated with a particular session.

\begin{definition}
  The \emph{sequence of events} of a session is the list of effectful generators obtained by following only the effectful wire on the diagram. Formally, it is defined by structural induction over the premonoidal category of sessions as follows.
  \begin{enumerate}
    \item It is the empty list for a pure morphism.
    \item It is invariant to whiskering.
    \item It contains a single element $[X^{∘}]$ for each generator $(∘)_X ፡ I → X$.
    \item It contains a single element $[X^{•}]$ for each generator $(•)_X ፡ X → I$.
    \item It concatenates the lists for a composition of morphisms.
  \end{enumerate}
  It becomes straightforward to check that the sequence of events is well-defined. We write $\Session(A;B)[X^{‽}_1, …, X^{‽}_n]$ for the set of sessions $A → B$ with a sequence of events $X^{‽}_1, …, X^{‽}_n$.
\end{definition}

\begin{proposition}[Combs are sessions]
  \label{prop:combsaresessions}
  Sessions from $A$ to $B$ with a sequence of events $X^{‽}_1, …, X^{‽}_n$ are in bijective correspondence with combs with the same events happening sequentially,
  $$\Session(A ; B)[X^{‽}_1, …, X^{‽}_n] ≅ \mLens\left( X^{‽}_1 ⊲ … ⊲ X^{‽}_n; \biobj{A}{B} \right).$$
\end{proposition}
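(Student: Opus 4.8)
The plan is to exhibit an explicit bijection by structural induction over the premonoidal category of sessions, together with a coend-calculus computation identifying the inductive step with the recursive definition of the monoidal-lens profunctor. First I would set up notation: by \Cref{theorem:runtime-as-a-resource}, a session $A \to B$ in $\Session(\mathbb{C})$ is the same as a morphism $\R \otimes A \to \R \otimes B$ in the runtime monoidal category, where the only generators touching $\R$ are the send/receive generators $(∘)_X$ and $(•)_X$ — which in runtime form become maps $\R \otimes X \to \R$ and $\R \to \R \otimes X$ respectively — plus the pure morphisms of $\mathbb{C}$, which do not touch $\R$. Tracking the effectful wire and reading off the sequence of events $X^{‽}_1, \ldots, X^{‽}_n$ cuts such a diagram into $n+1$ pure segments separated by the $n$ send/receive boxes.

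Next I would make the induction precise. The base case $n = 0$: a session with no events is, by definition, a pure morphism (invariant to whiskering), so $\Session(A;B)[\,] \cong \mathbb{C}(A;B) = \mLens(𝖭; \pbiobj{A}{B})$, which is the unit case of \Cref{def:monoidallens}. For the inductive step, a session with events $X^{‽}_1, \ldots, X^{‽}_n$ factors canonically as a pure morphism $A \to A'$ followed by the first event generator followed by a session $A'' \to B$ with events $X^{‽}_2, \ldots, X^{‽}_n$; the subtlety is that the first event generator is whiskered by some object $M_1$ on one side (this $M_1$ is exactly the ``memory'' carried past the event), so the factorization produces a pure $f \in \mathbb{C}(A; M_1 \otimes X_1)$ (if $‽_1 = ∘$, i.e. a receive-then-continue — or the dual, matching the two cases in the statement of \Cref{prop:sessionNotation}) together with a session $M_1 \otimes Y_1 \to B$ with the remaining events. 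Two such factorizations with different choices of $M_1$ related by a morphism $M_1 \to M_1'$ whiskering both the pure prefix and the tail yield the \emph{same} session — this is precisely dinaturality in $M_1$. So the set of sessions is $\int^{M_1} \mathbb{C}(A; M_1 \otimes X_1) \times \Session(M_1 \otimes Y_1; B)[X^{‽}_2, \ldots, X^{‽}_n]$, and by induction this is $\int^{M_1} \mathbb{C}(A; M_1 \otimes X_1) \times \mLens(X^{‽}_2 ⊲ \cdots ⊲ X^{‽}_n; \pbiobj{M_1 \otimes Y_1}{B})$. Unfolding the definition of $\mLens$ of an $(n-1)$-fold sequential join, collecting coends via Fubini, and matching against \Cref{def:monoidallens}'s formula for the $n$-fold sequential join $\mLens(X^{‽}_1 ⊲ \cdots ⊲ X^{‽}_n; \pbiobj{A}{B})$ closes the induction.

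The two main obstacles I anticipate: first, showing that the canonical factorization ``peel off the first event'' is well-defined on \emph{equivalence classes} of premonoidal string diagrams — one must check that the factorization is invariant under the equations of the effectful category (whiskering naturality, interchange of pure morphisms past pure morphisms, the runtime braiding axioms) and, crucially, that it forgets exactly a dinatural parameter and nothing more; this is where the string-diagrammatic reasoning of \Cref{theorem:runtime-as-a-resource} does the heavy lifting, since it reduces the premonoidal bookkeeping to ordinary monoidal string diagrams with a runtime wire, where ``cut along the first event box'' is geometrically unambiguous. Second, the bookkeeping of the $M_i$ parameters: in the comb profunctor of \Cref{def:monoidallens} each sequential join carries \emph{two} memory objects (left and right of the hole), whereas in a session diagram the single effectful wire winds through and the ``memory'' to both sides of an event is packaged into one object $M_i$; reconciling these requires either working in the symmetric setting (where \Cref{def:symmetricmonoidallens} already has a single $M$, making the match transparent) or, in the strict non-symmetric case, carefully noting that the events in a session diagram drawn with the runtime wire on a fixed side force a particular association of left/right memories — this indexing match is routine once set up but easy to get wrong. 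I would therefore carry out the symmetric case first as the clean prototype and then indicate the modification for the general monoidal (non-symmetric) lens of \Cref{def:monoidallens}, appealing to coherence of the runtime braiding to justify the identification $M_1 \otimes X_1$ versus the two-sided $M_1 \otimes X \otimes M_2$ form.
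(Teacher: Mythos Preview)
Your proposal is correct and follows essentially the same route as the paper: structural induction on the premonoidal presentation of sessions, peeling off the first effectful generator to obtain a pure prefix together with a shorter session, and identifying the ambiguity in where to cut with dinaturality in the memory object $M$. The paper checks well-definedness by the same three-case isotopy analysis you anticipate (two boxes before the cut, two after, one across), and your concern about the one-sided versus two-sided memory in $\mLens$ versus $\smLens$ is apt—the paper tacitly works in the symmetric setting here, writing a single $M$ throughout, exactly as you suggest doing first.
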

\begin{proof}
  We proceed by structural induction over the presentation of the cteogry of sessions. The base case consists of a morphism that is pure: by definition, those are the combs $\mLens(𝖭; \biobj{A}{B})$.

  The inductive case considers a morphism $A → B$ with at least one first occurrence of the effectful generators, $(∘)_X$ or $(•)_X$. We assume without loss of generality that this is $(•)_X ፡ I → X$, so its sequence of events is $X^{•},Γ$ -- the other case is analogous.
  We consider it as a string diagram for a \monoidalCategory{}, quotiented by the equations of the base \monoidalCategory{} $ℂ$, the symmetries and only up to interchange by isotopy. 
  We may split the diagram into two parts (as in \Cref{fig:split-session}); we leave everything before the generator $(•)_X$ to one side, $f ∈ ℂ(A; X ⊗ M)$, and everything after the generator to the other side $g ∈ \Session(M,B)[Γ]$. This procedure constructs the following comb,
  $$∫^{M ∈ ℂ} \withPoint{f}{ℂ(A; M ⊗ X)} × \withPoint{g}{\Session(A ; B)[Γ]}.$$
  \begin{figure}[ht]
    \centering
    \includegraphics[scale=0.35]{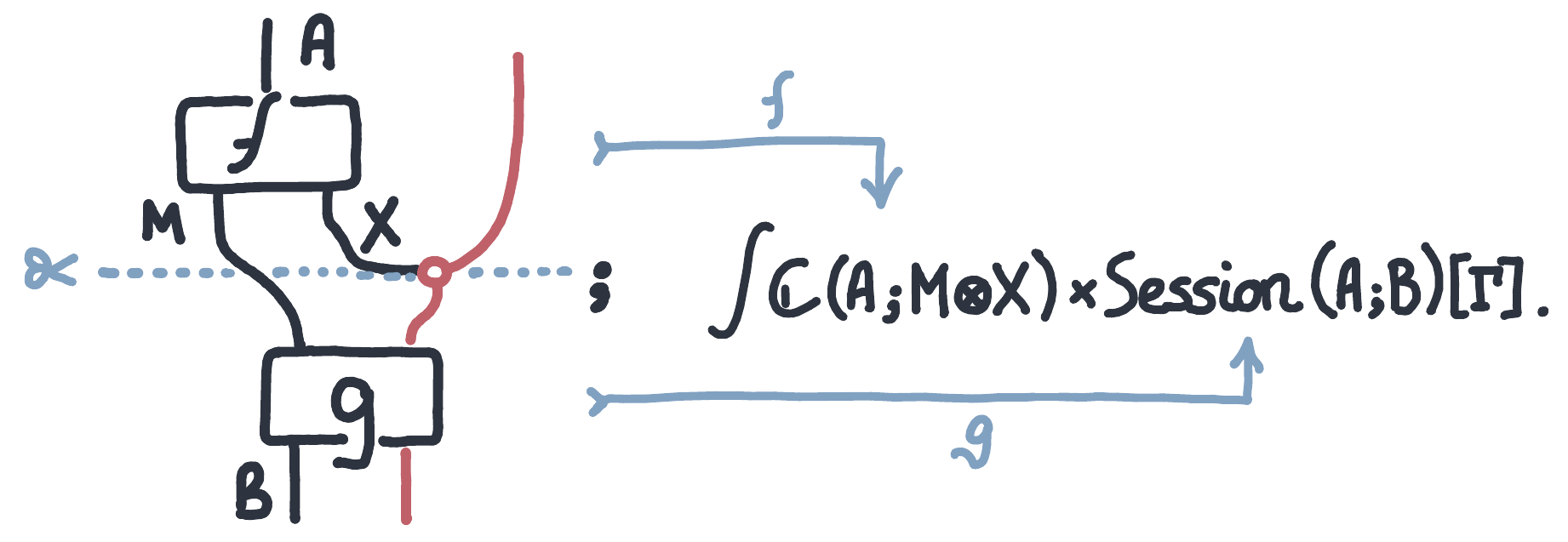}
    \caption{Splitting the diagram of a session.}
    \label{fig:split-session}
  \end{figure}
  
  Of course, the usual problem with this kind of definitions that split a string diagram is that we need to prove that they are well-defined. We need to show that this definition is invariant to isotopy; we do so by cases: \emph{(i)} if the isotopy interchanges two boxes before the cutting line, then there were two pure morphisms and it is captured by an equation of $ℂ$; \emph{(ii)} if the isotopy interchanges two boxes after the cutting line, then it is defining an equation of sessions; \emph{(iii)} if the isotopy interchanges two morphisms across the cutting line, then it is captured by dinaturality.
  
  At the same time, note that these are exactly all of the equations imposed to combs: those of the original \symmetricMonoidalCategory{} and \dinaturality{}; so the correspondence is bijective. Finally, by the induction hypothesis,
  \begin{align*}
     \Session(A;B)[X^{•},Γ] 
    & ≅ {\textstyle ∫}^{M ∈ ℂ}\ ℂ(A; X ⊗ M) × \Session(A;B)[Γ] \\
    & ≅ {\textstyle ∫}^{M ∈ ℂ}\  ℂ(A; X ⊗ M) × \mLens\left(Γ;\biobj{A}{B}\right) \\
    & ≅ \mLens\left(X^{•} ⊲ Γ;\biobj{A}{B}\right),
  \end{align*}
  which concludes the proof.
\end{proof}

The next step is to show that sessions over a \processTheory{} actually define a \messageTheory{}. For this, we will only need sessions with no inputs or outputs: we write $\Session[Γ]$ for $\Session(I;I)[Γ]$.

\begin{proposition}
  Sessions over a strict \symmetricMonoidalCategory{}, $\Session(ℂ)$, form a \messageTheory{}.
  This construction extends to a functorial assignemnt
  $$\Session ፡ \SymMonCatStr → \Msg.$$
\end{proposition}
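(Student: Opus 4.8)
The plan is to verify that $\Session(ℂ)$ satisfies the axioms of a \messageTheory{} (\Cref{def:messagetheory}) and then extend the construction to a functor. For the first part, I would take the session types to be the polarized lists of objects of $ℂ$, and for each such list $Γ$ declare $𝕄(Γ) = \Session[Γ] = \Session(ℂ)(I;I)[Γ]$, the sessions with no input or output whose sequence of events is exactly $Γ$. The four required operations are all natural to define: shuffling $\SHF_σ$ is obtained by taking two diagrams in the effectful category of sessions and interleaving their effectful wires according to $σ$ (using the whiskering and interchange structure of the \premonoidalCategory{}, and invoking \Cref{theorem:runtime-as-a-resource} to manipulate the runtime wire); the no-operation $\NOP$ is the empty session, i.e. the identity on $I$ regarded as a pure morphism; linking $\LNK_x$ takes a session with adjacent events $X^{•}, X^{∘}$ and composes the $\mathsf{send}$ generator $(•)_X ፡ X → I$ with the $\mathsf{receive}$ generator $(∘)_X ፡ I → X$ — but since $(•)_X ⨾ (∘)_X$ need not be the identity, the linking is really just removing these two events from the sequence of events while keeping the underlying morphism; and spawning $\SPW_x$ inserts a pair $(∘)_X ፡ I → X$ followed by $(•)_X ፡ X → I$ at the appropriate point in the diagram.

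The bulk of the work is checking the fourteen equational axioms (1a)--(5d). Here I would lean heavily on \Cref{prop:combsaresessions}: sessions with a fixed sequence of events $Γ$ are in bijection with monoidal lenses (combs) of type $\mLens(X_1^{‽} ⊲ \dots ⊲ X_n^{‽}; \pbiobj{I}{I})$, and the \normalProduoidalCategory{} structure of \monoidalLenses{} (\Cref{th:monoidalContextsAreANormalization}) already packages up sequential composition, parallel composition, units, and the interchange laws. Under this bijection, the shuffling operations of a \messageTheory{} correspond to the derived shuffling operations on combs coming from the produoidal structure together with the symmetric normalization; linking and spawning correspond to the unit and counit data. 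So the axioms of a \messageTheory{} should follow from the coherence of the normal \symmetricProduoidalCategory{} of symmetric monoidal lenses together with \dinaturality{} — indeed, \Cref{prop:messageTheoriesAreAlgebrasOfFreePolarized} tells us exactly which structure ($\SHF$ composing as shuffles, each object a left adjoint natural in the shuffles) is needed, and \Cref{prop:sessionNotation} already establishes the send/receive functors $(\Send{})$ and $(\Get{})$ with the needed naturality. The main obstacle I anticipate is not any single axiom but the bookkeeping of the well-definedness of $\SHF_σ$: interleaving effectful wires in a \premonoidalCategory{} must be shown invariant under the isotopies and base-category equations, exactly as in the cases analysis of \Cref{prop:combsaresessions} (pure/pure interchange is an equation of $ℂ$, effectful/effectful interchange is an equation of sessions, across-the-cut interchange is \dinaturality{}), and one must check these interleavings compose as in the symmetric \malleableMulticategory{} of shufflings.

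For the functoriality statement, given a strict symmetric monoidal functor $F ፡ ℂ → 𝔻$, I would define $\Session(F)$ on objects by $F$ itself and on sessions by the induced functor on the freely generated effectful categories: $F$ extends to an effectful functor $\Session(ℂ) → \Session(𝔻)$ sending pure morphisms via $F$ and sending $(∘)_X \mapsto (∘)_{FX}$, $(•)_X \mapsto (•)_{FX}$, by the universal property of the free \effectfulCategory{} (\Cref{theorem:runtime-as-a-resource}, applied to the \polygraphCouple{} of session generators). This functor preserves the sequence of events by construction, hence restricts to a \messageFunctor{}; it preserves shuffling, linking, spawning and the no-operation because all four are built from composition, whiskering, and the send/receive generators, each of which $\Session(F)$ preserves. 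Functoriality in $F$ (identities and composites) is then immediate from the uniqueness clause of the free-effectful-category universal property. I expect this part to be routine once the \messageTheory{} structure has been pinned down.
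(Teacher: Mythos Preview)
Your proposal takes a different route from the paper. The paper does not verify the fourteen axioms of \Cref{def:messagetheory} directly; instead it invokes the characterization of \messageTheories{} as algebras of the \physicalMonoidalMulticategory{} of \polarShuffles{} (the corollary following \Cref{th:polar-shuffles-are-the-free-polarized-physical-monoidal-multicategory}) and gives a single topological construction: given sessions $s_i \in \Session[Γ_i]$ and a \polarShuffle{} $p \in \pShuf(Γ_1,\dots,Γ_n;Δ)$, glue the hypergraphs of the $s_i$ along their runtime wires into the acyclic graph of $p$. The result is acyclic because acyclic graphs glued along linear boundaries remain acyclic (the same argument as in \Cref{prop:polarcomposition}), hence is again a string diagram and defines a session of type $Δ$. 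Compatibility with polar-shuffle composition is immediate because both are defined by gluing. This buys a uniform treatment of all four operations and all axioms at once, with acyclicity as the only nontrivial step.

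Your axiom-by-axiom approach via the \produoidalCategory{} of \monoidalLenses{} could in principle be made to work, but your description of $\LNK_x$ contains a genuine gap. You write that linking ``is really just removing these two events from the sequence of events while keeping the underlying morphism'', but the sequence of events is a derived invariant of a session, not independent data: you cannot change it without changing the morphism. What linking must actually do is excise the adjacent send/receive generators from the string diagram and reconnect the surrounding wires directly. If you go via \Cref{prop:combsaresessions} this becomes the counit of the duality $X^{•} \dashv X^{∘}$ in the \produoidalCategory{} of lenses, and likewise $\SPW_x$ is the unit --- but you would then need to identify these maps explicitly and verify that the duality axioms in \Cref{prop:sessionNotation} give exactly the snake equations (4a, 4b) and that naturality with respect to the produoidal structure gives axioms (2a)--(3b) and (5a)--(5d). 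That is doable but is considerably more bookkeeping than the paper's single acyclicity argument.

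For functoriality, your approach via the universal property of the free \effectfulCategory{} is essentially the same as the paper's.
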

\begin{proof}
  We will show that sessions over a strict \symmetricMonoidalCategory{}, $\Session(ℂ)$, form an algebra for the \multicategory{} of \polarShuffles{}. Consider a family of sessions, $sᵢ ∈ \Session(Γᵢ)$; and consider a \polarShuffle{}
  $$p ∈ \pShuf(Γ₁,…,Γₙ; Δ),$$
  we need to construct a new session of type $Δ$.

  The construction follows a topological intuition: consider the hypergraph defining the string diagrams of the sessions; and consider at the same time the acyclic graph defined by the \polarShuffle{}. We glue the string diagram of each $sᵢ$, along its runtime wire, to the inputs and outputs, $Γᵢ$, of the \polarShuffle{}, removing these nodes on the process. The crucial step happens now: we have a graph containing nodes of the \premonoidalCategory{} of sessions, and it has been constructed by gluing acyclic graphs along linear boundaries -- it must be acyclic, and it must be a string diagram.

  We define the composition of the sessions $sᵢ ∈ \Session(Γᵢ)$ along the \polarShuffle{} $p ∈ \pShuf(Γ₁,…,Γₙ; Δ),$ to be the session represented by the string diagram here obtained. This assignment preserves the composition of \polarShuffles{} -- which is also defined topologically by gluing -- and thus it determines an algebra.

  Finally, the assignment is functorial with respect to the base monoidal category because \emph{(i)} the construction of the sessions is functorial and \emph{(ii)} we only apply operations of a symmetric \premonoidalCategory{} when we build a string diagram over this \premonoidalCategory{} of sessions.
\end{proof}

\begin{example}
  Consider two morphisms, $f ፡ A → M ⊗ X$ and $g ፡ M ⊗ Y → B$, determining a session of type $[A^{∘},X^{•},Y^{∘},B^{•}]$; and consider two other morphisms, $h ፡ X ⊗ U → N$ and $k ፡ N → Y ⊗ V$, determining a session of type $[X^{∘},U^{∘},V^{•},Y^{•}]$.
  They can compose along the \polarShuffle{} we defined in \Cref{fig:polar-shuffle-example-encoding}; the result is in \Cref{fig:polar-process-swap}.
  \begin{figure}[ht]
    \centering
    \includegraphics[scale=0.35]{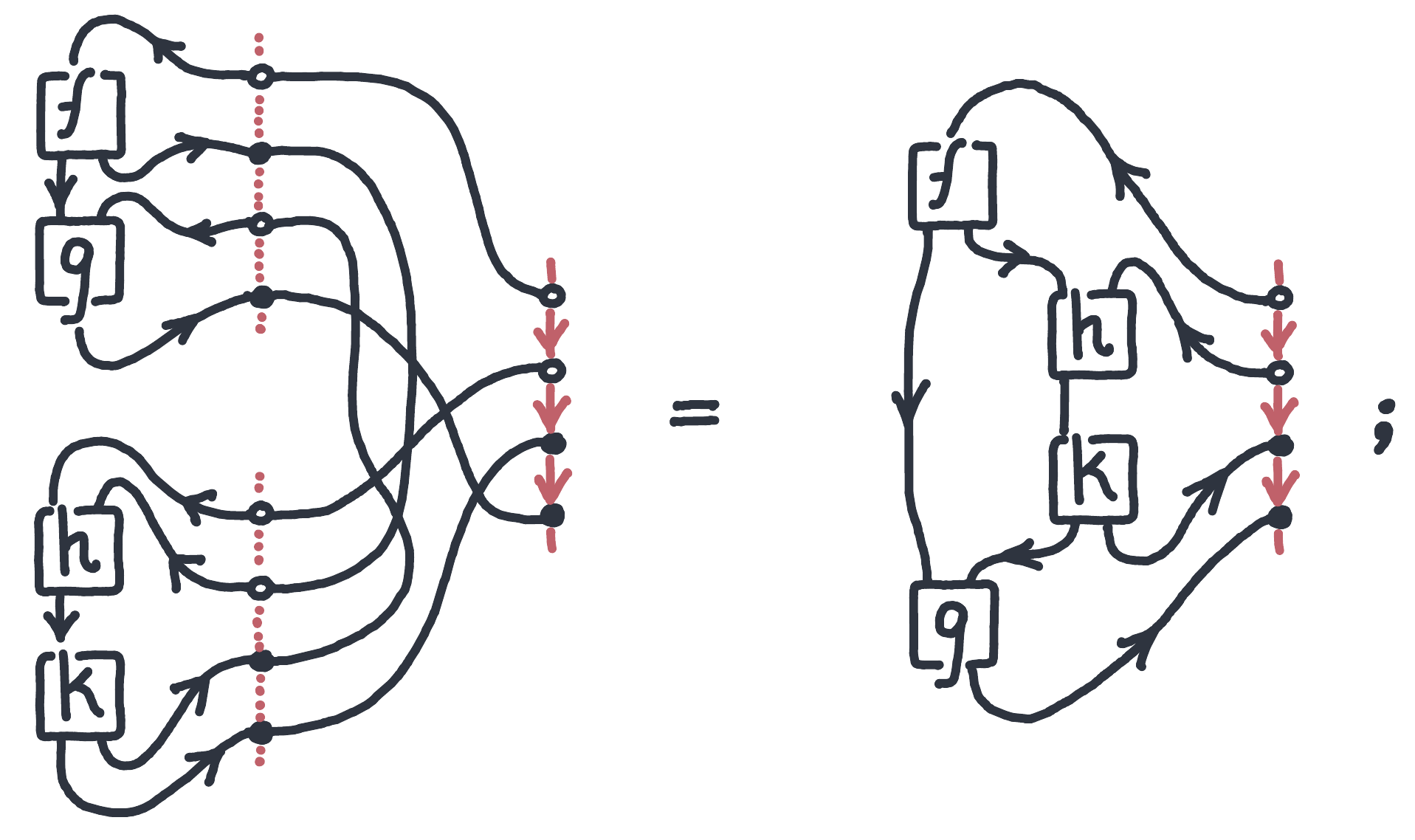}
    \caption{Two sessions compose along a polar shuffle.}
    \label{fig:polar-process-swap}
  \end{figure}
\end{example}

\subsection{Sessions versus Processes}
The final result of this section is to prove that \emph{sessions} over a \processTheory{} are the free \messageTheory{} over a \symmetricMonoidalCategory{}.

\begin{lemma}
  There exists a strict symmetric monoidal functor $$\inProc ፡ ℂ → \Proc(\Session(ℂ))$$ that includes a \monoidalCategory{} in the processes of its \messageTheory{}.
\end{lemma}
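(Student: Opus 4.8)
The plan is to exhibit $\inProc$ directly on generators and then invoke freeness of $ℂ$ as a strict symmetric monoidal category. Recall that $\Proc(\Session(ℂ))$ has as objects the lists of objects of the message theory $\Session(ℂ)$, which in turn has the same objects as $ℂ$; so on objects I would send a list $[X_1, \dots, X_n]$ to the list $[X_1, \dots, X_n]$, or, thinking of $ℂ$ as strict, simply send $X$ to the one-element list $[X]$ and extend by concatenation. The content is on morphisms: a morphism $f ∈ ℂ(X_1 ⊗ \dots ⊗ X_n; Y_1 ⊗ \dots ⊗ Y_m)$ must be sent to an element of $\Proc(\Session(ℂ))(X_1 ⊗ \dots ⊗ X_n; Y_1 ⊗ \dots ⊗ Y_m) = \Session(ℂ)(X_n^{?}, \dots, X_1^{?}; Y_1^{!}, \dots, Y_m^{!})$. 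I would define this assignment by first \emph{receiving} all the inputs (using the $(•)_{X_i}$ generators in reverse order), then applying $f$ as a pure morphism, then \emph{sending} all the outputs (using the $(∘)_{Y_j}$ generators); pictorially, this is the obvious session that wraps $f$ with a block of receives before and a block of sends after, following the runtime wire.

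The key steps, in order: (1) fix the assignment on objects and the assignment on morphisms just described, including the convention that reverses the order of inputs. (2) Check that this assignment preserves identities: the identity on $[X_1, \dots, X_n]$ in $\Proc(\Session(ℂ))$ is the session that spawns a channel for each $X_i$ (as in \Cref{fig:polar-process-identity}); receiving-then-sending nothing, or equivalently receiving all inputs and immediately sending them back, must coincide with this spawned-channel identity, which follows from the snake/echo behaviour already baked into the message theory axioms (or, more concretely, from the fact that sessions are algebras of polar shuffles). (3) Check preservation of composition: given $f$ and $g$ composable in $ℂ$, the session for $f ⨾ g$ is obtained by the sends-at-the-end of $f$ meeting the receives-at-the-start of $g$; this is exactly the polar shuffle that links middle outputs to middle inputs (\Cref{fig:polar-process-composition}), and the "linking a sent message to immediately receive it" structure of $\Session(ℂ)$ collapses each send-receive pair, leaving precisely the session of $f ⨾ g$. (4) Check preservation of tensor and of symmetries: tensoring is disjoint juxtaposition of the wrapped diagrams, which matches the tensor polar shuffle (\Cref{fig:polar-process-tensor}); the symmetry $σ_{X,Y}$ in $ℂ$ is pure and gets wrapped, and one checks it maps to the spawn-then-shuffle symmetry of $\Proc$ (\Cref{fig:polar-process-swap}). (5) Conclude strict symmetric monoidality. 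Alternatively, since $ℂ$ as a strict symmetric monoidal category is freely generated by its underlying polygraph modulo its equations, it suffices to specify images of objects and generating morphisms and verify the images satisfy the equations of $ℂ$ — but the equations of $ℂ$ hold in $\Proc(\Session(ℂ))$ because the pure morphisms of $\Session(ℂ)$ are exactly $ℂ$ quotiented by those same equations, and $\Proc$ only adds structure around them.

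The main obstacle I expect is step (3) together with well-definedness: the morphisms of $\Proc(\Session(ℂ))$ are sessions, which are string diagrams of a premonoidal category quotiented by isotopy, and the morphisms of $\Session(ℂ)$ are built with an extra runtime wire; I need to be careful that the "wrap $f$ with receives and sends" recipe descends to the quotient and that composing two wrapped diagrams and then cancelling send-receive pairs genuinely yields the wrapped diagram of the composite rather than something only equal up to the message-theory axioms. This is where \Cref{prop:combsaresessions} (combs are sessions) is the right lever: it identifies $\Session(A;B)[Γ]$ with the corresponding monoidal lens, and under that identification the composition in $\Proc$ becomes composition of lenses, which is manifestly functorial. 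So in practice I would route step (3) through \Cref{prop:combsaresessions}, reducing functoriality of $\inProc$ to the already-established produoidal/lens structure, rather than manipulating premonoidal string diagrams by hand.
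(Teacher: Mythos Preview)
Your proposal is correct and converges on the same approach as the paper: the paper defines $\inProc$ as the identity on objects and on morphisms uses the bijection $\Proc(\Session(ℂ))(A;B) = \Session(ℂ)[A^{∘},B^{•}] \cong \mLens(A^{∘} \triangleleft B^{•}; \pbiobj{I}{I}) \cong ℂ(A;B)$ from \Cref{prop:combsaresessions}, then checks that composition, tensoring, identities and symmetries in $\Proc$ unwind to those of $ℂ$. Your explicit ``wrap $f$ with receives then sends'' is exactly what this bijection does at the level of elements, and your final paragraph correctly identifies \Cref{prop:combsaresessions} as the lever that makes the functoriality checks go through.
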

\begin{proof}
  The functor will act as the identity on objects.
  We already know that combs are sessions (\Cref{prop:combsaresessions}), and we can use this fact to construct the assignment on
  morphisms.
  \begin{align*}
    \Proc(\Session(ℂ)(A;B)) = \Session(ℂ)[A^{∘}, B^{•}] 
    ≅  \mLens(A^{∘}, B^{•}; \biobj{I}{I}) 
     ≅ ℂ(A;B).
  \end{align*}
  It only remains to show that this assignment defines a strict symmetric monoidal functor: we need to show that it preserves composition, tensoring, identities and symmetries. This is straightforward, as we only need to check that the operations that we defined for the \processTheory{} of a \messageTheory{}, $\Proc(𝕄)$, correspond to the operations of a \symmetricMonoidalCategory{}.
  Let us explicitly check composition, $\inProc(f) ⨾ \inProc(g) = \inProc(f ⨾ g)$, in \Cref{fig:polar-composition-works}, the rest follow a similar pattern.
  \begin{figure}[ht]
    \centering
    \includegraphics[scale=0.35]{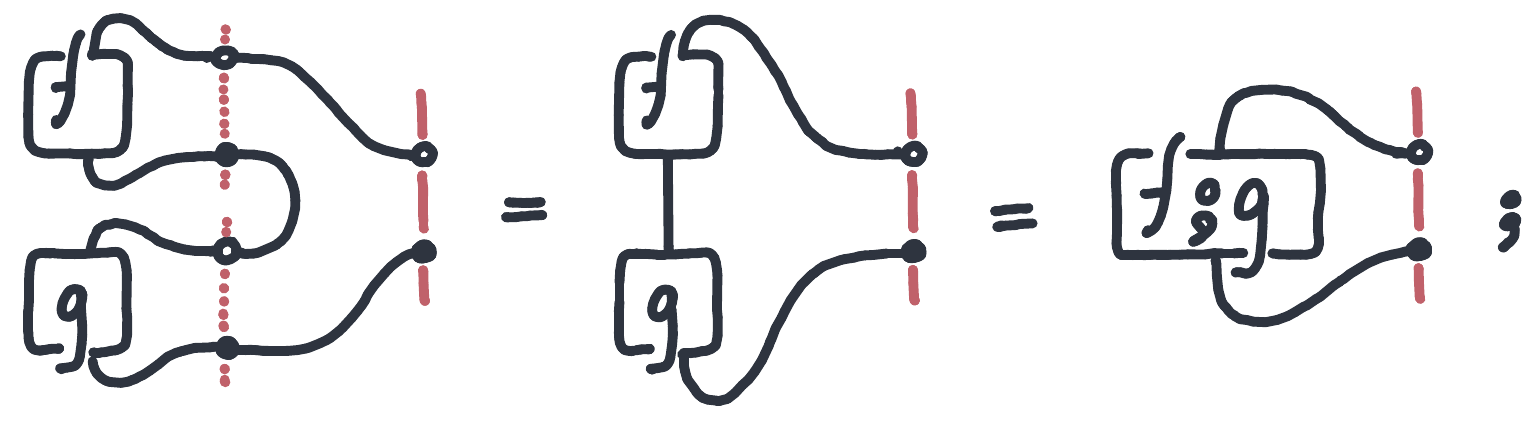}
    \caption{The inclusion of processes preserves composition.}
    \label{fig:polar-composition-works}
  \end{figure}

  Checking the rest of the cases concludes the construction.
\end{proof}

\begin{theorem}
  \label{th:sessions-vs-processes}
  Sessions and processes form an adjunction, $\Session ⊣ \Proc$; where sessions, $\Session ፡ \SymMonCatStr → \Msg$, construct the free \messageTheory{} over a \symmetricMonoidalCategory{}, and where processes, $\Proc ፡ \Msg → \SymMonCatStr$, construct the cofree \symmetricMonoidalCategory{} over a \messageTheory{}.
\end{theorem}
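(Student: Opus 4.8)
The plan is to establish the adjunction $\Session \dashv \Proc$ by exhibiting unit and counit and checking the triangle identities, leveraging the universal properties we have already accumulated. First I would construct the unit. For a strict symmetric monoidal category $ℂ$, the component $η_ℂ \colon ℂ → \Proc(\Session(ℂ))$ is exactly the functor $\inProc$ from the preceding lemma; we already know it is a strict symmetric monoidal functor, so the only work is naturality in $ℂ$, which follows because $\inProc$ is defined on morphisms through the chain of isomorphisms $\Proc(\Session(ℂ))(A;B) ≅ \mLens(A^{∘},B^{•}; \pbiobj{I}{I}) ≅ ℂ(A;B)$, and all three functors $\Session$, $\Proc$, $\mLens$ act functorially; a routine diagram chase closes it.

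Next I would construct the counit $ε_𝕄 \colon \Session(\Proc(𝕄)) → 𝕄$ for a message theory $𝕄$. Here the key observation is that $\Session(\Proc(𝕄))$ is generated, as an effectful category, by the morphisms of $\Proc(𝕄)$ together with send/receive generators $(∘)_X, (•)_X$ for each object $X ∈ \Proc(𝕄)_{obj} = 𝕄_{obj}^{\ast}$; and \Cref{prop:combsaresessions} tells us that sessions in $\Session(\Proc(𝕄))$ with a given sequence of events correspond to combs of $\Proc(𝕄)$ with those events happening sequentially, which in turn — since $\Proc(𝕄)(A;B) = 𝕄(A^{∘},B^{•})$ — unwind back into sessions of $𝕄$ itself. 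Concretely I would define $ε_𝕄$ to send a send generator $(•)_X ∈ \Session(\Proc(𝕄))$ to the spawning-and-linking data in $𝕄$, and a receive generator dually; on pure morphisms (which come from $\Proc(𝕄) = 𝕄(A^{∘},B^{•})$) it acts by the identity on the underlying $𝕄$-session. The content to verify is that this assignment respects the presentation of $\Session(\Proc(𝕄))$ (i.e. it is well-defined as a message functor): the generators of $\Session$ impose \emph{no} extra equations beyond those of the base symmetric monoidal category, so well-definedness reduces to the fact that the structural operations of a message theory — shuffling, spawning, linking — already satisfy all the message-theory axioms, which is exactly what \Cref{def:messagetheory} provides.

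Then I would verify the two triangle identities. The identity $\Proc(ε_𝕄) ∘ η_{\Proc(𝕄)} = \id_{\Proc(𝕄)}$ follows because, tracing through \Cref{prop:combsaresessions} and the definition of $\inProc$, a process $f ∈ \Proc(𝕄)(A;B) = 𝕄(A^{∘},B^{•})$ is first reinterpreted as the trivial two-event comb/session and then collapsed by $ε_𝕄$ back to $f$; the only subtlety is checking the spawn–link cancellations (Axioms 4a, 4b) play the role of the snake equations, which they do by design. The identity $ε_{\Session(ℂ)} ∘ \Session(η_ℂ) = \id_{\Session(ℂ)}$ is checked on generators: pure morphisms are carried around the loop untouched since $η_ℂ$ and $ε_{\Session(ℂ)}$ both act by identities on the $ℂ$-part, while a send generator $(•)_X$ is sent by $\Session(η_ℂ)$ to the corresponding send generator in $\Session(\Proc(\Session(ℂ)))$ and then by $ε_{\Session(ℂ)}$ back to $(•)_X$; because $\Session$ is freely presented over generators-with-no-relations, agreement on generators suffices. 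The main obstacle I anticipate is precisely the well-definedness of the counit $ε_𝕄$: one must check that \emph{every} equation holding in the freely presented effectful category $\Session(\Proc(𝕄))$ — in particular all premonoidal coherence and the interchange-by-isotopy relations — is respected by the target assignment into $𝕄$, and this is where the coherence theorems for message theories (\Cref{theorem:coherentMessageTheories}) and for polar shuffles (\Cref{cor:physicalMonoidalMulticat-are-coherent}) must be invoked to guarantee that the candidate map does not depend on the choice of derivation. Everything else is bookkeeping by the universal properties already in hand.
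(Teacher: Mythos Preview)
Your unit--counit approach is a legitimately different route from the paper. The paper proves the adjunction via the \emph{universal arrow} characterization: for an arbitrary strict symmetric monoidal functor $F \colon ℂ \to \Proc(𝕄)$, it constructs the unique message functor $F^{♯} \colon \Session(ℂ) \to 𝕄$ factoring $F$ through $\inProc$, by (i) arguing $F^{♯}$ is forced on types and on single-morphism sessions $\inProc(f)$, (ii) extending to all sessions via \Cref{prop:combsaresessions} --- every session is a comb, and every comb factors as a message-theoretic composite of single-morphism pieces --- and (iii) checking well-definedness by verifying invariance under \emph{dinaturality} of combs, which is the only relation imposed on them. Preservation of the polar-shuffle operations is then a rewiring argument. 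What the paper's route buys is economy: the counit is never built separately (it would just be $\id_{\Proc(𝕄)}^{♯}$), uniqueness comes for free rather than through triangle identities, and the well-definedness obligation is pinned down as a concrete dinaturality check rather than an appeal to premonoidal or message-theory coherence. Your description of $ε_𝕄$ ``on send generators'' also sits at the wrong granularity --- a message functor acts on sessions, i.e.\ on equivalence classes of combs, not on generators of the underlying effectful category --- and once you correct this you will find yourself running exactly the paper's comb-factoring argument, only with triangle identities still to discharge afterward.
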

\begin{proof}
  Consider a strict \symmetricMonoidalCategory{}, $ℂ$, and a \messageTheory{}, $𝕄$, endowed with a strict symmetric monoidal functor $F ፡ ℂ → \Proc(𝕄)$. We will construct a message functor $F^{♯} ፡ \mLens(ℂ) → 𝕄$ and prove that it is the unique one satisfying $\inProc ⨾ \Proc(F^{♯}) = F$.

  Let us show that such a message functor, if it were to exist, would be unique. Firstly, the image on message types is already determined to be $F^{♯}(A) = F(A)$. Secondly, the image on sessions consisting on a single morphism, $\inProc(f) ፡ [A^{∘},B^{•}]$, is determined, $F^{♯}(\inProc(f)) = F(f) ፡ [FA^{∘},FB^{•}]$. We will show now that this reasoning can be extended to all sessions:
  we know that sessions of type $[X_1^{‽}, … , X_n^{‽}]$ are combs (\Cref{prop:combsaresessions}) of type
  $$(f₀ | … | fₙ) ፡ X₁^{‽} ⊲ … ⊲ X^{‽}ₙ → \pbiobj{I}{I}.$$
  This determines their image: combs can be factored as the composition, in the \messageTheory{} of multiple sessions consisting of a single morphism (\Cref{fig:comb-to-session}).
  Accordingly, their image, $F^{♯}(f₀ | … | fₙ)$, should be the composition, on the \messageTheory{}, of these pieces (\Cref{fig:comb-to-session}).
  \begin{figure}[ht]
    \centering
    \includegraphics[scale=0.35]{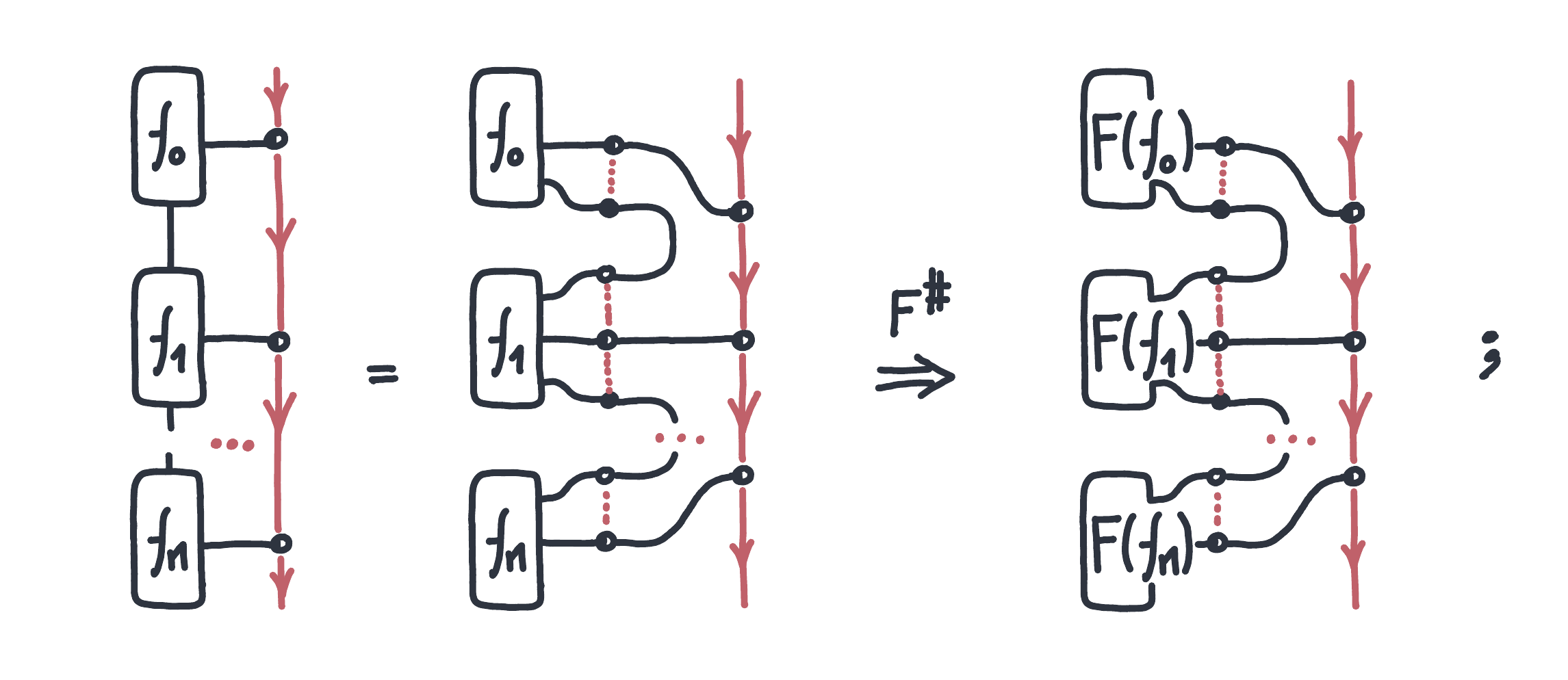}
    \caption{Image of a comb under the message functor.}
    \label{fig:comb-to-session}
\end{figure}

  Let us now show that we have constructed a well-defined assignment. Our construction should preserve the \dinaturality{} equivalence relation imposed to combs. This happens, indeed, and the proof simply checks that the images of two combs,
  $$(f₀ ⨾ (\id ⊗ h₀) | f₁ ⨾ (\id ⊗ h₁) | … | fₙ)  = (f₀ | (\id ⊗ h₀) ⨾ f₁ |  … | (\id ⊗ h_{n-1}) ⨾ fₙ),$$
  are equal (\Cref{fig:comb-assingment}).
  \begin{figure}[ht]
    \centering
    \includegraphics[scale=0.35]{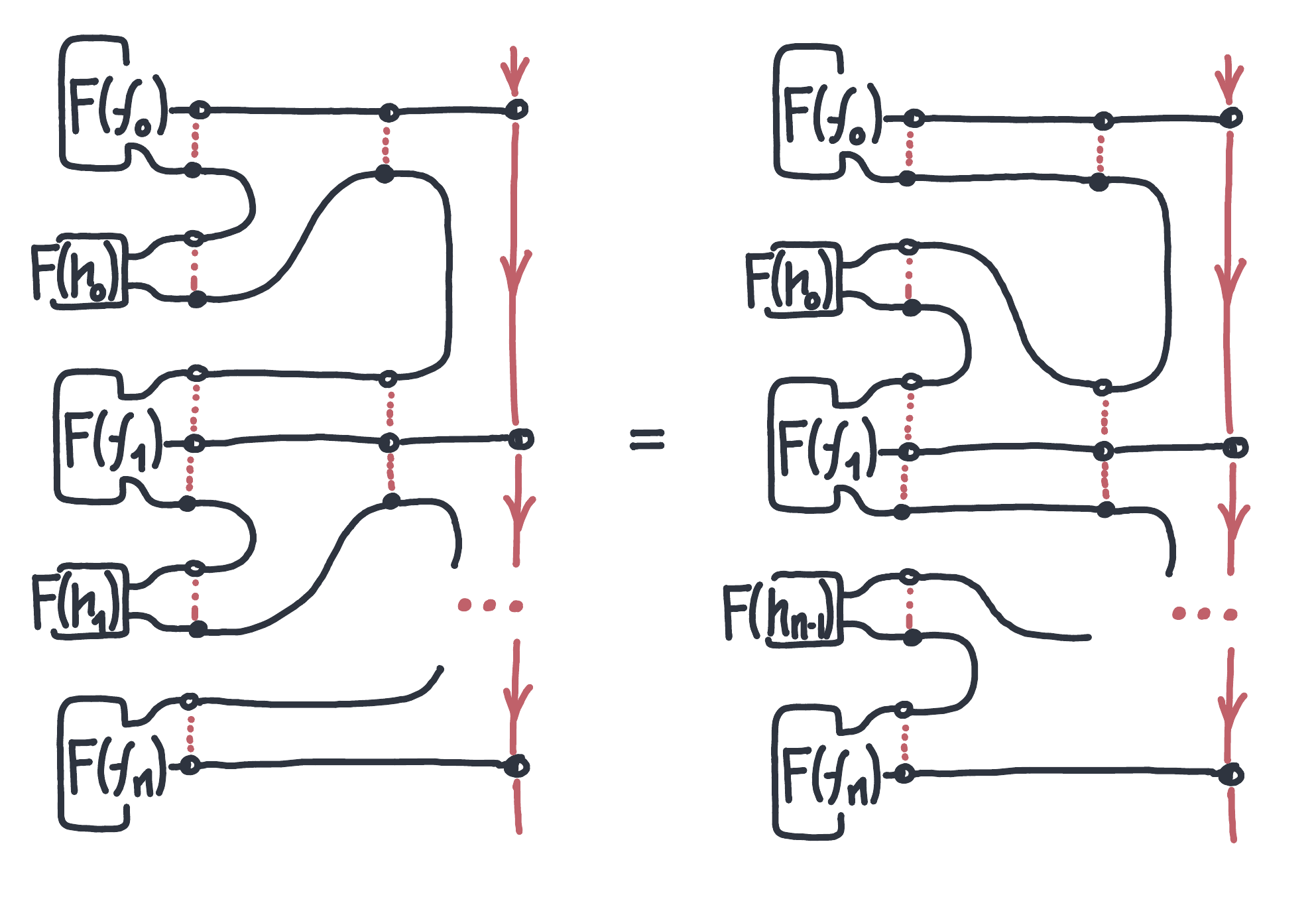}
    \caption{The assignment of combs to sessions preserves dinaturality.}
    \label{fig:comb-assingment}
  \end{figure}
  We have constructed a well-defined assignment on sessions. Finally, we need to check that $F^{♯}$ is a message functor preserving all of the operations determined by \polarShuffles{}. In the theory of combs, applying a \polarShuffle{} corresponds to a rewiring into another comb; applying the \polarShuffle{} in $𝕄$ must result in the same rewiring of the pieces forming the comb -- which, as we have already shown, is precisely the image of that comb. This forces $F^{♯}$ to preserve the application of a \polarShuffle{}.

  We have shown that $F^{♯}$ is indeed a message functor and that it is the only possible one satisfying $\inProc ⨾ \Proc(F^{♯}) = F$.
\end{proof}

\subsection{Example: One-Time Pad, as a Message Session}
  Let us come back to \Cref{ex:one-time-pad}, where we discussed a decomposition of the one-time pad. 
  We now know that there is an adjunction between \symmetricMonoidalCategories{} and \messageTheories{}, let us use it to provide semantics to the decomposition of the one-time pad example.

  The theory for the one-time pad problem can be expressed in a message theory $𝕆$ where we have a single object generator for the type of a message, $X$, and a single session generator for each one of the actors.
  \begin{enumerate}
    \item $\Stage ፡ X^{∘} ⊲ X^{•} ⊲ X^{•},$
    \item $\Bob ፡ X^{•} ⊲ X^{∘} ⊲ X^{•},$
    \item $\Alice ፡ X^{∘} ⊲ X^{∘} ⊲ X^{•},$
    \item $\Eve ፡ X^{∘} ⊲ X^{•}.$
  \end{enumerate}
  We will interpret these generators in the free \messageTheory{} over the category of finite sets and stochastic maps: thanks to the adjunction, we know that, if we could interpret each one of the components presenting the category of finite sets in any \messageTheory{}, then we can interpret this whole example inside it.

  We already gave an interpretation to each one of the components in terms of combs. We rewrite now the example explicitly separating each one of the parts that form it (\Cref{fig:opt-session}).

  \begin{proposition}
    The session describing the one-time pad protocol is equal to a session where \Alice{} and \Bob{} communicate the message directly and \Eve{} attacks a signal representing pure noise.
    \begin{figure}[ht]
      \centering
      \includegraphics[scale=0.35]{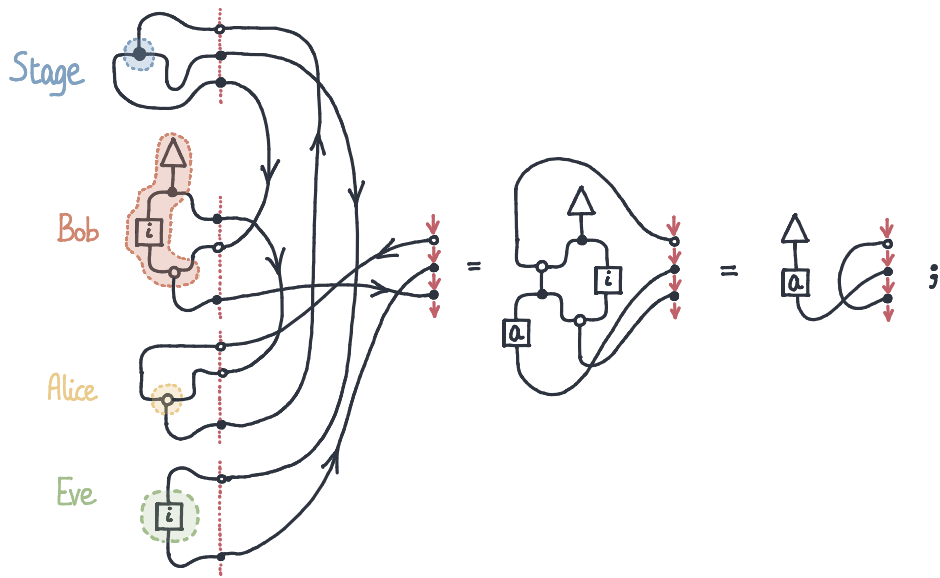}
      \caption{One-time pad, complete session.}
      \label{fig:opt-session}
  \end{figure}
  \end{proposition}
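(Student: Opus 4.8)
The plan is to reuse the correctness proof of the one-time pad already recorded in \Cref{prop:correctness} and \Cref{fig:one-time-pad-correctness}, transporting it across the adjunction $\Session ⊣ \Proc$ of \Cref{th:sessions-vs-processes}. First I would fix the ambient process theory: let $ℂ$ be the strict \symmetricMonoidalCategory{} of finite sets and stochastic maps, equipped with the Hopf algebra with an integral $(X,\iconbcm,\iconbcu,\iconwm,\iconwu,i,d)$ of \Cref{fig:hopf-algebra-theory} (bitwise XOR as multiplication, the uniform distribution as integral). By the adjunction, interpreting the generators of $𝕆$ in $\Session(ℂ)$ is the same as giving a strict symmetric monoidal functor out of the free \symmetricMonoidalCategory{} carrying this structure; equivalently, by \Cref{prop:sessionNotation} and \Cref{prop:combsaresessions}, each actor is a \monoidalLens{} (a comb) and each sequential type $X^{∘} ⊲ X^{•} ⊲ \dots$ records the order of $\Send{}$/$\Get{}$ events along that actor's runtime wire.

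Next I would unfold both sides of the claimed equation as sessions of type $[\,]$ (no free ports) obtained by composing the four combs $\Stage$, $\Alice$, $\Bob$, $\Eve$ along the \polarShuffle{} $τ$ that wires them together, exactly as in \Cref{fig:opt-sessionIntro} and \Cref{fig:opt-session}. Because $\Session(ℂ)$ is an algebra for the \physicalMonoidalMulticategory{} of \polarShuffles{} (the functoriality of $\Session$ proven just above), this composite is computed topologically: glue the string diagrams of the four combs along their runtime wires following the acyclic graph of $τ$, deleting the interface nodes. The result is a single string diagram over the \premonoidalCategory{} of sessions; using \Cref{prop:combsaresessions} in the reverse direction, this session is itself a comb, and in fact — since after gluing every $\Send{}$ is linked to a matching $\Get{}$ — it is a \emph{pure} session, i.e. an ordinary morphism of $ℂ$. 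A short computation identifies this morphism as precisely the left-hand side of \Cref{fig:one-time-pad-correctness}: $\Alice$ contributes the XOR-with-noise and broadcast, $\Stage$ contributes the shared-noise preparation and the broadcast wiring, $\Eve$ contributes the passive tap, and $\Bob$ contributes the decrypting XOR with the stored noise bit.

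Now I would simply invoke \Cref{prop:correctness}: that diagram equals $\id_X$ tensored with the integral $d ፡ I → X$ through \Eve{}'s channel. Re-reading that morphism through \Cref{prop:combsaresessions} and \Cref{prop:sessionNotation} exhibits it as the session in which \Alice{} sends a message that \Bob{} receives unchanged (a $\Send{}$ linked directly to a $\Get{}$) while \Eve{}'s channel receives $d(\NOP)$, pure noise — which is exactly the right-hand side. The only genuine work, and the main obstacle, is bookkeeping: verifying that the \polarShuffle{} $τ$ really glues the four combs into the specific diagram of \Cref{fig:one-time-pad} (matching broadcasts, the order of \Bob{}'s receive-then-send, the placement of \Eve{}'s tap), so that the equality is an instance of the already-established \Cref{prop:correctness} rather than a new diagrammatic identity. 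This is a finite check on the acyclic graph of $τ$ together with the isotopy invariance established in the proof of \Cref{prop:combsaresessions}, and once it is done the statement follows.
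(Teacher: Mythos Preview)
Your proposal is correct and follows essentially the paper's approach: compose the four combs along the prescribed polar shuffle to recover the one-time-pad string diagram of \Cref{fig:one-time-pad}, then apply \Cref{prop:correctness}; the paper's own proof is the same argument compressed into a single sentence that points to \Cref{fig:opt-session} for the evaluation of the shuffle, leaving the framework-level justifications (the adjunction, \Cref{prop:combsaresessions}) implicit where you spell them out. One small correction: the composed session is not of empty type, since the protocol retains external ports (Alice's input message and the outputs to Bob and Eve), so the glued diagram is a morphism $X \to X \otimes X$ in $ℂ$ rather than a scalar --- but this does not affect the structure of your argument.
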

  \begin{proof}
    Evaluating the session that describes the one-time pad example using the components described before, in \Cref{ex:one-time-pad}, obtains the following \polarShuffle{} applied to multiple combs. Evaluating the \polarShuffle{}, as in \Cref{fig:opt-session}, produces the desired result.
  \end{proof}
  
  \begin{remark}
    This discussion is not restricted to the modularity of the string diagrams: it affects the modularity of the code itself. 
    Recall that we have a notation for sessions and \polarShuffles{}; we use it in \Cref{fig:otp-wiring} to write the one-time pad.
    \begin{figure}[ht] 
      \centering
      \includegraphics[scale=0.15]{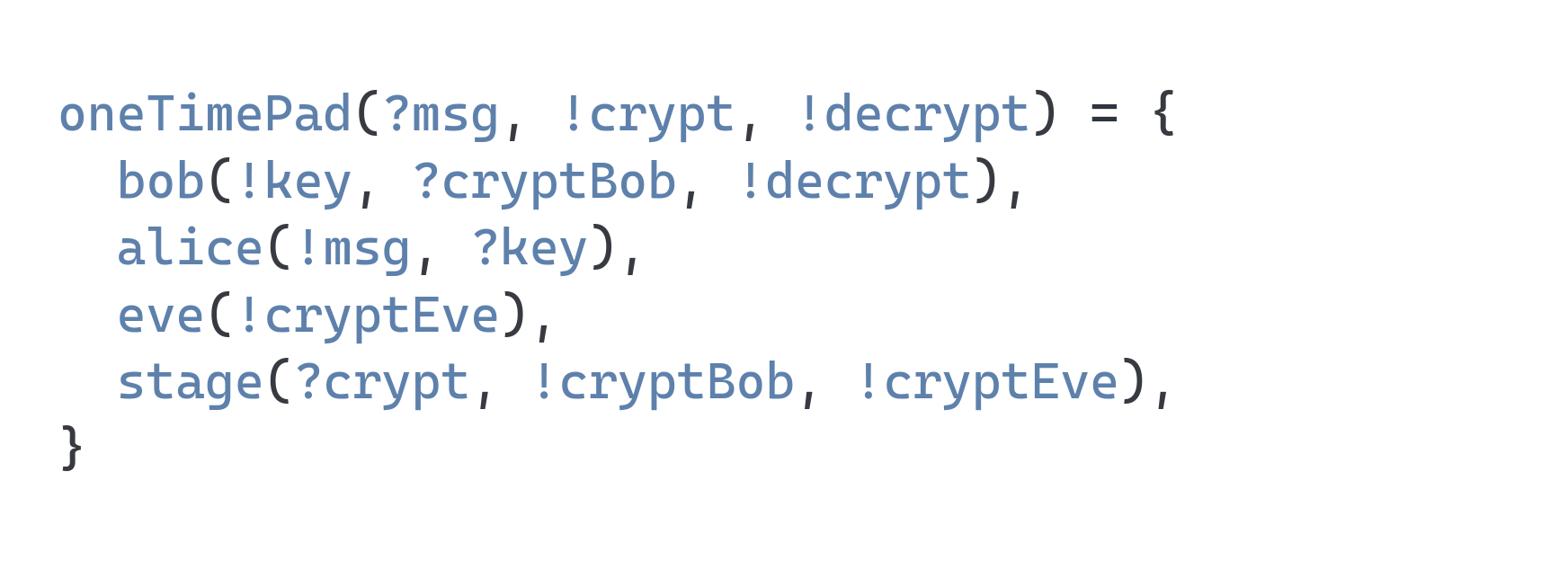}
      \caption{Notation for the one-time pad session.}
      \label{fig:otp-wiring}
    \end{figure}

    At the same time, we have second a notation for sessions: sessions are ultimately morphisms of an \effectfulCategory{}, so we can use \emph{do-notation} without the interchange axiom to represent them. The sending and receiving effects can be written as $(!/?)$ respectively. Lenses are tuples of morphisms, and they can be represented in do-notation using that exact characterization. The following \Cref{fig:otp-do-notation} shows a modular implementation of the one-time pad that separates each one of the actors into a different module.
    \begin{figure}[ht] 
      \centering
      \includegraphics[scale=0.5]{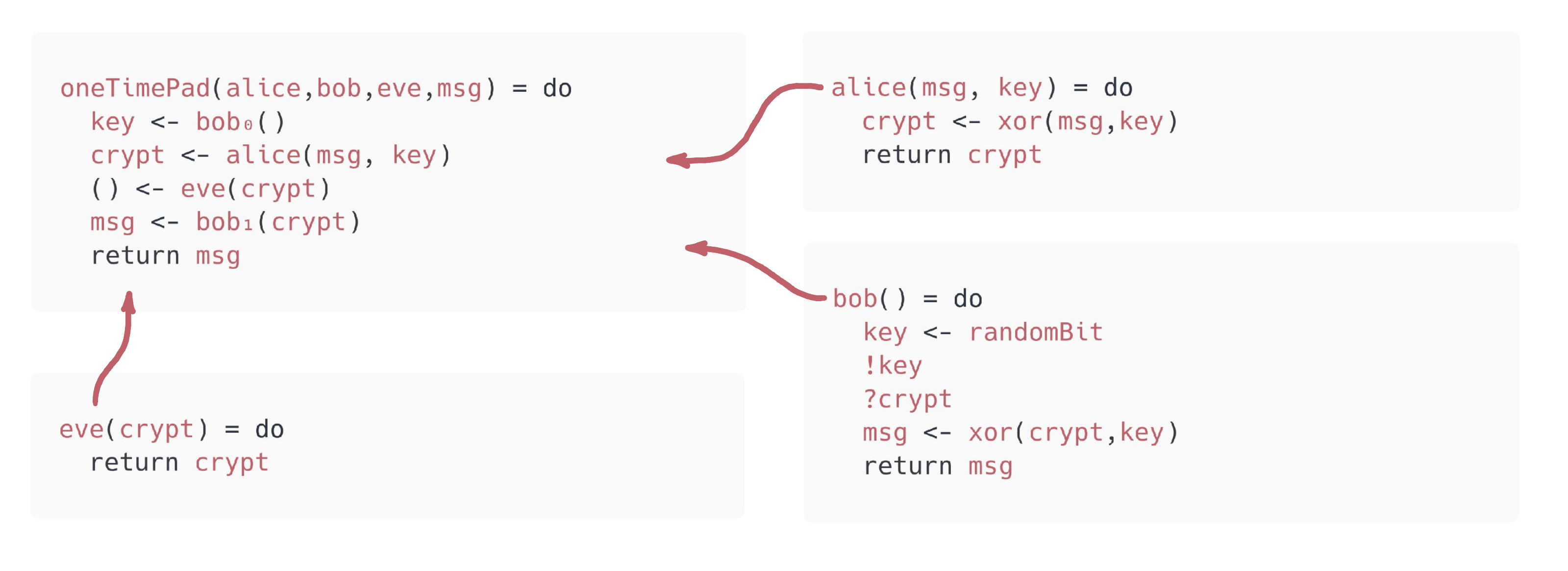}
      \caption{Do-notation for the one-time pad.}
      \label{fig:otp-do-notation}
    \end{figure}

    Which notation should we settle for? It seems that both interplay nicely together: the best way of writing a message session seems to be to write its underlying \polarShuffle{}, as in \Cref{fig:otp-wiring}, while the best way of writing processes may be the usage of do-notation as in \Cref{fig:otp-do-notation}, which is well-known and imposes a human-readable order on the operations.
  \end{remark}

\subsection{Case Study: Causal versus Evidential Decision Theories}\label{sec:casestudy}
Leibniz's dream was to see philosophical debates reduced to mathematical calculation, to have a formal language for decision theory and an algorithm to solve any dispute.
\begin{quote}
  "[...] if controversies were to arise, there would be no more need of disputation between two philosophers than between two calculators. For it would suffice for them to take their pencils in their hands and to sit down at the abacus, and say to each other (and if they so wish also to a friend called to help): \emph{calculemus} (let us calculate)."
\end{quote}

However, our modern decision theory seems far from this dream. For instance, Monty Hall's problem caused famous controversies and confident blunders of some experts \cite{vosSavant} while being relatively simple to describe. It could seem that the passage from the statement to its formal encoding is more of an art than a science.

Let us try to understand one of these debates: \emph{causal} versus \emph{evidential} decision theory on Newcomb's problem \cite{nozick69,ahmed14:evidential,yudkowsky:soares:17}. We will use \messageTheories{} to set up the scene and partial Markov categories to compute the solution.

\begin{definition}
  Newcomb’s problem \cite{nozick69} is a famous decision problem
  that sets apart Evidential and Causal Decision Theory. An
  agent (\iconAgent{}) is in front of two boxes: a transparent box filled with
  1\euro{} and an opaque box (\iconBoxes{}). The agent is given the choice between
  taking both boxes (two-boxing, $\mathsf{T}$) and taking just the opaque
  box (one-boxing, $\mathsf{O}$). However, the opaque box is controlled by
  a ``perfectly accurate'' predictor (\iconPredictor{}). The predictor placed 1000\euro{} in the
  opaque box if it predicted that the agent would one-box and left it
  empty otherwise. The agent knows this. Which action should
  the agent choose?
  \begin{figure}[ht] 
    \centering
    \includegraphics[scale=0.35]{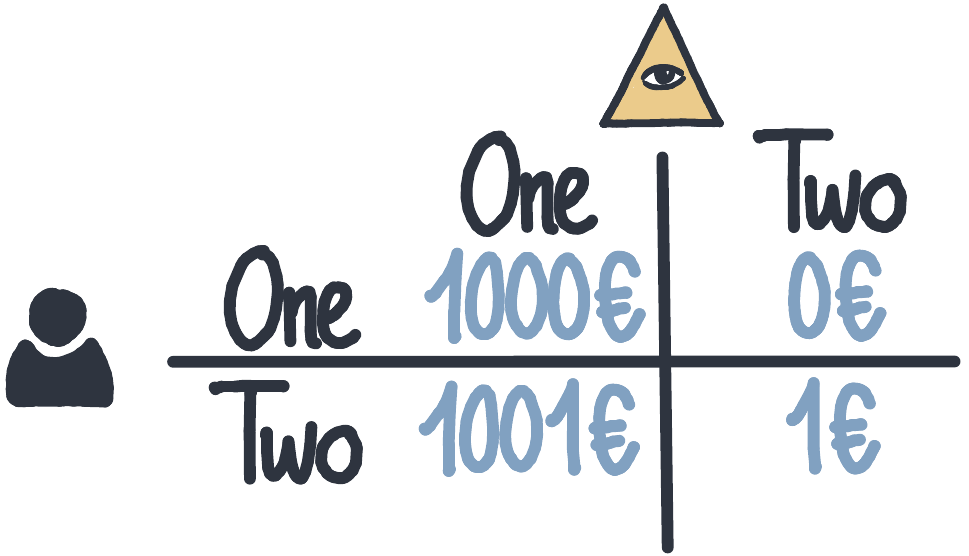}
    \caption{Newcomb's problem: table of utilities.}
    \label{fig:newcombs-problem}
  \end{figure}
\end{definition}

At the risk of oversimplifying, most philosophers are divided in two schools \cite{ahmed14:evidential,yudkowsky:soares:17}. Those that follow \emph{causal decision theory} would claim that no matter what the predictor does, the lower row of the table in \Cref{fig:newcombs-problem} contains strictly more utility; they prescribe \emph{two-boxing}. Those that follow \emph{evidential decision theory} claim that, because the predictor is omniscient, \emph{one-boxing} is the only way of ensuring the biggest prize is on the box.

The analysis of the problem starts by dividing it into different parties: \emph{(i)} the agent (\iconAgent{}) must only make a choice on whether to one-box or two-box; \emph{(ii)} the stage (\iconBoxes{}) takes the choice of the agent, the prediction of the predictor, broadcasts the choice of the agent and computes the final utility of the agent, and  \emph{(iii)} the predictor (\iconPredictor{}) sends a prediction and, only afterwards, can see the choice of the agent. Let us call $X = \{\mathsf{O}, \mathsf{T}\}$ to the set containing \emph{one-boxing} or \emph{two-boxing}; we are claiming to have three elements of a \messageTheory{}: the agent, 
$(\iconAgent{}) ፡ X^{•}$; the predictor, $(\iconPredictor) ፡ X^{•} ⊲ X^{∘}$; and the stage, $(\iconBoxes) ፡ X^{∘} ⊲ X^{∘} ⊲ X^{•} ⊲ X^{•}$.
\begin{figure}[ht] 
  \centering
  \includegraphics[scale=0.35]{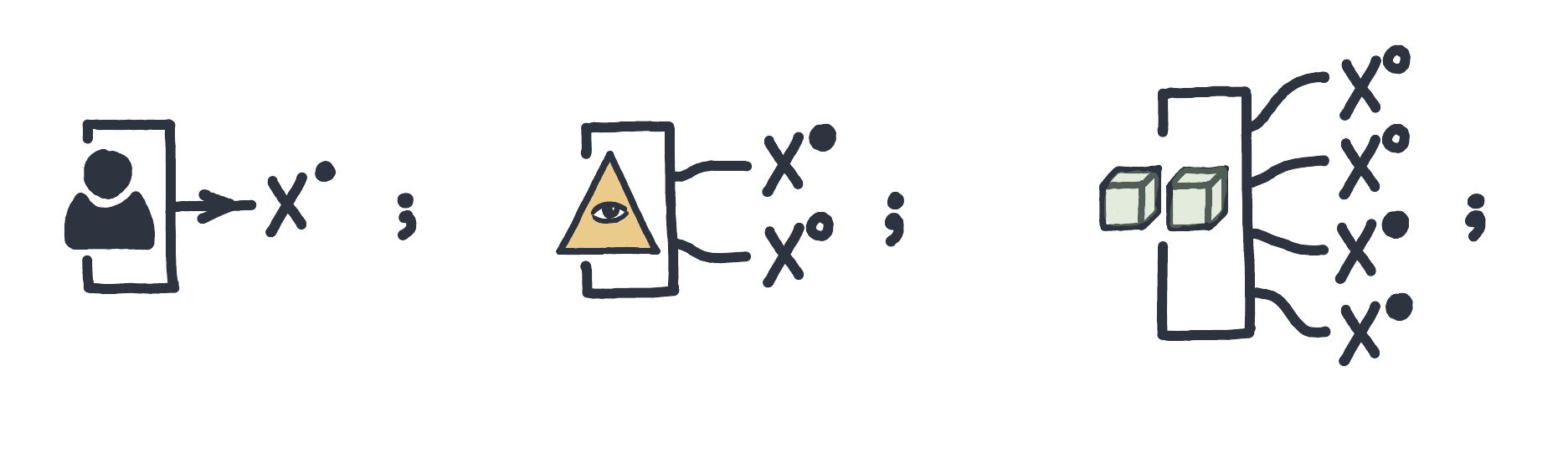}
  \caption{Newcomb's problem: components of a message theory.}
  \label{fig:newcombs-components}
\end{figure}

\emph{The Evidential Decision Theory Solution:} Let us take as an axiom that these components are constructed out of total stochastic channels; in other words, the \messageTheory{} we use is the free \messageTheory{} over the Kleisli category of the subdistribution monad, $\Session(\Kleisli(\Subd))$.

We cannot assume anything about the agent, but because of the construction of out free message theory (\Cref{def:sessions}), it must be given by a single stochastic channel $(\iconAgent{}) ፡ I → X$.  Even without assuming anything about the predictor, because of the construction of the free message theory (\Cref{def:sessions}), we know that must be constructed of two parts: the one that sends the prediction, $(\iconPredictor)_1 ፡ I → M ⊗ X$, and the one that receives the choice of the agent, $(\iconPredictor)_2 ፡ M ⊗ X → I$; of the first part we know nothing, but we have postulated that we will observe it to be perfectly accurate with the prediction, meaning that the second part will fail if it is not. Thus, we deduce it must factor as in \Cref{fig:predictor-splits}.
\begin{figure}[ht] 
  \centering
  \includegraphics[scale=0.3]{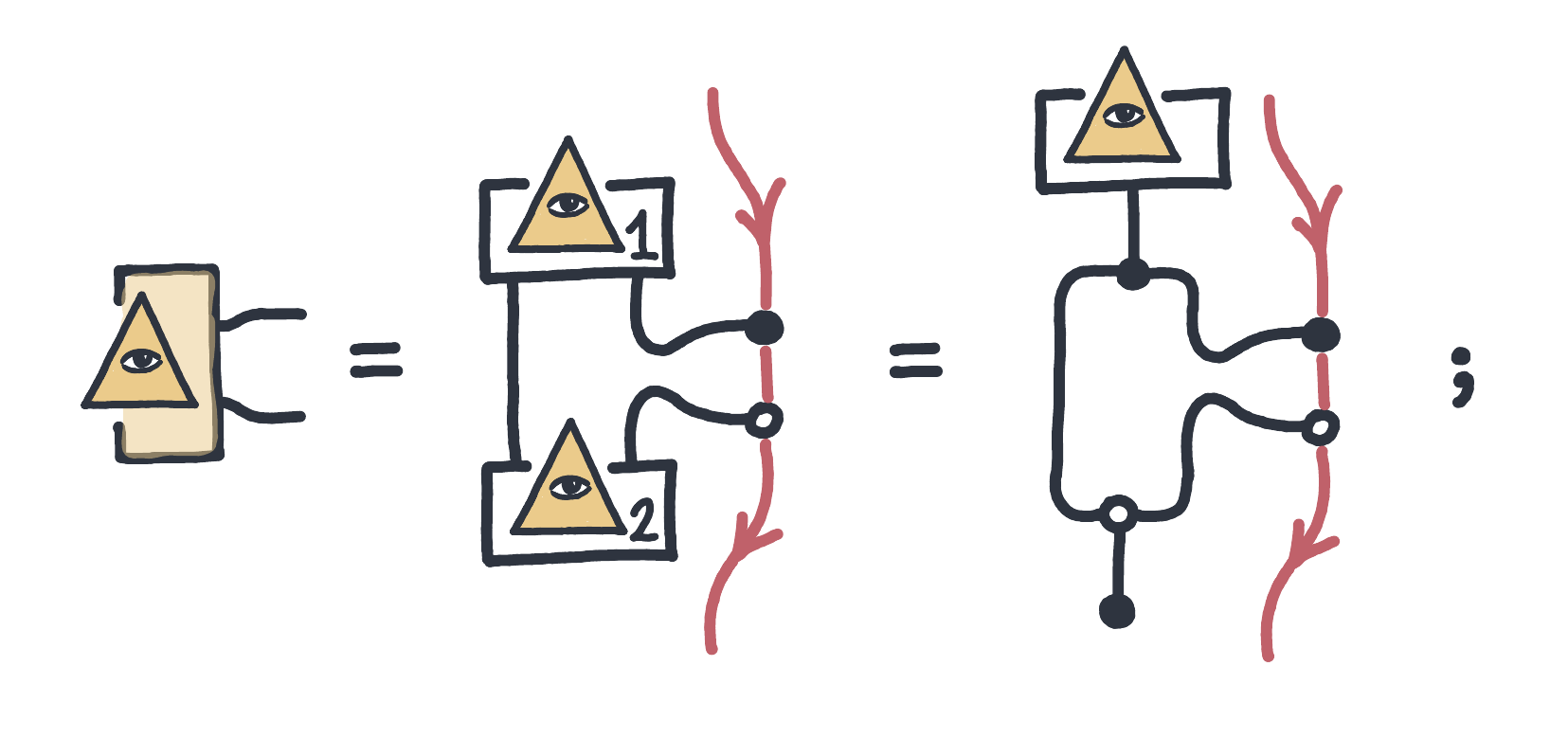}
  \caption{Evidential reading of the predictor.}
  \label{fig:predictor-splits}
\end{figure}

The wiring of the components is given by the statement: agent and predictor send choice and prediction to the stage, which answers giving back the choice to the predictor and computing the output (\Cref{fig:newcombs-evidential-solution}). We then reason \emph{(i)} computing the \polarShuffle{}; \emph{(ii)} we analyze the agent by cases, the agent two-boxes ($\mathsf{T}$) with probability $a$ or one-boxes ($\mathsf{O}$) with probability $(1-a)$; \emph{(iii)} because both cases are deterministic, they can be copied; \emph{(iv)} we analyze then the predictor, it two-boxes ($\mathsf{T}$) with probability $p$ or one-boxes ($\mathsf{O}$) with probability $(1-p)$; \emph{(v)} we compute according to \Cref{fig:newcombs-problem}, canceling the incompatible equality checks; and \emph{vi} assuming that the first term is just an order of magnitude larger than the second, we can bound it by $p \cdot 1000$\euro{}, where we take $a = 1$: the agent should one-box.
\begin{figure}[ht] 
  \centering
  \includegraphics[scale=0.25]{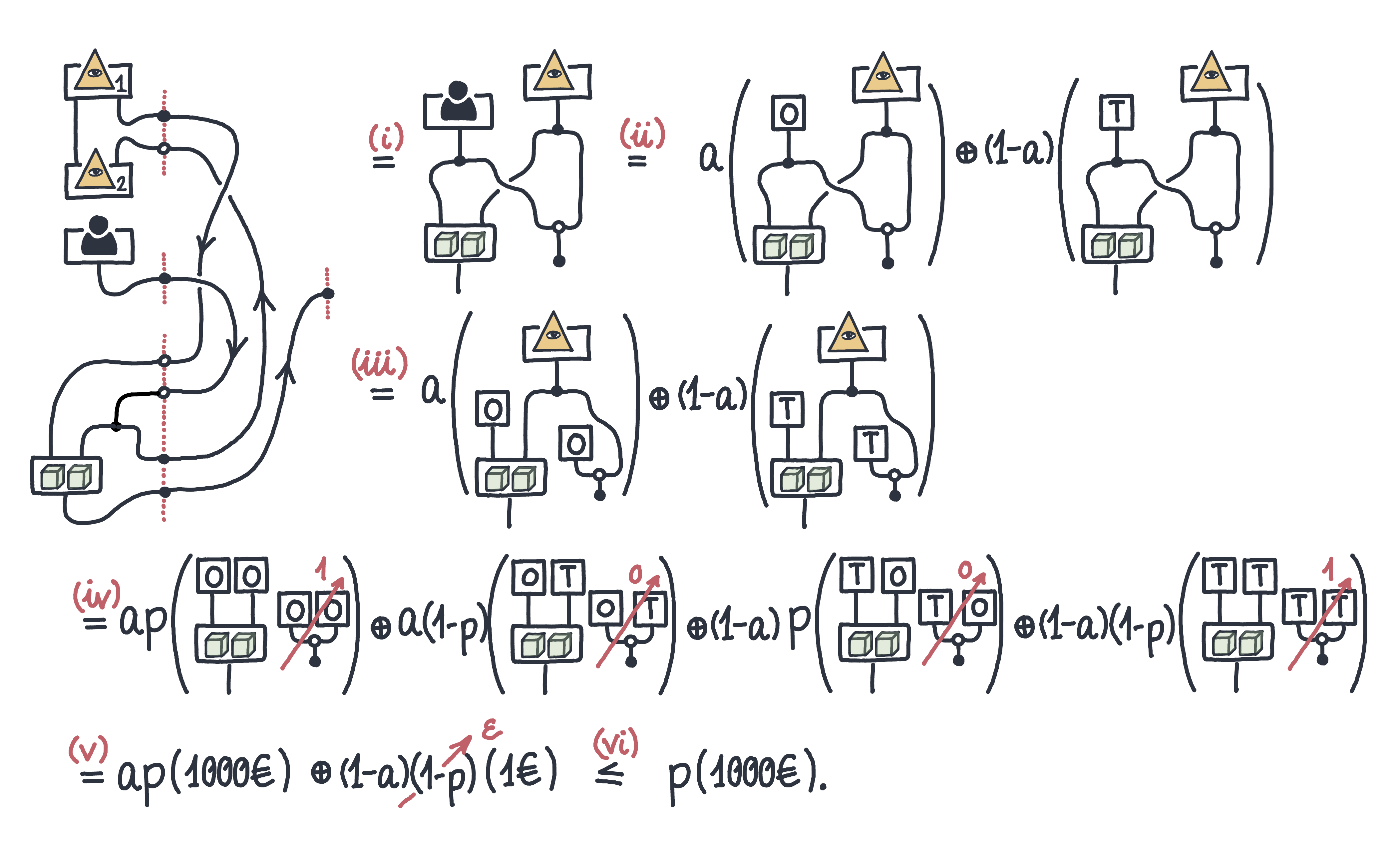}
  \caption{Newcomb's problem: the solution from Evidential Decision Theory.}
  \label{fig:newcombs-evidential-solution}
\end{figure}

\newpage
\emph{The Causal Decision Theory Solution:} Let us assume the same components (\Cref{fig:newcombs-problem}). The only hypothesis over which we will place suspicion is that the predictor can be ``perfectly accurate'' without violating causality in some way. Causal decision theory assumes that all processes are causal, or total. That means that, after receiving the news of what the agent has chosen, the predictor can do nothing: there must exist a unique total morphism $X ⊗ X → I$.
\begin{figure}[ht] 
  \centering
  \includegraphics[scale=0.4]{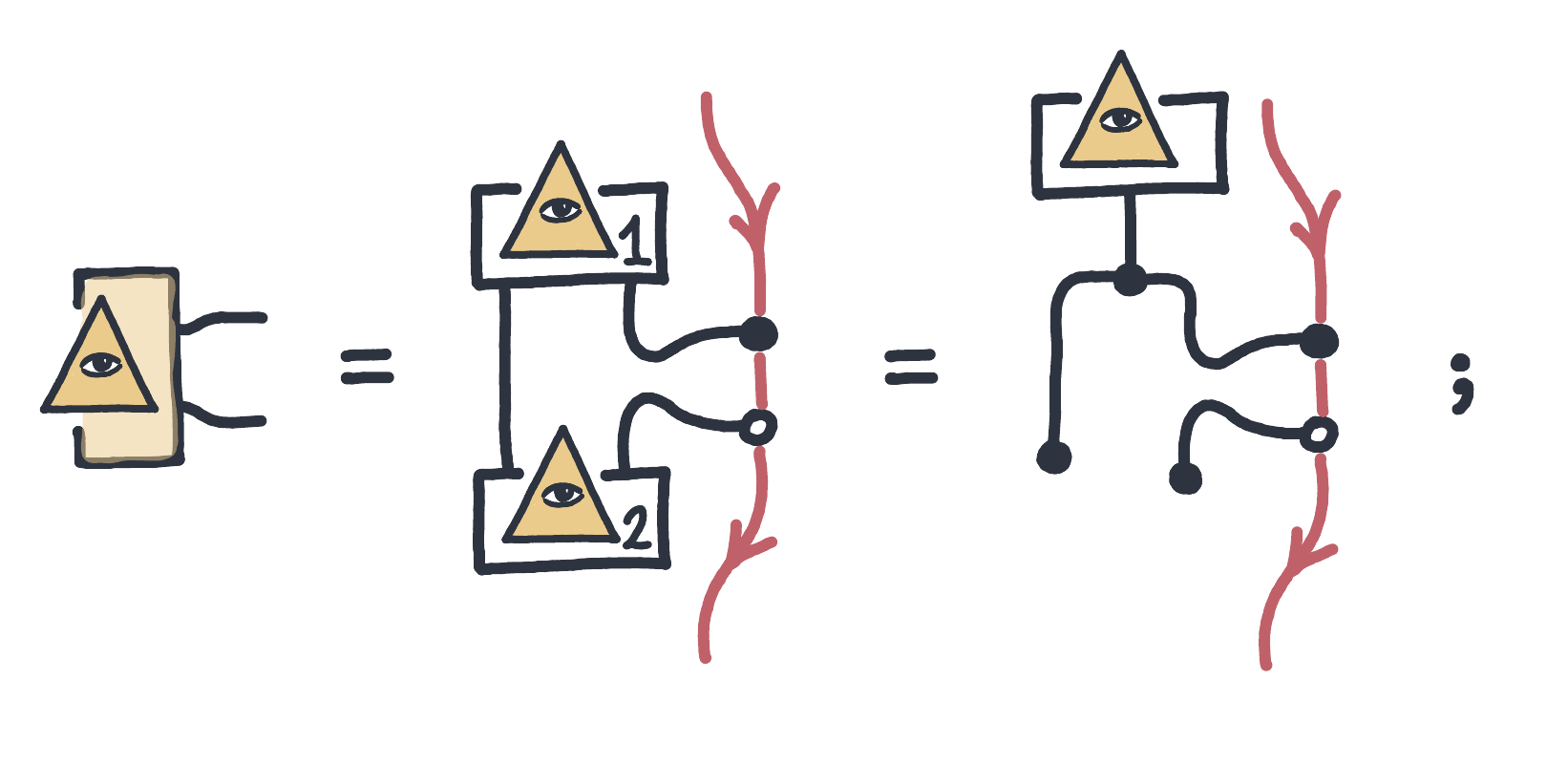}
  \caption{Causal reading of the predictor.}
  \label{fig:predictor-causal-splits}
\end{figure}

The wiring of the components is again the same, but the computation is now different: \emph{(i)} we compute the \polarShuffle{}; \emph{(ii)} we analyze the agent by cases, the agent two-boxes ($\mathsf{T}$) with probability $a$ or one-boxes ($\mathsf{O}$) with probability $(1-a)$; \emph{(iii)} we analyze in the same way the predictor; \emph{(iv)} we compute according to \Cref{fig:newcombs-problem}, canceling the incompatible equality checks; and \emph{(v)} we bound everything by the case where $a = 0$: this time, to maximize utility, the agent must two-box.
\begin{figure}[ht] 
  \centering
  \includegraphics[scale=0.3]{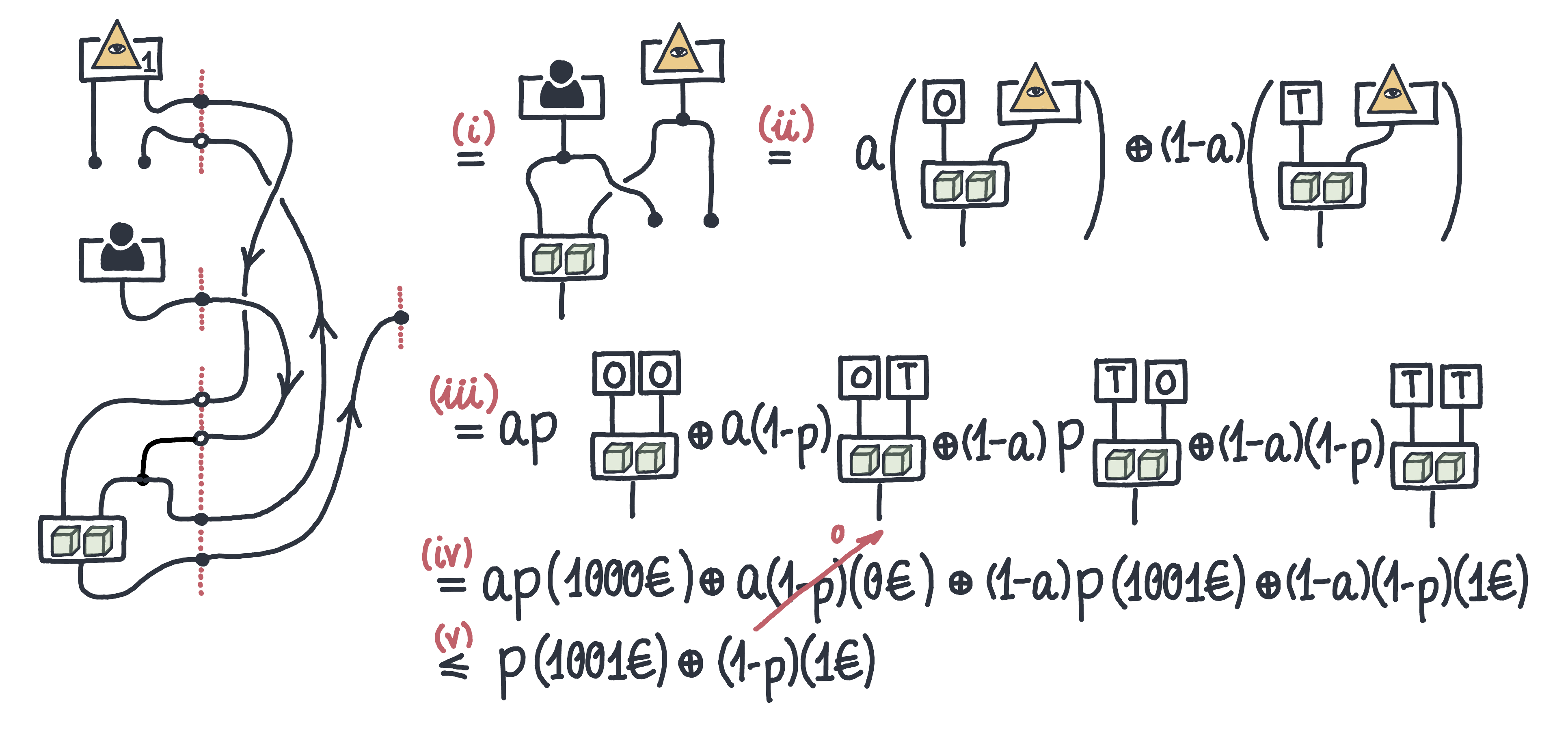}
  \caption{Newcomb's problem: the solution from Causal Decision Theory.}
  \label{fig:newcombs-evidential-solution}
\end{figure}

Was this formal analysis better than a pure discussion? We can now claim that the advantage is that the computations go from a starting diagram that represents our reading of the problem to a utility that we can maximize. We have turned most of the problem into a problem of computation: not only looking at the table of utilities (\Cref{fig:newcombs-problem}) but at the whole statement of the problem. Of course, our formalization does not solve the debate on Newcomb's paradox, but at least it moves the controversy to a more fundamental point: are we fine with using the action of the agent to reason acausally about the predictor? The algebra of partial Markov categories provides a mathematical framework where it makes sense to assume so; the algebra of \messageTheories{} takes care of the rest of the discussion.

\subsection*{Bibliography}
The idea of using send/receive effects for encoding sessions is not new. 
Message passing can be also seen as a core component of game semantics, which has a vast literature \cite{abramsky94:games,abramsky99,hyland97:game}. Game semantics has the ambition to provide the mathematical structures that describe coordination between distributed agents, starting from a duality between the Player's moves and the Opponent's moves; one of its achievements is to provide syntax-independent semantics for different extensions of PCF \cite{mccusker00:games}, including one with global state \cite{abramsky98:fully}. 

Game semantics and session types \cite{honda08:sessionTypes} have been called ``two sides of the same coin'' \cite{castellan19:two}.
Orchard and Yoshida \cite{orchard16:effects} discuss two mutual embeddings between an effectful $\lambda$-calculus (PCF) and a session $\pi$-calculus; further work also implemented the corresponding do-notation \cite{orchard17:session}. In our framework, this correspondence occurs between \premonoidalCategories{} and \messageTheories{}; and thus between do-notation and \polarShuffles{}.

Particularly relevant is Melliès' categorical approach to game semantics in the form of \emph{template games} \cite{mellies19:templategames}. The crucial difference between the present proposal and Melliès' line of work is that it starts from labelled transition systems as the basic notion. Melliès introduces \emph{asynchronous graphs} -- graphs with a set of commuting squares -- and many of the same ingredients that we use here. Asynchronous graphs explain shuffles, polarization, and a failure of interchange in the form of a Grey tensor product \cite{mellies21:asynchronous}. We would be interested to compare our approach to message passing with the framework of template games, and especially the points where they diverge: we land on \normalDuoidalCategories{} while template games are based on Girard's linear logic \cite{girard89}.

Finally, Newcomb's problem (or \emph{paradox}) was first stated by Nozick \cite{nozick69}. Evidential decision theory is defended in the work of Ahmad \cite{ahmed14:evidential}; both Everitt, Leike and Hutter \cite{everitt15:sequential}, and Yudkowsky and Soares \cite{yudkowsky:soares:17} have formalized comparisons of evidential decision theory, causal decision theory, and further variants.\clearpage{}%

\chapter{Conclusions and Further Work}
\section*{Conclusions}

\subsection*{Monoidal Context Theory}
We have universally characterized the \normalProduoidalCategory{} of \monoidalLenses{} (\Cref{th:monoidalContextsAreANormalization}) as a free normalization of the cofree \produoidalCategory{} on top of a \monoidalCategory{}. The interpretation of this result is relevant: the splice-contour adjunction (\cite{mellies22:parsing}, \Cref{th:splice-right-contour}) relates each category to its cofree \promonoidalCategory{} of incomplete terms; in the same way, we have constructed a monoidal splice-contour adjunction relating each \monoidalCategory{} to its cofree \produoidalCategory{} of incomplete processes. The category underlying this universal \produoidalCategory{} is familiar: it is the category of \monoidalLenses{}.

\MonoidalLenses{} have gained recent popularity in applications of category theory, apart from their classical counterparts in database theory \cite{johnson2012lenses}: they have spawned applications in bidirectional transformations \cite{foster07:bidirectional} but also in functional programming \cite{pickering17:profunctoroptics,ClarkeRoman20:ProfunctorOptics}, open games \cite{ghani:compositionalgametheory2018}, polynomial functors \cite{niuspivak:polynomial} and quantum combs \cite{hefford_combs}. Moreover, a different \promonoidal{} structure for lenses had been already studied in the past by Pastro and Street \cite{pastro07}. Apart from lenses, incomplete processes have appeared implicitly multiple times in recent literature. Kissinger and Uijlen \cite{kissinger:uijlen:causalstructure:lics2017} describe higher-order quantum processes using contexts with holes in compact closed monoidal categories. Ghani, Hedges, Winschel and Zahn \cite{ghani:compositionalgametheory2018} describe economic game theory in terms of \emph{lenses} and incomplete processes in cartesian monoidal categories. Bonchi, Piedeleu, \Sobocinski{} and Zanasi \cite{bonchi:graphicalaffinealgebra2019} study contextual equivalence in their monoidal category of affine signal flow graphs. Di Lavore, de Felice and Román \cite{monoidalstreams} define \emph{monoidal streams} by iterating monoidal context coalgebraically.
 This situation prompted a question: why are lenses so prevalent? why do they appear in seemingly unrelated applications?  We can now claim a conceptual answer with a mathematical justification. Lenses are the universal algebra for decomposing morphisms in process theories. The recent applications of lenses all describe incomplete processes.

Incomplete processes have two uses: on the one hand, they track the dependencies between monoidal processes; on the other hand, they allow us to split the multiple agents of a multi-party process. The former is quite useful in itself: \duoidalCategories{} provide two different tensors -- a sequential, $(⊲)$, and a parallel one $(⊗)$ -- that can track dependencies between different processes (as we saw in \Cref{sec:communication-duoidals}). Signalling and non-signalling conditions are important to the study of quantum theories, and recent work by Wilson and Chiribella \cite{wilson22:mathematical} and Hefford and Kissinger \cite{hefford_spacetime} has studied signalling using structures close to \monoidalLenses{}; we hope that the universal characterization of lenses as cofree \produoidalCategories{} may help extracting the exact structure needed for these physical frameworks. Even forgetting about dependencies explicitly, the theory of monoidal contexts allows us to pursue branches of computer science that were classically restricted to 1-dimensional syntaxes: the recent work of Eanrshaw and Sobocinski \cite{earnshaw22} studies monoidal regular languages as the natural monoidal analogue of the classical notion of regular language. 

However, it is the second application of incomplete processes the one that this author found more surprising: we can now separate the different agents of a multi-party process in an arbitrary \monoidalCategory{}. Message passing was not the main goal of this thesis, but developing it has brought interesting connections to \emph{game semantics}.

\subsection*{Monoidal Message Passing}
\SymmetricMonoidalCategories{} have two operations: sequential composition $(⨾)$ and parallel composition $(⊗)$. Naively, we would think then that there are two ways of decomposing monoidal processes: sequentially and in parallel. This is not false, and for many applications this may be the simpler way of dealing with this; after all, glueing sequentially and in parallel is the only thing we need to separate a string diagram in its atomic parts. However, this misses the rich algebra of incomplete morphisms and their compositions: \monoidalLenses{} can be composed according to any \polarShuffle{}, and that is the algebra that we are implicitly using when we cut a string diagram into pieces that do not necessarily follow the sequential and parallel divisions.
We can now argue that the algebra of message passing for process theories is that of \messageTheories{}. \MessageTheories{} try to be a minimalistic axiomatization of what it means to communicate different processes that send and receive messages: we have argued for these axioms in \Cref{sec:message-theories}, in a way that should appeal any reader not familiar with the categorical framework behind them. Their combinatorial characterization in terms of \polarShuffles{} only makes them more concrete.

We have developed a theory of context on top of \monoidalCategories{} and we have used it to develop a canonical theory of message passing in \monoidalCategories{}. 
We have argued that this a fundamental structure for concurrency, and we have characterized it universally; however, we could still argue that it does not address the problem posed by Abramsky of finding the fundamental structures of concurrency \cite{abramsky06:concurrency}. The main limitation of this framework is that it does not provide most of the features that we expect from fully-fledged session types: how can we model choice, or synchrony \cite{honda93,honda08:sessionTypes}? how can we model iteration, feedback, or other common programming constructs \cite{mccusker00:games}?
Minimalism, however, may be a good thing to separate these concerns from the fundamental structure: if we want to model choice, we can do so using \emph{distributive categories} \cite{carboni93:distributiveextensive}, or \emph{linear actegories} \cite{cockettPastro09:messagepassing}; if we want feedback and iteration, we can recall traced categories \cite{joyal96}, categories with feedback \cite{diLavore:spangraph}, and notions of monoidal automata \cite{monoidalstreams}. Precisely because our framework is minimal, it seems that it is robust enough to support these additions; we will be interested in constructing models of game semantics in further work.

\subsection*{Future Work}
Monoidal game semantics was not the original goal of this project but it became its most promising avenue; last section gives us a recipe to construct a \emph{session do-notation} calculus on top of any \monoidalCategory{}, which would include stochastic, non-deterministic and partial variants of a multi-party do-notation calculus. Levy, Power and Thielecke \cite{levy03:modelling} discuss the correspondence between \premonoidalCategories{} and call-by-value languages; we would be interested in extending this correspondence into message passing \cite{orchard17:session}. Once there, it seems plausible that we can connect this idea to the literature on game semantics for fully-fledged programming semantics \cite{mccusker00:games}. Specifically, we would like to construct a model of probabilistic programming allowing for message passing, choice and iteration; multiple developments in categorical probability (such as quasiborel spaces \cite{heunen18:semantic} or Markov categories \cite{fritz:markov2020}) make this possible.

The existence of a vast literature on message passing in terms of linear actions \cite{cockettPastro09:messagepassing} makes it particularly important to understand exactly how \duoidalCategories{} and linear logic relate. We conjecture that \physicalDuoidalCategories{} also form \emph{isomix} linearly distributive categories \cite{cockett97:prooftheory}, and this may be a categorical justification behind this connection.  

For conciseness, we have not discussed approaches to iteration in \monoidalCategories{}. Joint work of this author with Di Lavore and de Felice \cite{monoidalstreams} has shown that it is possible to reason coinductively with automata in \monoidalCategories{}, using precisely \monoidalLenses{} to describe incomplete processes. It seems plausible then, that mixing coinduction with message passing allows us to talk about networks of stochastic iterative processes: these have remarkable applications in scientific modelling, where we can write causal networks of stochastic processes in which unknown components are approximated stochastically.

\bibliographystyle{alpha}
\bibliography{./main}

\appendix
\chapter*{Appendix}
\clearpage{}%

\section{Coherence diagrams for a duoidal category}  
\label{sec:coherencediagramsduoidal}

\begin{figure}[ht]
  \centering
  \begin{tikzcd}
    ((A ◁ B) ⊗ (C ◁ D)) ⊗ (E ◁ F) 
    \rar{\alpha} \dar[swap]{ψ₂ ⊗ id} &
    (A ◁ B) ⊗ ((C ◁ D) ⊗ (E ◁ F))
    \dar{id ⊗ ψ₂} \\
    ((A ⊗ C) ◁ (B ⊗ D)) ⊗ (E ◁ F)
    \dar[swap]{ψ₂} &
    (A ◁ B) ⊗ ((C ⊗ E) ◁ (D ⊗ F))
    \dar{ψ₂} \\
    ((A ⊗ C) ⊗ E) ◁ ((B ⊗ D) ⊗ F)
    \rar{α ◁ α} &
    (A ⊗ (C ⊗ E)) ◁ (B ⊗ (D ⊗ F))
  \end{tikzcd}
  \begin{tikzcd}
    ((A ◁ B) ◁ C) ⊗ ((D ◁ E) ◁ F) 
    \rar{\beta ⊗ \beta} \dar[swap]{ψ₂} &
    (A ◁ (B ◁ C)) ⊗ (D ◁ (E ◁ F))
    \dar{ψ₂} \\
    ((A ◁ B) ⊗ (D ◁ E)) ◁ (C ⊗ F)
    \dar[swap]{ψ₂ ⊗ id} &
    (A ⊗ D) ◁ ((B ◁ C) ⊗ (E ◁ F))
    \dar{id ⊗ ψ₂} \\
    ((A ⊗ D) ◁ (B ⊗ E)) ◁ (C ⊗ F)
    \rar{\beta} &
    (A ⊗ D) ◁ ((B ⊗ E) ◁ (C ⊗ F))
  \end{tikzcd}
  \caption{Coherence diagrams for associativity of a duoidal category.}
  \label{cd:duoidal-coherence-assoc}
\end{figure}

\begin{figure}
  \begin{tikzcd}
    I ⊗ (A ◁ B) 
    \rar{ψ₀ ⊗ id} \dar[swap]{\lambda} &
    (I ◁ I) ⊗ (A ◁ B) 
    \dar{ψ₂} \\
    A ◁ B
    &
    (I ⊗ A) ◁ (I ⊗ B)
    \lar{\lambda ◁ \lambda}
  \end{tikzcd}
  \begin{tikzcd}
    (A ◁ B) ⊗ I
    \rar{ψ₀ ⊗ id} \dar[swap]{\rho} &
    (A ◁ B) ⊗ (I ◁ I)
    \dar{ψ₂} \\
    A ◁ B
    &
    (A ⊗ I) ◁ (B ⊗ I)
    \lar{\rho ◁ \rho}
  \end{tikzcd}
  \caption{Coherence diagrams for $⊗$-unitality of a duoidal category.}
  \label{cd:duoidal-coherence-unit}
\end{figure}

\begin{figure}
  \begin{tikzcd}
    N ◁ (A ⊗ B)
    \dar[swap]{κ} &
    (N ⊗ N) ◁ (A ⊗ B)
    \lar[swap]{\varphi_2 ◁ id} \dar{ψ₂} \\
    A ⊗ B
    &
    (N ◁ A) ⊗ (N ◁ B)
    \lar{κ ⊗ κ}
  \end{tikzcd}
  \begin{tikzcd}
    (A ⊗ B) ◁ N
    \dar[swap]{ν} &
    (A ⊗ B)  ◁ (N ⊗ N)
    \lar[swap]{id ◁ \varphi_2} \dar{ψ₂} \\
    A ⊗ B
    &
    (A ◁ N) ⊗ (B ◁ N)
    \lar{ν ⊗ ν}
  \end{tikzcd}
  \caption{Coherence diagrams for $◁$-unitality of a duoidal category.}
  \label{cd:duoidal-coherence-unit2}
\end{figure}

\begin{figure}
  \begin{tikzcd}
    (N ⊗ N) ⊗ N
    \ar{rr}{\alpha} \dar[swap]{\varphi_2 ⊗ id}&&
    N ⊗ (N ⊗ N)
    \dar{id ⊗ \varphi_2}\\
    N ⊗ N
    \rar[swap]{\varphi_2} &
    N
    &
    N ⊗ N
    \lar{\varphi_2}
  \end{tikzcd}
  \begin{tikzcd}
    I ◁ I
    \dar[swap]{\psi_0 ⊗ id} &
    I 
    \lar[swap]{\psi_0} \rar{\psi_0} &
    I ◁ I
    \dar{id ⊗ \psi_0} \\
    (I ◁ I) ◁ I
    \ar{rr}[swap]{\beta} &&
    I ◁ (I ◁ I)
  \end{tikzcd}
  \caption{Associativity and coassociativity for $N$ and $I$ in a duoidal category.}\label{cd:duoidal-coherence-bimonoids}

  \begin{tikzcd}
    N ⊗ I
    \rar{ρ} \dar[swap]{id ⊗ \varphi_0} &
    N
    \\
    N ⊗ N 
    \urar[swap]{\varphi_2} &
  \end{tikzcd}
  \begin{tikzcd}
    I ⊗ N
    \rar{\lambda} \dar[swap]{\varphi_0 ⊗ id} &
    N
    \\
    N ⊗ N 
    \urar[swap]{\varphi_2} &
  \end{tikzcd}
  \begin{tikzcd}
    I ◁ N
    \rar{id ⊗ \varphi_0} \dar[swap]{ν} &
    I ◁ I
    \\
    I 
    \urar[swap]{\psi_0} &
  \end{tikzcd}
  \begin{tikzcd}
    N ◁ I
    \rar{id ⊗ \varphi_0} \dar[swap]{κ} &
    I ◁ I
    \\
    I 
    \urar[swap]{\psi_0} &
  \end{tikzcd}
  \caption{Unitality and counitality for $N$ and $I$ in a duoidal category.}\label{cd:duoidal-coherence-nandi}
\end{figure}

\clearpage

\clearpage{}%
\clearpage{}%
\section{Polycategories}

This extra section repeats the splice-contour adjunction for \polycategories{}. It is a detour from the main text; and it is not necessary for its development: this does not seem to be the direction we want to follow to study context in categories or \monoidalCategories{}. It is, however, another proof of the resilience of the splice-contour adjunction: the duality between a category and its opposite induces a pseudofrobenius algebra on the monoidal bicategory of profunctors.

{ %
\renewcommand\UnicodeBlackboardP{\ensuremath{\mathbb{P}}} %
\subsection{Polycategories}

A \polycategory{} is like a category where every morphism has both a list of inputs and a list of outputs \cite{szabo75:polycategories}. This does not mean that its inputs and outpuss start forming a monoid, as in strict monoidal categories; morphisms really have different multiple inputs and outputs, and we need to choose a single one to compose along it.
A polycategory, $ℙ$, contains a set of objects, $ℙ_{obj}$, as categories and multicategories; but instead of a set of morphisms, it will have a set of \emph{polymorphisms},
$$ℙ(X₁,…,Xₙ; Y₁,…, Yₘ),$$
for each two lists of objects $X₁, …, Xₙ , Y₀, …, Yₘ ∈ ℙ_{obj}$. As in linear logic, we denote both sides of a derivation by two metavariables, $Γ = X₁, …, Xₙ$ and $Δ = Y₀, …, Yₘ$, and write $ℙ(Γ; Δ)$ for the set of polymorphisms.

\begin{definition}
  \defining{linkPolycategory}{}\label{def:polycategory}
  A \emph{polycategory} $ℙ$ is a collection of objects, $ℙ_{obj}$, together with a collection of \emph{polymorphisms}, $ℙ(X₀,\dots,Xₙ; Y₀,\dots,Yₘ)$, for each two lists of objects $X₀,\dots,Xₙ \in ℙ$ and $Y₀,\dots,Yₘ \in ℙ$. For each object $X \in ℙ_{obj}$, there must be an identity, $\mathrm{id}_X \in ℙ_{obj}(X;X)$; and for each pair of composable maps,
  \begin{align*}
  & f \in ℙ(\Gamma ; \Delta₁, X, \Delta₂) \mbox{ and } g \in ℙ(\Gamma₁,X,\Gamma₂; \Delta),\\
  & \qquad\mbox{ where either }
  \Delta₁\mbox{ or }\Gamma₁,\mbox{ and either }\Delta₂\mbox{ or }\Gamma₂,\mbox{ are empty,}
  \end{align*}
  there must be a composite polymorphism $f ⨾_X g \in ℙ(\Gamma₁,\Gamma,\Gamma₂; \Delta₁,\Delta,\Delta₂)$.
  This means that there are four possible types of composition (\Cref{fig:polycategorical-composition}), and they yield the same polymorphism whenever they overlap.

	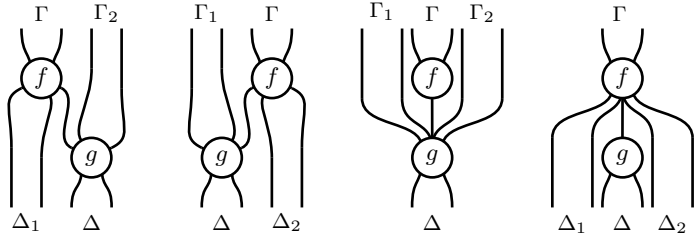
\begin{figure}[h]
	\centering

\begin{tikzpicture}[x=0.75pt,y=0.75pt,yscale=-1,xscale=1,line width=1pt]
\draw    (265.06,109.94) -- (265,80) ;
\draw    (270,110) .. controls (269.78,124.33) and (274.89,119.89) .. (275,135) ;
\draw    (260,110) .. controls (259.78,124.33) and (254.89,119.89) .. (255,135) ;
\draw    (265,105) .. controls (264.78,90.67) and (279.89,95.11) .. (280,80) ;
\draw    (265,105) .. controls (264.78,90.67) and (249.89,95.11) .. (250,80) ;
\draw    (90,115) .. controls (90.22,126.47) and (85.11,122.91) .. (85,135) ;
\draw    (100,115) .. controls (100.22,126.47) and (105.11,122.91) .. (105,135) ;
\draw    (95.03,104.97) .. controls (80.56,99.67) and (95.22,84.78) .. (95,65) ;
\draw    (70,75) .. controls (81,81) and (70.11,89.89) .. (70,110) ;
\draw    (65,70) .. controls (65.22,55.67) and (60.11,60.11) .. (60,45) ;
\draw    (75,70) .. controls (75.22,55.67) and (80.11,60.11) .. (80,45) ;
\draw    (70,70) -- (60,75) ;
\draw    (105.03,104.97) -- (95.03,109.97) ;
\draw    (94.94,109.94) -- (85,105) ;
\draw  [fill={rgb, 255:red, 255; green, 255; blue, 255 }  ,fill opacity=1 ] (104.88,109.94) .. controls (104.88,104.45) and (100.43,100) .. (94.94,100) .. controls (89.45,100) and (85,104.45) .. (85,109.94) .. controls (85,115.43) and (89.45,119.88) .. (94.94,119.88) .. controls (100.43,119.88) and (104.88,115.43) .. (104.88,109.94) -- cycle ;
\draw    (80,75) -- (70,70) ;
\draw  [fill={rgb, 255:red, 255; green, 255; blue, 255 }  ,fill opacity=1 ] (79.88,70.06) .. controls (79.88,75.55) and (75.43,80) .. (69.94,80) .. controls (64.45,80) and (60,75.55) .. (60,70.06) .. controls (60,64.57) and (64.45,60.12) .. (69.94,60.12) .. controls (75.43,60.12) and (79.88,64.57) .. (79.88,70.06) -- cycle ;
\draw    (80,75) .. controls (89.73,80.09) and (74.82,100.27) .. (85,105) ;
\draw    (59.97,75.03) .. controls (49.73,80.09) and (55,85.36) .. (55,105) ;
\draw    (55,135) -- (55,105) ;
\draw    (110,75) -- (110,45) ;
\draw    (105.03,104.97) .. controls (115.27,99.91) and (110,94.64) .. (110,75) ;
\draw  [draw opacity=0] (125,40) -- (50,40) -- (50,140) -- (125,140) -- cycle ;
\draw    (95,65) -- (95,45) ;
\draw    (70,135) -- (70,110) ;
\draw  [draw opacity=0] (55,135) -- (70,135) -- (70,150) -- (55,150) -- cycle ;
\draw  [draw opacity=0] (85,135) -- (105,135) -- (105,150) -- (85,150) -- cycle ;
\draw  [draw opacity=0] (95,30) -- (110,30) -- (110,45) -- (95,45) -- cycle ;
\draw  [draw opacity=0] (60,30) -- (80,30) -- (80,45) -- (60,45) -- cycle ;
\draw    (165,115) .. controls (164.78,126.47) and (169.89,122.91) .. (170,135) ;
\draw    (155,115) .. controls (154.78,126.47) and (149.89,122.91) .. (150,135) ;
\draw    (159.97,104.97) .. controls (174.44,99.67) and (159.78,84.78) .. (160,65) ;
\draw    (185,75) .. controls (174,81) and (184.89,89.89) .. (185,110) ;
\draw    (190,70) .. controls (189.78,55.67) and (194.89,60.11) .. (195,45) ;
\draw    (180,70) .. controls (179.78,55.67) and (174.89,60.11) .. (175,45) ;
\draw    (185,70) -- (195,75) ;
\draw    (149.97,104.97) -- (159.97,109.97) ;
\draw    (160.06,109.94) -- (170,105) ;
\draw  [fill={rgb, 255:red, 255; green, 255; blue, 255 }  ,fill opacity=1 ] (150.12,109.94) .. controls (150.12,104.45) and (154.57,100) .. (160.06,100) .. controls (165.55,100) and (170,104.45) .. (170,109.94) .. controls (170,115.43) and (165.55,119.88) .. (160.06,119.88) .. controls (154.57,119.88) and (150.12,115.43) .. (150.12,109.94) -- cycle ;
\draw    (175,75) -- (185,70) ;
\draw  [fill={rgb, 255:red, 255; green, 255; blue, 255 }  ,fill opacity=1 ] (175.12,70.06) .. controls (175.12,75.55) and (179.57,80) .. (185.06,80) .. controls (190.55,80) and (195,75.55) .. (195,70.06) .. controls (195,64.57) and (190.55,60.12) .. (185.06,60.12) .. controls (179.57,60.12) and (175.12,64.57) .. (175.12,70.06) -- cycle ;
\draw    (175,75) .. controls (165.27,80.09) and (180.18,100.27) .. (170,105) ;
\draw    (195.03,75.03) .. controls (205.27,80.09) and (200,85.36) .. (200,105) ;
\draw    (200,135) -- (200,105) ;
\draw    (145,75) -- (145,45) ;
\draw    (149.97,104.97) .. controls (139.73,99.91) and (145,94.64) .. (145,75) ;
\draw    (160,65) -- (160,45) ;
\draw    (185,135) -- (185,110) ;
\draw  [draw opacity=0] (145,30) -- (160,30) -- (160,45) -- (145,45) -- cycle ;
\draw  [draw opacity=0] (175,30) -- (195,30) -- (195,45) -- (175,45) -- cycle ;
\draw  [draw opacity=0] (150,135) -- (170,135) -- (170,150) -- (150,150) -- cycle ;
\draw  [draw opacity=0] (185,135) -- (200,135) -- (200,150) -- (185,150) -- cycle ;
\draw    (270,70) .. controls (269.78,55.67) and (274.89,60.11) .. (275,45) ;
\draw    (260,70) .. controls (259.78,55.67) and (254.89,60.11) .. (255,45) ;
\draw  [fill={rgb, 255:red, 255; green, 255; blue, 255 }  ,fill opacity=1 ] (255.12,70.06) .. controls (255.12,75.55) and (259.57,80) .. (265.06,80) .. controls (270.55,80) and (275,75.55) .. (275,70.06) .. controls (275,64.57) and (270.55,60.12) .. (265.06,60.12) .. controls (259.57,60.12) and (255.12,64.57) .. (255.12,70.06) -- cycle ;
\draw    (270,105) .. controls (269.78,90.67) and (299.89,95.11) .. (300,80) ;
\draw    (260,105) .. controls (259.78,90.67) and (229.89,95.11) .. (230,80) ;
\draw  [fill={rgb, 255:red, 255; green, 255; blue, 255 }  ,fill opacity=1 ] (255.12,109.94) .. controls (255.12,115.43) and (259.57,119.88) .. (265.06,119.88) .. controls (270.55,119.88) and (275,115.43) .. (275,109.94) .. controls (275,104.45) and (270.55,100) .. (265.06,100) .. controls (259.57,100) and (255.12,104.45) .. (255.12,109.94) -- cycle ;
\draw    (230,80) -- (230,45) ;
\draw    (250,80) -- (250,45) ;
\draw    (280,80) -- (280,45) ;
\draw    (300,80) -- (300,45) ;
\draw  [draw opacity=0] (255,30) -- (275,30) -- (275,45) -- (255,45) -- cycle ;
\draw  [draw opacity=0] (280,30) -- (300,30) -- (300,45) -- (280,45) -- cycle ;
\draw  [draw opacity=0] (230,29) -- (250,29) -- (250,44) -- (230,44) -- cycle ;
\draw  [draw opacity=0] (255,135) -- (275,135) -- (275,150) -- (255,150) -- cycle ;
\draw    (360.06,70.06) -- (360,100) ;
\draw    (365,70) .. controls (364.78,55.67) and (369.89,60.11) .. (370,45) ;
\draw    (355,70) .. controls (354.78,55.67) and (349.89,60.11) .. (350,45) ;
\draw    (360,75) .. controls (359.78,89.33) and (374.89,84.89) .. (375,100) ;
\draw    (360,75) .. controls (359.78,89.33) and (344.89,84.89) .. (345,100) ;
\draw    (365,110) .. controls (364.78,124.33) and (369.89,119.89) .. (370,135) ;
\draw    (355,110) .. controls (354.78,124.33) and (349.89,119.89) .. (350,135) ;
\draw  [fill={rgb, 255:red, 255; green, 255; blue, 255 }  ,fill opacity=1 ] (350.12,109.94) .. controls (350.12,104.45) and (354.57,100) .. (360.06,100) .. controls (365.55,100) and (370,104.45) .. (370,109.94) .. controls (370,115.43) and (365.55,119.88) .. (360.06,119.88) .. controls (354.57,119.88) and (350.12,115.43) .. (350.12,109.94) -- cycle ;
\draw    (365,75) .. controls (364.78,89.33) and (394.89,84.89) .. (395,100) ;
\draw    (355,75) .. controls (354.78,89.33) and (324.89,84.89) .. (325,100) ;
\draw  [fill={rgb, 255:red, 255; green, 255; blue, 255 }  ,fill opacity=1 ] (350.12,70.06) .. controls (350.12,64.57) and (354.57,60.12) .. (360.06,60.12) .. controls (365.55,60.12) and (370,64.57) .. (370,70.06) .. controls (370,75.55) and (365.55,80) .. (360.06,80) .. controls (354.57,80) and (350.12,75.55) .. (350.12,70.06) -- cycle ;
\draw    (325,100) -- (325,135) ;
\draw    (345,100) -- (345,135) ;
\draw    (375,100) -- (375,135) ;
\draw    (395,100) -- (395,135) ;
\draw  [draw opacity=0] (350,30) -- (370,30) -- (370,45) -- (350,45) -- cycle ;
\draw  [draw opacity=0] (325,136) -- (345,136) -- (345,151) -- (325,151) -- cycle ;
\draw  [draw opacity=0] (350,135) -- (370,135) -- (370,150) -- (350,150) -- cycle ;
\draw  [draw opacity=0] (375,136) -- (395,136) -- (395,151) -- (375,151) -- cycle ;

\draw (69.91,70.14) node  [font=\small]  {$f$};
\draw (95.03,109.97) node  [font=\small]  {$g$};
\draw (62.5,142.5) node  [font=\footnotesize]  {$\Delta _{1}$};
\draw (95,142.5) node  [font=\footnotesize]  {$\Delta $};
\draw (102.5,37.5) node  [font=\footnotesize]  {$\Gamma _{2}$};
\draw (70,37.5) node  [font=\footnotesize]  {$\Gamma $};
\draw (185.06,70.06) node  [font=\small]  {$f$};
\draw (159.97,109.97) node  [font=\small]  {$g$};
\draw (152.5,37.5) node  [font=\footnotesize]  {$\Gamma _{1}$};
\draw (185,37.5) node  [font=\footnotesize]  {$\Gamma $};
\draw (160,142.5) node  [font=\footnotesize]  {$\Delta $};
\draw (192.5,142.5) node  [font=\footnotesize]  {$\Delta _{2}$};
\draw (265.06,70.06) node  [font=\small]  {$f$};
\draw (265.06,109.94) node  [font=\small]  {$g$};
\draw (265,37.5) node  [font=\footnotesize]  {$\Gamma $};
\draw (290,37.5) node  [font=\footnotesize]  {$\Gamma _{2}$};
\draw (240,36.5) node  [font=\footnotesize]  {$\Gamma _{1}$};
\draw (265,142.5) node  [font=\footnotesize]  {$\Delta $};
\draw (360.06,109.94) node  [font=\small]  {$g$};
\draw (360.06,70.06) node  [font=\small]  {$f$};
\draw (358.5,37.5) node  [font=\footnotesize]  {$\Gamma $};
\draw (335,143.5) node  [font=\footnotesize]  {$\Delta _{1}$};
\draw (360,142.5) node  [font=\footnotesize]  {$\Delta $};
\draw (385,143.5) node  [font=\footnotesize]  {$\Delta _{2}$};

\end{tikzpicture}
 	\caption{Four planar polycategorical compositions.}
	\label{fig:polycategorical-composition}
  \end{figure}

  Moreover, polycategories must satisfy the following two unitality axioms, $f ⨾_X \mathrm{id}_X = f$ and $\mathrm{id}_X ⨾_X f = f$; two associativity axioms, $f ⨾_X (g ⨾_Y h) = (f ⨾_X g) ⨾_Y h$ and $f ⨾_X (g ⨾_Y h) = g ⨾_Y (f ⨾_X h)$; and an interchange axiom, $(f ⨾_A g) ⨾_B h = (f ⨾_B h) ⨾_A g$; whenever any of these is formally well-typed.
\end{definition}

\begin{remark}
	Asking for an identity on each object, $\mathrm{id}_X \in ℙ(X;X)$, is different from asking for an identity on each list of objects, $$\mathrm{id}_{X₀,\dots,Xₙ} \in ℙ(X₀,\dots,Xₙ;X₀,\dots,Xₙ).$$ The latter gives rise to \emph{isomix} categories \cite{cockett97:prooftheory} and we will not discuss it here.
\end{remark}

\begin{example}
	A \emph{polyfunctional relation}, $R \in \mathbf{MultiFun}(A₁,\dots, Aₙ; B₁,\dots, Bₘ)$, is a relation $R \colon A₁ × \dots × Aₙ → B₁ × \dots × Bₘ$ together with representing functions that, given an element of the relation missing exactly one element, return the element missing. Explicitly, there exist two families of functions,
	\begin{align*}
	& fⱼ ፡ A₁ × \dots × A_n × B₀ × \overset{\cancel{B_j}}\dots × B_m → B_j
	\quad\mbox{ and } \\
  & g_i \colon A₁ \times \overset{\cancel{A_i}}\dots \times Aₙ × B_0 × \dots × B_m → Aᵢ,
	\end{align*}
	such that $R(a₁,…,bₘ)$ if and only if $fⱼ(a₁,…,bₘ) = bⱼ$ and if and only if $g_i(a₁,…,bₘ) = aᵢ$ for each two indices $i$ and $j$. Polyfunctional relations form a \polycategory{} with relational composition.

	A $(1,1)$-polyfunctional relation is a pair of inverse functions. A (2,1) or (1,2)-polyfunctional relation is a triple functions $f₀ ፡ A₁ × A₂ → A₀$, $f₁ ፡ A₂ × A₀ → A₁$ and $f₂ ፡ A₀ × A₁ → A₂$ such that 
	$f₁(a_1,a_2) = a_0$ if and only if
	$f_1(a_2,a_0) = a_1$ and if and only if
	$f_2(a_0,a_1) = a_2$.
	Polyfunctional relations are a decategorification of the \emph{multivariable adjunctions} in the work of Shulman \cite{shulman20182}.
\end{example}

\subsection{The Category of Polycategories}
Analogously to the categorical and multicategorical case, the theory of \polyfunctors{} and polynatural transformations is synthetised in the 2-category $\mathbf{PolyCat}$ of polycategories, polyfunctors and polynatural transformations.

\begin{definition}
  \defining{linkPolyfunctor}{}
	A \emph{polyfunctor}, $F \colon ℙ → ℚ$, between two \polycategories{} $ℙ$ and $ℚ$, is an assignment on objects, $F_{obj} \colon ℙ_{obj} → ℚ_{obj}$ together with an assignment on polymorphisms
	$$F_{n,m} \colon ℙ(X₀, \dots, Xₙ; Y₀, \dots, Yₘ) → ℚ(F_{obj}X₀, \dots, F_{obj}Xₙ; F_{obj}Y₀, \dots, F_{obj}Yₘ).$$
	This assignment must be functorial, in that $F(f ⨾_{X_i} g) = F(f) ⨾_{X_i} F(g)$ and that $F(\mathrm{id}_X) = \mathrm{id}_{FX}$, whenever these are formally well-typed. 
\end{definition}

\begin{definition}
  A polynatural transformation $θ ፡ F → G$ between two \polyfunctors{} $F, G ፡ ℙ → ℚ$ is given by a family of polymorphisms $θ_X ∈ ℚ(FX; GX)$ such that, for each polymorphism $f ∈ ℙ(X₁, …, Xₙ; Y₁, …, Yₘ)$, the following naturality condition holds
  $$θ_{X₁} ⨾ … ⨾ θ_{Xₙ} ⨾ G(f) = F(f) ⨾ θ_{Y₁} ⨾ … ⨾ θ_{Yₘ}.$$
\end{definition}

\begin{definition}
  Polyfunctors between polycategories form a category, $\mathbf{PolyCat}$.
  This is moreover a 2-category with polynatural transformations.
\end{definition}

\begin{remark}
  In the same sense that a multifunctor from the terminal \multicategory{} picks a monoid, a polyfunctor from the terminal \polycategory{} should pick a \emph{polyoid} -- instead, we call these \emph{Frobenius monoids}.
\end{remark}

}

\subsection{Polycategorical Contour}

\begin{definition}
  \defining{linkContourPolycategory}{}
  Let $ℙ$ be a \polycategory{}. Its contour, $\Contour{ℙ}$, is the category presented by the following generators and equations:
  \begin{itemize}
    \item two polarized objects, $Xˡ$ and $Xʳ$, for each object $X ∈ ℙ_{obj}$;
    \item for each polymorphism, $f ∈ ℙ(X₁,…,Xₙ ; Y₁,…,Yₘ)$, the following generators,
    \begin{align*}
      & fʳ_1 ፡ Xʳ_1 → Xˡ_2, \quad …, \quad fʳ_{n-1} ፡ Xʳ_{n-1} → Xˡ_n, \quad fᵈ ፡ Xʳ_n → Y^r_m, \\
      & fˡ_1 ፡ Yˡ_2 → Yʳ_1, \quad …, \quad fˡ_{m-1} ፡ Yˡ_m → Yʳ_{m-1}, \quad fᵘ ፡ Yˡ_1 → Xˡ_1,
    \end{align*}
    having instead $fᵘ ፡ Yˡ_1 → Yʳ_m$ when $n = 0$, having instead $fᵈ ፡ Xʳ_n → Xˡ_1$ when $m = 0$, and using no generators for $(0,0)$-polymorphisms;
  \end{itemize}
  to which we impose equations requiring contour to preserve identities, $(\id_X)ᵘ = \id_{Xˡ}$ and $(\id_X)ᵈ = \id_{Xʳ}$;
  and requiring contour to preserve compositions, meaning that for each $f ∈ ℙ(X₁,…,Xₙ;Y₁,…,Yₘ)$ and each $g ∈ ℙ(Z₁,…,Zₚ;Q₁,…,Q_q)$ such that $Yₘ = U₁$, the contour of the composition along $Yₘ = U₁$ is defined by the following eight cases
  \begin{align*}
  & (f ⨾_{Xᵢ} g)ᵘ = fᵘ;
  && (f ⨾_{Xᵢ} g)ʳ_i = f^r_i,\mbox{ for } i = 1,…,n-1; \\
  & (f ⨾_{Yₘ} g)ʳ_n = fᵈ ⨾ gʳ_1;
  && (f ⨾_{Yₘ} g)ʳ_j = gʳ_{j-n+1},\mbox{ for } j = n+1, …, n+p-2; \\
  & (f ⨾_{Yₘ} g)ᵈ = gᵈ;
  && (f ⨾_{Yₘ} g)ˡ_i = fˡ_i,\mbox{ for } i = 1,…,m-2; \\
  & (f ⨾_{Yₘ} g)ᵈ_{m-1} = gᵘ ⨾ fˡ_{m-1};
  && (f ⨾_{Yₘ} g)ᵈ_{j} = gˡ_{j-m+1},\mbox{ for } j = m+1,…,m+q-2;
  \end{align*}
  and similar conditions for the rest of the compositions. These equations are depicted in \Cref{fig:contour-proautonomous}.

\end{definition}

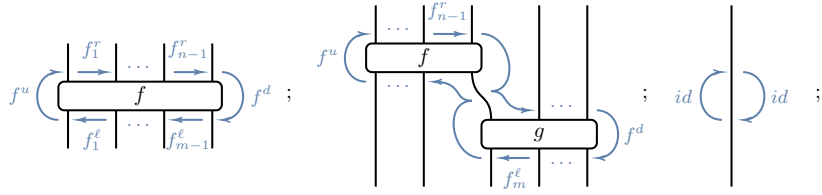
\begin{figure}[h]
  \centering
  \scalebox{0.8}{%

\tikzset{every picture/.style={line width=0.95pt}} %

\begin{tikzpicture}[x=0.75pt,y=0.75pt,yscale=-1.2,xscale=1.2]
\draw [color={rgb, 255:red, 94; green, 129; blue, 172 }  ,draw opacity=1 ]   (85,85) -- (98,85) ;
\draw [shift={(100,85)}, rotate = 180] [color={rgb, 255:red, 94; green, 129; blue, 172 }  ,draw opacity=1 ][line width=0.75]    (4.37,-1.32) .. controls (2.78,-0.56) and (1.32,-0.12) .. (0,0) .. controls (1.32,0.12) and (2.78,0.56) .. (4.37,1.32)   ;
\draw [color={rgb, 255:red, 94; green, 129; blue, 172 }  ,draw opacity=1 ]   (72.92,85.22) .. controls (60.44,87.7) and (60.89,110) .. (75,110) ;
\draw [shift={(75,85)}, rotate = 178.88] [color={rgb, 255:red, 94; green, 129; blue, 172 }  ,draw opacity=1 ][line width=0.75]    (4.37,-1.32) .. controls (2.78,-0.56) and (1.32,-0.12) .. (0,0) .. controls (1.32,0.12) and (2.78,0.56) .. (4.37,1.32)   ;
\draw  [draw opacity=0] (110,75) -- (125,75) -- (125,90) -- (110,90) -- cycle ;
\draw  [draw opacity=0] (85,70) -- (100,70) -- (100,85) -- (85,85) -- cycle ;
\draw    (80,105) -- (80,125) ;
\draw    (105,105) -- (105,125) ;
\draw    (130,105) -- (130,125) ;
\draw    (155,105) -- (155,125) ;
\draw    (80,70) -- (80,90) ;
\draw    (105,70) -- (105,90) ;
\draw    (130,70) -- (130,90) ;
\draw    (155,70) -- (155,90) ;
\draw [color={rgb, 255:red, 94; green, 129; blue, 172 }  ,draw opacity=1 ]   (135,85) -- (148,85) ;
\draw [shift={(150,85)}, rotate = 180] [color={rgb, 255:red, 94; green, 129; blue, 172 }  ,draw opacity=1 ][line width=0.75]    (4.37,-1.32) .. controls (2.78,-0.56) and (1.32,-0.12) .. (0,0) .. controls (1.32,0.12) and (2.78,0.56) .. (4.37,1.32)   ;
\draw  [draw opacity=0] (135,70) -- (150,70) -- (150,85) -- (135,85) -- cycle ;
\draw [color={rgb, 255:red, 94; green, 129; blue, 172 }  ,draw opacity=1 ]   (100,110) -- (87,110) ;
\draw [shift={(85,110)}, rotate = 360] [color={rgb, 255:red, 94; green, 129; blue, 172 }  ,draw opacity=1 ][line width=0.75]    (4.37,-1.32) .. controls (2.78,-0.56) and (1.32,-0.12) .. (0,0) .. controls (1.32,0.12) and (2.78,0.56) .. (4.37,1.32)   ;
\draw  [draw opacity=0] (110,105) -- (125,105) -- (125,120) -- (110,120) -- cycle ;
\draw  [draw opacity=0] (45,90) -- (65,90) -- (65,105) -- (45,105) -- cycle ;
\draw [color={rgb, 255:red, 94; green, 129; blue, 172 }  ,draw opacity=1 ]   (150,110) -- (137,110) ;
\draw [shift={(135,110)}, rotate = 360] [color={rgb, 255:red, 94; green, 129; blue, 172 }  ,draw opacity=1 ][line width=0.75]    (4.37,-1.32) .. controls (2.78,-0.56) and (1.32,-0.12) .. (0,0) .. controls (1.32,0.12) and (2.78,0.56) .. (4.37,1.32)   ;
\draw  [draw opacity=0] (135,110) -- (150,110) -- (150,125) -- (135,125) -- cycle ;
\draw   (75,93) .. controls (75,91.34) and (76.34,90) .. (78,90) -- (157,90) .. controls (158.66,90) and (160,91.34) .. (160,93) -- (160,102) .. controls (160,103.66) and (158.66,105) .. (157,105) -- (78,105) .. controls (76.34,105) and (75,103.66) .. (75,102) -- cycle ;
\draw  [draw opacity=0] (240,55) -- (265,55) -- (265,70) -- (240,70) -- cycle ;
\draw    (240,85) -- (240,145) ;
\draw    (265,85) -- (265,145) ;
\draw    (240,50) -- (240,70) ;
\draw    (290,50) -- (290,70) ;
\draw   (235,73) .. controls (235,71.34) and (236.34,70) .. (238,70) -- (292,70) .. controls (293.66,70) and (295,71.34) .. (295,73) -- (295,82) .. controls (295,83.66) and (293.66,85) .. (292,85) -- (238,85) .. controls (236.34,85) and (235,83.66) .. (235,82) -- cycle ;
\draw   (295,113) .. controls (295,111.34) and (296.34,110) .. (298,110) -- (352,110) .. controls (353.66,110) and (355,111.34) .. (355,113) -- (355,122) .. controls (355,123.66) and (353.66,125) .. (352,125) -- (298,125) .. controls (296.34,125) and (295,123.66) .. (295,122) -- cycle ;
\draw    (290,85) .. controls (290.67,100.83) and (300,95.17) .. (300,110) ;
\draw    (325,50) -- (325,110) ;
\draw    (350,50) -- (350,110) ;
\draw    (300,125) -- (300,145) ;
\draw    (350,125) -- (350,145) ;
\draw [color={rgb, 255:red, 94; green, 129; blue, 172 }  ,draw opacity=1 ]   (290,100) .. controls (274.67,100.17) and (279,130.17) .. (295,130) ;
\draw    (265,50) -- (265,70) ;
\draw [color={rgb, 255:red, 94; green, 129; blue, 172 }  ,draw opacity=1 ]   (270,65) -- (283,65) ;
\draw [shift={(285,65)}, rotate = 180] [color={rgb, 255:red, 94; green, 129; blue, 172 }  ,draw opacity=1 ][line width=0.75]    (4.37,-1.32) .. controls (2.78,-0.56) and (1.32,-0.12) .. (0,0) .. controls (1.32,0.12) and (2.78,0.56) .. (4.37,1.32)   ;
\draw [color={rgb, 255:red, 94; green, 129; blue, 172 }  ,draw opacity=1 ]   (290,100) .. controls (280.13,99.86) and (285.52,90.93) .. (271.82,90.06) ;
\draw [shift={(270,90)}, rotate = 0.6] [color={rgb, 255:red, 94; green, 129; blue, 172 }  ,draw opacity=1 ][line width=0.75]    (4.37,-1.32) .. controls (2.78,-0.56) and (1.32,-0.12) .. (0,0) .. controls (1.32,0.12) and (2.78,0.56) .. (4.37,1.32)   ;
\draw [color={rgb, 255:red, 94; green, 129; blue, 172 }  ,draw opacity=1 ]   (300,95) .. controls (315.33,94.83) and (311,64.83) .. (295,65) ;
\draw [color={rgb, 255:red, 94; green, 129; blue, 172 }  ,draw opacity=1 ]   (300,95) .. controls (309.87,95.14) and (304.48,104.07) .. (318.18,104.94) ;
\draw [shift={(320,105)}, rotate = 180.6] [color={rgb, 255:red, 94; green, 129; blue, 172 }  ,draw opacity=1 ][line width=0.75]    (4.37,-1.32) .. controls (2.78,-0.56) and (1.32,-0.12) .. (0,0) .. controls (1.32,0.12) and (2.78,0.56) .. (4.37,1.32)   ;
\draw  [draw opacity=0] (240,85) -- (265,85) -- (265,100) -- (240,100) -- cycle ;
\draw    (325,125) -- (325,145) ;
\draw  [draw opacity=0] (325,95) -- (350,95) -- (350,110) -- (325,110) -- cycle ;
\draw  [draw opacity=0] (325,125) -- (350,125) -- (350,140) -- (325,140) -- cycle ;
\draw [color={rgb, 255:red, 94; green, 129; blue, 172 }  ,draw opacity=1 ]   (320,130) -- (307,130) ;
\draw [shift={(305,130)}, rotate = 360] [color={rgb, 255:red, 94; green, 129; blue, 172 }  ,draw opacity=1 ][line width=0.75]    (4.37,-1.32) .. controls (2.78,-0.56) and (1.32,-0.12) .. (0,0) .. controls (1.32,0.12) and (2.78,0.56) .. (4.37,1.32)   ;
\draw [color={rgb, 255:red, 94; green, 129; blue, 172 }  ,draw opacity=1 ]   (232.92,65.22) .. controls (220.44,67.7) and (220.89,90) .. (235,90) ;
\draw [shift={(235,65)}, rotate = 178.88] [color={rgb, 255:red, 94; green, 129; blue, 172 }  ,draw opacity=1 ][line width=0.75]    (4.37,-1.32) .. controls (2.78,-0.56) and (1.32,-0.12) .. (0,0) .. controls (1.32,0.12) and (2.78,0.56) .. (4.37,1.32)   ;
\draw [color={rgb, 255:red, 94; green, 129; blue, 172 }  ,draw opacity=1 ]   (160,85) .. controls (173.92,85.27) and (174.8,107.83) .. (161.91,109.85) ;
\draw [shift={(160,110)}, rotate = 360] [color={rgb, 255:red, 94; green, 129; blue, 172 }  ,draw opacity=1 ][line width=0.75]    (4.37,-1.32) .. controls (2.78,-0.56) and (1.32,-0.12) .. (0,0) .. controls (1.32,0.12) and (2.78,0.56) .. (4.37,1.32)   ;
\draw  [draw opacity=0] (170,90) -- (190,90) -- (190,105) -- (170,105) -- cycle ;
\draw    (425,50) -- (425,145) ;
\draw [color={rgb, 255:red, 94; green, 129; blue, 172 }  ,draw opacity=1 ]   (431.04,85) .. controls (444.95,85.27) and (445.84,107.83) .. (432.95,109.85) ;
\draw [shift={(431.04,110)}, rotate = 360] [color={rgb, 255:red, 94; green, 129; blue, 172 }  ,draw opacity=1 ][line width=0.75]    (4.37,-1.32) .. controls (2.78,-0.56) and (1.32,-0.12) .. (0,0) .. controls (1.32,0.12) and (2.78,0.56) .. (4.37,1.32)   ;
\draw [color={rgb, 255:red, 94; green, 129; blue, 172 }  ,draw opacity=1 ]   (417.92,85.22) .. controls (405.44,87.7) and (405.89,110) .. (420,110) ;
\draw [shift={(420,85)}, rotate = 178.88] [color={rgb, 255:red, 94; green, 129; blue, 172 }  ,draw opacity=1 ][line width=0.75]    (4.37,-1.32) .. controls (2.78,-0.56) and (1.32,-0.12) .. (0,0) .. controls (1.32,0.12) and (2.78,0.56) .. (4.37,1.32)   ;
\draw  [draw opacity=0] (185,85) -- (205,85) -- (205,110) -- (185,110) -- cycle ;
\draw  [draw opacity=0] (370,85) -- (390,85) -- (390,110) -- (370,110) -- cycle ;
\draw  [draw opacity=0] (400,75) -- (450,75) -- (450,120) -- (400,120) -- cycle ;
\draw [color={rgb, 255:red, 94; green, 129; blue, 172 }  ,draw opacity=1 ]   (355,105) .. controls (368.92,105.27) and (369.8,127.83) .. (356.91,129.85) ;
\draw [shift={(355,130)}, rotate = 360] [color={rgb, 255:red, 94; green, 129; blue, 172 }  ,draw opacity=1 ][line width=0.75]    (4.37,-1.32) .. controls (2.78,-0.56) and (1.32,-0.12) .. (0,0) .. controls (1.32,0.12) and (2.78,0.56) .. (4.37,1.32)   ;
\draw  [draw opacity=0] (390,90) -- (410,90) -- (410,105) -- (390,105) -- cycle ;
\draw  [draw opacity=0] (440,90) -- (460,90) -- (460,105) -- (440,105) -- cycle ;
\draw  [draw opacity=0] (460,85) -- (480,85) -- (480,110) -- (460,110) -- cycle ;
\draw  [draw opacity=0] (135,70) -- (150,70) -- (150,85) -- (135,85) -- cycle ;
\draw  [draw opacity=0] (170,90) -- (185,90) -- (185,105) -- (170,105) -- cycle ;
\draw  [draw opacity=0] (135,110) -- (150,110) -- (150,125) -- (135,125) -- cycle ;
\draw  [draw opacity=0] (85,110) -- (100,110) -- (100,125) -- (85,125) -- cycle ;
\draw  [draw opacity=0] (85,110) -- (100,110) -- (100,125) -- (85,125) -- cycle ;
\draw  [draw opacity=0] (205,70) -- (225,70) -- (225,85) -- (205,85) -- cycle ;
\draw  [draw opacity=0] (365,110) -- (385,110) -- (385,125) -- (365,125) -- cycle ;
\draw  [draw opacity=0] (365,110) -- (380,110) -- (380,125) -- (365,125) -- cycle ;
\draw  [draw opacity=0] (270,50) -- (285,50) -- (285,65) -- (270,65) -- cycle ;
\draw  [draw opacity=0] (270,50) -- (285,50) -- (285,65) -- (270,65) -- cycle ;
\draw  [draw opacity=0] (305,130) -- (320,130) -- (320,145) -- (305,145) -- cycle ;
\draw  [draw opacity=0] (305,130) -- (320,130) -- (320,145) -- (305,145) -- cycle ;

\draw (117.5,97.5) node    {$f$};
\draw (117.5,82.5) node  [font=\small,color={rgb, 255:red, 94; green, 129; blue, 172 }  ,opacity=1 ]  {$\dotsc $};
\draw (92.5,73.5) node  [font=\small,color={rgb, 255:red, 94; green, 129; blue, 172 }  ,opacity=1 ]  {$f_{1}^{r}$};
\draw (117.5,112.5) node  [font=\small,color={rgb, 255:red, 94; green, 129; blue, 172 }  ,opacity=1 ]  {$\dotsc $};
\draw (55,97.5) node  [font=\small,color={rgb, 255:red, 94; green, 129; blue, 172 }  ,opacity=1 ]  {$f^{u}$};
\draw (265,77.5) node    {$f$};
\draw (252.5,62.5) node  [font=\small,color={rgb, 255:red, 94; green, 129; blue, 172 }  ,opacity=1 ]  {$\dotsc $};
\draw (325,117.5) node    {$g$};
\draw (252.5,92.5) node  [font=\small,color={rgb, 255:red, 94; green, 129; blue, 172 }  ,opacity=1 ]  {$\dotsc $};
\draw (337.5,102.5) node  [font=\small,color={rgb, 255:red, 94; green, 129; blue, 172 }  ,opacity=1 ]  {$\dotsc $};
\draw (337.5,132.5) node  [font=\small,color={rgb, 255:red, 94; green, 129; blue, 172 }  ,opacity=1 ]  {$\dotsc $};
\draw (195,97.5) node    {$;$};
\draw (380,97.5) node    {$;$};
\draw (400,97.5) node  [font=\small,color={rgb, 255:red, 94; green, 129; blue, 172 }  ,opacity=1 ]  {$id$};
\draw (450,97.5) node  [font=\small,color={rgb, 255:red, 94; green, 129; blue, 172 }  ,opacity=1 ]  {$id$};
\draw (470,97.5) node    {$;$};
\draw (142.5,73.5) node  [font=\small,color={rgb, 255:red, 94; green, 129; blue, 172 }  ,opacity=1 ]  {$f_{n-1}^{r}$};
\draw (180.5,97.5) node  [font=\small,color={rgb, 255:red, 94; green, 129; blue, 172 }  ,opacity=1 ]  {$f^{d}$};
\draw (142.5,120.5) node  [font=\small,color={rgb, 255:red, 94; green, 129; blue, 172 }  ,opacity=1 ]  {$f_{m-1}^{\ell}$};
\draw (92.5,120.5) node  [font=\small,color={rgb, 255:red, 94; green, 129; blue, 172 }  ,opacity=1 ]  {$f_{1}^{\ell}$};
\draw (215,77.5) node  [font=\small,color={rgb, 255:red, 94; green, 129; blue, 172 }  ,opacity=1 ]  {$f^{u}$};
\draw (374.5,117.5) node  [font=\small,color={rgb, 255:red, 94; green, 129; blue, 172 }  ,opacity=1 ]  {$f^{d}$};
\draw (277.5,53.5) node  [font=\small,color={rgb, 255:red, 94; green, 129; blue, 172 }  ,opacity=1 ]  {$f_{n-1}^{r}$};
\draw (312.5,140.5) node  [font=\small,color={rgb, 255:red, 94; green, 129; blue, 172 }  ,opacity=1 ]  {$f_{m}^{\ell}$};

\end{tikzpicture}
}
  \caption{Contour of a morphism, composition of contours, and identity contours.}
  \label{fig:contour-proautonomous}
\end{figure}

\begin{proposition}
  Contouring extends to a functor from the category of polycategories to the category of categories,
  $𝓒 \colon \mathbf{PolyCat} \to \mathbf{Cat}$.
\end{proposition}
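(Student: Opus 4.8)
The plan is to mimic the analogous results already established for multicategories (the functoriality of $\mathsf{String}$, $\mathsf{Forget}$, $\mathsf{Do}$, $\mContourF$, and the contour of plain categories) and for the contour of a multicategory. The statement asserts two things: that $\Contour{(-)}$ sends polyfunctors to functors, and that this assignment respects identities and composition of polyfunctors. Since $\Contour{ℙ}$ is defined by a presentation (generators and equations), the only work is to say what a polyfunctor does on the generators and check it preserves the relations.

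First I would describe the action on morphisms. Given a polyfunctor $F ፡ ℙ → ℚ$, define $\Contour(F) ፡ \Contour(ℙ) → \Contour(ℚ)$ on objects by $\Contour(F)(X^{l}) = (FX)^{l}$ and $\Contour(F)(X^{r}) = (FX)^{r}$, and on the generating arrows by sending the contour pieces of a polymorphism $f$ to the contour pieces of $F(f)$: that is, $\Contour(F)(f^{u}) = F(f)^{u}$, $\Contour(F)(f^{d}) = F(f)^{d}$, $\Contour(F)(f^{r}_{i}) = F(f)^{r}_{i}$, and $\Contour(F)(f^{l}_{j}) = F(f)^{l}_{j}$, for all indices and all arities, including the degenerate $(n,0)$, $(0,m)$ and $(0,0)$ cases. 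Then I would check that this assignment of generators respects the defining equations of $\Contour(ℙ)$: the identity equations $(\id_{X})^{u} = \id_{X^{l}}$ and $(\id_{X})^{d} = \id_{X^{r}}$ are preserved because $F(\id_{X}) = \id_{FX}$, so their contour pieces are the relevant identities; and the eight (and symmetric) composition equations are preserved because $F(f ⨾_{X} g) = F(f) ⨾_{FX} F(g)$, so the contour pieces of $F$ applied to a polycategorical composite are exactly the contour pieces of the composite of $F(f)$ and $F(g)$, and these in turn satisfy in $\Contour(ℚ)$ precisely the equations imposed there. Since a functor out of a presented category is uniquely determined by compatible images of generators, this yields a well-defined functor $\Contour(F)$.

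Next I would check the $2$-out-of-$2$ functoriality conditions for $\Contour$ itself. For the identity polyfunctor $\id_{ℙ} ፡ ℙ → ℙ$, the induced functor acts as the identity on each generator of $\Contour(ℙ)$, hence is $\id_{\Contour(ℙ)}$. For a composite $G ⨾ F$ of polyfunctors, on each generator we have, e.g., $\Contour(G ⨾ F)((G⨾F)(f))^{u}$ unwinding to $F(G(f))^{u} = \Contour(F)(G(f)^{u}) = \Contour(F)(\Contour(G)(f^{u}))$, and similarly for $f^{d}, f^{r}_{i}, f^{l}_{j}$; since both functors agree on the generating set of $\Contour(ℙ)$ they are equal, giving $\Contour(G ⨾ F) = \Contour(G) ⨾ \Contour(F)$. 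This establishes that $\Contour$ is a functor $\mathbf{PolyCat} → \mathbf{Cat}$, exactly as in \Cref{prop:monoidalContourFunctor} for the produoidal case.

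The only mildly delicate point—the main obstacle, such as it is—is bookkeeping the case analysis for the degenerate arities and the four orientations of polycategorical composition: one must verify that $\Contour(F)$ respects each of the eight composition clauses (plus the clauses for composing along a left-hand object rather than a right-hand one, and the special $(0,m)$ and $(n,0)$ clauses) without collapsing cases or mismatching an index. This is entirely routine once the convention is fixed, and it is justified uniformly by the single fact that $F$ preserves polycategorical composition; I would present one representative clause in detail and remark that the rest are identical.
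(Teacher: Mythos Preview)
Your proposal is correct and follows exactly the template the paper uses for the analogous result (\Cref{prop:monoidalContourFunctor}): define the functor on generators via $f^{u} \mapsto F(f)^{u}$, etc., use functoriality of $F$ to verify the presentation's equations are preserved, and check identities and composites of polyfunctors are respected. The paper itself states this proposition without proof, so you have supplied precisely the routine argument that the paper leaves implicit.
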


\subsection{Malleable Polycategories}

A malleable polycategory is a polycategory where each morphism can be morphed uniquely into any possible shape. This
means that there exist unique factorizations of each morphism into each one of the possible shapes for composition.

\begin{definition}
  The (1,1)-polymorphisms of a \polycategory{} $ℙ$ form an underlying category $ℙᵘ$. 
  The polymorphisms form profunctors over the \polycategory{} and their composition, in its four possible forms, is dinatural with respect to the underlying category. 
  This means that the following four operations are well-defined:
  \begin{align*}
    (⨾)_1 &፡ {\textstyle \left( ∫^{X ∈ ℙᵘ} ℙ(Γ; Δ, X) × ℙ(X , Γ'; Δ') \right) → ℙ(Γ, Γ'; Δ, Δ'),} \\  
    (⨾)_2 &፡ {\textstyle \left( ∫^{X ∈ ℙᵘ} ℙ(Γ; X, Δ) × ℙ(Γ', X; Δ') \right) → ℙ(Γ, Γ'; Δ, Δ'),} \\  
    (⨾)_3 &፡ {\textstyle \left( ∫^{X ∈ ℙᵘ} ℙ(Γ; Δ₁, X, Δ₂) × ℙ(X; Δ) \right) → ℙ(Γ; Δ₁, Δ, Δ₂),} \\  
    (⨾)_4 &፡ {\textstyle \left( ∫^{X ∈ ℙᵘ} ℙ(Γ;X) × ℙ(Γ₁, X, Γ₂; Δ) \right) → ℙ(Γ₁, Γ, Γ₂; Δ).}
  \end{align*}
\end{definition}

\begin{definition}
  \defining{linkMalleablePolycategory}{}
  A \emph{malleable polycategory} is a \polycategory{} where dinatural composition, in all its four forms, is invertible.
\end{definition}

\begin{remark}
  If a \polycategory{} is malleable, we can reconstruct it up to isomorphism from its binary, cobinary, nullary and conullary maps.
  When defining a \malleablePolycategory{}, it is usually easier to provide these binary, cobinary, unary, nullary and conullary maps, and deduce from those the rest of the structure. The situation is now similar to that of linearly distributive categories: we do not need to provide all n-ary tensors in order to define a linearly distributive category, we only provide the binary $(⊗,⅋)$ and unary $(I,Z)$ tensors.
  
  This suggests that we will really work with a \emph{biased} version of \malleablePolycategories{}, one that privileges the binary and nullary tensors over the others. Biased malleable polycategories are what we will call \emph{prostar autonomous categories}.
\end{remark}

\subsection{Prostar-Autonomous Categories}

Prostar-autonomous categories provide an algebra for both coherent composition and decomposition.
Apart from the usual \emph{morphisms}, $𝕍(X; Y)$; and the \emph{joints}, $𝕍(X₀ ⊗ X₁ ; Y)$, and \emph{units}, $𝕍(⊤; Y)$, of a \promonoidalCategory{}; a prostar-autonomous category has \emph{splits}, $𝕍(X ; Y₀ ⅋ Y₁)$, and \emph{atoms}, $𝕍(X ; ⊥)$. As in the case of \multicategories{}, these compositions and decompositions must be coherent, which translates into the existence of natural isomorphisms witnessing a Frobenius rule, the \emph{Frobenius distributors}
\begin{align*}
  & \textstyle \varphi_l \colon ∫^{W} ℂ(A ; C ⅋ W) × ℂ(W ⊗ B ; D)
   \xrightarrow[]{\cong}
   ℂ(A ⊗ B; C ⅋ D),\mbox{ and } \\
  & \textstyle \varphi_r \colon ∫^{W} ℂ(A ⊗ W ; C) × ℂ(B ; W ⅋ D)
   \xrightarrow[]{\cong}
   ℂ(A ⊗ B; C ⅋ D).
\end{align*}

In summary, after this section, we will have developed the relation between malleability and profunctorial structures in an analogous way for both multicategories and polycategories.

\begin{figure}[h]
\begin{tabular}{c|c|c}  
  Multicategory & Malleable Multicategory & Promonoidal category \\ \hline
  Polycategory & Malleable Polycategory & Prostar autonomous category
\end{tabular}
\end{figure}

\begin{definition} 
    \defining{linkProstar}{}\label{def:prostar}
  Prostar-autonomous categories are the 2-Frobenius monoids of the monoidal bicategory of profunctors, which is equivalent to the following definition.
	A \emph{prostar autonomous category} is a category $ℂ$ endowed with a promonoidal structure $(ℂ,⊗,⊤)$, and procomonoidal structure $(ℂ,⅋,⊥)$, that interact as a Frobenius pseudomonoid \cite{daystreet04:quantum,lauda05:frobenius}.
	That is, it is a category endowed with four profunctors, suggestively written $ℂ(\bullet ⊗ \bullet ; \bullet)$, $ℂ(\top; \bullet)$, $ℂ(\bullet ; \bot)$ and $ℂ(\bullet ; \bullet \parr \bullet)$, as if they were representable. These profunctors form two promonoidal categories \cite{day} with coherent associators and unitors. Further, they are endowed with invertible Frobenius distributors,
	\begin{align*}
	  & \textstyle \varphi_l \colon ∫^{W} ℂ(A ; C ⅋ W) × ℂ(W ⊗ B ; D)
	   \xrightarrow[]{\cong}
	   ℂ(A ⊗ B; C ⅋ D), \\
	  & \textstyle \varphi_r \colon ∫^{W} ℂ(A ⊗ W ; C) × ℂ(B ; W ⅋ D)
	   \xrightarrow[]{\cong}
	   ℂ(A ⊗ B; C ⅋ D),
	\end{align*}
	such that every formal diagram formed of these distributors and promonoidal coherences commutes.
  \end{definition}

\ProstarAutonomousCategories{} have a canonical \emph{prostar} given by profunctors $ℂ(• ⊗ •; ⊥)$ and $ℂ(⊤; • \parr •)$. We may think of a \prostarAutonomousCategory{} as a category $ℂ$ equipped with sets of polymorphisms $ℂ(• ⊗ ... ⊗ • ; • \parr ... \parr •)$. The Frobenius isomorphisms let us decompose polymorphisms into combinations of the pro(co)monoidal structures: this decomposition is unique up to dinaturality. Informally, \prostarAutonomousCategories{} are to \polycategories{} what promonoidal categories are to (co)multicategories.

\begin{definition}
  \defining{linkProstarFunctor}{} \label{def:prostarfunctor}
  A \emph{prostar functor} $F \colon 𝕍 \to 𝕎$ is a quintuple $(F_{obj},F_{\otimes},F_{\parr},F_{\top},F_{\bot})$ where $(F_{obj},F_\otimes,F_\top)$ and $(F_{obj}, F_\parr, F_\bot)$ are promonoidal functors that together strictly preserve the Frobenius distributors, in that
    $\varphi_l ⨾ (F_\otimes × F_\parr) = (F_\parr × F_\otimes) ⨾ \varphi_l'$ and $\varphi_r ⨾ (F_\otimes × F_\parr) = (F_\otimes × F_\parr) ⨾ \varphi_r'$.
  Prostar functors between \prostarAutonomousCategories{} form a category, $\mathbf{ProStar}$.
\end{definition}

\subsection{Prostar Autonomous are Malleable Polycategories}

In this section, we show that the category of \prostarAutonomousCategories{} is equivalent to that of malleable polycategories. In this sense, the study of malleable polycategories is the study of prostar autonomous categories.

\begin{definition}[Polycategorical analogue of \Cref{def:underlyingMalleableMulticategory}]
  \label{def:underlyingMalleablePolycategory}
	Let $𝕎$ be a \prostarAutonomousCategory{}. There is a \malleablePolycategory{}, $𝕎^m$, that has the same objects but polymorphisms defined by the elements of the \prostarAutonomousCategory{}. By induction, we define
	\begin{align*}
	  𝕎^m(X₀,X₁,Γ; Δ) &= \textstyle{∫^V 𝕎(X₀ ⊗ X₁; V) × 𝕎^m(V,Γ;Δ)}, \\
	  𝕎^m(;Δ) &= \textstyle{∫^V 𝕎(⊤;V) × 𝕎(V;Δ)}, \\
	  𝕎^m(X;Y₀,Y₁,Δ) &= \textstyle{∫^V 𝕎^m(X;V,Δ) × 𝕎(V;Y₀ ⅋ Y₁)}, \\
	  𝕎^m(X;) &= \textstyle{𝕎(X;⊥)}.
	\end{align*}
	In other words, the polymorphisms are elements of the left-biased tree reductions of the promonoidal category, seen as a 2-monoid.
	The four forms of dinatural composition are then defined to be the unique map relating two tree expressions in a 2-Frobenius monoid, which exist uniquely by coherence,
  \begin{align*}
    (\mathrm{coh})_1 &፡ {\textstyle \left( ∫^{X ∈ 𝕎ᵐ} 𝕎ᵐ(Γ; Δ, X) × 𝕎ᵐ(X , Γ'; Δ') \right) → 𝕎ᵐ(Γ, Γ'; Δ, Δ'),} \\  
    (\mathrm{coh})_2 &፡ {\textstyle \left( ∫^{X ∈ 𝕎ᵐ} 𝕎ᵐ(Γ; X, Δ) × 𝕎ᵐ(Γ', X; Δ') \right) → 𝕎ᵐ(Γ, Γ'; Δ, Δ'),} \\  
    (\mathrm{coh})_3 &፡ {\textstyle \left( ∫^{X ∈ 𝕎ᵐ} 𝕎ᵐ(Γ; Δ₁, X, Δ₂) × 𝕎ᵐ(X; Δ) \right) → 𝕎ᵐ(Γ; Δ₁, Δ, Δ₂),} \\  
    (\mathrm{coh})_4 &፡ {\textstyle \left( ∫^{X ∈ 𝕎ᵐ} 𝕎ᵐ(Γ;X) × 𝕎ᵐ(Γ₁, X, Γ₂; Δ) \right) → 𝕎ᵐ(Γ₁, Γ, Γ₂; Δ).}
  \end{align*}
	Coherence maps are isomorphisms, and so dinatural composition is invertible, making the \polycategory{} malleable. 
	By coherence for pseudomonoids, composition must satisfy associativity and unitality.
  \end{definition}
  
  \begin{proposition}
    \label{prop:equivalenceProstarMalleable}
	  The category of \prostarAutonomousCategories{} and the category of \malleablePolycategories{} are equivalent with the functor $(•)ᵐ ፡ \ProStar{} → \mathbf{mPoly}$ induced by the construction of the underlying malleable polycategory of a \prostarAutonomousCategory{}.  This is the polycategorical analogue of \Cref{prop:equivalencePromonoidalMalleable}.
  \end{proposition}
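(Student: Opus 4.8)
The plan is to follow the proof of \Cref{prop:equivalencePromonoidalMalleable} step for step, adapting it to the two-sided structure of \polycategories{}. First I would show that every \prostarFunctor{} $F \colon 𝕍 → 𝕎$ induces a \polyfunctor{} $F^m \colon 𝕍^m → 𝕎^m$ between the underlying \malleablePolycategories{} of \Cref{def:underlyingMalleablePolycategory}. On objects it is $F^m_{obj} = F_{obj}$; on polymorphisms I would proceed by induction on the biased tree shape used to define $𝕍^m$ — first on the number of inputs (peeling off the leftmost $⊗$, or a $⊤$ in the nullary case), then on the number of outputs (peeling off the leftmost $⅋$, or a $⊥$ in the conullary case) — setting $F^m_{1,1} = F$, $F^m_{2,1} = F_{⊗}$, $F^m_{0,1} = F_{⊤}$, $F^m_{1,2} = F_{⅋}$, $F^m_{1,0} = F_{⊥}$. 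Functoriality of this assignment reduces, exactly as in the promonoidal case, to checking that the inductive step preserves each of the four forms of dinatural composition; each check is a short equational chain combining the inductive definition, the interchange law for functions, and the fact that $F$ is a \prostarFunctor{} (so it strictly preserves both promonoidal structures and the Frobenius distributors $\varphi_l, \varphi_r$).

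Next I would show $(\bullet)^m$ is fully faithful. Faithfulness is immediate: from $F^m = G^m$ one reads off $F_{obj} = G_{obj}$ and each of $F_{⊗}, F_{⅋}, F_{⊤}, F_{⊥}$ by restricting to the binary, cobinary, nullary and conullary shapes. For fullness, given a \polyfunctor{} $G \colon 𝕍^m → 𝕎^m$ between \malleablePolycategories{}, I would build a \prostarFunctor{} $G^\star$ by setting $G^\star_{⊗} = G_{2,1}$, $G^\star_{⅋} = G_{1,2}$, $G^\star_{⊤} = G_{0,1}$, $G^\star_{⊥} = G_{1,0}$, and then verify $(G^{\star m})_{n,m} = G_{n,m}$ by the same double induction: the base cases hold by definition, and the inductive case follows from malleability (which lets us write $G_{n,m}$ as $\decomp{}$ followed by a pairing of $G$ on a smaller-arity polymorphism and on a binary or cobinary one), the induction hypothesis, and the definition of the underlying \polyfunctor{}. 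The only subtlety beyond the promonoidal case is that one must fix a consistent order in which to strip inputs and outputs and appeal to Frobenius-pseudomonoid coherence to see the result is independent of that choice.

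Finally, essential surjectivity proceeds as before: given a \malleablePolycategory{} $\mathbb{P}$, let $\mathbb{P}^{\flat}$ be the \prostarAutonomousCategory{} with the same objects whose promonoidal and procomonoidal profunctors are the binary, nullary, cobinary and conullary polymorphisms of $\mathbb{P}$, with associators, unitors and Frobenius distributors supplied by the invertible dinatural compositions of $\mathbb{P}$. There is then an identity-on-objects isomorphism $\mathbb{P} \cong \mathbb{P}^{\flat m}$, defined at each arity by iterating the inverse of dinatural composition, exactly as in the display at the end of the proof of \Cref{prop:equivalencePromonoidalMalleable}. Combining these, $(\bullet)^m$ is fully faithful and essentially surjective, hence an equivalence of categories.

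I expect the main obstacle to be the well-definedness and uniqueness arguments underpinning \Cref{def:underlyingMalleablePolycategory}: one needs that every formal diagram built from the promonoidal coherences and the two Frobenius distributors commutes — the coherence theorem for Frobenius pseudomonoids, which is assumed in \Cref{def:prostar} — and that this is precisely strong enough to make all four forms of dinatural composition uniquely invertible and the biased tree reductions canonical. This is where the interaction between $⊗$ and $⅋$ genuinely enters, unlike the promonoidal proof where a single pseudomonoid coherence sufficed, so the care required is in tracking how $\varphi_l$ and $\varphi_r$ are used in each inductive step rather than in any one calculation.
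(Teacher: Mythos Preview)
Your proposal is correct and follows essentially the same approach as the paper: the paper's proof begins exactly as yours does (defining $F^m_{obj} = F_{obj}$ and setting $F^m_{2,1} = F_{\otimes}$, $F^m_{0,1} = F_{\top}$, $F^m_{1,2} = F_{\parr}$, $F^m_{1,0} = F_{\bot}$, $F^m_{1,1} = F$) and then leaves the remaining steps implicit as the direct polycategorical analogue of \Cref{prop:equivalencePromonoidalMalleable}. Your identification of Frobenius-pseudomonoid coherence as the ingredient replacing plain pseudomonoid coherence, and the need to fix a stripping order for inputs and outputs, is exactly the additional care the two-sided setting requires.
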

  \begin{proof}
    First, let us show that a \prostarFunctor{}, $F ፡ 𝕍 → 𝕎$, induces a polyfunctor, $Fᵐ ፡ 𝕍ᵐ → 𝕎ᵐ$, between the Underlying polycategories. On objects, we define it to be the same, $Fᵐ_{obj} = F_{obj}$. On polymorphisms, we can define the binary, nullary, cobinary, conullary and unary using the \prostarFunctor{} structure,
    $$F_{2,1}^m = F_{⊗};\ F_{0,1}^m = F_⊤;\ F_{1,2}^m = F_{⅋};\ F_{1,0}^m = F_⊥; \mbox{ and } F_{1,1}^m = F.$$
    
  \end{proof}

\subsection{Splice of a Polycategory}
  \begin{definition}
	Let $ℂ$ be a category. Its \prostarAutonomousCategory{} of \emph{spliced arrows}, $Sℂ$, has underlying category $ℂ^{\text{op}} × ℂ$. Intuitively, its profunctors are defined by spliced circles of morphisms. %
	\begin{figure}[ht]
	  \centering
      \scalebox{0.8}{%

\tikzset{every picture/.style={line width=1.2pt}} %

\begin{tikzpicture}[x=0.75pt,y=0.75pt,yscale=-1.2,xscale=1.2]
\draw   (21,57) .. controls (21,40.43) and (34.43,27) .. (51,27) .. controls (67.57,27) and (81,40.43) .. (81,57) .. controls (81,73.57) and (67.57,87) .. (51,87) .. controls (34.43,87) and (21,73.57) .. (21,57) -- cycle ;
\draw  [fill={rgb, 255:red, 94; green, 129; blue, 172 }  ,fill opacity=0.2 ] (51,27) .. controls (54.69,27) and (58.23,27.67) .. (61.49,28.89) .. controls (60.21,32.01) and (59.5,35.42) .. (59.5,39) .. controls (59.5,51.5) and (68.15,62.02) .. (79.9,65.09) .. controls (78.26,70.93) and (74.91,76.05) .. (70.41,79.87) .. controls (65.44,75.01) and (58.58,72) .. (51,72) .. controls (43.42,72) and (36.56,75.01) .. (31.59,79.87) .. controls (27.09,76.05) and (23.74,70.93) .. (22.1,65.09) .. controls (33.85,62.02) and (42.5,51.5) .. (42.5,39) .. controls (42.5,35.42) and (41.79,32.01) .. (40.51,28.89) .. controls (43.77,27.67) and (47.31,27) .. (51,27) -- cycle ;
\draw    (61.79,49.43) -- (63.38,52.26) ;
\draw [shift={(64.36,54)}, rotate = 240.64] [color={rgb, 255:red, 0; green, 0; blue, 0 }  ][line width=0.75]    (4.37,-1.32) .. controls (2.78,-0.56) and (1.32,-0.12) .. (0,0) .. controls (1.32,0.12) and (2.78,0.56) .. (4.37,1.32)   ;
\draw  [draw opacity=0] (18,24) -- (84,24) -- (84,90) -- (18,90) -- cycle ;
\draw    (53.5,72) -- (50.61,72.16) ;
\draw [shift={(48.61,72.27)}, rotate = 356.82] [color={rgb, 255:red, 0; green, 0; blue, 0 }  ][line width=0.75]    (4.37,-1.32) .. controls (2.78,-0.56) and (1.32,-0.12) .. (0,0) .. controls (1.32,0.12) and (2.78,0.56) .. (4.37,1.32)   ;
\draw    (38.9,53.1) -- (39.72,51.14) ;
\draw [shift={(40.5,49.3)}, rotate = 112.83] [color={rgb, 255:red, 0; green, 0; blue, 0 }  ][line width=0.75]    (4.37,-1.32) .. controls (2.78,-0.56) and (1.32,-0.12) .. (0,0) .. controls (1.32,0.12) and (2.78,0.56) .. (4.37,1.32)   ;
\draw   (101.5,57) .. controls (101.5,73.57) and (114.93,87) .. (131.5,87) .. controls (148.07,87) and (161.5,73.57) .. (161.5,57) .. controls (161.5,40.43) and (148.07,27) .. (131.5,27) .. controls (114.93,27) and (101.5,40.43) .. (101.5,57) -- cycle ;
\draw  [fill={rgb, 255:red, 94; green, 129; blue, 172 }  ,fill opacity=0.2 ] (131.5,87) .. controls (135.19,87) and (138.73,86.33) .. (141.99,85.11) .. controls (140.71,81.99) and (140,78.58) .. (140,75) .. controls (140,62.5) and (148.65,51.98) .. (160.4,48.91) .. controls (158.76,43.07) and (155.41,37.95) .. (150.91,34.13) .. controls (145.94,38.99) and (139.08,42) .. (131.5,42) .. controls (123.92,42) and (117.06,38.99) .. (112.09,34.13) .. controls (107.59,37.95) and (104.24,43.07) .. (102.6,48.91) .. controls (114.35,51.98) and (123,62.5) .. (123,75) .. controls (123,78.58) and (122.29,81.99) .. (121.01,85.11) .. controls (124.27,86.33) and (127.81,87) .. (131.5,87) -- cycle ;
\draw    (144.75,59.81) -- (144.14,60.84) ;
\draw [shift={(143.13,62.56)}, rotate = 300.58] [color={rgb, 255:red, 0; green, 0; blue, 0 }  ][line width=0.75]    (4.37,-1.32) .. controls (2.78,-0.56) and (1.32,-0.12) .. (0,0) .. controls (1.32,0.12) and (2.78,0.56) .. (4.37,1.32)   ;
\draw    (131.5,42) -- (132.5,42) ;
\draw [shift={(134.5,42)}, rotate = 180] [color={rgb, 255:red, 0; green, 0; blue, 0 }  ][line width=0.75]    (4.37,-1.32) .. controls (2.78,-0.56) and (1.32,-0.12) .. (0,0) .. controls (1.32,0.12) and (2.78,0.56) .. (4.37,1.32)   ;
\draw    (120.5,63.29) -- (120.13,62.7) ;
\draw [shift={(119.07,61)}, rotate = 57.99] [color={rgb, 255:red, 0; green, 0; blue, 0 }  ][line width=0.75]    (4.37,-1.32) .. controls (2.78,-0.56) and (1.32,-0.12) .. (0,0) .. controls (1.32,0.12) and (2.78,0.56) .. (4.37,1.32)   ;
\draw   (182.5,57) .. controls (182.5,40.43) and (195.93,27) .. (212.5,27) .. controls (229.07,27) and (242.5,40.43) .. (242.5,57) .. controls (242.5,73.57) and (229.07,87) .. (212.5,87) .. controls (195.93,87) and (182.5,73.57) .. (182.5,57) -- cycle ;
\draw  [fill={rgb, 255:red, 94; green, 129; blue, 172 }  ,fill opacity=0.2 ] (212.5,27) .. controls (216.27,27) and (219.89,27.7) .. (223.21,28.97) .. controls (220.15,37.82) and (218.5,47.23) .. (218.5,57) .. controls (218.5,66.77) and (220.15,76.18) .. (223.21,85.03) .. controls (219.89,86.3) and (216.27,87) .. (212.5,87) .. controls (208.73,87) and (205.11,86.3) .. (201.79,85.03) .. controls (204.85,76.18) and (206.5,66.77) .. (206.5,57) .. controls (206.5,47.23) and (204.85,37.82) .. (201.79,28.97) .. controls (205.11,27.7) and (208.73,27) .. (212.5,27) -- cycle ;
\draw    (206.5,57) -- (206.5,56) ;
\draw [shift={(206.5,54)}, rotate = 90] [color={rgb, 255:red, 0; green, 0; blue, 0 }  ][line width=0.75]    (4.37,-1.32) .. controls (2.78,-0.56) and (1.32,-0.12) .. (0,0) .. controls (1.32,0.12) and (2.78,0.56) .. (4.37,1.32)   ;
\draw    (218.5,54) -- (218.5,55) ;
\draw [shift={(218.5,57)}, rotate = 270] [color={rgb, 255:red, 0; green, 0; blue, 0 }  ][line width=0.75]    (4.37,-1.32) .. controls (2.78,-0.56) and (1.32,-0.12) .. (0,0) .. controls (1.32,0.12) and (2.78,0.56) .. (4.37,1.32)   ;
\draw  [fill={rgb, 255:red, 94; green, 129; blue, 172 }  ,fill opacity=0.2 ] (294,27) .. controls (297.77,27) and (301.39,27.7) .. (304.71,28.97) .. controls (304.71,35.05) and (299.92,39.97) .. (294,39.97) .. controls (288.08,39.97) and (283.29,35.05) .. (283.29,28.97) .. controls (286.61,27.7) and (290.23,27) .. (294,27) -- cycle ;
\draw    (292.5,40.08) -- (293.84,40.02) ;
\draw [shift={(290.83,39.92)}, rotate = 0] [color={rgb, 255:red, 0; green, 0; blue, 0 }  ][line width=0.75]    (4.37,-1.32) .. controls (2.78,-0.56) and (1.32,-0.12) .. (0,0) .. controls (1.32,0.12) and (2.78,0.56) .. (4.37,1.32)   ;
\draw   (264,57) .. controls (264,40.43) and (277.43,27) .. (294,27) .. controls (310.57,27) and (324,40.43) .. (324,57) .. controls (324,73.57) and (310.57,87) .. (294,87) .. controls (277.43,87) and (264,73.57) .. (264,57) -- cycle ;
\draw  [fill={rgb, 255:red, 94; green, 129; blue, 172 }  ,fill opacity=0.2 ] (375,87) .. controls (378.77,87) and (382.39,86.3) .. (385.71,85.03) .. controls (385.71,78.95) and (380.92,74.03) .. (375,74.03) .. controls (369.08,74.03) and (364.29,78.95) .. (364.29,85.03) .. controls (367.61,86.3) and (371.23,87) .. (375,87) -- cycle ;
\draw    (376.17,73.75) -- (375.5,73.79) ;
\draw [shift={(377.5,73.92)}, rotate = 180] [color={rgb, 255:red, 0; green, 0; blue, 0 }  ][line width=0.75]    (4.37,-1.32) .. controls (2.78,-0.56) and (1.32,-0.12) .. (0,0) .. controls (1.32,0.12) and (2.78,0.56) .. (4.37,1.32)   ;
\draw   (345,57) .. controls (345,73.57) and (358.43,87) .. (375,87) .. controls (391.57,87) and (405,73.57) .. (405,57) .. controls (405,40.43) and (391.57,27) .. (375,27) .. controls (358.43,27) and (345,40.43) .. (345,57) -- cycle ;
\draw  [draw opacity=0][fill={rgb, 255:red, 0; green, 0; blue, 0 }  ,fill opacity=1 ] (39.01,28.89) .. controls (39.01,28.06) and (39.68,27.39) .. (40.51,27.39) .. controls (41.33,27.39) and (42.01,28.06) .. (42.01,28.89) .. controls (42.01,29.72) and (41.33,30.39) .. (40.51,30.39) .. controls (39.68,30.39) and (39.01,29.72) .. (39.01,28.89) -- cycle ;
\draw  [draw opacity=0][fill={rgb, 255:red, 0; green, 0; blue, 0 }  ,fill opacity=1 ] (59.99,28.89) .. controls (59.99,28.06) and (60.67,27.39) .. (61.49,27.39) .. controls (62.32,27.39) and (62.99,28.06) .. (62.99,28.89) .. controls (62.99,29.72) and (62.32,30.39) .. (61.49,30.39) .. controls (60.67,30.39) and (59.99,29.72) .. (59.99,28.89) -- cycle ;
\draw  [draw opacity=0][fill={rgb, 255:red, 0; green, 0; blue, 0 }  ,fill opacity=1 ] (78.4,65.09) .. controls (78.4,64.26) and (79.07,63.59) .. (79.9,63.59) .. controls (80.72,63.59) and (81.4,64.26) .. (81.4,65.09) .. controls (81.4,65.92) and (80.72,66.59) .. (79.9,66.59) .. controls (79.07,66.59) and (78.4,65.92) .. (78.4,65.09) -- cycle ;
\draw  [draw opacity=0][fill={rgb, 255:red, 0; green, 0; blue, 0 }  ,fill opacity=1 ] (68.91,79.87) .. controls (68.91,79.05) and (69.58,78.37) .. (70.41,78.37) .. controls (71.24,78.37) and (71.91,79.05) .. (71.91,79.87) .. controls (71.91,80.7) and (71.24,81.37) .. (70.41,81.37) .. controls (69.58,81.37) and (68.91,80.7) .. (68.91,79.87) -- cycle ;
\draw  [draw opacity=0][fill={rgb, 255:red, 0; green, 0; blue, 0 }  ,fill opacity=1 ] (30.09,79.87) .. controls (30.09,79.05) and (30.76,78.37) .. (31.59,78.37) .. controls (32.42,78.37) and (33.09,79.05) .. (33.09,79.87) .. controls (33.09,80.7) and (32.42,81.37) .. (31.59,81.37) .. controls (30.76,81.37) and (30.09,80.7) .. (30.09,79.87) -- cycle ;
\draw  [draw opacity=0][fill={rgb, 255:red, 0; green, 0; blue, 0 }  ,fill opacity=1 ] (20.6,65.09) .. controls (20.6,64.26) and (21.28,63.59) .. (22.1,63.59) .. controls (22.93,63.59) and (23.6,64.26) .. (23.6,65.09) .. controls (23.6,65.92) and (22.93,66.59) .. (22.1,66.59) .. controls (21.28,66.59) and (20.6,65.92) .. (20.6,65.09) -- cycle ;
\draw  [draw opacity=0][fill={rgb, 255:red, 0; green, 0; blue, 0 }  ,fill opacity=1 ] (110.59,34.13) .. controls (110.59,33.3) and (111.26,32.63) .. (112.09,32.63) .. controls (112.92,32.63) and (113.59,33.3) .. (113.59,34.13) .. controls (113.59,34.95) and (112.92,35.63) .. (112.09,35.63) .. controls (111.26,35.63) and (110.59,34.95) .. (110.59,34.13) -- cycle ;
\draw  [draw opacity=0][fill={rgb, 255:red, 0; green, 0; blue, 0 }  ,fill opacity=1 ] (149.41,34.13) .. controls (149.41,33.3) and (150.08,32.63) .. (150.91,32.63) .. controls (151.74,32.63) and (152.41,33.3) .. (152.41,34.13) .. controls (152.41,34.95) and (151.74,35.63) .. (150.91,35.63) .. controls (150.08,35.63) and (149.41,34.95) .. (149.41,34.13) -- cycle ;
\draw  [draw opacity=0][fill={rgb, 255:red, 0; green, 0; blue, 0 }  ,fill opacity=1 ] (158.9,48.91) .. controls (158.9,48.08) and (159.57,47.41) .. (160.4,47.41) .. controls (161.22,47.41) and (161.9,48.08) .. (161.9,48.91) .. controls (161.9,49.74) and (161.22,50.41) .. (160.4,50.41) .. controls (159.57,50.41) and (158.9,49.74) .. (158.9,48.91) -- cycle ;
\draw  [draw opacity=0][fill={rgb, 255:red, 0; green, 0; blue, 0 }  ,fill opacity=1 ] (140.49,85.11) .. controls (140.49,84.28) and (141.17,83.61) .. (141.99,83.61) .. controls (142.82,83.61) and (143.49,84.28) .. (143.49,85.11) .. controls (143.49,85.94) and (142.82,86.61) .. (141.99,86.61) .. controls (141.17,86.61) and (140.49,85.94) .. (140.49,85.11) -- cycle ;
\draw  [draw opacity=0][fill={rgb, 255:red, 0; green, 0; blue, 0 }  ,fill opacity=1 ] (119.51,85.11) .. controls (119.51,84.28) and (120.18,83.61) .. (121.01,83.61) .. controls (121.83,83.61) and (122.51,84.28) .. (122.51,85.11) .. controls (122.51,85.94) and (121.83,86.61) .. (121.01,86.61) .. controls (120.18,86.61) and (119.51,85.94) .. (119.51,85.11) -- cycle ;
\draw  [draw opacity=0][fill={rgb, 255:red, 0; green, 0; blue, 0 }  ,fill opacity=1 ] (101.1,48.91) .. controls (101.1,48.08) and (101.78,47.41) .. (102.6,47.41) .. controls (103.43,47.41) and (104.1,48.08) .. (104.1,48.91) .. controls (104.1,49.74) and (103.43,50.41) .. (102.6,50.41) .. controls (101.78,50.41) and (101.1,49.74) .. (101.1,48.91) -- cycle ;
\draw  [draw opacity=0][fill={rgb, 255:red, 0; green, 0; blue, 0 }  ,fill opacity=1 ] (200.29,28.97) .. controls (200.29,28.14) and (200.96,27.47) .. (201.79,27.47) .. controls (202.61,27.47) and (203.29,28.14) .. (203.29,28.97) .. controls (203.29,29.8) and (202.61,30.47) .. (201.79,30.47) .. controls (200.96,30.47) and (200.29,29.8) .. (200.29,28.97) -- cycle ;
\draw  [draw opacity=0][fill={rgb, 255:red, 0; green, 0; blue, 0 }  ,fill opacity=1 ] (221.71,28.97) .. controls (221.71,28.14) and (222.39,27.47) .. (223.21,27.47) .. controls (224.04,27.47) and (224.71,28.14) .. (224.71,28.97) .. controls (224.71,29.8) and (224.04,30.47) .. (223.21,30.47) .. controls (222.39,30.47) and (221.71,29.8) .. (221.71,28.97) -- cycle ;
\draw  [draw opacity=0][fill={rgb, 255:red, 0; green, 0; blue, 0 }  ,fill opacity=1 ] (200.29,85.03) .. controls (200.29,84.2) and (200.96,83.53) .. (201.79,83.53) .. controls (202.61,83.53) and (203.29,84.2) .. (203.29,85.03) .. controls (203.29,85.86) and (202.61,86.53) .. (201.79,86.53) .. controls (200.96,86.53) and (200.29,85.86) .. (200.29,85.03) -- cycle ;
\draw  [draw opacity=0][fill={rgb, 255:red, 0; green, 0; blue, 0 }  ,fill opacity=1 ] (221.71,85.03) .. controls (221.71,84.2) and (222.39,83.53) .. (223.21,83.53) .. controls (224.04,83.53) and (224.71,84.2) .. (224.71,85.03) .. controls (224.71,85.86) and (224.04,86.53) .. (223.21,86.53) .. controls (222.39,86.53) and (221.71,85.86) .. (221.71,85.03) -- cycle ;
\draw  [draw opacity=0][fill={rgb, 255:red, 0; green, 0; blue, 0 }  ,fill opacity=1 ] (281.79,28.97) .. controls (281.79,28.14) and (282.46,27.47) .. (283.29,27.47) .. controls (284.11,27.47) and (284.79,28.14) .. (284.79,28.97) .. controls (284.79,29.8) and (284.11,30.47) .. (283.29,30.47) .. controls (282.46,30.47) and (281.79,29.8) .. (281.79,28.97) -- cycle ;
\draw  [draw opacity=0][fill={rgb, 255:red, 0; green, 0; blue, 0 }  ,fill opacity=1 ] (303.21,28.97) .. controls (303.21,28.14) and (303.89,27.47) .. (304.71,27.47) .. controls (305.54,27.47) and (306.21,28.14) .. (306.21,28.97) .. controls (306.21,29.8) and (305.54,30.47) .. (304.71,30.47) .. controls (303.89,30.47) and (303.21,29.8) .. (303.21,28.97) -- cycle ;
\draw  [draw opacity=0][fill={rgb, 255:red, 0; green, 0; blue, 0 }  ,fill opacity=1 ] (362.79,85.03) .. controls (362.79,84.2) and (363.46,83.53) .. (364.29,83.53) .. controls (365.11,83.53) and (365.79,84.2) .. (365.79,85.03) .. controls (365.79,85.86) and (365.11,86.53) .. (364.29,86.53) .. controls (363.46,86.53) and (362.79,85.86) .. (362.79,85.03) -- cycle ;
\draw  [draw opacity=0][fill={rgb, 255:red, 0; green, 0; blue, 0 }  ,fill opacity=1 ] (384.21,85.03) .. controls (384.21,84.2) and (384.89,83.53) .. (385.71,83.53) .. controls (386.54,83.53) and (387.21,84.2) .. (387.21,85.03) .. controls (387.21,85.86) and (386.54,86.53) .. (385.71,86.53) .. controls (384.89,86.53) and (384.21,85.86) .. (384.21,85.03) -- cycle ;

\draw (38.51,25.49) node [anchor=south east] [inner sep=0.75pt]  [font=\scriptsize,color={rgb, 255:red, 0; green, 0; blue, 0 }  ,opacity=1 ]  {$X^{+}$};
\draw (63.49,25.49) node [anchor=south west] [inner sep=0.75pt]  [font=\scriptsize,color={rgb, 255:red, 0; green, 0; blue, 0 }  ,opacity=1 ]  {$X^{-}$};
\draw (81.9,68.49) node [anchor=north west][inner sep=0.75pt]  [font=\scriptsize,color={rgb, 255:red, 0; green, 0; blue, 0 }  ,opacity=1 ]  {$Z^{-}$};
\draw (72.41,83.27) node [anchor=north west][inner sep=0.75pt]  [font=\scriptsize,color={rgb, 255:red, 0; green, 0; blue, 0 }  ,opacity=1 ]  {$Z^{+}$};
\draw (20.1,68.49) node [anchor=north east] [inner sep=0.75pt]  [font=\scriptsize,color={rgb, 255:red, 0; green, 0; blue, 0 }  ,opacity=1 ]  {$Y^{+}$};
\draw (29.59,83.27) node [anchor=north east] [inner sep=0.75pt]  [font=\scriptsize,color={rgb, 255:red, 0; green, 0; blue, 0 }  ,opacity=1 ]  {$Y^{-}$};
\draw (100.6,45.51) node [anchor=south east] [inner sep=0.75pt]  [font=\scriptsize,color={rgb, 255:red, 0; green, 0; blue, 0 }  ,opacity=1 ]  {$X^{+}$};
\draw (110.09,30.73) node [anchor=south east] [inner sep=0.75pt]  [font=\scriptsize,color={rgb, 255:red, 0; green, 0; blue, 0 }  ,opacity=1 ]  {$X^{-}$};
\draw (152.91,30.73) node [anchor=south west] [inner sep=0.75pt]  [font=\scriptsize,color={rgb, 255:red, 0; green, 0; blue, 0 }  ,opacity=1 ]  {$Y^{+}$};
\draw (162.4,45.51) node [anchor=south west] [inner sep=0.75pt]  [font=\scriptsize,color={rgb, 255:red, 0; green, 0; blue, 0 }  ,opacity=1 ]  {$Y^{-}$};
\draw (119.01,88.51) node [anchor=north east] [inner sep=0.75pt]  [font=\scriptsize,color={rgb, 255:red, 0; green, 0; blue, 0 }  ,opacity=1 ]  {$Z^{+}$};
\draw (143.99,88.51) node [anchor=north west][inner sep=0.75pt]  [font=\scriptsize,color={rgb, 255:red, 0; green, 0; blue, 0 }  ,opacity=1 ]  {$Z^{-}$};
\draw (199.79,25.57) node [anchor=south east] [inner sep=0.75pt]  [font=\scriptsize,color={rgb, 255:red, 0; green, 0; blue, 0 }  ,opacity=1 ]  {$X^{+}$};
\draw (225.21,25.57) node [anchor=south west] [inner sep=0.75pt]  [font=\scriptsize,color={rgb, 255:red, 0; green, 0; blue, 0 }  ,opacity=1 ]  {$X^{-}$};
\draw (199.79,88.43) node [anchor=north east] [inner sep=0.75pt]  [font=\scriptsize,color={rgb, 255:red, 0; green, 0; blue, 0 }  ,opacity=1 ]  {$Y^{+}$};
\draw (225.21,88.43) node [anchor=north west][inner sep=0.75pt]  [font=\scriptsize,color={rgb, 255:red, 0; green, 0; blue, 0 }  ,opacity=1 ]  {$Y^{-}$};
\draw (281.29,25.57) node [anchor=south east] [inner sep=0.75pt]  [font=\scriptsize,color={rgb, 255:red, 0; green, 0; blue, 0 }  ,opacity=1 ]  {$X^{+}$};
\draw (306.71,25.57) node [anchor=south west] [inner sep=0.75pt]  [font=\scriptsize,color={rgb, 255:red, 0; green, 0; blue, 0 }  ,opacity=1 ]  {$X^{-}$};
\draw (362.29,88.43) node [anchor=north east] [inner sep=0.75pt]  [font=\scriptsize,color={rgb, 255:red, 0; green, 0; blue, 0 }  ,opacity=1 ]  {$Y^{+}$};
\draw (387.71,88.43) node [anchor=north west][inner sep=0.75pt]  [font=\scriptsize,color={rgb, 255:red, 0; green, 0; blue, 0 }  ,opacity=1 ]  {$Y^{-}$};
\draw (36.9,49.7) node [anchor=south east] [inner sep=0.75pt]  [font=\scriptsize]  {$f_{0}$};
\draw (57,85.6) node [anchor=south east] [inner sep=0.75pt]  [font=\scriptsize]  {$f_{2}$};
\draw (66.36,50.6) node [anchor=south west] [inner sep=0.75pt]  [font=\scriptsize]  {$f_{1}$};
\draw (137,37.6) node [anchor=south east] [inner sep=0.75pt]  [font=\scriptsize]  {$g_{1}$};
\draw (119,62.4) node [anchor=north east] [inner sep=0.75pt]  [font=\scriptsize]  {$g_{0}$};
\draw (145,62.4) node [anchor=north west][inner sep=0.75pt]  [font=\scriptsize]  {$g_{2}$};
\draw (193,53.4) node [anchor=north west][inner sep=0.75pt]  [font=\scriptsize]  {$h_{0}$};
\draw (220,53.4) node [anchor=north west][inner sep=0.75pt]  [font=\scriptsize]  {$h_{1}$};
\draw (289,44.4) node [anchor=north west][inner sep=0.75pt]  [font=\scriptsize]  {$k_{0}$};
\draw (370,60.4) node [anchor=north west][inner sep=0.75pt]  [font=\scriptsize]  {$l_{0}$};

\end{tikzpicture}
}
	\end{figure}

	Explicitly, it is defined by the following profunctors (below, left). The coherence isomorphisms are defined by glueing circles along the desired boundary and composing the relevant arrows; two compositions are isomorphic if and only if they determine the same arrows (below, right).

  \begin{remark}
  This structure appeared in Day \& Street \cite[Ex. 7.3]{daystreet04:quantum}, where it was noticed that the canonical promonoidal category induced by a small category \cite{day} has an involution. As a \multicategory{}, it was rediscovered by Melliès \& Zeilberger \cite{mellies22}. Monoidal spliced arrows were explicitly introduced and characterized as an adjunction in a joint work \cite{produoidal23}.
\end{remark}
\begin{figure}[ht]
  \begin{align*}
	\Splice{(ℂ)} \left( \biobj{X^+}{X^-} ;  \biobj{Y^+}{Y^-} ⅋ \biobj{Z^+}{Z^-}\right) & = 
      ℂ(Y^+; X^+) × ℂ(X^-; Z^-) × ℂ(Z^+; Y^-); \\
    \Splice{(ℂ)} \left( \biobj{X^+}{X^-} ⊗ \biobj{Y^+}{Y^-} ; \biobj{Z^+}{Z^-}\right) & = 
      ℂ(Z^+; X^+) × ℂ(X^-; Y^+) × ℂ(Y^-; Z^-); \\
	\Splice{(ℂ)} \left( \biobj{X^+}{X^-} ; \biobj{Y^+}{Y^-} \right) & = 
      ℂ(Y^+; X^+) × ℂ(X^-; Y^-); \\
	\Splice{(ℂ)} \left( \biobj{X^+}{X^-} ; \bot \right) & = 
      ℂ(X^-; X^+); \\
	\Splice{(ℂ)} \left(\top ; \biobj{Y^+}{Y^-}  \right) & = 
      ℂ(Y^+; Y^-). \\
  \end{align*}
  \label{fig:spliced-arrow-profrobenius}
\end{figure}
\begin{figure}
  \scalebox{0.80}{%

\tikzset{every picture/.style={line width=1pt}} %

\begin{tikzpicture}[x=0.75pt,y=0.75pt,yscale=-1.2,xscale=1.2]
\draw   (39,17.13) .. controls (55.57,17.13) and (69,30.56) .. (69,47.13) .. controls (69,49.98) and (68.6,52.73) .. (67.86,55.34) .. controls (73.11,50.85) and (79.92,48.13) .. (87.37,48.13) .. controls (103.94,48.13) and (117.37,61.56) .. (117.37,78.13) .. controls (117.37,94.7) and (103.94,108.13) .. (87.37,108.13) .. controls (70.81,108.13) and (57.37,94.7) .. (57.37,78.13) .. controls (57.37,75.28) and (57.77,72.52) .. (58.51,69.91) .. controls (53.27,74.41) and (46.45,77.13) .. (39,77.13) .. controls (22.43,77.13) and (9,63.69) .. (9,47.13) .. controls (9,30.56) and (22.43,17.13) .. (39,17.13) -- cycle ;
\draw  [fill={rgb, 255:red, 94; green, 129; blue, 172 }  ,fill opacity=0.2 ] (39,17.13) .. controls (42.69,17.13) and (46.23,17.79) .. (49.49,19.01) .. controls (48.21,22.14) and (47.5,25.55) .. (47.5,29.13) .. controls (47.5,41.62) and (56.15,52.14) .. (67.9,55.22) .. controls (67.88,55.26) and (67.87,55.3) .. (67.86,55.34) .. controls (67.89,55.31) and (67.93,55.29) .. (67.96,55.26) .. controls (72.94,60.12) and (79.8,63.13) .. (87.37,63.13) .. controls (94.95,63.13) and (101.81,60.12) .. (106.79,55.26) .. controls (111.28,59.08) and (114.64,64.2) .. (116.27,70.04) .. controls (104.53,73.11) and (95.87,83.63) .. (95.87,96.13) .. controls (95.87,99.71) and (96.58,103.12) .. (97.87,106.24) .. controls (94.6,107.46) and (91.07,108.13) .. (87.37,108.13) .. controls (83.68,108.13) and (80.15,107.46) .. (76.88,106.24) .. controls (78.17,103.12) and (78.87,99.71) .. (78.87,96.13) .. controls (78.87,83.63) and (70.22,73.11) .. (58.48,70.04) .. controls (58.49,70) and (58.5,69.95) .. (58.51,69.91) .. controls (58.48,69.94) and (58.45,69.97) .. (58.41,70) .. controls (53.44,65.13) and (46.58,62.13) .. (39,62.13) .. controls (31.42,62.13) and (24.56,65.13) .. (19.59,70) .. controls (15.09,66.18) and (11.74,61.06) .. (10.1,55.22) .. controls (21.85,52.14) and (30.5,41.62) .. (30.5,29.13) .. controls (30.5,25.55) and (29.79,22.14) .. (28.51,19.01) .. controls (31.77,17.79) and (35.31,17.13) .. (39,17.13) -- cycle ;
\draw    (49.79,39.55) -- (51.38,42.38) ;
\draw [shift={(52.36,44.13)}, rotate = 240.64] [color={rgb, 255:red, 0; green, 0; blue, 0 }  ][line width=0.75]    (4.37,-1.32) .. controls (2.78,-0.56) and (1.32,-0.12) .. (0,0) .. controls (1.32,0.12) and (2.78,0.56) .. (4.37,1.32)   ;
\draw  [draw opacity=0] (6,14.13) -- (72,14.13) -- (72,80.13) -- (6,80.13) -- cycle ;
\draw    (41.5,62.13) -- (38.61,62.29) ;
\draw [shift={(36.61,62.4)}, rotate = 356.82] [color={rgb, 255:red, 0; green, 0; blue, 0 }  ][line width=0.75]    (4.37,-1.32) .. controls (2.78,-0.56) and (1.32,-0.12) .. (0,0) .. controls (1.32,0.12) and (2.78,0.56) .. (4.37,1.32)   ;
\draw    (26.9,43.22) -- (27.72,41.27) ;
\draw [shift={(28.5,39.43)}, rotate = 112.83] [color={rgb, 255:red, 0; green, 0; blue, 0 }  ][line width=0.75]    (4.37,-1.32) .. controls (2.78,-0.56) and (1.32,-0.12) .. (0,0) .. controls (1.32,0.12) and (2.78,0.56) .. (4.37,1.32)   ;
\draw  [draw opacity=0][fill={rgb, 255:red, 0; green, 0; blue, 0 }  ,fill opacity=1 ] (27.01,19.01) .. controls (27.01,18.18) and (27.68,17.51) .. (28.51,17.51) .. controls (29.33,17.51) and (30.01,18.18) .. (30.01,19.01) .. controls (30.01,19.84) and (29.33,20.51) .. (28.51,20.51) .. controls (27.68,20.51) and (27.01,19.84) .. (27.01,19.01) -- cycle ;
\draw  [draw opacity=0][fill={rgb, 255:red, 0; green, 0; blue, 0 }  ,fill opacity=1 ] (47.99,19.01) .. controls (47.99,18.18) and (48.67,17.51) .. (49.49,17.51) .. controls (50.32,17.51) and (50.99,18.18) .. (50.99,19.01) .. controls (50.99,19.84) and (50.32,20.51) .. (49.49,20.51) .. controls (48.67,20.51) and (47.99,19.84) .. (47.99,19.01) -- cycle ;
\draw  [draw opacity=0][fill={rgb, 255:red, 0; green, 0; blue, 0 }  ,fill opacity=1 ] (66.4,55.22) .. controls (66.4,54.39) and (67.07,53.72) .. (67.9,53.72) .. controls (68.72,53.72) and (69.4,54.39) .. (69.4,55.22) .. controls (69.4,56.04) and (68.72,56.72) .. (67.9,56.72) .. controls (67.07,56.72) and (66.4,56.04) .. (66.4,55.22) -- cycle ;
\draw  [draw opacity=0][fill={rgb, 255:red, 0; green, 0; blue, 0 }  ,fill opacity=1 ] (56.91,70) .. controls (56.91,69.17) and (57.58,68.5) .. (58.41,68.5) .. controls (59.24,68.5) and (59.91,69.17) .. (59.91,70) .. controls (59.91,70.83) and (59.24,71.5) .. (58.41,71.5) .. controls (57.58,71.5) and (56.91,70.83) .. (56.91,70) -- cycle ;
\draw  [draw opacity=0][fill={rgb, 255:red, 0; green, 0; blue, 0 }  ,fill opacity=1 ] (18.09,70) .. controls (18.09,69.17) and (18.76,68.5) .. (19.59,68.5) .. controls (20.42,68.5) and (21.09,69.17) .. (21.09,70) .. controls (21.09,70.83) and (20.42,71.5) .. (19.59,71.5) .. controls (18.76,71.5) and (18.09,70.83) .. (18.09,70) -- cycle ;
\draw  [draw opacity=0][fill={rgb, 255:red, 0; green, 0; blue, 0 }  ,fill opacity=1 ] (8.6,55.22) .. controls (8.6,54.39) and (9.28,53.72) .. (10.1,53.72) .. controls (10.93,53.72) and (11.6,54.39) .. (11.6,55.22) .. controls (11.6,56.04) and (10.93,56.72) .. (10.1,56.72) .. controls (9.28,56.72) and (8.6,56.04) .. (8.6,55.22) -- cycle ;
\draw    (100.62,80.94) -- (100.02,81.97) ;
\draw [shift={(99,83.69)}, rotate = 300.58] [color={rgb, 255:red, 0; green, 0; blue, 0 }  ][line width=0.75]    (4.37,-1.32) .. controls (2.78,-0.56) and (1.32,-0.12) .. (0,0) .. controls (1.32,0.12) and (2.78,0.56) .. (4.37,1.32)   ;
\draw    (87.37,63.13) -- (88.37,63.13) ;
\draw [shift={(90.37,63.13)}, rotate = 180] [color={rgb, 255:red, 0; green, 0; blue, 0 }  ][line width=0.75]    (4.37,-1.32) .. controls (2.78,-0.56) and (1.32,-0.12) .. (0,0) .. controls (1.32,0.12) and (2.78,0.56) .. (4.37,1.32)   ;
\draw    (76.37,84.42) -- (76.01,83.83) ;
\draw [shift={(74.95,82.13)}, rotate = 57.99] [color={rgb, 255:red, 0; green, 0; blue, 0 }  ][line width=0.75]    (4.37,-1.32) .. controls (2.78,-0.56) and (1.32,-0.12) .. (0,0) .. controls (1.32,0.12) and (2.78,0.56) .. (4.37,1.32)   ;
\draw  [draw opacity=0][fill={rgb, 255:red, 0; green, 0; blue, 0 }  ,fill opacity=1 ] (66.46,55.26) .. controls (66.46,54.43) and (67.13,53.76) .. (67.96,53.76) .. controls (68.79,53.76) and (69.46,54.43) .. (69.46,55.26) .. controls (69.46,56.08) and (68.79,56.76) .. (67.96,56.76) .. controls (67.13,56.76) and (66.46,56.08) .. (66.46,55.26) -- cycle ;
\draw  [draw opacity=0][fill={rgb, 255:red, 0; green, 0; blue, 0 }  ,fill opacity=1 ] (105.29,55.26) .. controls (105.29,54.43) and (105.96,53.76) .. (106.79,53.76) .. controls (107.61,53.76) and (108.29,54.43) .. (108.29,55.26) .. controls (108.29,56.08) and (107.61,56.76) .. (106.79,56.76) .. controls (105.96,56.76) and (105.29,56.08) .. (105.29,55.26) -- cycle ;
\draw  [draw opacity=0][fill={rgb, 255:red, 0; green, 0; blue, 0 }  ,fill opacity=1 ] (114.77,70.04) .. controls (114.77,69.21) and (115.44,68.54) .. (116.27,68.54) .. controls (117.1,68.54) and (117.77,69.21) .. (117.77,70.04) .. controls (117.77,70.87) and (117.1,71.54) .. (116.27,71.54) .. controls (115.44,71.54) and (114.77,70.87) .. (114.77,70.04) -- cycle ;
\draw  [draw opacity=0][fill={rgb, 255:red, 0; green, 0; blue, 0 }  ,fill opacity=1 ] (96.37,106.24) .. controls (96.37,105.42) and (97.04,104.74) .. (97.87,104.74) .. controls (98.7,104.74) and (99.37,105.42) .. (99.37,106.24) .. controls (99.37,107.07) and (98.7,107.74) .. (97.87,107.74) .. controls (97.04,107.74) and (96.37,107.07) .. (96.37,106.24) -- cycle ;
\draw  [draw opacity=0][fill={rgb, 255:red, 0; green, 0; blue, 0 }  ,fill opacity=1 ] (75.38,106.24) .. controls (75.38,105.42) and (76.05,104.74) .. (76.88,104.74) .. controls (77.71,104.74) and (78.38,105.42) .. (78.38,106.24) .. controls (78.38,107.07) and (77.71,107.74) .. (76.88,107.74) .. controls (76.05,107.74) and (75.38,107.07) .. (75.38,106.24) -- cycle ;
\draw  [draw opacity=0][fill={rgb, 255:red, 0; green, 0; blue, 0 }  ,fill opacity=1 ] (56.98,70.04) .. controls (56.98,69.21) and (57.65,68.54) .. (58.48,68.54) .. controls (59.31,68.54) and (59.98,69.21) .. (59.98,70.04) .. controls (59.98,70.87) and (59.31,71.54) .. (58.48,71.54) .. controls (57.65,71.54) and (56.98,70.87) .. (56.98,70.04) -- cycle ;
\draw  [draw opacity=0] (6,14.13) -- (120,14.13) -- (120,110.13) -- (6,110.13) -- cycle ;
\draw   (198,89.13) .. controls (198,105.69) and (184.57,119.13) .. (168,119.13) .. controls (151.43,119.13) and (138,105.69) .. (138,89.13) .. controls (138,76.32) and (146.02,65.39) .. (157.31,61.09) .. controls (145.95,56.81) and (137.88,45.85) .. (137.88,33) .. controls (137.88,16.43) and (151.31,3) .. (167.88,3) .. controls (184.44,3) and (197.88,16.43) .. (197.88,33) .. controls (197.88,45.8) and (189.86,56.73) .. (178.57,61.04) .. controls (189.92,65.31) and (198,76.28) .. (198,89.13) -- cycle ;
\draw  [fill={rgb, 255:red, 94; green, 129; blue, 172 }  ,fill opacity=0.2 ] (178.49,61.01) .. controls (177.21,64.14) and (176.5,67.55) .. (176.5,71.13) .. controls (176.5,83.62) and (185.15,94.14) .. (196.9,97.22) .. controls (195.26,103.06) and (191.91,108.18) .. (187.41,112) .. controls (182.44,107.13) and (175.58,104.13) .. (168,104.13) .. controls (160.42,104.13) and (153.56,107.13) .. (148.59,112) .. controls (144.09,108.18) and (140.74,103.06) .. (139.1,97.22) .. controls (150.85,94.14) and (159.5,83.62) .. (159.5,71.13) .. controls (159.5,67.62) and (158.82,64.26) .. (157.58,61.19) .. controls (157.51,61.16) and (157.45,61.14) .. (157.38,61.11) .. controls (158.67,57.99) and (159.38,54.58) .. (159.38,51) .. controls (159.38,38.5) and (150.72,27.98) .. (138.98,24.91) .. controls (140.61,19.07) and (143.97,13.95) .. (148.46,10.13) .. controls (153.44,14.99) and (160.3,18) .. (167.88,18) .. controls (175.45,18) and (182.31,14.99) .. (187.29,10.13) .. controls (191.78,13.95) and (195.14,19.07) .. (196.77,24.91) .. controls (185.03,27.98) and (176.38,38.5) .. (176.38,51) .. controls (176.38,54.51) and (177.06,57.86) .. (178.3,60.94) .. controls (178.36,60.96) and (178.43,60.99) .. (178.49,61.01) -- cycle ;
\draw    (178.79,81.55) -- (180.38,84.38) ;
\draw [shift={(181.36,86.13)}, rotate = 240.64] [color={rgb, 255:red, 0; green, 0; blue, 0 }  ][line width=0.75]    (4.37,-1.32) .. controls (2.78,-0.56) and (1.32,-0.12) .. (0,0) .. controls (1.32,0.12) and (2.78,0.56) .. (4.37,1.32)   ;
\draw  [draw opacity=0] (135,56.13) -- (201,56.13) -- (201,122.13) -- (135,122.13) -- cycle ;
\draw    (170.5,104.13) -- (167.61,104.29) ;
\draw [shift={(165.61,104.4)}, rotate = 356.82] [color={rgb, 255:red, 0; green, 0; blue, 0 }  ][line width=0.75]    (4.37,-1.32) .. controls (2.78,-0.56) and (1.32,-0.12) .. (0,0) .. controls (1.32,0.12) and (2.78,0.56) .. (4.37,1.32)   ;
\draw    (155.9,85.23) -- (156.72,83.27) ;
\draw [shift={(157.5,81.43)}, rotate = 112.83] [color={rgb, 255:red, 0; green, 0; blue, 0 }  ][line width=0.75]    (4.37,-1.32) .. controls (2.78,-0.56) and (1.32,-0.12) .. (0,0) .. controls (1.32,0.12) and (2.78,0.56) .. (4.37,1.32)   ;
\draw  [draw opacity=0][fill={rgb, 255:red, 0; green, 0; blue, 0 }  ,fill opacity=1 ] (156.01,61.01) .. controls (156.01,60.18) and (156.68,59.51) .. (157.51,59.51) .. controls (158.33,59.51) and (159.01,60.18) .. (159.01,61.01) .. controls (159.01,61.84) and (158.33,62.51) .. (157.51,62.51) .. controls (156.68,62.51) and (156.01,61.84) .. (156.01,61.01) -- cycle ;
\draw  [draw opacity=0][fill={rgb, 255:red, 0; green, 0; blue, 0 }  ,fill opacity=1 ] (176.99,61.01) .. controls (176.99,60.18) and (177.67,59.51) .. (178.49,59.51) .. controls (179.32,59.51) and (179.99,60.18) .. (179.99,61.01) .. controls (179.99,61.84) and (179.32,62.51) .. (178.49,62.51) .. controls (177.67,62.51) and (176.99,61.84) .. (176.99,61.01) -- cycle ;
\draw  [draw opacity=0][fill={rgb, 255:red, 0; green, 0; blue, 0 }  ,fill opacity=1 ] (195.4,97.22) .. controls (195.4,96.39) and (196.07,95.72) .. (196.9,95.72) .. controls (197.72,95.72) and (198.4,96.39) .. (198.4,97.22) .. controls (198.4,98.04) and (197.72,98.72) .. (196.9,98.72) .. controls (196.07,98.72) and (195.4,98.04) .. (195.4,97.22) -- cycle ;
\draw  [draw opacity=0][fill={rgb, 255:red, 0; green, 0; blue, 0 }  ,fill opacity=1 ] (185.91,112) .. controls (185.91,111.17) and (186.58,110.5) .. (187.41,110.5) .. controls (188.24,110.5) and (188.91,111.17) .. (188.91,112) .. controls (188.91,112.83) and (188.24,113.5) .. (187.41,113.5) .. controls (186.58,113.5) and (185.91,112.83) .. (185.91,112) -- cycle ;
\draw  [draw opacity=0][fill={rgb, 255:red, 0; green, 0; blue, 0 }  ,fill opacity=1 ] (147.09,112) .. controls (147.09,111.17) and (147.76,110.5) .. (148.59,110.5) .. controls (149.42,110.5) and (150.09,111.17) .. (150.09,112) .. controls (150.09,112.83) and (149.42,113.5) .. (148.59,113.5) .. controls (147.76,113.5) and (147.09,112.83) .. (147.09,112) -- cycle ;
\draw  [draw opacity=0][fill={rgb, 255:red, 0; green, 0; blue, 0 }  ,fill opacity=1 ] (137.6,97.22) .. controls (137.6,96.39) and (138.28,95.72) .. (139.1,95.72) .. controls (139.93,95.72) and (140.6,96.39) .. (140.6,97.22) .. controls (140.6,98.04) and (139.93,98.72) .. (139.1,98.72) .. controls (138.28,98.72) and (137.6,98.04) .. (137.6,97.22) -- cycle ;
\draw    (181.13,35.81) -- (180.52,36.84) ;
\draw [shift={(179.5,38.56)}, rotate = 300.58] [color={rgb, 255:red, 0; green, 0; blue, 0 }  ][line width=0.75]    (4.37,-1.32) .. controls (2.78,-0.56) and (1.32,-0.12) .. (0,0) .. controls (1.32,0.12) and (2.78,0.56) .. (4.37,1.32)   ;
\draw    (167.88,18) -- (168.88,18) ;
\draw [shift={(170.88,18)}, rotate = 180] [color={rgb, 255:red, 0; green, 0; blue, 0 }  ][line width=0.75]    (4.37,-1.32) .. controls (2.78,-0.56) and (1.32,-0.12) .. (0,0) .. controls (1.32,0.12) and (2.78,0.56) .. (4.37,1.32)   ;
\draw    (156.88,39.29) -- (156.51,38.7) ;
\draw [shift={(155.45,37)}, rotate = 57.99] [color={rgb, 255:red, 0; green, 0; blue, 0 }  ][line width=0.75]    (4.37,-1.32) .. controls (2.78,-0.56) and (1.32,-0.12) .. (0,0) .. controls (1.32,0.12) and (2.78,0.56) .. (4.37,1.32)   ;
\draw  [draw opacity=0][fill={rgb, 255:red, 0; green, 0; blue, 0 }  ,fill opacity=1 ] (146.96,10.13) .. controls (146.96,9.3) and (147.64,8.63) .. (148.46,8.63) .. controls (149.29,8.63) and (149.96,9.3) .. (149.96,10.13) .. controls (149.96,10.95) and (149.29,11.63) .. (148.46,11.63) .. controls (147.64,11.63) and (146.96,10.95) .. (146.96,10.13) -- cycle ;
\draw  [draw opacity=0][fill={rgb, 255:red, 0; green, 0; blue, 0 }  ,fill opacity=1 ] (185.79,10.13) .. controls (185.79,9.3) and (186.46,8.63) .. (187.29,8.63) .. controls (188.11,8.63) and (188.79,9.3) .. (188.79,10.13) .. controls (188.79,10.95) and (188.11,11.63) .. (187.29,11.63) .. controls (186.46,11.63) and (185.79,10.95) .. (185.79,10.13) -- cycle ;
\draw  [draw opacity=0][fill={rgb, 255:red, 0; green, 0; blue, 0 }  ,fill opacity=1 ] (195.27,24.91) .. controls (195.27,24.08) and (195.94,23.41) .. (196.77,23.41) .. controls (197.6,23.41) and (198.27,24.08) .. (198.27,24.91) .. controls (198.27,25.74) and (197.6,26.41) .. (196.77,26.41) .. controls (195.94,26.41) and (195.27,25.74) .. (195.27,24.91) -- cycle ;
\draw  [draw opacity=0][fill={rgb, 255:red, 0; green, 0; blue, 0 }  ,fill opacity=1 ] (176.87,61.11) .. controls (176.87,60.28) and (177.54,59.61) .. (178.37,59.61) .. controls (179.2,59.61) and (179.87,60.28) .. (179.87,61.11) .. controls (179.87,61.94) and (179.2,62.61) .. (178.37,62.61) .. controls (177.54,62.61) and (176.87,61.94) .. (176.87,61.11) -- cycle ;
\draw  [draw opacity=0][fill={rgb, 255:red, 0; green, 0; blue, 0 }  ,fill opacity=1 ] (155.88,61.11) .. controls (155.88,60.28) and (156.55,59.61) .. (157.38,59.61) .. controls (158.21,59.61) and (158.88,60.28) .. (158.88,61.11) .. controls (158.88,61.94) and (158.21,62.61) .. (157.38,62.61) .. controls (156.55,62.61) and (155.88,61.94) .. (155.88,61.11) -- cycle ;
\draw  [draw opacity=0][fill={rgb, 255:red, 0; green, 0; blue, 0 }  ,fill opacity=1 ] (137.48,24.91) .. controls (137.48,24.08) and (138.15,23.41) .. (138.98,23.41) .. controls (139.81,23.41) and (140.48,24.08) .. (140.48,24.91) .. controls (140.48,25.74) and (139.81,26.41) .. (138.98,26.41) .. controls (138.15,26.41) and (137.48,25.74) .. (137.48,24.91) -- cycle ;
\draw  [draw opacity=0] (117,47.13) -- (144,47.13) -- (144,74.13) -- (117,74.13) -- cycle ;

\draw (24.9,39.82) node [anchor=south east] [inner sep=0.75pt]  [font=\scriptsize]  {$f_{0}$};
\draw (45,74.72) node [anchor=south east] [inner sep=0.75pt]  [font=\scriptsize]  {$f_{2}$};
\draw (54.36,40.72) node [anchor=south west] [inner sep=0.75pt]  [font=\scriptsize]  {$f_{1}$};
\draw (92.87,58.73) node [anchor=south east] [inner sep=0.75pt]  [font=\scriptsize]  {$g_{1}$};
\draw (74.87,83.53) node [anchor=north east] [inner sep=0.75pt]  [font=\scriptsize]  {$g_{0}$};
\draw (100.87,83.53) node [anchor=north west][inner sep=0.75pt]  [font=\scriptsize]  {$g_{2}$};
\draw (153.9,81.82) node [anchor=south east] [inner sep=0.75pt]  [font=\scriptsize]  {$k_{0}$};
\draw (174,116.73) node [anchor=south east] [inner sep=0.75pt]  [font=\scriptsize]  {$k_{2}$};
\draw (183.36,82.73) node [anchor=south west] [inner sep=0.75pt]  [font=\scriptsize]  {$k_{1}$};
\draw (173.38,13.6) node [anchor=south east] [inner sep=0.75pt]  [font=\scriptsize]  {$h_{1}$};
\draw (155.38,38.4) node [anchor=north east] [inner sep=0.75pt]  [font=\scriptsize]  {$h_{0}$};
\draw (181.38,38.4) node [anchor=north west][inner sep=0.75pt]  [font=\scriptsize]  {$h_{2}$};
\draw (130.5,60.63) node  [font=\small]  {$\overset{\phi_1}\sim $};

\end{tikzpicture}
}
  \caption{Holds if $f₀ = k₀ ⨾ h₀$, $f₁ ⨾ g₁ = h₁$, $g₂ = h₂ ⨾ k₁$, $g₀ ⨾ f₂ = k₂$.}
\end{figure}
\end{definition}
\begin{remark}
$\Splice{(ℂ)}$ has a representable prostar, given on objects by 
$$\left( \biobj{X^+}{X^-} \right)^{*} = \left( \biobj{X^-}{X^+} \right).$$
\end{remark}

\begin{proposition}
  Spliced arrows extend to a functor, $\Splice{} : \mathbf{Cat}{} \to \ProStarAut{}$.
\end{proposition}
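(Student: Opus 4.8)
The plan is to mimic the structure of the multicategorical splice functor $\mathsf{String} \colon \PolyGraph \to \MonCatStr$ and the promonoidal splice functor $\Splice{} \colon \Cat \to \Prom$ already established in the excerpt. We have defined $\Splice{(ℂ)}$ on objects and have displayed the five structural profunctors together with the Frobenius distributors; it remains to define the action on morphisms of categories and check functoriality. First I would fix a functor $H \colon ℂ \to 𝔻$ and define $\Splice{(H)} \colon \Splice{(ℂ)} \to \Splice{(𝔻)}$ to be the prostar functor that acts on objects by $\left(\biobj{X^{+}}{X^{-}}\right) \mapsto \left(\biobj{HX^{+}}{HX^{-}}\right)$, and on each of the five profunctor components by postcomposing $H$ with every factor. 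Concretely, on the $\parr$-component it sends $(a \in ℂ(Y^{+};X^{+}),\, b \in ℂ(X^{-};Z^{-}),\, c \in ℂ(Z^{+};Y^{-}))$ to $(Ha, Hb, Hc)$; similarly for $\otimes$, for hom, for the $\bot$-unit (an arrow $ℂ(X^{-};X^{+})$), and for the $\top$-unit (an arrow $ℂ(Y^{+};Y^{-})$). Because each of these is just a tuple of arrows of $ℂ$, and $H$ preserves composition and identities, the assignment is well-defined on the coends presenting $\Splice{(ℂ)}$: the dinaturality relations are preserved because $H$ is a functor and $H(f ⨾ g) = Hf ⨾ Hg$.

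The next step is to verify that $\Splice{(H)}$ is genuinely a prostar functor in the sense of \Cref{def:prostarfunctor}: that the two underlying promonoidal functors (for $(\otimes,\top)$ and for $(\parr,\bot)$) are well-defined, and that together they strictly preserve the Frobenius distributors $\varphi_{l}$ and $\varphi_{r}$. Preservation of the promonoidal associators and unitors is routine: these are defined by glueing spliced circles and composing the relevant arrows, so applying $H$ before or after glueing gives the same result since $H$ preserves composition. For the Frobenius distributors, recall that $\varphi_{l}$ and $\varphi_{r}$ are the canonical isomorphisms witnessing that two ways of decomposing a spliced circle agree up to dinaturality; since the witnessing conditions (e.g. $f_{0} = k_{0} ⨾ h_{0}$, $f_{1} ⨾ g_{1} = h_{1}$, $g_{2} = h_{2} ⨾ k_{1}$, $g_{0} ⨾ f_{2} = k_{2}$ in the displayed example) are all equations built from composition, they are preserved verbatim by $H$. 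Hence $\varphi_{l} ⨾ (\Splice{(H)}_{\otimes} × \Splice{(H)}_{\parr}) = (\Splice{(H)}_{\parr} × \Splice{(H)}_{\otimes}) ⨾ \varphi_{l}'$, and likewise for $\varphi_{r}$.

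Finally I would check functoriality of $\Splice{}$ itself: that $\Splice{(\id_{ℂ})} = \id_{\Splice{(ℂ)}}$ and $\Splice{(H ⨾ K)} = \Splice{(H)} ⨾ \Splice{(K)}$. Both are immediate: on objects the assignment is literally functorial, and on each profunctor component $\Splice{(H ⨾ K)}$ postcomposes each factor with $K \circ H$, which is the composite of postcomposing with $H$ and then with $K$; identities are handled the same way. This gives a well-defined functor $\Splice{} \colon \Cat \to \ProStar$.

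I do not expect any single step to be a serious obstacle here, since the statement is a ``functoriality of a construction'' lemma of exactly the same flavour as \Cref{prop:monoidalContourFunctor} and \Cref{prop:monoidalSpliceFunctor}. The only mildly delicate point is book-keeping: one must be careful that $\Splice{(H)}$ respects the coend quotients on \emph{all five} profunctors simultaneously and that the variances line up (the $+$-objects are contravariant, the $-$-objects covariant, so $H$ is applied covariantly to $ℂ^{\mathrm{op}} × ℂ$ throughout), and that the Frobenius coherence diagrams — which by \Cref{def:prostar} are required to \emph{all} commute — are preserved; but since every such diagram is a formal consequence of composition in $ℂ$ and $H$ is a functor, this is automatic rather than a real difficulty. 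So the main work is simply writing out the component-wise definition and invoking functoriality of $H$ at each stage.
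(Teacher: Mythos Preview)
Your proposal is correct and is exactly the kind of argument one would expect here: define $\Splice(H)$ componentwise by applying $H$ to every factor of each spliced tuple, check that this respects the promonoidal coherences and the Frobenius distributors (all of which are equations in $ℂ$ built from composition, hence preserved by any functor), and then verify $\Splice(\id)=\id$ and $\Splice(H⨾K)=\Splice(H)⨾\Splice(K)$.

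The paper itself does not spell out a proof of this proposition at all: it is stated bare, and the subsequent theorem simply says ``the proof extends our previous one \cite[Theorem 3.7]{produoidal23}'', i.e.\ it defers to the analogous produoidal case (\Cref{prop:monoidalSpliceFunctor}). So your write-up is more detailed than what the paper provides, but it is precisely the argument the paper is gesturing at.
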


\begin{theorem}
    Contour extends to a functor $\Contour{} \colon \mathbf{PolyCat} \to \Cat{}$, splice extends to a functor $\Splice \colon \Cat{} \to \ProStarAut{}$. $\Contour{}$ is left adjoint to $\Splice{}$ composed with the forgetful functor, $\Contour{} \dashv \Splice{} ⨾ \mathsf{Forget}$; and $\Contour{}$ composed with the forgetful functor is left adjoint to $\Splice{}$, meaning $\mathsf{Forget} ⨾ \Contour{} \dashv \Splice{}$.
\end{theorem}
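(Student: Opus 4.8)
The plan is to mirror, almost verbatim, the proof strategy already used for the multicategorical splice-contour adjunction (\Cref{th:splice-right-contour}) and its promonoidal refinement (\Cref{prop:promSplicedArrows}), now carried out one categorical dimension ``wider'' — with outputs as well as inputs. The key is \Cref{prop:equivalenceProstarMalleable}: since $\ProStarAut$ is equivalent to the category of \malleablePolycategories{}, it suffices to establish the adjunctions at the level of malleable polycategories, where the spliced-arrow construction $\Splice$ has an elementary description by ``spliced circles'' of arrows. I will treat the two adjunctions in parallel because the second follows from the first by a bookkeeping argument about where the units $\top,\bot$ go, exactly as in the promonoidal case.

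First I would verify the functoriality claims that are only asserted: that $\Contour \colon \mathbf{PolyCat} \to \Cat$ is well-defined (the eight-case composition formula and the identity equations in \Cref{fig:contour-proautonomous} are preserved under composition of polymorphisms — a finite check exactly parallel to the one in \Cref{defn:contour}), and that $\Splice \colon \Cat \to \ProStarAut$ is a functor into \prostarAutonomousCategories{}, i.e. the Frobenius distributors of \Cref{def:prostar} are the glueing-and-composing isomorphisms indicated in \Cref{fig:spliced-arrow-profrobenius}. Then, for the main adjunction $\Contour \dashv \Splice \,⨾\, \mathsf{Forget}$, I would follow the template of \Cref{th:splice-right-contour}: given a polycategory $ℙ$, construct the unit $T \colon ℙ \to \Splice(\Contour(ℙ))$ sending each object $X$ to the polarized pair $\bigl(\substack{X^{l}\\ X^{r}}\bigr)$ and each polymorphism $f \in ℙ(X_1,\dots,X_n;Y_1,\dots,Y_m)$ to the spliced circle whose arcs are precisely the contour generators $f^{u},f^{r}_1,\dots,f^{d},f^{l}_1,\dots$; check $T$ is a polyfunctor using the contour composition equations; then show that for any polyfunctor $F \colon ℙ \to \Splice(ℂ)$ there is a unique functor $F^{\sharp} \colon \Contour(ℙ) \to ℂ$ with $F = T \,⨾\, \Splice(F^{\sharp})$, by reading off $F^{\sharp}$'s values on the presentation generators from the components of $F$ and checking it respects the presentation's equations — which it must, because $F$ is a polyfunctor so the images of the generators satisfy all contour relations in $ℂ$. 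The second adjunction, $\mathsf{Forget}\,⨾\,\Contour \dashv \Splice$, is obtained the same way but now the domain polycategory need not be malleable; one uses that a general polyfunctor into the (malleable) $\Splice(ℂ)$ factors uniquely through the free malleable polycategory, and the contour construction is insensitive to this completion — the argument is identical to how \Cref{prop:equivalencePromonoidalMalleable} lets one pass between biased and unbiased structures.

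The main obstacle I expect is the \emph{coherence bookkeeping} around the four composition forms and the two Frobenius distributors: unlike the multicategorical case where composition happens ``along one output'', here a polymorphism is cut along either an input arc or an output arc of the circle, giving four planar composition operations (\Cref{fig:polycategorical-composition}), and one must check the contour-generator assignment is compatible with \emph{all four} and with the polycategorical interchange axiom $(f ⨾_A g) ⨾_B h = (f ⨾_B h) ⨾_A g$. Fortunately this is a finite, diagram-chasing obstacle rather than a conceptual one, and it is precisely the content that \Cref{prop:equivalenceProstarMalleable} packages away: working with \malleablePolycategories{} means the Frobenius distributors are honest isomorphisms and every n-ary composite is reconstructed uniquely from binary/nullary/cobinary/conullary data, so the well-definedness of $F^{\sharp}$ on all of $\Contour(ℙ)$ reduces to its well-definedness on the handful of generator types, each of which is a direct transcription of a component of $F$. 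I would also note, as the excerpt itself suggests, that this is ``another proof of the resilience of the splice-contour adjunction'': conceptually it is the statement that the self-duality $ℂ \dashv ℂ^{\mathrm{op}}$ in the monoidal bicategory of profunctors upgrades the pseudomonoid of spliced arrows to a pseudo-Frobenius algebra, and the adjunction is the $2$-categorical shadow of that — but I would keep the written proof elementary, via the presentation of $\Contour$, to match the style of \Cref{th:splice-right-contour}.
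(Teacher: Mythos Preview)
Your proposal is correct and matches the paper's approach: the paper's entire proof reads ``The proof extends our previous one \cite[Theorem 3.7]{produoidal23}'', so your plan to rerun the argument of \Cref{th:splice-right-contour} with polarized pairs $(X^l,X^r)$, spliced circles, and a presentation-by-generators argument for $F^\sharp$ is exactly what is intended, only spelled out in far more detail than the paper itself provides.

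One small correction: your description of the \emph{second} adjunction $\mathsf{Forget}\,⨾\,\Contour \dashv \Splice$ is slightly garbled. There the domain is already a \prostarAutonomousCategory{} (equivalently a \malleablePolycategory{}), not a general one, so no ``free malleable completion'' is needed; rather, one uses that $\mathsf{Forget}\colon\ProStarAut\to\mathbf{PolyCat}$ is fully faithful (this is the content of \Cref{prop:equivalenceProstarMalleable}), so prostar functors $𝕍\to\Splice(ℂ)$ coincide with polyfunctors between the underlying \malleablePolycategories{}, and the first adjunction then transports these to functors $\Contour(𝕍)\to ℂ$.
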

\begin{proof}
  The proof extends our previous one \cite[Theorem 3.7]{produoidal23}.
\end{proof}

\subsection{Bibliography}

\Polycategories{} were defined by Szabo \cite{szabo75:polycategories} in the symmetric case; Cockett and Seely contributed the planar version we study here \cite{cockett1997,blanco20:polycategories}.

Street \cite{street04:frobeniusmonads} prove that Frobenius pseudomonoids in $\mathbf{Prof}$ are equivalent to what Day \& Street  \cite{daystreet04:quantum} call ``$\ast$-autonomous promonoidal categories''. The minor twist we take, ``prostar autonomous'', emphasizes that the canonical prostar may not be representable. When all of the structure including the prostar is representable, we obtain $\ast$-autonomous categories.

\newpage
\clearpage{}%

\end{document}